\documentclass[a4paper]{amsart}


\usepackage[T1]{fontenc}
\usepackage[utf8]{inputenc}
\usepackage[english]{babel}
\usepackage{lmodern}
\usepackage{exscale}
\usepackage[babel]{microtype}
\usepackage{amsmath, amssymb, mathtools, mathrsfs}

\usepackage{csquotes}
\usepackage[backend=bibtex,style=alphabetic, sorting=nyt, maxnames=10, maxalphanames=5, doi=false,isbn=true,url=false,eprint=false]{biblatex}
\renewbibmacro{in:}{%
  \ifentrytype{article}{}{\printtext{\bibstring{in}\intitlepunct}}}

\AtEveryBibitem{\clearfield{issn}}
\AtEveryCitekey{\clearfield{issn}}
\AtEveryBibitem{\clearfield{month}}

\addbibresource{grtg.bib}

\usepackage{tikz}
\usetikzlibrary{matrix,shapes,arrows,calc,3d,decorations,decorations.pathmorphing,through,cd}
\usepackage{todonotes}

\usepackage[numbered]{bookmark}
\usepackage{hyperref}
\usepackage{amsthm}
\hypersetup{
  colorlinks,
  urlcolor = {blue!80!black},
  linkcolor = {red!70!black},
  citecolor = {green!50!black}
}

\theoremstyle{plain}
  \newtheorem{thm}{Theorem}
  \newtheorem{defi}[thm]{Definition}
\newtheorem{conj}[thm]{Conjecture}
  \newtheorem{prop}[thm]{Proposition}
  
  \newtheorem{cor}[thm]{Corollary}
  
  \newtheorem{lemma}[thm]{Lemma}
\theoremstyle{definition}
  
  \newtheorem{rem}[thm]{Remark}


\input{math}
\usepackage{fullpage}
\usepackage{graphicx}
\usepackage{extpfeil}
\usepackage{enumitem}

\allowdisplaybreaks

\begin{document}

\title{Graph complexes and higher genus Grothendieck-Teichm\"uller Lie algebras}

\author[M. Felder]{Matteo~Felder}
\address{Matteo~Felder: Department of Mathematics, ETH Zurich, Zurich, Switzerland}
\email{matteo.felder@math.ethz.ch}



\begin{abstract}
We give a presentation in terms of generators and relations of the cohomology in degree zero of the Campos-Willwacher graph complexes associated to compact orientable surfaces of genus $g$. The results carry a natural Lie algebra structure, and for $g=1$ we recover Enriquez' elliptic Grothendieck-Teichm\"uller Lie algebra. In analogy to Willwacher's theorem relating Kontsevich's graph complex to Drinfeld's Grothendieck-Teichm\"uller Lie algebra, we call the results higher genus Grothendieck-Teichm\"uller Lie algebras. Moreover, we find that the graph cohomology vanishes in negative degrees.
\end{abstract}

\maketitle

\newcommand{\mapsfrom}{\mathrel{\reflectbox{\ensuremath{\mapsto}}}}

\newcommand{\GT}{\mathrm{GT}}
\newcommand{\GRT}{\mathrm{GRT}}
\newcommand{\Gal}{\mathrm{Gal}}
\newcommand{\lD}{\mathrm{D}}
\newcommand{\Aut}{\mathrm{Aut}}
\newcommand{\Der}{\mathrm{Der}}
\newcommand{\BiDer}{\mathrm{BiDer}}
\newcommand{\Chains}{\mathrm{Chains}}
\newcommand{\K}{\mathbb K}
\newcommand{\FreeOp}{\mathrm{FreeOp}}
\newcommand{\conf}{\mathrm{conf}}
\newcommand{\Diff}{\mathrm{Diff}}
\newcommand{\MC}{\mathsf{MC}}

\newcommand{\Harru}{\mathrm{Harr}^{\mathrm{unred}}}
\newcommand{\Omegau}{\Omega^{\mathrm{unred}}}

\newcommand{\e}{\mathsf{e}}
\newcommand{\ke}{\mathsf{e_2^\text{!`}}}
\newcommand{\bv}{\mathsf{bv}}
\newcommand{\kbv}{\mathsf{bv^\text{!`}}}
\newcommand{\Com}{\mathsf{Com}}
\newcommand{\bvk}{\mathsf{bv^!}}

\newcommand{\Qc}{\mathsf{Q}}
\newcommand{\Tc}{\mathsf{T}}
\newcommand{\Cc}{\mathsf{C}}

\newcommand{\frakg}{\mathfrak{g}}
\newcommand{\frakh}{\mathfrak{h}}
\newcommand{\wOmega}{\widebar\Omega}
\newcommand{\CoDef}{\mathrm{CoDef}}
\newcommand{\BiDef}{\mathrm{BiDef}}
\newcommand{\hL}{\widehat{\mathbb{L}}}

\newcommand{\rell}{\mathfrak{r}^{ell}}
\newcommand{\frakt}{\mathfrak{t}}
\newcommand{\barfrakt}{\overline{\mathfrak{t}}}
\newcommand{\grtell}{\alg{grt}^{ell}}
\newcommand{\grt}{\alg{grt}}

\newcommand{\grtg}{\alg{grt}_{\mathrm{(g)}}}
\newcommand{\fraksg}{\mathfrak{s}_{\mathrm{(g)}}}

\newcommand{\frakr}{\mathfrak{r}}
\newcommand{\fraka}{\mathfrak{a}}
\newcommand{\fraks}{\mathfrak{s}}

\newcommand{\td}{\bar{d}}

\newcommand{\PK}{\widetilde \Phi_{\tilde \kappa}}

\newcommand{\icg}{\mathsf{ICG}}
\newcommand{\BVicg}{\mathsf{BVICG}}
\newcommand{\pdicg}{{}^*\mathsf{ICG}}
\newcommand{\pdBVGraphs}{\mathsf{BVGraphs}^*}
\newcommand{\pdBVicg}{{}^*\mathsf{BVICG}}
\newcommand{\fGraphs}{\mathsf{fGraphs}}
\newcommand{\divGC}{\mathsf{GC}^{div}}
\newcommand{\tcg}{\mathsf{TCG}}

\newcommand{\icgg}{\mathsf{ICG}_{\mathrm{(g)}}}
\newcommand{\tcgg}{\mathsf{TCG}_{\mathrm{(g)}}}
\newcommand{\GCg}{\mathsf{GC}_{\mathrm{(g)}}}

\newcommand{\fGraphsg}{\mathsf{fGraphs}_{\mathrm{(g)}}}

\newcommand{\I}{\mathsf{I}}

\newcommand{\HGC}{\mathsf{HGC}}
\newcommand{\fHGC}{\mathsf{fHGC}}
\newcommand{\HGCg}{\mathsf{HGC}_{(\mathrm{g})}}
\newcommand{\fHGCg}{\mathsf{fHGC}_{(\mathrm{g})}}

\newcommand{\Grag}{\mathsf{Gra}_{(\mathrm{g})}}

\newcommand{\Graphsg}{\mathsf{Graphs}_{(\mathrm{g})}}
\newcommand{\pdGraphsg}{\mathsf{Graphs}_{(\mathrm{g})}^*}

\newcommand{\BVGraphsg}{\mathsf{BVGraphs}_{\mathrm{(g)}}}
\newcommand{\pdBVGraphsg}{\mathsf{BVGraphs}_\mathrm{(g)}^*}

\newcommand{\uniGraphsg}{\mathsf{uniGraphs}_{(\mathrm{g})}}
\newcommand{\uniGraphs}{\mathsf{uniGraphs}}

\newcommand{\gra}{\mathsf{gra}}

\newcommand{\Mog}{\mathsf{Mo}_{(\mathrm{g})}}
\newcommand{\Mo}{\mathsf{Mo}}
\newcommand{\Mogg}{\mathsf{{Mo}}^!_{(\mathrm{g})}}

\newcommand{\LSg}{\mathsf{LS}^*_{(\mathrm{g})}}
\newcommand{\LSgg}{\mathsf{{LS}^*}^!_{(\mathrm{g})}}

\newcommand{\pdLSg}{\mathsf{LS}_{(\mathrm{g})}}

\newcommand{\pdTw}{{}^*\Tw}

\newcommand{\Ve}{V_{\exp}}
\newcommand{\bVe}{\overline{V}_{\exp}}
\newcommand{\bicg}{\overline{\mathsf{ICG}}}

\newcommand{\fraktg}{\mathfrak{t}_{(\mathrm{g})}}
\newcommand{\CE}{\mathrm{CE}}

\newcommand{\tk}{{\tilde \kappa}}

\newcommand{\flD}{\mathsf{D}^{fr}}
\newcommand{\tGC}{\mathsf{tGC}}
\newcommand{\osp}{\mathfrak{osp}}
\newcommand{\lsp}{\mathfrak{sp}}
\newcommand{\spp}{\mathfrak{sp}'}
\newcommand{\gl}{\mathfrak{gl}}

\newcommand{\mflD}{\mathsf{mD}^{fr}}
\newcommand{\mfM}{\mathsf{mM}^{fr}}
\newcommand{\vspan}{\mathrm{span}}

\newcommand{\fG}{\mathsf{fG}}
\newcommand{\actson}{\circlearrowright}

\usetikzlibrary{decorations.pathreplacing,calc}

\setcounter{tocdepth}{1}
\tableofcontents

\section{Introduction}

Configuration spaces of $r$ points in a surface $X$ 
\[
\Conf_r(X)=\{(x_1,\dots,x_r)\in X^{\times r} \ | \ \forall i\neq j \ : \ x_i\neq x_j \}
\]
are classical objects in topology. In the ``local'' case of $X=\mathbb{R}^2$ its compactifications assemble to form a topological operad $\FM_2$, the Fulton-MacPherson-Axelrod-Singer operad (\cite{AS94},\cite{FM94},\cite{GetzlerJones94}). It is weakly-equivalent to the little disks operad and as such it boasts a wide range of applications in topology, algebra and mathematical physics. For $X=\Sigma_g$ a compact orientable surface of genus $g$, the compactification of framed configuration spaces of points leads to an operadic module $\FFM_{\Sigma_g}$ over the framed variant of the Fulton-MacPherson-Axelrod-Singer operad $\FFM_2$. In a recent series of works Idrissi, Campos and Willwacher (\cite{CW}, \cite{Idrissi19}, \cite{CIW}) compute their homotopy type by means of workable combinatorial models. These consist of a cooperadic comodule $\op M$ over a cooperad $\op C$ such that
\[
\op M(r) \simeq \Omega(\FFM_{\Sigma_g}(r)) \text{ and } \op C(r) \simeq \Omega(\FFM_2(r))
\]
are quasi-isomorphisms of differential graded commutative algebras over a field of characteristic zero, which are compatible with the respective cooperadic comodule structures.

Notice that for a fixed number of points, understanding the homotopy type of each individual space $\Conf_r(X)\simeq \FM_2(r)$ goes back to Arnold \cite{Arnold69} and Cohen \cite{Cohen76} for the local case, and Bezrukavnikov \cite{Bezru94} for surfaces of higher genus. Major breakthroughs by Tamarkin \cite{Tamarkin03}, Kontsevich \cite{Kontsevich99}, Lambrechts and Volic \cite{LV14}, and Fresse \cite{FresseBook17} lead to highly non-trivial results on the homotopy type of configuration spaces of points in the local case as an operad, i.e. respecting the algebraic structure tying together the spaces for different numbers of points. In their approach to tackle the (operadic) surface case, which indeed extends to higher dimensional manifolds in great generality (\cite{CILW}, \cite{CDIW}, \cite{FW21}), Idrissi, Campos and Willwacher construct the cooperadic comodule $\op M=\pdBVGraphsg$ over the cooperad $\op C=\pdBVGraphs$, the latter being a resolution of the Batalin-Vilkovisky cooperad and a model for framed configurations of points in the plane (\cite{Getzler94}, \cite{Severa10}, \cite{GP10}). Elements of $\pdBVGraphsg(r)$ are essentially given by linear combinations of diagrams of the form (here $r=4$)
\\
\begin{center}
{{
\begin{tikzpicture}[baseline=-.55ex,scale=.7, every loop/.style={}]
 \node[circle,draw,inner sep=1.5pt] (a) at (0,0) {$1$};
 \node[circle,draw,inner sep=1.5pt] (b) at (1,0) {$2$};
 \node[circle,draw,inner sep=1.5pt] (c) at (2,0) {$3$};
 \node[circle,draw,inner sep=1.5pt] (d) at (3,0) {$4$};
 \node[circle,draw,fill,inner sep=1.5pt] (d1) at (0.5,1) {};
 \node[circle,draw,fill,inner sep=1.5pt] (d2) at (1.5,1) {};
\node[circle,draw,densely dotted,inner sep=.5pt] (f) at (-0.5,1) {$a_1$};
\node[circle,draw,densely dotted,inner sep=.5pt] (g) at (0,2) {$b_2$};
\node[circle,draw,densely dotted,inner sep=.5pt] (h) at (1,2) {$a_3$};
\node[circle,draw,densely dotted,inner sep=.5pt] (j) at (2,2) {$\omega$};
\draw (a) to (d1);
\draw (b) to (d1);
\draw (b) to (d2);
\draw (c) to (d2);
\draw (d1) to (d2);
\draw[densely dotted] (d1) to (g);
\draw[densely dotted] (a) to (f);
\draw[densely dotted] (d2) to (h);
\draw[densely dotted] (d2) to (j);
\draw (d) edge[loop] (d);
\end{tikzpicture}}}
\end{center}
i.e. diagrams with two types of vertices which possibly carry decorations in the reduced cohomology of the surface $\overline H=H^{\geq 1}(\Sigma_g)$. It comes naturally equipped with a combinatorially defined action of the dg Lie algebra $\spp(H^*)\ltimes \GCg$, where $\GCg$ is the Lie algebra whose elements are linear combinations of diagrams of the form
\\
\begin{center}
{{
\begin{tikzpicture}[baseline=-.55ex,scale=.7, every loop/.style={}]
\node[circle,draw,fill,inner sep=1.5pt] (a) at (0,0) {};
\node[circle,draw,fill,inner sep=1.5pt] (b) at (1,1) {};
\node[circle,draw,fill,inner sep=1.5pt] (c) at (-1,1) {};
\node[circle,draw,fill,inner sep=1.5pt] (d) at (0,2) {};
\node[circle,draw,densely dotted,inner sep=.5pt] (f) at (-2,1) {$\alpha_1$};
\node[circle,draw,densely dotted,inner sep=.5pt] (g) at (-0.5,3) {$\beta_2$};
\node[circle,draw,densely dotted,inner sep=.5pt] (h) at (0.5,3) {$\alpha_3$};
\node[circle,draw,densely dotted,inner sep=.5pt] (j) at (2,1) {$\beta_1$};
\draw (a) to (b);
\draw (b) to (c);
\draw (b) to (d);
\draw (a) to (d);
\draw (a) to (c);
\draw (c) to (d);
\draw[densely dotted] (d) to (g);
\draw[densely dotted] (d) to (h);
\draw[densely dotted] (c) to (f);
\draw[densely dotted] (b) to (j);
\end{tikzpicture}}}
\end{center}
with decorations living in $\overline{H}^*$, the dual of the cohomology of the surface.  The Lie bracket on $\GCg$ is also defined in a combinatorial way. Moreover, $\spp(H^*)$ is a Lie subalgebra of $\osp(H^*)$, the Lie algebra consisting of (graded) linear endomorphisms of $H^*$ preserving the natural pairing on $H^*$.

In this paper, we study the deformation theory of the cooperadic comodule $\pdBVGraphsg$. For this we set up a suitable version of the deformation complex $\Def(\pdBVGraphsg)$, inspired by work of Fresse and Willwacher for the cooperad case \cite{FW20}. In analogy to Fresse and Willwacher's construction, we expect the deformation complex to describe the dg vector space underlying the dg Lie algebra of homotopy biderivations of $\pdBVGraphsg$ - however, this remains an open problem at this stage. The deformation complex comes equipped with a morphism of degree one from the dg Lie algebra $\spp(H^*)\ltimes \GCg$ induced by its action on $\pdBVGraphsg$. We find that, via this action and up to one class of degree three, the dg Lie algebra $\spp(H^*)\rtimes \GCg$ describes precisely the type of deformations encoded in $\Def(\pdBVGraphsg)$, up to homotopy. Our first main result reads as follows.
\begin{thm}(Theorem \ref{thm:defGC})\label{thm:Theoremone}
The action of $\spp(H^*)\rtimes \GCg$ on $\pdBVGraphsg$ induces a quasi-isomorphism \[
(\spp(H^*)\ltimes \GCg)[-1]\rightarrow \Def(\pdBVGraphsg)\]
in all degrees except in degree $4$. In degree $4$, the map on cohomology is injective with one-dimensional cokernel.
\end{thm}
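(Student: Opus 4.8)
The plan is to realize $\Def(\pdBVGraphsg)$ as an explicit complex of decorated graphs, to exhibit the action map as (up to quasi-isomorphism) the inclusion of a sub-dg-module, and then to compute the cohomology of the cokernel complex by a spectral sequence argument that reduces everything to known graph cohomology. First I would unpack the Fresse--Willwacher-style convolution definition of $\Def(\pdBVGraphsg)$: using that $\pdBVGraphsg$ is the combinatorial model built from graphs with two vertex types and decorations in $\overline H$, the deformation complex becomes a product over arities of equivariant maps, which I would rewrite as a complex of connected graphs carrying external legs (hairs) and decorations in $\overline H$ and $\overline H^*$, equipped with a differential that is the sum of the internal graph differential (edge contraction / vertex splitting) and a piece coming from the cooperadic coaction. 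With $\Def$ in this form the induced map from $\spp(H^*)\ltimes\GCg$ becomes transparent: a cocycle of $\GCg$ maps to the same graph regarded as a deformation, while $\spp(H^*)$ maps to the part supported on a single edge or vertex decorated by one element of $\overline H$ and one of $\overline H^*$. After checking that this realizes the map, up to homotopy, as an inclusion, the theorem is equivalent to the statement that the cokernel complex (equivalently, the mapping cone) has one-dimensional cohomology concentrated in total degree $4$.

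To compute the cokernel cohomology I would introduce a filtration by loop order (first Betti number of the graphs), so that on the associated graded the coaction part of the differential is suppressed and only the internal graph differential survives. The standard acyclicity lemmas then remove the bulk of the complex: subcomplexes of graphs with bivalent ``passing'' vertices, with internal connected components carrying no decoration, and with hairs attached in the combinatorially trivial pattern are all contractible. After these reductions the $E_1$-page collapses onto a small explicit quotient in which the decoration-free internal graphs reproduce $\GCg$ and the part with one paired pair of decorations reproduces the endomorphisms of $H^*$ preserving the pairing; here I would quote the cohomology computations for the genus-$g$ (hairy) graph complexes due to Willwacher and Campos--Willwacher, together with the identification of the decorations with the pairing on $H^*$. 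Convergence of this spectral sequence needs care, since $\Def(\pdBVGraphsg)$ is a completed, infinite-product complex and the loop filtration is exhaustive only after completion; I would verify that in each fixed total degree the filtration is locally finite, so that the $E_1$ computation genuinely computes $H(\Def(\pdBVGraphsg))$.

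The decisive remaining step is a low-degree bookkeeping argument showing that the associated-graded cohomology is exactly $(\spp(H^*)\ltimes\GCg)[-1]$ together with a single extra generator outside the image of the action map, and that this generator sits in total degree $4$. The natural candidate for this class is the discrepancy between the full orthosymplectic algebra $\osp(H^*)$, which governs the linear part of the reduced complex, and its subalgebra $\spp(H^*)$ that is actually realized by the action: concretely a distinguished degree-$4$ element of $\osp(H^*)$ (a grading/Euler-type endomorphism) not lying in $\spp(H^*)$. I would pin this down, check directly that it is closed, neither exact nor hit, and that it survives the spectral sequence, and simultaneously rule out any further surviving classes outside the image in every other degree by matching the reduced complex with $\spp(H^*)\ltimes\GCg$ degree by degree. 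I expect the main obstacle to be precisely this final separation: proving the decorated acyclicity lemmas sharply enough that the $E_1$-page is exactly the small quotient claimed, and then showing the cokernel is one-dimensional in degree $4$ and vanishes elsewhere --- rather than merely bounding it --- which in the genus-$g$ setting entangles the graph-complex combinatorics with the orthosymplectic representation theory of $H^*$ and so does not reduce directly to the already-known cooperad case of Fresse--Willwacher.
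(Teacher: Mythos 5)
There is a genuine gap, in two places. First, your plan quotes as known input ``the cohomology computations for the genus-$g$ (hairy) graph complexes due to Willwacher and Campos--Willwacher'' --- but no such comparison exists in the literature; the known statement is the genus-zero one ($\GC$ versus $\HGC$, Fresse--Willwacher), and it does not transfer formally, since in genus $g$ both complexes carry $\overline H^*$-decorations and their differentials are twisted by Maurer--Cartan elements ($z$, resp.\ $m=m_1+m_2$). That comparison is exactly half of the theorem you are trying to prove: the paper establishes it as Proposition \ref{prop:GCHGC}, by showing that the mapping cone of $F_1\colon\GCg\to\HGCg^\bullet$ is acyclic (filtration by internal edges, decomposition by the ``core'' of a graph, explicit homotopy $\partial_x\mapsto x^*$), together with a separate hand computation on the vertex-free part $\HGCg^0$, where the kernel of the leading differential is shown to be $2n^2-n-1$-dimensional and matched with $F_2(\spp(H^*))$. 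Relatedly, your description of the expected $E_1$-page misidentifies the targets: the graphs ``reproducing $\GCg$'' are not decoration-free (decoration-free graphs give Kontsevich's $\GC$), and the part matching $\spp(H^*)$ is not ``one paired pair of decorations'' but the one-hair diagrams $x\,\partial_y$. Moreover, your unpacking never registers that $\Def(\pdBVGraphsg)$ is shorthand for $\Def(\pdBVGraphsg\xrightarrow{\Phi}\Mog)$, i.e.\ $\Hom_S(\Harr\,\pdBVGraphsg,\Omega\Mog)$ with target the small model $\Mog$: the other half of the theorem (Theorem \ref{thm:defHGC}, that $\HGCg[-1]\to\Def$ is a quasi-isomorphism in \emph{all} degrees) lives precisely there, and the serious work is replacing $\Omega\Mog$ by the Koszul module $\Mogg=\Mog\circ\bvk$, computing the Harrison cohomology of the cofree coalgebras $\BVGraphsg(r)$ so as to collapse onto internally connected graphs, and the small/big-cluster analysis with two further Harrison-type computations (including the acyclicity of the unreduced Harrison complex). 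Your single loop-order filtration, with the claim that ``the coaction part of the differential is suppressed,'' is unsubstantiated and is not what controls any of these steps; the operative filtrations are by edge and hair counts, arity, $\Mogg$-degree, and cluster statistics.

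Second, your identification of the degree-$4$ cokernel class fails outright. You propose ``a distinguished degree-$4$ element of $\osp(H^*)$ (a grading/Euler-type endomorphism) not lying in $\spp(H^*)$,'' but $H^*$ is concentrated in degrees $0,-1,-2$, so $\osp(H^*)$ is concentrated in degrees between $-2$ and $2$ and has no degree-$4$ elements; there is nothing for your proposed closedness/non-exactness check to apply to. The actual class is the hairy graph $\hgcomegaomega$ --- no internal vertices, a single edge, both hairs decorated by $\partial_\omega$ --- of degree $3$ in $\HGCg$, hence degree $4$ after the shift $[-1]$. It survives because hitting it under the leading differential would require the disallowed diagram with hair-decoration $x^*=1$; in that sense it is the shadow of the $\osp$-directions excluded from $\spp(H^*)$, but it is a two-hair graph, not a linear endomorphism, and locating it requires the explicit $\HGCg^0$-computation in Proposition \ref{prop:GCHGC} rather than orthosymplectic bookkeeping.
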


The second main result addresses the cohomology of $\spp(H^*)\rtimes \GCg$ in degree zero. Recall the (framed version of the) Lie algebra $\fraktg(n)$ introduced by Bezrukavnikov \cite{Bezru94}. Generated by the elements $x_l^{(i)}, \ y_l^{(i)}, \ t_{ij}$ for $1\leq i,j\leq n$, $1\leq l \leq g$, subject to a set of relations, the collection $\fraktg=\{\fraktg(n)\}_{n\geq 1}$ assembles to form an operadic module over the (framed version of the) Drinfeld-Kohno Lie algebra operad $\frakt_\bv$. Its Chevalley-Eilenberg complex $C(\fraktg)$ carries the structure of a cooperadic $C(\frakt_\bv)$-comodule. It is related to $\pdBVGraphsg$ through a zig-zag of quasi-isomorphisms, thus forming a further model for the spaces of configuration of points. This equivalence enables us to simplify the deformation complex and express its cohomology in non-positive degrees and in degree one in terms of $\fraktg$. More concretely, we explicitly define a Lie subalgebra
\[
Z_{(g)} \subset \Der(\fraktg(2))
\]
of the Lie algebra of derivations of $\fraktg(2)$, and a Lie ideal $B_{(g)}\subset Z_{(g)}$ in terms of generators and relations (see equation \eqref{eq:defZg} for the precise formulas), and set $\frakr_{(g)}:=Z_{(g)}/B_{(g)}$. The second main result then reads as follows.

\begin{thm}(Theorem \ref{thm:GCgvsp})\label{thm:Theoremtwo}
In degree zero, we have
\[
H^0(\spp(H^*)\ltimes \GCg)\cong \frakr_{(g)}.
\]
Moreover,
\[
H^{-1}(\spp(H^*)\ltimes \GCg)=
\begin{cases}
\spp_{-1}(H^*)=\K [1]\oplus \K[1] & \text{ for } g=1\\
0 & \text{ for } g\geq 2
\end{cases}
\]
and for all $i<-1$, 
\[
H^{i}(\spp(H^*)\ltimes \GCg)=H^{i}(\GCg)=0.
\]
In particular, $H^{i}(\GCg)=0$ for all $i<0$ and $g\geq 1$.
\end{thm}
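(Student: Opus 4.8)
The plan is to bootstrap from the first main result. Theorem~\ref{thm:defGC} already identifies $(\spp(H^*)\ltimes\GCg)[-1]$ with the deformation complex $\Def(\pdBVGraphsg)$ in all degrees except for a single class in degree $4$. Under the degree shift, the cohomologies $H^{0}$, $H^{-1}$ and $H^{<-1}$ of $\spp(H^*)\ltimes\GCg$ correspond to $H^{\leq 1}$ of $\Def(\pdBVGraphsg)$, so the exceptional class in degree $4$ never interferes and it suffices to compute $H^{\leq 1}(\Def(\pdBVGraphsg))$. The first step is then to replace $\pdBVGraphsg$ by the Chevalley--Eilenberg comodule $C(\fraktg)$, using the zig-zag of quasi-isomorphisms of comodules (over correspondingly quasi-isomorphic models of the framed cooperad) recalled in the introduction. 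Justifying via the standard (co)fibrancy arguments for deformation complexes that this construction is invariant under such quasi-isomorphisms, one obtains $\Def(\pdBVGraphsg)\simeq\Def(C(\fraktg))$ and transports the whole computation to a purely Lie-theoretic model.

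The core of the argument is the explicit computation of $H^{\leq 1}(\Def(C(\fraktg)))$. Since $C(\fraktg)$ is the Chevalley--Eilenberg complex of the operadic $\frakt_\bv$-module $\fraktg$, the comodule deformation complex unwinds, through the cobar description of the cooperad $C(\frakt_\bv)$, into a complex assembled from (co)derivations of $\fraktg$. The decisive structural input is that $\fraktg$ is generated in arities $1$ and $2$, so that a low-degree deformation cocycle is determined by its restriction to arity $2$; concretely, the part of $\Def(C(\fraktg))$ computing $H^{0}(\spp(H^*)\ltimes\GCg)$ is governed by $\Der(\fraktg(2))$. I would then match the cocycle condition -- compatibility with the operadic insertions and with Bezrukavnikov's defining relations of $\fraktg$ -- with the conditions cutting out the Lie subalgebra $Z_{(g)}\subset\Der(\fraktg(2))$ of \eqref{eq:defZg}, and match the image of the differential with the Lie ideal $B_{(g)}$ of deformations arising from inner symmetries and from the comodule structure maps. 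This yields $H^{0}(\spp(H^*)\ltimes\GCg)\cong Z_{(g)}/B_{(g)}=\frakr_{(g)}$.

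For the negative degrees I would show that the $\fraktg$-part of the deformation complex contributes nothing in degrees $\leq 0$ of $\Def(C(\fraktg))$: there are no (co)derivations of the required negative internal degree beyond the trivial ones, which follows from the same arity analysis combined with the connectivity of $\fraktg$. Consequently the only surviving contribution in these degrees is that of the finite-dimensional graded Lie algebra $\spp(H^*)$, whose negative part is read off directly from the Poincaré pairing on $H^*(\Sigma_g)$: it is concentrated in degree $-1$ and equals $\spp_{-1}(H^*)=\K[1]\oplus\K[1]$ precisely when $g=1$, and vanishes for $g\geq 2$ and in all lower degrees. Separating the $\spp(H^*)$ and $\GCg$ contributions then simultaneously gives the stated value of $H^{-1}$, the vanishing of $H^{i}$ for $i<-1$, and -- after stripping off the $\spp(H^*)$ summand -- the acyclicity $H^{i}(\GCg)=0$ for all $i<0$.

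The main obstacle is the middle step: reading off $Z_{(g)}$ and $B_{(g)}$ from the deformation cocycles and coboundaries. This requires a careful passage through the cooperadic comodule structure of $C(\fraktg)$ over $C(\frakt_\bv)$, tracking signs and the Chevalley--Eilenberg differential, in order to verify that the abstract cocycle equations coincide on the nose with the generators-and-relations description of $Z_{(g)}$, and that no spurious classes survive. I expect the verification that the image of the differential is \emph{exactly} $B_{(g)}$ -- neither larger nor smaller -- to be the most delicate point, as it intertwines the genuinely operadic deformations with the internal symmetries of $\fraktg$.
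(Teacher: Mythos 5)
Your overall skeleton (Theorem \ref{thm:defGC}, then transport to the Bezrukavnikov model, then match cocycles with $Z_{(g)}$ and coboundaries with $B_{(g)}$) does follow the paper, but the treatment of degree $-1$ contains a genuine error that would sink the proof. You assert that $\spp_{-1}(H^*)$ ``vanishes for $g\geq 2$'' and can be read off from the Poincar\'e pairing; in fact $\spp_{-1}(H^*)$ is $2g$-dimensional for \emph{every} $g\geq 1$, spanned by $\omega\partial_{\alpha_i}+\beta_i\partial_1$ and $\omega\partial_{\beta_i}-\alpha_i\partial_1$. The vanishing of $H^{-1}(\spp(H^*)\ltimes\GCg)$ for $g\geq 2$ is a cohomological statement resting on two inputs your proposal has no counterpart for: (i) for $g\geq 2$ these degree $-1$ elements are not cycles for the \emph{twisted} differential, since $\sigma.z\neq 0$ (this is Lemma \ref{lemma:extensioncommutes}), so the extension does not split off an $\spp$ summand in degree $-1$ and one is left with $H^{-1}(\spp(H^*)\ltimes\GCg)=H^{-1}(\GCg)$; and (ii) one must then prove $H^{-1}(\GCg)=0$, which via the auxiliary complex $\Qc=\Mogg\hotimes_S\fraktg$ amounts to $H^0(\Qc)=0$. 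Contrary to your claim that the $\fraktg$-part ``contributes nothing'' in these degrees by connectivity, the relevant associated graded in degree zero is all of $\fraktg(1)$; killing it is exactly where the paper needs Lemma \ref{lemma:center} --- that the center of $\fraktg(1)$ is $\K\, t_{11}$ for $g\geq 2$, proved by a separate spectral-sequence argument in a free associative algebra --- combined with the observation that $t_{11}$ fails the arity-two constraint because $[t_{12},x_i^{(1)}]\neq 0$. For $g=1$ the same analysis is what produces the two surviving classes $x^{(1)},y^{(1)}$ (as $\frakt_{(1)}(1)$ is abelian), matching $\spp_{-1}(H^*)$. None of this follows from inspecting the pairing on $H^*$.

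Two secondary points. First, your appeal to ``standard (co)fibrancy arguments'' for invariance of the deformation complex is not available here: the paper explicitly leaves the homotopy-theoretic interpretation of its comodule deformation complex open, and instead proves the needed comparisons by hand --- Proposition \ref{prop:koszuldef} replaces $\Omega\Mog$ by the Koszul module $\Mogg$, and the zig-zag of Lemma \ref{lemma:zigzagDef} changes only the \emph{source} ($\pdBVGraphsg\leftrightarrow C(\fraktg)$) while the target stays $\Mog$ (resp.\ $\Mogg$), so the computation is never ``purely Lie-theoretic'' in the sense you describe. Second, your claim that a low-degree cocycle is determined by its restriction to arity $2$ is imprecise: in the actual computation (Lemma \ref{lem:lemma4}) degree-one classes in $\Qc$ are determined by their components in arities one through \emph{three}, and the defining relations of $Z_{(g)}$ live in $\fraktg(3)$; moreover, passing from the deformation complex to $\fraktg$ requires the Harrison quasi-isomorphism $T:\Harr C^c(\fraktg)\to\fraktg$ and a cascade of filtrations on $\Com^c\circ H\circ\Lie\{-1\}\hotimes_S\fraktg$, not merely the fact that $\fraktg$ is generated in low arity. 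You rightly flag the identification of cocycles with $Z_{(g)}$ and coboundaries with $B_{(g)}$ as the delicate point, but as written the proposal supplies no mechanism for it, and the degree $-1$ step fails outright.
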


The cohomology in positive degrees is still unknown. We call $\frakr_{(g)}$ the Grothendieck-Teichm\"uller Lie algebra of a surfaces of genus $g$. This is inspired by Willwacher's seminal paper \cite{Willwacher15} on the connection between the Grothendieck-Teichm\"uller Lie algebra, Kontsevich's graph complex and homotopy derivations of the Gerstenhaber operad. Indeed, in that case, elements of the graph complex consist of linear combinations of similar decoration-free diagrams. In degree zero the cohomology is isomorphic to Drinfeld's Grothendieck-Teichm\"uller Lie algebra, and up to one class, this corresponds to the degree zero cohomology of homotopy derivations of the Gerstenhaber operad (which is itself a model for configuration spaces of points in the plane). Additionally, observe that Enriquez generalized Drinfeld's construction of the Grothendieck-Teichm\"uller Lie algebra in the case of $g=1$ \cite{Enriquez14}. Within our approach, notice that $\Sigma_1$ is parallelizable and the framing is thus not necessary. Although the combinatorial models (and the notation, as will become apparent below) need to be modified slightly, the results still carry over accordingly. More importantly though, we recover Enriquez' version of the elliptic Grothendieck-Teichm\"uller Lie algebra $\rell$ (see equation \eqref{eq:defrell} for its explicit definition) in this case.

\begin{cor}(Corollary \ref{cor:GCrell})
For $g=1$, the zeroth cohomology of the graph complex is isomorphic to the elliptic Grothendieck-Teichm\"uller Lie algebra, i.e.
\[
H^0(\spp(H^*)\rtimes \GC_{(1)}^{\minitadp})\cong   \rell.
\]
\end{cor}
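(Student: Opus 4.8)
The plan is to deduce the corollary from the genus-one case of Theorem~\ref{thm:GCgvsp}, so that its essential content becomes an explicit isomorphism of presentations $\frakr_{(1)}\cong\rell$. Since $\Sigma_1$ is parallelizable, the framing is inessential and the Batalin--Vilkovisky data degenerate; the appropriate unframed model is the tadpole-admitting complex $\GC_{(1)}^{\minitadp}$ appearing in the statement. I would first verify that the proof of Theorem~\ref{thm:GCgvsp} carries over to this setting, the only change being the bookkeeping of self-loops (which is the origin of the superscript tadpole), so that at $g=1$ one obtains
\[
H^0(\spp(H^*)\rtimes \GC_{(1)}^{\minitadp})\cong \frakr_{(1)}=Z_{(1)}/B_{(1)}.
\]
Here $H^1(\Sigma_1)$ is two-dimensional and symplectic, whence $\spp(H^*)\cong\mathfrak{sl}_2$, matching the $\mathfrak{sl}_2$ that governs Enriquez' elliptic theory.

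Next I would unwind the definition \eqref{eq:defZg} at $g=1$. In this case $\frakt_{(1)}(2)$ is the genus-one Drinfeld--Kohno Lie algebra on two points, generated by $x^{(i)},y^{(i)}$ for $i=1,2$ together with $t_{12}$, and both $Z_{(1)}\subset\Der(\frakt_{(1)}(2))$ and its ideal $B_{(1)}$ specialize to explicit systems of generators and relations among derivations preserving $t_{12}$ and the genus-one mixed relations. In parallel I would expand Enriquez' definition of $\rell$ from \eqref{eq:defrell}, which is likewise presented as a Lie algebra of derivations of the elliptic braid Lie algebra subject to prescribed compatibility conditions.

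The remaining and principal step is to match these two presentations. I expect to construct a dictionary sending the generators of $Z_{(1)}/B_{(1)}$ to Enriquez' generators and to check that the defining relations coincide term by term, with the $\mathfrak{sl}_2\cong\spp(H^*)$-action on the graph side corresponding to the $\mathfrak{sl}_2$-action in Enriquez' construction. The main obstacle lies precisely here: one must confirm that the conditions cutting out $Z_{(1)}$ inside $\Der(\frakt_{(1)}(2))$, after passing to the quotient by $B_{(1)}$, reproduce exactly the elliptic Grothendieck--Teichm\"uller conditions and neither a strictly larger nor a strictly smaller Lie algebra. Since both sides are graded, I would compare them degree by degree as a consistency check, using this grading to localize and settle any discrepancy in the matching of relations.
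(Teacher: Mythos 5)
There is a genuine gap, and it is the specialization itself. You identify $H^0(\spp(H^*)\rtimes \GC_{(1)}^{\minitadp})$ with $\frakr_{(1)}=Z_{(1)}/B_{(1)}$, treating the tadpole superscript as mere ``bookkeeping of self-loops'' in the proof of Theorem~\ref{thm:GCgvsp}. But in the paper the tadpole-admitting complex $\GC_{(1)}^{\minitadp}$ is tied to the \emph{non-framed} model $(\pdGraphsone,\pdGraphs)\to(\Mo,\e_2^c)$, and the Lie-theoretic side of the computation then uses $\fraktone$ — the genus-one Lie algebra \emph{without} the central generators $t_{ii}$ — rather than $\frakt_{(1)}$. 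The correct output is therefore $\Zone/\Bone$, not $Z_{(1)}/B_{(1)}$. This is not a cosmetic difference: in the non-framed case $\Bone=0$ (because $\fraktone(1)=\K x\oplus\K y$ is abelian and $[v^{12},x^{(1)}]=[v^{12},y^{(1)}]=0$ for $v=x,y$), whereas the framed $B_{(1)}$ is one-dimensional, generated by $v=t_{11}$, i.e.\ by $V=([t_{12},x^{(1)}],[t_{12},y^{(1)}])$; correspondingly the framed graph complex computes $H^0(\GC_{(1)})\cong Z_{(1)}^0/B_{(1)}$, which the paper does \emph{not} identify with $\frakr_{ell}$. Your route would thus be comparing $\rell$ against the wrong Lie algebra, and extra central classes involving $t_{11}$ would obstruct the intended matching.

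Once the specialization is corrected, your ``principal step'' — building a generator-and-relation dictionary and checking relations term by term — is also misdirected, because no such dictionary is needed: $\Zone$ and $\rell$ are cut out of the \emph{same} ambient space $\fraktone(2)^{\times 2}$ by equations that coincide after one elementary observation. Namely, on the antisymmetric part ($u_a=u_b=0$), the pair of conditions $U_c^{1,2}+U_c^{2,1}=0$ and $U_c^{12,3}-U_c^{1,23}-U_c^{2,13}=0$ from \eqref{eq:defZg} is equivalent, using $U_c^{12,3}=-U_c^{3,12}$, to Enriquez' single cyclic relation $U_c^{1,23}+U_c^{2,13}+U_c^{3,12}=0$ from \eqref{eq:defrell}, while the remaining four equations agree verbatim at $g=1$; and the elements with $u\neq 0$ are exactly spanned by $H=(x^{(1)},-y^{(1)})$, $E=(0,x^{(1)})$, $F=(y^{(1)},0)$, yielding $\Zone\cong\sll_2\ltimes\frakr_{ell}=\rell$ (Lemma~\ref{lemma:decompositionframed}). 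Your instinct to match $\spp_0(H^*)\cong\sll_2$ with Enriquez' $\sll_2$ is right and is indeed the last ingredient, via Lemma~\ref{lemma:extensioncommutes} and the explicit correspondence $a\partial_b\mapsto F$, $b\partial_a\mapsto E$, $a\partial_a-b\partial_b\mapsto H$ through the zig-zag — but it must be anchored to $\fraktone$, not to $\frakt_{(1)}$.
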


\begin{rem}
Notice that the analogy above was announced in some form by Campos and Willwacher's in (\cite{CW}, Section 7). Indeed they suggest that the Grothendieck-Teichm\"uller Lie algebra of a surface $\Sigma_g$ should be defined as the homotopy derivations of a real model of the pair $(\FFM_{\Sigma_g},\FFM_2)$. Moreover, using their techniques, this definition extends to higher-dimensional manifolds (see also \cite{FW21}). Our text is a first step in the direction sketched by Campos and Willwacher. 
\end{rem}

\begin{rem}
In his recent work \cite{Gonzalez20}, Gonzalez defines the Grothendieck-Teichm\"uller group for a surface of genus $g$ by means of operad theory. Whether the corresponding Lie algebra is isomorphic to $H^0(\spp(H^*)\rtimes \GCg)$ (and thus our version of the Grothendieck-Teichm\"uller Lie algebra) is an open problem for $g\geq 2$. It holds in genus one, since in this case, his result also coincides with Enriquez' definition of the Grothendieck-Teichm\"uller group.
\end{rem}

\begin{rem}
While working on this text, we learned from Benjamin Enriquez that his approach in genus one can be extended to surfaces of higher genus - thus providing a third way of defining the Grothendieck-Teichm\"uller Lie algebra for surfaces. Indeed, he had already found the explicit formulas, and he generously shared his important insights with us, also in form of an unpublished manuscript.
\end{rem}

\subsection{Structure of the paper}
In Section \ref{sec:defforcoops}, we recall the deformation complex in the cooperadic case, as introduced by Fresse and Willwacher. This construction is generalized to cooperadic comodules in Section \ref{sec:defformod}. Next, we recall two combinatorial models for the framed configurations of points on surfaces due to Idrissi, Campos and Willwacher, as well as the graph complexes $\GCg$ and its ``hairy'' version $\HGCg$. At this point, we have all the tools to formulate Theorem \ref{thm:Theoremone}. Its proof makes use of a first simplification of the deformation complex, and the relation of $\GCg$ and $\HGCg$ (up to $\spp(H^*)$ and a class in degree three, they are quasi-isomorphic). In Section \ref{sec:tg}, we introduce the Lie algebra $\fraktg(n)$ and address the operadic $\frakt_\bv$-module structure the collection $\fraktg$ carries. As mentioned in the introduction, its Chevalley-Eilenberg complex $C(\fraktg)$ forms itself a model for the framed configurations of points on surfaces - this is the main result of Section \ref{sec:tg}, and indeed simply the operadic upgrade of Bezrukavnikov's work. In Section \ref{sec:grtg}, we recall Enriquez' work in genus one and define the Grothendieck-Teichm\"uller Lie algebra of a surface of genus $g$. In the last two sections, we prove Theorem \ref{thm:Theoremtwo}, and use this result to deduce further properties of the graph complex and $\frakr_{(g)}$.

\subsection{Acknowledgements}
I am very grateful and highly indebted to Thomas Willwacher for suggesting this research topic and for shaping most of my understanding of it over the past couple of years. In particular, he provided the main arguments for the proof of Theorem \ref{thm:Theoremone} by noting that the quasi-isomorphism should factor through the ``hairy'' graph complex. I heartily thank Benjamin Enriquez for many interesting discussions, for generously sharing his unpublished manuscript containing some of the results appearing in this text and for his warm encouragement. I am grateful to Anton Alekseev and Florian Naef for numerous helpful suggestions and their patient support. In particular, I am indebted to Florian Naef for providing the argument for the proof of Lemma \ref{lemma:center}. Furthermore, it is a pleasure to thank Ricardo Campos, Stefano D'Alesio, Julien Ducoulombier, Najib Idrissi and Elise Raphael for many useful exchanges. This work was supported by the grants no. 340243 MODFLAT and no. 678156 GRAPHCPX of the European Research Council (ERC).

\section{Preliminaries}

\subsection{Notations and conventions}

We work over a field $\K$ of characteristic zero. For a differential graded (dg) vector space $V$, the $r$-fold desuspension is denoted by $V[r]$. The desuspension operator $s$ is of degree $-1$, i.e. for a homogeneous element $x\in V$, $sx\in V[1]$ and $|sx|=|x|-1$. We work with cohomological conventions, i.e. all differentials are of degree $+1$. If each graded component of a graded vector space $V$ is finite dimensional, we say that $V$ is of finite type.
\\
\\
Let $(\frakg,d)$ be a dg Lie algebra acting on a dg vector space $(V,d_V)$. A Maurer-Cartan element is a degree one element $m\in g$ satisfying the Maurer-Cartan equation
\[
dm+\frac12[m,m]=0.
\]
We may form the twisted dg Lie algebra $\frakg^m$ equipped with the same bracket as $\frakg$ but with twisted differential $d_m=d+[m,-]$. Similarly, we define the twisted dg vector space $V^m$ by twisting the differential by the action of $m$, i.e. $V^m=(V,d_V+m\cdot -)$. It comes equipped with a dg Lie algebra action of $\frakg^m$.
\\
\\
Our notational conventions regarding symmetric collections (also referred to as $S$-modules), operads and operadic modules mostly follow the textbook by Loday and Vallette \cite{LodayVallette12}. For symmetric collections $M$ and $N$, their composite $M\circ N$ is the symmetric collection
\[
(M\circ N)(n)=\bigoplus\limits_{k\geq 0} M(k) \otimes \left(\bigoplus_{i_1+\dots +i_k=n} N(i_1)\otimes \dots\otimes N(i_k)\otimes_{S_{i_1}\times \dots \times S_{i_k}} \K[S_n] \right).
\]
The equivalence classes in $(M\circ N)(n)$ are thus represented by elements which we denote by
\[
(\mu;\nu_1,\dots,\nu_k;\sigma)
\]
for $\mu \in M(k)$, $\nu_1,\dots,\nu_k$ with $\nu_j\in N(i_j)$ and a shuffle $\sigma \in \mathrm{Sh}(i_1,\dots,i_k)$. When $\sigma=\id$, we omit it in the notation above.

\subsection{Hopf operads and cooperads}

A dg Hopf operad is an operad $\op P$ in the category of dg (counitary coaugmented) cocommutative coalgebras. Thus, it consists of a collection of dg cocommutative coalgebras $\op P(r)$ for $r\geq 1$, each equipped with an action of the symmetric group $S_r$, an identity element $\id \in \op P(1)$ and operadic composition morphisms $\gamma_{\op P}:\op P \circ \op P \rightarrow \op P$ satisfying the natural equivariance, unit and associativity conditions. Moreover, these morphisms all describe morphisms of dg cocommutative coalgebras (see also \cite{Fresse1}, I. Chapter 3). 

For the partial operadic composition morphisms we adopt the notation $\circ_i:\op P(k)\otimes \op P(l) \rightarrow \op P(k+l-1)$ for $1\leq i \leq k$, and $\mu\circ_i \nu := \gamma_{\op P} (\mu;\id,\dots,\id,\nu,\id,\dots,\id)$. It is sometimes more convenient to index the underlying symmetric collection by finite sets $S$, and to write $\op P(S)$ for the space in arity $|S|$, the cardinality of the set $S$. The partial operadic composition formula then reads
\[
\circ_*:\op P(S')\otimes \op P (S'')\rightarrow \op P ((S' \setminus \{*\}) \sqcup S'')
\]
for arbitrary finite sets $S'$ and $S''$. Finally, notice that the composition morphisms can be used to define treewise composition operations
\[
\gamma_T:\bigotimes\limits_{v\in VT} \op P(\text{star}(v)) \rightarrow \op P(S)
\]
where we take a tensor product over the vertex set $VT$ of a rooted tree $T$ whose leafs are labelled by $S$. Here $\text{star}(v)$ stands for the set of incoming edges at the vertex $v$. The notation is borrowed from (\cite{FTW17}, Section 0.2).

A dg Hopf cooperad $\op C$ is a collection of (unitary augmented) dg commutative algebras $\op C(r)$ for $r\geq 1$, each equipped with an $S_r$-action compatible with the algebra structure. Moreover, we have cooperadic cocompositions $\Delta_{\op C}:\op C\rightarrow \op C \circ \op C$ satisfying the relations dual to ones of operads, and which are morphisms of dg commutative algebras. 

We denote the partial cocomposition morphisms by $\Delta_i:\op C(k+l-1)\rightarrow \op C(k)\otimes \op C(l)$, for $i=1,\dots,k$. These also describe morphisms of dg commutative algebras with the right hand side equipped with the diagonal algebra structure. As in the operad case, it is sometimes useful to index the symmetric collection underlying a cooperad by finite sets. In that case, we write
\[
\Delta_*: \op C(S)\rightarrow \op C(S')\otimes C(S'')
\]
for the partial cooperadic cocompositions. Moreover, these may be used to describe the treewise cocomposition operations
\[
\Delta_T: \op C(S) \rightarrow \bigotimes\limits_{v\in VT} \op C(\text{star}(v))=:\op F_T \op C
\]
defined for rooted trees whose leafs are indexed by the finite set $S$.

%

\subsection{(Co)operadic Hopf (co)modules}\label{sec:comodules}

A right operadic dg Hopf module over a Hopf operad $\op P$ consists of a collection of (counitary coaugmented) dg cocommutative coalgebras, $\op X(r)$ for $r\geq 1$, each equipped with an action of the symmetric group $S_r$, and equivariant composition morphisms $\op X \circ \op P\rightarrow \op X$ satisfying the natural compatibility relations with the operad structure. These morphisms are moreover required to be compatible with the respective coalgebra structures. We denote the corresponding partial composition morphism $\circ_i:\op X(k)\otimes \op P(l)\rightarrow \op X(k+l-1)$ for $1\leq i \leq k$ which are also subject to the natural compatibility conditions.

Dually, a right cooperadic dg Hopf comodule $\op M$ over a Hopf operad $\op C$ consists of a collection of (unitary augmented) dg commutative algebras $\op M(r)$ for $r\geq 1$, each equipped with an $S_r$-action compatible with the algebra structure. Moreover, it comes with cocomposition morphisms $\Delta_{\op M}:\op M\rightarrow \op M\circ \op C$ which satisfy the relations dual to the ones of operadic modules, and are morphisms of dg commutative algebras. The corresponding partial cocomposition morphisms are denoted by $\Delta_i:\op M(k+l-1)\rightarrow \op M(k)\otimes \op C(l)$. Moreover, these may be used to describe the treewise cocompositon operations
\[
\Delta_T: \op M(S) \rightarrow \op M (\text{star}(v_0))\otimes \bigotimes\limits_{v\in VT\setminus \{v_0\}}  \op C(\text{star}(v))=:\op F_T(\op M;\op C)
\]
shaped after rooted trees with root $v_0$ whose leafs are indexed by the finite set $S$.

\subsection{Bicomodules over cooperads and bimodules over operads}\label{sec:bicom}
Fix a symmetric collection $ \Xi=\{ \Xi(r)\}_{r\geq 1}$ in the category of graded vector spaces. We recall the natural extension of a right (co)operadic (co)module over a (co)operad. Given an operad $\op P$ in graded vector spaces, we call $\Xi$ a bimodule over the operad $\op P$ if we have left and right equivariant partial composition morphisms
\begin{align*}
\circ_i:\op P(k) \otimes \Xi(l) &\rightarrow \Xi(k+l-1)\\
\circ_i:\Xi(k)\otimes \op P(l)&\rightarrow \Xi(k+l-1)
\end{align*}
satisfying the natural compatibility relations with the operadic structure. 

Finally, given a cooperad $\op C$ in the category of graded vector spaces, $\Xi$ defines a bicomodule over the cooperad $\op C$ if we have left and right partial cocomposition morphisms
\begin{align*}
\Delta_i: \Xi(k+l-1)&\rightarrow \Xi(k)\otimes \op C(l)\\
\Delta_i:\Xi(k+l-1)&\rightarrow \op C(k) \otimes \Xi(l)
\end{align*}
satisfying the natural extension of equivariance, counit, and coassociativity conditions of the cocompositions of cooperads.

\subsection{Symmetric bimodules over Hopf collections}

A symmetric collection $\op P=\{\op P (r)\}_{r\geq 1}$ in the category of graded commutative algebras is called a Hopf collection. Given such a symmetric Hopf collection $\op P$, we call a symmetric collection $\Xi$ in the category of graded vector spaces a symmetric bimodule over $\op P$ if on each $\Xi(r)$ we have the structure of a bimodule over the commutative algebra $\op P(r)$, i.e. there are equivariant symmetric right and left product operations $-\wedge-: \Xi(r) \otimes \op P(r) \rightarrow \Xi(r)$ and $-\wedge-: \op P(r) \otimes \Xi(r) \rightarrow  \Xi(r)$ for all $r\geq 1$ which satisfy the  usual unit and associativity relations for modules over commutative algebras (see also \cite{FW20}, Definition 1.1.5).

\subsection{Cobar construction for cooperads}

Let $\op C$ be a coaugmented cooperad in the category of graded vector spaces. The \emph{unreduced} cobar construction of $\op C$ is the quasi-free operad
\[
\Omega^{\mathrm{unred}} \op C= (\op {F}(\op C [-1]),d_{\Omega\op C})
\]
where $d_{\Omega\op C}$ is the unique operadic derivation of the free operad extending the unreduced infinitesimal decomposition map $\Delta_{(1)}:\op C\rightarrow \op C \circ_{(1)} \op C$ (see \cite{LodayVallette12}, Section 6.1.3). The infinitesimal decomposition can be viewed as the decomposition of an element in $\op C$ into two parts. Notice also that we adopt Loday and Vallette's sign convention and within the definition of $d_{\Omega\op C}$ we use $\Delta_s: \K s^{-1}\rightarrow \K s^{-1}\otimes \K s^{-1}$, $\Delta_s(s^{-1}):=-s^{-1}\otimes s^{-1}$ (see \cite{LodayVallette12}, Section 6.5.5). Moreover, we denote by $\iota:\op C \rightarrow \Omegau \op C$ the natural (universal twisting) morphism given by the composition
\[
\iota:\op C \rightarrow \op C[-1] \hookrightarrow \op {F}(\op C[-1]).
\]
Its \emph{reduced} version $\Omega \op C$ (to which we refer to as the cobar construction) is defined as
\[
\Omega \op C = (\mathcal{F}(\overline{ \op C}[-1]) , d_{\Omega\op C})
\]
where $\overline {\op C}$ denotes the coaugmentation coideal of $\op C$. Here, $d_{\Omega\op C}$ extends the reduced infinitesimal decomposition map $\Delta_{(1)}: \overline{\op C} \rightarrow \overline{\op C} \circ_{(1)} \overline{\op C}$. In this case, we have
\[
\iota:\overline{\op C} \rightarrow \overline{\op C}[-1] \hookrightarrow \op {F}(\overline{\op C} [-1]).
\]
\begin{rem}
We will often use that the free operad $\op F \op C$ allows an expansion in terms of trees, i.e.
\[
\op F \op C= \bigoplus\limits_{T} \op F_T \op C
\]
where the sum runs over rooted trees, and the summands $\op F_T \op C$ are treewise tensor products indexed by the corresponding vertex set.
\end{rem}

\subsection{Symmetric bimodule structure on the cobar construction for Hopf cooperads}

For a Hopf cooperad $\op C$ both versions of the cobar construction define symmetric bimodules over the Hopf collection $\op C$ (\cite{FW20}, Proposition 1.3.3). For instance the right action on $(\Omega\op C) (r)$ is
\[
(\Omega \op C)(r)\otimes \op C(r)\xrightarrow{1\otimes \Delta_{\op C}} (\Omega\op C)(r)\otimes (\mathcal{F} \op C)(r)\xrightarrow{\mathcal{F}(\mu)} (\Omega\op C)(r)
\]
i.e. it is given by applying the cooperadic cocomposition on $\op C(r)$ before using the commutative algebra structure on each vertex in the tensor product indexed by trees. 

\subsection{Cobar construction for cooperadic comodules}

The cobar construction of a right cooperadic comodule $\op M$ over a cooperad $\op C$ in the category of dg vector spaces is the right operadic $\Omega \op C$-module $\Omega \op M$ whose underlying symmetric sequence is
\[
\Omega \op M=\op M \circ \Omega\op C
\]
and for which the differential $d_{\Omega\op M}$ is the unique derivation extending 
\[
\op M\xrightarrow{\Delta_{\op M}} \op M\circ \op C\xrightarrow {\id_{\op M}\circ \iota} \op M\circ \Omega(\op C)
\]
where $\Delta_{\op M}$ denotes the reduced infinitesimal decomposition map. As such it is given by the formula (see for instance \cite{LodayVallette12}, Prop.6.3.19)
\[
d_{\Omega\op M}=d_{\op M}\circ' \id_{\Omega\op C}+\id_{\op M}\circ' d_{\Omega\op C}+(\id_{\op M}\circ  \gamma_{\Omega \op C})((\id_{\op M}\circ \iota)\circ \Delta_{\op M} )\circ \id_{\Omega \op C}).
\]

The unreduced version $\Omegau \op M=\op M \circ \Omegau \op C$ is defined analogously by using the unreduced infinitesimal decomposition map instead of the reduced one.

\subsection{Symmetric bimodule structure on the cobar construction for Hopf comodules}\label{sec:symmbim}

For a Hopf cooperadic comodule $\op M$ over a Hopf cooperad $\op C$ the cobar constructions $\Omega\op M$ and $\Omegau\op M$ form symmetric bimodules over the Hopf collection $\op M$. For instance the left product is  given by 
\[
-\wedge -: \op M(r)\otimes(\Omega \op M)(r)\xrightarrow{ \Delta_{\op M}\otimes 1} \op F(\op M ; \op C)(r)\otimes (\Omega\op M)(r)\xrightarrow{\mathcal{F}(\mu)} (\Omega\op M)(r)
\]
where
\[
\op F (\op M;\op C)=\bigoplus\limits_T \op F_T (\op M; \op C)
\]
(see Section \ref{sec:comodules}) and $\mathcal F(\mu)$ denotes the commutative product applied to each vertex in the tensor product indexed by the vertex set of the corresponding tree decomposition. We refer to (\cite{FW20}, Proposition 1.3.3) for the proof. It generalizes to the cooperadic comodule case.

\subsection{Complete Hopf operads and filtered Hopf cooperads}\label{sec:filtered}

We will mostly work within the category of complete dg vector spaces. The objects of this category are dg vector spaces, say $V$, equipped with a descending filtration $V=F_0 V\supset F_1 V\supset \dots \supset F_s V\supset \dots$ of subcomplexes such that $V=\lim_s V/F_s V$. For the morphisms of this category we take filtration preserving morphisms. In this case, the completed tensor product $V\hotimes W=\lim_s V\otimes W /F_s(V\otimes W)$, where $F_s(V\otimes W)=\sum_{k+l=s} F_k V\otimes F_l W$ defines a symmetric monoidal structure. We refer to (\cite{FresseBook17}, II. Chapter 13) for a more extensive study of this category.

Indeed some of the Hopf operads we will consider live in the category of (counitary coaugmented cocommutative) coalgebras in complete dg vector spaces. We thus assume that in each arity $\op P(r)$ is a counitary coaugmented cocommutative coalgebra, with coproduct $\Delta:\op P(r)\rightarrow \op P(r)\hotimes \op P(r)$ landing in the completed tensor product. Moreover, the composition morphisms are also defined on the completed tensor product, i.e. $\circ_i: \op P(k)\hotimes \op P(l)\rightarrow \op P(k+l-1)$. Furthermore, we assume the our operads come with a coaugmentation $\eta:\Com \rightarrow \op P$, where we consider the commutative operad as a complete Hopf operad by defining $F_0\Com(r) =\Com(r)=\K$, $F_1\Com(r)=0$ for all $r\geq 1$. Operads of this type are called coaugmented complete Hopf operads. 

In the dual case we consider filtered Hopf cooperads. In addition to being a Hopf cooperad, we require that in each arity, the dg commutative algebras $\op C(r)$ are equipped with an increasing filtration $0=F^{-1}\op C(r)\subset \dots \subset F^{s}\op C(r) \subset \dots \subset \colim_s F^s \op C(r) =\op C(r)$ with the algebra unit $1\in F^0\op C(r)$. The filtration is moreover compatible with the product, i.e. $F^p \op C(r)\cdot F^q \op C(r) \subset F^{p+q} \op C(r)$ for any $p,q\geq 0$, and with the cooperadic cocompositions, that is $\circ_i^* (F^s \op C(k+l-1))\subset \sum_{p+q=s} F^p\op C(k) \otimes F^q \op C(l)$. Similarly as above, we may assume that the cooperad is endowed with an augmentation $\eta^*:\Com^c\rightarrow \op C$, where this time we regard $\Com^c$ as the filtered Hopf cooperad endowed with the trivial filtration ($F^0\Com^c(r)=\K$, for all $r\geq 1$). In this case, we refer to the cooperad as a augmented filtered Hopf cooperad.

By the following result of Fresse and Willwacher, the notions above are dual to each other.

\begin{prop}(\cite{FW20}, Proposition 2.1.4)\label{prop:filtered}
Let $\op C$ be an augmented filtered Hopf cooperad. If the subquotients $F^s \op C(r)/F^{s-1} \op C(r)$ are vector spaces of finite rank degree-wise, the collection $\op C^c=\{\op C^c(r)\}_{r\geq 1}$ consisting of the linear duals in each arity inherits the structure of a coaugmented complete Hopf operad. The complete filtration on $\op C^c(r)$ is given by $F_s  \ \op C^c(r) := \ker ( \op C^c(r) \rightarrow (F^{s-1}\op C(r))^c)$.
\end{prop}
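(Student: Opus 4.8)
The plan is to perform the construction arity by arity and reduce every piece of structure to the behaviour of linear duality on increasingly filtered complexes whose subquotients are degree-wise finite dimensional. The single input I would isolate first is the following duality lemma. Let $V=\colim_s F^sV$ be a dg vector space with an exhaustive increasing filtration by subcomplexes, $F^{-1}V=0$, whose subquotients $F^sV/F^{s-1}V$ are degree-wise finite dimensional; then by induction each $F^sV$ is itself degree-wise finite dimensional. Dualizing the tower of inclusions gives $V^*=\lim_s (F^sV)^*$, and with $F_sV^*:=\ker\bigl(V^*\to (F^{s-1}V)^*\bigr)$ one has $V^*/F_sV^*\cong (F^{s-1}V)^*$ (any functional on a subspace extends over a field), so $V^*=\lim_s V^*/F_sV^*$ is complete with exactly the stated filtration. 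The essential point is the monoidal statement: for two such complexes $V,W$ the canonical map
\[
V^*\hotimes W^*\to (V\otimes W)^*,
\]
where $V\otimes W$ carries the convolution filtration $F^s(V\otimes W)=\sum_{p+q=s}F^pV\otimes F^qW$, is an isomorphism of complete filtered complexes.

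I would prove this monoidal isomorphism by passing to associated graded. On the target, the dual filtration satisfies $\gr_s\bigl((V\otimes W)^*\bigr)\cong\bigl(\gr^s(V\otimes W)\bigr)^*$ degree-wise, and $\gr^s(V\otimes W)=\bigoplus_{p+q=s}\gr^pV\otimes\gr^qW$; on the source, $\gr_s(V^*\hotimes W^*)=\bigoplus_{p+q=s}\gr_pV^*\otimes\gr_qW^*$ with $\gr_pV^*\cong(\gr^pV)^*$. Since each $\gr^pV$ is degree-wise finite dimensional, dualization is exact and monoidal here, and the canonical map induces a degree-wise isomorphism on associated graded. As both sides are complete and the filtrations are exhaustive and Hausdorff, a graded isomorphism lifts to an isomorphism, proving the lemma. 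This is the only step that genuinely uses the finite-rank hypothesis: without it the algebraic dual of a tensor product is strictly larger than the completed tensor product of the duals.

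Granting the lemma, I would transport each structure map on $\op C$ by applying $(-)^*$ in each arity. Filtration compatibility of the product, $F^p\op C(r)\cdot F^q\op C(r)\subset F^{p+q}\op C(r)$, guarantees that the dual of $\mu_r\colon\op C(r)\otimes\op C(r)\to\op C(r)$ is a filtered map $\op C^c(r)\to(\op C(r)\otimes\op C(r))^*\cong\op C^c(r)\hotimes\op C^c(r)$, i.e.\ a coproduct landing in the \emph{completed} tensor product; associativity, commutativity and unitality of $\mu_r$ dualize to coassociativity, cocommutativity and counitality. The stated compatibility $\circ_i^*\bigl(F^s\op C(k+l-1)\bigr)\subset\sum_{p+q=s}F^p\op C(k)\otimes F^q\op C(l)$ makes the cocompositions filtered, so their duals give operadic compositions $\circ_i\colon\op C^c(k)\hotimes\op C^c(l)\cong(\op C(k)\otimes\op C(l))^*\to\op C^c(k+l-1)$ preserving the complete filtrations; coassociativity, counit and equivariance of the cocompositions become the associativity, unit and equivariance axioms of the operad. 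The Hopf compatibility passes over as well: each $\Delta_i$ being a morphism of commutative algebras dualizes, through the monoidal isomorphism, to $\circ_i$ being a morphism of cocommutative coalgebras, so $\op C^c$ is a Hopf operad. Finally the (co)unit and (co)augmentation data transfer with the evident reversal of arrows: the algebra augmentations $\op C(r)\to\K$ dualize to the coaugmentation $\eta\colon\Com\to\op C^c$, and the unit $\eta^*\colon\Com^c\to\op C$ dualizes to the operadic augmentation $\op C^c\to\Com$, all compatible with the trivial filtration on $\Com$.

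The remaining verifications are formal: the dual differential is filtered and squares to zero, the symmetric group actions match, and every dualized identity holds on the nose, all by functoriality of $(-)^*$. The main obstacle is thus the monoidal isomorphism of the lemma \emph{together with its naturality}: one must check that the one isomorphism $V^*\hotimes W^*\cong(V\otimes W)^*$ is compatible simultaneously with all the structure maps, so that, for instance, coassociativity of the coproduct is literally the transpose of associativity of the product once the two completed tensor factorizations are identified consistently, and that each structure map is filtered so that its transpose truly lands in the completed rather than the full algebraic tensor product. Completeness of $\op C^c(r)$ with the filtration $F_s\op C^c(r)=\ker\bigl(\op C^c(r)\to(F^{s-1}\op C(r))^*\bigr)$ is then immediate from the first part of the lemma.
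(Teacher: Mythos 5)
Your overall architecture is the right one, and it matches the intended proof: the paper itself gives no argument for this statement (it is quoted verbatim from Fresse--Willwacher), and the expected route is exactly yours --- dualize arity-wise, reduce completeness to $V^*=\lim_s(F^sV)^*$ with $V^*/F_sV^*\cong(F^{s-1}V)^*$, establish a monoidal comparison $V^*\hotimes W^*\to (V\otimes W)^*$ by passing to associated graded, and then transport all (co)operad, Hopf and (co)augmentation axioms by functoriality of linear duality. Those surrounding steps are all correct as you state them, including the filtered-map bookkeeping (a functional killing $F^{k-1}V$ tensored with one killing $F^{l-1}W$ kills $F^{k+l-1}(V\otimes W)$, so the canonical map is filtered with matching indices) and the lifting of a $\gr$-isomorphism to an isomorphism of complete, exhaustively and Hausdorff filtered complexes.

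However, there is a genuine gap at the one place you yourself flag as the crux. Your justification of the monoidal lemma reads: ``since each $\gr^pV$ is degree-wise finite dimensional, dualization is exact and monoidal here.'' That inference is false in general. Monoidality of duality in a fixed internal degree $n$ requires the sum $\bigoplus_{i+j=n}(\gr^pV)^i\otimes(\gr^qW)^j$ to have only finitely many nonzero terms, and degree-wise finiteness of the factors does not give this when the grading is unbounded in both directions. Concretely, take $A=\K[x,x^{-1}]$ with $|x|=1$ and the trivial filtration $F^0A=A$: the unique subquotient is degree-wise of rank one, the dual filtration is trivial, so $A^*\hotimes A^*=A^*\otimes A^*$, yet $\mu^*\bigl((x^0)^*\bigr)=\sum_{i\in\Z}(x^i)^*\otimes(x^{-i})^*$ does not lie in $A^*\otimes A^*$. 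So under the hypotheses as literally recorded there is no coproduct on $\op C^c(r)$ at all, and your lemma cannot be proved; the statement is implicitly being read inside Fresse--Willwacher's standing conventions. The repair is either to add a boundedness hypothesis --- e.g.\ non-negatively graded algebras, as this paper explicitly assumes for $\op D(r)$, in which case $\bigoplus_{i+j=n}$ with $i,j\geq 0$ is a finite sum and your $\gr$ argument goes through verbatim --- or to invoke the stronger finiteness actually available in the paper's applications: for $\pdGraphs$ and $\pdBVGraphsg$, fixing the filtration weight (number of edges, respectively edges plus decoration degree) bounds the number of internal vertices and hence leaves finitely many diagrams \emph{in all degrees simultaneously}, so each $F^s\op C(r)$ is finite dimensional in total and $(F^pV\otimes F^qW)^*\cong (F^pV)^*\otimes(F^qW)^*$ holds on the nose. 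With either amendment in place, the rest of your proof is sound.
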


All of the notions above extend to right (co)operadic Hopf (co)modules. Indeed coaugmented complete right operadic Hopf modules are right operadic modules in counitary coaugmented cocommutative coalgebras in the category of complete dg vector spaces, while augmented filtered right cooperadic Hopf comodules are right cooperadic comodules in the category of filtered unitary augmented commutative algebras. The proof of Fresse and Willwacher's result carries over to the right (co)operadic Hopf (co)module setting.

\begin{prop}(\cite{FW20})\label{prop:filteredmod}
Let $\op M$ be an augmented filtered cooperadic Hopf comodule over an augmented filtered cooperad $\op C$. If the respective subquotients $F^s \op M(r)/F^{s-1} \op M(r)$ and $F^s \op C(r)/F^{s-1} \op C(r)$ are vector spaces of finite rank degree-wise, the collection $\op M^c=\{\op M^c(r)\}_{r\geq 1}$ consisting of the linear duals in each arity inherits the structure of a coaugmented complete operadic Hopf module over the coaugmented complete Hopf operad $\op C^c$ equipped with the structure from Proposition \ref{prop:filtered}. The complete filtration on $\op M^c(r)$ is given by $F_s  \ \op M^c(r) := \ker ( \op M^c(r) \rightarrow (F^{s-1}\op M(r))^c)$.
\end{prop}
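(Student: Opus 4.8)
The plan is to construct $\op M^c$ by dualizing $\op M$ arity by arity and transporting every structure map to its adjoint, exactly mirroring the proof of Proposition \ref{prop:filtered} but keeping track of the additional family of maps coming from the comodule structure. First I would set up the duality dictionary under the standing finite-rank hypothesis: for a filtered dg vector space $V=\colim_s F^s V$ whose subquotients $F^s V/F^{s-1}V$ have finite rank in each degree, the arity-wise dual $V^c$ is complete for the descending filtration $F_s V^c=\ker(V^c\to (F^{s-1}V)^c)$, with $V^c=\lim_s V^c/F_s V^c$ and $V^c/F_sV^c\cong (F^{s-1}V)^c$. The finite-rank condition is precisely what yields the filtration-compatible isomorphism $(V\otimes W)^c\cong V^c\hotimes W^c$, where the completed tensor product carries $F_s(V^c\hotimes W^c)=\sum_{p+q=s}F_pV^c\otimes F_qW^c$ as in Section \ref{sec:filtered}. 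This isomorphism is what converts the uncompleted tensor products on the right-hand sides of the comodule cocompositions into the completed tensor products demanded by the target category.

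With this dictionary I would dualize the structure maps one by one. The unitary augmented commutative algebra structure on each $\op M(r)$ dualizes arity-wise to a counitary coaugmented cocommutative coalgebra structure on $\op M^c(r)$ in complete dg vector spaces, the coproduct landing in $\op M^c(r)\hotimes\op M^c(r)$ by the isomorphism above. The partial cocompositions $\Delta_i:\op M(k+l-1)\to \op M(k)\otimes\op C(l)$ dualize to partial composition morphisms $\circ_i:\op M^c(k)\hotimes\op C^c(l)\to \op M^c(k+l-1)$, where $\op C^c$ already carries the complete Hopf operad structure from Proposition \ref{prop:filtered}. The equivariance, counit and coassociativity axioms of the comodule are linear identities between filtration-preserving maps, so passing to adjoints turns them into the equivariance, unit and associativity axioms of a right operadic module; likewise the compatibility of the $\Delta_i$ with the algebra structure on $\op M$ dualizes to compatibility of the $\circ_i$ with the coalgebra structure, which is exactly the Hopf module condition. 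Completeness of $\op M^c(r)$ for the stated filtration is immediate from $\op M^c(r)=\lim_s (F^{s-1}\op M(r))^c$.

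The only step requiring genuine care, and the unique place where the finite-rank hypothesis is indispensable, is checking that the adjoint of each $\Delta_i$ actually lands in the completed tensor product and is filtration-preserving. A priori this adjoint is merely a map out of the full algebraic dual $(\op M(k)\otimes\op C(l))^c$, which strictly contains $\op M^c(k)\hotimes\op C^c(l)$. The filtration compatibility of the cocomposition, namely $\Delta_i(F^s\op M(k+l-1))\subset \sum_{p+q=s}F^p\op M(k)\otimes F^q\op C(l)$, together with finiteness of the subquotients of both $\op M$ and $\op C$, forces the adjoint to factor through $\op M^c(k)\hotimes\op C^c(l)$ and to respect the filtration $F_s\op M^c(r)=\ker(\op M^c(r)\to (F^{s-1}\op M(r))^c)$. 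This verification is a subquotient-by-subquotient computation identical in form to the cooperad case treated by Fresse and Willwacher in Proposition \ref{prop:filtered}; once it is secured for the single extra family $\Delta_i$ carrying the comodule structure, all remaining module and Hopf axioms follow formally by dualizing the corresponding identities in $\op M$ and invoking the already-established structure on $\op C^c$.
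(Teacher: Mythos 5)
Your proposal is correct and follows essentially the same route as the paper, which states that Fresse and Willwacher's proof of Proposition \ref{prop:filtered} carries over verbatim to the comodule setting: one dualizes arity-wise, uses the finite-rank subquotient hypothesis to identify duals of tensor products with completed tensor products of duals, and transports the one additional family of structure maps $\Delta_i:\op M(k+l-1)\rightarrow \op M(k)\otimes \op C(l)$ to partial compositions $\circ_i:\op M^c(k)\hotimes \op C^c(l)\rightarrow \op M^c(k+l-1)$, with all module and Hopf axioms following formally by adjunction. Your identification of the filtration-compatibility of the cocompositions plus degree-wise finiteness as the one point requiring genuine verification is exactly where the weight of the cited argument lies.
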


\subsection{Harrison complexes for commutative algebras}

Let $A$ be an augmented unitary commutative algebra. The \emph{unreduced} Harrison complex $\Harru(A)$ is explicitly given by
\[
\Harru A:=(\mathbb{L}^c(A[1]),d_{\Harr})
\]
where $d_{\Harr}$ is the unique coderivation of the cofree Lie coalgebra $\mathbb{L}^c$ whose projection onto its cogenerators $A[1]$ reduces to the map $\mathbb{L}_2^c(A[1])\rightarrow A[1]$ induced by the product $\mu$ on $A$. Here $\mathbb{L}_2^c(A[1])$ denotes the component of homogeneous weight two the cofree Lie coalgebra. Its \emph{reduced} version $\Harr A$ (which we will simply call the Harrison complex) is defined similarly as
\[
\Harr A :=(\mathbb{L}^c (IA[1]), d_{\Harr} )
\]
where $IA$ denotes the augmentation ideal. Again, slightly abusing notation, $d_{\Harr}$ is induced by the product $\mu$ on $IA$.

Dually, we define the \emph{unreduced} Harrison complex of a complete coaugmented counitary cocommutative coalgebra $C$
\[
\Harru C:=(\hL(C[-1]), d_{\Harr})
\]
where $\hL$ denotes the completed free Lie algebra and $d_{\Harr}$ is the unique derivation extending $C[-1]\rightarrow \hL_2(C[-1])$ induced by the coproduct on $C$. Explicitly, if for $c\in C$ the (unreduced) coproduct is given by 
\[
\Delta(c)= c'\otimes c''
\]
using Sweedler notation, then the Harrison differential is
\[
d_{\Harr}(s^{-1} c)=- (-1)^{|c'|}\frac12 [s^{-1}c',s^{-1}c''].
\]
Its \emph{reduced} version is
\[
\Harr C:=(\hL(IC[-1]), d_{\Harr}).
\]
Here $IC$ is the coaugmentation coideal of $C$, while $d_{\Harr}$ is the unique derivation extending the reduced coproduct on $C$.

\subsection{Harrison complexes for Hopf operads and Hopf cooperads}

For $\op C$ a Hopf cooperad, $\op C(r)$ defines a unitary augmented commutative algebra and we extend the constructions above to $\op C$ arity-wise, i.e. we set for $r\geq 1$
\begin{align*}
(\Harru \op C)(r)&:=\Harru(\op C(r))\\
(\Harr \op C)(r)&:=\Harr(\op C(r)).
\end{align*}
The definitions extend to filtered Hopf cooperads. Dually, we set
\begin{align*}
(\Harru \op P)(r)&:=\Harru(\op P(r))\\
(\Harr \op P)(r)&:=\Harr(\op P(r))
\end{align*}
for a coaugmented complete Hopf operad $\op P$. Recall that in (\cite{FW20}, Proposition 2.1.7) it is shown that, if the complete Hopf operad $\op P=\op C^c$ arises as the dual of an augmented filtered Hopf cooperad $\op C$ as in Proposition \ref{prop:filtered} (i.e. the subquotients are of finite type), then
\[
\Harr (\op C^c(r))=(\Harr \op C(r))^c
\]
as complete dg Lie algebras (see \cite{FresseBook17}, II. Chapter 13). Let us recall one further result on Harrison complexes. We refer to \cite{FW20} for the proofs that
\begin{itemize}
\item $\Harru \op C$ forms a bicomodule over the Hopf cooperad $\op C$ and
\item $\Harru \op P$ forms a bimodule over the complete Hopf operad $\op P$.
\end{itemize}
For instance, the right cocomposition morphisms are given explicitly by the composition
\[
\Delta_r^i: \mathbb L^c(\op C(k+l-1)[1])\xrightarrow{\Delta_i} \mathbb{L}^c(\op C(k)[1]\otimes \op C(l))\xrightarrow\mu\ \mathbb{L}^c(\op C(k)[1])\otimes \op C(l)
\]
where we first apply the cooperadic cocomposition $\Delta_i$ on each tensor factor of the cofree Lie algebra, before multiplying the factors in the graded commutative algebra $\op C(l)$. We denote their sum by $\Delta_r:=\sum_i \Delta_r^i$. The left coproducts are defined similarly, and again denote their sum by $\Delta_l$. In the dual case, the right composition products
\[
\hL( \op P(k)[-1])\hotimes \op P(l)\xrightarrow{\Delta} \hL( \op P(k)[-1]\hotimes \op P(l))\xrightarrow{\circ_i} \hL(\op P(k+l-1)[-1])
\]
are obtained by first taking the coproduct $\Delta$ in the complete cocommutative coalgebra $\op P(l)$ and distributing the factors on the tensor factors of the free Lie algebra, before performing the operadic composition $\circ_i$ on each factor.

\subsection{Harrison complex for cooperadic comodules and operadic modules}

We may extend the construction above to right Hopf cooperadic comodules. Indeed, since for such a comodule $\op M$, its component of arity $r\geq 1$ defines a unitary augmented commutative algebra, we simply set
\begin{align*}
(\Harru \op M)(r):=\Harru(\op M(r))\\
(\Harr \op M)(r):=\Harr(\op M(r)).
\end{align*}
The definitions extend to filtered cooperadic Hopf comodules. Dually, we have
\begin{align*}
(\Harru \op X)(r):=\Harru(\op X(r))\\
(\Harr \op X)(r):=\Harr(\op X(r))
\end{align*}
for a coaugmented complete right Hopf operadic module $\op X$. Moreover, since these definitions only make use of the underlying Hopf collections, the proof that
\[
\Harr (\op M^c(r))=(\Harr \op M(r))^c
\]
carries over from the cooperad case, whenever $\op M$ is a filtered Hopf cooperadic comodule as in Proposition \ref{prop:filteredmod} (i.e. the subquotients are all of finite type). Notice also that the results on the Harrison complexes for (co)operads extend to the comodule case, and we have that
\begin{itemize}
\item $\Harr \op M$ forms a  right comodule over $\op C$ and
\item $\Harr \op X$ forms a right module over $\op P$.
\end{itemize}
The right (co)composition (co)products are given by similar formulas as in the (co)operadic case, but with the (co)operadic (co)compositions replaced by the right (co)operadic (co)module (co)compositions.
\section{Deformation complex for Hopf cooperads}\label{sec:defforcoops}

In this section we review the construction of the deformation complex from \cite{FW20}. In the next section we will define its generalization to the cooperadic comodule case. Let $\op C$ and $\op D$ be dg Hopf cooperads and $\phi:\op C\rightarrow \op D$ be a morphism of dg Hopf cooperads. Assume furthermore that $\op C$ is filtered, in the sense of Section \ref{sec:filtered}, and that the respective subquotients $F^s\op C(r)/ F^{s-1} \op C(r)$ are vector spaces of finite rank degree-wise (i.e. of finite type). Moreover, assume that for all $r\geq 1$, $\op D(r)$ is of finite type, and moreover concentrated in non-negative degrees, i.e. $\op D(r)^d=0$ for $d<0$. To view $\op D$ as a filtered dg Hopf cooperad, equip $\op D(r)$ with the trivial filtration, i.e. $0=F^{-1}\op D(r)\subset F^0 \op D(r)=\op D(r) =F^s\op D(r)$ for all $s\geq 1$.

\subsection{Convolution Lie algebras}

Consider the dg vector space
\[
\frakg_\mathsf{op} := \Hom_S(\op C,\Omegau \op D)=\prod_{r\geq 1} \Hom_{S_r}(\op C(r),(\Omegau\op D) (r))
\]
whose component in arity $r$ consists of $S_r$-equivariant linear morphisms - equipped with the usual differential which, for homogeneous $f\in \Hom_{S} (\op C,  \Omegau \op D)$, is given by
\[
df=  d_{\Omega\op D} f + d_{\op D} f - (-1)^{|f|} f d_{\op C}.
\]
Endow it further with the convolution product induced by the cooperadic cocomposition on $\op C$ and the operadic composition on $\Omegau \op D$. The graded commutator of the convolution product induces a dg Lie algebra structure on $\frakg_\mathsf{op}$. The following result is well-known (see for instance \cite{LodayVallette12}, Section 6.5).

\begin{lemma}
The composition
\[
m_{\mathsf{op}}=\iota \circ \phi: \op C\rightarrow \op D \rightarrow \op D[-1]\rightarrow \Omegau \op D
\]
defines a Maurer-Cartan element in $\frakg_\mathrm{op}$.
\end{lemma}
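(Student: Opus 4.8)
The plan is to avoid a direct verification of the Maurer--Cartan equation for $m_{\mathsf{op}}$ and instead factor the claim through the universal twisting morphism. Concretely, I would establish two facts: (i) the universal morphism $\iota\colon \op D\to\Omegau\op D$ is itself a Maurer--Cartan element of the convolution dg Lie algebra $\Hom_S(\op D,\Omegau\op D)$; and (ii) precomposition with $\phi$ defines a morphism of dg Lie algebras $\phi^*\colon \Hom_S(\op D,\Omegau\op D)\to\Hom_S(\op C,\Omegau\op D)$. Since a morphism of dg Lie algebras carries Maurer--Cartan elements to Maurer--Cartan elements, and $m_{\mathsf{op}}=\iota\circ\phi=\phi^*(\iota)$, the lemma follows at once.

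For step (i) I would unwind the equation $d\iota+\tfrac12[\iota,\iota]=0$. As $\iota$ has degree $+1$ one has $\tfrac12[\iota,\iota]=\iota\star\iota$, where $\star$ denotes the convolution product; and because $\iota$ lands in the weight-one part $\op D[-1]\subset\op F(\op D[-1])$, the product $\iota\star\iota$ evaluated on $x\in\op D$ is computed from the infinitesimal decomposition $\Delta_{(1)}\colon\op D\to\op D\circ_{(1)}\op D$, producing the two-vertex trees $\sum s^{-1}x'\,\circ\, s^{-1}x''$. On the other hand, the decomposition part of the cobar differential $d_{\Omega\op D}$ is by definition the unique derivation extending $\Delta_{(1)}$, so that $d_{\Omega\op D}\iota(x)$ reproduces the same family of two-vertex trees with the opposite sign, while the internal terms $d_{\op D}\iota-(-1)^{|\iota|}\iota\,d_{\op D}$ cancel because $\iota$ intertwines the internal differentials up to the desuspension sign. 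This is precisely the standard identification of $\iota$ as the universal twisting morphism, so I would record step (i) as a recollection, citing (\cite{LodayVallette12}, Section 6.5) for the bookkeeping and using the convention $\Delta_s(s^{-1})=-s^{-1}\otimes s^{-1}$ fixed in the excerpt.

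For step (ii) I would check that $\phi^*$ respects each piece of structure. It is a chain map because $\phi$ commutes with the internal differentials of $\op C$ and $\op D$, so that $d(f\circ\phi)=(df)\circ\phi$ (the post-composition with $d_{\Omega\op D}$ is automatic, and the source-differential term matches exactly via $\phi\, d_{\op C}=d_{\op D}\,\phi$). It is multiplicative for the convolution products because $\phi$ is a morphism of cooperads: the cocomposition identity $\Delta_{\op D}\circ\phi=(\phi\circ\phi)\circ\Delta_{\op C}$ gives $(f\circ\phi)\star(g\circ\phi)=(f\star g)\circ\phi$ for all $f,g\in\Hom_S(\op D,\Omegau\op D)$, whence $\phi^*$ is compatible with the commutator bracket. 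Note that only the dg cooperad structure of $\phi$ enters here; its compatibility with the commutative products (the Hopf structure) plays no role in the Maurer--Cartan property.

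The only genuinely delicate point is the sign bookkeeping in step (i), i.e. confirming that the decomposition term of $d_{\Omega\op D}\iota$ cancels $\iota\star\iota$ with the correct sign under the chosen (de)suspension conventions; everything else is formal. Since this is exactly the content of the cited passage in Loday--Vallette, in the write-up I would keep step (i) brief and spend the argument on the short verification that $\phi^*$ is a dg Lie morphism, then conclude.
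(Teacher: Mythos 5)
Your proposal is correct: the paper offers no proof of this lemma at all, simply recording it as well-known with the citation to (\cite{LodayVallette12}, Section 6.5), and your factorization $m_{\mathsf{op}}=\phi^*(\iota)$ --- the universal twisting morphism $\iota$ as a Maurer--Cartan element, plus the observation that precomposition with the cooperad morphism $\phi$ is a morphism of convolution dg Lie algebras --- is exactly the standard argument that citation encapsulates. Your sign bookkeeping (degree $+1$ of $\iota$ under the paper's desuspension convention, cancellation of the internal terms, and the convention $\Delta_s(s^{-1})=-s^{-1}\otimes s^{-1}$ making the decomposition part of $d_{\Omega\op D}$ cancel $\iota\star\iota$) and your remark that only the dg cooperad structure, not the Hopf structure, is needed are both accurate.
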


Next, we define another convolution type dg Lie algebra. For this, consider the dg vector space
\[
\frakg_\Lie (r) := \Hom_{S_r} (\Harru \op C(r), \op D(r))
\]
with the usual differential which, for homogeneous $f\in \Hom_{S_r} (\Harru \op C(r), \op D(r))$ is given by
\[
df=  d_{\op D} f  - (-1)^{|f|} f (d_{\op C} + d_{\Harr\op C}).
\]
Additionally, the Lie coalgebra and the graded commutative algebra structure on $\Harr \op C(r)$ and $\op D(r)$, respectively, enables us to define a convolution Lie bracket on $\frakg_\Lie(r)$. In this setting, we have a result analogous to the one above.

\begin{lemma}
Let $\pi:\Harru \op C(r)\rightarrow \op C(r)[1]$ denote the natural projection. The composition
\[
m_r:=\phi\circ s^{-1}\circ \pi: \Harru\op C(r)\rightarrow \op C(r)[1]\rightarrow \op C(r)\rightarrow \op D(r)
\]
defines a Maurer-Cartan element in $\frakg_\Lie(r)$.
\end{lemma}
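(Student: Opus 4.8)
The plan is to verify the Maurer--Cartan equation $d m_r + \frac12 [m_r, m_r] = 0$ directly, by decomposing both sides according to the weight grading of the Harrison complex $\Harru \op C(r) = \mathbb{L}^c(\op C(r)[1])$, where weight $n$ denotes the homogeneous component $\mathbb{L}^c_n(\op C(r)[1])$ of the cofree Lie coalgebra. The crucial observation is that $m_r$ factors through the projection $\pi$ onto the cogenerators $\op C(r)[1]$, and therefore vanishes on every component of weight $\geq 2$; it is concentrated entirely in weight $1$, where it acts as $\phi \circ s^{-1}$.

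First I would record how each summand of the equation interacts with this grading. The internal differential $d_{\op C}$, extended as a coderivation of $\mathbb{L}^c$, preserves weight; the Harrison differential $d_{\Harr}$ lowers weight by one, its corestriction to the cogenerators $\op C(r)[1]$ being the map $\mathbb{L}^c_2(\op C(r)[1]) \to \op C(r)[1]$ induced by the product on $\op C(r)$; and the Lie cobracket $\delta$ sends $\mathbb{L}^c_n$ into $\bigoplus_{i+j=n,\, i,j\geq 1} \mathbb{L}^c_i \otimes \mathbb{L}^c_j$. Since $m_r$ only detects weight $1$, the terms $d_{\op D}m_r$ and $m_r d_{\op C}$ are supported on weight-$1$ inputs, the term $m_r d_{\Harr}$ on weight-$2$ inputs, and, because $[m_r,m_r] = \mu \circ (m_r\otimes m_r)\circ \delta$ forces both cobracket factors to have weight $1$, it too is supported on weight $2$. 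Hence the equation splits into exactly two nontrivial conditions, in weights $1$ and $2$, while every component of weight $\geq 3$ vanishes identically.

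Next I would identify the content of each surviving equation. On weight $1$, where $d_{\Harr}$ contributes nothing (there is no weight-$0$ component), the equation reads $d_{\op D}m_r + m_r d_{\op C} = 0$ on $\op C(r)[1]$; unwinding $m_r = \phi\circ s^{-1}\circ\pi$, the two terms cancel precisely because $\phi$, being a morphism of dg Hopf cooperads, is a chain map. On weight $2$ the equation reads $m_r d_{\Harr} + \frac12[m_r,m_r]=0$; evaluating on a generator $s c_1 \wedge s c_2 \in \mathbb{L}^c_2(\op C(r)[1])$, the first term produces (up to sign) $\phi(c_1 c_2)$ through the product-induced corestriction of $d_{\Harr}$, while $\frac12[m_r,m_r]$ produces $\phi(c_1)\phi(c_2)$, the factor $\frac12$ absorbing the two symmetric contributions of $\delta$ after invoking the graded commutativity of $\op D(r)$. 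Thus weight $2$ encodes exactly the multiplicativity $\phi(c_1 c_2) = \phi(c_1)\phi(c_2)$, which holds since $\phi$ respects the commutative algebra structure in each arity.

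The hard part will be the sign bookkeeping in the weight-$2$ identity: one must track the Koszul signs produced by the degree $-1$ (de)suspension $s$ on the cogenerators $\op C(r)[1]$, the sign conventions of the cofree Lie cobracket and of the convolution bracket on $\frakg_\Lie(r)$, and the graded-commutativity sign incurred when rewriting $\phi(c_2)\phi(c_1)$ as $\pm\phi(c_1)\phi(c_2)$, verifying that all of them conspire so that $m_r d_{\Harr}$ and $\frac12[m_r,m_r]$ cancel rather than reinforce. Since these conventions are fixed exactly as in the operadic lemma above and in \cite{LodayVallette12}, the verification is mechanical once the bookkeeping is in place, but it is where all the care is needed.
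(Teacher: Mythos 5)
Your proposal is correct, but it argues on the opposite side of the duality from the paper. The paper disposes of this lemma by pointing to its comodule analogue, whose proof first uses the finite-type hypotheses to identify the convolution algebra with a completed tensor product of the form $\op D(r)\hotimes_{S_r}\Harru\op C^c(r)$, expands $m_r$ in dual bases as $\sum_a \pm\, n_a\otimes s^{-1}m^a$, and checks the Maurer--Cartan equation on the free Lie algebra side, where the Harrison differential is the \emph{derivation} extending the coproduct: the internal-differential terms cancel because the map is a chain map, and $\tfrac12[m_r,m_r]$ reproduces $-dm_r$ via the relation $n_p\otimes {m^p}'\otimes{m^p}''=\pm\, n_a\wedge n_b\otimes m^a\otimes m^b$, which encodes multiplicativity. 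You instead stay on the primal side and organize the same verification by the weight grading of $\mathbb{L}^c(\op C(r)[1])$: since $m_r$ is supported on the cogenerators, the equation collapses to a weight-one identity (the chain-map property of $\phi$) and a weight-two identity (multiplicativity of $\phi$), with all higher weights vanishing for support reasons. This is a faithful mirror of the paper's two cancellations, so your reduction is complete, and the sign bookkeeping you defer is genuinely mechanical --- forced once $|m_r|=1$ and the conventions for the cobracket and the factor $\tfrac12$ are fixed, exactly as in the paper's computation. What your route buys is transparency and economy: no dualization, hence no appeal to the filtration and finite-type assumptions for this particular step. What the paper's route buys is the tensor-product dictionary itself, which is not a luxury there: the same identification and dual-basis formalism are reused verbatim in the subsequent, harder computations (the proof that $\partial_{\mathsf{op}}$ and $\partial_{\mathsf{mod}}$ anticommute, and the proof of Proposition \ref{prop:Lieaction}), so the paper front-loads machinery that your argument would have to introduce later anyway.
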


\begin{proof}
See Lemma \ref{lemma:mr} below.
\end{proof}

Furthermore, we set $\frakg_{\Lie}:=\prod_{r\geq 1} \frakg_{\Lie}(r)$ and $m_{\Lie}:=\sum_{r\geq 1} m_r$.

\subsection{Deformation complex for Hopf cooperads}

Consider the dg vector space
\[
\Hom_{S} ((\Harru\op C) [-1],\Omegau\op D)=\prod_{r\geq 1} \Hom_{S_r}((\Harru\op C(r))[-1],(\Omegau\op D) (r))
\]
equipped with the usual differential which, for homogeneous $F\in \Hom_{S} (\Harru\op C[-1],\Omegau\op D)$, is given by
\[
dF=(d_\op D+ d_{\Omega\op D}) F  - (-1)^{|F|}F (d_{\op C} + d_{\Harr\op C}).
\]

\begin{lemma}
The dg vector space $\Hom_S(\Harru\op C[-1], \Omegau\op D)$ is a module over the dg Lie algebra $\frakg_{\mathsf{op}}$. 
\end{lemma}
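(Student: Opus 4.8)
The plan is to present the required $\frakg_\mathsf{op}$-module structure as the two-sided convolution action attached to the $\op C$-bicomodule $\Harru\op C[-1]$ and the operad $\Omegau\op D$. Recall from the preceding subsection that $\Harru\op C$, and hence its desuspension, is a bicomodule over the cooperad $\op C$, with left and right cocompositions $\Delta_l=\sum_i\Delta_l^i$ and $\Delta_r=\sum_i\Delta_r^i$ assembled from the cocompositions of $\op C$ together with the commutative product on $\op C$. Since $\Omegau\op D$ is an operad, these maps may be composed with the infinitesimal operadic composition $\gamma_{(1)}$ exactly as in the definition of the convolution bracket on $\frakg_\mathsf{op}$.

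First I would define the action. For $f\in\frakg_\mathsf{op}$ and $F\in\Hom_S(\Harru\op C[-1],\Omegau\op D)$, I set $f\cdot F:=f\cdot_\ell F+f\cdot_r F$, the sum of an outer and an inner term (with the Koszul signs fixed by our conventions absorbed into the two maps),
\[
f\cdot_\ell F:\ \Harru\op C[-1]\xrightarrow{\Delta_l}\op C\circ_{(1)}\Harru\op C[-1]\xrightarrow{f\circ_{(1)}F}\Omegau\op D\circ_{(1)}\Omegau\op D\xrightarrow{\gamma_{(1)}}\Omegau\op D,
\]
in which $f$ hits the $\op C$-slot grafted at the root and $F$ the $\Harru\op C[-1]$-slot, and
\[
f\cdot_r F:\ \Harru\op C[-1]\xrightarrow{\Delta_r}\Harru\op C[-1]\circ_{(1)}\op C\xrightarrow{F\circ_{(1)}f}\Omegau\op D\circ_{(1)}\Omegau\op D\xrightarrow{\gamma_{(1)}}\Omegau\op D,
\]
in which $F$ hits the outer slot and $f$ the $\op C$-slot grafted at a leaf. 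Every structure map used is $S$-equivariant, and in each output arity both sums are finite (one summand for each way of splitting the arity), so $f\cdot F$ again lies in $\Hom_S(\Harru\op C[-1],\Omegau\op D)$.

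Next I would check the two module axioms. Compatibility with the differential, $d(f\cdot F)=(df)\cdot F+(-1)^{|f|}f\cdot(dF)$, follows because $\Delta_l$ and $\Delta_r$ are chain maps for the differential $d_{\op C}+d_{\Harr}$ on $\Harru\op C$ and $d_{\op C}$ on $\op C$, while $\gamma_{(1)}$ intertwines the operadic and internal differentials of $\Omegau\op D$; feeding these facts into the Leibniz rule reproduces exactly the differentials already placed on $\frakg_\mathsf{op}$ and on the deformation complex. For compatibility with the bracket, $[f,g]\cdot F=f\cdot(g\cdot F)-(-1)^{|f||g|}g\cdot(f\cdot F)$, I would expand the nested actions and regroup: the terms in which $f$ and $g$ both act through outer cocompositions (resp.\ both through inner ones) reassemble, by the coassociativity of $\Delta_l$ (resp.\ $\Delta_r$) against $\Delta_{(1)}$ and the associativity of $\gamma$, into the outer (resp.\ inner) action of the convolution product $f\star g$; the remaining mixed terms, where one of $f,g$ acts outer and the other inner, are governed by the commutation of $\Delta_l$ and $\Delta_r$, are symmetric in $f$ and $g$, and hence cancel in the graded commutator.

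The main obstacle is the bracket axiom. It is formally the same identity that makes $\frakg_\mathsf{op}$ into a Lie algebra, except that one tensor slot is now occupied by the non-cooperadic factor $\Harru\op C[-1]$, so that the cooperad coassociativity of $\op C$ must be replaced throughout by the three bicomodule relations (coassociativity of $\Delta_l$, coassociativity of $\Delta_r$, and commutation of the two). The genuine labour lies in recording these relations in their infinitesimal form and in tracking the Koszul signs created by the desuspension $[-1]$ and by moving $f$, $g$ and $F$ past one another; once this is done the verification is routine and runs parallel to the cooperadic case treated in \cite{FW20}.
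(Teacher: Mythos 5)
Your proof is correct and takes essentially the same route as the paper: the action is defined by the sum of the left and right $\op C$-cocompositions on $\Harru\op C[-1]$ followed by the operadic composition of $\Omegau\op D$, the differential compatibility follows from the bicomodule and operad structure maps being chain maps, and the bracket axiom follows from coassociativity and associativity, with the mixed terms cancelling in the graded commutator. The paper's proof merely states these structural facts more tersely, without spelling out the outer/inner decomposition and term-by-term cancellation that you record explicitly.
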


\begin{proof}
For homogeneous $f\in \Hom_S(\op C,\Omegau \op D)$ and $F\in \Hom_S(\Harru\op C[-1],\Omegau\op D)$ the action $f\cdot F$ is given by
\begin{align*}
\Harru \op C[-1] \xrightarrow{\Delta_r+\Delta_l}\Harru \op C[-1] \otimes \op C\oplus \op C \otimes \Harru \op C[-1] \xrightarrow{F \otimes f+f\otimes F} \Omegau\op D \otimes \Omegau\op D \xrightarrow{\gamma_{\Omega \op D}} \Omegau \op D
\end{align*}
It follows from the compatibility of both the bicomodule structure of $\Harru(\op C)$ over $\op C$ and the operadic composition on $\Omegau\op D$ with the respective differentials that
\[
d(f\cdot F)=(df)\cdot F +(-1)^{|f|} f\cdot (dF).
\]
Moreover, the coassociativity of the cooperadic cocomposition, together with the compatibility of the cooperadic and graded commutative algebra structure on $\op C$, as well as the associativity of the operadic composition on $\Omegau \op D$ imply that $\frakg_{\mathsf{op}}$ acts as a graded Lie algebra on $\Hom_S(\Harru\op C[-1], \Omegau\op D)$.
\end{proof}

Twisting the differential on $\Hom_S(\Harru\op C[-1], \Omegau\op D)$ by the action of the Maurer-Cartan element $m_{\mathsf{op}}$ yields an additional term in the differential which we denote by $\partial_{\mathsf{op}}$.

\begin{lemma}
For each $r\geq 1$, the dg vector space $\Hom_{S_r}((\Harru\op C(r))[-1], \Omegau\op D(r))$ is a module over the dg Lie algebra $\frakg_{\Lie}(r)$. 
\end{lemma}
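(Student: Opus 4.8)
The plan is to follow the proof of the preceding lemma essentially verbatim in structure, substituting its two cooperadic ingredients by their convolution-Lie analogues. On the source side, recall that the unreduced Harrison complex $\Harru\op C(r)=\mathbb{L}^c(\op C(r)[1])$ is a dg Lie coalgebra, whose cobracket $\delta$ is the canonical one on the cofree Lie coalgebra and is precisely the structure used to define the convolution bracket on $\frakg_\Lie(r)$. Since every Lie coalgebra is a comodule over itself, after desuspension $(\Harru\op C(r))[-1]$ becomes a Lie comodule over $\Harru\op C(r)$, the coaction being induced by $\delta$ and splitting as $\rho_l+\rho_r$ according to which tensor factor carries the shift. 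On the target side, the symmetric bimodule structure on the cobar construction recalled above (cf. \cite{FW20}, Proposition 1.3.3) makes $\Omegau\op D(r)$ a dg module over the commutative algebra $\op D(r)$, via the product $\wedge$ that applies the cocomposition $\Delta_{\op D}$ and multiplies into the vertices of the tree expansion.

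With these ingredients, for $f\in\frakg_\Lie(r)$ and $F\in\Hom_{S_r}((\Harru\op C(r))[-1],\Omegau\op D(r))$ I would define the action $f\cdot F$ as the composite
\begin{align*}
(\Harru\op C(r))[-1]&\xrightarrow{\rho_l+\rho_r}(\Harru\op C(r))[-1]\otimes\Harru\op C(r)\ \oplus\ \Harru\op C(r)\otimes(\Harru\op C(r))[-1]\\
&\xrightarrow{F\otimes f+f\otimes F}\Omegau\op D(r)\otimes\op D(r)\ \oplus\ \op D(r)\otimes\Omegau\op D(r)\xrightarrow{\wedge}\Omegau\op D(r),
\end{align*}
which is $S_r$-equivariant because $\delta$ and $\wedge$ are. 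This is the exact mirror of the action in the previous lemma, with the bicomodule maps $\Delta_r+\Delta_l$ replaced by the Lie comodule coaction $\rho_l+\rho_r$, and the operadic composition $\gamma_{\Omega\op D}$ replaced by the module product $\wedge$.

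It then remains to verify the two module axioms. First, the Leibniz rule $d(f\cdot F)=(df)\cdot F+(-1)^{|f|}f\cdot(dF)$ follows from the compatibility of $\delta$ with the total differential $d_{\op C}+d_{\Harr}$ on $\Harru\op C(r)$---that is, the fact that the Harrison differential is a coderivation of the cofree Lie cobracket, which is built into the dg Lie coalgebra structure of $\Harru\op C(r)$---together with the compatibility of $\wedge$ with $d_{\op D}+d_{\Omega\op D}$, which is exactly the statement that $\Omegau\op D(r)$ is a \emph{dg} module over $\op D(r)$ (the differential-compatibility part of the symmetric bimodule structure). Second, the representation identity $[f,g]\cdot F=f\cdot(g\cdot F)-(-1)^{|f||g|}g\cdot(f\cdot F)$ follows from the co-Jacobi identity for $\delta$ (replacing the coassociativity of the cooperadic cocomposition used before) together with the associativity and commutativity of $\wedge$ (replacing the associativity of $\gamma_{\Omega\op D}$); commutativity of $\op D(r)$ is what allows the two summands $\rho_l,\rho_r$ to recombine with the correct signs.

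The main obstacle is the sign bookkeeping: one must track the Koszul signs produced by the desuspension $[-1]$ as $\delta$ and the evaluations $f,F$ are threaded through the coaction, and check that the co-Jacobi identity reproduces the graded antisymmetrization on the right-hand side of the representation identity exactly. Since every structural input is already in place---the dg Lie coalgebra structure on $\Harru\op C(r)$, the dg module structure on $\Omegau\op D(r)$, and the entirely parallel computation carried out for $\frakg_{\mathsf{op}}$ in the preceding lemma---the verification is routine once the signs are fixed, and no new phenomenon arises.
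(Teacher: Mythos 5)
Your proposal is correct and matches the paper's proof essentially verbatim: the paper also defines the action by composing the Lie cobracket on $\Harru\op C(r)$ with $F\otimes f$ and the $\op D(r)$-module product $\wedge$ on $\Omegau\op D(r)$, and verifies the Leibniz rule from the coderivation property of $d_{\Harr}$ plus differential-compatibility of $\wedge$, and the representation identity from the co-Jacobi identity plus graded commutativity of the $\op D(r)$-action. The only cosmetic difference is that the paper writes a single summand $(F_r\otimes f_r)\circ\delta$ rather than your $\rho_l+\rho_r$ decomposition; by the graded antisymmetry of the cofree Lie cobracket the two formulas agree up to normalization, so nothing substantive changes.
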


\begin{proof}
For homogeneous $f_r\in \Hom_S(\Harru\op C(r), \op D(r))$ and $F_r\in \Hom_{S_r}(\Harru\op C[-1](r),(\Omegau\op D)(r))$ the action $f_r\cdot F_r$ is given by
\begin{align*}
\Harru \op C(r)[-1] \xrightarrow{\text{Lie cobracket}} \Harru \op C(r)[-1] \otimes \Harru \op C(r)
\xrightarrow{F_r \otimes f_r} (\Omegau\op D)(r) \otimes \op D(r) \xrightarrow{\wedge} (\Omegau \op D)(r)
\end{align*}
As the action of $\op D(r)$ on $(\Omegau \op D)(r)$ intertwines the differentials, and the Harrison differential is a coderivation on $\mathbb{L}^c(\op C(r)[1])$, the action of $\frakg_{\Lie}(r)$ is compatible with the differentials. Moreover, it follows from the dual Jacobi identity and the fact that $\op D(r)$ acts on $(\Omegau\op D)(r)$ as a graded commutative algebra that the action of $\frakg_\Lie(r)$ is compatible with the graded Lie algebra structure.
\end{proof}

Twisting the differential by the action of the Maurer-Cartan element $m_{\Lie}$ yields an additional term in the differential which we denote by $\partial_{\Lie}$.

\begin{lemma}[\cite{FW20}, Construction 1.3.7]
The action of $m_{\mathsf{op}}$ and $m_\Lie$ anti-commute, i.e. we have $\partial_\mathsf{op}\partial_\Lie +\partial_\Lie \partial_\mathsf{op}=0$. We may thus consider the twisted dg vector space
\[
(\Hom_{S} (\Harru\op C [-1],\Omegau\op D),d+\partial_\mathsf{op}+\partial_\Lie).
\]
\end{lemma}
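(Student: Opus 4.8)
The plan is to reduce the assertion to the statement that the two module structures on $\Hom_S(\Harru\op C[-1],\Omegau\op D)$ — the one over $\frakg_{\mathsf{op}}$ and the one over $\frakg_{\Lie}$ — are compatible, meaning that for homogeneous $f\in\frakg_{\mathsf{op}}$ and $g\in\frakg_{\Lie}$ the corresponding action operators commute up to the Koszul sign,
\[
f\cdot(g\cdot F)=(-1)^{|f||g|}\,g\cdot(f\cdot F).
\]
Once this is known the lemma is immediate: both $m_{\mathsf{op}}$ and $m_{\Lie}$ have degree one, so specializing to $f=m_{\mathsf{op}}$, $g=m_{\Lie}$ yields $\partial_{\mathsf{op}}\partial_{\Lie}F=-\,\partial_{\Lie}\partial_{\mathsf{op}}F$, which is precisely $\partial_{\mathsf{op}}\partial_{\Lie}+\partial_{\Lie}\partial_{\mathsf{op}}=0$.

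To verify the compatibility I would expand both iterated actions into explicit composites of structure maps, reading off the two defining diagrams. The $\frakg_{\mathsf{op}}$-action precomposes $F$ with the bicomodule maps $\Delta_r+\Delta_l$ of $\Harru\op C$ over the cooperad $\op C$ and postcomposes with the operadic product $\gamma_{\Omega\op D}$; the $\frakg_{\Lie}$-action precomposes with the Lie cobracket of $\Harru\op C$ and postcomposes with the product $\wedge$ of $\op D$ on $\Omegau\op D$. In either order one ends up decomposing a single element of $\Harru\op C$ simultaneously by the cobracket and by the cooperadic cocomposition, and the two orders agree because $\Harru\op C$ is a bicomodule over the Hopf cooperad $\op C$ whose comodule structure maps are compatible with the (cofree) Lie coalgebra structure — a fact that traces back to the Hopf axiom that the cocompositions of $\op C$ are morphisms of commutative algebras, which is exactly what makes the definition of $\Delta_r$ (cocompose, then multiply) interact correctly with the cobracket. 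On the target, the two ways of assembling the three resulting factors coincide because $\gamma_{\Omega\op D}$ and $\wedge$ act on disjoint tensor slots and hence commute up to sign; this is the symmetric bimodule structure of $\Omegau\op D$ over $\op D$ recalled in Section \ref{sec:symmbim}.

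The only genuinely delicate point, and the place I expect to spend most effort, is the sign bookkeeping: the desuspension $[-1]$ on $\Harru\op C$, the internal shift in the cofree Lie coalgebra $\mathbb{L}^c(\op C[1])$, and the Koszul signs produced when interchanging the cobracket with the cocompositions must all be tracked and shown to combine into exactly the factor $(-1)^{|f||g|}$. This computation is carried out in \cite{FW20}, Construction 1.3.7, and the argument applies here once the source and target maps are identified as above; I would therefore present the proof as a careful diagram chase establishing the displayed compatibility, and then invoke the Koszul-sign count to conclude.
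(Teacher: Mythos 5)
There is a genuine gap: the reduction you propose is to the claim that the two actions are compatible for \emph{all} homogeneous elements, i.e.\ $f\cdot(g\cdot F)=(-1)^{|f||g|}\,g\cdot(f\cdot F)$ for every $f\in\frakg_{\mathsf{op}}$ and $g\in\frakg_{\Lie}$, and this claim is false. The $\frakg_{\Lie}$-action is defined arity-wise, $\frakg_\Lie=\prod_r\frakg_\Lie(r)$ with no relation between components, while the $\frakg_{\mathsf{op}}$-action mixes arities. Take $g$ supported in a single arity $r_0$ and evaluate both composites in arity $r>r_0$: in $g\cdot(f\cdot F)$ only the component $g_r=0$ is used, so the result vanishes, whereas $f\cdot(g\cdot F)$ uses $g_{r_0}\cdot F_{r_0}$ composed operadically with an arity-$(r-r_0+1)$ value of $f$ and is generically nonzero. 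So no Koszul-sign identity between the two actions can hold in general. A second inaccuracy compounds this: you assert that $\gamma_{\Omega\op D}$ and $\wedge$ "act on disjoint tensor slots and hence commute up to sign", but the left/right $\op D(r)$-action on $(\Omegau\op D)(r)$ is defined by first cocomposing the $\op D(r)$-factor along the tree and multiplying \emph{vertex-wise} (Section \ref{sec:symmbim}), so one has an exchange of the form $n\wedge(y\circ_i d)=\pm\sum (n'\wedge y)\circ_i(n''\wedge d)$ with genuine cross-terms, not a disjoint-slot commutation.

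The anticommutation $\partial_{\mathsf{op}}\partial_\Lie+\partial_\Lie\partial_{\mathsf{op}}=0$ is therefore a special property of the particular Maurer--Cartan elements, not a consequence of a bimodule-type compatibility. What makes it work is that $m_\Lie$ vanishes on Harrison words of weight $\geq 2$ (being $\phi\circ s^{-1}\circ\pi$), that its components in all arities are coherently induced by the single map $\phi$, and that $\phi$ is a morphism of dg Hopf cooperads: its multiplicativity is exactly what cancels the cross-terms produced by the wedge-exchange above, and its compatibility with cocompositions is what matches the terms in which the arity of the $g$-input differs between the two orders. This is visible in the paper's own proof of the comodule analogue (the lemma on $m_{\mathsf{op}}$ and $m_{\mathsf{mod}}$): after identifying $\Hom_{S}(\Harru\op M,\Omegau\op N)\cong\Omegau\op N\hotimes_S\Harru\op M^c$, the computation of $\partial_{\mathsf{mod}}\partial_{\mathsf{op}}(y\otimes x)$ invokes the relation \eqref{eq:phidgcamap} (the MC element is a map of dg commutative algebras) and the identity $n_a'\otimes d_a''\otimes m^a=\pm\, n_p\otimes d_q\otimes m^p\circ_i c^q$ (the pair $(\Phi,\phi)$ is a morphism of comodules) -- neither of which holds for arbitrary elements of the convolution Lie algebras. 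To repair your argument you should abandon the general compatibility statement and instead carry out the dual-basis computation for $m_{\mathsf{op}}$ and $m_\Lie$ themselves, inserting these two structural identities at the corresponding steps, as in \cite{FW20}, Construction 1.3.7.
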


Restricting to elements of the reduced Harrison complex $\Harr \op C\subset \Harru \op C$ and projecting onto the reduced cobar construction $\Omega \op D$ yields the graded linear subspace
\[
\Hom_S((\Harr \op C)[-1],\Omega \op D) \subset \Hom_{S} ((\Harru\op C) [-1],\Omegau\op D).
\]
\begin{prop}[\cite{FW20}, Construction 1.3.7]
The differential restricts to the subspace $\Hom_S((\Harr \op C)[-1],\Omega \op D)$, i.e. 
\[
(\Hom_S((\Harr \op C)[-1],\Omega \op D), d+\partial_\mathsf{op}+\partial_\Lie)
\]
defines a subcomplex.
\end{prop}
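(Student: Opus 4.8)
The plan is to check that each of the three summands of the total differential $d+\partial_{\mathsf{op}}+\partial_{\Lie}$ preserves the subspace $\Hom_S((\Harr\op C)[-1],\Omega\op D)$; once this stability is in hand, the relation $(d+\partial_{\mathsf{op}}+\partial_{\Lie})^2=0$ on the ambient complex, already established above, restricts and so turns the subspace into a genuine subcomplex. Membership is governed by two independent reductions: on the source one restricts along the inclusion of the reduced Harrison complex $\Harr\op C=\mathbb{L}^c(I\op C[1])$, built from the augmentation ideal $I\op C$, and on the target one composes with the projection onto the reduced cobar construction $\Omega\op D=\op F(\overline{\op D}[-1])$, built from the coaugmentation coideal $\overline{\op D}$. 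I will verify the source and target conditions summand by summand, the crucial inputs being that $\phi$ preserves (co)augmentations and that the twisting morphism $\iota$ annihilates the counit.

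The summand $d$, given by $dF=(d_{\op D}+d_{\Omega\op D})F-(-1)^{|F|}F(d_{\op C}+d_{\Harr\op C})$, is immediate. The internal differentials $d_{\op C},d_{\op D}$ respect the augmentation and coaugmentation, hence stabilize $I\op C$ and $\overline{\op D}$, while $d_{\Harr\op C}$ and $d_{\Omega\op D}$ are by definition the reduced Harrison and cobar differentials and therefore stabilize $\Harr\op C$ and $\Omega\op D$. The summand $\partial_{\Lie}$ is almost as clean: it is assembled from the cofree Lie cobracket of $\Harr\op C(r)$, which is intrinsic to the cogenerators $I\op C(r)[1]$ and so creates no unit cogenerators, followed by pairing one slot with $m_r=\phi\circ s^{-1}\circ\pi$ and applying the wedge action on $(\Omegau\op D)(r)$. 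Since $\pi$ lands in the cogenerators, $m_r$ takes values in $\overline{\op D}(r)$; and wedging an element of $\Omega\op D(r)$ with one of $\overline{\op D}(r)$ stays in $\Omega\op D(r)$, because the bimodule action distributes the coproduct over the tree vertices and multiplies there, and $\overline{\op D}$ is an ideal.

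The summand $\partial_{\mathsf{op}}$ is where the work lies. Its action runs $\Harr\op C[-1]\xrightarrow{\Delta_r+\Delta_l}\Harr\op C[-1]\otimes\op C\oplus\op C\otimes\Harr\op C[-1]$, then applies $F$ and $m_{\mathsf{op}}=\iota\circ\phi$ to the two tensor slots, and finally composes via $\gamma_{\Omega\op D}$. The point is that $m_{\mathsf{op}}$ is adapted to the reduction: as $\phi(1)=1$ and $\iota$ kills the counit of $\op D$ (it is not a cogenerator of $\Omega\op D$), $m_{\mathsf{op}}$ vanishes on the unit and sends $\overline{\op C}$ into $\overline{\op D}[-1]\subset\Omega\op D$. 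Now $\Delta_r+\Delta_l$ applied to a reduced Harrison word hides cooperadic cocompositions of $\op C$, which may deposit a unit into the $\op C$-slot or leave a unit cogenerator in the Harrison slot; but each such term dies after reduction, the first because $m_{\mathsf{op}}(1)=0$ on the target, the second because $F$ is supported on $\Harr\op C$ on the source. The surviving terms feed two elements of $\Omega\op D$ into the operadic composition $\gamma_{\Omega\op D}$, which is grafting of $\overline{\op D}$-decorated trees and hence stays in $\Omega\op D$.

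The main obstacle, and the only genuinely delicate point, is this last bookkeeping of (co)unit-decorated contributions inside $\partial_{\mathsf{op}}$: one must confirm that every term generated by the cocompositions carrying a unit in one tensor slot is annihilated by exactly one of the two reductions, with the signs cooperating so that nothing survives partially. Establishing this cleanly requires unwinding the interplay of $\iota(1)=0$, the (co)augmentation-compatibility of $\phi$, and the inclusion–projection defining $\Harr\op C$ and $\Omega\op D$, which is precisely the computation underlying \cite{FW20}, Construction 1.3.7.
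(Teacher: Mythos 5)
Your overall strategy---checking that each of the three summands $d$, $\partial_{\mathsf{op}}$, $\partial_{\Lie}$ separately preserves $\Hom_S((\Harr\op C)[-1],\Omega\op D)$---does not work, because the summands do \emph{not} individually preserve the subspace. The ambient complex is built from the \emph{unreduced} Harrison and cobar differentials, and both of them produce unit-decorated terms that survive your two reductions: the unreduced Harrison differential, applied via the unreduced coproduct, generates terms of the form $\ad_{s^{-1}1}(x)$ (equivalently, $F\circ d_{\Harru\op C}$ fails to vanish on Lie words containing the unit cogenerator, since multiplying the unit into a word lands back in the reduced part where $F$ is nonzero), and the unreduced cobar differential generates terms $y\circ_i s^{-1}\id$ whose trees carry an identity-decorated vertex outside $\Omega\op D$. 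So already the summand $d$ alone fails to restrict, contrary to your first claim.

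The two supporting claims in your $\partial_{\mathsf{op}}$ and $\partial_{\Lie}$ paragraphs are also false for the unreduced constructions actually used here: in $\Omegau\op D=\op F(\op D[-1])$ the twisting morphism $\iota$ does \emph{not} kill the identity, so $m_{\mathsf{op}}=\iota\circ\phi$ contains the component $s^{-1}\id\otimes\id$; and since $\pi$ projects onto \emph{all} of $\op C(r)[1]$, one has $m_r(s\,1)=\phi(1)=1\in\op D(r)$, so $m_{\Lie}$ does not take values in the coaugmentation coideal. Far from being defects, these unit components are the whole point: the proposition holds because of cross-cancellations between different summands of the total differential. Concretely---this is the paper's proof of the comodule analogue (Proposition \ref{prop:subcomplex}), which is the same argument as in \cite{FW20}---the term $s^{-1}\id\otimes\id$ in $m_{\mathsf{op}}$ produces $\sum_i \pm\, y\circ_i s^{-1}\id\otimes x$, cancelling exactly the unit terms of $d_{\Omegau\op D}$, while the term $1\otimes s^{-1}1$ in the Maurer--Cartan element for $\partial_{\Lie}$ produces $\pm\, y\otimes[s^{-1}1,x]$, cancelling the $\ad_{s^{-1}1}$ terms of the unreduced Harrison differential via the graded Jacobi identity (and here it is essential that $x$ is reduced, otherwise the identification of the offending Harrison terms with $\ad_{s^{-1}1}(x)$ fails). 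Your proposal, by asserting that each piece is separately stable and that the unit contributions are ``annihilated by exactly one of the two reductions,'' misses this mechanism entirely; repairing it means abandoning the summand-by-summand plan and verifying the two pairwise cancellations.
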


We refer to this subcomplex as the deformation complex associated to the morphism $\phi:\op C\rightarrow \op D$, and denote it by
\[
\Def(\op C\xrightarrow \phi \op D):=(\Hom_S((\Harr \op C)[-1],\Omega \op D), d+\partial_\mathsf{op}+\partial_\Lie).
\]

\section{Deformation complex for Hopf cooperadic comodules}\label{sec:defformod}

Next, we shall extend Fresse and Willwacher's construction to right cooperadic dg Hopf comodules. For this, let $(\op M,\op C)$ and $(\op N, \op D)$ be right cooperadic dg Hopf comodules $\op M$ and $\op N$ together with their respective dg Hopf cooperads $\op C$ and $\op D$. Assume furthermore, that $\op M$ and $\op C$ are filtered in the sense of Section \ref{sec:filtered}, and that for all $r\geq 1$, $\op N(r)$ and $\op D(r)$ are of finite type. To view $(\op N, \op D)$ as a filtered right cooperadic dg Hopf comodule, equip both $\op N(r)$ and $\op D(r)$ with the trivial filtration, i.e. $0=F^{-1}\op N(r)\subset F^0 \op N(r)=\op N(r) =F^s\op N(r)$ for all $s\geq 1$, and $0=F^{-1}\op D(r)\subset F^0 \op D(r)=\op D(r) =F^s\op D(r)$ for all $s\geq 1$. Additionally, fix a morphism of filtered right cooperadic dg Hopf comodules, i.e. a pair 
\[
(\Phi: \op M\rightarrow \op N , \ \phi:\op C\rightarrow \op D)
\]
respecting all of the algebraic structures.

\subsection{A convolution Lie algebra for Hopf cooperadic comodules}

Consider the dg Lie algebra
\[
\frakg_{\mathsf{mod}}(r):=\Hom_{S_r}(\Harru \op M(r),\op N(r)).
\]
It is indeed a dg Lie algebra, since, as in the case of $\frakg_\Lie (r)$ the dg Lie algebra structure only depends on the dg commutative algebra structures on $\op M(r)$ and $\op N(r)$ (and not on the comodule structures). For the same reason, we recover the following statement.

\begin{lemma}\label{lemma:mr}
The composition
\[
m_r^{\mathsf{mod}}=\Phi\circ s^{-1}\circ \pi: \Harru\op M(r)\rightarrow \op M(r)[1]\rightarrow \op M(r)\rightarrow \op N(r)
\]
defines a Maurer-Cartan element in $\frakg_\mathsf{mod} (r)$.
\end{lemma}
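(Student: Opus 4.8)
The plan is to verify the Maurer--Cartan equation $dm+\tfrac12[m,m]=0$ for $m:=m_r^{\mathsf{mod}}$ directly, the guiding observation being that the convolution dg Lie algebra $\frakg_{\mathsf{mod}}(r)=\Hom_{S_r}(\Harru\op M(r),\op N(r))$ is assembled \emph{only} from the cofree Lie coalgebra structure on $\Harru\op M(r)=\mathbb{L}^c(\op M(r)[1])$, the graded commutative product $\mu_{\op N}$ on $\op N(r)$, the internal differentials together with the Harrison coderivation $d_{\Harr}$, and the chain map $\Phi$. No cooperadic comodule data enters. Hence the computation is formally identical to the one required for $\frakg_\Lie(r)$ in the preceding lemma (with $(\op M,\op N,\Phi)$ in place of $(\op C,\op D,\phi)$), and carrying it out once settles both statements. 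Note that $m$ is of degree $+1$, as required of a candidate Maurer--Cartan element, since $\pi$ and $\Phi$ have degree $0$ and $s^{-1}$ has degree $+1$.

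First I would exploit the weight grading of the Harrison complex. Because $m$ factors through the projection $\pi$ onto the cogenerators $\op M(r)[1]=\mathbb{L}_1^c(\op M(r)[1])$, it is concentrated in Harrison weight one. The Harrison differential $d_{\Harr}$ is the coderivation whose corestriction to cogenerators is the map $\mathbb{L}_2^c(\op M(r)[1])\to\op M(r)[1]$ induced by $\mu_{\op M}$, so it lowers weight by one, while the cofree Lie cobracket $\delta$ splits weight $n$ as $\bigoplus_{p+q=n}\mathbb{L}_p^c\otimes\mathbb{L}_q^c$ with $p,q\geq 1$. Therefore both $m\circ d_{\Harr}$ and $[m,m]=\mu_{\op N}\circ(m\otimes m)\circ\delta$ are supported on the weight-two component $\mathbb{L}_2^c(\op M(r)[1])$, and on $\mathbb{L}_n^c$ with $n\geq 3$ every term of $dm+\tfrac12[m,m]$ vanishes for weight reasons. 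Thus the equation reduces to its components in weights one and two.

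On weight one only the internal part of the differential survives, so $dm$ restricts to $d_{\op N}m-(-1)^{|m|}m\,d_{\op M}$; this vanishes because $\Phi$, and hence $m=\Phi\circ s^{-1}$, is a morphism of complexes. On weight two the two remaining contributions meet: the Harrison term $-(-1)^{|m|}m\circ d_{\Harr}$ evaluates, through the corestriction induced by $\mu_{\op M}$, to a sign times $\Phi(ab)$, whereas $\tfrac12[m,m]$ evaluates, through the primitive cobracket followed by $\mu_{\op N}$, to a sign times $\Phi(a)\Phi(b)$. These cancel precisely because $\Phi$ is multiplicative, i.e. $\Phi(ab)=\Phi(a)\Phi(b)$, so the Maurer--Cartan equation holds.

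The only genuine work, and the step I expect to be the main obstacle, is the sign-and-coefficient bookkeeping in weight two: one must reconcile the suspension signs introduced by $s^{-1}$ and the normalization $-(-1)^{|c'|}\tfrac12$ of $d_{\Harr}$ with the factor $\tfrac12$ and the antisymmetrization built into the cofree Lie cobracket, so that the two weight-two contributions cancel rather than reinforce one another. Conceptually this is nothing but the classical fact that a morphism of commutative algebras is a Maurer--Cartan element in the associated Harrison (André--Quillen) convolution Lie algebra; once the conventions of Section~\ref{sec:defforcoops} and the Loday--Vallette signs already fixed in the text are in force, the cancellation is forced, and I would simply transcribe that verification.
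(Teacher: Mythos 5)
Your argument is correct, and it turns on exactly the two inputs the paper's own proof uses: that $\Phi$ is a chain map (which kills the internal-differential part of $dm$) and that $\Phi$ is multiplicative (which makes the Harrison contribution cancel against $\tfrac12[m,m]$). The route differs in the bookkeeping, though. The paper first dualizes, identifying $\frakg_{\mathsf{mod}}(r)=\Hom_{S_r}(\Harru\op M(r),\op N(r))$ with the completed tensor product $\op N(r)\hotimes_{S_r}\Harru\op M^c(r)$ --- a step that rests on the finite-type filtration hypotheses of Proposition \ref{prop:filteredmod} --- writes $m_r^{\mathsf{mod}}=\sum_a(-1)^{|n_a|}n_a\otimes s^{-1}m^a$ in dual bases, and then verifies the Maurer--Cartan equation by an explicit Sweedler-notation computation in which your multiplicativity step appears as the relation $n_p\otimes{m^p}'\otimes{m^p}''=(-1)^{|n_b||m^a|}\,n_a\wedge n_b\otimes m^a\otimes m^b$ of equation \eqref{eq:phidgcamap}. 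You instead stay in the convolution algebra and organize by Harrison weight: since $m$ factors through the projection $\pi$ it is concentrated in weight one, so both $m\circ d_{\Harr}$ and $\mu_{\op N}\circ(m\otimes m)\circ\delta$ are supported on $\mathbb{L}_2^c(\op M(r)[1])$ and all contributions in weight $\geq 3$ vanish a priori. What your version buys is a proof needing no dualization (hence no finiteness assumptions for this particular lemma) and a structural explanation of why only two terms can interact; what the paper's version buys is that the dual tensor model, once set up here, is reused verbatim in the subsequent anticommutation lemma and in the proof of Proposition \ref{prop:Lieaction}, so the basis-level formulas are not wasted effort. The one point you defer --- reconciling the suspension signs and the factors of $\tfrac12$ in weight two --- is indeed the only delicate step, and the paper's explicit calculation confirms that with the stated conventions ($\Delta_s(s^{-1})=-s^{-1}\otimes s^{-1}$, and $d_{\Harr}(s^{-1}c)=-(-1)^{|c'|}\tfrac12[s^{-1}c',s^{-1}c'']$ on the dual side) the cancellation comes out exactly as you predict, so your appeal to the classical Harrison/twisting-morphism fact is legitimate rather than circular.
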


The proof is given in the next section. Moreover, we set $\frakg_{\mathsf{mod}}:=\prod_{r\geq 1} \frakg_{\mathsf{mod}}(r)$ and $m_{\mathsf{mod}}:=\sum_{r\geq 1} m_r^{\mathsf{mod}}$.

\subsection{Deformation complex for Hopf cooperadic comodules}

Consider the dg vector space 
\[
\Hom_{S} (\Harru\op M ,\Omegau\op N)=\prod_{r\geq 1} \Hom_{S_r}(\Harru\op M(r),(\Omegau\op N) (r)).
\]
equipped with the usual differential which, for homogeneous $f\in \Hom_{S} (\Harru\op M[-1],\Omegau\op N)$, is given by
\[
df=(d_\op N+ d_\Omega) f  - (-1)^{|f|}f (d_{\op M} + d_{\Harr\op M}).
\]

\begin{lemma}
The dg vector space $\Hom_S(\Harru\op M, \Omegau\op N)$ is a module over the dg Lie algebra $\frakg_{\mathsf{op}}$. 
\end{lemma}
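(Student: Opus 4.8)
The plan is to mimic the cooperad-case lemma verbatim, replacing the \emph{bicomodule} structure of $\Harru \op C$ by the \emph{right} comodule structure of $\Harru \op M$ over $\op C$ (established in the preliminary section on Harrison complexes for cooperadic comodules), and replacing the operadic self-composition of $\Omegau \op D$ by the right operadic $\Omega \op D$-module structure of $\Omegau \op N$. Concretely, for homogeneous $f \in \frakg_{\mathsf{op}} = \Hom_S(\op C, \Omegau \op D)$ and $F \in \Hom_S(\Harru \op M, \Omegau \op N)$, I would define the action $f \cdot F$ as the composite
\[
\Harru \op M \xrightarrow{\Delta_r} \Harru \op M \otimes \op C \xrightarrow{F \otimes f} \Omegau \op N \otimes \Omegau \op D \xrightarrow{\gamma_{\Omega \op N}} \Omegau \op N,
\]
where $\Delta_r$ is the right comodule cocomposition on the Harrison complex $\Harru \op M$ and $\gamma_{\Omega \op N}$ denotes the right operadic $\Omega \op D$-module composition on $\Omegau \op N$. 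The essential structural difference from the cooperad lemma is that $\op M$ is only a right $\op C$-comodule, so only $\Delta_r$ enters the formula (there is no left part $\Delta_l$, and correspondingly only the single term $F \otimes f$ rather than $F\otimes f + f \otimes F$), and the target of $F$ now lives in the module $\Omegau \op N$ rather than in $\Omegau \op D$.

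Second, I would verify the Leibniz identity $d(f \cdot F) = (df)\cdot F + (-1)^{|f|} f \cdot (dF)$. Exactly as in the cooperad case, this follows from two facts already recorded in the preliminaries: the right comodule cocomposition $\Delta_r$ on $\Harru \op M$ is compatible with the total differential $d_{\op M} + d_{\Harr \op M}$ (since $\Harr \op M$ forms a right $\op C$-comodule, as stated at the end of the Harrison-complex section), and the module composition $\gamma_{\Omega \op N}$ intertwines the differentials because $\Omega \op N$ is a dg right operadic $\Omega \op D$-module.

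Third, I would check that the action is a genuine graded Lie algebra action, i.e.\ that $f_1 \cdot (f_2 \cdot F) - (-1)^{|f_1||f_2|} f_2 \cdot (f_1 \cdot F) = [f_1, f_2] \cdot F$. This is the direct analogue of the cooperad computation and rests on the coassociativity of the right comodule cocomposition on $\Harru \op M$, on the compatibility of that cocomposition with the graded commutative algebra structure on $\op M$ (so that iterating the action reproduces precisely the convolution bracket on $\frakg_{\mathsf{op}}$), and on the associativity of the right module composition $\gamma_{\Omega \op N}$.

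I expect the only real subtlety — and hence the main point to handle with care — to be the bookkeeping of Koszul signs when iterating the action and matching the outcome against the convolution bracket on $\frakg_{\mathsf{op}}$, together with confirming that the comodule analogues of the coassociativity and algebra-compatibility statements genuinely suffice even with only the one-sided cocomposition $\Delta_r$ in play. Since every structural input has already been set up in the preliminary sections (the comodule structure on $\Harru \op M$ and the module structure on $\Omega \op N$), no new conceptual ingredient is required: the argument is a faithful transcription of the cooperad proof with the bicomodule replaced by a one-sided right comodule.
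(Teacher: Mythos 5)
Your proposal coincides with the paper's own proof: the action is defined by exactly the same composite $\Harru \op M \xrightarrow{\Delta_r} \Harru \op M \otimes \op C \xrightarrow{F\otimes f} \Omegau\op N \otimes \Omegau\op D \xrightarrow{\gamma_{\Omega\op N}} \Omegau\op N$, and the Leibniz rule and graded Lie action axiom are verified from the same structural inputs (compatibility of the right $\op C$-comodule structure on $\Harru\op M$ and of the operadic $\Omegau\op D$-module structure on $\Omegau\op N$ with the differentials, plus coassociativity of the comodule cocomposition and associativity of the module composition). One small correction: the algebra compatibility entering the Lie-action check is that of the cooperad and graded commutative algebra structure on $\op C$ (since $\Delta_r$ multiplies factors in $\op C$), not on $\op M$ as you wrote, but this does not affect the argument.
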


\begin{proof}
For homogeneous $f\in \frakg_\mathsf{op}$ and $F\in \Hom_S(\Harru\op M, \Omegau\op N)$ the action $f\cdot F$ is defined through the composition
\[
\Harru \op M \xrightarrow{\Delta_r}\Harru \op M \otimes \op C \xrightarrow{F \otimes f} \Omegau\op N \otimes \Omegau\op D \xrightarrow{\gamma_{\Omega \op N}} \Omegau \op N.
\]
It follows from the compatibility of both the  right comodule structure of $\Harru(\op M)$ over $\op C$ and the operadic $\Omegau\op D$-module structure on $\Omegau\op N$ with the respective differentials that 
\[
d(f\cdot F)=(df)\cdot F +(-1)^{|f|} f\cdot (dF).
\]
Moreover, the coassociativity of the right $\op C$-comodule structure on $\Harru \op M$, together with the compatibility of the cooperad and graded commutative algebra structure on $\op C$, as well as the associativity of the right operadic $\Omegau \op D$-module structure on $\Omegau \op N$ imply that $\frakg_\mathsf{op}$ acts as a graded Lie algebra on $\Hom_S(\Harru\op M, \Omegau\op N)$.
\end{proof}

A similar reasoning as in the case of $\frakg_\Lie$ works for $\frakg_{\mathsf{mod}}$.

\begin{lemma}
The dg vector space $\Hom_S(\Harru\op M, \Omegau\op N)$ is a module over the dg Lie algebra $\frakg_{\mathsf{mod}}$. 
\end{lemma}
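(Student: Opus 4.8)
The plan is to define the action of $\frakg_{\mathsf{mod}}$ arity-wise, mirroring exactly the action of $\frakg_\Lie(r)$ constructed above, and to observe that every compatibility we need depends only on the graded commutative algebra structures on $\op M(r)$ and $\op N(r)$ together with the symmetric bimodule structure of $\Omegau\op N$ over the Hopf collection $\op N$ from Section \ref{sec:symmbim} --- none of the comodule or cooperad data enters. Concretely, for homogeneous $g\in \frakg_{\mathsf{mod}}$ with components $g_r\in \Hom_{S_r}(\Harru\op M(r),\op N(r))$ and $F\in \Hom_S(\Harru\op M,\Omegau\op N)$ with components $F_r$, I would set $g\cdot F$ equal to the arity-wise composition
\[
\Harru \op M(r) \xrightarrow{\text{Lie cobracket}} \Harru \op M(r) \otimes \Harru \op M(r) \xrightarrow{F_r \otimes g_r} (\Omegau\op N)(r) \otimes \op N(r) \xrightarrow{\wedge} (\Omegau \op N)(r),
\]
where $\wedge$ is the product of the symmetric $\op N$-bimodule $\Omegau\op N$.

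First I would verify compatibility with the differentials, namely $d(g\cdot F)=(dg)\cdot F + (-1)^{|g|} g\cdot(dF)$. This rests on two inputs: the product $\wedge$ of the symmetric bimodule $\Omegau\op N$ over $\op N$ intertwines the differentials (so that $d_{\op N}$ on the $\op N(r)$-factor and $d_{\op N}+d_\Omega$ on the $(\Omegau\op N)(r)$-factor both pass through $\wedge$), and the Harrison differential $d_{\Harr}$ is a coderivation of the cofree Lie coalgebra $\mathbb L^c(\op M(r)[1])$, hence compatible with the cobracket in the first arrow. These are precisely the properties used in the $\frakg_\Lie(r)$ argument, now applied to $(\op M,\op N)$ in place of $(\op C,\op D)$.

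Next I would show that the action is by a graded Lie algebra, i.e. that $[g_1,g_2]\cdot F = g_1\cdot(g_2\cdot F) - (-1)^{|g_1||g_2|} g_2\cdot(g_1\cdot F)$. The two essential ingredients are the dual Jacobi identity for the Lie cobracket on $\Harru\op M(r)$, which lets one re-associate the two successive applications of the cobracket, and the fact that $\op N(r)$ acts on $(\Omegau\op N)(r)$ as a graded commutative algebra, so that the two $\op N(r)$-factors produced may be multiplied in either order. This is again the verbatim analogue of the $\frakg_\Lie(r)$ computation.

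The only point that makes the statement non-automatic is that $\Omegau\op N$ genuinely carries a symmetric bimodule structure over the Hopf collection $\op N$ that is compatible with its differential; this is supplied by Section \ref{sec:symmbim} (following \cite{FW20}, Proposition 1.3.3, extended to the comodule case). Granting this, the proof is a transcription of the $\frakg_\Lie(r)$ argument under $\op C\rightsquigarrow\op M$, $\op D\rightsquigarrow\op N$, since, exactly as already noted for $\frakg_{\mathsf{mod}}(r)$ itself, all the structures involved depend only on the underlying Hopf collections and not on the comodule structures. The sole genuine bookkeeping --- and the part I expect to require care --- is tracking the Koszul signs through the desuspension and the coderivation property, just as in the cooperadic case.
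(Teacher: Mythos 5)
Your proposal is correct and coincides with the paper's own argument: the action is defined arity-wise by the same composition (Lie cobracket, then $F_r\otimes g_r$, then the product $\wedge$ of the symmetric $\op N$-bimodule $\Omegau\op N$), with compatibility with the differentials following from the coderivation property of $d_{\Harr}$ and the fact that $\wedge$ intertwines differentials, and the Lie module axiom following from the dual Jacobi identity and the graded commutativity of the $\op N(r)$-action. Your observation that only the underlying Hopf collections (and none of the comodule data) enter is exactly the point the paper makes as well.
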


\begin{proof}
For homogeneous $f_r\in \Hom_S(\Harru\op M(r), \op N(r))$ and $F_r\in \Hom_{S_r}(\Harru\op M (r),\Omegau\op N(r))$ the action $f_r\cdot F_r$ is given by
\begin{align*}
\Harru \op M(r) \xrightarrow{\text{Lie cobracket}} \Harru \op M(r) \otimes \Harru \op M(r) \xrightarrow{F_r \otimes f_r} \Omegau\op N(r) \otimes \op N(r) \xrightarrow{\wedge} \Omegau \op N(r)
\end{align*}
As the action of $\op N(r)$ on $\Omegau \op N(r)$ intertwines the differentials, and the Harrison differential is a coderivation on $\mathbb{L}^c(\op M(r)[1])$, the action of $\frakg_{\mathsf{mod}}(r)$ is compatible with the differentials. Moreover, it follows from the dual Jacobi identity and the fact that $\op N(r)$ acts on $\Omegau\op N(r)$ as a graded commutative algebra that the action of $\frakg_{\mathsf{mod}}(r)$ is compatible with the graded Lie algebra structure.
\end{proof}

\begin{rem}
For each $r\geq 1$ we may equip $\frakg_{\mathrm{op}}(r)=\Hom_{S_r} (\op C(r),(\Omegau\op D)(r))$ with the complete descending filtration 
\[
F_s \ \frakg_{\mathrm{op}}(r):=\ker\Big(\Hom_{S_r}(\op C(r),(\Omegau\op D)(r))\rightarrow \Hom_{S_r} ( F^s\op C(r),(\Omegau\op D)(r))\Big)
\]
defined through the restriction map induced by the inclusion $F^s\op C(r)\subset \op C(r)$. In particular, we have an isomorphism
\[
\frac{F_s \ \frakg_{\mathrm{op}}(r)}{F_{s+1} \ \frakg_{\mathrm{op}}(r)}\cong \Hom_{S_r} (\frac{F^s\op C(r)}{F^{s+1}\op C(r)}, (\Omegau\op D)(r))
\]
Moreover, we view each arity of the cobar construction $(\Omegau\op D)(r)$ as a complete dg vector space by endowing it with the trivial filtration, i.e. $F^0 (\Omegau\op D)(r)=(\Omegau\op D)(r) \supset F^1(\Omegau\op D)(r)=0$. Also, by Proposition \ref{prop:filtered} $\op C^c(r)$ defines a complete dg vector space. The finiteness assumptions on the filtration on $\op C(r)$ and the fact that $\op D(r)$ is non-negatively graded and of finite type ensure that we have an isomorphism of complete dg vector spaces
\[
\frakg_{\mathrm{op}}(r) \longleftarrow (\Omegau \op D)(r)\hotimes_{S_r}\op C^c(r). 
\]
Indeed, on the level of associated graded complexes both side decompose into a direct product of finite dimensional subspaces (one for each total degree) on which the morphism above restricts to an isomorphism. Furthermore, taking the direct product over all arities, the compatibility of the filtration on $\op C$ and its cooperad structure implies that the respective filtrations and the morphism are compatible with the natural Lie algebra structures - on the right hand side, the Lie bracket is given on homogeneous elements by
\[
[f_1\otimes c_1,f_2\otimes c_2]=(-1)^{|f_2||c_1|} \sum_{i=1}^{r_1} f_1\circ_i f_2 \otimes c_1 \circ_i c_2 -  (-1)^{(|f_1|+|c_1|)(|f_2|+|c_2|)} \sum_{i=1}^{r_2} (-1)^{|f_1||c_2|} f_2\circ_i f_1 \otimes c_2 \circ_i c_1
\]
where $f_i \otimes c_i$ is of arity $r_i$. The isomorphism is thus an isomorphism of complete dg Lie algebras.

Similarly, the filtration on the Harrison complex $\Harru \op C(r)$ induced by the filtration on $\op C(r)$ allows us to define the complete descending filtration 
\[
F_s \ \frakg_{\mathrm{mod}}(r):=\ker\Big(\Hom_{S_r}(\Harru \op M(r),\op N(r))\rightarrow \Hom_{S_r} ( F^s \Harru \op M(r),\op N(r))\Big)
\]
on $\frakg_{\mathrm{mod}}(r)$. Moreover, notice that the finiteness assumptions on $\op M(r)$ imply that the subquotients $F^s\Harr \op M(r)/F^{s-1}\Harru \op M(r)$ are all of finite type. We obtain an isomorphism of complete dg Lie algebras
\[
\frakg_{\mathrm{mod}}(r) \longleftarrow \op N(r) \hotimes_{S_r} \Harru \op M^c(r).
\]
Remark that the Lie bracket on the right reads
\[
[n_1 \otimes m_1, n_2 \otimes m_2]=(-1)^{|n_2||m_1|} n_1 \cdot  n_2 \otimes [m_1,m_2].
\]
Finally, the analogous construction with $\Omegau \op N$ replacing $\op N$ yields an isomorphism of complete dg vector spaces
\[
\Hom_{S} (\Harru \op M,\Omegau \op N) \longleftarrow  \Omegau\op N \hotimes_S \Harru \op M^c.
\]
The right hand side comes equipped with dg Lie algebra actions from $\op N(r) \hotimes_{S_r} \Harru \op M^c(r)$ and $(\Omegau \op D)(r)\hotimes_{S_r}\op C^c(r)$ which are compatible with the respective filtrations and the actions of $\frakg_{\mathrm{mod}}$ and $\frakg_{\mathrm{op}}$ on $\Hom_{S} (\Harru \op M,\Omegau \op N)$ via the isomorphisms above. For homogeneous elements $y\otimes x \in \Omegau\op N \hotimes_S \Harru \op M^c$, $n\otimes m \in \op N(r) \hotimes_{S_r} \Harru \op M^c(r)$ and $d\otimes c \in (\Omegau \op D)(r)\hotimes_{S_r}\op C^c(r)$, we have
\[
(n\otimes m)\cdot (y\otimes x) = (-1)^{|y||m|} n\wedge y \otimes [m,x]
\]
where $\wedge$ denotes the product defining the symmetric bimodule structure of $\Omegau \op N$ over $\op N$ (see Section \ref{sec:symmbim}), and
\[
(d\otimes c)\cdot (y\otimes x) = \sum_{i} (-1)^{(|y|+|x|)|c|+|y||d|} y\circ_i d \otimes x\circ_i c.
\]
Here $y\circ_i d$ is describes the operadic module structure of $\Omegau N$ over $\Omegau D$, while $x\circ_i c$ is given by the right module structure of the symmetric collection $\Harru \op M^c$ over the Hopf operad $\op C^c$ (see Section \ref{sec:bicom}).
\end{rem}

\begin{rem}
Under the identifications above, we write 
\[
\sum_b s^{-1}d_b \otimes c^b \in \Omegau \op D\hotimes_S \op C^c
\]
for the element corresponding to $m_{\mathsf{op}}=\iota\circ \phi \in \frakg_{\mathsf{op}}$. Here $\{c^b\}_b$ are the elements dual to any lift $\{c_b\}_b$ of the bases for the various subquotients of finite type of $\op C$, and $d_b$ describes the image in $\op D$ of $c_b$ under $\phi$. Similarly, we identify $m_\mathsf{mod}$ with 
\[
\sum_a (-1)^{|n_a|} n_a \otimes s^{-1}m^a \in \op N \hotimes_{S} \Harru \op M
\]
where $\{m^a\}_a$ are the linear duals to any lift $\{m_b\}_b$ of the bases for the various subquotients of finite type of $\op M$, and $n_a$ denotes the image in $\op N$ of $m_a$ under $\Phi$.
\end{rem}

We use this equivalence to prove Lemma \ref{lemma:mr}.

\begin{proof}[Proof of Lemma \ref{lemma:mr}]
We identify 
\[
\Hom_{S_r}(\Harru \op M(r),\op N(r))\cong \op N(r)\hotimes_{S_r} \Harr \op M^c(r)
\]
and set $m_r=\sum_a (-1)^{|n_a|} n_a \otimes s^{-1}m^a$ to be the element corresponding to $\Phi\circ s^{-1}\circ \pi$. Summing over repeated indices, we find by direct computation,
\begin{align*}
d m_r^\mathrm{mod}&= (-1)^{|n_a|} d_{\op N}(n_a)\otimes s^{-1} m^a + (-1)^{2|m_a|+1} m_a \otimes s^{-1} d_{\op M^c} (m^a) + (-1)^{2|n_a|} n_a \otimes d_{\Harr\op M^c} (s^{-1}m^a)\\
&=- (-1)^{|{m^a}'|}n_a\otimes \frac12 [ s^{-1}{c^m}',s^{-1}{c^m}''] 
\end{align*}
where $(-1)^{|n_a|} d_{\op N}(n_a)\otimes s^{-1} m^a + (-1)^{2|n_a|+1} n_a \otimes s^{-1} d_{\op M^c}( m^a)=0$ since $\Phi\circ s^{-1}\circ \pi$ is a morphism of complexes. On the other hand,
\begin{align*}
[(-1)^{|d_a|} n_a\otimes s^{-1}m^a, (-1)^{|n_b|} n_b\otimes s^{-1} m^b]&=(-1)^{|n_a|+|n_b|+|s^{-1} m^a||n_b|}  n_a\wedge n_b \otimes [s^{-1}m^a,s^{-1}m^b]\\
&= (-1)^{|n_a|+|m^a||n_b| + |m^a|}  (n_a\wedge n_b \otimes m^a\otimes m^b ) (-\otimes [s^{-1} - ,s^{-1} -])\\
&= (n_p \otimes {m^p}'\otimes {m^p}'')( -\otimes [s^{-1} - ,s^{-1} -])\\
&=(-1)^{|{m^p}'|}  n_p \otimes [s^{-1}{m^p}', s^{-1} {m^p}'']
\end{align*}
where the relation
\begin{equation}\label{eq:phidgcamap}
n_p\otimes {m^p}'\otimes {m^p}''=(-1)^{|n_b||m^a|} n_a\wedge n_b\otimes m^a\otimes m^b
\end{equation}
follows from $m_r^{\mathrm{mod}}=\Phi\circ s^{-1}\circ \pi$ being a morphism of dg commutative algebras.
\end{proof}

\begin{lemma}
The action of $m_{\mathsf{op}}$ and $m_{\mathsf{mod}}$ anti-commute, i.e. we have $\partial_\mathsf{op}\partial_\mathsf{mod} +\partial_\mathsf{mod} \partial_\mathsf{op}=0$.
\end{lemma}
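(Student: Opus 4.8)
The plan is to work entirely inside the explicit convolution model of the two preceding Remarks, where the deformation complex $\Hom_S(\Harru\op M,\Omegau\op N)$ is identified with $\Omegau\op N\hotimes_S\Harru\op M^c$, and where $m_{\mathsf{op}}=\sum_b s^{-1}d_b\otimes c^b$ and $m_{\mathsf{mod}}=\sum_a(-1)^{|n_a|}n_a\otimes s^{-1}m^a$. Since $\partial_{\mathsf{op}}=m_{\mathsf{op}}\cdot(-)$ and $\partial_{\mathsf{mod}}=m_{\mathsf{mod}}\cdot(-)$ are precisely the two degree one actions recorded there, I would evaluate both composites on a general homogeneous element $y\otimes x$ using the explicit bilinear formulas
\[
(n\otimes m)\cdot(y\otimes x)=(-1)^{|y||m|}\,n\wedge y\otimes[m,x],\qquad (d\otimes c)\cdot(y\otimes x)=\sum_i(-1)^{(|y|+|x|)|c|+|y||d|}\,y\circ_i d\otimes x\circ_i c,
\]
and compare the results term by term.

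The key observation is that each composite splits into a \emph{bulk} part and a \emph{diagonal} part. The module composition $x\circ_i c^b$ is computed by distributing the cocommutative coproduct of $c^b\in\op C^c(l)$ over the letters of the Lie word $x$, so it obeys a co-Leibniz rule with respect to the free Lie bracket; dually, the product $n_a\wedge y$ distributes the comodule cocomposition $\Delta_{\op N}(n_a)$ over the tree underlying $y$. Tracking these, the bulk contribution of $\partial_{\mathsf{op}}\partial_{\mathsf{mod}}$ — the part in which the composition with $c^b$ (resp.\ $d_b$) does not meet the generator $s^{-1}m^a$ (resp.\ the factor $n_a$) just produced by $\partial_{\mathsf{mod}}$ — reads $\sum_{a,b,i}\pm\,n_a\wedge(y\circ_i s^{-1}d_b)\otimes[s^{-1}m^a,x\circ_i c^b]$, and agrees up to an overall Koszul sign with the corresponding bulk part of $\partial_{\mathsf{mod}}\partial_{\mathsf{op}}$. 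As both operators are odd, that sign is $-1$, so the two bulk parts cancel in the sum $\partial_{\mathsf{op}}\partial_{\mathsf{mod}}+\partial_{\mathsf{mod}}\partial_{\mathsf{op}}$.

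It remains to treat the diagonal terms, where the actual content lies. In $\partial_{\mathsf{op}}\partial_{\mathsf{mod}}$ the only diagonal contributions come from those summands of $[s^{-1}m^a,x]\circ_i c^b$ in which a nontrivial coproduct component of $c^b$ is composed into the freshly inserted generator $s^{-1}m^a$, producing terms built from $(s^{-1}m^a)\circ_i c^b$; dually, in $\partial_{\mathsf{mod}}\partial_{\mathsf{op}}$ the diagonal contributions come from those summands of $n_a\wedge(y\circ_i s^{-1}d_b)$ in which a piece of $\Delta_{\op N}(n_a)$ lands on the freshly grafted vertex $s^{-1}d_b$. I would show these two families cancel by invoking that $(\Phi,\phi)$ is a morphism of Hopf comodules, i.e.\ $\Delta_{\op N}\circ\Phi=(\Phi\circ\phi)\circ\Delta_{\op M}$. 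Dualizing, the expression $\sum_a n_a\otimes(m^a\circ_i c^b)$ governing the source diagonal is the image under $\Phi$ of the $i$-th partial comodule cocomposition of $\op M$ evaluated against $c_b$, which by this compatibility equals the corresponding $\op N$-side cocomposition evaluated against $d_b=\phi(c_b)$ — exactly the data of the target diagonal; cocommutativity of $\op C^c$ and the coassociativity of $\Delta_{\op N}$ ensure that the partially split components match at every level.

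The main obstacle is precisely this diagonal cancellation together with the attendant Koszul and desuspension sign bookkeeping: one must verify that the identity $\Delta_{\op N}\circ\Phi=(\Phi\circ\phi)\circ\Delta_{\op M}$ pairs the two diagonal families not merely combinatorially but with opposite signs. The argument is the cooperadic-comodule analogue of the computation of Fresse and Willwacher (\cite{FW20}, Construction 1.3.7), with the cooperad compatibility of $\phi$ there replaced by the comodule compatibility of the pair $(\Phi,\phi)$, and otherwise proceeds along the same lines.
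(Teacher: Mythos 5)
Your overall strategy is the paper's: the proof in the text also works in the identification $\Hom_{S}(\Harru\op M,\Omegau\op N)\cong \Omegau\op N\hotimes_S\Harru\op M^c$ and rewrites $\partial_{\mathsf{mod}}\partial_{\mathsf{op}}(y\otimes x)$ in a single chain into $-\partial_{\mathsf{op}}\partial_{\mathsf{mod}}(y\otimes x)$, using (i) the distribution of $\Delta_i(n_a)$ over the grafted tree, $n_a\wedge(y\circ_i s^{-1}d_b)=\pm(n_a'\wedge y)\circ_i(d_a''\wedge s^{-1}d_b)$, (ii) the comodule identity $n_a'\otimes d_a''\otimes m^a=\pm\, n_p\otimes d_q\otimes m^p\circ_i c^q$, (iii) the multiplicativity of $\phi$ in the dual form $d_q\wedge d_b\otimes c^q\otimes c^b=d_e\otimes {c^e}'\otimes{c^e}''$ (the $\phi$-analogue of \eqref{eq:phidgcamap}), and (iv) the co-Leibniz rule $[s^{-1}(m^p\circ_i{c^e}'),x\circ_i{c^e}'']=\pm[s^{-1}m^p,x]\circ_i c^e$. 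However, your bulk/diagonal reorganization has a genuine flaw as stated: the two ``bulk'' families are not equal up to a Koszul sign, so ``both operators are odd'' cannot produce their cancellation (were oddness alone enough, the lemma would be vacuous). Concretely, in $\partial_{\mathsf{op}}\partial_{\mathsf{mod}}$ the mod-action runs over a dual basis of $\op M(r)$, and the counit piece of $[s^{-1}m^a,x]\circ_i c^b$ is the letter $s^{-1}(m^a\circ_i 1_l)$ --- \emph{not} $s^{-1}m^a$, since composing with the coaugmentation element $1_l\in\op C^c(l)$ raises the arity to $r+l-1$ --- while in $\partial_{\mathsf{mod}}\partial_{\mathsf{op}}$ the mod-action runs over a dual basis of $\op M(r+l-1)$, its bulk being the $(\,\cdot\,\otimes 1)$-component of $\Delta_i(n_{a'})$ together with the letter $s^{-1}m^{a'}$. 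Identifying $\sum_a n_a\otimes(m^a\circ_i 1_l)$ with $\sum_{a'}(\Delta_i n_{a'})\big\vert_{\op N\otimes\K 1}\otimes m^{a'}$ is precisely the unit component of $\Delta_i\circ\Phi=(\Phi\otimes\phi)\circ\Delta_i^{\op M}$, so the structural input you reserve for the diagonal is already indispensable for the bulk. (Relatedly, even your displayed bulk term does not typecheck: in $\partial_{\mathsf{op}}\partial_{\mathsf{mod}}$ the $\Omegau\op N$-factor is $(n_a\wedge y)\circ_i s^{-1}d_b$, which differs from $n_a\wedge(y\circ_i s^{-1}d_b)$ exactly by the diagonal pieces of $\Delta_i(n_a)$.)

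A second omission sits in the diagonal matching. You invoke only the comodule compatibility, but on the $\partial_{\mathsf{mod}}\partial_{\mathsf{op}}$ side the freshly grafted vertex carries the \emph{product} $d_a''\wedge s^{-1}d_b$, whereas on the other side a single element $c^e$ must split as ${c^e}'\otimes{c^e}''$ over the letter $s^{-1}m^p$ and the word $x$; fusing these two shapes requires the algebra-map property of $\phi$, i.e.\ step (iii) above, which your sketch never names --- coassociativity and cocommutativity alone do not supply it. The paper's proof avoids both pitfalls by not splitting at all: it pushes the full Sweedler sums through (i)--(iv) with an explicit sign ledger. Your decomposition can be repaired --- apply the comodule identity and the multiplicativity of $\phi$ to the complete sums, unit components included, and only then observe the cancellation --- but at that point the argument has collapsed into the computation the paper already performs.
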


\begin{proof}
We identify 
\begin{align*}
\Hom_{S} (\Harru\op M,\Omegau\op N)&\cong  \Omegau \op N\hotimes_S \Harru \op M^c\\
\frakg_{\mathsf{op}}&\cong  \Omegau \op D \hotimes_S \op C^c\\
\frakg_{\mathsf{mod}}&\cong  \op N \hotimes_S \Harru \op M^c. 
\end{align*}
Let $  y\otimes x\in \Omegau \op N \hotimes_S \Harru \op M^c $, $m_{\mathsf{mod}}= \sum_a (-1)^{|n_a|}n_a \otimes s^{-1} m^a$ and $m_{\mathsf{op}}=\sum_b s^{-1}d_b\otimes c^b$. We sum over repeated indices and find
\begin{align*}
\partial_{\mathsf{mod}} \partial_{\mathsf{op}}(  y\otimes  x)&= (-1)^{|n_a|} (n_a\otimes s^{-1} m^a) \cdot (s^{-1}d_b\otimes c^b)\cdot ( y\otimes  x)\\
&=\sum_i  (-1)^{|n_a|+|y|+|c^b||x|} (n_a\otimes s^{-1} m^a)\cdot (y\circ_i s^{-1}d_b \otimes  x\circ_i c^b)\\
&=\sum_i (-1)^{|c^b||x|+|m^a||y|+|d_b|+|m^a||d_b|} (n_a\wedge (y\circ_i s^{-1}d_b))\otimes  [s^{-1}m^a,x\circ_i c^b]
\end{align*}
Again, let us continue the computation up to a sign. We take care of those below.
\begin{align*}
&\pm (n_a\wedge (y\circ_i s^{-1}d_b))\otimes [s^{-1}m^a,x\circ_i c^b]\\
&=\pm (n_a'\wedge y) \circ_i (d_a''\wedge s^{-1}d_b))\otimes  [s^{-1}m^a,x\circ_i c^b]\\
&=\pm (n_a'\otimes d_a''\otimes m^a)(-\wedge y)\circ_i(-\wedge s^{-1}d_b) \otimes  [s^{-1}-,x\circ_i c^b]\\
&=\pm (n_p\otimes d_q\otimes m^p\circ_i c^q) (-\wedge y)\circ_i(-\wedge s^{-1}d_b) \otimes  [s^{-1}-,x\circ_i c^b]\\
&=\pm (n_p\wedge y)\circ_i(d_q\wedge s^{-1}d_b) \otimes  [s^{-1} (m^p\circ_i c^q),x\circ_i c^b]\\
&=\pm (n_p\wedge y)\circ_i s^{-1} (d_q\wedge d_b) \otimes  [s^{-1} (m^p\circ_i c^q),x\circ_i c^b]\\
&=\pm (d_q\wedge d_b\otimes c^q\otimes c^b)((n_p\wedge y)\circ_i s^{-1} - \otimes  [s^{-1} (m^p\circ_i -),x\circ_i -])\\
&=\pm (d_e\otimes {c^e}'\otimes {c^e}'')((n_p\wedge y)\circ_i s^{-1} - \otimes [s^{-1} (m^p\circ_i -),x\circ_i -])\\
&=\pm (n_p\wedge y)\circ_i s^{-1} d_e \otimes[s^{-1} (m^p\circ_i {c^e}'),x\circ_i {c^e}'']\\
&=\pm (n_p\wedge y)\circ_i s^{-1} d_e \otimes [s^{-1} m^p,x]\circ_i c^e
\end{align*}
To go from the third to the fourth line, we use the identity
\[
n_a'\otimes d_a'' \otimes m^a=(-1)^{|d_q||m^p|} n_p\otimes d_q \otimes m^p \circ_i c^q
\]
reflecting the fact that the pair $(\Phi, \phi)$ defines a morphism of right cooperadic comodules, while going from the seventh line to the eighth, we again apply the relation \eqref{eq:phidgcamap}. The respective signs are given by
\begin{align*}
\text{line 1: }& |c^b||x|+|m^a||y|+|d_b|+|m^a||d_b|\\
\text{line 2: }&|c^b||x|+|m^a||y|+|d_b|+|m^a||d_b|+|d_a''||y|\\
\text{line 3: }&1+|c^b||x|+|d_b|\\
\text{line 4: }&1+|c^b||x|+|d_b|+|d_q||m^p|\\
\text{line 5: }&1+|c^b||x|+|d_b|+|d_q||m^p|+|d_q||y|+|m^p||y|+|m^p||d_b|+|c^q||y|+|c^q||d_b|\\
\text{line 6: }&1+|c^b||x|+|d_b|+|d_q||m^p|+|d_q||y|+|m^p||y|+|m^p||d_b|+|c^q||y|+|c^q||d_b|+|d_q|\\
\text{line 7: }&1+|m^p||y|+|c^q||d_b|\\
\text{line 8: }&1+|m^p||y|\\
\text{line 9: }&1+|m^p||y|+|d_e||n_p|+|d_e||y|+|d_e|+(|{c^e}'|+|{c^e}''|)(|n_p|+|y|+|m^p|)+|{c^e}''||x|\\
\text{line 10: }&1+|m^p||y|+|d_e||n_p|+|d_e||y|+|d_e|+|c^e|(|n_p|+|y|+|m^p|+|x|)
\end{align*}
Thus, we conclude that
\begin{align*}
&\sum_i(-1)^{1+|m^p||y|+|d_e||n_p|+|d_e||y|+|d_e|+|c^e|(|n_p|+|y|+|m^p|+|x|) }(n_p\wedge y)\circ_i s^{-1} d_e \otimes  [s^{-1} m^p,x]\circ_i c^e\\
&=(-1)^{1+|m^p||y|+|n_p|+|y|} (s^{-1}d_e\otimes c^e) \cdot  ( n_p\wedge y)\otimes  [s^{-1} m^p,x]\\
&=(-1)^{|n_p|+1} (s^{-1}d_e\otimes c^e) \cdot (n_p\otimes s^{-1} m^p)\cdot (y \otimes x)\\
&=-\partial_{\mathsf{op}}\partial_{\mathsf{mod}} (  y\otimes x).
\end{align*}
\end{proof}

Thus, we may consider the twisted dg vector space
\[
(\Hom_{S} (\Harru\op M ,\Omegau\op N),d+\partial_\mathsf{op}+\partial_\mathsf{mod}).
\]
Restricting to elements of the reduced Harrison complex $\Harr \op M\subset \Harru \op M$ and projecting onto the reduced cobar construction $\Omega \op N$ yields first of all the graded linear subspace
\[
\Hom_S(\Harr \op M,\Omega \op N) \subset \Hom_{S} (\Harru\op M ,\Omegau\op N).
\]
\begin{prop}\label{prop:subcomplex}
The differential restricts to the subspace $\Hom_S(\Harr \op M,\Omega \op N)$, i.e. 
\[
(\Hom_S(\Harr \op M,\Omega \op N), d+\partial_\mathsf{op}+\partial_\mathsf{mod})
\]
defines a subcomplex.
\end{prop}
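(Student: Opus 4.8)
The plan is to adapt the argument of Fresse and Willwacher for the cooperad case (\cite{FW20}, Construction 1.3.7) to the present comodule setting. First I would make the inclusion $\Hom_S(\Harr \op M, \Omega \op N) \subset \Hom_S(\Harru \op M, \Omegau \op N)$ explicit by means of the splittings $\op M(r) = \K\,1 \oplus I\op M(r)$ and $\op N(r) = \K \oplus \overline{\op N}(r)$ coming from the augmentation and coaugmentation. Under the induced decompositions of $\mathbb{L}^c(\op M(r)[1])$ and of the free operad $\op F(\op N(r)[-1])$, an element lies in the subspace precisely when it vanishes on Harrison words containing the unit $1 \in \op M$ and its image is spanned by cobar trees all of whose vertices carry labels in $\overline{\op N}$. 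The task is then to show that $d + \partial_\mathsf{op} + \partial_\mathsf{mod}$ preserves both conditions. The subtlety is that the three summands are written with the \emph{unreduced} Harrison and cobar differentials, which individually do not respect reducedness, so the content of the statement is a cancellation rather than a term-by-term check.

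On the source side the point is that $d_{\Harr \op M}$, acting on a word $w$ that contains the unit, can \emph{absorb} that unit into a neighbouring letter via the relation $1\cdot a = a$, producing a reduced word on which a map $F$ of the subspace is nonzero; hence $F\circ(d_{\op M}+d_{\Harr \op M})$ need not annihilate unit-containing words. The compensating contribution is the term of $\partial_\mathsf{mod}$ in which the Lie cobracket splits off the single unit cogenerator $s\cdot 1$ and feeds it to $m_\mathsf{mod}=\Phi\circ s^{-1}\circ \pi$. Since $\Phi(1)=1\in \op N$ is the co-unit and wedging with it is the identity, this yields $\pm F$ evaluated on $w$ with the unit letter deleted, and I would match these against the unit-absorption terms of $d_{\Harr \op M}$, checking that they cancel after accounting for signs.

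On the target side, by contrast, the reduced cobar $\Omega \op N$ is genuinely \emph{not} a subcomplex for the unreduced cobar differential: applying $d_{\Omegau \op N}$ to a tree with labels in $\overline{\op N}$ creates a co-unit-labelled vertex through the coaugmentation part of the infinitesimal decomposition $\Delta_{(1)}$, which is absent from the reduced $\Delta_{(1)}$ defining $d_{\Omega \op N}$. The matching co-unit terms come from $\partial_\mathsf{op}$: the comodule cocomposition $\Delta_r\colon \Harru \op M \to \Harru \op M \otimes \op C$ produces the coaugmentation factor $1\in \op C$, and $m_\mathsf{op}=\iota\circ\phi$ evaluates it to $\iota(1)=s^{-1}1\in \Omegau \op D$, which the operadic $\Omegau \op D$-module composition $\gamma_{\Omega \op N}$ glues in as exactly such a co-unit vertex. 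Because $\phi$ preserves coaugmentations, these two families of terms coincide up to sign and cancel, so $dF$ again lands in $\Omega \op N$; note that $\partial_\mathsf{mod}$ contributes no co-units here, since on a reduced source it only wedges with values of $m_\mathsf{mod}$ in $\overline{\op N}$.

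The main obstacle will be the sign and combinatorial bookkeeping needed to match the co-unit terms of $d_{\Omegau \op N}$ with those of $\partial_\mathsf{op}$, and dually the unit-absorption terms of $d_{\Harr \op M}$ with those of $\partial_\mathsf{mod}$. For this I would pass to the dual description $\Hom_{S}(\Harru \op M, \Omegau \op N)\cong \Omegau \op N \hotimes_S \Harru \op M^c$ furnished by the finiteness and filtration hypotheses, and use the explicit formulas for $\partial_\mathsf{op}$ and $\partial_\mathsf{mod}$ recorded in the preceding remarks, exactly as in the anti-commutativity computation carried out above. Once the relevant terms are isolated, the verification reduces to the cooperad computation of \cite{FW20}, now with the cooperadic cocomposition and operadic composition replaced by the comodule cocomposition $\Delta_r$ and the operadic $\Omegau \op D$-module structure on $\Omegau \op N$, and with $\Phi$ playing the role that $\phi$ played there.
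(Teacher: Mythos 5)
Your proposal is correct and takes essentially the same route as the paper: its proof isolates exactly the two cancellations you describe — the unit term $\ad_{s^{-1}1}$ of the unreduced Harrison differential against the $1\otimes s^{-1}1$ part of $\partial_\mathsf{mod}$ (using that $\Phi$ preserves units and wedging with $1$ is the identity), and the counit terms $y\circ_i s^{-1}\id$ of the unreduced cobar differential against the $s^{-1}\id\otimes\id$ part of $\partial_\mathsf{op}$ (using that $\phi$ preserves coaugmentations). The paper also performs the bookkeeping precisely where you propose to, namely in the dual description $\Omegau\op N\hotimes_S \Harru\op M^c$ with the explicit formulas for the twisted differentials.
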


\begin{proof}
We need to show that the differential preserves the respective (co)augmentation (co)ideals. In the case of the augmentation ideal, notice the following calculation. Since we work with the unreduced coproduct, the Harrison differential produces terms of the form
\[
(1\otimes d_{\Harr\op M^c} )( y\otimes x)=  (-1)^{|y|} y \otimes d_{\Harr \op M^c}(x)= - (-1)^{|y|} y\otimes   \ad_{s^{-1} 1} (x)+\dots.
\]
Here it is important that $x\in \Harr \op M^c$ is an element of the reduced Harrison complex, otherwise the second equality does not hold. On other hand, the twisted differential $\partial_\mathsf{mod}$ yields the terms
\[
\partial_\mathsf{mod} ( y\otimes x) = (1\otimes s^{-1}1)\cdot (y\otimes x)+\dots= \  (-1)^{|y|} (1\wedge y)\otimes  [s^{-1}1,x] +\dots = (-1)^{|y|} y \otimes  [s^{-1}1,x] +\dots
\]
By the graded Jacobi identity these terms cancel. For the coaugmentation coideal of $\op D$, consider
\begin{align*}
&(d_{\Omega \op N} \otimes 1) ( y\otimes x)= d_{\Omega \op N} (y) \otimes x= -(-1)^{|y|}\sum_i (y;\id,\dots,\underbrace{s^{-1}\id}_{i},\dots,\id)\otimes \ x+\dots \\
= \ & -(-1)^{|y|} \sum_i y \circ_i s^{-1}\id \otimes \ x+ \dots
\end{align*}

On the other hand, applying $\partial_\mathsf{op}$ gives
\begin{align*}
&\partial_\mathsf{op} ( y\otimes x)=(s^{-1}\id \otimes \id)\cdot (y\otimes x)+\dots=\sum_i (-1)^{|y|} y\circ_i s^{-1}\id \otimes \ x\circ_i \id+\dots\\
= \ & (-1)^{|y|} \sum_i y\circ_i s^{-1}\id \otimes \ x+\dots
\end{align*}

\end{proof}

We refer to this subcomplex as the deformation complex associated to the morphism $\Phi:\op M\rightarrow \op N$, and denote it by
\[
\Def(\op M\xrightarrow \Phi \op N):=(\Hom_S(\Harr \op M,\Omega \op N), d+\partial_\mathsf{op}+\partial_\mathsf{mod}).
\]

\subsection{A natural morphism induced by a Lie algebra action}\label{sec:Lieaction}

Let $\frakg$ be a complete dg Lie algebra, $\op C$ a filtered dg Hopf cooperad and $\op M$ a filtered right dg Hopf cooperadic $\op C$-comodule both satisfying the finiteness assumptions from Proposition \ref{prop:filteredmod}. Assume further that $\frakg$ acts on $\op M$ in a way which is compatible with the filtered dg Hopf $\op C$-comodule structure. More precisely, we assume that each homogeneous element $g\in \frakg$ of degree $|g|$ defines for each $r\geq 1$ a linear morphism $D_g:\op M(r)\rightarrow \op M(r)$ of degree $|g|$ making the diagrams
\begin{center}
 \begin{tikzcd}[row sep=large, column sep=10ex]
  \op M(k+l-1) \arrow[r,"D_g"]\arrow[d,"\circ_i"] & \op M(k+l-1) \arrow[d,"\circ_i"]   \\
  \op M(k)\otimes \op C(l)\arrow[r, "D_g\otimes \id"] & \op M(k)\otimes \op C(l) 
 \end{tikzcd}
 \end{center}
 and 
 \begin{center}
 \begin{tikzcd}[row sep=large, column sep=20ex]
  \op M(r)\otimes \op M(r) \arrow[r,"D_g\otimes \id +\id \otimes D_g"]\arrow[d,"\mu"] & \op M(r)\otimes \op M(r) \arrow[d,"\mu"]   \\
  \op M(r) \arrow[r, "D_g"] & \op M(r)
 \end{tikzcd}
 \end{center}
commute (i.e. $D_g$ defines a coderivation with respect to the cooperadic comodule structure, and a derivation with respect to the dg commutative algebra structure in each arity). Furthermore, we assume that the respective filtrations are compatible, i.e. for $g\in F_s\frakg$ and $m\in F^t \op M(r)$, we have $D_g(m)\in F^{t-s} \op M(r)$. We refer to such morphisms as biderivations and we denote by $\mathrm{BiDer}(\op M)\subset \Hom(\op M,\op M)$ the space of biderivations. It carries a natural dg Lie algebra structure. In particular, we have a morphism of dg Lie algebras
\begin{align*}
\frakg &\rightarrow \mathrm{BiDer}(\op M)\\
g&\mapsto D_g.
\end{align*}
Notice moreover that in this case $D_g$ defines a co-Lie coderivation on the Harrison complex arity-wise (since $\Harr \op M (r)$ is cofree) which in addition is compatible with the right comodule structure over $\op C$.

Returning to the deformation complex above, notice that the morphism $\Phi$ induces a preferred element of degree one. Indeed, for
\[
\tilde \Phi: \Harr \op M \xrightarrow {\pi} \op M[1] \xrightarrow{s^{-1}} \op M \xrightarrow \Phi \op N \xrightarrow {\iota} \Omega \op N
\]
we have the following result.

\begin{prop}\label{prop:Lieaction}
The action of $\frakg$ on $\op M$ defines the morphism of complexes
\begin{align*}
W:\frakg[-1] &\rightarrow \Def(\op M\xrightarrow \Phi \op N)\\
s^{-1} g&\mapsto \tilde \Phi \circ D_g.
\end{align*}
\end{prop}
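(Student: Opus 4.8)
The goal is to verify that $W$ is a degree-preserving chain map, and since $W$ is built entirely from the structure maps already in play and is $S$-equivariant, this reduces to the single identity $\mathcal D(\tilde\Phi\circ D_g)=\pm\,\tilde\Phi\circ D_{d_\frakg g}$ for homogeneous $g\in\frakg$, where $\mathcal D=d+\partial_{\mathsf{op}}+\partial_{\mathsf{mod}}$ is the deformation differential and the right-hand side is $W$ applied to the desuspended differential of $s^{-1}g$. Two inputs will be used throughout. First, $g\mapsto D_g$ is a morphism of dg Lie algebras into $\BiDer(\op M)$, so that $D_{d_\frakg g}=[d_{\op M},D_g]$. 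Second, the content of the two commuting squares together with the Remark preceding the statement: $D_g$ descends to a co-Lie coderivation of $\Harr\op M$ in each arity which is simultaneously compatible with the right $\op C$-comodule cocomposition (first square) and a derivation for the commutative product (second square). Using that $D_g$ on $\Harr\op M$ is the coderivation extending its action on the cogenerators $\op M[1]$, it commutes with $s^{-1}\circ\pi$ up to sign, so that $\tilde\Phi\circ D_g=\pm\,\widetilde{\Phi\circ D_g}$, where for a map $\Psi\colon\op M\to\op N$ I write $\widetilde\Psi:=\iota\circ\Psi\circ s^{-1}\circ\pi$.

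The guiding principle I would use is the infinitesimal picture. Because $D_g\in\BiDer(\op M)$, the map $\id+\epsilon D_g$ is, to first order in a formal parameter $\epsilon$, an automorphism of the Hopf $\op C$-comodule $\op M$: the first square makes it compatible with the cocomposition and the second makes it multiplicative to first order (it is the linearization of $\exp(\epsilon D_g)$). Consequently $\Phi_\epsilon:=\Phi\circ(\id+\epsilon D_g)$ is, to first order, a morphism of Hopf $\op C$-comodules, whose \emph{only} failure is that it does not commute with the differentials, the defect being $\epsilon\,\Phi\circ[d_{\op M},D_g]=\epsilon\,\Phi\circ D_{d_\frakg g}$ (here one uses that $\Phi$ is a chain map and that $g\mapsto D_g$ is a dg Lie map). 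Since the first-order term of $\widetilde{\Phi_\epsilon}$ is exactly $\pm\epsilon\,\tilde\Phi\circ D_g=\pm\epsilon\,W(s^{-1}g)$, the sought identity becomes the statement that $\mathcal D$ sends $\widetilde{\Phi\circ D_g}$ to the element induced by this chain-map defect.

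To make this a proof I would isolate the following structural lemma and prove it directly: for any $S$-equivariant family $\Psi\colon\op M\to\op N$ that is compatible with the cocompositions along $\phi$ and is a derivation-along-$\Phi$ for the products (equivalently, the first-order part of a family $\Phi\circ\exp(\epsilon D_g)$), one has $\mathcal D\widetilde\Psi=\pm\,\widetilde{\,d_{\op N}\Psi-(-1)^{|\Psi|}\Psi\,d_{\op M}\,}$. The proof is the same bookkeeping computation as for the Maurer--Cartan property in Lemma~\ref{lemma:mr} and the anti-commutation lemma above, now tracking a general such $\Psi$ rather than the honest morphism $\Phi$. Concretely, expanding $\mathcal D\widetilde\Psi=d\widetilde\Psi+\partial_{\mathsf{op}}\widetilde\Psi+\partial_{\mathsf{mod}}\widetilde\Psi$, the cobar differential $d_{\Omega\op N}$ applied on the image of $\iota$ produces the cocomposition cross-term, which cancels against $\partial_{\mathsf{op}}$ (recall $m_{\mathsf{op}}=\iota\circ\phi$) precisely because $\Psi$ is compatible with the cocomposition; the Harrison differential $d_{\Harr\op M}$ produces the product cross-term, which cancels against $\partial_{\mathsf{mod}}$ (recall $m_{\mathsf{mod}}=\Phi\circ s^{-1}\circ\pi$) using the derivation-along-$\Phi$ (Leibniz) property in place of the algebra-map relation \eqref{eq:phidgcamap}. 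What survives is exactly the chain-map defect. Applying this with $\Psi=\Phi\circ D_g$, whose defect is $\Phi\circ D_{d_\frakg g}$, yields $\mathcal D(\tilde\Phi\circ D_g)=\pm\,\tilde\Phi\circ D_{d_\frakg g}$, as desired.

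I expect the $\partial_{\mathsf{mod}}$-cancellation in the structural lemma to be the main obstacle. For the honest morphism $\Phi$ this step rests on the algebra-map identity \eqref{eq:phidgcamap}; for $\Psi=\Phi\circ D_g$ that identity is no longer available and must be replaced by its Leibniz counterpart coming from the second commuting square, so one has to check that the resulting cross terms reorganize cleanly rather than leaving a spurious residue. The infinitesimal picture above is what guarantees that they do: $\Phi\circ\exp(\epsilon D_g)$ really is multiplicative to first order, so no genuine product-defect can appear, only the chain-map defect. Beyond this, the difficulty is, as elsewhere in this section, purely a matter of sign bookkeeping — tracking the Koszul signs from the desuspensions $s^{-1}$, the shift $[-1]$ on $\frakg$, and the degree of $D_g$ — and of verifying that the filtration-compatibility $D_g(F^t\op M(r))\subset F^{t-s}\op M(r)$ keeps all manipulations inside the complete setting, so that the infinite sums defining $\partial_{\mathsf{op}}$ and $\partial_{\mathsf{mod}}$ converge.
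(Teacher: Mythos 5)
Your proposal is correct and takes essentially the same route as the paper: the paper proves exactly your structural identity by a direct term-by-term computation in the dual model $\Omega \op N \hotimes_S \Harr \op M^c$, cancelling the cobar term against $\partial_{\mathsf{op}}$ via the compatibility of $D_g$ with the right comodule structure together with the comodule-morphism property of $(\Phi,\phi)$, cancelling the Harrison term against $\partial_{\mathsf{mod}}$ via the derivation property of $D_g$ together with relation \eqref{eq:phidgcamap}, and identifying the surviving internal-differential terms with the chain-map defect $\tilde\Phi\circ D_{dg}$. Your structural lemma and the $\Phi\circ(\id+\epsilon D_g)$ heuristic are a thin abstraction layer over the identical cancellations, not a genuinely different argument.
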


\begin{proof}
Equivalently, $\frakg$ acts on $\op M^c$, respecting the complete right dg Hopf operadic $\op C^c$-module structure. Extend this action by derivation to $\Harr \op M^c$, and furthermore bilinearly to $\Omega \op N \hotimes_S \Harr \op M^c $. We identify $W(s^{-1}g)=\tilde \Phi \circ D_g$ with 
\[
(-1)^{|n_a|+|m^a||g|} n_a \otimes s^{-1} (g.m^a)
\]
and $W(d(s^{-1}g))=-W(s^{-1}(dg))=-\tilde \Phi \circ D_{dg}$ with
\[
-(-1)^{|n_a|+|m_a||dg|} n_a \otimes s^{-1} ((dg).m^a)=-(-1)^{|m_a||g|} n_a \otimes s^{-1} ((dg).m^a)
\]
since $|n_a|+|m^a|=0$. As a part of $dW(g)$, we then compute
\begin{align*}
&(-1)^{|n_a|+|m^a||g|} d_{\op N}  (n_a)\otimes s^{-1} (g.m^a)  + (-1)^{2|n_a|+|m^a||g|+1} n_a\otimes s^{-1} d_{\op M^c}( g . m^a))\\
= \ & (-1)^{|n_a|+|m^a||g|} d_{\op N}  (n_a)\otimes s^{-1} (g.m^a) + (-1)^{2|n_a|+|m^a||g|+1} \left( n_a\otimes s^{-1} ((d g) . m^a) + (-1)^{|g|} n_a\otimes s^{-1} ( g . d_{\op M^c} m^a) \right)\\
= \ & - (-1)^{|m^a||g|} n_a\otimes s^{-1} ((d g) . m^a)
\end{align*}
as in particular the fact that $\tilde \Phi$ describes a morphism of complexes yields
\[
(-1)^{|n_a|+|m^a||g|} d_{\op N}  (n_a)\otimes s^{-1} (g.m^a) + (-1)^{|m^a||g|+1+|g|} n_a\otimes s^{-1} ( g . d_{\op M^c} m^a)=0.
\]
We need to show that all other terms appearing in $d W(g)$ vanish. For instance, we have
\[
(-1)^{|n_a|+|m^a||g|} d_{\Omega \op N} (n_a)\otimes s^{-1} (g.m^a)= -\sum_i (-1)^{|d_a''|+|m^a||g|} (n_a' \circ_i s^{-1} d_a'') \otimes s^{-1}(g. m^a)
\]
while on the other hand
\begin{align*}
(-1)^{|n_a|+|m^a||g|} \partial_\mathsf{op}   (n_a\otimes  s^{-1}(g. m^a))&= (-1)^{|n_a|+|m^a||g|} (s^{-1} d_b\otimes c^b )\cdot(  n_a\otimes  s^{-1}(g. m^a))\\
&=\sum_i (-1)^{|n_a||d_b|+|m^a||g|+|c^b||g|+|c^b|} n_a\circ_i s^{-1}d_b \otimes s^{-1} (g. m^a)\circ_i c^b \\
&=\sum_i (-1)^{|n_a||d_b|+|m^a||g|+|c^b||g|+|c^b|} n_a\circ_i s^{-1}d_b \otimes s^{-1} (g. (m^a \circ_i c^b)) \\
&=\sum_i (-1)^{|n_a||d_b|} (n_a\otimes d_b \otimes m^a\circ_i c^b)( - \circ_i s^{-1} -)\otimes s^{-1} (g .-)\\
&=\sum_i  (n_e'\otimes d_e'' \otimes m^e )( - \circ_i s^{-1} -)\otimes s^{-1}(g.-)\\
&=\sum_i (-1)^{|d_e''|+|m^e||g|} (n_e' \circ_i s^{-1} d_e'')\otimes s^{-1} (g. m^e).
\end{align*}
Similarly,
\begin{align*}
(-1)^{2|n_a|+|m^a||g|} n_a\otimes d_{\Harr \op M^c} s^{-1}(g. m^a)&= (-1)^{|m^a||g|} n_a\otimes g. d_{\Harr \op M^c} s^{-1}m^a\\
& =   (-1)^{(|{m^a}'|+|{m^a}''|)|g|+1+|{m^a}'|} n_a\otimes \frac12  g. [s^{-1} {m^a}' ,s^{-1} {m^a}'']
\end{align*}
whereas,
\begin{align*}
(-1)^{|n_a|+|m^a||g|}\partial_{\mathsf{mod}} (n_a \otimes s^{-1}(g. m^a))&= (-1)^{|n_b|+|n_a|+|m^a||g|}(n_b \otimes s^{-1}m^b)\cdot (n_a\otimes s^{-1}(g. m^a)) \\
&= (-1)^{|n_b|+|m^b||n_a|+|m^a||g|} n_b\wedge  n_a \otimes [ s^{-1} m^b,s^{-1} (g.m^a)]\\
&= (-1)^{|n_b|+|m^b||n_a|+|m^a||g|+|m^b||g|} n_b\wedge  n_a \otimes \frac12 g.[ s^{-1} m^b,s^{-1} m^a]\\
&= (-1)^{|m^b||n_a|} (n_b\wedge n_a\otimes m^b\otimes m^a) (- \otimes \frac12 g. [s^{-1} - ,s^{-1} -])\\
&= (n_e\otimes {m^e}'\otimes {m^e}'') (- \otimes \frac12 g.[s^{-1} - ,s^{-1} -])\\
&= (-1)^{|{m^e}''||g|+|{m^e}'||g|+|{m^e}'|} n_e \otimes \frac12 g. [s^{-1}{m^e}' ,s^{-1} {m^e}'' ].
\end{align*}
The statement follows.

\end{proof}

\section{Gerstenhaber and Batalin-Vilkovisky operads}\label{sec:BV}
In this section we recall the several notions associated to the Gerstenhaber and the Batalin-Vilkovisky operads. In the next section, we will consider modules over these operads. Denote by $\e_2$ the operad governing Gerstenhaber algebras. It is a quadratic operad generated by two binary operations, a commutative product $\mu \in \e_2(2)$ of degree $0$, and a symmetric Lie bracket $\lambda \in \e_2(2)$ of degree $-1$, subject to the usual compatibility condition. Recall further that as symmetric sequences $\e_2=\Com\circ \Lie\{1\}$, and that the operad $\e_2$ is Koszul. Its Koszul dual cooperad is $\e_2^\text{!`}=\e_2^c\{2\}$ \cite{GetzlerJones94}. Its dual cooperad $\e_2^c$ computes the cohomology of the little discs operad $\lD_2$. Its arity $r$ component is given by the graded commutative algebra \cite{Arnold69}
\[
\e_2^c(r)=H^*(\lD_2(r);\K)\cong S(w^{(ij)})_{1\leq i\neq j\leq r }/(w^{(ij)}-w^{(ji)} \ ; \ w^{(ij)}w^{(jk)}+w^{(jk)}w^{(ki)}+w^{(ki)}w^{(ij)}).
\]
with $|w^{(ij)}|=1$.

%

Denote by $\bv$ the operad governing Batalin-Vilkovisky algebras. It is generated by a unary operation $\Delta \in \bv(1)$ of degree -1, and two binary operations, a commutative product $\mu \in \bv_2(2)$ of degree $0$, and a symmetric Lie bracket $\lambda\in \bv(2)$ of degree $-1$. In this case, its dual cooperad $\bv^c$ computes the cohomology of framed little disk operad $\flD_2$. In arity $r$, it is given by the graded commutative algebra \cite{Getzler94}
\[
\bv^c(r)=H^*(\flD_2(r);\K) \cong S(w^{(ij)})_{1\leq i, j\leq r }/(w^{(ij)}-w^{(ji)} \ ; \ w^{(ij)}w^{(jk)}+w^{(jk)}w^{(ki)}+w^{(ki)}w^{(ij)}).
\]
with again all generators of degree one. Note that $\bv$ is not a quadratic operad. Nevertheless, Vallette developed a Koszul duality theory for operads which contain quadratic and linear relations (\cite{Vallette07}, see also the appendix of \cite{HomBV12}). In \cite{HomBV12}, G\'{a}lvez-Carrillo, Tonks and Vallette use these techniques to obtain an explicit cofibrant resolution of the operad $\bv$. For this, define $\kbv$ as the quadratic cooperad
\[
\kbv:= \K[\delta]\circ \ke=\K[\delta]\circ \e^c_2\{2\}=\K[\delta]\circ \Com^c\{2\}\circ \Lie^c\{1\}
\]
where $\delta$ is of degree $-2$ and $\K[\delta]\cong T^c(\delta)$ is the cofree coalgebra generated by $\delta$, thought of as a cooperad concentrated in arity one. As in \cite{HomBV12}, we denote generic element in $\kbv(n)$ by
\[
\delta^s \otimes (L_1\wedge\dots\wedge L_k)
\]
with $L_i\in \Lie^c\{1\}(n_i)$ for $i=1,\dots,k$ and $n_1+\dots+n_k=n$. Note that the degree of this element is
\[
-2s+2\cdot (1-k)+(1-n_1)+\dots+(1-n_k)=-2s+2\cdot (1-k)+k-n=-2s+2-k-n.
\] 
The cooperad $\kbv$ may be endowed with the differential $d_\kbv$ acting as
\[
d_\kbv(\delta^s \otimes (L_1\wedge\dots\wedge L_k))=\delta^{s-1}\otimes \sum\limits_{i=1}^k (-1)^{\epsilon_i} L_1\wedge\dots\wedge L_i' \wedge L_i{''} \wedge \dots\wedge L_k.
\]
Here $\epsilon_i=|L_1|+\dots+|L_{i-1}|=1-n_1+\dots+1-n_{i-1}$ and $L_i'\wedge L_i{''}$ is Sweedler-type notation for the image of $L_i$ under the binary part of the cooperadic cocomposition on $\Lie^c\{1\}$
\[
\Lie^c\{1\}\rightarrow \Lie^c\{1\}(2)\otimes (\underbrace{\Lie^c\{1\}\otimes \Lie^c\{1\}}_{\substack{\rotatebox[origin=c]{90}{$\in$} \\ L_i' \wedge L_i''}}).
\]
The dg cooperad $(\kbv,d_{\kbv})$ is called the Koszul dual cooperad of the operad $\bv$. Indeed, in \cite{HomBV12} the authors show that the cobar construction $\Omega(\kbv)$ of the Koszul dual cooperad $\kbv$ is a resolution of $\bv$, i.e. there is a quasi-isomorphism of operads
\[
\Omega(\kbv)\xrightarrow{\sim} \bv.
\]
By Koszul duality the above is equivalent to the existence of a quasi-isomorphism of operads
\[
\Omega(\bv^c)\xrightarrow{\sim} (\kbv)^c=:\bvk
\]
where $\bvk$ is equipped with the dual differential $d_\bvk:=d_\kbv^*$. The morphism is induced by the linear map of degree zero, $\kappa: \bv^c\rightarrow \bvk$ which is non-zero on generators only, and sends
\begin{align*}
\mu^c &\mapsto s^{-1}\lambda\\
\lambda^c &\mapsto s^{-1}\mu\\
\Delta^c &\mapsto s\delta^*.
\end{align*}
In particular, let 
\[
y\in \bigotimes\limits_{v\in VT}\overline{\bv^c} (\mathrm{star}(v))[-1]
\]
describe an element in $\Omega(\bv^c)$. In this case, the morphism above may be expressed as the composition
\[
(\gamma_T\circ \mathcal{F}_T(\kappa))(y) \in \bvk
\]
where $\mathcal{F}_T(\kappa)$ describes the tensorwise application of $\kappa$ to the tensor product of the vertex set of $T$. Finally, note that, as $S$-modules,
\[
\bvk=\K[\delta^*]\circ (\ke)^c=\K[\delta^*]\circ \e_2\{-2\}=\K[\delta^*]\circ \Com\{-2\}\circ \Lie\{-1\}.
\]

\subsection{Module structures over $\bv^c(r)$}

For any $r\geq 1$, both $(\Omega(\bv^c))(r)$ and $(\bvk)(r)$ are modules over the graded commutative algebra $\bv^c(r)$. For instance, the right action on $(\Omega(\bv^c))(r)$
\[
(\Omega(\bv^c))(r)\otimes \bv^c(r)\xrightarrow{1\otimes \Delta_{\bv^c}} (\Omega(\bv^c))(r)\otimes (\mathcal{F}(\bv^c))(r)\xrightarrow{\mathcal{F}(\mu)} \Omega(\bv^c)(r)
\]
is given by applying the cooperadic cocomposition on $\bv^c(r)$ before using the commutative algebra structure on each vertex in the tensor product indexed by rooted trees. On the other hand, the right action on $\bvk(r)$ is defined by requiring the morphism
\[
\Omega(\bv^c)(r)\rightarrow \bvk(r)
\]
to be equivariant with respect to the actions. In fact, it is thus induced by the following right action 
\begin{align*}
q_1:\bvk(1)\otimes \bv^c(1)&\rightarrow \bvk(1)\\
q_2:\bvk(2)\otimes \bv^c(2)&\rightarrow \bvk(2)
\end{align*}
of $\bv^c(1)$ and $\bv^c(2)$ on $\bvk(1)$ and $\bvk(2)$, respectively,
\begin{align*}
q_1(\delta^* \otimes \Delta) = & 0\\
q_2(\mu\otimes \mu^c)= & \mu\\
q_2(\lambda\otimes\mu^c)=& \lambda\\
q_2(\mu\otimes \lambda^c)=& 0\\
q_2(\lambda\otimes \mu^c)=&\lambda.
\end{align*}
In arity $r\geq 3$, the action is defined by a sum, indexed by rooted trees with $r$ leaves whose vertices are bi- or trivalent, of compositions of the sort $q^T_r:\bvk(r)\otimes \bv^c(r)\rightarrow \bvk(r)$ given by
\[
q^T_r: \bvk(r)\otimes \bv^c(r)\xrightarrow{\Delta_T\otimes \Delta_T} \bigotimes\limits_{v\in VT}\left( \bvk(\mathrm{star}(v))\otimes \bv^c(\mathrm{star}(v))\right)\xrightarrow{\mathcal{F}_T(q)} \bigoplus\limits_{v\in VT} \bvk(\mathrm{star}(v)) \xrightarrow{\nabla_T} \bvk(r).
\]
The left actions are defined analogously.

\section{Graph (co)operads, (co)modules and complexes}

In this section, we define the combinatorial tools on which we will apply the notions of the previous sections. More precisely, we recall the two models for the framed configuration spaces of points on surfaces introduced by Campos and Willwacher \cite{CW}, and Campos, Idrissi, and Willwacher \cite{CIW}. Moreover, we give the construction of the combinatorially defined dg Lie algebra $\GCg$ and its ``hairy'' variant $\HGCg$.

\subsection{The operad $\Graphs$}
The operad $\Graphs$ was introduced by Kontsevich \cite{Kontsevich99}. Let $r\geq 1$. Elements of $\Graphs(r)$ are given by formal series of (isomorphism classes of) undirected graphs with 
\begin{itemize}
\item $r$ labelled vertices, numbered by $1,\dots,r$, called external vertices,
\item an arbitrary number of indistinguishable vertices, called internal vertices
\end{itemize} 
subject to the conditions that
\begin{itemize}
\item internal vertices are of valence at least three,
\item every connected component contains at least one external vertex.
\end{itemize}
Additionally, each graphs comes with an ordering on the set of edges, defined up to signed permutation. The degree of a graph with $k$ edges and $N$ internal vertices is
\[
2N-k.
\]
The differential is denoted by $d_s$ and given by a ``vertex splitting'' operation. In the case of the internal vertices, this amounts to replacing each vertex (one at the time, and taking the sum over all vertices) by two internal vertices connected by an edge, and summing over all possible ways of reconnecting the edges previously attached to the ``splitted'' vertex to the two new vertices in such a way that the valence condition is respected. External vertices are split into an internal and an external vertex connected by an edge. Again we require the internal vertex to remain at least trivalent, when we sum over all possible ways of reconnecting the edges previously attached to the splitted vertex. The new edge is placed last in the new ordering of the edges.

\begin{figure}[h]
\centering
{{
\begin{tikzpicture}[baseline=-.55ex,scale=.7, every loop/.style={}]
\node[circle,draw,inner sep=1.5pt] (a) at (0,0) {$i$};
\node (d1) at (0.2,1) {};
\node (d2) at (-0.6,1) {};
\node (d3) at (-0.2,1) {};
\node (d4) at (0.6,1) {};
\draw (a) to (d1);
\draw (a) to (d2);
\draw (a) to (d3);
\draw (a) to (d4);
\node (aa) at (-1,0) {$d_s$};
\node (aa) at (1,0) {$= \sum$};
\node[circle,draw,inner sep=1.5pt] (a2) at (2.5,0) {$i$};
 \node[circle,draw,fill,inner sep=1.5pt] (b2) at (2.5,0.75) {};
\node (d12) at (2.9,1.4) {};
\node (d22) at (1.8,1) {};
\node (d32) at (2.1,1.4) {};
\node (d42) at (3.2,1) {};
\draw (b2) to (d12);
\draw (a2) to (d22);
\draw (b2) to (d32);
\draw (a2) to (d42);
\draw (a2) to (b2);
\node[circle,draw,fill,inner sep=1.5pt] (a3) at (7,0) {};
\node (d13) at (6.5,0.75) {};
\node (d23) at (6.5,-0.75) {};
\node (d33) at (7.5,0.75) {};
\node (d43) at (7.5,-0.75) {};
\draw (a3) to (d13);
\draw (a3) to (d23);
\draw (a3) to (d33);
\draw (a3) to (d43);
\node (aa) at (6,0) {$d_s$};
\node (aa) at (8,0) {$= \sum$};
\node[circle,draw,fill,inner sep=1.5pt] (a23) at (9,0) {};
 \node[circle,draw,fill,inner sep=1.5pt] (b23) at (10,0) {};
\node (d123) at (8.5,0.75) {};
\node (d223) at (8.5,-0.75) {};
\node (d323) at (10.5,0.75) {};
\node (d423) at (10.5,-0.75) {};
\draw (a23) to (d123);
\draw (a23) to (d223);
\draw (b23) to (d323);
\draw (b23) to (d423);
\draw (a23) to (b23);
\end{tikzpicture}}}
\caption{The differential $d_s$ is defined by the vertex splitting operations above.}\label{fig:ds}
\end{figure}

The collection $\Graphs(r)=\{\Graphs(r)\}_{r\geq 1}$ assembles to form an operad. The symmetric group acts by permuting the labels of the external vertices, while the operadic composition on $\Graphs$ is given by insertion at external vertices. More precisely, $\Gamma_1\circ_j \Gamma_2$ is obtained by inserting $\Gamma_2$ in the $j$-th external vertex of $\Gamma_1$ and summing over all possible ways of connecting the ``loose'' edges which were previously attached to vertex $j$ to the (internal and external) vertices of $\Gamma_1$.

\begin{figure}[h]
\centering
{{
\begin{tikzpicture}[baseline=-.55ex,scale=.6, every loop/.style={}]
 \node[circle,draw,inner sep=1.5pt] (a) at (0,0) {$1$};
 \node[circle,draw,inner sep=1.5pt] (b) at (1,0) {$2$};
\draw (a) edge [bend left=45] (b);
\node (aa) at (2,0) {$\circ_2$};
 \node[circle,draw,inner sep=1.5pt] (a2) at (0+3,0) {$1$};
 \node[circle,draw,inner sep=1.5pt] (b2) at (1+3,0) {$2$};
 \node[circle,draw,inner sep=1.5pt] (c2) at (2+3,0) {$3$};
 \node[circle,draw,fill,inner sep=1.5pt] (d22) at (1+3,1) {};
\draw (b2) to (d22);
\draw (c2) to (d22);
\draw (a2) to (d22);
\node (aaa) at (6,0) {$=$};
\node[circle,draw,inner sep=1.5pt] (a3) at (7,0) {$1$};
\node[circle,draw,inner sep=1.5pt] (b3) at (8,0) {$2$};
\node[circle,draw,inner sep=1.5pt] (c3) at (9,0) {$3$};
\node[circle,draw,inner sep=1.5pt] (d3) at (10,0) {$4$};
\node[circle,draw,fill,inner sep=1.5pt] (d233) at (9,1) {};
\draw (b3) to (d233);
\draw (c3) to (d233);
\draw (d3) to (d233);
\draw (a3) edge [bend left=45] (b3);
\node (aaaa) at (11,0) {$+$};
\node[circle,draw,inner sep=1.5pt] (a4) at (12,0) {$1$};
\node[circle,draw,inner sep=1.5pt] (b4) at (13,0) {$2$};
\node[circle,draw,inner sep=1.5pt] (c4) at (14,0) {$3$};
\node[circle,draw,inner sep=1.5pt] (d4) at (15,0) {$4$};
\node[circle,draw,fill,inner sep=1.5pt] (d2334) at (14,1) {};
\draw (b4) to (d2334);
\draw (c4) to (d2334);
\draw (d4) to (d2334);
\draw (a4) edge [bend left=60] (c4);
\node (aaaaa) at (16,0) {$+$};
\node[circle,draw,inner sep=1.5pt] (a45) at (17,0) {$1$};
\node[circle,draw,inner sep=1.5pt] (b45) at (18,0) {$2$};
\node[circle,draw,inner sep=1.5pt] (c45) at (19,0) {$3$};
\node[circle,draw,inner sep=1.5pt] (d45) at (20,0) {$4$};
\node[circle,draw,fill,inner sep=1.5pt] (d23345) at (19,1) {};
\draw (b45) to (d23345);
\draw (c45) to (d23345);
\draw (d45) to (d23345);
\draw (a45) edge [bend left=45] (d45);
\node (aaaaaa) at (21,0) {$+$};
\node[circle,draw,inner sep=1.5pt] (a456) at (22,0) {$1$};
\node[circle,draw,inner sep=1.5pt] (b456) at (23,0) {$2$};
\node[circle,draw,inner sep=1.5pt] (c456) at (24,0) {$3$};
\node[circle,draw,inner sep=1.5pt] (d456) at (25,0) {$4$};
\node[circle,draw,fill,inner sep=1.5pt] (d233456) at (24,1) {};
\draw (b456) to (d233456);
\draw (c456) to (d233456);
\draw (d456) to (d233456);
\draw (a456) edge [bend left=30] (d233456);
\end{tikzpicture}}}
\caption{The operadic composition in $\Graphs$ is given by insertion at the external vertices.}\label{fig:operadiccomp}
\end{figure}

\begin{rem}
Notice that one can define more general variants of this operad, denoted $\Graphs_n$, one for each $n\geq 0$, and that here we are only considering the version for $n=2$, i.e. $\Graphs=\Graphs_2$. The subscript defines the degrees that are assigned to the intenral vertices (degree $n$) and edges (degree $n-1$).
\end{rem}

In the case of its dual, the cooperad $\pdGraphs$, we only consider finite linear combinations of graphs satisfying the same valence and connectivity conditions as in $\Graphs$. The differential on $\pdGraphs(r)$ is the adjoint of the vertex splitting operation. It is thus given by ``edge contraction'', that is, two internal vertices connected by an edge are merged into one internal vertex, while an internal and an external vertex connected by an edge are merged to an external vertex. Edges between external vertices are not contracted. The cooperadic cocompositions dual to the operadic compositions in $\Graphs$ are described by subgraph extractions.

\begin{rem}\label{rem:graphsfintype}
For $r\geq 1$, the dg vector space $\pdGraphs(r)$ carries a natural graded commutative algebra structure. The product is obtained by identifying external vertices. It is compatible with the differential. Equipped with the increasing filtration
\[
F^p\pdGraphs=\{\Gamma \ | \ 2\# \text{ edges } \leq p\}
\]
$\pdGraphs$ defines a filtered dg Hopf cooperad satisfying the finiteness conditions from Proposition \ref{prop:filtered}. Indeed, for fixed arity and degree, fixing the number of edges leaves us with only finitely many diagrams. Thus, the subquotients
\[
F^s \pdGraphs(r)/F^{s-1}\pdGraphs(r)
\]
are of finite type. Dually, $\Graphs$ defines a complete dg Hopf operad.
\end{rem}

\begin{figure}[h]
\centering
{{
\begin{tikzpicture}[baseline=-.55ex,scale=.7, every loop/.style={}]
 \node[circle,draw,inner sep=1.5pt] (a) at (0,0) {$1$};
 \node[circle,draw,inner sep=1.5pt] (b) at (1,0) {$2$};
 \node[circle,draw,inner sep=1.5pt] (c) at (2,0) {$3$};
 \node[circle,draw,fill,inner sep=1.5pt] (d1) at (0,1) {};
 \node[circle,draw,fill,inner sep=1.5pt] (d2) at (1,1) {};
\draw (a) to (d1);
\draw (b) to (d1);
\draw (b) to (d2);
\draw (d1) to (d2);
\draw (d2) to (a);
\node (aa) at (3,0) {$\wedge$};
 \node[circle,draw,inner sep=1.5pt] (a2) at (0+4,0) {$1$};
 \node[circle,draw,inner sep=1.5pt] (b2) at (1+4,0) {$2$};
 \node[circle,draw,inner sep=1.5pt] (c2) at (2+4,0) {$3$};
 \node[circle,draw,fill,inner sep=1.5pt] (d22) at (1+4.5,1) {};
\draw (b2) to (d22);
\draw (c2) to (d22);
\draw (a2) to (d22);
\draw (b2) to (c2);
\node (aa) at (7,0) {$=$};
 \node[circle,draw,inner sep=1.5pt] (a3) at (8,0) {$1$};
 \node[circle,draw,inner sep=1.5pt] (b3) at (9,0) {$2$};
 \node[circle,draw,inner sep=1.5pt] (c3) at (10,0) {$3$};
 \node[circle,draw,fill,inner sep=1.5pt] (d13) at (8,1) {};
 \node[circle,draw,fill,inner sep=1.5pt] (d23) at (9,1) {};
  \node[circle,draw,fill,inner sep=1.5pt] (d233) at (10,1) {};
\draw (a3) to (d13);
\draw (b3) to (d13);
\draw (b3) to (d23);
\draw (d13) to (d23);
\draw (b3) to (d233);
\draw (c3) to (d233);
\draw (a3) to (d233);
\draw (a3) to (d23);
\draw (b3) to (c3);
\end{tikzpicture}}}
\caption{The graded commutative product on $\pdGraphs$ is given by identifying external vertices.}\label{fig:dgca}
\end{figure}

\subsection{The operad $\BVGraphs$}
Let $\I$ denote the operadic ideal of $\Graphs$ spanned by graphs containing at least one tadpole at an internal vertex. The differential preserves $\I$. To see this, note that when we split an internal vertex to which a tadpole is attached, the tadpole is either preserved (we remain in $\I$) or it produces a double edge (and the graph is thus zero). Furthermore, for any $r\geq 1$, $\I(r)$ describes a coideal in the dg commutative coalgebra $\Graphs(r)$ (see also \cite{Campos17}). We adopt the notation $\BVGraphs$ for the operadic quotient
\[
\BVGraphs:=\Graphs / \I .
\]
The considerations above imply that $\BVGraphs$ is a dg Hopf operad. 

\begin{rem}
In the dual setting, denote by $\pdBVGraphs(r)$ the graded subalgebra of $\pdGraphs (r)$ spanned by finite linear combinations of graphs without tadpoles at internal vertices. It is preserved by the differential since the only way one might produce a tadpole by contracting an edge is when there are two vertices which are connected by two edges. By symmetry reasons, however, such graphs are zero. Moreover, the cooperadic cocompositions can only create tadpoles at external vertices and not at internal vertices. We conclude that therefore the collection $\pdBVGraphs$ forms a dg Hopf cooperad.
\end{rem}

The operad $\Graphs$ was introduced by Kontsevich in order to prove the formality of the little disks operad (\cite{Tamarkin03}, \cite{Kontsevich99}). Indeed, the little disks operad being weakly equivalent to the Fulton-MacPherson-Axelrod-Singer operad (\cite{AS94}, \cite{FM94}, \cite{GetzlerJones94}, \cite{Sinha04}) $\FM_2$, defined as the compactifications of the configuration spaces of points in $\mathbb{R}^2$, by iterated real bordification, the cohomology of the little disks cooperad $e_2^c$ is related to the dg commutative algebra of differential forms via the zig-zag of quasi-isomorphisms of cooperads below. The result extends to the case of framed configuration spaces of points, and the corresponding framed version of the Fulton-MacPherson-Axelrod-Singer operad, $\FFM_2$, whose cohomology is computed by the Batalin-Vilkovisky cooperad (\cite{GP10}, \cite{Severa10}).

\begin{prop}\label{prop:formality}(\cite{Kontsevich99}, \cite{LV14}, \cite{Willwacher15}, \cite{FresseBook17})
There are zig-zags of quasi-isomorphism of dg Hopf cooperads
\begin{align*}
\e_2^c&\leftarrow \pdGraphs \rightarrow \Omega_{\mathrm{PA}}(\FM_2)\\
\bv^c&\leftarrow \pdBVGraphs \rightarrow \Omega_{\mathrm{PA}}(\FFM_2).
\end{align*}
where $\Omega_{\mathrm{PA}}(...)$ denote piecewise semi-algebraic forms (\cite{HLTV}, \cite{KS00}).
\end{prop}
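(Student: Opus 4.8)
The plan is to observe that both zig-zags factor through the graph (co)operads, with the framed case obtained by decorating the unframed one. The unframed statement is precisely Kontsevich's formality theorem for the little disks operad, so I would follow his strategy as streamlined in \cite{LV14} and \cite{FresseBook17}. In each row the left arrow is an elementary projection and the right arrow is Kontsevich's integration map; the content is in constructing the latter and in the two quasi-isomorphism claims.

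First I would construct the right-hand arrows. To an edge $(i,j)$ of a graph one associates the \emph{propagator} $\omega_{ij}$, the pullback to $\FM_2(r)$ of the normalized $\SO(2)$-invariant volume form on $S^1$ under the map recording the direction from point $i$ to point $j$; this is a closed PA $1$-form. For a graph $\Gamma$ with external vertices $\{1,\dots,r\}$ and internal vertex set $V_{\mathrm{int}}$ one takes the product of propagators over all edges, a form on $\FM_2(r+|V_{\mathrm{int}}|)$, and integrates it along the fiber of the forgetful projection $\FM_2(r+|V_{\mathrm{int}}|)\to \FM_2(r)$. Since each edge contributes degree $1$ and each of the $2$-dimensional internal integrations lowers the degree by $2$, the output has degree $(\#\text{edges})-2(\#\text{internal vertices})$, which is the degree of $\Gamma$ in $\pdGraphs(r)$ dual to the convention on $\Graphs(r)$, so the assignment is degree-preserving.

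The core verifications are then: (i) this assignment is a morphism of dg commutative algebras, where compatibility with products follows because the product on $\pdGraphs$ glues external vertices while the wedge of forms multiplies the integrands over a fiber product, and compatibility with differentials is Stokes' theorem on the compactified fibers, the codimension-one boundary strata of the Fulton--MacPherson compactification producing exactly the edge-contraction differential that is dual to $d_s$; (ii) it intertwines the cooperadic cocompositions with the restriction of forms to the boundary faces along which a subset of points collides. The left-hand arrows are the projections sending a graph with no internal vertices to the product $\prod w^{(ij)}$ of its edge classes and all other graphs to zero, which a direct check shows to be maps of dg Hopf cooperads onto $\e_2^c$, respectively $\bv^c$.

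The main obstacle is twofold. On the analytic side one must show the integrals converge and that the only surviving boundary contributions in the Stokes computation are the expected ones; this rests on Kontsevich's vanishing lemmas, which rule out the collapse of three or more points and the collapse of the whole configuration, together with the vanishing of graphs carrying double edges or internal tadpoles. On the homological side one must prove both maps are quasi-isomorphisms: the projection onto $\e_2^c$ is a quasi-isomorphism because $H^*(\pdGraphs(r))$ is concentrated on internal-vertex-free graphs and reproduces the Arnold presentation of $H^*(\lD_2(r))$, and the Kontsevich map is then a quasi-isomorphism by comparison, since $H^*(\FM_2)\cong H^*(\lD_2)\cong \e_2^c$ and the composite with the projection induces the identity on cohomology. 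Finally, for the framed row one decorates the argument: $\pdBVGraphs$ additionally permits tadpoles at external vertices, corresponding to the diagonal generators $w^{(ii)}$ recording the framing, and the propagator construction extends over the $(S^1)^r$-bundle $\FFM_2(r)\to \FM_2(r)$, the external-tadpole propagator integrating the framing angle; the same Stokes and cocomposition arguments, now accounting for the extra circle directions, give morphisms of dg Hopf cooperads, with the quasi-isomorphism statements following from $H^*(\FFM_2)\cong H^*(\flD_2)\cong \bv^c$.
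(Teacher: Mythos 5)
Your proposal is correct and takes the same route the paper relies on: the paper does not prove Proposition \ref{prop:formality} itself but recalls it from \cite{Kontsevich99}, \cite{LV14}, \cite{Willwacher15}, \cite{FresseBook17}, describing precisely the two maps you construct --- the projection killing graphs with internal vertices (sending an edge between external vertices $i,j$ to $\omega_{ij}$) and Kontsevich's fiberwise Feynman integrals built from the $S^1$-propagator, with Stokes' theorem plus the vanishing lemmas giving compatibility with differentials and cocompositions, and the quasi-isomorphism statements following from the Arnold, respectively Batalin--Vilkovisky, presentations of $H^*(\FM_2)$ and $H^*(\FFM_2)$. Your treatment of the framed row (external tadpoles mapped to the framing-circle volume forms on $\FFM_2(r)$, matching the generators $w^{(ii)}$ of $\bv^c$) likewise agrees with the cited construction of \cite{Severa10}, \cite{GP10}.
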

In arity $r\geq 1$, the left morphisms are given by the projection of diagrams with no internal vertices onto the generators of the dg commutative algebras $\e_2^c(r)$ and $\bv^c(r)$, respectively. More precisely, the graph with one edge connecting the external vertices $i$ and $j$ is sent to $\omega_{ij}$, and diagrams with internal vertices are mapped to zero. The right maps are defined through complicated Feynman type integration techniques.

\subsection{The right operadic $\BVGraphs$-module $\BVGraphsg$.}

We recall several results from \cite{CW} on a first combinatorial model for configuration spaces of points on surfaces. Note that most definitions and results of this section generalize to compact manifolds of higher dimension.

\subsection{The right operadic module $\Graphsg'$}
Let $g\geq 1$ and $\Sigma:=\Sigma_g$ be a compact orientable surface of genus $g$. Let $\{1,a_1,b_1,\dots,a_g,b_g,\nu\}$ of degrees $|1|=0,\ |a_i|=|b_i|=1$ for all $i$, and $|\nu|=2$ denote a basis for its cohomology $H:=H^\bullet (\Sigma;\K)$. We equip the space $H$ with the canonical paring $\langle -,-\rangle$ of degree $-2$. Explicitly, $\langle a_i,b_i \rangle=-\langle b_i,a_i\rangle=-1$ and $\langle \nu,1\rangle=\langle 1,\nu\rangle=1$. To render the notation more concise, we occasionally denote the basis of $H$ by $\{e_1,\dots,e_{2g+2}\}$, rather than as above, and write $\{e_q^*\}_q$ for the Poincar\'e-dual basis. The diagonal element is then
\[
\Delta=\nu\otimes 1 + 1\otimes \nu - \sum\limits_i (a_i\otimes b_i-b_i\otimes a_i)=\sum\limits_q \langle e_q,e_q^*\rangle \ e_q\otimes e_q^*.
\]
We denote by $\overline H=H^{\geq 1}(\Sigma)$ the reduced cohomology. For the linear dual $H^*=(H^\bullet(\Sigma;\K))^*$, we denote its basis by $\{1:=1^*,\alpha_1,\beta_1,\dots, \alpha_g,\beta_g,\omega:=\nu^*\}$. Similar as above, we sometimes abbreviate this basis by $\{f_q \}_q$.

In \cite{CW}, Campos and Willwacher define a right operadic dg module $\Graphsg'$ over the operad $\Graphs$. Elements of $\Graphsg'(r)$ are formal series of (isomorphism classes of) graphs with
\begin{itemize}
\item{$r$ labelled vertices, numbered by $1,\dots,r$, called external vertices,}
\item{an arbitrary number of indistinguishable vertices, called internal vertices,}
\end{itemize}
all of which possibly carry decorations in $\overline{H}^*$. As in the case of $\Graphs$, every connected component contains at least one external vertex. However, we have no restriction on the valence of the internal vertices. The valence of a vertex consists of the number of edges and decorations incident at that vertex. Moreover, odd decorations are included in the orientation data, meaning that there is a linear order on the set of edges and decorations of odd degrees. Graphs with different orderings are identified up to sign. The degree of a graph with $k$ edges, $N$ internal vertices and decorations $\{f_{i_1},\dots,f_{i_m}\}$ is
\[
2N-k+\sum\limits_{l=1}^m |f_{i_l}|.
\]
The differential on $\Graphsg'$ is the sum of two operations. The first, denoted $d_s$ is given by a vertex splitting operation which differs only slightly from the one in $\Graphs$. It splits
\begin{itemize}
\item{internal vertices into two internal vertices connected by an edge, and}
\item{external vertices into an internal and an external vertex connected by an edge.}
\end{itemize}
In both cases, we sum over all possible ways of reconnecting the edges and decorations previously attached to the splitted vertex with no restrictions on the valence. The second, which we denote by $d_p$ is given by the sum over all pairs of decorations, say $f_i, f_j \in H^*$ (additionally, assume that, if both are of odd degree, $f_i$ comes first and $f_j$ second in the ordering of the edges and odd decorations), removing the decorations and replacing them by an edge connecting vertices the respective vertices instead, and finally multiplying the result by the pairing of their linear duals $\langle e_i,e_j \rangle\in \K$.

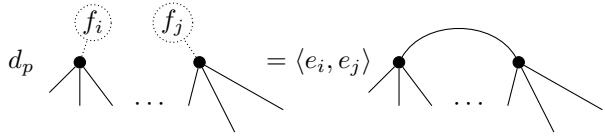
\begin{figure}
\centering
{{
\begin{tikzpicture}[baseline=-.55ex,scale=.7, every loop/.style={}]
\node (aa) at (-1.1,0) {$d_p$};
\node[circle,draw,fill,inner sep=1.5pt] (a) at (0,0) {};
\node (b) at (-0.75,-0.75) {};
\node (c) at (0,-1) {};
\node (d) at (0.75,-1) {};
\node[circle,draw,densely dotted,inner sep=.5pt] (f) at (0.25,0.75) {$f_i$};
\draw (a) to (b);
\draw (a) to (c);
\draw (a) to (d);
\draw[densely dotted] (a) to (f);
\node (aaa) at (1.35,-0.8) {$\cdots$};
\node[circle,draw,fill,inner sep=1.5pt] (a2) at (2.25,0) {};
\node (b2) at (2,-1) {};
\node (c2) at (3,-1.5) {};
\node (d2) at (4,-1) {};
\node[circle,draw,densely dotted,inner sep=.5pt] (f2) at (1.75,0.75) {$f_j$};
\draw (a2) to (b2);
\draw (a2) to (c2);
\draw (a2) to (d2);
\draw[densely dotted] (a2) to (f2);
\node (aaaa) at (4.5,0) {$= \langle e_i,e_j\rangle$};
\node[circle,draw,fill,inner sep=1.5pt] (a3) at (6,0) {};
\node (b3) at (-0.75+6,-0.75) {};
\node (c3) at (0+6,-1) {};
\node (d3) at (0.75+6,-1) {};
\draw (a3) to (b3);
\draw (a3) to (c3);
\draw (a3) to (d3);
\node (aaa3) at (1.35+6,-0.8) {$\cdots$};
\node[circle,draw,fill,inner sep=1.5pt] (a23) at (2.25+6,0) {};
\node (b23) at (2+6,-1) {};
\node (c23) at (3+6,-1.5) {};
\node (d23) at (4+6,-1) {};
\draw (a23) to (b23);
\draw (a23) to (c23);
\draw (a23) to (d23);
\draw (a3) edge[bend left=60] (a23);
\end{tikzpicture}}}
\caption{The differential $d_p$ is defined through the pairing operation. Here $f_i, f_j\in H^*$ and $e_i,e_j\in H$. }\label{fig:pairing}
\end{figure}

\begin{rem}
We tacitly consider all vertices to be decorated by $1^*$. Thus a vertex decorated by $\omega$ is connected by the differential to all vertices.
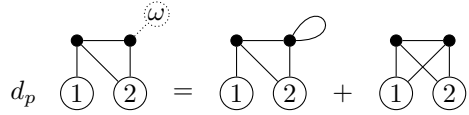
\begin{figure}[h]
\centering
{{
\begin{tikzpicture}[baseline=-.55ex,scale=.7, every loop/.style={min distance=10mm,in=0,out=60,looseness=10}]
 \node (a0) at (-1,0) {$d_p$};
 \node[circle,draw,inner sep=1.5pt] (a) at (0,0) {$1$};
 \node[circle,draw,inner sep=1.5pt] (b) at (1,0) {$2$};
 \node[circle,draw,fill,inner sep=1.5pt] (d1) at (0,1) {};
 \node[circle,draw,fill,inner sep=1.5pt] (d2) at (1,1) {};
\node[circle,draw,densely dotted,inner sep=.5pt] (h) at (1.5,1.5) {$\omega$};
\draw (a) to (d1);
\draw (b) to (d1);
\draw (b) to (d2);
\draw (d1) to (d2);
\draw[densely dotted] (d2) to (h);
\node (aa) at (2,0) {$=$};
 \node[circle,draw,inner sep=1.5pt] (a2) at (3,0) {$1$};
 \node[circle,draw,inner sep=1.5pt] (b2) at (4,0) {$2$};
 \node[circle,draw,fill,inner sep=1.5pt] (d12) at (3,1) {};
 \node[circle,draw,fill,inner sep=1.5pt] (d22) at (4,1) {};
\draw (a2) to (d12);
\draw (b2) to (d12);
\draw (b2) to (d22);
\draw (d12) to (d22);
\draw[loop] (d22) to (d22);
\node (aaa) at (5,0) {$+$};
 \node[circle,draw,inner sep=1.5pt] (a3) at (6,0) {$1$};
 \node[circle,draw,inner sep=1.5pt] (b3) at (7,0) {$2$};
 \node[circle,draw,fill,inner sep=1.5pt] (d13) at (6,1) {};
 \node[circle,draw,fill,inner sep=1.5pt] (d23) at (7,1) {};
\draw (a3) to (d13);
\draw (b3) to (d13);
\draw (b3) to (d23);
\draw (d13) to (d23);
\draw (d23) to (a3);
\end{tikzpicture}}}
\caption{We consider all vertices to be decorated by $1^*$. Thus, $\omega$ is connected to all vertices by the differential. Notice that double edges (two edges connecting the same two vertices) leads to an odd symmetry, and the diagrams are zero.}\label{fig:omegaconnects}
\end{figure}

\end{rem}
The symmetric group $S_n$ acts on $\Graphsg'(r)$ by permuting the labels of the external vertices. Furthermore, the right operadic module structure of $\Graphsg'$ over $\Graphs$ is given by insertion at the external vertices. 

Additionally, $\Graphsg'(r)$ carries the analogous dg cocommutative coalgebra structure as $\Graphs(r)$. In summary, the pair
\[
(\Graphsg',\Graphs)
\]
describes a dg Hopf operad and right operadic dg Hopf module over it.

Let $\I_{\mathrm{(g)}}$ denote the operadic submodule of $\Graphsg'$ spanned by graphs containing at least one tadpole at an internal vertex. Since the right operadic action of $\Graphs$ (and in particular of the operadic ideal $\I\subset \Graphs$) is given by insertion at the external vertices, it clearly preserves $\I_{\mathrm{(g)}}$. Moreover, as with $\I$, the vertex splitting differential preserves $\I_{\mathrm{(g)}}$, and because the pairing operation cannot decrease the number of tadpoles, $\I_{\mathrm{(g)}}$ is closed under the action of the differential. Furthermore, $\I_{\mathrm{(g)}}(r)$ forms a coideal in the dg cocommutative coalgebra $\Graphsg(r)$. It follows that the quotient
\[
\BVGraphsg':=\Graphsg'/ \I_{\mathrm{(g)}}
\]
forms a right operadic dg Hopf module over $\BVGraphs$. Similar to $\BVGraphs$, we represent elements of $\BVGraphsg'$ by formal series of graphs which have no tadpoles at the internal vertices.

\subsection{The dg Lie algebra $\GCg'$}

Campos and Willwacher further define the dg Lie algebra $\GCg'$. Its elements consist of formal series of (isomorphism classes of) connected graphs with a single type of undistinguishable vertices, possibly carrying decorations in $\overline{H}^*$. A graph in $\GCg'$ with $k$ edges, $N$ vertices and $\{f_{i_1},\dots f_{i_m}\}$ decorations has degree
\[
1+2N-k+\sum\limits_{l=1}^m {|f_{i_l}|}.
\]
The differential on $\GCg'$ consists again of similar vertex splitting and pairing operations as above. By abuse of notation, we refer to them again by $d_s$ and $d_p$. They act precisely as they did on internal vertices in the case of $\Graphsg'$, that is,
\begin{itemize}
\item a vertex is split into two vertices connected by an edge, and we sum over all possible ways of reconnecting the edges and decorations previously attached to the splitted vertex,
\item each pair of decorations is replaced by an edge connecting the respective vertices, and the result is multiplied by the pairing of their linear duals - again, a vertex decorated by $\omega$ is connected to all vertices by the differential.
\end{itemize}
The Lie bracket is defined through the same pairing operation. Given $\gamma_1,\gamma_2\in \GCg'$, their Lie bracket is given by summing over all pairs of decorations, one in $\gamma_1$ and the other in $\gamma_2$, replacing them by an edge connecting the respective vertices, and multiplying the result by the pairing of their linear duals. 

\begin{figure}[h]
\centering
{{
\begin{tikzpicture}[baseline=-.55ex,scale=.7, every loop/.style={}]
\node (aa) at (-0.75,0) {$\Bigg [$};
\node (a1) at (0,0.5) {};
\node (a2) at (0,-0.5) {};
\node[circle,draw,inner sep=2pt] (a) at (0,0) {$\gamma_1$};
\node[circle,draw,fill,inner sep=1.5pt] (b) at (1,0) {};
\draw (a) to (b);
\draw (a1) to  (b);
\draw (a2) to (b);
\node[circle,draw,densely dotted,inner sep=.5pt] (c) at (1.25,0.75) {$f_i$};
\draw[densely dotted] (b) to (c);
\node (aaa) at (1.5,0) {$,$};
\node (aaaa) at (4.75,0) {$\Bigg ]=\langle e_i,e_j \rangle $};
\node (a12) at (3,0.5) {};
\node (a22) at (3,-0.5) {};
\node[circle,draw,inner sep=2pt] (a2) at (3,0) {$\gamma_2$};
\node[circle,draw,fill,inner sep=1.5pt] (b2) at (2,0) {};
\draw (a2) to (b2);
\draw (a12) to  (b2);
\draw (a22) to (b2);
\node[circle,draw,densely dotted,inner sep=.5pt] (c2) at (1.75,-0.75) {$f_j$};
\draw[densely dotted] (b2) to (c2);
\node (a13) at (6.5,0.5) {};
\node (a23) at (6.5,-0.5) {};
\node[circle,draw,inner sep=2pt] (a3) at (6.5,0) {$\gamma_1$};
\node[circle,draw,fill,inner sep=1.5pt] (b3) at (7.5,0) {};
\draw (a3) to (b3);
\draw (a13) to  (b3);
\draw (a23) to (b3);
\node (a124) at (9.5,0.5) {};
\node (a224) at (9.5,-0.5) {};
\node[circle,draw,inner sep=2pt] (a24) at (9.5,0) {$\gamma_2$};
\node[circle,draw,fill,inner sep=1.5pt] (b24) at (8.5,0) {};
\draw (a24) to (b24);
\draw (a124) to  (b24);
\draw (a224) to (b24);
\draw (b3) to (b24);
\end{tikzpicture}}}
\caption{The Lie bracket on $\GCg'$ is given via the pairing operation. Here $f_i, f_j\in H^*$ and $e_i,e_j\in H$. }\label{fig:dp}
\end{figure}
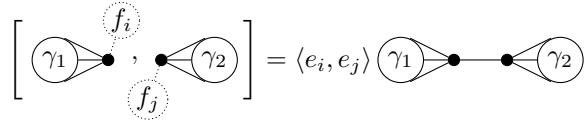

\subsection{Twisted versions}
Let $I_{(g)}\subset \GCg'$ be the Lie ideal spanned by graphs which contain at least one tadpole. Since the differential cannot reduce the number of tadpoles, it also defines a subcomplex. Denote by 
\[
\GCg''=\GCg'/I_{(g)}
\]
the quotient dg Lie algebra. Next, consider the graph
\[
z:= \zzeroo \ + \ \sum\limits_{i=1}^g \ \zzeroab
\]
It defines a Maurer-Cartan element in the quotient dg Lie algebra $\GCg''$ \cite{CW}. 
\begin{defi}
The graph complex of a surface $\Sigma$ of genus $g\geq 1$ is the dg Lie algebra
\[
\GCg:=(\GCg'')^{z}.
\]
In the case $g=1$, the graph $z$ is already a Maurer-Cartan element in $\GCg'$, and we set
\[
\GC_{(1)}^{\minitadp}:=(\GC_{(1)}')^z
\]
to denote the version with tadpoles.
\end{defi}

\begin{rem}
We view $\GCg$ as a complete dg Lie algebra by endowing it with the complete descending filtration
\[
F_p\GCg=\{\gamma\ | \ 2\# \text{ edges } - \text{total degree of all decorations} \geq p\}.
\]
Each $F_p\GCg$ defines a subcomplex, and it satisfies 
\[
[F_p\GCg,F_q \GCg]\subset F_{p+q}\GCg.
\]
\end{rem}

\subsection{Extension by $\spp(H^*)$}
In what follows, we adopt the rather unconventional notation and write $\{\partial_{f_q}\}_q$ for the basis of the linear dual space $(H^*)^*$ of $H^*$. Let $\osp(H^*)$ denote the Lie algebra of graded linear transformations $\sigma:H^*\rightarrow H^*$ which preserve the induced pairing on $H^*$. Denote by $\spp(H^*)$ its Lie subalgebra given by
\[
\spp(H^*)=\{\sigma \in \osp(H^*) \ \vert  \ 1^*\not \in \mathrm{im}(\sigma) \}.
\]
A basis for $\spp(H^*)\subset \gl(H^*)\cong H^*\otimes (H^*)^*$ is given by
\begin{align*}
\text{degree }0: \ & \ \alpha_i \partial_{\beta_i}, \ \beta_i \partial_{\alpha_i}, \ \alpha_i\partial_{\alpha_j}-\beta_j\partial_{\beta_i} \text{ for } i,j=1,\dots g  \text{ and } \alpha_i\partial_{\beta_j} + \alpha_j \partial_{\beta_i}, \ \beta_i\partial_{\alpha_j}+\beta_j \partial_{\alpha_i} \text{ for } i\neq j\\
\text{degree } -1: \ & \ \omega\partial_{\alpha_i} + \beta_i\partial_1, \ \omega\partial_{\beta_i} - \alpha_i\partial_1 \text{ for } i,j=1,\dots g
\end{align*}
In particular, note that for $2n=2g+2$, $\spp(H^*)$ is $2n^2-n-1$ dimensional. It acts as a graded Lie algebra on the dg Lie algebras $\GCg'$ and $\GCg''$ on the decorations (on one at the time, and we take the sum over all decorations). For $g=1$, it is compatible with the twisting differential, since $\sigma.z=0$ for all $\sigma \in \spp(H)$. This is not the case for $g\geq 2$, however, since for any $i$, we have
\begin{align*}
(\omega\partial_{\alpha_i} + \beta_i\partial_1). z\neq & 0\\
(\omega\partial_{\beta_i} - \alpha_i\partial_1). z\neq & 0.
\end{align*}
In contrast, for all other $\sigma \in \spp(H^*)$ we have $\sigma . z=0$. In any event, we twist the semi-direct product of Lie algebras $\spp(H^*)\ltimes \GCg'' $ and, in the case $g=1$ additionally $\spp(H^*) \ltimes \GC_{(1)}'$ by the Maurer-Cartan element $(0,z)$. By abuse of notation, we will denote the twisted complexes by
\begin{align*}
\spp(H^*)\ltimes \GCg:=(\spp(H^*)\ltimes \GCg'')^{(0,z)} \ \text{ for } g\geq 1\\
\spp(H^*)\ltimes \GC^{\minitadp}_{(1)}:=(\spp(H^*)\ltimes \GC_{(1)}')^{(0,z)} \ \text{ for } g=1.
\end{align*}
The extension by $\spp(H^*)$ only affects the cohomology in degrees $-1,0$ non-trivially.

\begin{lemma}\label{lemma:extensioncommutes}
For $g\geq 2$, we have
\[
H^{i}(\spp(H^*) \ltimes \GCg)=H^{i}(\GCg)
\]
for all $i\neq 0$. For $g=1$, 
\begin{align*}
H^i(\spp(H^*)\ltimes \GC_{(1)})=&\spp_i(H^*)\ltimes H^i(\GC_{(1)})\\
H^i(\spp(H^*)\rtimes \GC^{\minitadp}_{(1)})=&\spp_i(H^*)\ltimes H^i(\GC^{\minitadp}_{(1)})
\end{align*}
where $\spp_i(H^*)$ denotes the degree $i$ part of $\spp(H^*)$. It is non-zero only for $i=-1,0$.
\end{lemma}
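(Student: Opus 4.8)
The plan is to realise $\spp(H^*)\ltimes\GCg$ as an extension of complexes and to read off everything from the resulting long exact sequence, the only substantial input being the behaviour of a single connecting homomorphism.

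First I would unravel the twisted differential. Since the twisting element is $(0,z)$ and $\spp(H^*)$ carries no internal differential, for a homogeneous element $(\sigma,\gamma)$ one computes
\[
d_{(0,z)}(\sigma,\gamma)=\big(0,\ d_z\gamma-(-1)^{|\sigma|}\,\sigma\cdot z\big),
\]
where $d_z=d_s+d_p+[z,-]$ is the twisted differential of $\GCg$ and $\sigma\cdot z$ is the action of $\sigma$ on the decorations of $z$. In particular the $\spp(H^*)$-to-$\spp(H^*)$ part of the differential vanishes, so $\GCg$ is a subcomplex with quotient $\spp(H^*)$ equipped with the zero differential. This yields a short exact sequence $0\to\GCg\to\spp(H^*)\ltimes\GCg\to\spp(H^*)\to0$ whose connecting homomorphism $\delta\colon\spp_i(H^*)=H^i(\spp(H^*))\to H^{i+1}(\GCg)$ is induced by $\sigma\mapsto\pm\,\sigma\cdot z$. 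As $\spp(H^*)$ is concentrated in degrees $-1$ and $0$, the groups $\spp_i(H^*)$ vanish for $i\neq-1,0$, and the whole statement reduces to understanding $\delta$ in these two degrees.

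For $g=1$ the identity $\sigma\cdot z=0$ holds for every $\sigma$, so $\delta=0$ and the long exact sequence breaks into short exact sequences $0\to H^i(\GCg)\to H^i(\spp(H^*)\ltimes\GCg)\to\spp_i(H^*)\to0$; being sequences of vector spaces they split, and the splitting is compatible with the induced bracket, giving the asserted semidirect-product identifications (and identically for the tadpole version $\GC^{\minitadp}_{(1)}$), with $\spp_i(H^*)$ non-zero only for $i=-1,0$. For $g\geq2$ the degree-zero generators still satisfy $\sigma\cdot z=0$, so $\delta$ vanishes on $\spp_0(H^*)$; together with $\spp_i(H^*)=0$ for $i\neq-1,0$ this already gives $H^i(\spp(H^*)\ltimes\GCg)=H^i(\GCg)$ for all $i\geq1$ and all $i\leq-2$. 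The only remaining degree is $i=-1$, which forces me to prove that $\delta_{-1}\colon\spp_{-1}(H^*)\to H^0(\GCg)$ is injective; exactness of $0\to H^{-1}(\GCg)\to H^{-1}(\spp(H^*)\ltimes\GCg)\to\ker\delta_{-1}\to0$ then settles degree $-1$ as well, leaving $i=0$ as the sole exceptional degree.

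The main obstacle is this injectivity. Here I would filter $\GCg$ by the number of vertices: since $d_s$ and $[z,-]$ strictly raise the vertex count while $d_p$ preserves it, the induced differential on the single-vertex part is $d_p$, which on one vertex produces only tadpoles and hence vanishes in $\GCg$. Each $\sigma\cdot z$ with $\sigma\in\spp_{-1}(H^*)$ is a cocycle supported entirely on single-vertex graphs, whereas any coboundary $d_z\gamma$ has trivial single-vertex component (the only vertex-preserving contribution being $d_p\gamma^{(1)}=0$). Thus $\delta_{-1}$ is injective as soon as the family $\{\sigma\cdot z\}$ is linearly independent in $\GCg^0$, which I would check by direct evaluation: acting with the basis elements $\omega\partial_{\alpha_i}+\beta_i\partial_1$ and $\omega\partial_{\beta_i}-\alpha_i\partial_1$ on $z=\zzeroo+\sum_i\zzeroab$, and recalling that every vertex carries a tacit $1^*$-decoration, one finds that their components on the single-vertex graphs carrying an $\omega$-decoration are the pairwise distinct graphs $2\,(\text{one vertex decorated by }\omega,\beta_i)$ and $\pm2\,(\text{one vertex decorated by }\omega,\alpha_i)$ respectively. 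Projecting onto these $\omega$-decorated one-vertex graphs exhibits the linear independence, hence the injectivity of $\delta_{-1}$. The careful bookkeeping of signs and of the tacit $1^*$-decorations in this last evaluation is where the real work lies.
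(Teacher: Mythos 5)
Your structural framework is sound and is essentially a repackaging of the paper's argument: the paper likewise observes that the differential reads $d(\sigma,\Gamma)=(0,(d_s+d_p)\Gamma+(-1)^{|\sigma|}([z,\Gamma]-\sigma.z))$, disposes of all degrees except $-1$ via $\sigma.z=0$ for $\sigma\in\spp_0(H^*)$, and settles degree $-1$ for $g\geq 2$ by a filtration argument (the paper filters by $\#\text{edges}+\#\text{vertices}$, noting $\sigma.z\in F_1\setminus F_2$ while the remaining terms of the differential lie in $F_2$ --- the same mechanism as your vertex-count filtration showing coboundaries have no one-vertex, edge-free component). Your long-exact-sequence packaging, the identification of the connecting map $\delta$ with $\sigma\mapsto\pm\sigma.z$, and the reduction of everything to injectivity of $\delta_{-1}$ are all correct.

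However, your final evaluation --- the step you yourself flag as ``where the real work lies'' --- is wrong. You claim that for $\sigma=\omega\partial_{\alpha_i}+\beta_i\partial_1$ (and its companion) the element $\sigma.z$ has a nonvanishing component $2\cdot(\text{one vertex decorated by }\omega,\beta_i)$ on $\omega$-decorated one-vertex graphs. In fact the two contributions to this component --- $\omega\partial_{\alpha_i}$ acting on the decoration $\alpha_i$ of $\zzeroab$, and $\beta_i\partial_1$ acting on the tacit unit decoration of $\zzeroo$ --- \emph{cancel} rather than add. This is forced by consistency with the $g=1$ case that you invoke yourself: for $g=1$ one has $\sigma.z=0$ for \emph{all} $\sigma\in\spp(H^*)$, including the degree $-1$ elements, yet under your computation $\sigma.z$ would equal the nonzero graph $2\cdot(\omega,\beta_1)$ there (the tripod sum being empty). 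It also contradicts the paper's explicit computation in the tripod subsection, where $\sigma_{\beta_j}.z=-\sum_{i\neq j}t_{\alpha_i,\beta_i,\beta_j}$ with no $\omega$-decorated term at all. Consequently, projecting onto $\omega$-decorated one-vertex graphs gives zero and does \emph{not} exhibit injectivity of $\delta_{-1}$. The repair is straightforward: project instead onto the tripod components. For $g\geq 2$ the images of the $2g$ basis elements of $\spp_{-1}(H^*)$ are $\pm\sum_{l\neq i}t_{\alpha_l,\beta_l,\beta_i}$ and $\pm\sum_{l\neq i}t_{\alpha_l,\beta_l,\alpha_i}$; these sums are nonempty and pairwise linearly independent since the constituent tripods carry distinct decoration multisets, so $\sigma\mapsto\sigma.z$ is injective on $\spp_{-1}(H^*)$ and your argument closes. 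Note that this cancellation is also exactly why the distinction between $g=1$ and $g\geq 2$ in the lemma arises in the first place: $\sigma.z$ consists \emph{only} of the tripod sums, which vanish precisely when $g=1$.
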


\begin{proof}
Since for any $g\geq 1$ the graded Lie algebra $\spp(H^*)$ is concentrated in degrees $-1$ and $0$, we have
\[
H^i(\spp(H^*)\ltimes \GCg)\cong H^i(\GCg) \text{ for } i \neq -1,0,1.
\]
A priori, on homogeneous elements of degree $-1$ and $0$ the differential reads
\begin{equation}\label{eq:dsigma}
d(\sigma,\Gamma)=(0, (d_{s}+d_{p}) \Gamma +(-1)^{|\sigma|}([z, \Gamma] - \sigma. z) ).
\end{equation}
Since for all $\sigma\in \spp(H^*)$ of degree zero, $\sigma.z=0$, the differential is not affected by the twist, and
\[
d(\sigma,\Gamma)=(0,d\Gamma).
\]
Thus also in degree one passing to cohomology commutes with extending by $\spp(H^*)$. For $g\geq 2$, consider the descending filtration on $\GCg$ given by
\[
\# \text{ of edges } + \#\text{ of vertices } \geq p.
\]
For $\sigma \in \spp_{-1}(H^*)$, $d(\sigma,\Gamma)$ is as in equation \eqref{eq:dsigma}, but while $\sigma. z\in F_1\GCg$ has only terms with exactly one vertex and no edges, $(d_{s}+d_{p}) \Gamma +(-1)^{|\sigma|}[z, \Gamma]\in F_2\GCg$. Since $\sigma.z\neq 0$, the cohomology classes of degree $-1$ are all of the form $(0,\Gamma)$ with $|\Gamma|=-1$ and
\[
H^{-1}(\spp(H^*) \ltimes \GCg)=H^{-1}(\GCg).
\]
For $g=1$, $\sigma.z=0$ also for $|\sigma|=-1$, and the second result follows. 
\end{proof}

\subsection{Tripods - special elements of degree zero for $g\geq 2$.}

Let $g\geq 2$. Consider the graphs $t_{\alpha,\beta,\gamma}$ with one vertex and three decorations $\alpha,\beta,\gamma\in H^{*}_{-1}$.
\[
t_{\alpha,\beta,\gamma}=\triabc
\]
We refer to such diagrams as tripods. They satisfy $d t_{\alpha,\beta,\gamma}=0$ (the splitting differential and the twisting term cancel each other out, while the pairing leads to a tadpole). The extension by $\spp(H^*)$ leads to the following relation in $H^0(\spp(H^*)\ltimes \GCg)$. Set $\sigma_{\alpha_j}:=\omega \partial_{\beta_j}+\alpha_j\partial_1$, and consider
\[
d (\sigma_{\alpha_j},0)=(0,-\sigma_{\alpha_j}.z)=(0,-\sum_{i\neq j} t_{\alpha_i,\beta_i,\alpha_j}).
\]
Similarly for $\sigma_{\beta_j}:=\omega \partial_{\alpha_j}+\beta_j\partial_1$, we find
\[
d (\sigma_{\beta_j},0)=(0,-\sigma_{\beta_j}.z)=(0,\sum_{i\neq j} t_{\alpha_i,\beta_i,\beta_j}).
\]
Let $\frakh'$ be the Lie subalgebra of $\GCg$ generated by all tripods $t_{\alpha,\beta,\gamma}$ for $ \alpha,\beta,\gamma \in H^{*}_{-1}$. Quotient $\frakh'$ by the Lie ideal generated by the relations coming from above, i.e.
\[
\forall j\in 1,\dots,g \ : \ \sum_{i\neq j} t_{\alpha_i,\beta_i,\alpha_j}=0 \ \text{ and } \ \sum_{i\neq j} t_{\alpha_i,\beta_i,\beta_j}=0
\]
and denote by $\frakh$ the quotient Lie algebra. Moreover, notice that the action of the Lie algebra $\spp_0(H^*)$ on $\GCg$ preserves both the tripods as well as the Lie ideal. This allows us to define the product $\spp_0(H^*)\ltimes \frakh$. We then have the following conjecture the author learned from Anton Alekseev. It is inspired by work of Hain \cite{Hain97}.

\begin{conj}\label{conj:Tripods}
The morphism induced by the inclusion of $\frakh'\rightarrow  \GCg$ induces an isomorphism
\[
\spp_0(H^*)\ltimes \frakh\cong H^0(\spp(H^*)\ltimes \GCg).
\]
\end{conj}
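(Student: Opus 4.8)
The plan is to reduce the statement to a computation inside the graph complex and then to split that computation into a ``generation'' half and a ``relations'' half. First I would unwind the differential of the twisted semidirect product. By equation \eqref{eq:dsigma}, a degree-zero element $(\sigma,\Gamma)$ with $\sigma\in\spp_0(H^*)$ is closed precisely when $D\Gamma=0$, where $D=d_s+d_p+[z,-]$, since $\sigma.z=0$ in degree $0$; and the degree-zero coboundaries coming from $\spp_{-1}(H^*)$ contribute exactly the terms $\sigma.z$, which are the tripod combinations $\sum_{i\neq j}t_{\alpha_i,\beta_i,\alpha_j}$ and $\sum_{i\neq j}t_{\alpha_i,\beta_i,\beta_j}$ computed in the preceding subsection. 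Since coboundaries never touch the $\spp_0(H^*)$-summand, Lemma \ref{lemma:extensioncommutes} lets me peel off the $\spp_0(H^*)$ factor as a semidirect summand, reducing the conjecture to the single isomorphism $\frakh\cong Z^0(\GCg)/\bigl(B^0(\GCg)+\spp_{-1}(H^*).z\bigr)$, in which the relations $\spp_{-1}(H^*).z$ are exactly the defining relations of $\frakh$.

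Next I would record the geometric features of $\frakh'$. A short degree count shows that a connected degree-zero graph has loop order $\ell=N+S+2$, where $S\leq 0$ is the total decoration degree; tripods have $\ell=0$, and since the bracket is edge-joining through the pairing it is additive in loop order and hence preserves the tree property, so the whole subalgebra $\frakh'$ consists of loop-order-zero (tree) cocycles. Moreover $\frakh'$ is spanned by trivalent decorated trees, grafted from tripods by pairing an $\alpha_i$-leg against a $\beta_i$-leg. Thus the conjecture asserts that degree-zero cohomology is supported on trivalent trees and that there the only relations are the Jacobi/IHX identities already built into the bracket together with the two families $\sigma.z$.

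For the surjectivity (generation) half I would filter $\GCg$ by loop order. Under this filtration $d_s$ and $[z,-]$ preserve $\ell$ while $d_p$ raises it by one, so the associated-graded differential is $d_s+[z,-]$ and $d_p$ acts on later pages of the resulting spectral sequence. The crux is to prove that the positive-loop-order part of the degree-zero complex carries no cohomology, so that every class descends to a tree; here I would exploit the vanishing $H^{<0}(\GCg)=0$ from Theorem \ref{thm:Theoremtwo} to control the incoming and outgoing differentials of the relevant $E_1$-terms, and run the standard edge-contraction homotopy that kills graphs once a cycle is present. On the loop-order-zero page, identifying $H(d_s+[z,-])$ on decorated trees with $\frakh'$ amounts to showing that non-trivalent tree vertices are either killed by $d_s$ or expressed through trivalent ones, while the $[z,-]$-term is what forces the trace-type relations to appear.

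The remaining relations half---matching $B^0(\GCg)+\spp_{-1}(H^*).z$ with the relation ideal of $\frakh$, neither more nor less---should then be comparatively routine, identifying the degree-$(-1)$ tree coboundaries with the IHX relations and the two $\sigma.z$ families with the trace relations. The genuine difficulty I anticipate is the generation statement above: proving that no positive-loop-order graph survives in degree-zero cohomology is the direct analogue of the hard direction in Willwacher's identification of $H^0(\GC_2)$ with $\grt_1$, and the presence of decorations changes the acyclicity arguments substantially. A more tractable route, once the comparison of Section \ref{sec:grtg} is in hand, is to transport the problem to the explicit Lie algebra $\frakr_{(g)}=Z_{(g)}/B_{(g)}$ via Theorem \ref{thm:Theoremtwo}: there the tripods correspond to explicit derivations of $\fraktg(2)$ and the relations $B_{(g)}$ are given in closed form, so the isomorphism $\frakr_{(g)}\cong\spp_0(H^*)\ltimes\frakh$ becomes a finite generators-and-relations comparison rather than a spectral-sequence argument, in line with Hain's presentation of the Torelli Lie algebra that motivates the conjecture.
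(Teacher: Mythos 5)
The statement you are trying to prove is stated in the paper as Conjecture \ref{conj:Tripods}; the paper contains no proof of it, so there is nothing to compare your attempt against except the paper's own surrounding evidence and the strategy sketched in its final outlook. Measured against that, your reduction steps are sound: using equation \eqref{eq:dsigma} to see that degree-zero cocycles are pairs $(\sigma,\Gamma)$ with $D\Gamma=0$, that coboundaries land in the $\GCg$-summand, and that the $\spp_{-1}(H^*)$-coboundaries contribute precisely the tripod relations $\sum_{i\neq j}t_{\alpha_i,\beta_i,\alpha_j}$ and $\sum_{i\neq j}t_{\alpha_i,\beta_i,\beta_j}$, is correct, as is your degree count $\ell=N+S+2$ showing $\frakh'$ lives in loop order zero and that the bracket is additive in loop order. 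Your loop-order filtration, with $d_s$ and $[z,-]$ preserving $\ell$ and $d_p$ raising it, is exactly the filtration the paper itself proposes in its closing outlook (where $E_1^{0,0}$ is identified with trivalent trees modulo IHX). But this is where the proposal stops being a proof: the two essential steps --- (i) that no positive-loop-order class survives in $H^0$, and (ii) that on the zero-loop page the cohomology of $d_s+[z,-]$ on decorated trees is spanned by tripod-generated trivalent trees with no relations beyond IHX and the two trace families --- are asserted or ``anticipated'' rather than established. Step (i) is, as you yourself note, the analogue of the hard direction of Willwacher's $H^0(\GC)\cong\grt_1$, and the decorations genuinely break the known acyclicity arguments; moreover, since elements of $\GCg$ are formal series and the loop filtration is unbounded on degree-zero diagrams, even convergence of your spectral sequence needs an argument you do not supply. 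Calling the relations half ``comparatively routine'' also understates it: injectivity of $\frakh\to H^0(\spp(H^*)\ltimes\GCg)$ is equivalent to an exact tree-level cohomology computation, not a formal consequence.

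Your proposed fallback --- transporting the problem to $\frakr_{(g)}=Z_{(g)}/B_{(g)}$ via Theorem \ref{thm:GCgvsp} --- is circular in two ways. First, Theorem \ref{thm:GCgvsp} gives only a vector-space isomorphism; its compatibility with the Lie structures is itself Conjecture \ref{thm:GCgrt}, and comparing Lie subalgebras generated by tripods on both sides requires precisely that compatibility. Second, even granting it, the resulting statement in $\frakr_{(g)}$ --- that the derivations $V_{\alpha,\beta,\gamma}$ together with $\spp_0(H^*)$ generate $\frakr_{(g)}$, with relations generated by the $A_j,B_j\in B_{(g)}$ --- is verbatim the paper's final open conjecture, not a ``finite generators-and-relations comparison'': $Z_{(g)}$ sits inside the infinite-dimensional $\fraktg(2)^{\times 2g}$ and no finiteness is available. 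In short, your write-up is a reasonable strategy document, consistent with the paper's own sketch, but the two genuinely hard steps (loop-order vanishing and exact identification of the tree page) remain open, which is exactly why the statement is a conjecture rather than a theorem in the paper.
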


\subsection{Lie algebra action}

For $r\geq 1$, the dg Lie algebra $\GCg'$ acts on $\Graphsg'(r)$ through the pairing operation. That is, for $\gamma\in \GCg'$ and $\Lambda \in \Graphsg'(r)$
\[
\gamma\cdot \Lambda \in \Graphsg'(r)
\]
is defined by summing over all pairs of decorations, one in $\Lambda$, the other in $\gamma$,  replacing them by an edge connecting the vertices they were attached to and finally multiplying the result by the value of the pairing on the decorations. The action is compatible with all the respective structures on both $\GCg'$ and $\Graphsg'$. In particular, it defines a coderivation for the dg cocommutative coalgebra structure in each arity, and a derivation with respect to the operadic module structure.

\begin{figure}[h]
\centering
\begin{tikzpicture}
[baseline=-.55ex,scale=.7, every loop/.style={}]
 \node[circle,draw,inner sep=1.5pt] (a) at (0,0) {$1$};
  \node[circle,draw,inner sep=1.5pt] (b) at (1,0) {$2$};
  \node[circle,draw,inner sep=1.5pt] (c) at (2,0) {$3$};
  \node[circle,draw,fill,inner sep=1pt] (d) at (0.5,1) {};
  \node[circle,draw,fill,inner sep=1pt] (e) at (1.5,1) {};
  \draw (a) edge[] (d);
\draw (a) edge[] (d);
\draw (b) edge[] (d);
\draw (c) edge[] (e);
\draw (e) edge[] (d);
\draw (b) edge[] (e); 
  \node[circle,draw,densely dotted,inner sep=.5pt] (f) at (0,1.75) {$f_j$};
  \draw (d) edge[densely dotted] (f);
\node [] (g2) at (-5.5,1) {$\gamma =$};
\node [] (g) at (-0.75,1) {$\Lambda=$};
  \node[circle,draw,fill,inner sep=1pt] (a2) at (-3,1) {};
  \node[circle,draw,fill,inner sep=1pt] (b2) at (-4.5,1) {};
    \node[circle,draw,fill,inner sep=1pt] (c2) at (-3.75,1.75) {};
  \node[circle,draw,fill,inner sep=1pt] (d2) at (-3.75,0.25) {};
  \node[circle,draw,densely dotted,inner sep=.5pt] (e2) at (-2.5,0.25) {$f_i$};
  \draw (a2) edge[densely dotted] (e2); 
\draw (a2) edge[] (b2);
\draw (a2) edge[] (c2);
\draw (a2) edge[] (d2);
\draw (b2) edge[] (d2);
\draw (b2) edge[] (c2);
\draw (c2) edge[] (d2);
\node [] (f2) at (5,1) {$\gamma\cdot \Lambda=\langle e_i,e_j\rangle$};
 \node[circle,draw,inner sep=1.5pt] (a3) at (9,0) {$1$};
  \node[circle,draw,inner sep=1.5pt] (b3) at (10,0) {$2$};
  \node[circle,draw,inner sep=1.5pt] (c3) at (2+9,0) {$3$};
  \node[circle,draw,fill,inner sep=1pt] (d3) at (0.5+9,1) {};
  \node[circle,draw,fill,inner sep=1pt] (e3) at (1.5+9,1) {};
  \draw (a3) edge[] (d3);
\draw (a3) edge[] (d3);
\draw (b3) edge[] (d3);
\draw (c3) edge[] (e3);
\draw (e3) edge[] (d3);
\draw (b3) edge[] (e3); 
  \node[circle,draw,fill,inner sep=1pt] (a23) at (4.5+2.5,1) {};
  \node[circle,draw,fill,inner sep=1pt] (b23) at (6+2.5,1) {};
    \node[circle,draw,fill,inner sep=1pt] (c23) at (5.25+2.5,1.75) {};
  \node[circle,draw,fill,inner sep=1pt] (d23) at (5.25+2.5,0.25) {};
\draw (a23) edge[] (b23);
\draw (a23) edge[] (c23);
\draw (a23) edge[] (d23);
\draw (b23) edge[] (d23);
\draw (b23) edge[] (c23);
\draw (c23) edge[] (d23);
\draw (d3) to (b23);
\end{tikzpicture}
\caption{The action of $\GCg'$ on $\Graphsg'$ is defined via the pairing operation. It induces an action of $\GCg''$ on $\BVGraphsg'$, and an action of $\GCg$ on $\BVGraphsg$.}\label{fig:actionGCGraphs}
\end{figure}

Additionally, note that the action of $\GCg'$ descends to the quotient $\BVGraphsg'(r)$ - and that moreover any graph in $\GCg'$ containing at least one tadpole acts trivially on $\BVGraphsg'$, which implies that we have a well-defined action of the quotient dg Lie algebra $\GCg''$ of tadpole-free diagrams on the right operadic dg Hopf module $\BVGraphsg'$. For $r\geq 1$, we twist its differential by the action of $z$ to obtain the complex
\[
\BVGraphsg(r):=(\BVGraphsg'(r))^{z}.
\]
The differential on $\BVGraphsg(r)$ is thus given by 
\[
d\Gamma = d_s \Gamma + d_p \Gamma + z \cdot \Gamma.
\]
In particular, this yields an action of the dg Lie algebra $\GCg$ on the right operadic dg Hopf module $\BVGraphsg=\{\BVGraphsg(r)\}_{r\geq 1}$. For $g=1$ similar considerations hold without having to quotient out tadpoles, and in this case we twist $\Graphs_{(1)}'$ by the Maurer-Cartan element $z\in \GC_{(1)}'$ and set
\[
\Graphs_{(1)}(r):=(\Graphs_{(1)}'(r))^{z}.
\]
The corresponding right operadic dg Hopf module comes equipped with an action of the dg Lie algebra $\GC^{\minitadp}_{(1)}$.

In both cases, notice that the Lie algebra $\spp(H^*)$ acts on $\Graphsg'$ via the decorations. The argument above extends to an action of the twisted dg Lie algebras $\spp(H^*)\ltimes \GCg$ and $\spp(H^*)\ltimes \GC^{\minitadp}_{(1)}$ and thus, in summary, we obtain 
\begin{align*}
\spp(H^*)\ltimes \GCg \ \actson \ & \BVGraphsg \ \text{ for } g\geq 1\\
\spp(H^*)\ltimes \GC^{\minitadp}_{(1)}  \ \actson \ & \Graphs_{(1)} \ \text{ for } g=1.
\end{align*}

\subsection{Dual spaces}
In the dual case, we have the analogous construction. The right cooperadic dg Hopf comodule $\Graphsg'^*$ is obtained by considering only finite linear combinations of graphs satisfying the same connectivity conditions as in $\Graphsg'$, and for which the decorations lie in $H$. The differential on $\Graphsg'^*(r)$ is given by the sum of two terms. First, we have the same edge contraction operation as in $\pdGraphs(r)$, that is, two internal vertices connected by an edge are merged into one internal vertex, while an internal and an external vertex connected by an edge are merged to an external vertex. Edges between external vertices are not contracted. The second term $d_p^*$ is given by replacing edges (one at the time and taking the sum over all the edges) by the diagonal element as sketched below. We require that the resulting graph still lies in $\Graphsg'^*(r)$, i.e. that each connected component contains at least one external vertex, and only replace the edges for which this condition is satisfied.

\begin{figure}[h]
\centering
{{
\begin{tikzpicture}[baseline=-.55ex,scale=.7, every loop/.style={}]
\node (aa) at (-1.5,0) {$d_p^*$};
\node[circle,draw,fill,inner sep=1.5pt] (a) at (0,0) {};
\node (b) at (-0.75,0.75) {};
\node (c) at (-1,0) {};
\node (d) at (-0.75,-0.75) {};
\draw (a) to (b);
\draw (a) to (c);
\draw (a) to (d);
\node[circle,draw,fill,inner sep=1.5pt] (a2) at (1,0) {};
\draw (a) to (a2);
\node (b2) at (1.75,0.75) {};
\node (c2) at (2,0) {};
\node (d2) at (1.75,-0.75) {};
\draw (a2) to (b2);
\draw (a2) to (c2);
\draw (a2) to (d2);
\node (aaaa) at (3,0) {$= \sum\limits_{q}$};
\node[circle,draw,densely dotted,inner sep=.5pt] (f) at (5.5,0.75) {$e_q$};
\node[circle,draw,densely dotted,inner sep=.5pt] (f2) at (5.75,-0.75) {$e_q^*$};
\node[circle,draw,fill,inner sep=1.5pt] (a3) at (5,0) {};
\node (b3) at (-0.75+5,0.75) {};
\node (c3) at (0+4,0) {};
\node (d3) at (-0.75+5,-0.75) {};
\draw (a3) to (b3);
\draw (a3) to (c3);
\draw (a3) to (d3);
\node[circle,draw,fill,inner sep=1.5pt] (a23) at (2.25+4,0) {};
\node (b23) at (2+5,-0.75) {};
\node (c23) at (3+4,0) {};
\node (d23) at (7,0.75) {};
\draw (a23) to (b23);
\draw (a23) to (c23);
\draw (a23) to (d23);
\draw[densely dotted] (a3) to (f);
\draw[densely dotted] (a23) to (f2);
\end{tikzpicture}}}
\caption{The operation dual to the pairing differential replaces edges by the diagonal class.}\label{fig:dualtopairing}
\end{figure}
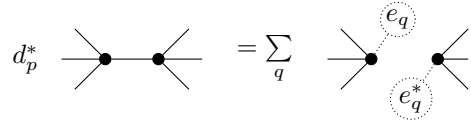

In this way, $\Graphsg'^*(r)$ allows an action of the dg Lie algebra $\GCg'$ given by replacing those edges by the diagonal class for which one of the two components contains no external vertices. The cut-off graph may be viewed as an element in the dual of $\GCg'$ which we evaluate on the element that acts. For $\gamma \in \GCg'$, $\Gamma \in \Graphsg'^*(r)$, and denoting $\lambda$ the cut-off graph and $\Gamma/\lambda$ the remaining graph in $\Graphsg'^*(r)$, we have
\[
\gamma. \Gamma= \sum\limits_{\lambda} \lambda(\gamma)\cdot \Gamma/\lambda.
\]
The action is compatible with the graded commutative product, defined by identifying external vertices, and moreover, with the right cooperadic comodule structure over $\pdGraphs$ which is given by subgraph extractions. Denote by $\BVGraphsg'^*(r)$ the dg subalgebra of $\Graphsg'^*(r)$ generated by diagrams with no tadpoles at the internal vertices. Notice that by the same reasoning as in $\pdBVGraphs(r)$ we cannot produce tadpoles at internal vertices by contracting edges. Clearly, this holds also for $d_p^*$. Moreover, $\BVGraphsg'^*$ assembles to form a right cooperadic comodule over $\pdBVGraphs$. 

\begin{figure}[h]
\centering
{{
\begin{tikzpicture}[baseline=-.55ex,scale=.7, every loop/.style={}]
 \node[circle,draw,inner sep=1.5pt] (a) at (0,0) {$1$};
 \node[circle,draw,inner sep=1.5pt] (b) at (1,0) {$2$};
 \node[circle,draw,inner sep=1.5pt] (c) at (2,0) {$3$};
 \node[circle,draw,fill,inner sep=1.5pt] (d1) at (0,1) {};
 \node[circle,draw,fill,inner sep=1.5pt] (d2) at (1,1) {};
\node[circle,draw,densely dotted,inner sep=.5pt] (h) at (1,2) {$\alpha_3$};
\draw (a) to (d1);
\draw (b) to (d1);
\draw (b) to (d2);
\draw (d1) to (d2);
\draw[densely dotted] (d2) to (h);
\node (aa) at (3,0) {$\wedge$};
 \node[circle,draw,inner sep=1.5pt] (a2) at (0+4,0) {$1$};
 \node[circle,draw,inner sep=1.5pt] (b2) at (1+4,0) {$2$};
 \node[circle,draw,inner sep=1.5pt] (c2) at (2+4,0) {$3$};
 \node[circle,draw,fill,inner sep=1.5pt] (d22) at (1+4.5,1) {};
\node[circle,draw,densely dotted,inner sep=.5pt] (f2) at (-0.5+4,1) {$\alpha_1$};
\node[circle,draw,densely dotted,inner sep=.5pt] (j2) at (2+3.5,2) {$\omega^*$};
\draw (b2) to (d22);
\draw (c2) to (d22);
\draw[densely dotted] (a2) to (f2);
\draw[densely dotted] (d22) to (j2);
\node (aa) at (7,0) {$=$};
 \node[circle,draw,inner sep=1.5pt] (a3) at (8,0) {$1$};
 \node[circle,draw,inner sep=1.5pt] (b3) at (9,0) {$2$};
 \node[circle,draw,inner sep=1.5pt] (c3) at (10,0) {$3$};
 \node[circle,draw,fill,inner sep=1.5pt] (d13) at (8,1) {};
 \node[circle,draw,fill,inner sep=1.5pt] (d23) at (9,1) {};
  \node[circle,draw,fill,inner sep=1.5pt] (d233) at (9.5,1) {};
\node[circle,draw,densely dotted,inner sep=.5pt] (f3) at (-0.5+8,1) {$\alpha_1$};
\node[circle,draw,densely dotted,inner sep=.5pt] (h3) at (1+8,2) {$\alpha_3$};
\node[circle,draw,densely dotted,inner sep=.5pt] (j3) at (2+8,2) {$\omega^*$};
\draw (a3) to (d13);
\draw (b3) to (d13);
\draw (b3) to (d23);
\draw (d13) to (d23);
\draw (b3) to (d233);
\draw (c3) to (d233);
\draw[densely dotted] (a3) to (f3);
\draw[densely dotted] (d23) to (h3);
\draw[densely dotted] (d233) to (j3);
\end{tikzpicture}}}
\caption{The graded commutative product on $\pdBVGraphsg(3)$ is given by identifying external vertices.}\label{fig:dgcag}
\end{figure}

Notice that the action of $\GCg'$ descends to an action on $\BVGraphsg'^*$ and induces an action of the tadpole-free version $\GCg''$. Again, we twist by the action of the Maurer-Cartan element $z\in \GCg''$ to obtain the right cooperadic dg Hopf comodule 
\[
\pdBVGraphsg=(\BVGraphsg'^*)^z
\]
equipped with the action of the twisted dg Lie algebra $\GCg$. Additionally, define an action of $\spp(H^*)$ on $H$ by specifying the values on the basis elements as follows - if $\sigma(f_q)=f_p$, then $\sigma(e_p)=e_q$. It induces an action of $\spp(H^*)$ on $\pdBVGraphsg$, and ultimately, an action of $\spp(H^*)\ltimes \GCg$ on $\pdBVGraphsg$.

\begin{rem}
When $g=1$, we may perform precisely the same construction as above, but twist $\Graphs_{(1)}'^*$ by the Maurer-Cartan element $z\in \GC_{(1)}'$, i.e. we allow tadpoles at internal vertices. We obtain a right cooperadic dg Hopf comodule over $\pdGraphs$ which we denote by $\pdGraphs_{(1)}$.
\end{rem}

\begin{rem}
Equip $\pdBVGraphsg$ with the increasing filtration
\[
F^p\pdBVGraphsg=\{\Gamma \ | \ 2\# \text{ edges } + \text{total degree of all decorations} \leq p\}
\]
and $\pdBVGraphs$ with filtration defined by twice the number of edges (see Remark \ref{rem:graphsfintype}). In this way, $(\pdBVGraphsg, \pdBVGraphs)$ defines a filtered right cooperadic dg Hopf comodule in the sense of Section \ref{sec:filtered}. Furthermore, for fixed arity and degree, fixing the number of edges and the total degree of all decorations, leaves us with only finitely many possible diagrams. This implies in particular that the subquotients
\[
F^s \pdBVGraphsg(r)/F^{s-1} \pdBVGraphsg(r)
\]
are of finite type. The same holds for $\pdBVGraphs$. Moreover, the action of the complete dg Lie algebra $\GCg$ on $\pdBVGraphsg$ is compatible (in the sense of Section \ref{sec:Lieaction}) with the respective filtrations.
\end{rem}

\begin{rem}
The notation is sometimes (for instance in \cite{CIW}) chosen such that $\pdBVGraphsg$ (and similarly $\pdGraphs$, $\pdBVGraphs$) denotes a right operadic dg Hopf module (or a dg Hopf operad, respectively), instead of a \emph{co}module, and $\BVGraphsg$ denotes a right operadic dg Hopf comodule (instead of a module) - while in \cite{CW} it is the other way around. We choose to adhere to the second version.
\end{rem}

The generalization to the surface case of Kontsevich's result then reads as follows. The framed Fulton-MacPherson-Axelrod-Singer compactification of configuration spaces of points on $\Sigma_g$, $\FFM_{(g)}$ defines a right operadic module over the framed Fulton-MacPherson-Axelrod-Singer operad, $\FFM_2$. Intuitively, the compactification $\FFM_{(g)}(r)$ is as a manifold with corners, whose interior is given by $\Conf_r(\Sigma_g)$, and its boundary consists of configurations of points on $\Sigma_g$ which become infinitesimally close, together with a trivialization of the tangent bundle (a framing) of $\Sigma_g$ at each point of the configuration. For $g\neq 1$, the surface $\Sigma_g$ is not parallelizable, and indeed this forces us to work with the framed versions. In the case $g=1$, we can also consider the (unframed version of the) Fulton-MacPherson-Axelrod-Singer compactification, $\FM_{(1)}$, which in this case forms a right operadic module over the Fulton-MacPherson-Axelrod-Singer operad $\FM_2$. Through techniques similar to the ones introduced by Kontsevich, Campos and Willwacher were able to prove the following result. Notice that the framed version also exists for $g=1$.

\begin{thm}[\cite{CIW}]\label{prop:model}
There are explicit quasi-isomorphism of right cooperadic dg Hopf comodules over $\Q$
\begin{align*}
&(\pdGraphs_{(1)},\pdGraphs) \xrightarrow{\sim} (\Omega_{\mathrm{PA}}(\FM_{(1)}),\Omega_{\mathrm{PA}}(\FM_2))\\
&(\pdBVGraphsg,\pdBVGraphs) \xrightarrow{\sim} (\Omega_{\mathrm{PA}}(\FFM_{(g)}),\Omega_{\mathrm{PA}}(\FFM_2))
\end{align*}
\end{thm}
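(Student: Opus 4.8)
The plan is to follow the graphical propagator method of Kontsevich \cite{Kontsevich99}, as upgraded to the manifold (and in particular surface) setting by Campos and Willwacher. In each case the comparison map is built from a single geometric datum: a \emph{propagator}, i.e.\ a closed PA-form $\varphi\in\Omega_{\mathrm{PA}}(\FFM_{(g)}(2))$ of degree $1$ whose fiberwise restriction to the boundary sphere bundle generates the cohomology of the $S^1$-fiber, which is (anti)symmetric under the involution exchanging the two points and whose restriction to the principal boundary stratum is compatible with the framing. Since $\Sigma_g$ is not parallelizable for $g\geq 2$, one must work on the framed compactification $\FFM_{(g)}$ and encode in the boundary behaviour of $\varphi$ both the diagonal class $\Delta\in H\otimes H$ and the rotation of the framing; for $g=1$ parallelizability lets one drop the framing and use $\FM_{(1)}$ over $\FM_2$.

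First I would construct the Feynman amplitude map. To a decorated graph $\Gamma\in\pdBVGraphsg(r)$ with $r$ external and $N$ internal vertices, associate the fiber integration (pushforward) along the projection $\pi:\FFM_{(g)}(r+N)\to\FFM_{(g)}(r)$ forgetting the $N$ internal points, applied to the product over edges of the pulled-back propagators and over decorations of the pulled-back cohomology classes of $\Sigma_g$:
\[
\Gamma \longmapsto \pi_*\Big(\bigwedge_{e} p_e^*\varphi \ \wedge\ \bigwedge_{v} q_v^* c_v\Big),
\]
where $p_e$ records the two endpoints of an edge $e$, and $q_v$ records the position of a vertex $v$ carrying the decoration $c_v$. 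One checks this lands in $\Omega_{\mathrm{PA}}(\FFM_{(g)}(r))$ and is $S_r$-equivariant.

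Next I would verify that this is a morphism of dg Hopf comodules. Compatibility with the commutative product is immediate, since identifying external vertices corresponds to pulling back along diagonals and the amplitude is multiplicative; compatibility with the cooperadic comodule structure over $\pdBVGraphs$ matches subgraph extraction with the operadic-module boundary strata of $\FFM_{(g)}$ over $\FFM_2$. The essential point is that the map intertwines differentials. Applying Stokes' theorem to the fiber integral, the boundary of the fibers of $\pi$ decomposes into strata where subsets of points collide; the codimension-one strata reproduce, via the prescribed fiberwise boundary value of $\varphi$ and its global non-closedness, the edge-contraction and diagonal-insertion ($d_p^*$) terms of the combinatorial differential, while strata where a point approaches the framing sphere produce the twist by the Maurer-Cartan element $z$, reflecting the Euler class of $\Sigma_g$.

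The hard part will be precisely the accompanying vanishing lemmas together with the construction of a propagator for which they hold: one must arrange $\varphi$ (and the global curvature/``partition function'' corrections) so that all remaining boundary contributions — the hidden or anomalous faces, where three or more points collide — either vanish by degree and symmetry reasons (the classical Kontsevich vanishing for $\geq 3$ colliding points, and the vanishing of tadpole and double-edge graphs) or are absorbed into the combinatorial differential. This is where essentially all the work of \cite{CW,CIW} is concentrated. Finally, to upgrade the chain map to a quasi-isomorphism I would filter the graph side by the edge-weight, so that on the associated graded the amplitude reduces to the already-known local comparison $\pdBVGraphs\simeq\Omega_{\mathrm{PA}}(\FFM_2)$ of Proposition \ref{prop:formality} together with Bezrukavnikov's identification of $H^\bullet(\Conf_r(\Sigma_g))$ \cite{Bezru94}; a spectral-sequence comparison then yields that the map is a quasi-isomorphism in every arity, compatibly with the comodule structure.
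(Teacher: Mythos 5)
The paper itself offers no proof of this statement: it is imported verbatim from \cite{CIW} (with \cite{CW} as precursor), so the only fair comparison is with the proof in those references --- and your sketch is a faithful reconstruction of exactly that argument: propagator on the framed two-point compactification, Feynman fiber integrals over decorated graphs, Stokes' theorem matching boundary strata with vertex splitting, the pairing differential $d_p^*$, the comodule cocomposition, and the $z$-twist coming from the framing/Euler class, with the real labor concentrated in the vanishing lemmas for hidden faces, just as you say.

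One internal inconsistency to fix: you first require $\varphi\in\Omega_{\mathrm{PA}}(\FFM_{(g)}(2))$ to be a \emph{closed} form and later invoke ``its global non-closedness'' to produce the diagonal-insertion terms. The latter is correct and the former is impossible: for any $g\geq 1$ the propagator cannot be closed, since $d\varphi$ must be a representative of the diagonal class $\sum_q \langle e_q,e_q^*\rangle\, p_1^*e_q\wedge p_2^*e_q^*$ (plus, in the framed setting, the curvature term responsible for the Euler characteristic and hence the twist by $z$); it is precisely this failure of closedness that makes the amplitude map intertwine $d_p^*$ on the graph side. With that correction, and modulo the fact that \cite{CW} establish the quasi-isomorphism property by comparing both sides with Bezrukavnikov's cohomology computation (essentially your final step, organized through the projection $\Phi:\pdBVGraphsg\rightarrow\Mog$ rather than a filtration of the Feynman map itself), your outline matches the cited proof.
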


\begin{rem}
In other words, Theorem \ref{prop:model} states that
\begin{itemize}
\item{for $g=1$, $(\pdGraphs,\pdGraphs_{(1)})$ is a rational model for the pair $(\FM_2,\FM_{(1)})$}
\item{for $g \geq 1$, $(\pdBVGraphs,\pdGraphsg)$ is a rational model for the pair $(\FFM_2,\FFM_{(g)})$.}
\end{itemize}
\end{rem}

\subsection{The right cooperadic $\bv^c$-comodule $\Mog$}\label{sec:Mog}

In \cite{CIW}, Campos, Idrissi and Willwacher adapt Bezrukavnikov's model $\Mog$ \cite{Bezru94} for configuration spaces of points on surfaces to encode also a right cooperadic $\bv^c$-comodule structure. In this section, we recall some of their results. In part, these already appeared in some form in their separate works (\cite{Idrissi19}, \cite{CW}).

\begin{defi}
Let $\Mog(r)$ denote the dg commutative algebra generated by the symbols 
\[
w^{(ij)}, \ a_l^{(i)}, \ b_l^{(i)}, \nu^{(i)} \text{ for } 1\leq i,j \leq r \text{ and } 1\leq l\leq g
\]
subject to the relations 
\begin{align*}
&|w^{(ij)}|=|a_l^{(i)}|=|b_l^{(i)}|=1, \ |\nu^{(i)}|=2,\\
&w^{(ij)}=w^{(ji)},\ a_l^{(i)}b_l^{(i)}=\nu^{(i)} \text{ for all }  l,\\
&a_l^{(i)} w^{(ij)}=a_l^{(j)} w^{(ij)}, \ b_l^{(i)} w^{(ij)}=b_l^{(j)} w^{(ij)} \\
&w^{(ij)}w^{(jk)}+w^{(jk)}w^{(ki)}+w^{(ki)}w^{(ij)}=0 \text{ for } i,j,k \text{ all distinct}
\end{align*}
and equipped with the differential which is zero on all generators but $w^{(ij)}$, for which
\begin{align*}
d w^{(ij)}&=\nu^{(i)}+\nu^{(j)}-\sum\limits_{l=1}^g (a_l^{(i)}b_l^{(j)}-b_l^{(i)}a_l^{(j)}) \text{ for } i\neq j\\
d w^{(ii)} &=(2-2g)\nu^{(i)}.
\end{align*}
\end{defi}

Notice that $\Mog$ is a right cooperadic comodule over $\bv^c$. The cooperadic module structure on $\Mog$ is then given on generators by (\cite{Idrissi19}, \cite{CIW})
\begin{align*}
\circ^*_u:\Mog(U \setminus \{u\} \sqcup W )&\rightarrow \Mog(U)\otimes \bv^c(W)\\
w^{(ij)}&\mapsto 
\begin{cases}
w^{(uu)}\otimes 1 + 1\otimes w^{(ij)} & \text{ if } i,j \in W\\
w^{(uj)}\otimes 1 & \text{ if } i\in W,\  j\in U\setminus \{u\}\\
w^{(iu)}\otimes 1 & \text{ if } i \in U\setminus\{u\},\  j\in W\\
w^{(ij)}\otimes 1 & \text{ if } i,j \in U\setminus \{u\}
\end{cases}\\
w^{(ii)}&\mapsto
\begin{cases}
w^{(uu)}\otimes 1 +1 \otimes w^{(ii)} & \text{ if } i\in W\\
w^{(ii)} \otimes 1 &\text{ if } i\in U\setminus \{u\}
\end{cases}\\
a_l^{(i)}&\mapsto
\begin{cases}
a_l^{(u)}\otimes 1 & \text{ if } i\in W\\
a_l^{(i)} \otimes 1 &\text{ if } i\in U\setminus \{u\}
\end{cases}\\
b_l^{(i)}&\mapsto
\begin{cases}
b_l^{(u)}\otimes 1 & \text{ if } i\in W\\
b_l^{(i)} \otimes 1 &\text{ if } i\in U\setminus \{u\}
\end{cases}
\end{align*}

\begin{rem}\label{rem:Mog}
We may view elements of $\Mog(r)$ as diagrams with only labelled vertices which possibly carry decorations in $H$. The differential acts by replacing edges by the diagonal class. The generators are of the form
\[
\lsek \ = \ e_q^{(k)} \hspace{2cm} \ \lstkl \  = w^{(kl)} \hspace{2cm}  \lstkk = w^{(kk)}
\]
and the product is given by identifying vertices. A general element in $\Mog(3)$ is depicted in Figure \ref{fig:dgcaMog}.
\begin{figure}[h]
\centering
{{
\begin{tikzpicture}[baseline=-.55ex,scale=.7, every loop/.style={}]
\node (p) at (-3,0) {$w^{(12)}w^{(23)} a_2^{(1)} b_1^{(2)} w^{(33)}=$};
 \node[circle,draw,inner sep=1.5pt] (a) at (0,0) {$1$};
 \node[circle,draw,inner sep=1.5pt] (b) at (1,0) {$2$};
 \node[circle,draw,inner sep=1.5pt] (c) at (2,0) {$3$};
 \node[circle,draw,densely dotted,inner sep=.5pt] (g) at (0,1) {$a_2$};
\node[circle,draw,densely dotted,inner sep=.5pt] (h) at (1,1) {$b_1$};
\draw (a) to (b);
\draw (b) to (c);
\draw [densely dotted] (a) to (g);
\draw[densely dotted] (b) to (h);
\draw [loop] (c) to (c);
\end{tikzpicture}}}
\caption{An element in $\Mog(3)$.}\label{fig:dgcaMog}
\end{figure}
\end{rem}

\begin{rem}
There is a morphism of dg commutative algebras
\[
\Phi:\pdBVGraphsg(r)\rightarrow \Mog(r)
\]
which sends all graphs with at least one internal vertex to zero, and, under the identification of Remark \ref{rem:Mog}, is the identity on all other graphs. It follows from the work of Idrissi (\cite{Idrissi19}, Section 6) and Campos and Willwacher (\cite{CW}, Appendix A) that $\Phi$ is a quasi-isomorphism. Moreover, it is compatible with the cooperadic comodule structures which are related through the quasi-isomorphism
\[
\phi:\pdBVGraphs\xrightarrow{\sim} \bv^c.
\]
\end{rem}

\begin{rem}
Notice that the symmetric sequences underlying the cooperadic comodule $\Mog$ may be identified with
\[
\Mog\cong \Com^c\circ H\circ \Lie^c\{-1\}\circ \K[\Delta]/(\Delta^2).
\]
where $H$ and $\K[\Delta]/(\Delta^2)$, the algebra of dual numbers, are viewed as symmetric sequences concentrated in arity one. 
\end{rem}

\begin{rem}
Notice that both $\Mog(r)$ and $\bv^c(r)$ are of finite type for all $r\geq 1$. The filtrations on $\Mog$ and $\bv^c$ which are compatible under the morphisms $(\Phi,\phi)$ are defined by setting all $w^{(ij)}$ to have weight $2$, while assigning the generators $a_l^{(i)}$, $b_l^{(i)}$, $\nu^{(i)}$ to have weight equal to their degree (i.e. weight $1$,$1$ and $2$).
\end{rem}

\subsection{The unframed case for $g=1$}
Let $g=1$ and consider the ideal $I_1(r):=(w^{(11)},\dots, w^{(nn)})$ in the dg commutative algebra $\Mo_{(1)}(r)$. Since $d w^{(ii)}=0$, it is preserved by the differential. The quotient
\[
\Mo(r)=\Mo_{(1)}(r)/ I_1(r)
\]
forms a dg commutative algebra which additionally assembles to a right cooperadic comodule $\Mo$ over the cooperad $\e_2^c$. The cooperadic comodule structure on $\Mo$ corresponds to the one of $\Mo_{(1)}$ over $\bv^c$ (omitting all $w^{(ii)}$). Furthermore, the result above can be adapted to this case. There is a quasi-isomorphism of dg commutative algebras
\[
\Phi: \pdGraphs_{(1)}(r)\rightarrow \Mo(r).
\]
which sends all graphs with at least one internal vertex to zero, and is the identity on all other graphs. Moreover, the map is compatible with the cooperadic comodule structures which are related through Kontsevich's quasi-isomorphism
\[
\phi:\pdGraphs \xrightarrow{\sim} \e_2^c.
\]

\begin{rem}
We may identify the symmetric sequences
\[
\Mo\cong \Com^c\circ H \circ \Lie^c\{-1\}.
\]
\end{rem}

\subsection{An auxiliary complex -- hairy graphs}\label{subsec:hairy}

Let $\uniGraphsg\subset (\BVGraphsg',d_s)$ be the subcollection of internally connected diagrams (i.e. the graphs remain connected if all external vertices are deleted) for which all external vertices are univalent. Notice that here we consider $\BVGraphsg'$ equipped only with the vertex splitting differential $d_s$. The hairy graph complex $\HGCg'$ is defined as the following dg vector space of invariant elements
\[
\HGCg'=\prod\limits_{r\geq 1} (\uniGraphsg(r)\otimes H^{\otimes r})^{S_r}.
\]
The elements of this space may be identified with formal series of internally connected graphs $\Gamma \in \BVGraphsg'$ whose external vertices have only one incident edge (to which we refer as the ``hairs''), are made indistinguishable and additionally carry a decoration in the cohomology $H$. In our pictures, we omit the univalent external vertices, and only draw the hairs with the decoration sitting at the ends (see Figure \ref{fig:HGC}). For reasons that will become apparent shortly, we identify $H\cong (H^*)^*$ and write $\partial_{f_q}\in (H^*)^*$ rather than $e_q\in H$ for the decorations.

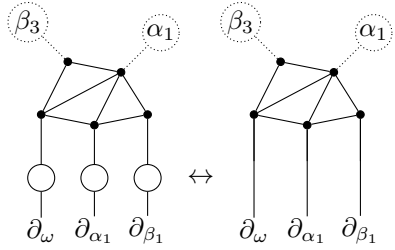
\begin{figure}[h]
\centering
\begin{tikzpicture}
[baseline=-.55ex,scale=.7, every loop/.style={}]
 \node[circle,draw,inner sep=1.5pt] (a) at (0,0) {$ \ \ $};
  \node[circle,draw,inner sep=1.5pt] (b) at (1,0) {$ \ \ $};
  \node[circle,draw,inner sep=1.5pt] (c) at (2,0) {$ \ \ $};
  \node[circle,draw,fill,inner sep=1pt] (d) at (1,1) {};
  \node[circle,draw,fill,inner sep=1pt] (e) at (2,1.2) {};
\node[circle,draw,fill,inner sep=1pt] (f) at (0,1.2) {};
\node[circle,draw,fill,inner sep=1pt] (g) at (0.5,2.2) {};
\node[circle,draw,fill,inner sep=1pt] (h) at (1.5,2) {};
\draw (b) edge[] (d);
\draw (c) edge[] (e);
\draw (e) edge[] (d);
\draw (f) edge[] (d);
\draw (f) edge[] (g);
\draw (f) edge[] (h);
\draw (g) edge[] (h);
\draw (h) edge[] (d);
\draw (h) edge[] (e);
\draw (f) edge[] (a);
\node[circle,draw,densely dotted,inner sep=.5pt] (i) at (-0.25,2.95) {$\beta_3$};
\node[circle,draw,densely dotted,inner sep=.5pt] (j) at (2.25,2.75) {$\alpha_1$};
\draw (g) edge[densely dotted] (i);
\draw (h) edge[densely dotted] (j);
\node[inner sep=.5pt] (i1) at (0,-1) {$\partial_{\omega}$};
\node[inner sep=.5pt] (i2) at (1,-1) {$\partial_{\alpha_1} $};
\node[inner sep=.5pt] (i3) at (2,-1) {$\partial_{\beta_1}$};
\draw (i1) edge[] (a);
\draw (i2) edge[] (b);
\draw (i3) edge[] (c);
\node (aa) at (3,0) {$\leftrightarrow$};
\node[circle,draw,fill,inner sep=1pt] (d2) at (5,1) {};
\node[circle,draw,fill,inner sep=1pt] (e2) at (6,1.2) {};
\node[circle,draw,fill,inner sep=1pt] (f2) at (4,1.2) {};
\node[circle,draw,fill,inner sep=1pt] (g2) at (4.5,2.2) {};
\node[circle,draw,fill,inner sep=1pt] (h2) at (5.5,2) {};
\draw (b2) edge[] (d2);
\draw (c2) edge[] (e2);
\draw (e2) edge[] (d2);
\draw (f2) edge[] (d2);
\draw (f2) edge[] (g2);
\draw (f2) edge[] (h2);
\draw (g2) edge[] (h2);
\draw (h2) edge[] (d2);
\draw (h2) edge[] (e2);
\draw (f2) edge[] (a2);
\node[circle,draw,densely dotted,inner sep=.5pt] (i24) at (-0.25+4,2.95) {$\beta_3$};
\node[circle,draw,densely dotted,inner sep=.5pt] (j24) at (2.25+4,2.75) {$\alpha_1$};
\draw (g2) edge[densely dotted] (i24);
\draw (h2) edge[densely dotted] (j24);
\node[inner sep=.5pt] (i12) at (4,-1) {$\partial_{\omega}$};
\node[inner sep=.5pt] (i22) at (5,-1) {$\partial_{\alpha_1} $};
\node[inner sep=.5pt] (i32) at (6,-1) {$\partial_{\beta_1}$};
\draw (i12) edge[] (f2);
\draw (i22) edge[] (d2);
\draw (i32) edge[] (e2);
\end{tikzpicture}
\caption{We omit the univalent external vertices as above. Still the edges connecting vertices to the decorations count towards the degree of a graph, just as it would in the diagram on the left.}\label{fig:HGC}
\end{figure}

The differential is induced by the one of $(\BVGraphsg',d_s)$, i.e. it is given by splitting internal vertices, with no restriction on the valence. A diagram with $k$ edges, $N$ internal vertices, $m$ decorations $\{f_{i_1},\dots,f_{i_m}\}$ and $\{\partial_{f_{j_1}}, \dots, \partial_{f_{j_l}}\}$ decorating the hairs is of degree
\[
-k+2N+\sum\limits_{k=1}^m |f_{i_k}|+\sum_{k=1}^l |\partial_{f_{j_k}}|.
\]
Recall that the $f_{i_k} \in \overline{H}^*$ lie in negative degrees, while the $\partial_{f_{j_k}} \in (H^*)^*\cong H$ live in non-negative degrees. In particular, we include diagrams with no internal vertices. These consist of a single edge connecting two decorations for which either one is $\overline{H}^*$ (being decorated by $1$ would violate the univalence condition of the external vertex) and the other is in $ (H^*)^*\cong H$, or both decorations are in $(H^*)^*\cong H$, i.e.

\begin{center}
\begin{tikzpicture}[baseline=-.55ex,scale=.7, every loop/.style={}]
 \node (a) at (-4,0) {for $x\neq 1^*$:};
 \node [circle,draw,densely dotted,inner sep=.5pt] (b) at (-2,1) {$x$};
 \node [circle,draw,inner sep=1.5pt] (c) at (-2,0) {$ \ \ $};
 \node [inner sep=.5pt](d) at (-2,-1) {$\partial_{y}$};
 \draw[densely dotted] (b) to[] (c);
 \draw[] (d) to[] (c);
 \node (a2) at (0,0) {$x$};
 \node (b2) at (2,0) {$\partial_{y}$};
 \draw[] (a2) to[] (b2);
 \node (aaaa) at (-1,0) {$\leftrightarrow$};
 \node (aaaaaa) at (3,0) {and};
 \node [circle,draw,inner sep=1.5pt] (aaa2) at (4,0) {$ \ \ $};
 \node [circle,draw,inner sep=1.5pt] (aaa3) at (5,0) {$ \ \ $};
 \node [inner sep=.5pt](bb2) at (4,-1) {$\partial_{x}$};
 \node [inner sep=.5pt](bb3) at (5,-1) {$\partial_{y}$};
 \draw[bend left=30] (aaa2) to[] (aaa3);
 \draw[] (aaa2) to[] (bb2);
 \draw[] (aaa3) to[] (bb3);
 \node (aaaa2) at (6,0) {$\leftrightarrow$};
  \node (a4) at (7,0) {$\partial_{x}$};
 \node (b4) at (9,0) {$\partial_{y}$};
 \draw[] (a4) to[] (b4);
\end{tikzpicture}
\end{center}

\begin{rem}
Notice that diagrams in $\HGCg'$ have at least one hair. Also, the newly added (odd) decorations are added to the linear order on the set of edges and odd decorations (attached to internal vertices), while the edge part of the hair still contributes to this ordering.
\end{rem}

%

Furthermore, $\HGCg'$ forms a dg Lie algebra. For two diagrams $\Gamma_1$, $\Gamma_2 \in \HGCg'$ consider the sum over all ways of connecting hairs of $\Gamma_1$ to the vertices of $\Gamma_2$ carrying a decoration, while multiplying with the product of the respective actions of the hair decorations in $H\cong {(H^{*})^*}$ on the decorations in $H^*$ attached to the internal vertices of $\Gamma_2$.

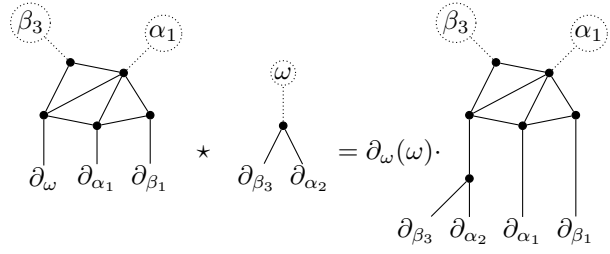
\begin{figure}[h]
\centering
\begin{tikzpicture}
[baseline=-.55ex,scale=.7, every loop/.style={}]
\node[circle,draw,fill,inner sep=1pt] (d2) at (5,1) {};
\node[circle,draw,fill,inner sep=1pt] (e2) at (6,1.2) {};
\node[circle,draw,fill,inner sep=1pt] (f2) at (4,1.2) {};
\node[circle,draw,fill,inner sep=1pt] (g2) at (4.5,2.2) {};
\node[circle,draw,fill,inner sep=1pt] (h2) at (5.5,2) {};
\draw (e2) edge[] (d2);
\draw (f2) edge[] (d2);
\draw (f2) edge[] (g2);
\draw (f2) edge[] (h2);
\draw (g2) edge[] (h2);
\draw (h2) edge[] (d2);
\draw (h2) edge[] (e2);
\node[circle,draw,densely dotted,inner sep=.5pt] (i24) at (-0.25+4,2.95) {$\beta_3$};
\node[circle,draw,densely dotted,inner sep=.5pt] (j24) at (2.25+4,2.75) {$\alpha_1$};
\draw (g2) edge[densely dotted] (i24);
\draw (h2) edge[densely dotted] (j24);
\node[inner sep=.5pt] (i12) at (4,0) {$\partial_{\omega}$};
\node[inner sep=.5pt] (i22) at (5,0) {$\partial_{\alpha_1} $};
\node[inner sep=.5pt] (i32) at (6,0) {$\partial_{\beta_1}$};
\draw (i12) edge[] (f2);
\draw (i22) edge[] (d2);
\draw (i32) edge[] (e2);
\node (a) at (7,0.5)  {$\star$};
\node[circle,draw,fill,inner sep=1pt] (b) at (8.5,1) {};
\node[circle,draw,densely dotted,inner sep=.5pt] (c) at (8.5,2) {$\omega$};
\node[inner sep=.5pt] (d) at (8,0) {$\partial_{\beta_3}$};
\node[inner sep=.5pt] (e) at (9,0) {$\partial_{\alpha_2} $};
\draw (b) edge[densely dotted] (c);
\draw (e) edge[] (b);
\draw (d) edge[] (b);
\node (a2) at (10.5,0.5)  {$=\partial_\omega(\omega)\cdot $};
\node[circle,draw,fill,inner sep=1pt] (d23) at (12+1,1) {};
\node[circle,draw,fill,inner sep=1pt] (e23) at (13+1,1.2) {};
\node[circle,draw,fill,inner sep=1pt] (f23) at (11+1,1.2) {};
\node[circle,draw,fill,inner sep=1pt] (g23) at (11.5+1,2.2) {};
\node[circle,draw,fill,inner sep=1pt] (h23) at (12.5+1,2) {};
\draw (e23) edge[] (d23);
\draw (f23) edge[] (d23);
\draw (f23) edge[] (g23);
\draw (f23) edge[] (h23);
\draw (g23) edge[] (h23);
\draw (h23) edge[] (d23);
\draw (h23) edge[] (e23);
\node[circle,draw,densely dotted,inner sep=.5pt] (i243) at (-0.25+11+1,2.95) {$\beta_3$};
\node[circle,draw,densely dotted,inner sep=.5pt] (j243) at (2.25+11+1,2.75) {$\alpha_1$};
\draw (g23) edge[densely dotted] (i243);
\draw (h23) edge[densely dotted] (j243);
\node[inner sep=.5pt] (i223) at (12+1,-1) {$\partial_{\alpha_1} $};
\node[inner sep=.5pt] (i323) at (13+1,-1) {$\partial_{\beta_1}$};
\draw (i223) edge[] (d23);
\draw (i323) edge[] (e23);
\node[circle,draw,fill,inner sep=1pt] (q) at (11+1,0) {};
\node[inner sep=.5pt] (p) at (10+1,-1) {$\partial_{\beta_3}$};
\node[inner sep=.5pt] (r) at (11+1,-1) {$\partial_{\alpha_2} $};
\draw (f23) edge[] (q);
\draw (q) edge[] (p);
\draw (q) edge[] (r);
\end{tikzpicture}
\caption{The pre-Lie product on $\HGCg'$ inducing the Lie bracket. The hair labelled by $\partial_\omega$ acts on the $\omega$ decoration, and the result is multiplied by the pairing $\partial_{\omega}(\omega)=1$ of $\partial_\omega$ on its linear dual.}\label{fig:HGCpreLie}
\end{figure}

This operation defines a pre-Lie product whose graded commutator induces the Lie bracket on $\HGCg'$. Since the differential acts on internal vertices only, it defines a derivation for the bracket. Next, consider the two elements
\begin{align*}
m_1&:=\hmcvertomega \ + \ \hmcvertone \ + \  \sum\limits_{i=1}^g \hmcvertab \ + \ \sum\limits_{i=1}^g \hmcverta \ + \ \hmcvertb \\
m_2&:=\hmclineomega \ + \ \sum\limits_{i=1}^g \hmclineab
\end{align*}

\begin{lemma}
All of $m_1$, $m_2$ and $m:=m_1+m_2$ define Maurer-Cartan elements in $\HGCg'$.
\end{lemma}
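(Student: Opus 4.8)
The plan is to verify the Maurer--Cartan equation $dx+\tfrac12[x,x]=0$ for $x\in\{m_1,m_2,m\}$ directly, exploiting that all three elements are concentrated in very low complexity (at most one internal vertex), so that both the differential $d=d_s$ and the pre-Lie product $\star$ producing the bracket output only graphs with at most two internal vertices. First I would record the degree bookkeeping: from the formula $-k+2N+\sum|f_{i_k}|+\sum|\partial_{f_{j_k}}|$ together with $|\omega|=-2$, $|\partial_\omega|=2$, $|\alpha_i|=|\beta_i|=-1$, $|\partial_{\alpha_i}|=|\partial_{\beta_i}|=1$ and $|1^*|=|\partial_1|=0$, each generating graph of $m_1$ and $m_2$ has degree exactly $1$; hence all three equations sit in degree $2$ and, since the elements are odd, $\tfrac12[x,x]=x\star x$. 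With this, $m_2$ is immediate: it has no internal vertices, so $d_s m_2=0$, and since $\star$ attaches a hair only to an internal vertex carrying an $H^*$-decoration while $m_2$ has none, $m_2\star m_2=0$. Thus $m_2$ is Maurer--Cartan with no computation.

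Second, for $m_1$ I would prove $d_s m_1=-\,m_1\star m_1$. Both sides are sums of connected two-vertex graphs carrying a single hair: the differential splits the unique vertex and distributes its decorations (including the implicit $1^*$-decoration present at every vertex) across the two new vertices joined by an edge, while the pre-Lie square glues the hair $\partial_f$ of one generator onto the decorated vertex of another and contracts $\partial_f$ against a vertex-decoration $g$ via the evaluation $\partial_f(g)$. The generators of $m_1$ are assembled from the basis of $H$ and its Poincar\'e duals so that the splitting and the regluing range over exactly the same isomorphism types of two-vertex graphs; the coefficients are precisely those dictated by the diagonal $\Delta=\nu\otimes1+1\otimes\nu-\sum_i(a_i\otimes b_i-b_i\otimes a_i)$ (reflecting the Frobenius structure of $H$), and I would organize the verification by the isomorphism type of the output, matching each splitting against exactly one gluing of opposite sign.

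Third, for $m=m_1+m_2$ I would use $[m,m]=[m_1,m_1]+2[m_1,m_2]+[m_2,m_2]$ together with the previous two steps, which collapses the claim to $[m_1,m_2]=0$. Since $m_2$ has no decorated internal vertices, $m_1\star m_2=0$, so it suffices to show $m_2\star m_1=0$. Each generator of $m_2$ is an edge joining two hairs; gluing one of its two legs onto a decorated vertex of an $m_1$-generator and contracting that leg's decoration against a vertex-decoration (using here, crucially, the implicit $1^*$-decoration) turns the remaining leg into a new hair and yields a \emph{one-vertex} graph. The key observation is that every such one-vertex graph is produced exactly twice, once from each leg of the relevant $m_2$-edge and from two different $m_1$-generators distinguished by which decoration is consumed and which survives; the graded symmetry of $\Delta$ then forces the two contributions to cancel, giving $m_2\star m_1=0$.

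The main obstacle is the sign and orientation bookkeeping: edges, odd decorations, and the hairs all enter the orientation data, so every asserted cancellation must be checked against the chosen ordering conventions. I expect the matching of graph types in all three steps to be routine; the delicate part is confirming that the signs are genuinely opposite in each cancelling pair, most notably in the step $m_2\star m_1=0$, where the graded symmetry of the diagonal and the odd degree of the gluing edge interact.
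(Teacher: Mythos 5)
Your overall strategy is the same as the paper's: the proof there is likewise a direct verification, recording $[m_2,m_2]=d_s m_2=0$ trivially, $[m_1,m_1]=2d_s m_1$ by matching vertex splittings against gluings (with the Maurer--Cartan equation $dm+\tfrac12[m,m]=0$ as stated in the preliminaries, the consistent form is the one you write, $d_sm_1=-m_1\star m_1$), and $[m_1,m_2]=0$. Your degree bookkeeping and your treatment of $m_1$ and of $m_2$ alone are fine.

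The gap is in your third step. In this paper's conventions the pre-Lie product sums over \emph{all} ways of attaching hairs to decorated vertices (including the implicit $1^*$), not just one hair at a time; this is visible in the proof of Lemma \ref{lemma:m2F1}, where the case that ``the diagrams in $m_2$ act with only one end'' is singled out as one contribution among others. Consequently $m_2\star m_1$ contains, besides the one-leg gluings you analyze, terms in which \emph{both} legs of an $m_2$-edge attach to the single vertex of an $m_1$-generator: for instance both hairs $\partial_{\alpha_i},\partial_{\beta_i}$ of $\hmclineab$ glue onto the vertex of $\hmcvertab$, consuming both decorations with nonzero coefficient $\partial_{\alpha_i}(\alpha_i)\,\partial_{\beta_i}(\beta_i)$ (and similarly $\hmclineomega$ glues onto $\hmcvertomega$ using the implicit $1^*$). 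These contributions do not cancel against anything; they vanish only because the glued edge becomes a tadpole at an internal vertex, and $\HGCg'$ is built from the tadpole-free quotient $\BVGraphsg'$. That is exactly the one reason the paper's proof cites (``because we quotient out tadpoles at internal vertices''), and your proposal never invokes it, so as written your verification of $[m_1,m_2]=0$ is incomplete. Two smaller inaccuracies in the same step: the cancelling partners of the one-leg terms are not always ``the two legs of the relevant $m_2$-edge'' --- e.g.\ gluing the $\partial_{\alpha_i}$-leg of $\hmclineab$ onto $\hmcvertab$ cancels against gluing the $\partial_1$-leg of $\hmclineomega$ onto the implicit $1^*$ of the generator $\hmcvertb$, i.e.\ against a term coming from a \emph{different} $m_2$-generator --- and some two-hair outputs (those with two $\partial_\omega$-hairs at one vertex) vanish individually by an odd edge-swap symmetry rather than in pairs. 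With the tadpole terms accounted for, your argument becomes a correct, slightly more detailed version of the paper's one-line computation.
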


\begin{proof}
By direct computation, we find $[m_1,m_1]=2 d_{s} m_1$ and $[m_1,m_2]=0$ (because we quotient out tadpoles at internal vertices), whereas $[m_2,m_2]=d_s m_2=0$, trivially. 
\end{proof}

Consider
\[
m=m_1+m_2.
\]
We twist the differential by this Maurer-Cartan element to obtain the dg Lie algebra $\HGCg:=(\HGCg')^m$. 
%
%
Notice that $\GCg$ acts on $\HGCg'$ via the pairing operation. In particular, this action induces a Lie algebra morphism
\begin{align*}
F_1:\GCg & \rightarrow \HGCg\\
\Gamma&\mapsto \sum\limits_{x \in H } \Gamma . (\hairyactsonx) .
\end{align*}
In this instance, we allow the graph $\hairyactsonone$ to be acted on. 
%
%
Additionally, we may define a morphism of Lie algebras
\begin{align*}
F_2:\spp(H^*)&\rightarrow \HGCg\\
\sigma &\mapsto \sum\limits_{x\in H} \hairysigma.
\end{align*}
We readily check that the morphisms are compatible with the $\spp(H^*)$-action on $\GCg$ in the sense that
\[
F_1(\sigma.\Gamma)=[F_2(\sigma),F_1(\Gamma)].
\]
Thus, they assemble to form a Lie algebra morphism
\begin{align*}
F:\spp(H^*)\ltimes \GCg &\rightarrow \HGCg\\
(\sigma,\Gamma)&\mapsto F_2(\sigma)+F_1(\Gamma).
\end{align*}

Remark the following small technical lemmas.

\begin{lemma}
The Maurer-Cartan element $m_1\in \HGCg$ satisfies $m_1=F_1(z)$.
\end{lemma}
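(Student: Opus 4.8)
The plan is to prove the identity by a direct term-by-term evaluation of $F_1(z)=\sum_{x}z.(\hairyactsonx)$, matching the output against the five families of graphs constituting $m_1$. First I would recall that $z=\zzeroo+\sum_i\zzeroab$ and that, by the standing convention, every vertex carries an implicit decoration by $1^*$. Since the action of $\GCg$ on the hairy complex is the pairing operation and $\hairyactsonx$ is the degenerate no-internal-vertex diagram (a single decorated edge with graph-end $x\in H^*$ and hair-end $\partial_x\in H$), the term $z.(\hairyactsonx)$ is computed by pairing a decoration $f$ of $z$ — either an explicit one or the implicit $1^*$ — against $x$ with weight $\langle e_f,e_x\rangle$, gluing the surviving hair $\partial_x$ onto the vertex that carried $f$. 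The explicit decoration is consumed in the pairing, while the implicit $1^*$ is retained.

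The key reduction is the summation over the basis element $x$: using the pairing table on $H$ one has $\sum_x\langle e_f,e_x\rangle\,\partial_x=\partial_{\hat f}$, the Poincaré-dual hair of $f$, so that $\partial_{\widehat{1^*}}=\partial_\omega$, $\partial_{\hat\omega}=\partial_1$, $\partial_{\hat\alpha_i}=-\partial_{\beta_i}$ and $\partial_{\hat\beta_i}=+\partial_{\alpha_i}$. Hence $F_1$ attaches to each vertex, for each of its decorations $f$, the single hair $\partial_{\hat f}$. Carrying out the enumeration then gives: from $\zzeroo$, pairing the implicit $1^*$ produces $\hmcvertomega$ (hair $\partial_\omega$, keeping $\omega$), while pairing the explicit $\omega$ produces $\hmcvertone$ (hair $\partial_1$, with $\omega$ removed) — this last term is exactly why one must allow $\hairyactsonone$ as an input. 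From each $\zzeroab$, pairing the implicit $1^*$ yields $\hmcvertab$, pairing $\beta_i$ yields $\hmcverta$, and pairing $\alpha_i$ yields $\hmcvertb$. Summing over all contributions gives $F_1(z)=(\hmcvertomega+\hmcvertone)+\sum_i(\hmcvertab+\hmcverta+\hmcvertb)=m_1$.

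I expect the main obstacle to be the sign and orientation bookkeeping rather than the combinatorics, which is immediate from the pairing. The odd decorations $\alpha_i,\beta_i$ and the newly created hair-edge both enter the orientation data (the linear order on edges and odd decorations), so the genuine work is to verify that the antisymmetry $\langle a_i,b_i\rangle=-\langle b_i,a_i\rangle$ together with the reordering sign incurred when a decoration is removed and a hair-edge inserted conspires to yield $+\hmcverta$ and $+\hmcvertb$ with the correct signs, and likewise to fix the signs on $\hmcvertab$ and $\hmcvertomega$. A final routine check confirms that no spurious terms arise: pairings of $f$ against any basis element other than its dual partner vanish, and since the elementary hair $\hairyactsonx$ carries no internal vertex there is no additional implicit $1^*$ on it to pair against. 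This completes the matching and hence the identity $m_1=F_1(z)$.
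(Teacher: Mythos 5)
Your proposal is correct and coincides with the paper's argument: the paper disposes of this lemma with the single line ``This follows via direct computation,'' and your term-by-term evaluation -- pairing each explicit decoration and each tacit $1^*$ of $z$ against the elementary one-hair graphs, reducing via $\sum_x \langle e_f,e_x\rangle\,\partial_x = \partial_{\hat f}$, and matching the five resulting families against $m_1$ -- is precisely that computation, including the observation that allowing $\hairyactsonone$ is what produces the $\hmcvertone$ term. Your remaining caveat about orientation signs (the reordering incurred when an odd decoration is consumed and a hair-edge created absorbing the antisymmetry $\langle a_i,b_i\rangle=-\langle b_i,a_i\rangle$) is exactly the routine bookkeeping the paper leaves implicit, so nothing is missing relative to the source.
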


\begin{proof}
This follows via direct computation.
\end{proof}

In particular, since $F((0,z))=F_1(z)$, and $F$ is a Lie algebra morphism, it follows that for all $(\sigma,\Gamma)\in \spp(H^*)\ltimes \GCg$
\[
[m_1, F((\sigma,\Gamma))]=[F((0,z)),F((\sigma,\Gamma))]=F([(0,z),(\sigma,\Gamma)]).
\]

\begin{lemma}\label{lemma:m2F1}
For all $\Gamma\in \GCg$, we have $F_1(d_{p} \Gamma)= [m_2,F_1(\Gamma)]$. 
\end{lemma}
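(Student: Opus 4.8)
The plan is to unwind both sides into explicit sums over graphs and to recognize that the pairing differential $d_p$, the map $F_1$, and the Maurer--Cartan element $m_2$ are all governed by one and the same bilinear datum: the canonical pairing $\langle-,-\rangle$ on $H$, equivalently the diagonal class $\Delta=\nu\otimes 1+1\otimes\nu-\sum_i(a_i\otimes b_i-b_i\otimes a_i)$. First I would record the combinatorial effect of each ingredient, keeping in mind that every vertex carries an implicit decoration by $1^*$, so that a \emph{decoration slot} of a graph means either a genuine decoration in $\overline{H}^*$ or such an implicit $1^*$. With this convention, $F_1(\Gamma)=\sum_x \Gamma.(\hairyactsonx)$ grafts onto $\Gamma$ a single hair: for each vertex $v$ and each slot $g$ at $v$ it attaches a hair decorated by $\partial_x$ weighted by $\langle e_g,e_x\rangle$, removing $g$ if it is genuine and keeping it if it is the implicit unit, summed over the basis; the computation $F_1(z)=m_1$ already exhibits exactly this behaviour. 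Dually, $d_p\Gamma$ joins the vertices carrying a pair of slots $g_a,g_b$ by a new internal edge weighted by $\langle e_{g_a},e_{g_b}\rangle$, i.e. it inserts $\Delta$ as an edge, and $m_2=\hmclineomega+\sum_i\hmclineab$ is precisely the realization of $\Delta$ as a two--legged hair segment in $\HGCg'$.

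Second, I would compute $[m_2,F_1(\Gamma)]$. Since $m_2$ has no internal vertices and no $\overline{H}^*$--decoration, the only way its legs interact with $F_1(\Gamma)$ is through the pairing of its two hairs against decoration slots of $F_1(\Gamma)$; the opposite term $F_1(\Gamma)\star m_2$ can involve only the implicit units on the two external vertices of $m_2$, and I would check that these contributions cancel. The surviving operation is the grafting of $m_2$ along its hairs. When \emph{both} hairs attach, to slots $g_a$ at $v_a$ and $g_b$ at $v_b$, the middle edge of $m_2$ becomes an internal edge between $v_a$ and $v_b$ of weight $\langle e_{g_a},e_{g_b}\rangle$; by the first paragraph this is exactly an application of $d_p$, while the spectator hair still carried by $F_1(\Gamma)$ plays the role of the hair that $F_1$ attaches \emph{after} $d_p$. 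This identifies the sum of the both--legs terms with $F_1(d_p\Gamma)$, once one matches carefully which slot is used by $d_p$ and which by $F_1$ (the subtle case being when $F_1$ consumes one of the decorations that $d_p$ pairs).

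The main obstacle is the bookkeeping of the remaining terms and of signs. There are the ``one--leg'' terms of $m_2\star F_1(\Gamma)$ in which a single hair of $m_2$ merely replaces a slot $g$ of $F_1(\Gamma)$ by its conjugate hair; these do not produce the internal edge of $d_p$ and must cancel. I would organize this cancellation locally at each vertex, pairing such a term against the one in which the roles of the hair produced by $F_1$ and the hair produced by $m_2$ are interchanged, using that $m_2$ encodes the symmetric tensor $\Delta$ and that its unit legs $\partial_1,\partial_\omega$ are exactly calibrated to the convention that a genuine $\omega$--decoration ``connects to all vertices''. Simultaneously I would match the Koszul signs coming from the desuspension in the cobar/Harrison conventions and from the linear order on edges and odd decorations, by fixing once and for all an ordering convention for the newly created edge and hair and comparing the induced signs on the two sides, in the same spirit as the sign computations in the anti--commutation lemma above. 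I expect this sign-and-cancellation step to be the most laborious; once it is carried out, the both--legs terms yield exactly $F_1(d_{p}\Gamma)=[m_2,F_1(\Gamma)]$.
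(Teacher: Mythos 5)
Your proposal follows essentially the same route as the paper's proof: you identify the one-hair terms (both legs of $m_2$ attaching, recreating the pairing edge of $d_p$ with the $F_1$-hair as spectator) with $F_1(d_p\Gamma)$, and you cancel the remaining two-hair terms pairwise by interchanging which decoration of a given pair is consumed by $F_1$ and which by $m_2$ --- which is exactly the paper's cancellation argument, including the role of the unit/$\omega$ legs of $m_2$. The sign verification you defer is treated at the same (asserted) level of detail in the paper itself, so there is no substantive gap.
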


\begin{proof}
By definition $F_1(d_p \Gamma)$ consists of a linear combination of diagrams all of which have a single hair. It coincides with the one-hair part of $[m_2,F_1(\Gamma)]$. Additionally, $[m_2,F_1(\Gamma)]$ will produce diagrams with two hairs. This is the case when the diagrams in $m_2$ act with only one end on the decorations of $F_1(\Gamma)$ - the other end becoming a hair. However, we note that each diagram with two hairs appears twice (and with opposite signs) in the linear combination $[m_2,F_1(\Gamma)]$. Indeed, for each pair of decorations in $\Gamma$, $F_1(\Gamma)$ produces two diagrams with one hair (one for each decoration), while acting with $m_2$ turns the respective second decoration (of the chosen pair) into a hair. The identical diagrams appear with opposite signs, and thus cancel. The statement follows.
\end{proof}

\begin{lemma}\label{lemma:m2F2}
For all $\sigma \in \spp(H^*)$, we have $[m_2,F_2(\sigma)]=0$.
\end{lemma}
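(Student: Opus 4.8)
The plan is to evaluate the graded commutator $[m_2,F_2(\sigma)]=m_2\star F_2(\sigma)-(-1)^{|m_2||F_2(\sigma)|}F_2(\sigma)\star m_2$ term by term, exploiting that $m_2$ and $F_2(\sigma)$ are both single-edge diagrams. First I would dispose of $F_2(\sigma)\star m_2$ for structural reasons. Recall that in the pre-Lie product $\Gamma_1\star\Gamma_2$ one connects hairs (decorations in $(H^*)^*\cong H$) of $\Gamma_1$ to the vertices of $\Gamma_2$ carrying a cohomology decoration in $\overline{H}^*$. By definition $m_2=\hmclineomega+\sum_i\hmclineab$ carries only hair-decorations and no $\overline{H}^*$-decorations, so the single hair $\partial_x$ of $F_2(\sigma)$ has no vertex of $m_2$ to attach to. Hence $F_2(\sigma)\star m_2=0$, and it suffices to prove $m_2\star F_2(\sigma)=0$.

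For the remaining term I would set up the dictionary between single-edge diagrams and tensors. Writing $\partial_{f_q}\leftrightarrow e_q$ under $(H^*)^*\cong H$, the element $m_2$ corresponds to the diagonal class $\Delta=\nu\otimes 1+1\otimes\nu-\sum_i(a_i\otimes b_i-b_i\otimes a_i)\in H\otimes H$: the even edge $\hmclineomega$ realizes the symmetric part $1\otimes\nu+\nu\otimes 1$, and each odd--odd edge $\hmclineab$ realizes the antisymmetric part $a_i\otimes b_i-b_i\otimes a_i$ once one accounts for the Koszul sign attached to reordering the two odd decorations. On the other hand $F_2(\sigma)=\sum_x\hairysigma$ is the edge with the decoration $\sigma(x)$ at one end and the hair $\partial_x$ at the other; by the defining condition $1^*\notin\mathrm{im}(\sigma)$ of $\spp(H^*)$ the decoration $\sigma(x)$ always lies in $\overline{H}^*$, so $F_2(\sigma)$ genuinely has one cohomology-decorated vertex.

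Then I would compute $m_2\star F_2(\sigma)$ directly. Since $F_2(\sigma)$ has exactly one $\overline{H}^*$-decorated vertex while $m_2$ has two hairs, the pre-Lie product is the sum of the two terms obtained by attaching either hair of $m_2$ to the vertex decorated by $\sigma(x)$. Attaching the hair $\partial_{f_p}$ contracts $\partial_{f_p}$ against $\sigma(x)$ via the evaluation pairing $(H^*)^*\otimes H^*\to\K$, deletes that decoration, and leaves a new single-edge diagram whose two ends are the surviving hair of $m_2$ and the hair $\partial_x$ of $F_2(\sigma)$ (this is the same mechanism by which $m_2$ turns one end into a dangling hair as in Lemma \ref{lemma:m2F1}). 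Summing over $x$ and over the two hairs of $m_2$ reconstitutes, by the symmetric and antisymmetric combinations noted above, exactly the image of $\Delta$ under the induced diagonal action $(\sigma\otimes\mathrm{id}+\mathrm{id}\otimes\sigma)$ of $\sigma$ on $H\otimes H$, where $\sigma$ acts on $H$ through $\sigma(e_p)=e_q$ whenever $\sigma(f_q)=f_p$. Since $\sigma\in\spp(H^*)\subseteq\osp(H^*)$ preserves the pairing on $H^*$, the dual diagonal is $\sigma$-invariant, that is $(\sigma\otimes\mathrm{id}+\mathrm{id}\otimes\sigma)\Delta=0$. Therefore $m_2\star F_2(\sigma)=0$ and the lemma follows.

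The main obstacle I anticipate is bookkeeping rather than conceptual: I must check carefully that summing over the two hairs of each edge of $m_2$ reproduces precisely the symmetric part $1\otimes\nu+\nu\otimes 1$ and the antisymmetric parts $a_i\otimes b_i-b_i\otimes a_i$ of $\Delta$, and that the Koszul signs generated when the odd hairs $\partial_{\alpha_i},\partial_{\beta_i}$ are contracted against the odd decorations match those in the pairing-preservation identity $\langle\sigma u,v\rangle+(-1)^{|\sigma||u|}\langle u,\sigma v\rangle=0$. Once the signs are reconciled, the vanishing is literally the defining property of $\osp(H^*)$, so no further input is required.
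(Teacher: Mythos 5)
Your proof is correct, but it takes a genuinely different route from the paper's. The paper argues by bilinearity on a basis of $\spp(H^*)$: it fixes an ad hoc sign convention for the bracket of the labelled edge diagrams, checks that $[m_2,F_2(\alpha_i\partial_{\beta_i})]$ produces an edge with two equal odd hairs (zero by symmetry), verifies by explicit cancellation the mixed combination $\sigma=\alpha_i\partial_{\alpha_j}-\beta_j\partial_{\beta_i}$, and asserts that ``all other cases follow from similar computations'' --- in particular the degree $-1$ generators $\omega\partial_{\alpha_i}+\beta_i\partial_1$, $\omega\partial_{\beta_i}-\alpha_i\partial_1$ are never written out. You instead give a basis-free argument: after the structural observation that $F_2(\sigma)\star m_2=0$ (which the paper uses tacitly, since its brackets reduce to a single pre-Lie term), you identify $m_2$ with the diagonal class $\Delta\in H\otimes H$ under $\partial_{f_q}\leftrightarrow e_q$ and recognize $m_2\star F_2(\sigma)$ as the (symmetrized) element $(\sigma^\vee\otimes\id+\id\otimes\sigma^\vee)\Delta$, where $\sigma^\vee$ is the transpose action on $H$ defined in the paper; vanishing is then literally the infinitesimal invariance of the dual diagonal under $\osp(H^*)$, with the condition $1^*\notin\mathrm{im}(\sigma)$ entering only to guarantee $F_2(\sigma)$ is a legitimate diagram. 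What each approach buys: the paper's computation requires no global diagram-to-tensor dictionary and makes the sign conventions concrete, at the cost of a case-by-case check that is left incomplete in print; your argument handles all of $\spp(H^*)$ uniformly (including the degree $-1$ part) and exposes the conceptual reason the lemma holds, at the cost of the one-time sign reconciliation you flag --- but since the unordered two-hair edge is canonically the graded-symmetric square of $H$ and the hair contraction is the natural $\gl$-coaction, that reconciliation is automatic and there is no genuine gap.
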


\begin{proof}
By bilinearity, it is enough to check the statement on basis elements. In the case of $\alpha_i\partial_{\beta_i}$ (and similarly $\beta_i\partial_{\alpha_i}$), this is clear, since
\[
[m_2, F_2(\alpha_i\partial_{\beta_i})]=[m_2, \adbi]=\pm \hmclinebb=0
\]
by symmetry. Let us perform the computation for $\sigma=\alpha_i\partial_{\alpha_j}-\beta_j\partial_{\beta_i}$. Adopt the following sign convention,
\[
[\hmcabii, \adaijorder]=\dbdaij.
\]
In that case, we also have
\[
[\hmcabjj,\bdbjiorder]=-[\hmcbajj,\bdbjiorder]=-\dadbji=\dbdaij.
\]
Clearly, thus,
\[
[m_2, F_2(\sigma)]=[m_2,\adaijorder - \bdbjiorder]=0.
\]
All other cases follow from similar computations.
\end{proof}

Lemma \ref{lemma:m2F1} and \ref{lemma:m2F2} imply that 
\[
[m_2, F((\sigma,\Gamma))]=F_1(d_{p}\Gamma)=F((0,d_{p} \Gamma))=F(d_{p}(\sigma,\Gamma)).
\]

\begin{lemma}
The map
\[
F:\spp(H^*)\ltimes\GCg\rightarrow \HGCg
\]
is a map of complexes.
\end{lemma}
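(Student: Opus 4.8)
The plan is to compare the two differentials directly on $F(\sigma,\Gamma)=F_2(\sigma)+F_1(\Gamma)$. On the source $\spp(H^*)\ltimes\GCg$ the differential is, by \eqref{eq:dsigma},
\[
d(\sigma,\Gamma)=\bigl(0,\ (d_s+d_p)\Gamma+(-1)^{|\sigma|}([z,\Gamma]-\sigma.z)\bigr),
\]
whose twisting part is precisely $F$ applied to the bracket $[(0,z),(\sigma,\Gamma)]$ in $\spp(H^*)\ltimes\GCg$. Hence $F(d(\sigma,\Gamma))=F_1(d_s\Gamma)+F_1(d_p\Gamma)+F([(0,z),(\sigma,\Gamma)])$. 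On the target $\HGCg=(\HGCg')^m$ the differential is $d_s+[m_1,-]+[m_2,-]$. I would therefore split $d_{\HGCg}F(\sigma,\Gamma)$ into these three contributions and match them one at a time against the three summands of $F(d(\sigma,\Gamma))$.

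The two bracket contributions are already handled by the preceding lemmas, so these steps are essentially bookkeeping. First, since $m_1=F_1(z)$ and $F$ is a morphism of Lie algebras, $[m_1,F(\sigma,\Gamma)]=[F((0,z)),F(\sigma,\Gamma)]=F([(0,z),(\sigma,\Gamma)])$, which matches the twisting term verbatim. Second, Lemma \ref{lemma:m2F1} gives $[m_2,F_1(\Gamma)]=F_1(d_p\Gamma)$ and Lemma \ref{lemma:m2F2} gives $[m_2,F_2(\sigma)]=0$, so $[m_2,F(\sigma,\Gamma)]=F_1(d_p\Gamma)=F(d_p(\sigma,\Gamma))$, which matches the $d_p$ term.

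It then remains to establish the $d_s$ part, namely $d_s F(\sigma,\Gamma)=F_1(d_s\Gamma)$. Here I would note that $F_2(\sigma)$ is a sum of diagrams with no internal vertices (a single hair attached to a decoration), so $d_s F_2(\sigma)=0$, reducing the claim to $d_s F_1(\Gamma)=F_1(d_s\Gamma)$. This is the one genuinely hands-on point: the vertex-splitting differential on $\HGCg'$ acts only on the internal vertices, which are exactly the vertices of $\Gamma$, whereas $F_1$ attaches univalent hairs at the decorations of $\Gamma$; since splitting an internal vertex and redistributing its incident edges, decorations and hairs is independent of the hair-attaching operation, the two commute up to sign. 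I expect this compatibility of the untwisted $d_s$ with the hair-attaching map $F_1$ — essentially the statement that $F_1$ is a chain map for $d_s$ alone — to be the main obstacle, although it is routine. Combining the three matched contributions yields $d_{\HGCg}F(\sigma,\Gamma)=F_1(d_s\Gamma)+F_1(d_p\Gamma)+F([(0,z),(\sigma,\Gamma)])=F(d(\sigma,\Gamma))$, which proves that $F$ is a map of complexes.
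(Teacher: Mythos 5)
Your proposal is correct and follows essentially the same route as the paper: the paper's proof is precisely the three-term splitting $d_{\HGCg}=d_s+[m_1,-]+[m_2,-]$, matched term-by-term using $m_1=F_1(z)$ together with the fact that $F$ is a Lie algebra morphism, and Lemmas \ref{lemma:m2F1} and \ref{lemma:m2F2} for the $[m_2,-]$ part. The only difference is that the paper asserts $d_sF((\sigma,\Gamma))=F(d_s(\sigma,\Gamma))$ without comment, whereas you correctly spell it out ($d_sF_2(\sigma)=0$ since $F_2(\sigma)$ has no internal vertices, and $d_s$ commutes with the hair-attaching operation $F_1$ since vertex splitting in $\HGCg'$ has no valence restriction).
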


\begin{proof}
This readily follows from the computation
\begin{align*}
d F((\sigma,F))&= d_s F((\sigma,\Gamma))+[m_1,F((\sigma,\Gamma))]+[m_2, F((\sigma,\Gamma))]\\
&=F(d_s(\sigma,\Gamma))+F([(0,z),(\sigma,\Gamma)])+F(d_p(\sigma,\Gamma))=F(d(\sigma,\Gamma)).
\end{align*}

\end{proof}

To prove the next proposition we adapt the proof by Fresse and Willwacher (\cite{FW20}, Proposition 2.2.9.) to our setting. In their case, they establish a link between Kontsevich's graph complex $\GC$ and its ``hairy'' variant $\HGC$, and find that, up to one class, the two complexes are quasi-isomorphic (see Remark \ref{rmk:GCHGCzero} below).

\begin{prop}\label{prop:GCHGC}
The map $F:\spp(H^*) \ltimes \GCg\rightarrow \HGCg$ defines a quasi-isomorphism in all degrees except 3 where it is injective on cohomology with a 1-dimensional cokernel.
\end{prop}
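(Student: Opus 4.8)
The plan is to follow the template of Fresse and Willwacher's comparison of Kontsevich's graph complex with its hairy variant (\cite{FW20}, Proposition 2.2.9), adapted to the decorated surface setting. Since $F$ is a map of complexes, it is equivalent (via the long exact sequence of the mapping cone) to prove that $H^i(\mathrm{Cone}(F))=0$ for all $i\neq 4$ and $H^4(\mathrm{Cone}(F))\cong\K$: this precisely encodes that $H^i(F)$ is an isomorphism for $i\neq 3$ and that $H^3(F)$ is injective with one-dimensional cokernel. The entire analysis is organized by the \emph{number of hairs}, which is the natural quantity measuring how far a hairy graph is from an ordinary graph in $\GCg$.

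\textbf{Step 1 (Hair filtration).} First I would filter $\HGCg$ by the number of hairs. The vertex-splitting differential $d_s$ and the bracket $[m_1,-]=[F_1(z),-]$ preserve the number of hairs, while $[m_2,-]$ is non-decreasing: its leading part creates one hair, since the two-hair diagrams making up $m_2$ glue a single one of their hairs to a decoration and leave the other hair free. On $\spp(H^*)\ltimes\GCg$ I would take the compatible filtration (by the number of decorations, which $F$ converts into hairs), so that $F$ is a filtered chain map, and then compare the two associated spectral sequences. \textbf{Step 2 (Hair-preserving page).} On the associated graded the differential is $d_0=d_s+[m_1,-]$. The identities $m_1=F_1(z)$, Lemma \ref{lemma:m2F1} and Lemma \ref{lemma:m2F2} show that $m_1$ exactly encodes the $z$-twisting together with the $d_s$-compatible promotion of $\overline{H}^*$-decorations to hairs. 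I would use this to reduce, up to $d_0$-homotopy, to hairy graphs whose external data are genuine hairs, and to match the $d_0$-cohomology of the image of $F$ with $\GCg$ together with the $\spp(H^*)$-contribution realized by $F_2$; the hair decorations in $H\cong(H^*)^*$ organize into $\spp(H^*)$-representations and $F_2$ supplies precisely its symplectic generators.

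\textbf{Step 3 (Killing multi-hair graphs and the exceptional class).} On the next page the hair-creating operator $[m_2,-]$ acts by gluing two hairs along the diagonal of the pairing on $H$. I would construct a contracting homotopy on the subcomplex of graphs carrying at least two free hairs, given by the adjoint hair-splitting operation, using the non-degeneracy of $\langle-,-\rangle$ and the underlying symplectic structure of $\spp(H^*)$. This collapses all multi-hair contributions, leaving only the image of $F$ plus a single residual class, namely the one low-complexity diagram built out of $m_2$ that cannot be split. Its cohomological degree I would pin down from the degree formula $-k+2N+\sum_k|f_{i_k}|+\sum_k|\partial_{f_{j_k}}|$, obtaining degree $3$, which produces the asserted injectivity of $H^3(F)$ with one-dimensional cokernel.

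The hard part will be Step 3: producing the contracting homotopy on the multi-hair part so that it is compatible, across the spectral sequence, with both $d_s+[m_1,-]$ and $[m_2,-]$, while controlling all signs and preserving $\spp(H^*)$-equivariance, and then isolating the unique non-exact degree-$3$ class and verifying that it is genuinely not a boundary. One also has to check convergence of the two spectral sequences — guaranteed in each fixed arity and degree by the filtration-boundedness from Section \ref{sec:filtered} — so that the associated-graded comparison lifts to the stated conclusion for $F$ itself.
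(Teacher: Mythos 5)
Your mapping-cone framing and your degree count for the exceptional class (the line with both hairs decorated by $\partial_\omega$ has degree $-k+2N+\sum|\partial_{f}|=-1+0+2+2=3$) are correct, but the filtration architecture of Steps 1--3 has a genuine gap that the argument cannot survive. Filtering $\HGCg$ by the number of hairs puts the wrong operators on the associated graded: besides $d_s$ and $[m_1,-]$, the part of $[m_2,-]$ in which \emph{both} ends of a line glue to decorations also preserves the hair number (it is exactly the term reproducing $d_p$, cf.\ Lemma \ref{lemma:m2F1}), so your $d_0$ is incomplete; and, much more seriously, the $E_1$ page of your spectral sequence is then the cohomology of each fixed-hair-number stratum with respect to a differential containing vertex splitting --- graph-cohomology-sized unknowns in every hair degree. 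Your Step 3 homotopy (``adjoint hair-splitting'' against the one-glued part of $[m_2,-]$) would have to contract these pages, but hair-splitting does not commute with $d_s$, and no such homotopy exists at that level. The paper's proof works precisely because it inverts your order: filtering both sides by $(\#\text{ internal edges})+(\#\text{ internal vertices})$, every component of the differential \emph{except} the one-ended hair-creating part of $[m_2,-]$ strictly raises the filtration, so on the associated graded the source differential vanishes and the only target differential is hair creation. The cone of $\gr F_1$ then decomposes over ``cores'' (hairs replaced by Poincar\'e-dual decorations) into tensor products of two-term Koszul-type complexes $x\mapsto \partial_{x^*}$, $1\mapsto\partial_\omega$, $\omega\mapsto\partial_1$, contracted by an explicit hand-written homotopy (Lemma \ref{lemma:fHGCgacyclic}), while the stratum with no internal vertices is handled by a direct dimension count producing $F_2(\spp(H^*))$ plus the single degree-$3$ class.

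A second concrete defect: your proposed source filtration is not compatible with $F$. Both $F_1(\Gamma)$ and $F_2(\sigma)$ convert exactly \emph{one} decoration into a hair (summing over decorations), so the entire image of $F$ sits in hair number one; ``number of decorations'' on $\spp(H^*)\ltimes\GCg$ does not correspond to ``number of hairs'' under $F$, and $F$ is not a filtered map for your pair of filtrations in any graded sense. (Note also that decoration count on $\GCg$ is non-increasing under the differential --- $d_p$ drops it by two, $[z,-]$ by one or zero --- giving it the opposite variance to the hair filtration on the target.) To close the argument you should discard the hair filtration and adopt the edge-plus-vertex filtration, after which your Steps 2 and 3 collapse into the core decomposition and the explicit $\gr\HGCg^0$ computation of the paper.
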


Leading up to the proof, we need a technical result within the following setup. Denote by $\HGCg^\bullet\subset \HGCg$ the subcomplex whose diagrams have at least one internal vertex. Additionally, denote by $\HGCg^0$ the subspace of $\HGCg$ spanned by diagrams with no internal vertices. It does not define a subcomplex. As graded vector spaces we have the decomposition
\[
\HGCg=\HGCg^0\oplus \HGCg^\bullet.
\]
Notice that $F_1$ maps $\GCg$ to $\HGCg^\bullet$. Next, denote by $f\HGCg$ the mapping cone of $F_1$. The underlying vector space is thus $\GCg[1]\oplus \HGCg^\bullet$ (i.e. diagrams with at least one internal vertex, but possibly without hairs), whereas the differential is induced by $F_1$. It may be thought of as the natural extension of the differential on $\HGCg$ to diagrams without hairs.

\begin{lemma}\label{lemma:fHGCgacyclic}
The complex $f\HGCg$ is acyclic.
\end{lemma}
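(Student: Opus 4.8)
The plan is to build an explicit contracting homotopy on $f\HGCg$ directly, adapting the argument of Fresse and Willwacher (\cite{FW20}, Proposition 2.2.9). Note that one cannot shortcut this by declaring $F_1$ a quasi-isomorphism: that statement is precisely the content of Proposition \ref{prop:GCHGC}, whose proof is supposed to rest on this lemma. So I would work with the mapping cone $f\HGCg=\GCg[1]\oplus \HGCg^\bullet$ and its total differential $D$, which is assembled from the vertex splitting $d_s$, the brackets $[m_1,-]$ and $[m_2,-]$, and the hair-creating map $F_1$ linking the two summands. The goal is to produce an operator $h$ of degree $-1$ for which $L:=Dh+hD$ is invertible; since $D^2=0$ forces $[D,L]=0$, invertibility of $L$ immediately yields the contracting homotopy $L^{-1}h$ and hence acyclicity.

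The natural candidate for $h$ is the adjoint of the hair-creating part of $D$: concretely, $h$ deletes a hair $\partial_{f_q}$ and reinstates the paired decoration, summed over all hairs of a diagram. Its partner inside $D$ is the operator $c$ that turns a decoration into the dual hair, namely $F_1$ together with the decoration-to-hair legs of $m_1$ and $m_2$ (e.g. $\omega\mapsto \partial_1$, $\alpha_i\mapsto \partial_{\beta_i}$). The crucial additional ingredient is that, because every vertex is tacitly decorated by $1^*$, the $\partial_1$–$\partial_\omega$ summand of $m_2$ contributes a distinguished \emph{free} creation operator $\delta_\omega$: its $\partial_1$-leg pairs off the tacit unit at any internal vertex, leaving the $\partial_\omega$-leg as a new hair. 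This $\delta_\omega$ costs no decoration, can be applied at every internal vertex and arbitrarily often (since $\partial_\omega$ is of even degree $2$), and will be what handles the diagrams carrying no explicit decoration and no hair.

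I would then compute $L=Dh+hD$. On diagrams that carry at least one hair or one explicit decoration, the anticommutator $ch+hc$ should act as a positive number operator $\mathcal N$ counting hairs and explicit decorations, hence invertible there; the commutators of $h$ with $d_s$, with the vertex-gluing $[m_1,-]$, and with the internal pairing $d_p$ all strictly raise the edge/vertex count, so relative to the complete filtration by edge number (Remark in Section on $\GCg$, inherited by $\HGCg$) they are topologically nilpotent corrections. On the remaining ``bare'' diagrams — connected graphs with no hair and no explicit decoration, of which there is always at least one internal vertex — the free operator $\delta_\omega$ provides the invertible direction through $[\delta_\omega,h]$. Writing $L=\mathcal N\big(\mathrm{id}+\mathcal N^{-1}(\,\text{correction}\,)\big)$ with topologically nilpotent correction then shows $L$ is invertible on the completed complex, and $\mathrm{id}=D(L^{-1}h)+(L^{-1}h)D$ gives acyclicity. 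Since every graph in the hairless summand $\GCg[1]$ is a nonempty connected graph, it too has an internal vertex on which $\delta_\omega$ acts, so no cocycle can hide in that summand.

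The hard part will be the rigorous proof that $L$ is invertible. This requires (i) isolating $\mathcal N$ as a genuinely positive number operator, with the correct signs and multiplicities coming from the indistinguishability of hairs and from the tacit-unit mechanism, rather than a cancelling sum; (ii) checking that \emph{all} cross-commutators $[\,d_s+[m_1,-]+d_p-\delta_\omega\,,h]$ strictly increase the completed filtration, so that the geometric series for $L^{-1}$ converges; and (iii) treating the bare-graph sector uniformly with the rest, i.e. verifying that $\delta_\omega$ restores positivity exactly on the kernel of $\mathcal N$. These sign- and convergence-bookkeeping points, not the underlying idea, are where the real work lies.
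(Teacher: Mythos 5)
Your plan is correct and, at its heart, it is the same mechanism as the paper's proof: the paper also reduces to the situation where the only surviving differential is the decoration-to-hair operator (including the tacit unit $1\mapsto\partial_\omega$, which is exactly your $\delta_\omega$ and is what kills the bare-graph sector), and it also uses your candidate homotopy $\partial_x\mapsto x^*$, $\partial_1\mapsto\omega$, $\partial_\omega\mapsto 1$, for which $Dh+hD$ is the positive scalar $\#\text{hairs}+\#\text{vertices}+\#(\overline H^*\text{-decorations})$. The difference is in the packaging, and the paper's packaging disposes of your points (i)--(iii) for free: instead of proving global invertibility of $L=Dh+hD$ by a geometric series, the paper first filters $f\HGCg$ by the number of \emph{internal} edges, so that $d_s$, $d_p$, $[m_1,-]$ and the both-ends part of $[m_2,-]$ all die on the associated graded (your topological-nilpotence claim, made in one stroke), and then observes that the graded differential preserves the ``core'' of a graph (hairs replaced by Poincar\'e-dual decorations), splitting $\gr f\HGCg$ into subcomplexes $V_{C(\Gamma)}$ each of which is an $l$-fold tensor power of a tiny two-term complex; there the anticommutator is literally the scalar above, with no sign or multiplicity bookkeeping left. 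One genuine caution about your version: the filtration must be by internal edges, not by the total edge count nor by the filtration $2\#\text{edges}-(\text{degree of decorations})$ from the remark on $\GCg$ that you cite --- hairs contribute edges, so with a total-edge filtration your homotopy $h$ lowers the filtration by one while $d_s$, $d_p$, $[m_1,-]$ raise it by one, and the cross-commutators then only \emph{preserve} filtration degree and your geometric series for $L^{-1}$ need not converge; with the internal-edge filtration $h$ preserves the degree and all corrections strictly raise it, and your argument goes through on the completed complex.
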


\begin{proof}
Let us refer to the edges connecting two internal vertices as internal edges. Equip $f\HGCg$ with the complete descending filtration given by
\[
\# \text{ internal edges } \geq p.
\]
On the associated graded complex $\gr f \HGCg=\gr (\GCg[1]\oplus \HGCg^{\bullet})$ the differential acts on the $\GCg[1]$ part via $F_1$ (and thus is given by the sum over all ways of attaching a hair), while on $\gr \HGCg$ we only see the part of $[m,-]$ which is given by bracketing with
\[
 m_2=\hmclineomega \ + \ \sum\limits_{i=1}^g \hmclineab
\]
and for which only one of two ends pairs with a decoration of the element in $\gr \HGCg$ (see Figure \ref{fig:dgrHGC}).

\begin{figure}[h]
\centering
\begin{tikzpicture}
[baseline=-.55ex,scale=.7, every loop/.style={}]
\node (a) at (3,0.5)  {$\gr d$};
\node[circle,draw,fill,inner sep=1pt] (d2) at (5,1) {};
\node[circle,draw,fill,inner sep=1pt] (e2) at (6,1.2) {};
\node[circle,draw,fill,inner sep=1pt] (f2) at (4,1.2) {};
\node[circle,draw,fill,inner sep=1pt] (g2) at (4.5,2.2) {};
\node[circle,draw,fill,inner sep=1pt] (h2) at (5.5,2) {};
\draw (e2) edge[] (d2);
\draw (f2) edge[] (d2);
\draw (f2) edge[] (g2);
\draw (f2) edge[] (h2);
\draw (g2) edge[] (h2);
\draw (h2) edge[] (d2);
\draw (h2) edge[] (e2);
\node[circle,draw,densely dotted,inner sep=.5pt] (j24) at (2.25+4,2.75) {$\alpha_3$};
\draw (h2) edge[densely dotted] (j24);
\node[inner sep=.5pt] (i12) at (4,0) {$\partial_{\omega}$};
\node[inner sep=.5pt] (i22) at (5,0) {$\partial_{\alpha_1} $};
\node[inner sep=.5pt] (i32) at (6,0) {$\partial_{\beta_2}$};
\draw (i12) edge[] (f2);
\draw (i22) edge[] (d2);
\draw (i32) edge[] (e2);
\node (a) at (7,0.5)  {$=$};
\node[circle,draw,fill,inner sep=1pt] (d23) at (9,1) {};
\node[circle,draw,fill,inner sep=1pt] (e23) at (10,1.2) {};
\node[circle,draw,fill,inner sep=1pt] (f23) at (8,1.2) {};
\node[circle,draw,fill,inner sep=1pt] (g23) at (8.5,2.2) {};
\node[circle,draw,fill,inner sep=1pt] (h23) at (9.5,2) {};
\draw (e23) edge[] (d23);
\draw (f23) edge[] (d23);
\draw (f23) edge[] (g23);
\draw (f23) edge[] (h23);
\draw (g23) edge[] (h23);
\draw (h23) edge[] (d23);
\draw (h23) edge[] (e23);
\node[inner sep=.5pt] (i123) at (8,0) {$\partial_{\omega}$};
\node[inner sep=.5pt] (i223) at (9,0) {$\partial_{\alpha_1} $};
\node[inner sep=.5pt] (i323) at (10,0) {$\partial_{\beta_2}$};
\node[inner sep=.5pt] (q) at (11,0) {$\partial_{\beta_3}$};
\draw (i123) edge[] (f23);
\draw (i223) edge[] (d23);
\draw (i323) edge[] (e23);
\draw (q) edge[in=60,out=90] (h23);
\end{tikzpicture}
\caption{On $\gr f\HGCg$, we only see the part of $[m_2,-]$ for which only one of the two ends pairs with a decoration of the element in $\gr f\HGCg$.}\label{fig:dgrHGC}
\end{figure}
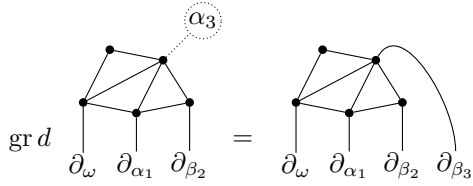

For any diagram $\Gamma$ in $f\HGCg$ let us refer to the graph in $f\HGCg$ obtained by replacing all hairs by decorations in the following way as the core $C(\Gamma)$ of $\Gamma$: If a hair is decorated by $\partial_x$, replace it with the Poincar\'e dual decoration $x^*$ of $x$ at the corresponding vertex (see Figure \ref{fig:coreHGC}).

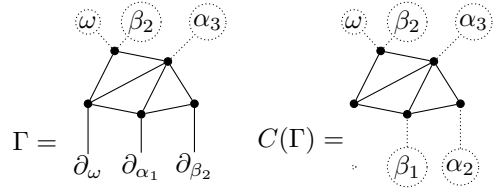
\begin{figure}[h]
\centering
\begin{tikzpicture}
[baseline=-.55ex,scale=.7, every loop/.style={}]
\node (a) at (3,0.5)  {$\Gamma=$};
\node[circle,draw,fill,inner sep=1pt] (d2) at (5,1) {};
\node[circle,draw,fill,inner sep=1pt] (e2) at (6,1.2) {};
\node[circle,draw,fill,inner sep=1pt] (f2) at (4,1.2) {};
\node[circle,draw,fill,inner sep=1pt] (g2) at (4.5,2.2) {};
\node[circle,draw,fill,inner sep=1pt] (h2) at (5.5,2) {};
\draw (e2) edge[] (d2);
\draw (f2) edge[] (d2);
\draw (f2) edge[] (g2);
\draw (f2) edge[] (h2);
\draw (g2) edge[] (h2);
\draw (h2) edge[] (d2);
\draw (h2) edge[] (e2);
\node[circle,draw,densely dotted,inner sep=.5pt] (j24) at (2.25+4,2.75) {$\alpha_3$};
\node[circle,draw,densely dotted,inner sep=.5pt] (j24a) at (5,2.75) {$\beta_2$};
\node[circle,draw,densely dotted,inner sep=.5pt] (j24b) at (4,2.75) {$\omega$};
\draw (h2) edge[densely dotted] (j24);
\draw (g2) edge[densely dotted] (j24a);
\draw (g2) edge[densely dotted] (j24b);
\node[inner sep=.5pt] (i12) at (4,0) {$\partial_{\omega}$};
\node[inner sep=.5pt] (i22) at (5,0) {$\partial_{\alpha_1} $};
\node[inner sep=.5pt] (i32) at (6,0) {$\partial_{\beta_2}$};
\draw (i12) edge[] (f2);
\draw (i22) edge[] (d2);
\draw (i32) edge[] (e2);
\node (a) at (8,0.5)  {$C(\Gamma)=$};
\node[circle,draw,fill,inner sep=1pt] (d23) at (10,1) {};
\node[circle,draw,fill,inner sep=1pt] (e23) at (11,1.2) {};
\node[circle,draw,fill,inner sep=1pt] (f23) at (9,1.2) {};
\node[circle,draw,fill,inner sep=1pt] (g23) at (9.5,2.2) {};
\node[circle,draw,fill,inner sep=1pt] (h23) at (10.5,2) {};
\draw (e23) edge[] (d23);
\draw (f23) edge[] (d23);
\draw (f23) edge[] (g23);
\draw (f23) edge[] (h23);
\draw (g23) edge[] (h23);
\draw (h23) edge[] (d23);
\draw (h23) edge[] (e23);
\node[circle,draw,densely dotted,inner sep=.5pt] (j24a) at (2.25+4+5,2.75) {$\alpha_3$};
\node[circle,draw,densely dotted,inner sep=.5pt] (j24aa) at (5+5,2.75) {$\beta_2$};
\node[circle,draw,densely dotted,inner sep=.5pt] (j24ba) at (4+5,2.75) {$\omega$};
\draw (h23) edge[densely dotted] (j24a);
\draw (g23) edge[densely dotted] (j24aa);
\draw (g23) edge[densely dotted] (j24ba);
\node[circle,draw,densely dotted,inner sep=.5pt] (i123a) at (9,0) {};
\node[circle,draw,densely dotted,inner sep=.5pt] (i223a) at (10,0) {$\beta_1 $};
\node[circle,draw,densely dotted,inner sep=.5pt] (i323a) at (11,0) {$\alpha_2$};
\draw (i223a) edge[densely dotted] (d23);
\draw (i323a) edge[densely dotted] (e23);
\end{tikzpicture}
\caption{The core of a graph $\Gamma$.}\label{fig:coreHGC}
\end{figure}

Notice that here we need to allow bi- and univalent vertices, since hairs decorated by $\partial_\omega$ will produce decorations labeled by $1$ which do not contribute to the valence. Notice that constructing the core from a graph preserves the number of internal edges, and thus it descends to the associated graded complex. The differential $\gr d$ on $\gr f\HGCg$ does not change the core of a graph (that is, $\Gamma$ and $\gr d (\Gamma)$ have the same core). Therefore, $\gr f \HGCg$ splits into a direct sum of subcomplexes $V_{C(\Gamma)}$, one for each core. Next, assume $\Gamma \in \gr f \HGCg$ has $l$ vertices. We may identify the complex $V_{C(\Gamma)}$ with
\[
\big(\bigoplus\limits_{x\in H_1} \K[x]/(x^2) \oplus \K [1] \oplus \K [\omega] \oplus \bigoplus\limits_{x\in H_1} \K[\partial_x]/(\partial_x^2)\oplus \K [\partial_1]/(\partial_1^2)\oplus \K[\partial_\omega]/(\partial_\omega^2) \big)^{\otimes l}.
\]
The differential acts as a derivation, and on basis vectors i is given by
\begin{align*}
x&\mapsto \partial_{x^*} \text{ for } x\in H^1 \\
1&\mapsto \partial_\omega\\
\omega&\mapsto \partial_1
\end{align*}
and the rest to zero. The morphism mapping
\begin{align*}
\partial_x&\mapsto x^* \text{ for } x \in H^1 \\
\partial_1&\mapsto \omega\\
\partial_\omega&\mapsto 1
\end{align*}
defines a homotopy for (a scalar multiple - the number of hairs plus the number of vertices plus the number of decorations in $\overline H^*$ - of) the identity. Thus, $V_{C(\Gamma)}$ is acyclic, and since as a complex
\[
\gr f \HGCg= \bigoplus_{\Gamma} V_{C(\Gamma)}
\]
the complex $\gr f\HGCg$ is acyclic, from which the statement follows.
\end{proof}
%
%

\begin{proof}[Proof of Proposition \ref{prop:GCHGC}]
Filter $\spp(H^*)\ltimes \GCg$ by
\[
\# \text{ edges } + \#\text{ vertices in the }\GCg \text{ part }\geq p 
\]
and $\HGCg$ by
\[
\# \text{ internal edges }+ \# \text{ internal vertices } \geq p.
\]
The morphism is compatible with the respective complete descending filtrations. On $\gr(\spp (H^*)\ltimes \GCg)$ the differential vanishes, whereas on $\gr\HGCg$ we only see the part of $[m,-]$ which is given by bracketing with
\[
 m_2=\hmclineomega \ + \ \sum\limits_{i=1}^g \hmclineab
\]
and for which only one of two ends pairs with a decoration of the element in $\HGCg$ (see Figure \ref{fig:dgrHGC}). Moreover, the associated graded complex $\gr \HGCg$ splits into the direct sum of subcomplexes
\[
\gr\HGCg\cong \gr \HGCg^0 \oplus \gr \HGCg^\bullet.
\]
Recall that $\HGCg^0$ is spanned by diagrams with no internal vertices, while $\HGCg^\bullet$ is generated by diagrams with at least one internal vertex. On the level of the associated graded these are indeed subcomplexes since the differential $\gr d$ preserves the number of vertices. Let us first consider $\gr \HGCg^0$. It contains two types of diagrams,
\begin{align*}
&\hairyactson \text{ for } x\neq 1, \text{ and }\\
&\novertexhairy.
\end{align*}
The differential maps diagrams shaped as in the first line to the ones in the second. In fact, up to one element, the differential is surjective, that is there is an exact sequence of the form
\[
0\rightarrow \ker (\gr d)\rightarrow   \langle \hairyactson \ \vert \ x\neq 1 \rangle \xtwoheadrightarrow{\gr d} \langle \novertexhairy \ \vert \ (\partial_x,\partial_y)\neq (\partial_\omega,\partial_\omega) \rangle \rightarrow \langle \hgcomegaomega\rangle \rightarrow 0.
\]
To see the surjectivity of the map above, notice that for any $x\neq \omega \in H$ , any $y\in H$, and $x^*$ the Poincar\'e dual of $x$,
\[
[m_2, \hairyactsonpoincare]=[\novertexpoincare, \hairyactsonpoincare]=\novertexhairy. 
\]
In particular, the only element $\gr d$ misses is $\hgcomegaomega$, since that would require $m_2$ to act on $\hairyactsonpoincareomega$ with $x^*=1$. Such diagrams are however not allowed. Additionally, the above implies 
\[
H(\gr \HGCg^0,\gr d )=\ker (\gr d) \oplus \K \hgcomegaomega.
\]
Moreover, for $2n=2g+2$,
\begin{align*}
\dim \left(\langle \hairyactson \ \vert \ x\neq 1 \rangle\right) =& 2n(2n-1)\\
\dim \left(\langle \novertexhairy \ \vert \ (\partial_x,\partial_y)\neq (\partial_\omega,\partial_\omega) \rangle\right)=& n(2n+1)-1-\underbrace{(2n-2)}_{\text{symmetry}}=2n^2-n+1.
\end{align*}
Notice that due to symmetry, diagrams of the form $\novertexhairyxx$ are zero when $|x|$ is odd. The exactness of the sequence above then implies that
\[
\dim \ker (\gr d)= 2n^2-n-1.
\]
At this point, we check that the $2n^2-n-1$ diagrams
\begin{align*}
&\adbi\\
&\bdai\\
&\adaij \ - \ \bdbji\\
&\adbij \ + \ \adbji\\
&\bdaij \ + \ \bdaji\\
&\odai \ + \ \bdone\\
&\odbi \ - \ \adone\\
\end{align*}
all lie in the kernel (this is precisely the computation of the proof of Lemma \ref{lemma:m2F2}). The space spanned by these diagrams corresponds precisely to the image of $\gr F\vert_{(\gr \spp(H),0)}=F_2$ restricted to $\spp(H)$. We conclude that
\[
H(\gr \HGCg^0,\gr d )\cong F_2(\spp(H))\oplus \K \hgcomegaomega .
\]
Notice that in addition $\hgcomegaomega$ lifts to a cohomology class (for instance $[m_1,\hgcomegaomega]=0$ by symmetry reasons) of degree three in $\HGCg$. 

On the other hand, since $f\HGCg$ is acyclic by Lemma \ref{lemma:fHGCgacyclic}, we have that $F_1:\GCg\rightarrow \HGCg^\bullet$ is a quasi-isomorphism. In particular, this implies that 
\[
\gr F:\gr(\spp(H^*)\ltimes \GCg)=\spp(H^*)\oplus \gr \GCg \rightarrow \gr \HGCg=\gr \HGCg^0\oplus \gr \HGCg^\bullet
\]
is a quasi-isomorphism in all degrees except degree 3. The statement of the proposition follows.
\end{proof}

\begin{rem}
The hairy graph complex $\HGCg'$ also acts on $(\BVGraphsg',d_s)$. The action generalizes the pre-Lie product on $\HGCg'$, and for $\gamma\in \HGCg$ and $\Gamma\in \BVGraphsg'$ it is given by the sum over all possible ways of connecting the hairs $\gamma$ to the vertices of $\Gamma$ carrying a decoration, while multiplying with the product of the respective actions of the hair decorations in $H\cong {(H^{*})^*}$ on the decorations in $H^*$ attached to the internal vertices of $\Gamma$. Notice that here we require that all hairs of $\gamma$ are connected to vertices of $\Gamma$. If this cannot be done, the action is zero. We check that it is compatible with the dg Lie algebra structure on $\HGCg'$. At this point, we may twist $(\BVGraphsg',d_s)$ by the Maurer-Cartan element $m\in \HGCg'$. We obtain an action of the twisted dg Lie algebra $\HGCg$ on ${(\BVGraphsg',d_s)}^m$. In particular, notice that twisting by $m$ reproduces precisely the pairing differential $d_p$ as well as the $z$-twist of the differential on $\BVGraphsg$. Furthermore, since the underlying spaces coincide, we conclude that $\BVGraphsg = {(\BVGraphsg',d_s)}^m$.
\end{rem}


\begin{rem}\label{rem:F}
Notice that the map $F:\spp(H^*)\ltimes \GCg \rightarrow \HGCg$ is compatible with the respective actions on $\BVGraphsg$, in the sense that, for $(\sigma,\gamma)\in \spp(H^*)\ltimes \GCg$ and $\Gamma$ in $\BVGraphsg$, we have
\[
F(\sigma,\gamma).\Gamma=(\sigma,\gamma).\Gamma.
\]
\end{rem}

\begin{rem}\label{rmk:GCHGCzero}
Both dg Lie algebras $\GCg$ and $\HGCg$ may be viewed as natural generalizations of Kontsevich's graph complex $\GC$ and its ``hairy'' version $\HGC$. Recall that elements of Kontsevich's graph complex $\GC$ are given by formal series of (isomorphism classes of) connected graphs with a single type of undistinguishable vertices of valency at least three (see Figure \ref{fig:GC}). The differential is defined by the same vertex splitting operation (on internal vertices) as in $\Graphs(r)$, i.e. when reconnecting the loose edges we require that the valency condition is respected (for a more precise definition see \cite{Willwacher15}). Moreover, and in addition to a combinatorially defined grading, $\GC$ may be equipped with a combinatorial Lie bracket. It was introduced by Kontsevich in his work on the formality conjecture \cite{Kontsevich97}. It holds numerous applications, many of which are related to Willwacher's result of computing its zero cohomology and identifying it with the Grothendieck-Teichm\"uller Lie algebra \cite{Willwacher15}. Its hairy version first appeared in \cite{AT15} in order to compute the rational homotopy of spaces of long embeddings, and within a similar setting it is used extensively to express the rational homotopy type of the mapping spaces of the little disks operads \cite{FTW17}. As already mentioned before the proof of Proposition \ref{prop:GCHGC}, by a result of Fresse and Willwacher, up to one class, $\GC$ and $\HGC$ are quasi-isomorphic \cite{FW20}.
\end{rem}

\begin{figure}[h]
\centering
\begin{tikzpicture}
[baseline=-.55ex,scale=.7, every loop/.style={}]
\node[circle,draw,fill,inner sep=1pt] (d2) at (0,0) {};
\node[circle,draw,fill,inner sep=1pt] (e2) at (1,0) {};
\node[circle,draw,fill,inner sep=1pt] (f2) at (0.31,0.95) {};
\node[circle,draw,fill,inner sep=1pt] (g2) at (-0.81,0.59) {};
\node[circle,draw,fill,inner sep=1pt] (h2) at (-0.81,-0.59) {};
\node[circle,draw,fill,inner sep=1pt] (i2) at (0.31,-0.95) {};
\draw (e2) edge[] (d2);
\draw (f2) edge[] (d2);
\draw (g2) edge[] (d2);
\draw (h2) edge[] (d2);
\draw (i2) edge[] (d2);
\draw (e2) edge[] (f2);
\draw (f2) edge[] (g2);
\draw (g2) edge[] (h2);
\draw (h2) edge[] (i2);
\draw (i2) edge[] (e2);
\node (a) at (2,0)  {$+ \ \frac52$};
\node[circle,draw,fill,inner sep=1pt] (d23) at (3,1) {};
\node[circle,draw,fill,inner sep=1pt] (e23) at (3,-1) {};
\node[circle,draw,fill,inner sep=1pt] (f23) at (5,1) {};
\node[circle,draw,fill,inner sep=1pt] (g23) at (5,-1) {};
\node[circle,draw,fill,inner sep=1pt] (h23) at (3.66,0) {};
\node[circle,draw,fill,inner sep=1pt] (i23) at (4.33,0) {};
\draw (e23) edge[] (d23);
\draw (f23) edge[] (d23);
\draw (f23) edge[] (g23);
\draw (f23) edge[] (i23);
\draw (g23) edge[] (i23);
\draw (h23) edge[] (d23);
\draw (h23) edge[] (e23);
\draw (h23) edge[] (i23);
\draw (e23) edge[] (i23);
\draw (g23) edge[] (e23);
\end{tikzpicture}
\caption{An element of Kontsevich's graph complex $\GC$.}\label{fig:GC}
\end{figure}
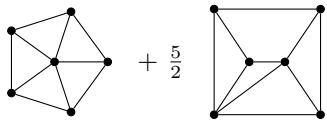

\subsection{The non-framed case for $g=1$}

In the non-framed case for $g=1$, we simply drop the condition that internal vertices need to be tadpole-free. More concretely, as a starting point we take $\uniGraphs_{(1) }\subset (\Graphs_{(1)}',d_s)$, the subcollection of internally connected graphs whose external vertices are univalent. All subsequent constructions go through analogously, and lead to the definition of the tadpole variant of $\HGC_{(1)}$ which we denote by $\HGC^{\minitadp}_{(1)}$. Ultimately, we obtain a quasi-isomorphism (except in degree three) of dg Lie algebras
\[
F: \spp(H^*)\ltimes \GC^{\minitadp}_{(1)}\rightarrow \HGC^{\minitadp}_{(1)}.
\]
which moreover is compatible with the respective actions on $\Graphs_{(1)}$.

\section{The deformation complex $\Def(\pdBVGraphsg\xrightarrow{_\Phi} \Mog)$}

Set $(\op M, \op C)=(\pdBVGraphsg,\pdBVGraphs)$ and $(\op N,\op D)=(\Mog,\bv^c)$, and consider the quasi-isomorphism of cooperadic dg Hopf comodules
\[
(\Phi:\pdBVGraphsg\rightarrow \Mog \ , \ \phi: \pdBVGraphs\rightarrow \bv^c)
\]
from Remark \ref{rem:Mog}. Our first main result establishes a link between the deformation complex 
\[
\Def(\pdBVGraphsg \xrightarrow \Phi \Mog)
\]
and the graph complex $\HGCg$. By Proposition \ref{prop:Lieaction}, the action of $\HGCg$ on $\BVGraphsg$ induces a morphism of complexes
\begin{align*}
\widetilde \Psi: \HGCg[-1] &\rightarrow \Def(\pdBVGraphsg \xrightarrow \Phi \Mog)\\
s^{-1}\Gamma &\mapsto  \widetilde \Phi\circ D_\Gamma
\end{align*}
where $\widetilde \Phi$ is as in Proposition \ref{prop:Lieaction}, namely the composition,
\[
\widetilde\Phi:\Harr \pdBVGraphsg \xrightarrow {s^{-1}\circ\pi} \pdBVGraphsg \xrightarrow \Phi \Mog \xrightarrow \iota \Omega \Mog.
\]

In this section, we proof the following result.

\begin{thm}\label{thm:defHGC}
The action of $\HGCg$ on $\pdBVGraphsg$ induces a quasi-isomorphism
\begin{align*}
\widetilde \Psi:\HGCg[-1] &\rightarrow \Def(\pdBVGraphsg \xrightarrow \Phi \Mog).
\end{align*}
\end{thm}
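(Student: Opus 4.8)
The plan is to prove that $\widetilde\Psi$ is a quasi-isomorphism by putting compatible complete filtrations on source and target, in the spirit of the proof of Proposition \ref{prop:GCHGC} and of Fresse and Willwacher's argument (\cite{FW20}, Proposition 2.2.9), and reducing to an isomorphism on an associated graded. First I would rewrite the deformation complex in its dual incarnation, using the identifications of Section \ref{sec:defformod}: writing $\Def(\pdBVGraphsg\xrightarrow{\Phi}\Mog)\cong \Omegau\Mog\hotimes_S \Harru\pdBVGraphsg^c$ with differential $d+\partial_{\mathsf{op}}+\partial_{\mathsf{mod}}$, and expressing $\widetilde\Psi(s^{-1}\Gamma)=\widetilde\Phi\circ D_\Gamma$ as the explicit element produced by letting the biderivation $D_\Gamma$ act on the cogenerators. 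The two structural inputs that drive the reduction are the freeness of $\pdBVGraphsg(r)$ as a graded commutative algebra and the Koszulness of $\bv$ recalled in Section \ref{sec:BV}.

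The source-side reduction handles the Harrison factor. Since $\pdBVGraphsg(r)$ is a free graded-commutative algebra on the internally connected tadpole-free decorated graphs — the product being identification of external vertices — the reduced Harrison complex $\Harr\pdBVGraphsg(r)$ is quasi-isomorphic to its cogenerators, namely the internally connected graphs (those staying connected after deleting all external vertices). Concretely I would filter by Harrison word-length, so that the leading part of the differential is the Harrison product differential, whose cohomology is exactly the indecomposables. This step converts the factor $\Harru\pdBVGraphsg^c$ into a complex of (duals of) internally connected decorated graphs, and it is here that the internally connected core underlying a hairy graph emerges.

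The target-side reduction handles the cobar factor $\Omegau\Mog=\Mog\circ\Omegau\bv^c$. Here I would exploit the quasi-isomorphism $\Omega(\bv^c)\xrightarrow{\sim}\bvk$ together with the explicit $\bv^c$-comodule structure of $\Mog$ to replace the cobar tower by its essential part: after incorporating the twist $\partial_{\mathsf{op}}=[m_{\mathsf{op}},-]$, running the internal cobar differential first collapses $\Omegau\bv^c$ onto $\bvk$, and the surviving contributions attach external legs to the internally connected core — these legs being precisely the hairs carrying the decorations in $H\cong(H^*)^*$. Matching the residual differential with the hairy differential $d_s+[m,-]$, and recognising the pairing differential $d_p$ and the $\spp(H^*)$-contributions through the computations of Lemmas \ref{lemma:m2F1} and \ref{lemma:m2F2}, then identifies the associated graded with $\HGCg[-1]$, with $\widetilde\Psi$ becoming the tautological isomorphism; the auxiliary summands that do not correspond to genuine hairy graphs organise, as in Lemma \ref{lemma:fHGCgacyclic}, into acyclic local complexes built from $\K[\partial_x]/(\partial_x^2)$-type factors.

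The main obstacle I expect is the homological bookkeeping needed to make this two-step reduction rigorous. One must choose the filtrations — word-length on the Harrison factor, internal-edge and tree-vertex count on the cobar factor — so that $\widetilde\Psi$ is filtered and the resulting spectral sequences converge simultaneously, which relies on the completeness and finite-type properties established in the Remarks of Section \ref{sec:defformod}. The delicate point is verifying that the leading differential on the associated graded really is the direct sum of the acyclic local pieces and the hairy differential, with all signs correct; this is where the interplay of the three differentials $d$, $\partial_{\mathsf{op}}$ and $\partial_{\mathsf{mod}}$ must be controlled, and where the finite-type acyclicity argument of Lemma \ref{lemma:fHGCgacyclic} has to be upgraded from $\HGCg$ to the full deformation complex.
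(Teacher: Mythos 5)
Your global strategy coincides with the paper's at the level of the two main reductions: the source-side collapse of the Harrison factor onto its cogenerators, the internally connected graphs $\icgg$ (the paper's Lemma \ref{lemma:harr}, via Dolgushev--Rogers), and the target-side replacement of the cobar factor by the Koszul module (the paper's $\tilde\kappa:\Omega\Mog\to\Mogg=\Mog\circ\bvk$, Lemma \ref{lemma:kappa} and Proposition \ref{prop:koszuldef}) are exactly the first two stages of the published proof, and your filtration choices (Harrison word length, edge counts) essentially match the ones used there.

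The gap is in your final step, which is where the real work of this theorem lies. After the two reductions one is left with $\Com^c\circ H\circ\Lie\{-1\}\hotimes_S\icgg^0$ carrying the differential $d_{\mathrm{fuse}}\otimes 1+\gr\partial'_{\mathsf{op}}$, which fuses clusters (adding Lie brackets) and splits external vertices; the auxiliary directions here are Lie words distributed over clusters, and external vertices of valence other than one --- not the hair/decoration pairs of Lemma \ref{lemma:fHGCgacyclic}. That lemma's core argument, local factors of the form $\K[\partial_x]/(\partial_x^2)$ with the homotopy $\partial_x\mapsto x^*$, computes the mapping cone of $F_1:\GCg\to\HGCg$ and has no counterpart in the deformation complex, where the relevant subquotients are \emph{not} locally acyclic. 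Instead the paper must: (i) filter by the number of big clusters and compute the small-cluster complex, whose cohomology is carried by the formal series $S(\smcluster)=\sum_{k\geq 1}\frac{(-2)^{k-1}}{k!}(\cdots)$, with the further change of basis $\tilde S(\smcluster)=-\frac12+S(\smcluster)$ needed just so that the first page splits into subcomplexes $V_0\oplus\tilde V_S$ (Lemma \ref{lemma:grgr}); (ii) decompose $\tilde V_S$ by characters and identify each piece with $S^p(\Harr'(S^c(P_k))[1])^{(1,\dots,1)}$, whose one-dimensional cohomology per character forces all external vertices to be univalent and all clusters to have arity one --- this, not a tautology, is what produces hairy graphs; and (iii) kill $V_0$ by the acyclicity of the \emph{unreduced} Harrison complex $\Harru(S^c(P_k))^{(1,\dots,1)}$ (Lemma \ref{lemma:unredHarr}), which requires its own explicit homotopy on $\ad_{s^{-1}1}$-words. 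None of these three mechanisms appears in your sketch, and the one you do invoke would fail: $\tilde V_S$ has nontrivial cohomology precisely where an "acyclic local pieces" picture predicts none, while $V_0$ dies for a reason (the reduced-versus-unreduced Harrison dichotomy) of an entirely different nature.
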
 

Notice that similarly, the action of $\spp(H^*)\ltimes \GCg$ on $\pdBVGraphsg$ induces the morphism of complexes
\begin{align*}
\Psi:( \spp(H^*) \ltimes \GCg)[-1] &\rightarrow \Def(\pdBVGraphsg \xrightarrow \Phi \Mog)\\
s^{-1}(\sigma,\gamma) &\mapsto  \widetilde \Phi\circ D_{(\sigma,\gamma)}
\end{align*}
which, in addition yields the following commutative diagram
\begin{center}
 \begin{tikzcd}
  \HGCg[-1] \arrow[r,"\widetilde \Psi"] & \Def(\pdBVGraphsg\xrightarrow \Phi \Mog)  \\
   (\spp(H^*)\ltimes\GCg)[-1] \arrow[ru, "\Psi"] \arrow[u,"F"]& 
 \end{tikzcd}
 \end{center}

Admitting Theorem \ref{thm:defHGC} thus provides us with the following result.

\begin{thm}\label{thm:defGC}
The morphism 
\[
\Psi: (\spp(H^*)\ltimes \GCg)[-1]\rightarrow \Def(\pdBVGraphsg \xrightarrow \Phi \Mog)
\]
induces a quasi-isomorphism in all degrees except in degree $4$. In degree $4$, the map on cohomology $H^4((\spp(H^*)\ltimes \GCg)[-1])\rightarrow H^4( \Def(\pdBVGraphsg\xrightarrow \Phi \Mog))$ is injective with one-dimensional cokernel.
\end{thm}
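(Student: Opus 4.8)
The plan is to deduce Theorem~\ref{thm:defGC} purely formally from the commutative triangle displayed just above, which factors the map $\Psi$ through its hairy counterpart. Concretely, that diagram expresses $\Psi$ as the composite
\[
\Psi = \widetilde\Psi \circ F[-1],
\]
where the vertical arrow is the desuspension $F[-1]$ of the Lie algebra morphism $F\colon \spp(H^*)\ltimes\GCg \to \HGCg$ of Proposition~\ref{prop:GCHGC}, and $\widetilde\Psi$ is the map of Theorem~\ref{thm:defHGC}. Since both ingredients are already available---$\widetilde\Psi$ is a quasi-isomorphism by Theorem~\ref{thm:defHGC} (which we admit), and $F$ is a quasi-isomorphism outside degree $3$ by Proposition~\ref{prop:GCHGC}---the only genuine bookkeeping is to track how the desuspension shifts the exceptional degree and to check that the exceptional cokernel survives post-composition with an isomorphism.

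First I would record the cohomological effect of the desuspension. With the conventions of Section~2 one has $H^i(V[-1]) = H^{i-1}(V)$ for any complex $V$, so $F[-1]$ induces on $H^i$ precisely the map $H^{i-1}(F)$. By Proposition~\ref{prop:GCHGC} the latter is an isomorphism whenever $i-1 \neq 3$, i.e.\ for all $i \neq 4$, while in the single case $i-1 = 3$ it is injective with one-dimensional cokernel. Hence $H^i(F[-1])$ is an isomorphism for $i\neq 4$ and is injective with one-dimensional cokernel for $i=4$.

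Next I would apply cohomology to the factorization, obtaining $H^i(\Psi) = H^i(\widetilde\Psi)\circ H^i(F[-1])$. As $H^i(\widetilde\Psi)$ is an isomorphism in every degree, post-composition by it preserves both the isomorphism property and injectivity degree by degree, and identifies the one-dimensional cokernel of $H^4(F[-1])$ with that of $H^4(\Psi)$. This gives exactly the claim: $\Psi$ is a quasi-isomorphism in all degrees other than $4$, and $H^4(\Psi)$ is injective with one-dimensional cokernel.

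The substantive content thus lies entirely in the two admitted inputs, so the real obstacle of this section is Theorem~\ref{thm:defHGC}, namely that the action of $\HGCg$ on $\pdBVGraphsg$ turns $\widetilde\Psi$ into a quasi-isomorphism; relative to that, the present deduction is formal. The one point still deserving care is the commutativity of the triangle itself, i.e.\ the identity $\widetilde\Psi\circ F[-1] = \Psi$. This amounts to checking that $D_{F(\sigma,\gamma)}$ and $D_{(\sigma,\gamma)}$ coincide as biderivations of $\pdBVGraphsg$, which is precisely the compatibility of $F$ with the two actions recorded in Remark~\ref{rem:F}.
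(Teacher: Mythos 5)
Your proposal is correct and follows essentially the same route as the paper: the paper likewise obtains Theorem \ref{thm:defGC} formally from the commutative triangle $\Psi=\widetilde\Psi\circ F[-1]$, admitting Theorem \ref{thm:defHGC} for $\widetilde\Psi$ and invoking Proposition \ref{prop:GCHGC} for $F$, with the desuspension shifting the exceptional degree from $3$ to $4$. Your closing check of the triangle's commutativity via the action-compatibility $F(\sigma,\gamma).\Gamma=(\sigma,\gamma).\Gamma$ of Remark \ref{rem:F} is exactly the point the paper relies on as well.
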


The proof of Theorem \ref{thm:defHGC} will require a few technical results on the deformation complex. We aim to replace the deformation complex by a simpler quasi-isomorphic complex. The strategy is to use a ``Koszul'' operadic module instead of the cobar construction $\Omega \Mog$. 

\begin{rem}\label{rmk:HGCFW}
Notice that Theorem \ref{thm:defHGC} and Theorem \ref{thm:defGC} are further adaptations of the work \cite{FW20} of Fresse and Willwacher to the case of cooperadic comodules. They define a similar quasi-isomorphism from the hairy variant of Kontsevich's graph complex to the deformation complex from Section \ref{sec:defforcoops} for $\op C=\pdGraphs$ and $\op D=\e_2^c$, i.e.
\[
\HGC[-1]\xrightarrow \sim \Def(\pdGraphs\xrightarrow \phi \e_2^c)
\]
thus also relating $\GC$ to the cooperadic deformation complex. Moreover, a similar result for $\op C=\pdBVGraphs$ and $\op D=\bv^c$ is discussed in \cite{Brun20}.
\end{rem}

\subsection{The Koszul operadic module $\Mogg$}\label{sec:OmegaMo}

Parallel to the cobar construction for $\Mog$ we consider the free right operadic $\bvk$-module $\Mogg$
\[
\Mogg:=\Mog\circ \bvk
\]
equipped with the differential
\[
d_{\Mogg}=d_{\Mog}\circ' \id_{ \bvk}+\id_{\Mog }\circ' d_{\bvk}+(\id_{\Mog}\circ  \gamma_{\bvk})((\id_{\Mog}\circ (\kappa\circ \iota))\circ \Delta_{\Mog} )\circ \id_{\bvk})
\]
where in this case $\gamma_{\bvk}$ denotes the operadic composition in $\bvk$. One way to describe elements of $\Mogg=\Mog \circ \bvk$ is as in Figure \ref{fig:diagramMog}. Each vertex of the $\Mog$ part contains an element in $\bvk\cong \K[\delta^*] \circ \Com\{-2\} \circ \Lie \{-1\}$, i.e. a product of Lie trees and powers of $\delta^*$. We refer to the vertices of the $\Mog$ part as \emph{clusters} and depict them as dotted half-circles.

\begin{figure}[h]
\centering
\includegraphics[width=0.6\textwidth]{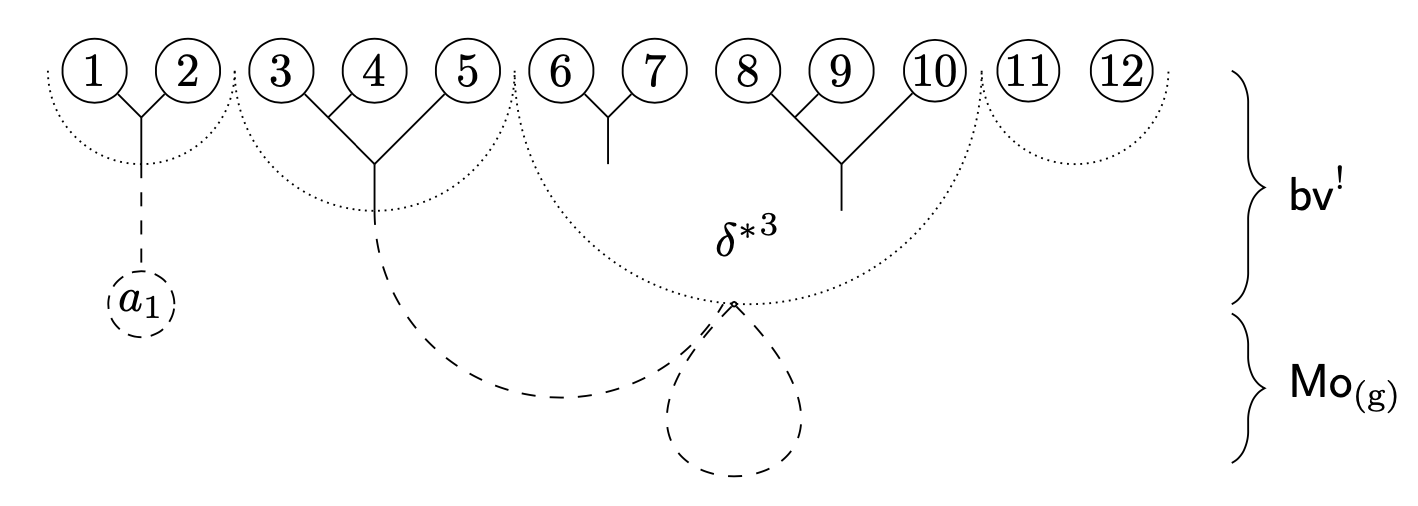}
\caption{An element of $\Mogg(12)$. We refer to the dotted half-circles as clusters. They correspond to the external vertices of the $\Mog$ part, and each contains an element in $\bvk$.}\label{fig:diagramMog}
\end{figure}

On such diagrams, the differential on $\Mogg=\Mog\circ \bvk$ is given (using a slightly abbreviated notation) by
\[
d_{\Mogg}=d_{\Mog}+d_{\bvk}+d_\mathrm{coop}
\]
where
\begin{itemize}
\item the internal differential on $\Mog$, $d_{\Mog}$, replaces dashed edges (and tadpoles) by decorating the corresponding clusters with the diagonal class.
\item the internal differential on $\bvk$, $d_{\bvk}$, takes the bracket between Lie words within a cluster and raises the power of $\delta^*$ by one.
\end{itemize}
and 
\[
d_\mathrm{coop}=(\id_{\Mog }\circ  \gamma_{\bvk})((\id_{\Mog }\circ \kappa \circ \iota)\circ \Delta_{\Mog} )\circ \id_{\bvk})
\]
defined through the cooperadic $\bv^c$ comodule structure of $\Mog$ and the morphism $\kappa$, produces the three terms for which
\begin{itemize}
\item a dashed edge is removed and the clusters it connected are fused into one larger cluster,
\item a dashed tadpole is removed and the power of $\delta^*$ in the corresponding cluster is raised by one,
\item two clusters which are not connected by a dashed edge are fused together to form one cluster, and we sum over all ways of taking Lie brackets between the Lie words that were previously in distinct clusters - this operation is sometimes denoted by $d_{\mathrm{fuse}}$.
\end{itemize}

\begin{figure}[h]
\centering
\includegraphics[width=0.6\textwidth]{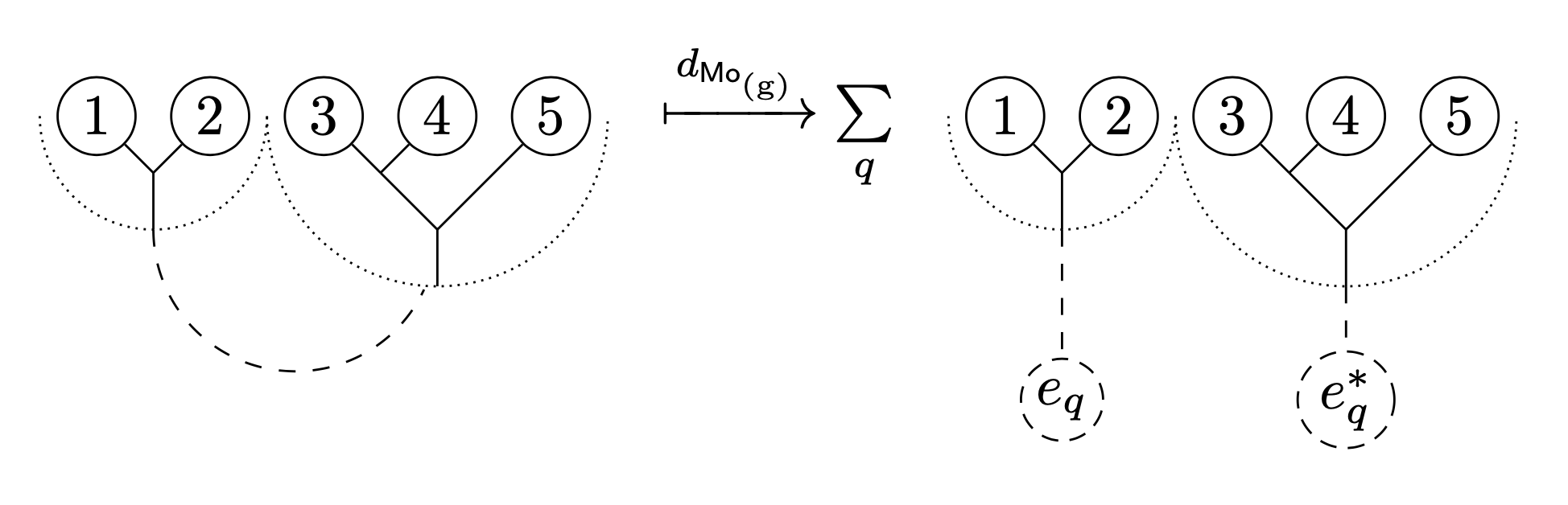}
\caption{Schematic description of the internal differential on $\Mog$.}\label{fig:diffMog}
\end{figure}

\begin{figure}[h]
\centering
\includegraphics[width=0.6\textwidth]{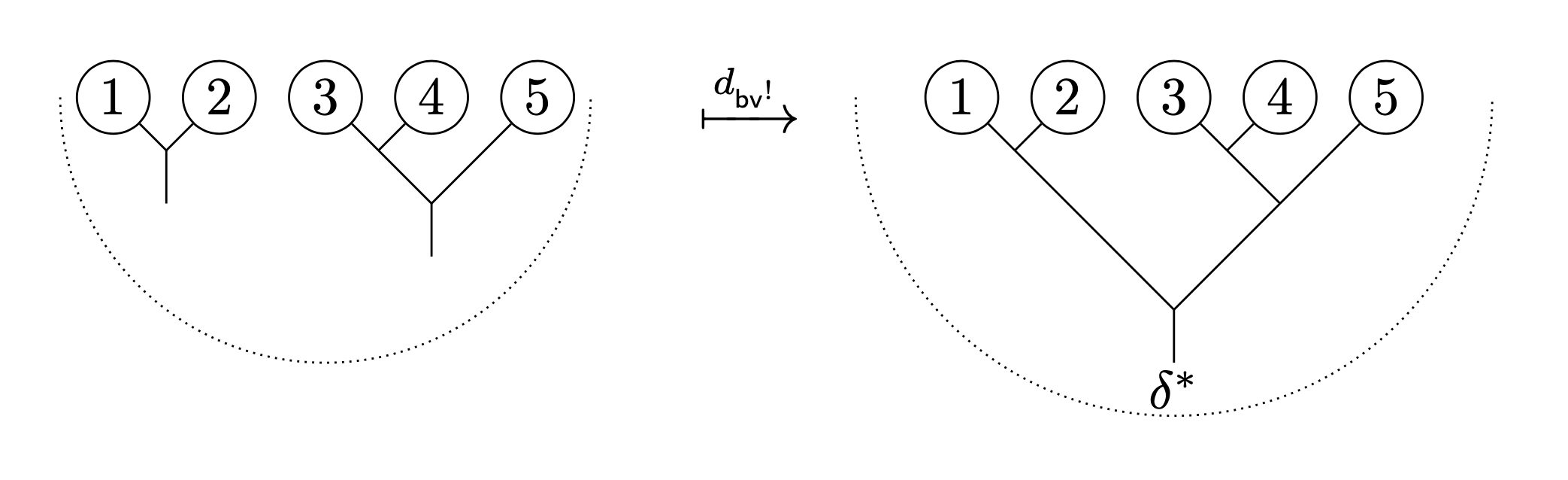}
\caption{Schematic description of the internal differential on $\bvk$.}\label{fig:diffbvk}
\end{figure}

\begin{figure}[h]
\centering
\includegraphics[width=0.6\textwidth]{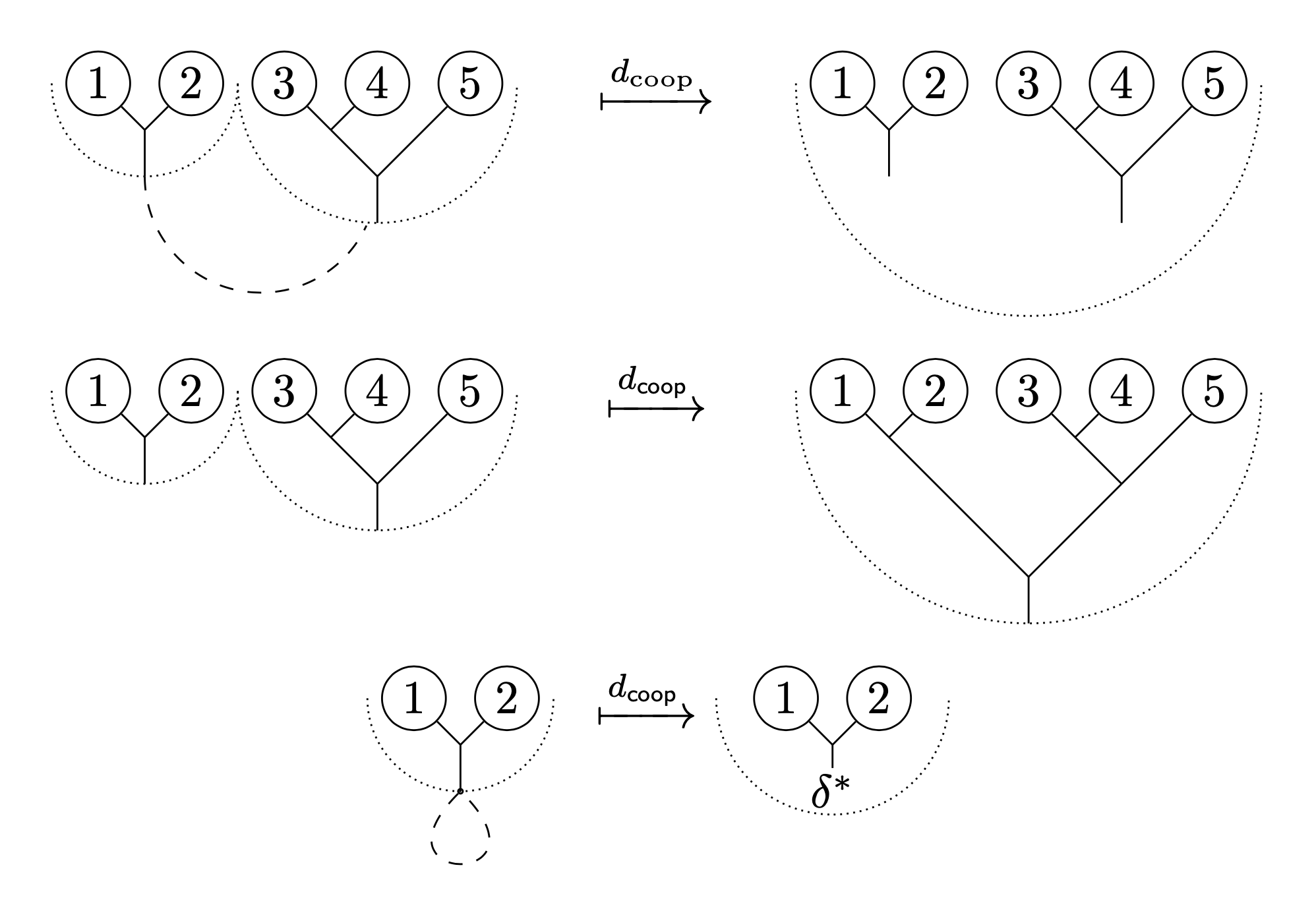}
\caption{Schematic description of the differential induced by the cooperadic comodule structure -  the operation on the second line is sometimes denoted by $d_\mathrm{fuse}$. }\label{fig:diffcoop}
\end{figure}

The induced morphism
\[
\tilde\kappa:=\id_{\Mog}\circ \kappa: \Omega\Mog=\Mog\circ \Omega\bv^c\rightarrow \Mogg=\Mog\circ \bvk
\]
is a map of complexes.

\begin{lemma}\label{lemma:kappa}
The morphism $\tilde\kappa$ describes a quasi-isomorphism $\Omega\Mog\xrightarrow{\sim} \Mogg$.
\end{lemma}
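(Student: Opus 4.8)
The plan is to deduce the statement from the quasi-isomorphism of operads $\Omega(\bv^c)\xrightarrow{\sim}\bvk$ established by G\'alvez-Carrillo, Tonks and Vallette (recalled above), by comparing the two twisted differentials on suitable associated graded complexes. Both $\Omega\Mog=\Mog\circ\Omega\bv^c$ and $\Mogg=\Mog\circ\bvk$ split, in each arity $n$, as a direct sum over the arity $k$ of the $\Mog$-factor (the number of clusters), and $\tilde\kappa=\id_{\Mog}\circ\kappa$ respects this decomposition since it acts only on the module factor. First I would filter both complexes by the number of clusters. Inspecting the three pieces of $d_{\mathrm{coop}}$, the only term that changes $k$ is the cluster-fusing part $d_{\mathrm{fuse}}$, which strictly lowers it; the internal differentials $d_{\Mog}$ and $d_{\bvk}$ (resp. $d_{\Omega\bv^c}$), as well as the $\delta^*$-tadpole term, preserve $k$. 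Since $i_1+\dots+i_k=n$ with each $i_j\ge 1$ forces $1\le k\le n$, this filtration is bounded in each arity, so the associated spectral sequences converge and $d_{\mathrm{fuse}}$ drops out on the associated graded.

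It then remains to prove that the induced map on $\gr_k$ is a quasi-isomorphism. Here the differential is $d_{\Mog}+d_{\bvk}+(\text{$\delta^*$-tadpole term})$, and all terms but the last act on separate tensor factors: $d_{\Mog}$ on $\Mog(k)$ and $d_{\bvk}$ on $\bigotimes_j\bvk(i_j)$. The $\delta^*$-tadpole term is the unique remaining piece coupling the two factors, replacing a self-pairing $w^{(jj)}$ of the $\Mog$-part of a cluster by a power of $\delta^*$. I would therefore run a secondary filtration on $\gr_k$ by the number of such tadpoles $w^{(jj)}$; as this number is bounded in fixed arity and degree, the filtration is again bounded. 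Both the $\delta^*$-tadpole term and the $\Mog$-internal tadpole term $w^{(jj)}\mapsto(2-2g)\nu^{(j)}$ lower this count, whereas $d_{\bvk}$ and the edge-part of $d_{\Mog}$ preserve it, so on the second associated graded one is left with a genuine tensor product of complexes $(\Mog(k),d_{\Mog}^{\mathrm{edge}})\otimes_{S_{i_1}\times\dots\times S_{i_k}}\bigl(\bigotimes_j\bvk(i_j),d_{\bvk}\bigr)\otimes_{S_k}\K[S_n]$, together with the analogous expression with $\Omega\bv^c$ in place of $\bvk$.

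On this doubly-associated-graded complex the map induced by $\tilde\kappa$ is $\id_{\Mog(k)}$ tensored, in each of the $k$ cluster slots, with the comparison map $\Omega\bv^c(i_j)\to\bvk(i_j)$ induced by $\kappa$. Working over the field $\K$, the Künneth theorem identifies its homology with the tensor product of the homologies of the factors, and since taking $S_{i_1}\times\dots\times S_{i_k}$- and $S_k$-(co)invariants is exact in characteristic zero it commutes with homology; as $\kappa$ realizes the quasi-isomorphism $\Omega\bv^c\xrightarrow{\sim}\bvk$ in each arity, the map is a quasi-isomorphism. Climbing back up the two bounded spectral sequences then yields that $\tilde\kappa$ is a quasi-isomorphism. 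The main obstacle I anticipate is organizational rather than computational: one must verify that the comodule-induced term $d_{\mathrm{coop}}$ is genuinely filtration-lowering for the chosen \emph{matched} filtrations on both $\Omega\Mog$ and $\Mogg$, so that the associated graded of $\tilde\kappa$ is precisely $\id_{\Mog}\otimes\kappa$, and in particular that the $\delta^*$-tadpole coupling is correctly isolated and absorbed by the secondary filtration; convergence of both spectral sequences is then immediate from boundedness in each arity.
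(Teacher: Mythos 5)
Your proposal is correct and follows essentially the same route as the paper: reduce, via bounded filtrations that remove all cluster-fusing and tadpole-type pieces of the differential, to the map $\id_{\Mog}\circ\kappa$ on the associated graded, where the G\'alvez-Carrillo--Tonks--Vallette quasi-isomorphism $\Omega\bv^c\xrightarrow{\sim}\bvk$ (together with K\"unneth and exactness of coinvariants in characteristic zero) finishes the argument. The only cosmetic difference is that the paper merges your two nested filtrations into a single one, by the arity of the $\Mog$-part plus the number of tadpoles in $\Mog$, arriving at the same associated graded comparison.
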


\begin{proof}
Since the differential preserves the total arity, it suffices to check that for any $r\geq 1$, the map
\[
\tilde \kappa(r)=(\id_{\Mog}\circ \kappa)(r):\Omega\Mog(r) \rightarrow \Mogg(r)
\]
is a quasi-isomorphism. To see this, filter both sides by the arity of the $\Mog$ part, plus the number of tadpoles in $\Mog$, i.e.
\begin{align*}
\mF^p (\Omega \Mog(r))=&\{\gamma\circ b \in (\Mog \circ  \Omega\bv^c )(r) \ \vert \ \mathrm{arity}(\gamma)+ \# \text{ of tadpoles in } \gamma \leq p\}\\
\mF^p (\Mogg(r))=&\{\gamma\circ b \in (\Mog\circ\bvk)(r) \ \vert \ \mathrm{arity}(\gamma)+ \# \text{ of tadpoles in } \gamma \leq p\}.
\end{align*}
These define bounded ascending filtrations. The differentials on the associated graded complexes are given by
\begin{align*}
&\gr (d_{\Mog}) \circ' \id_{\Omega \bv^c} +\id_{\Mog} \circ' d_{\Omega\bv^c}\\
&\gr (d_{\Mog}) \circ' \id_{\bvk} + \id_{\Mog}\circ' d_{\bvk}.
\end{align*}
Here $\gr(d_{\Mog})$ describes the part of the differential which does not operate on tadpoles (i.e. it splits edges of the form $w^{ij}$ for $i\neq j$, and replaces it by the diagonal class). From Section \ref{sec:BV} we know that the associated graded complexes are quasi-isomorphic. The statement follows.
\end{proof}

Notice furthermore that the right action of $\bv^c(r)$ on $(\Omega \bv^c)(r)$ and $\bvk(r)$ for any $r\geq 1$ induces a right action of $\Mog(r)$ on both $(\Omega \Mog)(r)$ and $\Mogg(r)$, which is compatible with the morphism $\tilde \kappa(r)$. For instance, an element in $(\Omega\Mog)(r)$ is of the form
\[
\Mog(s)\otimes((\Omega\bv^c)(r_1)\otimes \dots\otimes (\Omega\bv^c)(r_s))
\]
with $\sum_i r_i=r$. In this case, the action of $\Mog(r)$ is obtained by first applying the cooperadic comodule structure
\[
\Mog(r)\xrightarrow {\Delta_{\Mog}} \Mog(s)\otimes (\bv^c(r_1)\otimes \dots \otimes \bv^c(r_s))
\]
before using the algebra structure on $\Mog(s)$ and the respective actions of $\bv^c(r_i)$ on $(\Omega\bv^c)(r_i)$, tensor-wise. The action of $\Mog(r)$ on $\Mogg(r)$ is defined analogously.

\subsubsection{The non-framed case for $g=1$}

In the non-framed case for $g=1$, we find in an analogous way that $\Omega\Mo=\Mo\circ \Omega \e_2^c$ is quasi-isomorphic to $\Mo^{!}:=\Mo\circ \e_2^{!}=\Mo \circ \e_2\{-2\}$. Moreover, the arity-wise right actions of $\e_2^c$ on $\Omega\e_2^c$ and $\e_2^{!}$ induce compatible arity-wise right actions of $\e_2^c$ on both $\Omega \Mo$ and $\Mo^{!}$.

\subsection{``Koszul'' deformation complex}

Notice that the morphism $\tilde \kappa:\Omega \Mog\rightarrow \Mogg$ induces a surjective morphism of complexes
\begin{align*}
\tilde \kappa_*:(\Hom_S(\Harr\pdBVGraphsg,\Omega \Mog),d)&\rightarrow (\Hom_S(\Harr\pdBVGraphsg, \Mogg),d)\\
f&\mapsto \tilde \kappa \circ f
\end{align*}
Moreover, observe that the subspace $\Hom_S(\Harr\pdBVGraphsg, \ker\tilde \kappa)$ forms a subcomplex of the deformation complex, i.e. it is preserved by the differential $d+\partial_\mathsf{op}+\partial_{\mathsf{mod}}$. To see this, notice that it follows from Section \ref{sec:BV} that the kernel of the quotient map $\tilde \kappa: \Omega \Mog \rightarrow \Mogg$ is given by
\[
\ker \tilde \kappa=\mathcal{T}^{(\geq 4 )} \otimes (1\otimes d_{\Omega \bv^c}) ({\mathcal{T}^{(\leq 2,3)}})
\]
where $\mathcal{T}^{(\geq 4)}\subset \Mog\circ \Omega\bv^c$ is spanned by rooted trees for which at least one of the vertices labelled by elements in $\bv^c[-1]$ in the $\Omega\bv^c$ part is of valence at least four (i.e. the element in $ \bv^c[-1]$ is of arity at least three), and $\mathcal{T}^{(\leq 2,3)} \subset \Mog\circ \Omega \bv^c$ consists of linear combinations of rooted trees whose vertices which are labelled by elements in $ \bv^c[-1]$ are of arity less or equal to two, except for one, which is of arity three. Next, notice that since the action of $\Mog$ on $\Omega \Mog$ preserves the arity of the respective vertices in the tree representation of $\Omega \Mog$ and it is compatible with the differential of the cobar construction, $\partial_{\mathsf{mod}}$ preserves $\Hom_S(\Harr\pdBVGraphsg, \ker\tilde \kappa)$. Finally, observe that $\partial_\mathsf{op}$ acts on $\Hom_S(\Harr\pdBVGraphsg, \ker\tilde \kappa)$ through the operadic module structure of $\Omega\Mog$ over $\Omega \bv^c$, and as such it is given by grafting of trees which thus preserves the defining conditions of $\ker \tilde \kappa$ (i.e. attaching additional vertices at the leafs does not decrease the number of $\geq 4$-valent vertices, and if there a unique 4-valent vertex on which the differential $d_{\Omega \bv^c}$ was applied, attaching tri- and bivalent vertices will preserve this property).

It follows that we may endow $\Hom_S(\Harr(\pdBVGraphsg,\Mogg)$ with the induced differential $D_{\tilde \kappa}=d+\partial'_\mathsf{op}+\partial'_\mathsf{mod}$, defined through the equation $D_{\tilde \kappa}(\tilde \kappa \circ f)=\tilde \kappa \circ (Df)$. We adopt the notation
\[
\Def_\tk (\pdBVGraphsg\xrightarrow \Phi \Mog):=(\Hom_S(\Harr(\pdBVGraphsg,\Mogg), D_\tk).
\]
In the next section, we shall proof the following result.
\begin{prop}\label{prop:koszuldef}
The morphism $\tk_*$ induces a quasi-isomorphism
\[
\Def (\pdBVGraphsg\xrightarrow \Phi \Mog)\rightarrow \Def_\tk (\pdBVGraphsg\xrightarrow \Phi \Mog)
\]
\end{prop}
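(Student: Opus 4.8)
The plan is to realize $\tk_*$ as the quotient map in a short exact sequence and to show its kernel is acyclic. Since $\tk\colon\Omega\Mog\to\Mogg$ is surjective, $\tk_*$ is surjective, and by the discussion preceding the statement its kernel is the subcomplex $K:=\Hom_S(\Harr\pdBVGraphsg,\ker\tk)$. So there is a short exact sequence of complexes
\[
0\to K\to \Def(\pdBVGraphsg\xrightarrow\Phi\Mog)\xrightarrow{\tk_*}\Def_\tk(\pdBVGraphsg\xrightarrow\Phi\Mog)\to 0,
\]
and the associated long exact sequence in cohomology reduces everything to proving that $K$ is acyclic.

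To prove acyclicity of $K$, I would first exploit that, by Lemma \ref{lemma:kappa}, $\tk$ is a quasi-isomorphism, so $\ker\tk$ is acyclic for the cobar differential $d_{\Omega\Mog}$ it carries. Over the field $\K$ an acyclic complex is contractible, and averaging over the finite groups $S_r$ yields an $S$-equivariant contracting homotopy $h$ with $d_{\Omega\Mog}h+hd_{\Omega\Mog}=\id$. Writing the differential of $K$ as $D_0+\delta$, where $\delta=\partial_\mathsf{op}+\partial_\mathsf{mod}$ is the Maurer--Cartan twist and $D_0=d$ gathers the internal differentials of source and target (the cobar differential on $\Omega\Mog$ together with $d_{\op M}+d_{\Harr\op M}$ acting on $\Harr\pdBVGraphsg$), I expect post-composition $H:=h_*$ to contract $(K,D_0)$: a one-line sign check should give
\[
D_0H+HD_0=\bigl(d_{\Omega\Mog}h+hd_{\Omega\Mog}\bigr)_*=\id,
\]
the source-differential terms cancelling because $h$ acts only on the target factor $\ker\tk$.

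The remaining step, and the one I expect to be the main obstacle, is to absorb the perturbation $\delta$. Under the identification $K\cong\Omega\Mog\hotimes_S\Harr\op M^c$ of the Remark above (with $\op M=\pdBVGraphsg$), I would equip $K$ with the complete descending filtration inherited from the filtered structures of $\pdBVGraphsg$ and $\pdBVGraphs$ (Section \ref{sec:filtered}), by total degree of the graph (co)generators, arranged so that $D_0$ and $h$ preserve the filtration while both twisting terms $\partial_\mathsf{op}$ and $\partial_\mathsf{mod}$ strictly raise it. Then $H\delta$ is topologically nilpotent, and the basic perturbation lemma transfers the contraction of $(K,D_0)$ to $(K,D_0+\delta)$, so the latter is acyclic; equivalently, the associated spectral sequence has first page
\[
E_1=\Hom_S\bigl(\Harr\pdBVGraphsg,\,H(\ker\tk,d_{\Omega\Mog})\bigr)=0
\]
and converges by the finiteness hypotheses on $\pdBVGraphsg$ and $\pdBVGraphs$. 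This would give acyclicity of $K$, hence the desired quasi-isomorphism.

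The delicate point is exhibiting a \emph{single} complete filtration under which $\partial_\mathsf{op}$ and $\partial_\mathsf{mod}$ are \emph{simultaneously} subleading relative to the acyclic cobar differential on the target, together with checking that the completeness and finiteness assumptions of Section \ref{sec:filtered} really do force convergence (equivalently, topological nilpotence of $H\delta$). By contrast, the combinatorial input---surjectivity of $\tk_*$, the explicit form of $\ker\tk$ and its acyclicity---has already been secured by Lemma \ref{lemma:kappa} and the paragraph preceding the statement.
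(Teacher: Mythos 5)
Your proposal is correct in substance, but it reorganizes the argument rather than reproducing it: the paper never forms your short exact sequence. Instead it passes to the predual picture and shows directly that $\tk\otimes 1\colon \Omega\Mog\hotimes_S\Harr\BVGraphsg\to \Mogg\hotimes_S\Harr\BVGraphsg$ is a quasi-isomorphism via two \emph{nested} complete descending filtrations --- first by total arity, then, on the associated graded, by (number of Lie brackets in the Harrison factor) $+$ (total number of edges in all $\BVGraphsg$-letters). On the double associated graded only $d_{\Omega\Mog}\otimes 1$, resp.\ $d_{\Mogg}\otimes 1$, survives, and Lemma \ref{lemma:kappa} concludes. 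Your route --- kernel $K=\Hom_S(\Harr\pdBVGraphsg,\ker\tk)$, an $S_r$-averaged contracting homotopy $h$ on the acyclic complex $\ker\tk$ (legitimate over $\K$ in characteristic zero, and your sign computation for $D_0H+HD_0=\id$ is correct), then homotopy perturbation --- buys a small economy the paper does not exploit: since $h_*$ commutes with the source differentials up to the signs you checked, you need not make $d_{\Harr}$ and $d_{\BVGraphsg}$ subleading, only the twists $\partial_{\mathsf{op}}+\partial_{\mathsf{mod}}$. The paper's approach, by contrast, avoids choosing any homotopy and any appeal to the perturbation lemma, at the cost of a second filtration that must also absorb the source differentials.

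The one point where your sketch as written would fail is the filtration itself, which you rightly flag as delicate: filtering ``by total degree of the graph (co)generators'' (or by arity alone) is too coarse, because $\partial_{\mathsf{op}}$ has an \emph{arity-preserving} component, the $s^{-1}\Delta^c$-term, which merely adds a tadpole edge at an external vertex; likewise the Maurer--Cartan letters of $\partial_{\mathsf{mod}}$ can be edge-free single-vertex graphs, so edges alone do not suffice either. The fix is exactly the paper's count: take arity $+$ (edges in the $\BVGraphsg$-expressions) $+$ (Harrison brackets). Every component of $\partial_{\mathsf{op}}+\partial_{\mathsf{mod}}$ strictly raises this quantity, $h_*$ preserves it (it acts only on the target factor), $D_0$ does not decrease it, and completeness holds because $\BVGraphsg$ consists of formal series and the complex is a product over arities; graded acyclicity then transfers as you intend. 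Finally, a small slip: your identification should read $K\cong\ker\tk\hotimes_S\Harr\BVGraphsg$, not $\Omega\Mog\hotimes_S\Harr\op M^c$.
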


\subsection{Diagrammatic description of $\Def_\tk (\pdBVGraphsg\xrightarrow \Phi \Mog)$}\label{section:diagdescr}

We identify 
\[
\Def_\tk (\pdBVGraphsg\xrightarrow \Phi \Mog)\cong \Mogg\hotimes_S \Harr \BVGraphsg
\]
and represent elements of $\Mogg\hotimes_S \Harr\BVGraphsg$ by diagrams as in Figure \ref{fig:diagram1} where the upper part describes the $\Harr\BVGraphsg$ part.

\begin{figure}[h]
\centering
\includegraphics[width=0.6\textwidth]{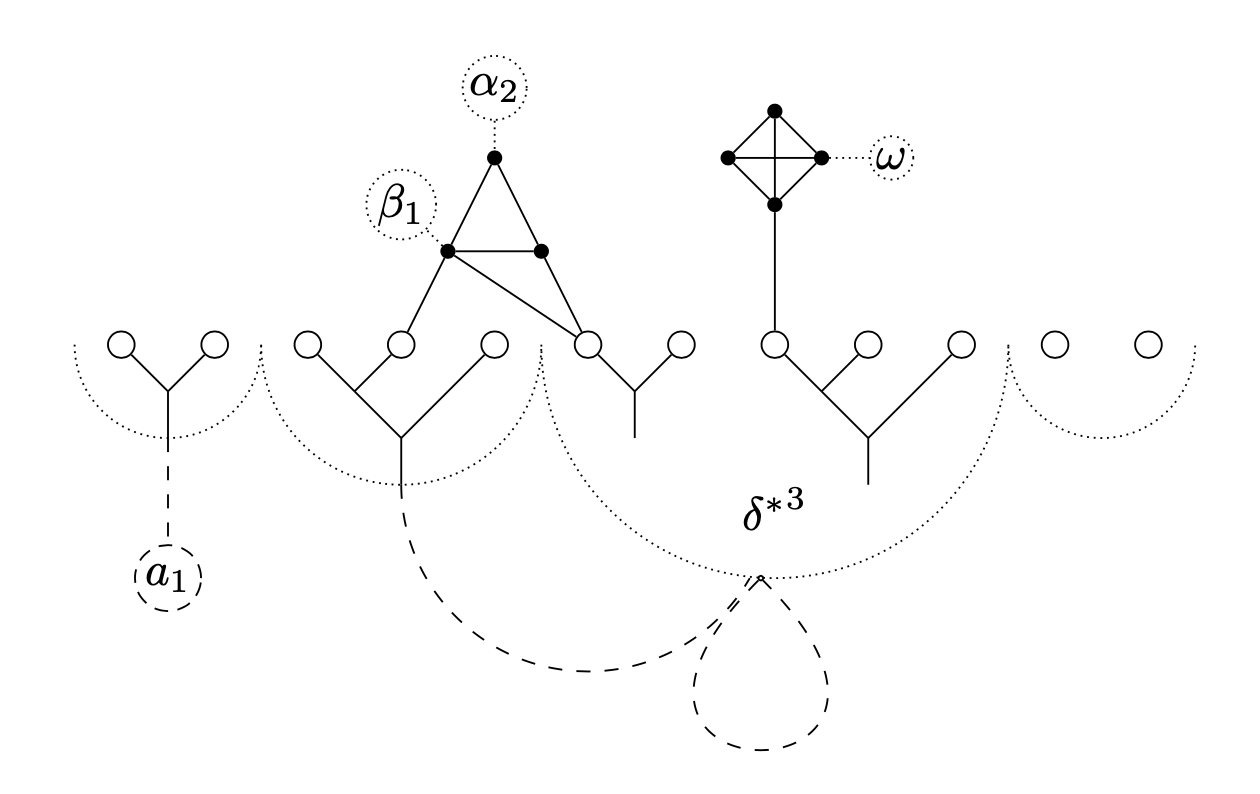}
\caption{An element of $\Mogg(12)\hotimes_{S_{12}} \BVGraphsg(12)[-1]\subset \Mogg(12)\hotimes_{S_{12}} \Harr (\BVGraphsg(12)) $.}\label{fig:diagram1}
\end{figure}

Let us sketch the different terms of the induced differential on $\Mogg \hotimes_S \Harr\BVGraphsg$. For this, observe that $\partial_\mathsf{op}$ is defined through the action of the element whose leading terms (i.e. up to arity two) are
\[
m_\mathsf{op}=s^{-1} \Delta^c \otimes \deltabvgraphs+s^{-1}\lambda^c\otimes \brabvgraphs +s^{-1} \mu^c\otimes \prodbvgraphs  + \dots \in \Omega \bv^c\hotimes_S \BVGraphs
\]
i.e. we find for $y\otimes x\in \Omega\Mog\hotimes_S \Harr \BVGraphsg$
\[
m_\mathsf{op} \cdot (y\otimes x)= \sum_i \pm y\circ_i s^{-1}\lambda^c\otimes (x \circ_i \brabvgraphs) +\pm s^{-1}y\circ_i \mu^c\otimes (x\circ_i \prodbvgraphs) +\pm s^{-1} y \circ_i \Delta^c \otimes  (x  \circ_i \deltabvgraphs)+\dots.
\]
Thus, for a representative in the quotient $\tilde \kappa (y) \otimes x \in \Mogg \hotimes_S \Harr \BVGraphsg$ the induced differential $\partial'_\mathsf{op}$ equals
\begin{align*}
&\partial'_\mathsf{op} (\tilde \kappa (y) \otimes x)=(\tilde \kappa\otimes 1) (m_\mathsf{op} \cdot (y \otimes x)) \\
&=  \sum_i \pm \tilde\kappa (y\circ_i s^{-1}\lambda^c) \otimes (x \circ_i \brabvgraphs) +\pm \tilde \kappa( y\circ_i s^{-1}\mu^c) \otimes (x\circ_i \prodbvgraphs) +\pm \tilde \kappa (y \circ_i s^{-1}\Delta^c) \otimes (x  \circ_i \deltabvgraphs)+\dots \\
&=\sum_i \pm \tilde\kappa (y)\circ_i \kappa( s^{-1}\lambda^c) \otimes (x \circ_i \brabvgraphs) +\pm \tilde \kappa( y) \circ_i \kappa (s^{-1} \mu^c) \otimes (x\circ_i \prodbvgraphs) +\pm \tilde \kappa (y) \circ_i \kappa (s^{-1}  \Delta^c) \otimes  (x  \circ_i \deltabvgraphs)+0\\
&=\sum_i \pm \tilde\kappa (y)\circ_i  s^{-2}\mu \otimes (x \circ_i \brabvgraphs) +\pm \tilde \kappa( y) \circ_i s^{-2} \lambda \otimes (x\circ_i \prodbvgraphs) +\pm \tilde \kappa (y) \circ_i \delta^* \otimes  (x  \circ_i \deltabvgraphs)
\end{align*}
since, in particular $\kappa$ vanishes on terms of arity $\geq 3$. The first terms are thus given by inserting an edge at the $i$-the external vertex of the $\Harr \BVGraphsg$ part through the bimodule structure of $\Harr \BVGraphsg$ over $\BVGraphs$, while composing with a product on the $\Mogg$ side (recall that $\Mogg$ is an operadic module over $\bvk$). In the second case, we insert an empty graph at the $i$-th external vertex, which amounts to splitting the external vertex, and compose with a Lie bracket in the respective cluster of $\Mogg$. For the arity one terms a tadpole is added to the $i$-th external vertex, while the corresponding $\bvk$ cluster is multiplied by $\delta^*$.

\begin{rem}
We use the reduced version here and omit the $s^{-1}\id \otimes \id$ term (and thus do not write $\partial_\mathsf{op}$), since by Proposition \ref{prop:subcomplex} its action does not contribute to the total differential.
\end{rem}

On the other hand, recall that $\partial_\mathsf{mod}$ is defined through the action of the Maurer-Cartan element describing the composition
\[
\Phi\circ s^{-1} \circ \pi :\Harr \pdBVGraphsg(r) \xrightarrow \pi \pdBVGraphsg (r) [1] \xrightarrow {s^{-1}} \pdBVGraphsg(r) \xrightarrow \Phi \Mog.
\]
Observe that in this case, we may encode this morphism as a linear combination in $\Mog \hotimes_S \BVGraphsg[-1]$ of elements of the form
\begin{equation}\label{eq:examplePhimc}
\examplePhi
\end{equation}
i.e. diagrams for which
\begin{itemize}
\item the upper part is a graph without any internal vertices, and
\item the lower part consists of the identical graph (with dual decorations). Notice that here we omit the $\bvk$ clusters since the lower part lies in $\Mog$ (and not in $\Mogg$).
\end{itemize}

Recall that diagrams of this sort act on $\Mogg \hotimes_S \Harr \BVGraphsg$ via the Lie bracket on the Harrison complex and through the action of $\Mog$ on $\Mogg$ induced by $\tilde \kappa$ and the action of $\Mog$ on $\Omega \Mog$ (as described in Section \ref{sec:OmegaMo}).

\begin{proof}[Proof of Proposition \ref{prop:koszuldef}]
We show the equivalent statement that the morphism $\tilde \kappa \otimes 1 : \Omega\Mog \hotimes_S \Harr \BVGraphsg\rightarrow \Mogg \hotimes_S \Harr \BVGraphsg$ defines a quasi-isomorphism. Equip both sides with the complete descending filtrations
\begin{align*}
\mF^p &= \prod\limits_{N\geq p} \Omega\Mog(N) \hotimes_{S_N} \Harr \BVGraphsg[1](N)\\
\mF^p &= \prod\limits_{N\geq p} \Mogg(N) \hotimes_{S_N} \Harr \BVGraphsg[1](N)
\end{align*}
Furthermore, filter both associated graded complexes by
\[
\# \text{ of Lie brackets in the Harrison part } + \text{ total } \# \text{ of edges in all } \BVGraphsg \text{-expressions }\geq p.
\]
It defines a complete descending filtration. While $d_{\Harr}$ raises the number of brackets, $d_{\BVGraphsg}$ creates an additional edge. Moreover, $\partial_\mathsf{mod}$ and $\partial'_\mathsf{mod}$ also increase the number of brackets in the Harrison complex, whereas $\partial_\mathsf{op}$ and $\partial'_\mathsf{op}$ raise the arity by one (or the number of edges, by adding a tadpole) in the respective complexes, and thus does not contribute to the differential on the associated graded. We obtain
\[
(\gr\gr(\Omega \Mog\hotimes_S\Harr \BVGraphsg  ), d_{\Omega\Mog}\otimes 1 )\rightarrow (\gr\gr(\Mogg \hotimes_S\Harr \BVGraphsg ),d_{\Mogg}\otimes 1)
\]
which describes a quasi-isomorphism by Lemma \ref{lemma:kappa}.
\end{proof}

\begin{rem}
The morphism from the hairy graph complex $\HGCg[-1]$ to the deformation complex extends via the map $\tilde \kappa_* $ to the morphism
\begin{align*}
\widetilde \Psi_\tk:=\tilde \kappa_* \circ \widetilde \Psi : \HGCg[-1] &\rightarrow \Def_\tk (\pdBVGraphsg\xrightarrow \Phi \Mog)\\
s^{-1}\Gamma &\mapsto \tilde \kappa \circ \widetilde \Phi\circ D_{\Gamma}
\end{align*}
The preferred element in $\Def_\tk (\pdBVGraphsg\xrightarrow \Phi \Mog)$ corresponds to
\[
\widetilde \Phi_{\tilde \kappa}:=\tilde \kappa \circ \widetilde \Phi:\Harr \pdBVGraphsg \xrightarrow \pi \pdBVGraphsg\xrightarrow \Phi \Mog \xrightarrow { i  } \Omega \Mog\xrightarrow {\tilde \kappa} \Mogg.
\]
Under the identification with $\Mogg \hotimes_S\Harr \BVGraphsg$ we may represent it by a linear combination of diagrams as in \eqref{eq:examplePhi}, i.e. diagrams of the form
\begin{equation}\label{eq:examplePhi}
\examplePhi
\end{equation}
that is, diagrams for which
\begin{itemize}
\item the upper part is a graph without any internal vertices, and
\item the lower part consists of the identical graph (with dualized decorations). In contrast to the diagrams in \eqref{eq:examplePhimc} the lower part lies in $\Mogg$. However, the clusters contain only one vertex (and are thus omitted in our diagrams).
\end{itemize}
%
%
\end{rem}

\subsection{Proof of Theorem \ref{thm:defHGC}}

We break up the proof in several smaller claims. First, filter $\HGCg[-1]$ by 
\[
\# \text{internal  edges } + \# \text{hairs } \geq p
\]
and similarly the complex $\Mogg\hotimes_S \Harr \BVGraphsg$ by 
\[
\text{ total } \# \text{ of edges in all } \BVGraphsg \text{-expressions in the free Lie algebra} \geq p.
\]
These define complete descending filtrations which are compatible with the morphism $\tilde \Psi$. 


\begin{rem}
Let us describe the morphism $\gr^{(1)}\widetilde \Psi$. Every hair is transformed into an edge by acting on $\PK$. Moreover, $\gr^{(1)}\widetilde \Psi$ maps a graph with, say $\#\text{edges} +\#\text{hairs}=p$, to a diagram with exactly $p$ edges. This is only obtained by acting on the part of the preimage of $\PK$ in $\Mogg \hotimes_S \Harr\BVGraphsg$ whose $\Harr \BVGraphsg$ part lies in $\BVGraphsg[1]$ and has no edges, but possibly decorations attached to the external vertices. That is, linear combinations of diagrams of the form,
\[
\examplePhii \ .
\]
\end{rem}

\begin{rem}
Notice that the differential on $\HGCg$ raises
\[
\# \text{ internal edges}+\# \text{hairs}
\]
by one and thus, it vanishes on the level of the associated graded, and 
\[
(\gr^{(1)}\HGCg[-1],d)\cong(\HGCg[-1],0).
\]
On the other hand, applying $d_{\BVGraphsg}$ as well as the part of $\partial'_\mathsf{mod}$ given by forming the bracket with the terms of the Maurer-Cartan element containing at least one edge increases the number of edges in the $\BVGraphsg$ part by at least one. In addition, the only term in $\partial'_\mathsf{op}$ which is visible on the associated graded complex is given by the operadic insertion operations of 
\[
s^{-2} \lambda \otimes \prodbvgraphs \ = \ \mcbvtwo
\]
since the other two terms in $m_\mathsf{op}$ both raise the number of edges by one. The differential we find on the associated graded $\gr^{(1)}(\Mogg\hotimes_S \Harr \BVGraphsg )$ is hence
\[
\gr^{(1)} D_\tk= 1\otimes d_{\Harr}+d_{\Mogg}\otimes 1+\gr \partial'_\mathsf{mod}+\gr \partial'_\mathsf{op}.
\]
\end{rem}

Next, notice that any diagram in $\BVGraphsg(r)$ decomposes as a (co)product of its internally connected components (i.e. the subgraphs which remain connected if all external vertices are deleted, see Section \ref{subsec:hairy}). Denote by $\icgg[1]\subset \BVGraphsg$ the (degree shifted) subcollection of internally connected graphs. For any $r\geq 1$, we may write $\BVGraphsg(r)=S^c(\icgg(r)[1])$ (see also \cite{SevWill11} for a similar construction in the local case). Since the differential on $\BVGraphsg(r)$ is compatible with the coalgebra structure, $\icgg(r)$ comes equipped with an $L_\infty$-structure. However, this is not important for our purposes, and we ignore both the differential on $\BVGraphsg(r)$ and the $L_\infty$-structure on $\icgg(r)$. Consider the underlying graded vector spaces $\BVGraphsg^0(r):=(\BVGraphsg(r),0)$ and $\icgg^0(r):=(\icgg(r),0)$ instead. We then have the following result.

\begin{lemma}\label{lemma:harr}
The projection $\pi:(\Harr \BVGraphsg^0(r),d_{\Harr} )\rightarrow \icgg^0(r)$ defines a quasi-isomorphism, i.e.
\[
H(\Harr \BVGraphsg^0(r),d_{\Harr})= \icgg^0(r).
\]
\end{lemma}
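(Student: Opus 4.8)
The plan is to reduce the statement to a purely (co)algebraic fact about Harrison homology of cofree cocommutative coalgebras, namely that it is concentrated in the cogenerators. First I would record the coalgebra structure: forgetting the differential, $\BVGraphsg^0(r)=S^c(\icgg(r)[1])$ is the conilpotent cofree cocommutative coalgebra cogenerated by $W:=\icgg(r)[1]$, the coproduct of a graph being the sum over all ways of distributing its internally connected components into two groups, and the cogenerators being exactly the single-component graphs. The decorations sit on vertices as inert labels and play no role in this coalgebra structure. Under this identification $\Harr \BVGraphsg^0(r)=\hL(I\BVGraphsg^0(r)[-1])$ with $I\BVGraphsg^0(r)=\bigoplus_{n\geq 1}S^n(W)$, and the projection $\pi$ is the composite of the projection of the free Lie algebra onto its generators $I\BVGraphsg^0(r)[-1]$ followed by the projection onto the weight-one summand $W[-1]=\icgg^0(r)$. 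Thus the lemma is an instance of the general fact that, for any graded vector space $W$, the Harrison complex of $S^c(W)$ has homology $W[-1]$, realized by projection to cogenerators.

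Next I would introduce the \emph{weight} grading on $\Harr \BVGraphsg^0(r)$ given by the total number of cogenerators, so that a Lie monomial $[s^{-1}x_1,[\dots,s^{-1}x_\ell]\dots]$ with $x_i\in S^{k_i}(W)$ has weight $k_1+\dots+k_\ell$. The Harrison differential, which replaces one factor $s^{-1}x_i$ by $-\frac12[s^{-1}x_i',s^{-1}x_i'']$ via the reduced coproduct, preserves weight and raises bracket length by one. Hence $\Harr \BVGraphsg^0(r)=\prod_{n\geq 1}(\Harr \BVGraphsg^0(r))^{(n)}$, each weight-$n$ summand being a bounded complex with bracket length between $1$ and $n$; for fixed arity and degree everything is finite dimensional, so homology may be computed weight by weight and commutes with the completion in $\hL$. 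In weight one the complex is $W[-1]=\icgg^0(r)$ with vanishing differential, concentrated in bracket length one. Since $\pi$ restricts there to an isomorphism and is a chain map to the zero-differential target (the image of $d_{\Harr}$ has bracket length $\geq 2$, so $\pi\circ d_{\Harr}=0$), it remains only to prove that $(\Harr \BVGraphsg^0(r))^{(n)}$ is acyclic for every $n\geq 2$.

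The acyclicity in weight $n\geq 2$ is the heart of the matter, and I expect it to be the main obstacle. Writing $I\BVGraphsg^0(r)=\overline{\Com^c}(W)$, one has $\Harr \BVGraphsg^0(r)=\mathbb{L}(\overline{\Com^c}(W)[-1])$ equipped with the Koszul-type differential induced by the coproduct; its weight-$n$ part is, up to operadic suspension and signs, the weight-$n$ piece of the Koszul complex $\mathbb{L}\circ \Com^c$ whose acyclicity above weight one is precisely the statement that $\Com$ is a Koszul operad with Koszul dual $\Lie$ (up to shift). Rather than re-derive this, I would cite it (for instance Loday--Vallette, Chapter 13), or equivalently exhibit the standard contracting homotopy coming from the Koszul contraction; either way the decorations are irrelevant, so the computation is identical to the decoration-free ``local'' case used by Severa--Willwacher and Fresse--Willwacher. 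Combining the three steps, $\pi$ induces an isomorphism $H(\Harr \BVGraphsg^0(r),d_{\Harr})\cong \icgg^0(r)$, as claimed.
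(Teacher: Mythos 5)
Your argument is correct and is essentially the paper's proof: the paper disposes of this lemma by citing the computation of Harrison cohomology of a cofree cocommutative coalgebra (Dolgushev--Rogers, Theorem B.1), which is exactly the general fact you reduce to via the identification $\BVGraphsg^0(r)=S^c(\icgg(r)[1])$. Your weight-grading reduction and the appeal to the Koszulness of $\Com$ (with Koszul dual $\Lie$, up to suspension) is precisely the standard proof of that cited theorem, so the two routes coincide in substance.
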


\begin{proof}
The cohomology of the Harrison complex of a cofree cocommutative coalgebra is for instance computed in Dolgushev and Rogers' article (\cite{DR12}, Theorem B.1).
\end{proof}

\begin{lemma}
The projection $\pi:(\Harr \BVGraphsg^0,d_{\Harr}) \rightarrow \icgg^0$ above induces a quasi-isomorphism
\[
1\otimes \pi: (\gr^{(1)} \Mogg \hotimes_S\Harr \BVGraphsg, \gr D_{\tk})\xrightarrow \sim ( \Mogg\hotimes_S \icgg^0, d_{\Mogg}\otimes 1  +\gr \partial'_\mathsf{op}).
\]
\end{lemma}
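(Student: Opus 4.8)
The plan is to deduce the statement from Lemma \ref{lemma:harr} by filtering so that $1\otimes d_{\Harr}$ becomes the only piece of the differential touching the Harrison factor, and then running a Künneth argument with $\Mogg$ as coefficients. First I would check that $1\otimes\pi$ is a morphism of complexes. Recall that $\Harr\BVGraphsg(r)=\hL(I\BVGraphsg(r)[-1])$ is a free Lie algebra and that $\pi$ is the projection onto its generators followed by the projection of $\BVGraphsg=S^c(\icgg[1])$ onto the cofree cogenerators $\icgg$; in particular $\pi$ annihilates every Lie word of length $\geq 2$. The terms $d_{\Mogg}\otimes 1$ and $\gr\partial'_\mathsf{op}$ preserve the Lie-word length and act compatibly with $\pi$, thereby reproducing the target differential $d_{\Mogg}\otimes 1+\gr\partial'_\mathsf{op}$, whereas $1\otimes d_{\Harr}$ and $\gr\partial'_\mathsf{mod}$ strictly raise the length (the former via the coproduct on $\BVGraphsg$, the latter by bracketing with the Maurer--Cartan element) and hence have image in the decomposables, which $\pi$ kills. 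This shows $1\otimes\pi$ lands in $(\Mogg\hotimes_S\icgg^0,\,d_{\Mogg}\otimes 1+\gr\partial'_\mathsf{op})$ and is a chain map.

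To prove that it is a quasi-isomorphism, I would equip source and target with the complete descending filtration by the weight of the $\Mogg$-factor, namely the degree-weight $2\#\text{edges}+\deg\text{decorations}$ already used on $\Mogg$. The operator $1\otimes d_{\Harr}$ acts only on the Harrison factor and preserves this weight, while $\gr\partial'_\mathsf{op}$ and $\gr\partial'_\mathsf{mod}$ strictly raise it. Consequently, on the associated graded the only surviving term touching the Harrison factor is $1\otimes d_{\Harr}$, so the $E_0$-differential is a tensor-product differential of the shape (an internal $\Mogg$-differential)$\,\otimes\,1 + 1\otimes d_{\Harr}$. By Lemma \ref{lemma:harr} the complex $(\Harr\BVGraphsg^0(r),d_{\Harr})$ has cohomology $\icgg^0(r)$ with projection $\pi$, and a Künneth argument then identifies the cohomology of the associated graded of the source with $\Mogg\hotimes_S\icgg^0$, the map induced by $1\otimes\pi$ being an isomorphism. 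Since $1\otimes\pi$ is a filtration-preserving quasi-isomorphism on the associated graded and both filtrations are complete, exhaustive and bounded in each arity and degree, it is itself a quasi-isomorphism.

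The main obstacle I anticipate is the monotonicity bookkeeping behind this filtration. One must confirm that both $\gr\partial'_\mathsf{op}$ and $\gr\partial'_\mathsf{mod}$ genuinely increase the $\Mogg$-weight, so that they disappear from $E_0$ and only $1\otimes d_{\Harr}$ remains on the Harrison factor, while verifying that $d_{\Mogg}$ — whose constituents $d_{\Mog}$, $d_{\bvk}$ and $d_\mathrm{coop}$ have mixed effects on naive counts such as the number of edges or clusters — acts purely within the $\Mogg$-factor and hence never interferes with the Harrison cohomology. The second delicate point is the Künneth step needed to commute $H(-,d_{\Harr})$ past the completed symmetric-group coinvariants $\hotimes_S$; exactly as in the proofs of Lemma \ref{lemma:kappa} and Proposition \ref{prop:koszuldef}, this rests on the degreewise finiteness of $\Mogg$ and on working one arity and one total degree at a time, where the whole computation reduces to finite-dimensional complexes.
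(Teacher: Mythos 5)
Your overall skeleton is the same as the paper's (chain map because $\pi$ kills Lie words of weight $\geq 2$ while $1\otimes d_{\Harr}$ and $\gr\partial'_\mathsf{mod}$ raise that weight; then a filtration argument reducing to Lemma \ref{lemma:harr} via a K\"unneth-type identification), and your chain-map verification is essentially correct. However, there is a genuine gap in the quasi-isomorphism step: the filtration you choose, by the weight $2\#\text{edges}+\deg(\text{decorations})$ of the $\Mogg$-factor, is not compatible with the differential in any single direction. First, $\gr\partial'_\mathsf{op}$ does \emph{not} strictly raise this weight: on the associated graded $\gr^{(1)}$ the only surviving term of $\partial'_\mathsf{op}$ is the insertion of $s^{-2}\lambda$, which creates no dashed edges and no decorations in the $\Mog$-part — it only adds a Lie bracket in the $\bvk$-part of a cluster — so it \emph{preserves} your weight and therefore survives into your $E_0$, acting on \emph{both} tensor factors (it splits an external vertex in the Harrison factor). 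Hence your $E_0$ is not of the tensor-product shape $(\text{internal } \Mogg\text{-differential})\otimes 1 + 1\otimes d_{\Harr}$, and the K\"unneth step collapses. Second, and worse, the component $d_\mathrm{coop}$ of $d_{\Mogg}$ which removes a dashed edge (or a tadpole, raising the power of $\delta^*$) strictly \emph{lowers} your weight by $2$, while $\gr\partial'_\mathsf{mod}$ strictly raises it; so the proposed filtration is not preserved by the total differential at all, and no spectral sequence exists for it.

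The repair is exactly the bookkeeping the paper uses: filter first by arity, and then filter the associated graded by the \emph{cohomological degree} of the $\Mogg$-factor. With respect to this degree every constituent of $d_{\Mogg}$ (including $d_\mathrm{coop}$, despite its mixed effect on edge counts) raises it by exactly one, since $d_{\Mogg}$ is a degree-$(+1)$ operator acting purely on $\Mogg$; $\gr\partial'_\mathsf{op}$ raises it by one because $s^{-2}\lambda$ has degree $+1$; and $\gr\partial'_\mathsf{mod}$ raises it by at least one because on $\gr^{(1)}$ it wedges the $\Mogg$-side with $\overline H$-decorated vertices of degree $\geq 1$. Thus only $1\otimes d_{\Harr}$ survives on the double associated graded of the source (and the zero differential on the target), and Lemma \ref{lemma:harr} applies verbatim; non-negativity of the grading on $\Mogg$ gives the boundedness needed for convergence in each arity, as you correctly anticipated in your final paragraph. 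In short: your instinct about where the difficulty lies was right, but the naive edge-decoration weight cannot do the job that the $\Mogg$-cohomological degree does.
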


\begin{proof}
We need to check that $( 1\otimes \pi) \circ \gr \partial'_\mathsf{mod}=0$ in order for $1 \otimes \pi$ to be a map of complexes. This follows readily from the fact that $\pi$ vanishes on terms of weight at least two in the Harrison complex, and $\gr \partial'_\mathsf{mod}$ raises the number of Lie brackets by one. For the quasi-isomorphism, filter first both sides by the arity, and the corresponding associated graded by the cohomological degree of $\Mogg$. These define complete descending filtrations. Notice that only $1\otimes d_{\Harr}$ does not raise the cohomological $\Mogg$-degree, and that thus the statement follows by Lemma \ref{lemma:harr}.
\end{proof}

Next, endow $\Mogg\hotimes_S \icgg^0$ with the complete descending filtration
\[ 
\text{ degree of } \Mog \text{ part }+\text{ total arity } - \# \text{ clusters } +\#\delta^*  \geq p
\]
and consider the spectral sequence associated to it.

\begin{lemma}\label{lemma:ComHLie}
For the zeroth page $E_0^{(2)}$ of the spectral sequence we have
\[
H(E_0^{(2)},d_0):=H(\gr^{(2)} \Mogg\hotimes_S \icgg^0, \gr^{(2)}(d_{\Mogg}\otimes 1+\gr^{(1)}\partial'_\mathsf{op}))\cong \Com^c\circ H \circ \Lie\{-1\}\hotimes_S \icgg^0.
\]
\end{lemma}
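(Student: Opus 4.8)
The plan is to first make the differential $d_0$ on the zeroth page completely explicit, and then to compute its cohomology by a sequence of elementary Koszul-type reductions that mirror, with coefficients, the operadic Koszul duality of Section \ref{sec:BV}.

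\emph{Identifying $d_0$.} First I would determine which summands of $d_{\Mogg}\otimes 1+\gr^{(1)}\partial'_\mathsf{op}$ preserve the filtration degree (degree of the $\Mog$-part) $+$ (total arity) $-\,\#\text{clusters}+\#\delta^*$. Checking each piece on generators: the diagonal-insertion part of $d_{\Mog}$ raises the degree of the $\Mog$-part by one; the internal differential $d_{\bvk}$ and the tadpole part of $d_{\mathrm{coop}}$ raise $\#\delta^*$; the fusion operation $d_{\mathrm{fuse}}$ lowers $\#\text{clusters}$ without touching the other quantities, and $\gr^{(1)}\partial'_\mathsf{op}$ raises the total arity. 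Hence each of these strictly raises the filtration degree and dies on $E_0^{(2)}$. Only two operations preserve it: (i) removing a dashed edge $w^{(ij)}$ and fusing the two clusters it joined (the $\Mog$-degree drops by one and $\#\text{clusters}$ drops by one, for net change zero), and (ii) removing a dashed tadpole $w^{(ii)}$ while raising the power of $\delta^*$ by one. Thus $d_0$ is the sum of these two cluster-level operations, acting as $d_0\otimes\id$ on $\Mogg\hotimes_S\icgg^0$; since $\icgg^0$ carries the zero differential it is a spectator, and it suffices to prove $H(\gr^{(2)}\Mogg,d_0)\cong\Com^c\circ H\circ\Lie\{-1\}$ and then apply $-\hotimes_S\icgg^0$.

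\emph{Reduction.} Both operations act within a single dashed-connected component of clusters, so with respect to the cofree cocommutative grouping the complex $(\gr^{(2)}\Mogg,d_0)$ splits as a completed product indexed by such components, and it is enough to treat one component. I would then introduce a secondary filtration by the number of dashed tadpoles plus $\delta^*$-powers, separating $d_0$ into operation (ii) and operation (i), and run operation (ii) first. Per cluster the tadpole/$\delta^*$ data is $\Lambda[w^{(ii)}]\otimes\K[\delta^*]$ with differential $w^{(ii)}\mapsto\delta^*$; this is the Koszul complex of the pair $(\K[\delta^*],\Lambda[w^{(ii)}])$, acyclic except for the class of $1$, so it kills all tadpoles and all positive powers of $\delta^*$ and identifies the cohomology with the tadpole- and $\delta^*$-free diagrams carrying the residual differential (i). Recognizing the $w^{(ij)}$ as the binary cobrackets of the $\Lie^c\{-1\}$-comodule structure of $\Mog$ (in $\Mog\cong\Com^c\circ H\circ\Lie^c\{-1\}\circ\K[\Delta]/(\Delta^2)$) and recalling $\bvk=\K[\delta^*]\circ\Com\{-2\}\circ\Lie\{-1\}$, operation (i) is exactly the differential that contracts the edges of a co-Lie tree while combining, via the $\Com\{-2\}$-product, the $\Lie\{-1\}$-words sitting in the fused clusters and multiplying their $H$-decorations. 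This is the coefficient version of the Koszul duality reduction of Section \ref{sec:BV}: its cohomology collapses each dashed-connected component to a single cluster carrying a single $\Lie\{-1\}$-word and a single decoration. Assembling the surviving single-Lie-word clusters through $\Com^c$ yields $\Com^c\circ H\circ\Lie\{-1\}$, and tensoring with $\icgg^0$ gives the claim.

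\emph{Main obstacle.} The delicate step is this last reduction: one must verify that contracting the $\Lie^c\{-1\}$-cobracket edges, together with the recombination of the $\bvk$-Lie-words under cluster fusion, matches precisely the operadic Koszul differential whose cohomology is $\Lie\{-1\}$, and in particular that the signs and the $\Com\{-2\}$-product bookkeeping conspire so that only single-Lie-word clusters survive. I expect this can either be imported directly from the Koszul duality statement of Section \ref{sec:BV}, applied with the $H$-decorations and the $\Lie^c\{-1\}$ co-structure as coefficients exactly as $\tilde\kappa$ was used in Lemma \ref{lemma:kappa}, or else checked by hand as an acyclic-except-top-class computation on trees.
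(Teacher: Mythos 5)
Your proposal is correct and takes essentially the same route as the paper: you identify the surviving differential $d_0$ as exactly the two operations (dashed-edge removal with cluster fusion, tadpole removal with raising of $\delta^*$), recognize them as the Koszul differentials $d_{\kappa_1}$ on $\Lie^c\{-1\}\circ\Com\{-2\}$ and $d_{\kappa_2}$ on $\K[\Delta]/(\Delta^2)\circ\K[\delta^*]$, and conclude from the standard acyclicity of these two complexes, which is precisely the paper's argument. One trivial repair: your secondary filtration by $\#\text{tadpoles}+\#\delta^*$ is preserved by \emph{both} operations and so does not separate them --- filter by the number of tadpoles alone, or simply observe (as the paper implicitly does) that $d_{\kappa_1}$ and $d_{\kappa_2}$ act on independent composition factors of $\Com^c\circ H\circ\Lie^c\{-1\}\circ\K[\Delta]/(\Delta^2)\circ\K[\delta^*]\circ\Com\{-2\}\circ\Lie\{-1\}$, so no further filtration is needed.
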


\begin{proof}
Recall that as symmetric sequences
\[
\Mogg =\Mog \circ \bv^!=\Com^c\circ H \circ \Lie^c\{-1\} \circ \K[\Delta]/(\Delta^2)\circ \K[\delta^*]\circ \Com\{-2\}\circ \Lie\{-1\}.
\]
The subspace $\Com^c\circ H \circ \Lie \{-1\} \hotimes_S \icgg^0$ forms a subcomplex of $(\Mogg\hotimes_S \icgg^0, d_{\Mogg}\otimes 1  +\gr^{(1)} \partial'_\mathsf{op})$. Observe that on the level of the associated graded the differential 
\[
\gr^{(2)}(d_{\Mogg}\otimes 1+\gr^{(1)}\partial'_\mathsf{op})
\]
vanishes on this subcomplex, while on $\gr^{(2)} \Mogg\hotimes_S \icgg^0$ it is equal to the part of $d_{\Mogg}\otimes 1$ which
\begin{itemize}
\item removes a dashed edge and fuses the clusters it connected, and the part which
\item removes a tadpole and increases the power of the corresponding $\delta^*$ by one
\end{itemize}
(see also the first and the third line in Figure \ref{fig:diffcoop}). Both operations decrease the degree on $\Mog$ by one, while either decreasing the number of clusters or raising a power of $\delta^*$ by one. In particular, they correspond precisely to the differentials $d_{\kappa_1}$ and $d_{\kappa_2}$ induced by the Koszul morphisms
\begin{align*}
&\kappa_1:\Lie^c\{-1\}=\Com^{\text{!`}}\{-2\}\rightarrow \Com\{-2\}\\
&\kappa_2:\K[\Delta]/(\Delta^2)\rightarrow \K[\delta^*]=(\K[\Delta]/(\Delta^2))^{\text{!`}}.
\end{align*}
In this case it is well-known (see for instance \cite{LodayVallette12}, Chapter 7) that the complexes
\begin{align*}
&(\Lie^c\{-1\} \circ \Com\{-2\},d_{\kappa_1})\\
&(\K[\Delta]/(\Delta^2)\circ \K[\delta^*],d_{\kappa_2})
\end{align*}
are acyclic (i.e. $\K$ in degree zero). The differential on the associated graded reduces thus to $(d_{\kappa_1}+d_{\kappa_2})\otimes 1$. Moreover, the acyclicity of the two aforementioned complexes implies that the cohomology of
\[
(\Com^c\circ H \circ \Lie^c\{-1\} \circ \K[\Delta]/(\Delta^2)\circ \K[\delta^*]\circ \Com\{-2\}\circ \Lie\{-1\},d_{\kappa_1}+d_{\kappa_2})
\]
is $\Com^c\circ H \circ \Lie\{-1\}$, and the inclusion 
\[
\Com^c\circ H \circ \Lie \{-1\} \hotimes_S \icgg^0\rightarrow \Mogg\hotimes_S \icgg^0
\]
is indeed a quasi-isomorphism.
\end{proof}

\begin{rem}
Notice that elements in $ \Com^c\circ H\circ \Lie\{-1\}$ consist of diagrams with no dashed edges, and for which each cluster contains exactly one (possibly trivial) Lie tree. 

\begin{figure}[h]
\centering
\includegraphics[width=0.4\textwidth]{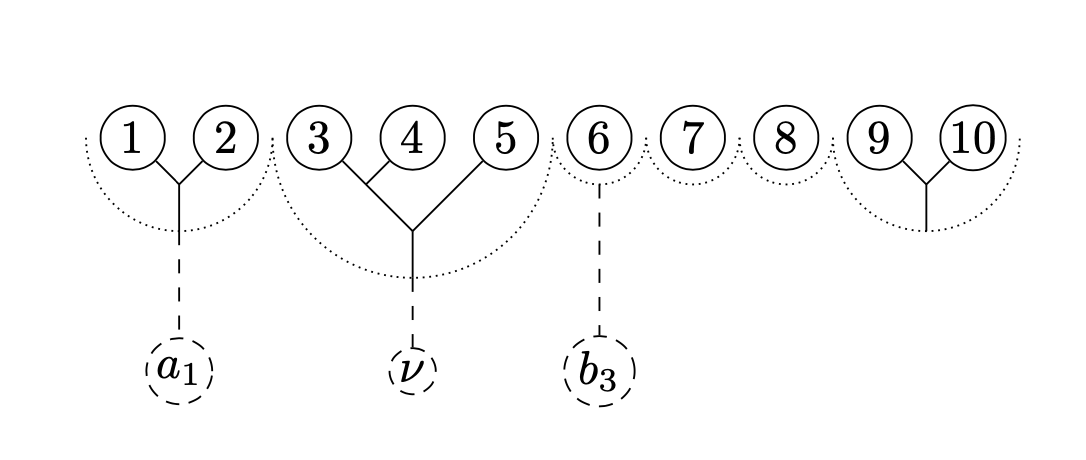}
\caption{An element in $\Com^c\circ H \circ \Lie\{-1\} (10)$.}\label{fig:comHlie}
\end{figure}

On the first page the differential is given by the sum of $\gr^{(1)} \partial'_\mathsf{op}$ and remaining the part of $d_{\Mogg}\otimes 1$ which fuses disconnected clusters and pairs the two Lie trees by adding a bracket (see the second line of Figure \ref{fig:diffcoop}). Denote this second term by $d_\mathrm{fuse}\otimes 1$. We need to compute the cohomology of the complex
\[
E_1^{(2)}=(H(E_0^{(2)},d_0), d_{\Mogg}\otimes 1+\gr^{(1)}\partial'_\mathsf{op} )\cong ( \Com^c\circ H\circ \Lie^c\{-1\}\hotimes_S \icgg^0,d_\mathrm{fuse}\otimes 1+\gr^{(1)}\partial'_\mathsf{op} ).
\]
\end{rem}

\begin{defi}
A small cluster is either a cluster of arity one (with trivial Lie word) carrying no decoration and for which the external vertex in $\icgg^0$ is zero-valent, or a cluster of arity two with one Lie bracket, no decoration and zero-valent external vertices in $\icgg^0$. All clusters which are not small clusters will be called big clusters (see Figure \ref{fig:smallclusters}).
\end{defi}

\begin{figure}[h]
\centering
\includegraphics[width=0.5\textwidth]{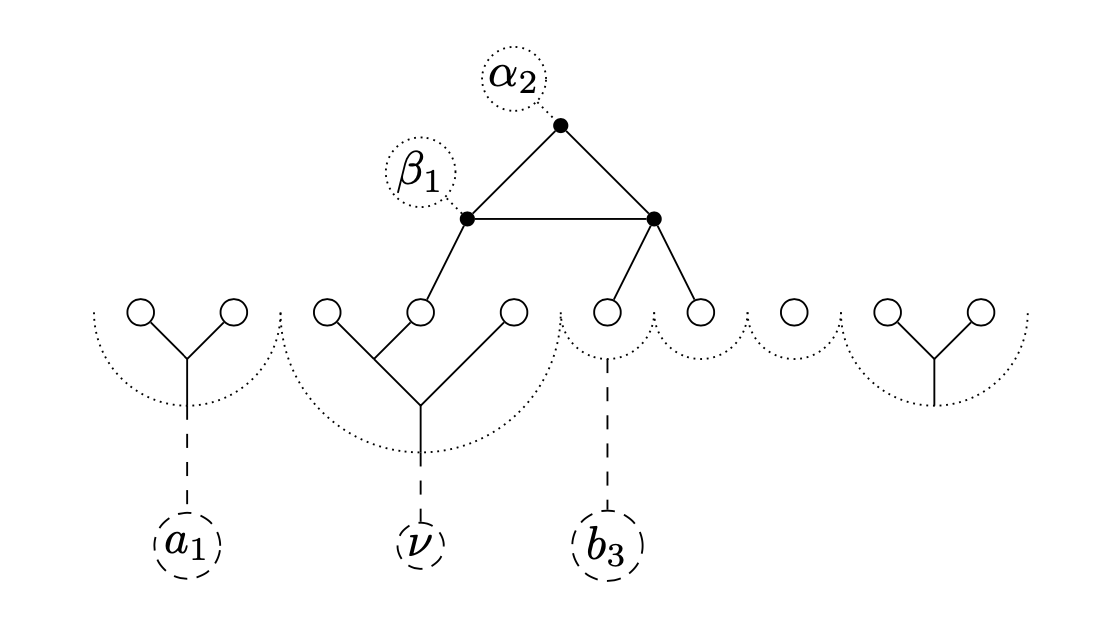}
\caption{The two right-most clusters (of arity one and two) are called small clusters. We refer to the other clusters as big clusters. Notice in particular that the fourth cluster from the left is also a big cluster, since the corresponding external vertex is of valence one in $\icgg^0$.}\label{fig:smallclusters}
\end{figure}

At this point filter $\ \Com^c\circ H\circ \Lie^c\{-1\}\hotimes_S \icgg^0$ by
\[
\#\text{big clusters}\leq p.
\]
The ascending filtration is complete. Observe that the differential on the associated graded consists of $\gr^{(1)} \partial'_\mathsf{op}$ plus the part of $d_\mathrm{fuse}\otimes 1$ which fuses either two small clusters or one small and one big cluster. We equip the associated graded with the descending complete filtration given by the total number of vertices in the big clusters. We are thus considering
\[
\gr^{(4)}\gr^{(3)} E_1^{(2)}.
\]
The induced differential acts on small clusters only:
\begin{itemize}
\item $\gr^{(1)} \partial'_\mathsf{op}$ splits a small cluster of arity one into a cluster of arity two , 
\item $d_\mathrm{fuse}\otimes 1 $ fuses two small clusters of arity one,
\end{itemize}
and in both cases a Lie bracket is added.

\begin{lemma}\label{lemma:grgr}
The cohomology of $\gr^{(4)}\gr^{(3)} E_1^{(2)}$ is given by the direct sum
\[
V_0\oplus V_S
\]
where
\begin{itemize}
\item $V_0$ is spanned by diagrams without small clusters, and
\item $V_S=V_0\otimes S(\smcluster)$ is spanned by formal series of diagrams for which the small cluster part is fixed by the formal series,
\[
S(\smcluster):=\sum\limits_{k\geq 1} \frac{ (-2)^{k-1}}{k!} \underbrace{\smclusterseries}_{k}
\]
\end{itemize}
\end{lemma}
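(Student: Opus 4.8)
The plan is to reduce everything to the cohomology of the factor built from small clusters and then to solve one explicit recursion. First I would record that, since small clusters carry no decoration and their external vertices are zero-valent in $\icgg^0$, each diagram factors as a product of its big-cluster part and its small-cluster part, and the differential induced on $\gr^{(4)}\gr^{(3)} E_1^{(2)}$ touches only the latter. Writing $V_0$ for the span of diagrams with \emph{no} small clusters (on this page it carries the zero differential), the whole complex is a tensor product $V_0\otimes C$, where $C$ denotes the small-cluster complex. By the Künneth theorem it then suffices to prove that $H(C)$ is two-dimensional, spanned by the empty diagram and by the formal series $S(\smcluster)$.

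Next I would identify $C$ concretely. The two types of small cluster are the arity-one cluster $a=\smcluster$ (even degree) and the arity-two cluster $b=\smclusterbra$ (odd degree, as it carries one additional $\Lie\{-1\}$-bracket relative to a pair of arity-one clusters). Through the equivalence $E_1^{(2)}\cong \Com^c\circ H\circ\Lie^c\{-1\}\hotimes_S\icgg^0$, this factor is the cofree cocommutative coalgebra $S^c(\K a\oplus \K b)$, and since $b$ is odd a linear basis is given by the monomials $a^m$ and $a^m b$ for $m\geq 0$. The differential is the coderivation determined by its corestriction to cogenerators: $\gr^{(1)}\partial'_\mathsf{op}$ contributes the splitting $a\mapsto b$ and $d_\mathrm{fuse}$ contributes the fusion $a\cdot a\mapsto b$, both adding one bracket. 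Tracking the coderivation's combinatorial prefactors I expect
\[
d(a^m)=\lambda_1\, m\, a^{m-1}b+\lambda_2\,\binom{m}{2}\,a^{m-2}b,\qquad d(a^m b)=0,
\]
where $\lambda_1,\lambda_2$ are the nonzero normalizations of the splitting and fusion operations (all terms produced from $a^m b$ either create a double $b$ or raise the big-cluster count, hence vanish on this page).

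Finally I would compute $H(C)$. Every $a^m b$ is a cocycle, so the $b$-part contributes only a cokernel; solving $d\big(\sum_m c_m a^m\big)=0$ coefficientwise gives, for each $k\geq 0$, the relation $\lambda_1(k+1)c_{k+1}+\lambda_2\binom{k+2}{2}c_{k+2}=0$, i.e. the recursion $c_{m}=-\tfrac{2\lambda_1}{\lambda_2 m}\,c_{m-1}$ for $m\geq 2$. This leaves $c_0$ and $c_1$ free, so the cocycles among the $a^m$ form a two-dimensional space, spanned by the empty diagram $1$ and by the series with $c_m=\big(-\tfrac{2\lambda_1}{\lambda_2}\big)^{m-1}/m!$; once one verifies $\lambda_1=\lambda_2$ this is exactly $S(\smcluster)=\sum_{k\geq 1}\tfrac{(-2)^{k-1}}{k!}\big(\smclusterseries\big)$. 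Running the same recursion in reverse shows that $d$, restricted to the $a^m$, surjects onto every $a^k b$, so the $b$-part is acyclic. Hence $H(C)=\K\{1\}\oplus\K\{S(\smcluster)\}$, and tensoring with $V_0$ produces $V_0\oplus V_S$ with $V_S=V_0\otimes S(\smcluster)$.

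The main obstacle will be pinning down the two scalars $\lambda_1,\lambda_2$ together with the binomial factors of the coderivation and the Koszul signs, since the precise value $-2$ in the coefficients $\tfrac{(-2)^{k-1}}{k!}$ is governed by the ratio $\lambda_1/\lambda_2$ and by the symmetry of the $\Lie\{-1\}$-bracket. The second delicate point is the formal-series bookkeeping: one must genuinely work in the completed setting so that the infinite series $S(\smcluster)$ is a legitimate cocycle and so that the recursion can be solved for arbitrary targets, giving surjectivity onto the whole $b$-part rather than merely onto finite monomials.
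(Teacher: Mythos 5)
Your proposal is correct and follows essentially the same route as the paper: both exploit that the induced differential touches only the small-cluster factor, reduce to the tensor decomposition $V_0\otimes(\K\oplus V_{small})$, show diagrams containing one arity-two small cluster are exact (your ``reverse recursion'' is the paper's induction on arity), and solve the same coefficient recursion $c_m=-\tfrac{2}{m}c_{m-1}$ to obtain the series with coefficients $\tfrac{(-2)^{k-1}}{k!}$. Your cofree-coalgebra/K\"unneth packaging is only cosmetic, and the normalization $\lambda_1=\lambda_2$ you flag is exactly what the paper's proof asserts via the multiplicities $(k-1)$ for splitting and $\binom{k}{2}$ for fusion, each operation carrying unit coefficient since both add a single bracket.
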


\begin{proof}
Since the differential acts on the small cluster part only, we may view $\gr^{(4)}\gr^{(3)} E_1^{(2)}$ as the tensor product
\[
V_0\oplus V_0\otimes V_{small}
\]
equipped with a differential which acts on $V_{small}$, the subcomplex spanned by small clusters, only. First of all, it is clear that diagrams in $V_0$ form non-zero cohomology classes in $\gr^{(4)}\gr^{(3)} E_1$. As for $V_{small}$, notice that having two small clusters of arity two within a diagram produces an odd symmetry which forces it to be zero. Moreover, any diagram with exactly one such small cluster of arity two is exact. To see this, we argue by induction on the arity. The diagram of arity two, with exactly one small cluster (of arity two) is clearly exact, in fact
\[
\smclusterbra= d(\smcluster).
\]
Now consider the diagram in $V_{small}$ of arity $k+1$, with one small cluster of arity two. It satisfies
\[
(k+1) \underbrace{\smclusterbra \ \smclusterseries}_{k+1} = d(\underbrace{\smclusterseries}_{k}) - \binom{k}{2} \underbrace{\smclusterbra \ \smclusterseries}_{k}.
\]
By the induction hypothesis the right hand side is exact. Next, assume we are given a formal series of diagrams of the form
\[
\lambda=\sum\limits_{k\geq 1} \lambda_k \underbrace{\smclusterseries}_{k}.
\]
Applying the differential yields
\[
\sum\limits_{k\geq 2}(\lambda_k \binom{k}{2} + \lambda_{k-1}(k-1)) \underbrace{\smclusterbra \ \smclusterseries}_{k}.
\]
For $d\lambda=0$, we thus have to require that for $k\geq 2$
\[
\lambda_k=-\lambda_{k-1}\frac{2}{k}=\dots=\lambda_1\frac{(-2)^{k-1}}{k!}.
\]
Thus, we find
\[
\lambda=\lambda_1 \sum\limits_{k\geq 1} \frac{(-2)^{k-1}}{k!}\underbrace{\smclusterseries}_{k}
\]
from which we conclude that $H(\gr^{(4)}\gr^{(3)} E_1^{(2)})\cong V_0\oplus (V_0\otimes S(\smcluster))\cong V_0\oplus V_S$.
\end{proof}

Lemma \ref{lemma:grgr} implies $E_1^{(4)}(\gr^{(3)} E_1^{(2)})\cong V_0\oplus V_S$. It comes furthermore equipped with part of the differential on $\gr^{(3)} E_1^{(2)}$ which raises the total number of vertices in the big clusters by one. This consists of the part of $\gr^{(1)} \partial'_\mathsf{op}$ which splits a vertex in a big cluster and adds a bracket, and the part of $d_\mathrm{fuse}\otimes 1$ which fuses a big cluster with a small cluster of arity one while adding a bracket. Notice however, that $V_0$ forms a subcomplex of $E_1^{(4)}(\gr^{(3)} E_1^{(2)})$, while $V_S$ does not (the diagram with one small cluster may be mapped to a diagram with no small clusters). This is fixed by the following slight change of basis. Set 
\[
\tilde S(\smcluster):=-\frac12+S(\smcluster)
\]
and $\tilde V_S=V_0 \otimes \tilde S(\smcluster)$. Still, we have $E_1^{(4)}(\gr^{(3)} E_1^{(2)})\cong V_0\oplus \tilde V_S$, but now this is a direct sum of subcomplexes. 

\begin{lemma}\label{lemma:coh}
The cohomology of the first page $E_1^{(4)}(\gr^{(3)} E_1^{(2)})$ is spanned by diagrams 
\[
v\otimes \tilde S(\smcluster) \in \tilde V_S= V_0\otimes \tilde S(\smcluster)
\]
for $v$ as in Figure \ref{fig:v0}, i.e.
\begin{itemize}
\item all external vertices are univalent, and
\item all clusters are of arity one.
\end{itemize}
\end{lemma}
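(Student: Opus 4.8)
The plan is to study the two summands of $E_1^{(4)}(\gr^{(3)}E_1^{(2)})\cong V_0\oplus \tilde V_S$ separately, using crucially that the residual differential $D=\gr^{(1)}\partial'_\mathsf{op}+d_\mathrm{fuse}$ touches only the clusters and leaves the internally connected graph in $\icgg^0$ entirely inert. After fixing the isomorphism type of the $\icgg^0$-part together with the distribution of its legs and of the decorations among the external vertices, the only moving data is how these external vertices are packaged into clusters carrying Lie words: the split operation $\gr^{(1)}\partial'_\mathsf{op}$ breaks a vertex of a \emph{big} cluster into two, redistributing its incident edges and decorations in all possible ways (in particular it may leave one of the two new vertices zero-valent) and records a new Lie bracket, while $d_\mathrm{fuse}$ merges a small arity-one cluster into a big one, again with a bracket. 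The first step is to make this cluster-wise decomposition precise, so that $V_0$ becomes a tensor product, over its big clusters, of local complexes acted on by the split differential alone.

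Next I would prove that $(V_0,\gr^{(1)}\partial'_\mathsf{op})$ is acyclic, whence all cohomology of $W$ concentrates in the $\tilde V_S$ summand. Because the split differential allows an empty (zero-valent) half, the local complex attached to a single big cluster is the \emph{unreduced} Harrison/cobar complex of the cofree cocommutative coalgebra generated by the incident edges and decorations of that cluster. Exactly as in the passage that forced us to the reduced complex in Proposition \ref{prop:subcomplex}, the unreduced differential contains the term $\ad_{s^{-1}1}$ coming from splitting off a zero-valent vertex, and this term provides a contracting homotopy (dual to merging a zero-valent vertex back into the word), in the same spirit as the homotopies of Lemmas \ref{lemma:fHGCgacyclic} and \ref{lemma:ComHLie}. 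A K\"unneth argument over the clusters then gives $H(V_0)=0$; the one point requiring care is the purely-bracketed ``unit'' clusters, whose lowest-arity representatives are precisely the small clusters already removed in Lemma \ref{lemma:grgr} and repackaged into the vacuum series.

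It then remains to compute $H(\tilde V_S)$. On $\tilde V_S=V_0\otimes \tilde S(\smcluster)$ the differential has, besides the split term on the $V_0$-factor, the fusion term that draws a zero-valent arity-one small cluster out of the frozen vacuum $\tilde S(\smcluster)$ and brackets it into a big cluster. The key observation is that for a diagram $v$ in which every external vertex is univalent and every cluster has arity one, splitting the single (univalent) vertex of any cluster produces exactly a univalent-plus-zero-valent pair joined by a bracket -- which is precisely the output of fusing a small cluster from the vacuum -- and the coefficients $-\frac12,\ \frac{(-2)^{k-1}}{k!}$ of $\tilde S$ are exactly those (already forced in Lemma \ref{lemma:grgr}) that make the split and fusion contributions cancel term by term along the whole series. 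Hence $v\otimes\tilde S$ is a cocycle. Conversely, if some vertex of $v$ has valence $\geq 2$, or some cluster has arity $\geq 2$, then splitting produces terms in which \emph{both} new vertices carry nontrivial data, and such terms can never be matched by a fusion (which only ever appends a zero-valent vertex); thus $D(v\otimes\tilde S)\neq 0$ and these classes do not survive. Since $D$ strictly increases the total arity, the minimal diagrams cannot be coboundaries, giving exactly the cohomology asserted.

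The hard part will be the coefficient-and-sign bookkeeping in the third paragraph: one must check that the split-versus-fusion cancellation holds \emph{along the entire vacuum} $\tilde S$ and not merely for its first terms, and that no diagram outside the univalent/arity-one family sneaks into the kernel. This, together with verifying that the contracting homotopy for $V_0$ never manufactures a forbidden (small) cluster and so genuinely stays inside the subcomplex, is the only delicate point; both computations are of the same flavour as those already carried out in Lemmas \ref{lemma:grgr} and \ref{lemma:fHGCgacyclic}, so I expect them to go through with care.
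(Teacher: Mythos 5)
Your first two steps are essentially the paper's. Character by character (fixing the isomorphism class of the graph obtained from the $\icgg^0$-part by deleting the external vertices while keeping the dangling legs), the splitting differential on a big cluster --- including the terms that split off a zero-valent vertex --- is exactly the \emph{unreduced} Harrison differential of the cofree cocommutative coalgebra on the leg variables $t_1,\dots,t_k$, and the paper's Lemma \ref{lemma:unredHarr} proves acyclicity of $\Harru(S^c(P_k))^{(1,\dots,1)}$ by precisely the filtration-plus-homotopy you sketch (one small correction: the $\ad_{s^{-1}1}$-term of the differential is not itself the homotopy; the homotopy is the operation removing one $\ad_{s^{-1}1}$, as in your parenthetical). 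So $H(V_0)=0$ goes through.

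The genuine gap is in your third step, the computation of $H(\tilde V_S)$. The cancellation you describe is correct and in fact holds for \emph{all} elements, not just the minimal diagrams: the fusion with small clusters drawn from the vacuum cancels exactly the unit terms of the splitting, so the effective differential on the $V_0$-factor of $\tilde V_S$ is the \emph{reduced} Harrison differential. But then your converse claim is false. Take $v$ to be a single arity-two cluster with Lie word $[s^{-1}t_1,s^{-1}t_2]$ and both vertices univalent: the generators are primitive for the reduced coproduct, so $D(v\otimes\tilde S(\smcluster))=0$, even though $v$ has a cluster of arity two --- splitting its univalent vertices produces univalent-plus-zero-valent pairs, which \emph{are} matched by fusion, contrary to your assertion that such diagrams fail to be cocycles. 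This $v$ is in fact exact (it is, up to sign, $D$ of the arity-one cluster $s^{-1}(t_1t_2)$ attached to one bivalent external vertex), but your diagram-by-diagram kernel check has no mechanism to establish exactness of such classes, nor to control cancellations among the split-outputs of \emph{different} diagrams inside a linear combination; the kernel in higher bracket weight is large, and all of it must be shown exact. The paper closes exactly this gap by identifying
\[
\tilde V_S(c)\cong \bigoplus\limits_{\substack{p=1\\ \alpha_1,\dots,\alpha_p}}^{k} \big(c\otimes_{S_k} S^p(\Harr'(S^c(P_k))[1])^{(1,\dots,1)}\big)\otimes_{S_p}\vspan_\K\{\alpha_1,\dots,\alpha_p\}
\]
and quoting the known computation of the reduced Harrison cohomology of a cofree cocommutative coalgebra (\cite{Willwacher15}, Appendix A; \cite{DR12}, Appendix B): in multidegree $(1,\dots,1)$ it is one-dimensional, concentrated in $p=k$ on the class $[t_1\otimes\cdots\otimes t_k]$, which is precisely the univalent, arity-one configuration. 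Finally, your non-coboundary argument uses the wrong invariant: the univalent/arity-one diagrams have the \emph{maximal} number of external vertices for their character, so "minimality in arity" does not protect them; what makes them non-exact is that the differential creates exactly one Lie bracket while these diagrams carry none.
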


\begin{figure}[h]
\centering
\includegraphics[width=0.3\textwidth]{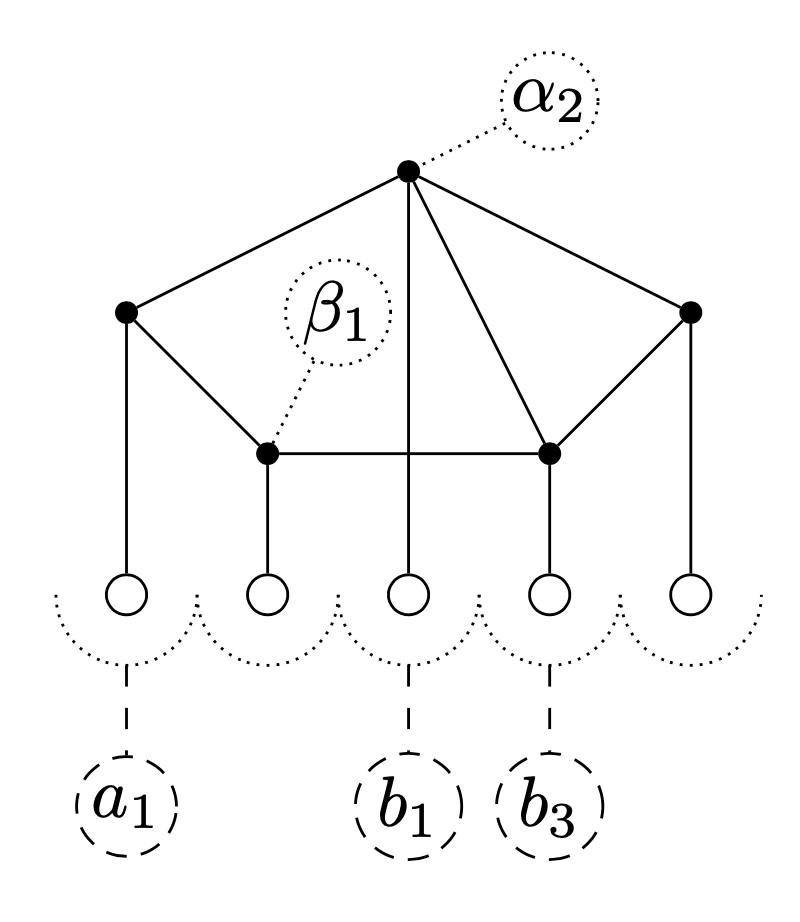}
\caption{A diagram in $V_0$, for which all all external vertices are of valence one, and all clusters are of arity one.}\label{fig:v0}
\end{figure}

\begin{proof}
Consider first elements of $\tilde V_S=V_0\otimes \tilde S(\smcluster)\subset E_1^{(4)}(\gr^{(3)} E_1^{(2)})$. Let the character of a graph $\Gamma \in \icgg^0$ be the isomorphism class of the graph obtained by deleting all external vertices, while keeping the dangling edges. The differential on $E_1^{(4)}(\gr^{(3)} E_1^{(2)})$ does not act on the character of a graph. Thus, for each character, say $c$, with $k$ edges connecting internal and external vertices, we obtain a subcomplex $\tilde V_S(c)$ of $\tilde V_S$. The symmetric group $S_k$ acts on the dangling edges of the character by permuting their order. Furthermore, introduce formal variables $t_1,\dots,t_k$ of degree zero and set $P_k=\vspan\{t_1, \dots, t_k\}$. Consider the following version of the Harrison complex of the cofree coaugmented cocommutative coalgebra $S^c(P_k[1])$,
\[
\Harr'(S^c (P(k))):=(L(S^c(P_k)[-1]), d_{\Harr}^{\mathrm{red}})
\]
where $d_{\Harr}^{\mathrm{red}}$ is induced by the reduced coproduct on $S^c(P_k)$. Since the differential does not annihilate or create any of the formal variables, $\Harr'(S^c(P_k))$ comes with a $\mathbb{N}^k_0$-grading counting the number of $t_1,\dots , t_k$ appearing in any Lie monomial. Moreover, its (degree shifted) symmetric tensor powers
\[
S^p(\Harr'(S^c(P_k))[1])
\]
inherit the $\mathbb{N}^k_0$-grading. We are interested in the degree $(1,\dots,1)$ part, i.e. those monomials in which every formal variable occurs exactly once. In fact, as computed in (\cite{Willwacher15}, Appendix A; see also \cite{DR12}, Appendix B for a more detailed account),
\[
H^j(S^p(\Harr'(S^c(P_k))[1])^{(1,\dots,1)})=\begin{cases} \K\cdot [t_1\otimes \dots \otimes t_k] & j=0 \text{ and } p=k\\ 0 & \text{otherwise.} \end{cases}
\]
Returning to $\tilde V_S(c)$, observe that we may decompose it as
\[
\tilde V_S(c)\cong \bigoplus\limits_{\substack{p=1 \\ \alpha_1,\dots, \alpha_p}}^k (c \otimes_{S_k} (S^p(\Harr'(S^c(P_k))[1])^{(1,\dots,1)})\otimes_{S_p} \vspan_\K\{\alpha_1,\dots,\alpha_p\} 
\]
where the direct sum runs also over all $p$-tuples $(\alpha_1,\dots,\alpha_p)\in H^{\times p}$. Figure \ref{fig:Harrargument} sketches the correspondence. 

\begin{figure}[h]
\centering
\includegraphics[width=1\textwidth]{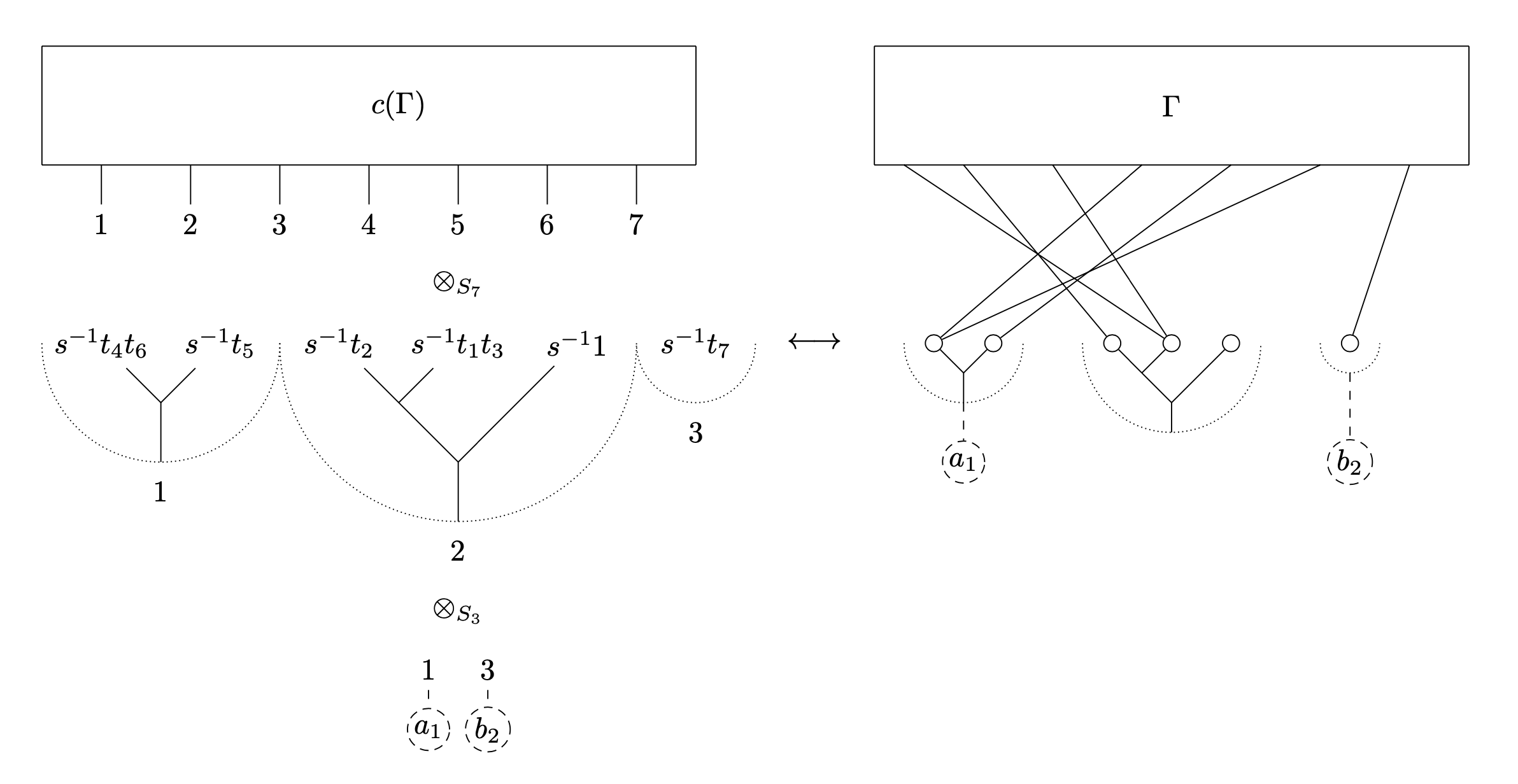}
\caption{The decomposition of a diagram in $\tilde V_S(c)$. The middle piece on the left hand side is identified with the element $s\left([s^{-1}t_4t_6,s^{-1}t_5]\wedge [[s^{-1}t_2,s^{-1}t_1t_3],s^{-1}1] \wedge s^{-1}t_7\right)$ in $S^3(\Harr ' (S^c(P_7))[1] )^{(1,\dots,1)}$. }\label{fig:Harrargument}
\end{figure}

Roughly speaking, we are cutting the $\icgg^0$ part off its external vertices, and detaching the decorations from the clusters. The remaining piece in the middle corresponds precisely to the complex $S^p(\Harr'(S^c(P_k))[1])^{(1,\dots,1)}$. Since taking cohomology commutes with taking coinvariants
\[
H(\tilde V_S(c))\cong\bigoplus\limits_{\alpha_1,\dots, \alpha_k} ( c \otimes_{S_k} \K\cdot [t_1\otimes \dots \otimes t_k] ) \otimes_{S_k} \vspan_\K\{\alpha_1,\dots,\alpha_k\}
\]
Translated into the language of graphs, this means that the cohomology classes in $\tilde V_S(c)$ are given by diagrams all of whose big clusters are (possibly decorated and) of arity one, whose character is $c$ and for which each external vertex is univalent.  

We conclude by arguing that the subcomplex $V_0\subset E_1(\gr E_1)$ is acyclic. Recall that it is spanned by diagrams without small clusters. Consequently, the differential only splits external vertices while adding a Lie bracket. We may perform a similar identification as above. Let $c$ be the character of a graph with $k$ edges connecting internal and external vertices. Since also in this case, the differential does not alter the character, we obtain subcomplexes $V_0(c)$ of $V_0$, one for each character. We have,
\[
V_0(c)\cong \bigoplus\limits_{\substack{p=1 \\ \alpha_1,\dots, \alpha_p}}^k (c \otimes_{S_k} (S^p(\Harru(S^c(P_k))[1])^{(1,\dots,1)})\otimes_{S_p} \vspan_\K\{\alpha_1,\dots,\alpha_p\}
\]
where $\Harru (S^c(P_k))$ denotes the unreduced Harrison complex. It takes into account that we are missing the term which fuses a big cluster with a small cluster while adding a bracket. We refer to Lemma \ref{lemma:unredHarr} below where we show that the cohomology of $\Harru (S^c(P_k))^{(1,\dots,1)}$ vanishes. The statement follows.
\end{proof}

\begin{lemma}\label{lemma:unredHarr}
The cohomology of $\Harru(S^c(P_k))^{(1,\dots,1)}$ vanishes.
\end{lemma}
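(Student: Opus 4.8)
The plan is to recognise $\Harru(S^c(P_k))$ as the Harrison dg Lie algebra sitting inside the unreduced cobar complex of the coalgebra $S^c(P_k)$, and to deduce the vanishing from the (classical) acyclicity of that cobar complex. Write $C:=S^c(P_k)$, a coaugmented cocommutative coalgebra, and consider the completed unreduced cobar complex $(\widehat{T}(C[-1]),d_\Omega)$, where $\widehat{T}$ is the completed tensor algebra and $d_\Omega$ is the derivation extending the full coproduct of $C$. Because $C$ is cocommutative, the degree shift turns the cosymmetry of $\Delta$ into antisymmetry, so $d_\Omega$ maps $C[-1]$ into $\hL_2(C[-1])$ and hence preserves the free Lie subalgebra $\hL(C[-1])\subset \widehat{T}(C[-1])$, where it restricts precisely to the sign- and $\tfrac12$-normalised map $d_{\Harr}$. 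Since $\widehat{T}(C[-1])=U(\hL(C[-1]))$ as graded algebras, this identifies the cobar complex with the universal enveloping algebra of the dg Lie algebra $\Harru(C)=(\hL(C[-1]),d_{\Harr})$, i.e. $(\widehat{T}(C[-1]),d_\Omega)\cong (U(\Harru C),\,U(d_{\Harr}))$.

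First I would prove that the unreduced cobar complex is acyclic, $H(\widehat{T}(C[-1]),d_\Omega)=\K$ concentrated in the length-zero part. This is the standard dual of the contractibility of the unnormalised bar complex of an augmented algebra: the counit $\epsilon$ yields the contracting homotopy $h(c_1\mid\cdots\mid c_n)=\epsilon(c_1)\,(c_2\mid\cdots\mid c_n)$, which satisfies $d_\Omega h+h d_\Omega=\mathrm{id}$ on tensors of positive length. Two observations make this harmless here: $h$ is nonzero only when the first slot is a multiple of the coaugmentation, so it preserves the $\mathbb{N}_0^k$-multidegree; and in multidegree $(1,\dots,1)$ each fixed cohomological degree (word length) involves only finitely many Lie words in the finitely many generators $s^{-1}m$ (with $m$ a squarefree monomial in $t_1,\dots,t_k$) together with copies of $u:=s^{-1}1$, so the relevant complex is degreewise finite dimensional and no completion subtleties arise.

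Finally I would transfer the computation from the enveloping algebra back to the Lie algebra. Over a field of characteristic zero the Poincar\'e--Birkhoff--Witt theorem gives $\gr U(\Harru C)\cong S(\Harru C)$, and the char-zero K\"unneth formula yields $H(S(\Harru C))\cong S(H(\Harru C))$; the associated spectral sequence then identifies $H(U(\Harru C))\cong U(H(\Harru C))$. Combining this with the acyclicity above gives $U(H(\Harru C))\cong \K$. Since a graded Lie algebra $\mathfrak h$ embeds into $U(\mathfrak h)$ and $U(\mathfrak h)\cong \K\oplus \mathfrak h\oplus\cdots$ as graded vector spaces, $U(\mathfrak h)\cong\K$ forces $\mathfrak h=0$; hence $H(\Harru C)=0$ in every multidegree, and in particular $H(\Harru(S^c(P_k))^{(1,\dots,1)})=0$, as claimed.

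I expect the main obstacle to be the bookkeeping in the identification $(\widehat{T}(C[-1]),d_\Omega)\cong (U(\Harru C),U(d_{\Harr}))$: one must verify that the shifted coproduct of the cocommutative coalgebra really lands in the free Lie part, so that $\Harru C$ is genuinely a subcomplex of the cobar complex, and that $d_\Omega$ restricts there to the exact formula defining $d_{\Harr}$, including the signs and the factor $\tfrac12$. Once this (standard) compatibility is in place, together with the equally standard fact that $U$ commutes with passage to homology in characteristic zero, the argument is purely formal.
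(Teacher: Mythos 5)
Your proposal is correct, but it takes a genuinely different route from the paper's. The paper works entirely inside the free Lie algebra: it writes down the explicit basis of $L(S^c(P_k)[-1])^{(1,\dots,1)}$ as iterated brackets of elements $\ad_{s^{-1}1}^{k_i}(s^{-1}c_i)$, filters by the number of non-unit letters, observes that the associated graded differential just raises the powers of $\ad_{s^{-1}1}$ (by $2\,\ad_{s^{-1}1}$ on even powers), and exhibits an explicit contracting homotopy lowering those powers. You instead identify $\Harru(S^c(P_k))$ as the Lie shadow of the unreduced cobar construction: cocommutativity of $C=S^c(P_k)$ makes the shifted coproduct land in $\hL_2(C[-1])$ (this is in effect how the paper defines $d_{\Harr}$), giving $(\widehat{T}(C[-1]),d_\Omega)\cong \left(U(\Harru C),U(d_{\Harr})\right)$; the counit homotopy contracts the unreduced cobar complex; and PBW plus symmetrization (or the spectral sequence of the PBW filtration) in characteristic zero gives $H(U(\mathfrak h))\cong U(H(\mathfrak h))$, whence $U(H(\Harru C))\cong\K$ and $H(\Harru C)=0$. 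Your two safeguards are exactly the right ones and both check out: $h$ preserves the $\mathbb{N}_0^k$-multidegree because $\epsilon$ kills anything of positive multidegree, and in multidegree $(1,\dots,1)$ the cohomological degree equals the word length, so each degree is finite dimensional and neither the completion $\widehat{T}$, $\hL$ nor the convergence of the PBW spectral sequence causes trouble. One cosmetic point: with the paper's convention $\Delta_s(s^{-1})=-s^{-1}\otimes s^{-1}$ the homotopy identity comes out as $d_\Omega h+h d_\Omega=-\id$ on words of positive length (e.g.\ on $s^{-1}c$ one gets $h(d_\Omega(s^{-1}c))=-s^{-1}c$), which is of course immaterial for acyclicity. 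As for what each approach buys: the paper's argument is elementary, self-contained and stays in the complex at hand, whereas yours is more conceptual, explains \emph{why} the unreduced Harrison complex dies (bracketing with $s^{-1}1$ is the Lie-algebra trace of the counit contraction of $\Omegau C$), and actually proves the stronger statement that $H(\Harru C)$ vanishes in all multidegrees, at the price of invoking PBW for graded Lie algebras and the compatibility of $U$ with cohomology in characteristic zero.
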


\begin{proof}
Basis elements of $L(S^c(P_k)[-1])^{(1,\dots,1)}$ are of the form
\begin{equation}\label{eq:basis}
b = [\ad_{s^{-1}1}^{k_1}(s^{-1}c_1),[\ad_{s^{-1}1}^{k_2}(s^{-1}c_2),[\ad_{s^{-1}1}^{k_3}(s^{-1}c_3),\dots, [\ad_{s^{-1}1}^{k_{l-1}}(s^{-1}c_{l-1}),\ad_{s^{-1}1}^{k_l}(s^{-1}c_l)]\cdots ]
\end{equation}
for $k_i\geq 0$ and $c_i\neq 1 \in S^c(P(k))$. Additionally, include the elements $s^{-1}1$ and $[s^{-1}1,s^{-1}1]$, but notice that for $k\geq 2$, $\ad_{s^{-1}1}^k (s^{-1}1)=0$. Filter $\Harru (S^c(P_k))^{(1,\dots,1)}$ by the number of non-(degree shifted-)unit elements. The filtration is bounded degree-wise. Since on $\ad_{s^{-1}1}^k(s^{-1}c)$, the differential on the associated graded is given by
\[
\gr d_{\Harr} (\ad_{s^{-1}1}^k(s^{-1}c))=
\begin{cases}
2 \ \ad_{s^{-1}1} ^{k+1} (s^{-1}c) & \text{ if } k \text{ even}\\
0 & \text{ if } k \text{ odd}
\end{cases}
\]
we find that $\gr d_{\Harr}$ yields
\[
\gr d_{\Harr} (b)=\sum\limits_{\substack{1\leq j\leq l \\ k_j \ \text{even}}} (-1)^{\epsilon(j)} 2 [\ad_{s^{-1}1}^{k_1}(s^{-1}c_1),\dots ,[\ad_{s^{-1}1}^{k_j+1}(s^{-1}c_j),\dots, [\ad_{s^{-1}1}^{k_{l-1}}(s^{-1}c_{l-1}),\ad_{s^{-1}1}^{k_l}(s^{-1}c_l)]\cdots ]
\]
on elements as in \eqref{eq:basis} and where the sign ${\epsilon(j)}=k_1+\dots + k_{j-1}+j-1$ is determined by the Koszul rule. Set
\[
h(b)=\frac{1}{2l} \sum\limits_{\substack{1\leq j\leq l \\ k_j \ \text{odd}}} (-1)^{\epsilon(j)} [\ad_{s^{-1}1}^{k_1}(s^{-1}c_1),\dots ,[\ad_{s^{-1}1}^{k_j-1}(s^{-1}c_j),\dots, [\ad_{s^{-1}1}^{k_{l-1}}(s^{-1}c_{l-1}),\ad_{s^{-1}1}^{k_l}(s^{-1}c_l)]\cdots ]
\]
for $\epsilon(j)=k_1+\dots + k_{j-1}+j-1$ (which is again determined by a Koszul type rule). It defines a homotopy, i.e. it satisfies
\[
h\gr d_{\Harr}+ \gr d_{\Harr} h =\id
\]
from which the statement follows.
\end{proof}

\begin{proof}[Proof of Theorem \ref{thm:defHGC}]
The series of nested spectral sequences implies that
\[
\gr^{(1)} \widetilde \Psi: (\HGCg[-1],0)\rightarrow \gr^{(1)} (\Mogg \hotimes_S \Harr\BVGraphsg)
\]
is a quasi-isomorphism, since 
\[
H(\gr^{(1)} (\Mogg \hotimes_S \Harr\BVGraphsg))\cong H(\Mogg \hotimes_S \icgg^0)\cong H(\tilde V_S)
\]
and by Lemma \ref{lemma:coh} representatives in the cohomology of $\tilde V_S$ are completely determined by diagrams whose external vertices are univalent and all clusters are of arity one - i.e. (up to degree shift) diagrams in $\HGCg$.
\end{proof}

\subsection{The non-framed case for $g=1$}

In the non-framed case for $g=1$, the set-up is as follows. We consider $(\op M,\op C)=(\pdGraphsone,\pdGraphs)$ and $(\op N,\op D)=(\Mo,e_2^c)$, and the quasi-isomorphism of dg Hopf comodules
\[
(\Phi:\pdGraphsone\rightarrow \Mo \ , \ \phi: \pdGraphs\rightarrow \e_2^c).
\]
The results and proofs carry over analogously. Indeed, some of the arguments are simplified slightly since $\Mo$ is tadpole-free and as opposed to $\bvk$, and when considering $\Mo^{!}=\Mo\circ e_2^!=\Mo \circ e_2\{-2\}$, $e_2^!=e_2\{-2\}$ has no internal differential. In this case the results read as follows.

\begin{thm}\label{thm:defHGCone}
The respective action of $\spp(H^*)\rtimes \GC^{\minitadp}_{(1)}$ and $\HGC^{\minitadp}_{(1)}$ on $\pdGraphsone$ induce morphisms
\begin{align*}
\HGC^{\minitadp}_{(1)}[-1] &\rightarrow \Def(\pdGraphsone \xrightarrow \Phi \Mo)\\
(\spp(H^*)\ltimes \GC^{\minitadp}_{(1)})[-1]&\rightarrow \Def(\pdGraphsone \xrightarrow \Phi \Mo)
\end{align*}
While the first defines a quasi-isomorphism, the second gives a quasi-isomorphism in all degrees except in degree $4$. In degree $4$, the map on cohomology $H^4((\spp(H^*)\ltimes \GC^{\minitadp}_{(1)})[-1])\rightarrow H^4( \Def(\pdGraphsone\xrightarrow \Phi \Mo))$ is injective with one-dimensional cokernel.
\end{thm}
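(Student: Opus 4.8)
The plan is to transpose the proofs of Theorems \ref{thm:defHGC} and \ref{thm:defGC} to the non-framed genus one setting, where the algebraic input is strictly simpler. I would first prove the hairy statement, that
\[
\widetilde\Psi\colon \HGC^{\minitadp}_{(1)}[-1]\rightarrow \Def(\pdGraphsone\xrightarrow\Phi \Mo)
\]
is a quasi-isomorphism (the analogue of Theorem \ref{thm:defHGC}), and then deduce the statement for $\spp(H^*)\ltimes\GC^{\minitadp}_{(1)}$ by factoring its structure map through $\widetilde\Psi$ along $F$.

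For the hairy statement I would mimic the proof of Theorem \ref{thm:defHGC} line by line, replacing the cobar construction $\Omega\Mo$ by the Koszul module $\Mo^{!}=\Mo\circ \e_2^!=\Mo\circ e_2\{-2\}$. The requisite quasi-isomorphism $\widetilde\kappa\colon\Omega\Mo\xrightarrow{\sim}\Mo^{!}$ is the genus one specialization of Lemma \ref{lemma:kappa} already recorded above, and the analogue of Proposition \ref{prop:koszuldef} allows me to pass to the Koszul deformation complex, identified with $\Mo^{!}\hotimes_S\Harr\Graphsone$ (tadpoles at internal vertices now being retained). The nested spectral sequence then runs as before: Lemma \ref{lemma:harr}, whose proof only invokes \cite{DR12}, computes the Harrison cohomology of $\Graphsone$ as its internally connected graphs, and Lemmas \ref{lemma:grgr}, \ref{lemma:coh} and \ref{lemma:unredHarr} apply unchanged, forcing the cohomology to consist of diagrams whose external vertices are univalent and whose clusters are of arity one, i.e. of $\HGC^{\minitadp}_{(1)}$. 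The one genuine simplification occurs in the analogue of Lemma \ref{lemma:ComHLie}: since $\e_2^!=e_2\{-2\}$ carries no internal differential, the tensor factor $\K[\Delta]/(\Delta^2)\circ\K[\delta^*]$ and its Koszul differential $d_{\kappa_2}$ are absent, so only the acyclicity of $(\Lie^c\{-1\}\circ\Com\{-2\},d_{\kappa_1})$ is needed and the small/big cluster analysis no longer carries any $\delta^*$-bookkeeping.

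To obtain the morphism out of $\spp(H^*)\ltimes\GC^{\minitadp}_{(1)}$, I would use the commutative triangle
\begin{center}
\begin{tikzcd}
\HGC^{\minitadp}_{(1)}[-1] \arrow[r,"\widetilde\Psi"] & \Def(\pdGraphsone\xrightarrow\Phi \Mo)\\
(\spp(H^*)\ltimes\GC^{\minitadp}_{(1)})[-1]\arrow[ru,"\Psi"]\arrow[u,"F"]&
\end{tikzcd}
\end{center}
whose left vertical map is the non-framed genus one instance of Proposition \ref{prop:GCHGC}, i.e. $F\colon\spp(H^*)\ltimes\GC^{\minitadp}_{(1)}\rightarrow\HGC^{\minitadp}_{(1)}$ is a quasi-isomorphism in all degrees except three, where it is injective on cohomology with one-dimensional cokernel. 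Since $\widetilde\Psi$ is a quasi-isomorphism and the desuspension $[-1]$ shifts the degree three discrepancy of $F$ into degree four, the composite $\Psi=\widetilde\Psi\circ(F[-1])$ is a quasi-isomorphism in all degrees except degree four, where it is injective on cohomology with one-dimensional cokernel. This is the assertion of the theorem.

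The main difficulty is purely one of bookkeeping rather than of new ideas: I must confirm that the chain of nested spectral sequences underlying Theorem \ref{thm:defHGC} still converges and collapses to the same page once the $\delta^*$ tower is deleted and tadpoles at internal vertices are reinstated (as in $\GC^{\minitadp}_{(1)}$ rather than the tadpole-free $\GCg$). Both modifications only delete or relabel generators without introducing additional differentials, so I expect the convergence, the cluster decomposition of Lemma \ref{lemma:grgr}, and the explicit homotopy of Lemma \ref{lemma:unredHarr} to survive verbatim; the only place demanding genuine care is re-deriving the Koszul acyclicity in the analogue of Lemma \ref{lemma:ComHLie} with the single surviving factor $(\Lie^c\{-1\}\circ\Com\{-2\},d_{\kappa_1})$.
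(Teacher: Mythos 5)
Your proposal is correct and matches the paper's own treatment, which likewise proves the non-framed genus one case by transposing the proofs of Theorems \ref{thm:defHGC} and \ref{thm:defGC} verbatim — replacing $\Omega\Mo$ by $\Mo^{!}=\Mo\circ \e_2\{-2\}$, noting exactly the simplifications you identify (no internal differential on $\e_2^{!}$, hence no $\delta^*$ tower or $d_{\kappa_2}$, and $\Mo$ tadpole-free while tadpoles at internal vertices are retained in $\GC^{\minitadp}_{(1)}$) — and then deducing the second statement from the commutative triangle through $F:\spp(H^*)\ltimes \GC^{\minitadp}_{(1)}\rightarrow \HGC^{\minitadp}_{(1)}$, whose degree-three defect shifts to degree four. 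No gaps; your bookkeeping concerns are precisely the points the paper also flags as the only ones requiring care.
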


\section{Bezrukavnikov's model}\label{sec:tg}
We introduce a further model for the configuration spaces of points on surfaces based on work of Bezrukavnikov \cite{Bezru94}. More concretely, we extend his work to the framed setting, and additionally include an operadic module structure.

\subsection{The Lie algebras $\fraktg(n)$}

Consider the following collection of Lie algebras.

\begin{defi}
Fix $n\geq 1$. Let $\fraktg(n)$ be the Lie algebra with generators of degree zero,
\begin{equation*}
x_l^{(i)},\ y_l^{(i)}, \ t_{ij} \text{ for } 1\leq i, j\leq n, \ 1\leq l \leq g
\end{equation*}
subject to the relations,
\begin{align*}
&t_{ij}=t_{ji}, \ [t_{ij},t_{kl}]=0 \text{ for } i,j,k,l \text{ all distinct},\  [t_{ik}+t_{kj},t_{ij}]=0 \text{ for } i,j,k \text{ all distinct.}\\
&[x_l^{(i)},x_k^{(j)}]=[y_l^{(i)},y_k^{(j)}]=0 \ (i\neq j), \ [x_k^{(i)},y_l^{(j)}]=0 \ (i\neq j, \ k\neq l)\\
&[x_k^{(i)},y_k^{(j)}]=[x_l^{(i)},y_l^{(j)}]=t_{ij} \ (i\neq j)\\
&\sum\limits_{k=1}^g [x_k^{(i)},y_k^{(i)}]=-\sum\limits_{j\neq i}^{n} t_{ij}-(2-2g)t_{ii}.
\end{align*}
and for which all $t_{ii}$ are central. 
\end{defi}

It follows from the Jacobi identity that for $i\neq j$, $[x_l^{(i)}+x^{(j)}_l,t_{ij}]=[y_l^{(i)}+y_l^{(j)},t_{ij}]=0$. For $g=1$, $\frakt_{(1)}(1)$ is abelian. For $g\geq 2$, it is probably well-known to experts that in arity one the center of $\fraktg(1)$ is one-dimensional with generator $t_{11}$. It is a consequence of (Proposition A', \cite{AsadaKaneko87}). Moreover, Enriquez gives a variant of Asada and Kaneko's proof \cite{AsadaKaneko87} in his unpublished notes \cite{Enriquezprivate}. The proof we present here was suggested by F. Naef.

\begin{lemma}\label{lemma:center}
For $g\geq 2$ the center of $\fraktg(1)$ is one-dimensional with generator $t_{11}$.
\end{lemma}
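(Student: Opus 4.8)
The plan is to reduce the statement to the vanishing of the center of the graded holonomy Lie algebra of the surface, and to settle the latter inside the universal enveloping algebra.

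First I would unwind the definition in arity one. For $n=1$ the only generators are $x_l:=x_l^{(1)},\,y_l:=y_l^{(1)}$ ($1\le l\le g$) and $t:=t_{11}$, and every relation is vacuous except that $t$ is central and $\omega:=\sum_{l=1}^g[x_l,y_l]=(2g-2)\,t$. Since $g\ge 2$ the scalar $2g-2$ is invertible, so $t=\tfrac1{2g-2}\omega$ may be eliminated and
\[
\fraktg(1)\;\cong\;L(V)\big/\big\langle\, [\omega,v]\ :\ v\in V\,\big\rangle,\qquad V:=\langle x_1,\dots,x_g,y_1,\dots,y_g\rangle,
\]
where $L(V)$ is the free Lie algebra and $\omega\in L_2(V)=\Lambda^2V$ is the (nondegenerate) symplectic element. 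Writing $\mathfrak p:=L(V)/\langle\omega\rangle$ for the graded Lie algebra of the closed surface $\Sigma_g$ (the associated graded of the lower central series of $\pi_1\Sigma_g$), the assignment $\omega\mapsto(2g-2)t$ exhibits $\fraktg(1)$ as a central extension $0\to\K t\to\fraktg(1)\to\mathfrak p\to0$.

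Next I would compare the two Lie algebras degreewise. A short Jacobi induction shows that the ideals $\langle\omega\rangle$ and $\langle[\omega,V]\rangle$ of $L(V)$ coincide in every degree $\ge3$; hence the projection $\fraktg(1)\to\mathfrak p$ is an isomorphism in all degrees $d\ne2$ and has kernel exactly $\K\omega$ in degree $2$, so $\fraktg(1)_2=\mathfrak p_2\oplus\K\omega$. Because $t$ is central, this reduces the claim to $Z(\mathfrak p)=0$: if $z\in Z(\fraktg(1))$ is homogeneous of degree $d$, its image $\bar z$ lies in $Z(\mathfrak p)$, hence $\bar z=0$ once $Z(\mathfrak p)=0$; for $d\ne2$ the isomorphism then gives $z=0$, while for $d=2$ it gives $z\in\K\omega=\K t$. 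The low-degree cases of $Z(\mathfrak p)=0$ are easy and instructive: everything is $\mathrm{Sp}(V)$-equivariant and $\omega$ spans $(\Lambda^2V)^{\mathrm{Sp}(V)}$, so $\mathfrak p_2=\Lambda^2V/\K\omega$ is the irreducible primitive module; since $\mathfrak p$ is generated in degree $1$ and $\omega=0$ in $\mathfrak p$, one gets $\mathfrak p_3=[\mathfrak p_2,V]\ne0$, which (together with $v\wedge V\not\subset\K\omega$ for $v\ne0$) rules out central elements in degrees $1$ and $2$.

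The heart of the proof is $Z(\mathfrak p)=0$ in \emph{all} degrees, and this is the step I expect to be the main obstacle. I would deduce it from the stronger claim that the connected graded algebra $A:=U(\mathfrak p)=T(V)/(\omega)$ has trivial center $Z(A)=\K\cdot1$ when $2g\ge4$: by Poincaré–Birkhoff–Witt $\mathfrak p\hookrightarrow A$ and $Z(\mathfrak p)=\mathfrak p\cap Z(A)$, which vanishes since $\mathfrak p$ is concentrated in strictly positive degrees. To prove $Z(A)=\K$ I would exploit that the single relation $\omega=\sum_l(x_ly_l-y_lx_l)$ is already a noncommutative Gröbner basis (its leading monomial, say $y_gx_g$, has no proper self-overlap), so $A$ has the explicit normal-form basis of words avoiding the factor $y_gx_g$, with Hilbert series $(1-2gt+t^2)^{-1}$; moreover the symplectic torus grading ($x_l,y_l$ of weight $\pm e_l$) allows a hypothetical nonscalar central element to be taken weight- and degree-homogeneous. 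A leading-term analysis of the equations $[z,x_l]=[z,y_l]=0$ in this normal form should then force $z$ to be scalar.

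The genuinely technical point is this last center computation: the rewriting of $y_gx_g$ mixes all symplectic pairs, so the leading-term bookkeeping requires care. If one prefers to avoid it, the same fact may be invoked from the classical theory of one-relator Lie algebras with a nondegenerate quadratic relation on $\ge4$ generators (Labute; Asada–Kaneko, Proposition A'), or extracted from the observation that $A$ is Koszul with Koszul dual the Poincaré-duality algebra $H^\bullet(\Sigma_g;\K)$ — so that $H^n_{\CE}(\mathfrak p)=0$ for $n\ge3$ — and feeding this into the Gysin/Hochschild–Serre sequence of a central extension by a putative central $z$; any of these routes yields $Z(\mathfrak p)=0$ and completes the argument.
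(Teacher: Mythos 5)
Your proposal is correct in architecture and, interestingly, funnels through exactly the same key statement as the paper: that the associative algebra $T(V)/(\omega)\cong U(\fraktg(1)/(t_{11}))$ (the paper's $A_0/B$) has center $\K\cdot 1$, after which the descent to the Lie algebra is essentially the same in both texts --- your graded central-extension analysis of $0\to \K t_{11}\to \fraktg(1)\to \mathfrak p\to 0$ plays the role of the paper's substitution $t_{11}=0$ combined with the observation that the Lie image of $\mathbb{L}(x_l,y_l)$ generates the associative quotient (both use that a Lie-central element commutes with algebra generators, hence is central in the enveloping algebra). Where you genuinely diverge is in how $Z(T(V)/(\omega))=\K$ is established. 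The paper (following Naef) introduces the dg algebra $A=\mathrm{Ass}(x_l,y_l,z)$ with $dz=\sum_l x_ly_l-y_lx_l$ and compares two spectral sequences on the mapping cone of $\ad_{x_1}$: one filtration computes the relevant cohomology as $\K[x_1]$, using the classical fact that the centralizer of $x_1$ in a free associative algebra is the polynomial algebra $\K[x_1]$ (the Bourbaki exercise cited there), while the other identifies it with the centralizer $Z_{x_1}(A_0/B)$; this yields the stronger statement $Z_{x_1}(A_0/B)=\K[x_1]$, from which triviality of the center is immediate. Your substitutes --- the noncommutative Gr\"obner basis with leading word $y_gx_g$ (your no-self-overlap observation and the Hilbert series $(1-2gt+t^2)^{-1}$ are correct, and the quadratic Gr\"obner basis also justifies your Koszulness claim via the PBW criterion), Koszul duality with $H^\bullet(\Sigma_g)$, or direct citation of Labute and Asada--Kaneko --- are all legitimate; indeed the paper itself records that the lemma is a consequence of Proposition A' of Asada--Kaneko, so your citation fallback does close the argument. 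What the paper's route buys is a self-contained proof (modulo the free-algebra centralizer theorem) that sidesteps any normal-form combinatorics; what your route buys is a more structural picture (the identification of $\fraktg(1)$ as a central extension of the holonomy Lie algebra $\mathfrak p$ of $\Sigma_g$, with the degreewise comparison of the ideals $\langle\omega\rangle$ and $\langle[\omega,V]\rangle$ made explicit, which the paper leaves implicit).

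The one soft spot, which you flag yourself, is that your self-contained option is not executed: ``a leading-term analysis \dots should then force $z$ to be scalar'' is precisely the nontrivial step, and it is not routine --- rewriting $y_gx_g$ produces correction terms involving \emph{all} symplectic pairs, so a central element's top normal word cannot be tracked by a one-line leading-monomial argument. As written, your proof is complete only through the external references (which is acceptable, since the paper endorses the Asada--Kaneko route); if you want it self-contained, note how the paper's mapping-cone trick outsources exactly this combinatorial difficulty to the centralizer theorem for free associative algebras.
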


\begin{proof}
Let $A:=\mathrm{Ass}(x_l,y_l,z)_{l=1,\dots,g}$ be the free associative algebra with generators $x_l$, $y_l$ of degree zero, and $z$ of degree $-1$. Equip it with the differential
\[
dz=\sum_{l=1}^g x_l y_l - y_l x_l , \ dx_l=0 , \ dy_l=0.
\]
Moreover, consider the double complex $C$ defined through the mapping cone of the morphisms
\[
\ad_{x_1}:A\rightarrow A, \ c \mapsto x_1c-cx_1.
\]
Denote by $A_i$ the degree $i$ part of $A$. Notice that $C=A\oplus A[-1]$ looks as follows.
\begin{center}
\begin{tikzcd}[row sep=0.4cm ]
  0 & 0 & 0 & 0\\
  0 & A_0 \arrow[r,"\ad_{x_1}"] & A_0[-1] & 0  \\
  0 & A_{-1}  \arrow[r,"\ad_{x_1}"] \arrow[u,"d"]& A_{-1}[-1]  \arrow[u,"d"]& 0\\
  0 & A_{-2} \arrow[r,"\ad_{x_1}"] \arrow[u,"d"]& A_{-2}[-1] \arrow[u,"d"]& 0\\
  0 & A_{-3} \arrow[r,"\ad_{x_1}"] \arrow[u,"d"]& A_{-3}[-1] \arrow[u,"d"]& 0\\
   &  \arrow[u,"d"]&  \arrow[u,"d"]& \\
\end{tikzcd}
\end{center}
Using the standard spectral sequence techniques, we find that the spectral sequence of the total complex of $C$ abuts on the second page, where it is concentrated in total degrees zero and one. On the one hand, filtering such that on the first page we have the cohomology with respect to the horizontal differential, yields
\[
E_1^{0,i}=\begin{cases}
\K[x_1] & i=0\\
0 & i>0
\end{cases}
\]
on the first column. Here we used that the kernel of $\ad_{x_1}$ corresponds to the centralizer of $x_1$ in $A$ and is thus given by the subalgebra of polynomials $\K [x_1]$ (see for instance \cite{Bourbaki2006}, Exercise 3 of Chapter 3). On the second column we find
\[
E_1^{1,i}\cong\bigoplus_{k\geq 0} x_1^k \tilde A_{i}[-1]
\]
where $\tilde A_i$ consists of words in $A_i$ which do not begin or end with the varible $x_1$. We show that the second column of the second page is concentrated in degree one, i.e. $E_2^{1,i}=0$ for $i\neq 0$. For this, equip the second column by the filtration given by the number of $x_i\neq x_2$. On the associated graded the differential is induced by 
\[
z \mapsto x_2y_2-y_2x_2.
\]
Filtering again by the number of times the expression $y_2x_2$ appears in any monomial in that order reduces the differential to $z\mapsto x_2y_2$ on the associated graded complex. The homotopy $x_2y_2\mapsto z$ ensures that the cohomology is concentrated in $E_2^{1,0}$. In particular, since $E_2^{1,-1}=0$, the cohomology of the total complex in degree zero is
\[
E_\infty^{0,0}\oplus E_\infty^{1,-1}=E_2^{0,0}\oplus E_2^{1,-1}=E_2^{0,0}=\K [x_1].
\]
On the other hand, filtering such that the first page computes the cohomology with respect to the vertical differential gives
\[
{\tilde E}_\infty^{0,0}\oplus \tilde E_\infty^{0,0}={\tilde E}_2^{0,0}=Z_{x_1}(A_0/B)
\]
where $Z_{x_1}(\dots)$ denotes the centralizer of $x_1$, and $B$ is the ideal in $A_0$ generated by 
\[
\sum_{l=1}^g x_l y_l - y_l x_l.
\]
In this case we used that $H^0(A,d)=A_0/B$ and the fact the cohomology of the columns is concentrated in $z$-degree zero (by a similar argument as above). Since the spectral sequences converges to the cohomology of the total complex, we have $E_\infty^{0,0}\oplus E_\infty^{1,-1}\cong {\tilde E}_\infty^{0,0}\oplus \tilde E_\infty^{1,-1}$ and thus $\K[x_1]\cong Z_{x_1}(A_0/B)$. We deduce that the center $Z(A_0/B)=\K 1$ is one-dimensional. Notice that $A_0/B$ contains a subalgebra isomorphic to $\fraktg(1)/( t_{11})$. More precisely, the image of $\mathbb{L}(x_l,y_l)_{l=1,\dots g}$ under the composition
\[
\mathbb{L}(x_l,y_l)_{l=1,\dots g}\hookrightarrow A_0=\mathrm{Ass}(x_l,y_l)_{l=1,\dots g}\twoheadrightarrow A_0/B
\]
is isomorphic $\fraktg(1)/( t_{11})$. Its center is contained in the center of $A_0/B$, and since $1$ is not in the image of the composition, it is equal to zero. Finally, let $\theta=\theta(x_l,y_l,t_{11}) \in Z(\fraktg(1))$. Since $t_{11}$ is central in $\fraktg(1)$, $\theta(x_l,y_l,t_{11}=0) \in Z(\fraktg(1))$ still lies in the center, and its projection onto the quotient $\fraktg(1)/(t_{11})$ yields an element in the center $Z(\fraktg(1)/(t_{11}))=0$. Thus $\theta(x_l,y_l,t_{11}=0)=0$, and $\theta=k\cdot t_{11}$ for $k\in \K$. 
\end{proof}

Recall further that for any $m,n\geq 1$ and any map of sets $\phi:\{1,\dots,m\}\rightarrow \{1,\dots,n\}$, there is a unique Lie algebra morphism
\begin{align*}
\fraktg(n)&\rightarrow \fraktg(m)\\
\theta&\mapsto \theta^\phi
\end{align*}
which on the generators is given by
\begin{align*}
\text{for } i\neq j: \ t_{ij} &\mapsto (t_{ij})^{\phi}:= \sum\limits_{i', j'\in \phi^{-1}(j)} t_{i'j'}\\
t_{ii}&\mapsto (t_{ii})^{\phi} := \sum\limits_{i'\in\phi^{-1}(i)} t_{i'i'} + \frac12 \sum\limits_{i'\neq j' \in\phi^{-1}(i)} t_{i'j'}\\
x_l^{(i)}&\mapsto (x_l^{(i)})^{\phi}:=\sum\limits_{i'\in \phi^{-1}(i)} x^{(i')}_l\\
y_l^{(i)}&\mapsto (y_l^{(i)})^{\phi}:=\sum\limits_{i'\in \phi^{-1}(i)} y^{(i')}_l.
\end{align*}
For a more concise notation, we write $\theta^{\phi}=\theta^{I_1,\dots,I_n}$ where $I_i=\phi^{-1}(i)$. Moreover, we define the following collection of maps of sets for $n\geq 1$ and $1\leq k \leq n+1$,
\begin{align*}
\delta^{n}_k:\{1,\dots,n+1\}&\rightarrow \{1,\dots,n\}\\
j&\mapsto 
\begin{cases}
j & \text{ if } j\leq k\\
j-1 & \text{ if } j>k
\end{cases}
\end{align*}
which, by the above, induce Lie algebra morphisms $\fraktg(n)\rightarrow \fraktg(n+1)$ mapping
\[
\theta \mapsto \theta^{\delta_k^n}=\theta^{1,\dots,k \ k+1, \dots, n+1}.
\]
A simple calculation shows that these satisfy the relations (here for $\theta \in \fraktg(n-1)$ and $l\leq k$)
\[
(\theta^{\delta_l^{n-1}})^{\delta_{k+1}^{n}}=\theta^{\delta_l^{n-1} \circ \delta_{k+1}^{n}}=\theta^{\delta_{k}^{n-1} \circ \delta_{l}^n}=(\theta^{\delta_{k}^{n-1}})^{\delta_l^n}.
\]
In particular, the case $l=k$, which will be important later on, reads
\[
\theta^{\delta_k^{n-1} \circ \delta_{k+1}^{n}}=\theta^{\delta_{k}^{n-1} \circ \delta_{k}^n}.
\]
Notice that in the alternative notation, both expressions equal 
\[
\theta^{\delta_k^{n-1} \circ \delta_{k+1}^{n}}=\theta^{1,\dots, k \ k+1 \ k+2, \dots, n+1}
\]
whereas the previous relation yields
\[
\theta^{\delta_l^{n-1} \circ \delta_{k+1}^{n}}=\theta^{1,\dots, l \ l+1, \dots, k \ k+1, \dots, n+1}.
\]

Similarly, for $n\geq 1$, consider the abelian extension $\frakt_\bv(n)$ of the Drinfeld-Kohno Lie algebra $\frakt(n)$ (\cite{Kohno87}, \cite{Drinfeld90}). It is generated by the symbols $t_{ij}$ for $1\leq i,j\leq n$ of degree zero, subject to the same relations as above, i.e.
\[
t_{ii} \text{ central, } t_{ij}=t_{ji}, \ [t_{ij},t_{kl}]=0 \text{ for } i,j,k,l \text{ all distinct},\  [t_{ik}+t_{kj},t_{ij}]=0 \text{ for } i,j,k \text{ all distinct.}
\]
Denote by $\fraks(n)\subset \frakt_\bv(n)$ the Lie ideal spanned by all $t_{ii}$. There is a short exact sequence of Lie algebras $0\rightarrow \fraks(n)\rightarrow \frakt_\bv(n)\rightarrow \frakt(n)\rightarrow 0$. Notice that we may define the Lie algebra morphisms $\frakt_\bv(n)\rightarrow \frakt_\bv(m)$ induced by maps of sets $\phi:\{1,\dots,m\}\rightarrow \{1,\dots,n\}$ analogously to the $\fraktg$ case by using the formula for the elements $t_{ij}$.

\subsection{The Chevalley-Eilenberg cochain complex $C(\fraktg)$}

As with the Drinfeld-Kohno Lie algebras, the collection of Lie algebras $\frakt_\bv=\{\frakt_\bv(n)\}_{n\geq 1}$ assembles to form an operad in Lie algebras, and indeed, the collection of Lie algebras $\fraktg:=\{\fraktg(n)\}_{n\geq 1}$ yields a right operadic module over the operad in Lie algebras $\frakt_\bv$. More precisely, we have Lie algebra morphisms $\circ_u: \fraktg(U) \oplus \frakt_{\bv} (W) \rightarrow \fraktg(U \setminus \{ u\}) \sqcup W)$, for finite sets $U$ and $W$, and $u \in U$, satisfying the additive analogues of the equivariance, associativity and unit relations. In particular, these maps are completely determined by their respective projections on the first and second component. On generators, they are defined by
\begin{align*}
\circ_u (t_{ij},-)&=\begin{cases}
t_{ij} & \text{ if } i\neq j\in U\setminus \{ u\}\\
\sum\limits_{w\in W} t_{iw} & \text{ if } i\in U\setminus \{ u\}, \ j= u
\end{cases}\\
\circ_u (x_l^{(i)},-)&=\begin{cases}
x_l^{(i)} & \text{ if } i\in U\setminus \{ u\}\\
\sum\limits_{w\in W} x^{(w)}_l & \text{ if } i=u
\end{cases}\\
\circ_u (y_l^{(i)},-)&=\begin{cases}
y_l^{(i)} & \text{ if } i\in U\setminus \{ u\}\\
\sum\limits_{w\in W} y^{(w)}_l & \text{ if } i=u
\end{cases}\\
\circ_u(t_{ii},-)&=\begin{cases}
t_{ii} & \text{ if } i\in U\setminus \{ u\}\\
\sum\limits_{w\in W} t_{ww} + \frac12\sum\limits_{v\neq w\in W} t_{vw} & \text{ if } i=u 
\end{cases}\\
\circ_u (-,t_{ij})&=t_{ij} \text{ for all } i,j\in W.\\
\end{align*}

For the right operadic $\frakt_\bv$-module $\fraktg$ the Chevalley-Eilenberg cochain complex forms a right cooperadic dg Hopf $C(\frakt_\bv)$-comodule $C(\fraktg):=\{C(\fraktg(r))\}_{r\geq 1}=\{(S(\fraktg^*[-1](r)),d_{\CE})\}_{r\geq 1}$. In this section, we relate this dg Hopf comodule to the pair $(\Mog,\bv^c)$. Indeed, on the level of dg Hopf cooperads the quasi-isomorphism $C(\frakt)\xrightarrow \sim e_2^c$ relating the Drinfeld-Kohno Lie algebras to the Gerstenhaber cooperad (\cite{Tamarkin03}, \cite{FresseBook17}) induces a quasi-isomorphism of dg Hopf cooperads \cite{Severa10}
\[
C(\frakt_\bv) \xrightarrow \sim \bv^c. 
\]
To check the result for the respective comodules, denote, for $r\geq 1$, by
\[
E(r):=\vspan_\K(t_{ij}, x_l^{(i)}, y_l^{(i)})_{1\leq i,j \leq r, \ 1\leq l \leq g}
\]
the linear space spanned by the generators of $\fraktg(r)$. Notice that the linear inclusion map $i:E(r)\rightarrow \fraktg(r)$ induces a linear map $\fraktg(r)^*\rightarrow E(r)^*$. Denote by 
\[
\{  w^{(ij)}=s^{-1} (t_{ij})^*, \ a_l^{(i)}=s^{-1}( x_l^{(i)})^*, \ b_l^{(i)}=s^{-1} (y_l^{(i)})^* \}_{1\leq i,j \leq r, \ 1\leq l \leq g}
\]
the basis of the degree shifted dual vector space $E(r)^*[-1]$. We view these as the generators of the dgca $\Mog(r)$. Notice that the linear map $\fraktg^*(r)\rightarrow E(r)^*$ induces the commutative diagram
\begin{center}
 \begin{tikzcd}
  S(\fraktg^*(r)[-1]) \arrow[r,"\Xi(r)"] & \Mog(r) \\
   \fraktg^*(r)\arrow[u]\arrow[r] \arrow[ru,"\xi(r)"] & E(r)^* \arrow[u]
 \end{tikzcd}
 \end{center}
where the vertical maps are the desuspension and inclusions of the generators in their respective dgca's. In particular, $\Xi(r)$ is the unique morphism of graded commutative algebras extending $\xi(r)$. We still need to check that $\Xi(r)$ is indeed a morphism of complexes.

\begin{lemma}\label{lemma:MCtMO)}
The morphism $\xi(r)$ describes a Maurer-Cartan element in $\Hom_{S_r}(\fraktg^*(r),\Mog(r))$. Equivalently, $\Xi(r)$ is a morphism of dgca's.
\end{lemma}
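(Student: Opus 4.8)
The plan is to exploit the standard correspondence between morphisms out of a Chevalley--Eilenberg complex and Maurer--Cartan elements of a convolution Lie algebra. Recall that $C(\fraktg(r))=S(\fraktg^*(r)[-1])$ is the free graded commutative algebra on $\fraktg^*(r)[-1]$, with $d_{\CE}$ the derivation whose quadratic part is dual to the Lie bracket of $\fraktg(r)$. I would equip $\Hom_{S_r}(\fraktg^*(r),\Mog(r))$ with the convolution bracket built from the Lie cobracket $\delta$ on $\fraktg^*(r)$ (dual to the bracket) and the commutative product on $\Mog(r)$, and with the differential induced by $d_{\Mog}$ (the internal differential of $\fraktg^*(r)$ being zero, since $\fraktg(r)$ sits in degree zero). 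Then a degree-one element $\xi(r)$ is Maurer--Cartan, i.e. $d_{\Mog}\,\xi(r)+\tfrac12[\xi(r),\xi(r)]=0$, precisely when its algebra extension $\Xi(r)$ commutes with the differentials; this is the asserted equivalence, by the same mechanism underlying the earlier Maurer--Cartan statements (cf.\ Lemma \ref{lemma:mr}). Since $\Xi(r)$ is a map of free graded commutative algebras and both differentials are derivations, it suffices to verify the Maurer--Cartan equation on the generators $(t_{ij})^*,(x_l^{(i)})^*,(y_l^{(i)})^*\in\fraktg^*(r)$.

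First I would observe that, because $\xi(r)$ factors through $E(r)^*$, the value $[\xi(r),\xi(r)](\theta)$ on any $\theta\in\fraktg^*(r)$ only involves the structure constants $\langle\theta,[e_a,e_b]\rangle$ with $e_a,e_b$ ranging over the generators $x_l^{(i)},y_l^{(i)},t_{ij}$; all parts of $\delta\theta$ landing outside $E(r)^*\otimes E(r)^*$ are annihilated by $\xi(r)\otimes\xi(r)$. These structure constants are read directly off the defining relations of $\fraktg(r)$. For the generators dual to the $x$'s and $y$'s there is nothing to produce: no defining relation expresses $x_l^{(i)}$ or $y_l^{(i)}$ as a bracket of two generators, so $[\xi(r),\xi(r)]$ vanishes on $(x_l^{(i)})^*$ and $(y_l^{(i)})^*$, matching $d_{\Mog}a_l^{(i)}=d_{\Mog}b_l^{(i)}=0$.

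The substance lies in the generators dual to the $t$'s. For $(t_{ij})^*$ with $i\neq j$ the brackets of generators carrying a nonzero $t_{ij}$-component are $[x_l^{(i)},y_l^{(j)}]=t_{ij}$ and $[x_l^{(j)},y_l^{(i)}]=t_{ij}$, together with the diagonal brackets $[x_l^{(i)},y_l^{(i)}]$ and $[x_l^{(j)},y_l^{(j)}]$; applying $\mu\circ(\xi(r)\otimes\xi(r))$ and summing over $l$ reproduces, up to sign, $d_{\Mog}w^{(ij)}=\nu^{(i)}+\nu^{(j)}-\sum_l(a_l^{(i)}b_l^{(j)}-b_l^{(i)}a_l^{(j)})$, while for $(t_{ii})^*$ only the diagonal brackets contribute (using that $t_{ii}$ is central), reproducing up to sign $d_{\Mog}w^{(ii)}=(2-2g)\nu^{(i)}$. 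The point I expect to be the main obstacle, and really the crux of the matching, is the diagonal relation $\sum_l[x_l^{(i)},y_l^{(i)}]=-\sum_{j\neq i}t_{ij}-(2-2g)t_{ii}$: it pins down only the $l$-sum of the diagonal brackets, not the individual terms, yet this is exactly enough, because $\mu\circ(\xi(r)\otimes\xi(r))$ sends every $(x_l^{(i)})^*(y_l^{(i)})^*$ to the single class $\nu^{(i)}=a_l^{(i)}b_l^{(i)}$ independently of $l$, so only the total $t$-component of the sum enters, and it enters with precisely the coefficient prescribed by the relation. Once this is recognized, the remaining work is the bookkeeping of the Koszul signs from the desuspension $s^{-1}$ and of the factor $\tfrac12$ from antisymmetrization, which is routine; one also checks that the brackets left unconstrained by the relations (such as $[x_k^{(i)},y_l^{(i)}]$ with $k\neq l$, or $[t_{ab},-]$) yield elements with vanishing $t$-, $x$- and $y$-components and therefore do not contribute, consistently with the absence of any $w$-linear terms in $d_{\Mog}$.
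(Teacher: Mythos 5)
Your framework is, up to dualization, the same as the paper's: the paper identifies $\xi(r)$ with the element $m_r=\sum_{i<j}w^{(ij)}\otimes t_{ij}+\sum_i w^{(ii)}\otimes t_{ii}+\sum_{i,l}a_l^{(i)}\otimes x_l^{(i)}+\sum_{i,l}b_l^{(i)}\otimes y_l^{(i)}$ of $\Mog(r)\hotimes_{S_r}\fraktg(r)$ and verifies $dm_r+\tfrac12[m_r,m_r]=0$ there, and you correctly isolate one crux, namely that only the $l$-sum $\sum_l[x_l^{(i)},y_l^{(i)}]$ enters because every diagonal pair produces the same class $\nu^{(i)}=a_l^{(i)}b_l^{(i)}$. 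However, there is a genuine gap in your reduction. The Maurer--Cartan equation is an identity of maps on all of $\fraktg^*(r)$, equivalently an identity in $\Mog(r)\hotimes\fraktg(r)$, and $[\xi,\xi]$ has components along quadratic Lie words that do \emph{not} lie in the span $E(r)$ of the generators. Testing against a functional $\theta$ dual to such a word gives $d_{\Mog}\xi(\theta)=0$, so the equation forces the $\Mog$-coefficient of each such bracket to vanish; checking the equation only on $(t_{ij})^*,(x_l^{(i)})^*,(y_l^{(i)})^*$ never sees these conditions, and they are not automatic.

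Concretely, three families must be handled. First, the terms $w^{(ij)}w^{(kl)}\otimes[t_{ij},t_{kl}]$ with overlapping indices: after using $[t_{ij},t_{il}+t_{lj}]=0$ to collapse everything onto $[t_{ij},t_{il}]$, the coefficient becomes $w^{(ij)}w^{(il)}-w^{(ij)}w^{(jl)}-w^{(jl)}w^{(il)}$, which vanishes precisely by the Arnold relation in $\Mog(r)$ --- your proposal never invokes Arnold at all. Second, the terms $w^{(ij)}a_l^{(k)}\otimes[t_{ij},x_l^{(k)}]$ with $k\in\{i,j\}$ (and their $b$-analogues): using $[t_{ij},x_l^{(i)}]=-[t_{ij},x_l^{(j)}]$, the coefficient is $w^{(ij)}(a_l^{(i)}-a_l^{(j)})$, which vanishes only by the relation $a_l^{(i)}w^{(ij)}=a_l^{(j)}w^{(ij)}$. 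Third, the same-site brackets $[x_k^{(i)},x_l^{(i)}]$, $[y_k^{(i)},y_l^{(i)}]$ and $[x_k^{(i)},y_l^{(i)}]$ with $k\neq l$, whose coefficients $a_k^{(i)}a_l^{(i)}$, etc., vanish because decorations at a vertex multiply according to the intersection form on $H$. Your closing sentence dismisses these by saying the brackets ``yield elements with vanishing $t$-, $x$- and $y$-components and therefore do not contribute'' --- but having no component along the generators only means these brackets are invisible when you evaluate on generator-duals; it does not make them zero in $\fraktg(r)$, so their $\Mog$-coefficients must be shown to vanish. These cancellations constitute most of the paper's proof, and without them (the Arnold relation in particular) the lemma is not established.
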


\begin{proof}
We identify $\xi(r)$ with the following element in $ \Mog(r)\otimes_{S_r}\fraktg(r)$,
\[
m_r=\sum\limits_{1\leq i<j \leq r} \  w^{(ij)}\otimes t_{ij}+ \sum\limits_{i=1}^r  \  w^{(ii)}\otimes t_{ii}+ \sum\limits_{i=1}^r\sum\limits_{l=1}^g  \  a_l^{(i)}\otimes x_l^{(i)}+\sum\limits_{i=1}^r\sum\limits_{l=1}^g  \ b_l^{(i)}\otimes y_l^{(i)}.
\]
For this we find
\begin{align*}
dm_r&=\sum\limits_{1\leq i<j \leq r} \  dw^{(ij)}\otimes t_{ij}+ \sum\limits_{i=1}^r  \  dw^{(ii)}\otimes t_{ii}\\
&=\sum\limits_{1\leq i<j \leq r} \  \left(\nu^{(i)}+\nu^{(j)}-\sum\limits_{l=1}^g (a_l^{(i)}b_l^{(j)}-b_l^{(i)}a_l^{(j)})\right)\otimes t_{ij}+ \sum\limits_{i=1}^r  \ (2-2g)\nu^{(i)} \otimes t_{ii}
\end{align*}
and
\begin{align*}
[m_r,m_r]&= \sum\limits_{i<j,k<l} 2w^{(ij)}w^{(kl)}  \otimes [t_{ij},t_{kl}]+\sum\limits_{i\neq j,l} 2 a_l^{(i)}b_l^{(j)} \otimes [x_l^{(i)},y_l^{(j)}]+\sum\limits_{i,l} 2 a_l^{(i)}b_l^{(i)}\otimes [x_l^{(i)},y_l^{(i)}]\\
&+\sum\limits_{i<j,k,l}2 \left( w^{(ij)}a_l^{(k)} \otimes [t_{ij},x_l^{(k)}]+ w^{(ij)}b_l^{(k)}\otimes [t_{ij},y_l^{(k)}]\right)
\end{align*}
We regroup the relevant terms with the Maurer-Cartan equation 
\[
dm_r+\frac12 [m_r,m_r]=0.
\]
For instance, for $i,j,k,l$ all distinct, $[t_{ij},t_{kl}]=0$. On the other hand, whenever two indices coincide, we may split the sum as
\[
\left(\sum\limits_{i=k<j<l}+\sum\limits_{i=k<l<j}+\sum\limits_{i<j=k<l}+\sum\limits_{k<i=l<j}+\sum\limits_{k<i<j=l}+\sum\limits_{i<k<j=l}\right)  w^{(ij)}w^{(kl)} \otimes [t_{ij},t_{kl}].
\]
Renaming the variables yields
\begin{align*}
&\sum\limits_{i<j<l} \Big( w^{(ij)}w^{(il)} \otimes [t_{ij},t_{il}]+ w^{(il)}w^{(ij)}\otimes [t_{il},t_{ij}]+ w^{(ij)}w^{(jl)}\otimes [t_{ij},t_{jl}]\\
&+ w^{(jl)}w^{(ij)} \otimes [t_{jl},t_{ij}]+w^{(jl)}w^{(il)} \otimes [t_{jl},t_{il}]+ w^{(il)}s^{jl)} \otimes [t_{il},t_{jl}]\Big)\\
&=2 \sum\limits_{i<j<l} \Big( w^{(ij)}w^{(il)}\otimes [t_{ij},t_{il}]+ w^{(ij)}w^{(jl)}\otimes [t_{ij},t_{jl}]+ w^{(jl)}w^{(il)} \otimes [t_{jl},t_{il}]\Big)\\
&=2 \sum\limits_{i<j<l} \Big( w^{(ij)}w^{(il)}\otimes [t_{ij},t_{il}]- w^{(ij)}w^{(jl)}\otimes [t_{ij},t_{il}]-w^{(jl)}w^{(il)} \otimes [t_{ji},t_{il}]\Big)\\
&=2 \sum\limits_{i<j<l}  \Big(w^{(ij)}w^{(il)}-w^{(ij)}w^{(jl)}-w^{(jl)}w^{(il)}\Big)\otimes  [t_{ij},t_{il}]\\
&=2 \sum\limits_{i<j<l}  \Big(w^{(ji)}w^{(il)}+w^{(lj)}w^{(ji)}+w^{(il)}w^{(lj)}\Big)\otimes [t_{ij},t_{il}]=0
\end{align*}
where to go from the third to the fourth line we used the relation $[t_{ij},t_{il}+t_{lj}]=0$ twice. The last expression vanishes by Arnold's relation. Moreover, notice that for $i,j,k$ all distinct, $[t_{ij},x_l^{(k)}]=0$ for all $l$. Thus, for each $l$ fixed, we find
\[
\sum\limits_{i < j,k}  w^{(ij)}a_l^{(k)} \otimes [t_{ij},x_l^{(k)}]=\sum\limits_{i<j=k} w^{(ij)}a_l^{(j)}\otimes [t_{ij},x_l^{(j)}]+ \sum\limits_{i=k<j} w^{(ij)}a_l^{(i)}\otimes [t_{ij},x_l^{(i)}]=0
\]
since $[t_{ij},x_l^{(j)}]=-[t_{ij},x_l^{(i)}]=0$ and $w^{(ij)}a_l^{(j)}=w^{(ij)}a_l^{(i)}$. Next, consider
\begin{align*}
&-\sum\limits_{i<j,l}  (a^{(i)}_l b^{(j)}_l - b^{(i)}_l a^{(j)}_l) \otimes t_{ij}+\frac12 \sum\limits_{i\neq j, l} 2 a^{(i)}_l b^{(j)}_l \otimes [x_l^{(i)},y_l^{(j)}]\\
=& -\sum\limits_{i<j,l}  a^{(i)}_l b^{(j)}_l \otimes t_{ij}+ \sum\limits_{i<j,l}  b^{(i)}_l a^{(j)}_l \otimes t_{ij}+\sum\limits_{i< j, l}   a^{(i)}_l b^{(j)}_l \otimes [x_l^{(i)},y_l^{(j)}]+\sum\limits_{j < i , l}  a^{(i)}_l b^{(j)}_l \otimes  [x_l^{(i)},y_l^{(j)}]\\
=& -\sum\limits_{i<j,l}  a^{(i)}_l b^{(j)}_l \otimes t_{ij}+ \sum\limits_{i<j,l}  b^{(i)}_l a^{(j)}_l \otimes t_{ij}+\sum\limits_{i< j, l}  a^{(i)}_l b^{(j)}_l  \otimes  t_{ij}-\sum\limits_{j < i , l}  b^{(j)}_l a^{(i)}_l \otimes  t_{ij}=0.
\end{align*}
For the remaining terms, notice first that
\[
\sum\limits_{1\leq i<j\leq r}(\nu^{(i)}+\nu^{(j)})   \otimes t_{ij}=\sum\limits_{i=1}^r\sum\limits_{j\neq i}  \nu^{(i)}  \otimes t_{ij}.
\]
We then find
\begin{align*}
&\sum\limits_{1\leq i<j\leq r} (\nu^{(i)}+\nu^{(j)}) \otimes t_{ij}+ \sum\limits_{i=1}^r  \ (2-2g)\nu^{(i)}  \otimes t_{ii}+\frac12 \sum\limits_{i,l} 2  a_l^{(i)}b_l^{(i)} \otimes [x_l^{(i)},y_l^{(i)}]\\
=&\sum\limits_{i=1}^r\left( \sum\limits_{j\neq i}  \nu^{(i)} \otimes t_{ij}+(2-2g)\nu^{(i)} \otimes  t_{ii}+\sum\limits_{l=1}^g  a^{(i)}_l b^{(i)}_l \otimes [x_l^{(i)},y_l^{(i)}]\right)\\
=&\sum\limits_{i=1}^r\nu^{(i)}\otimes  \left(\sum\limits_{j\neq i} t_{ij}+(2-2g) t_{ii} +\sum\limits_{l=1}^g [x_l^{(i)},y_l^{(i)}]\right)=0
\end{align*}
by identifying $a_l^{(i)}b_l^{(i)}=\nu^{(i)}$ in $\Mog(r)$ and the defining relation in $\fraktg(r)$.
\end{proof}

\begin{lemma}
The collection of morphisms $\{\Xi(r)\}_{r\geq 1}$ assembles to form a morphism of right cooperadic comodules $\Xi:C(\fraktg)\rightarrow \Mog$.
\end{lemma}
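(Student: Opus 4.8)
The plan is to establish the only missing piece of the comodule morphism axioms: compatibility of the family $\{\Xi(r)\}$ with the cooperadic cocompositions. Indeed, each $\Xi(r)$ is already a morphism of dg commutative algebras (Lemma \ref{lemma:MCtMO)}), and $S_r$-equivariance is immediate from the symmetric nature of the construction of $\xi(r)$, so it remains to intertwine the cocomposition on $C(\fraktg)$ (over the cooperad $C(\frakt_\bv)$) with that on $\Mog$ (over $\bv^c$), via the quasi-isomorphism $\psi\colon C(\frakt_\bv)\xrightarrow{\sim}\bv^c$ of \cite{Severa10}. Writing $\widetilde U=(U\setminus\{u\})\sqcup W$, and recalling that the left cocomposition is the Chevalley-Eilenberg dual $C(\circ_u)$ of the operadic module structure map $\circ_u\colon\fraktg(U)\oplus\frakt_\bv(W)\to\fraktg(\widetilde U)$ (using $C(\mathfrak a\oplus\mathfrak b)\cong C(\mathfrak a)\otimes C(\mathfrak b)$), I want to prove that the square
\begin{center}
 \begin{tikzcd}[row sep=large, column sep=huge]
  C(\fraktg)(\widetilde U) \arrow[r,"\Xi(\widetilde U)"]\arrow[d,"C(\circ_u)"] & \Mog(\widetilde U) \arrow[d,"\circ_u^*"] \\
  C(\fraktg)(U)\otimes C(\frakt_\bv)(W) \arrow[r,"\Xi(U)\otimes \psi(W)"] & \Mog(U)\otimes \bv^c(W)
 \end{tikzcd}
\end{center}
commutes.

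The key reduction is that all four maps in the square are morphisms of graded commutative algebras: the two cocompositions by the (co)operad/comodule axioms, and $\Xi$, $\psi$ by construction. Since the source $C(\fraktg)(\widetilde U)=S(\fraktg^*(\widetilde U)[-1])$ is the free graded commutative algebra on $\fraktg^*(\widetilde U)[-1]$, the two composites coincide as soon as they agree on this generating space. Moreover $\Xi(\widetilde U)$ acts on a generator $s^{-1}\theta^*$ by restricting the functional $\theta^*$ to the generating subspace $E(\widetilde U)$, so it annihilates the duals of all bracket elements; only the duals $w^{(pq)},a_l^{(p)},b_l^{(p)}$ of the Lie generators $t_{pq},x_l^{(p)},y_l^{(p)}$ can contribute. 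Thus the whole statement reduces to a finite case check on these three families of generators, with the indices distributed between $U\setminus\{u\}$ and $W$.

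For the check itself, $C(\circ_u)$ acts on a generator by precomposition, $s^{-1}\theta^*\mapsto s^{-1}(\theta^*\circ\circ_u)$, whose linear part is read off directly from the explicit formulas for $\circ_u$ on the generators of $\fraktg$: one records which generators of $\fraktg(U)\oplus\frakt_\bv(W)$ are sent to $\theta$. Dualizing formulas such as $\circ_u(t_{i'j'},-)=\sum_{w\in W}t_{i'w}$ and $\circ_u(x_l^{(u)},-)=\sum_{w\in W}x_l^{(w)}$, and then applying $\psi$, for which $\psi(s^{-1}(t_{ij})^*)=w^{(ij)}$ on generators, one recovers term by term exactly the formulas defining $\circ_u^*$ on the generators of $\Mog$ given in Section \ref{sec:Mog}. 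For example, for $p,q\in W$ the generator $t_{pq}\in\frakt_\bv(W)$ contributes $1\otimes w^{(pq)}$, while $\circ_u(t_{uu},-)=\sum_{w}t_{ww}+\tfrac12\sum_{v\neq w}t_{vw}$ contributes, through $t_{vw}=t_{wv}$, the term $w^{(uu)}\otimes 1$, matching $\circ_u^*(w^{(pq)})=w^{(uu)}\otimes 1+1\otimes w^{(pq)}$.

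The hard part will not be conceptual but bookkeeping: the self-pairing generators $t_{uu}$, resp. $w^{(uu)}$, are the delicate point, since the factor $\tfrac12\sum_{v\neq w\in W}t_{vw}$ together with the symmetry $t_{ij}=t_{ji}$ must be tracked carefully to reproduce precisely the $w^{(uu)}\otimes 1$ contributions with coefficient one and to confirm the absence of $w^{(uu)}$ terms where none are predicted. Once the generator-wise dictionary is matched in all cases for $t_{ij}$, $t_{ii}$, $x_l^{(i)}$ and $y_l^{(i)}$, the equivariance and associativity relations satisfied by the operadic module maps $\circ_u$ (and dually by the $\circ_u^*$) ensure that the local compatibilities assemble into the claim that $\Xi=\{\Xi(r)\}_{r\geq 1}$ is a morphism of right cooperadic comodules over $(\psi\colon C(\frakt_\bv)\to\bv^c)$.
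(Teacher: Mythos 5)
Your proposal is correct and follows essentially the same route as the paper's proof: since all four maps in the square are morphisms of graded commutative algebras, reduce to the generating space $\fraktg^*[-1]$, discard the duals of non-generator elements (on which $\xi$ vanishes), and match the cocomposition of $s^{-1}(t_{ij})^*$, $s^{-1}(x_l^{(i)})^*$, $s^{-1}(y_l^{(i)})^*$ against the defining formulas of $\circ_u^*$ on the generators $w^{(ij)}$, $a_l^{(i)}$, $b_l^{(i)}$ of $\Mog$. The only presentational difference is that the paper makes your ``duals of bracket elements die'' step precise by introducing a weight grading ($|x_l^{(i)}|=|y_l^{(i)}|=1$, $|t_{ij}|=|t_{ii}|=2$) compatible with the $\frakt_\bv$-module structure, so that both composites manifestly vanish on duals of weight $>2$; your device of reading off the linear part of $\theta^*\circ\circ_u$ from the explicit formulas for $\circ_u$ accomplishes the same reduction.
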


\begin{proof}
We need to check that diagrams of the form
\begin{center}
 \begin{tikzcd}
  C(\fraktg)(U\setminus \{u\} \sqcup W) \arrow[r]\arrow[d,"\circ_u^*"] & \Mog(U\setminus \{u\} \sqcup W) \arrow[d,"\circ_u^*"] \\
   C(\fraktg)(U)\otimes C(\frakt_{\bv})(W) \arrow[r] & \Mog(U)\otimes \bv^c(W).
 \end{tikzcd}
 \end{center}
commute.
For any $r\geq 1$, we may equip $\fraktg(r)$ (and similarly on $\frakt_{\bv}(r)$) with the weight grading defined by $|x_l^{(i)}|=|y_l^{(i)}|=1$, $|t_{ij}|=|t_{ii}|=2$. The grading is compatible with the Lie algebra structure by construction, and moreover also with the $\frakt_{\bv}$-module structure. It induces a weight grading on the dual Lie coalgebras $\fraktg(r)^*$ and $\frakt_{\bv}(r)^*$ by setting $|\alpha^*|:=|\alpha|$ for any element of the dual basis. This enables us to define a weight grading on $C(\fraktg)$ and $C(\frakt_{\bv})$ by putting $|v_1\cdots v_n|:=|v_1|+\dots+|v_n|$. Since both $\Xi$ and $\circ_u^*$ are morphisms of commutative algebras (for any fixed arity), it is enough to check the claim on $\fraktg^*[-1]$. Moreover, since the morphism $\xi:\fraktg^*\rightarrow \Mog$ maps elements of weight greater than two to zero, and the cooperadic comodule structure is compatible with the weight grading, it suffices to check the statement for $s^{-1} (t_{ij})^*$, $s^{-1}(x_l^{(i)})^*$, $s^{-1}(y_l^{(i)})^*$. For these elements, however, the cooperadic comodule structure is precisely given by the analogous formulas we found for the generators $w^{(ij)}$, $a_l^{(i)}$, $b_l^{(i)}$ of $\Mog$ (see Section \ref{sec:Mog}), yielding the statement.
\end{proof}

\begin{prop}\label{prop:Ctmodel}
The morphism $\Xi:C(\fraktg)\rightarrow \Mog$ is a quasi-isomorphism of right cooperadic dg Hopf comodules.
\end{prop}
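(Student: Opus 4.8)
The previous two lemmas show that $\Xi$ is a morphism of right cooperadic dg Hopf comodules, so it remains only to prove that each component $\Xi(r)\colon C(\fraktg(r))\to \Mog(r)$ is a quasi-isomorphism of dg commutative algebras; being a quasi-isomorphism is checked arity-wise, so I would fix $r$ throughout. First I would record that $\Xi(r)$ is strictly compatible with the weight grading of the preceding lemma ($|x_l^{(i)}|=|y_l^{(i)}|=1$, $|t_{ij}|=|t_{ii}|=2$, and dually on $\Mog(r)$), since $\xi(r)$ is weight-preserving and both $d_{\CE}$ and $d_{\Mog}$ are weight-homogeneous. This lets the comparison be run weight-piece by weight-piece, and, more usefully, it licenses auxiliary filtrations whose associated graded complexes decouple the genuinely quadratic part of $\fraktg(r)$ from the inhomogeneous data encoded in $d_{\Mog}$.

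The heart of the argument is a Koszulness statement: the surface configuration Lie algebra $\fraktg(r)$ is Koszul, and its Chevalley--Eilenberg cohomology is computed by the corresponding quadratic-dual commutative algebra, which I claim is exactly $\Mog(r)$. For the unframed genus-$g$ part this is Bezrukavnikov's theorem \cite{Bezru94}; the new ingredient here is only the framing, i.e.\ the central generators $t_{ii}$ (dual to $w^{(ii)}$) together with the classes $\nu^{(i)}$ and the relation $a_l^{(i)}b_l^{(i)}=\nu^{(i)}$. To accommodate the framing and, simultaneously, the quadratic-\emph{linear} character of the presentation (the relation $\sum_l[x_l^{(i)},y_l^{(i)}]=-\sum_{j\ne i}t_{ij}-(2-2g)t_{ii}$ is what produces the non-trivial differential on $\Mog(r)$), I would invoke the inhomogeneous Koszul duality already used for $\bv$ in Section \ref{sec:BV} (\cite{HomBV12}, \cite{LodayVallette12}): filter by the number of factors $\nu^{(i)}$, equivalently by the dual-numbers degree of the framing factor $\K[\Delta]/(\Delta^2)$, so that on the associated graded $\fraktg(r)$ becomes a genuinely quadratic (Koszul) Lie algebra and $d_{\Mog}$ retains only its weight-homogeneous quadratic part.

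On the associated graded the comparison then splits, via the symmetric-sequence identification $\Mog\cong \Com^c\circ H\circ \Lie^c\{-1\}\circ \K[\Delta]/(\Delta^2)$, into the unframed Bezrukavnikov quasi-isomorphism tensored with the Koszul complex of the framing factor, whose acyclicity is an elementary computation of the same flavour as Lemmas \ref{lemma:kappa} and \ref{lemma:ComHLie}. Since every graded component of $C(\fraktg(r))$ and $\Mog(r)$ is finite-dimensional, all filtrations are exhaustive and bounded in each fixed arity and weight, the associated spectral sequences converge, and an $E_1$-isomorphism induced by $\Xi(r)$ forces $\Xi(r)$ itself to be a quasi-isomorphism. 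The main obstacle is precisely this Koszulness input: one must know not merely that the two complexes have abstractly isomorphic cohomology, but that the explicit map $\Xi(r)$ --- sending $s^{-1}(t_{ij})^{*},\,s^{-1}(x_l^{(i)})^{*},\,s^{-1}(y_l^{(i)})^{*}$ to $w^{(ij)},a_l^{(i)},b_l^{(i)}$ --- realizes the canonical quadratic-duality isomorphism, and that $\Mog(r)$ with its given differential is genuinely the (inhomogeneous) Koszul dual. Verifying this compatibility, in the framed setting, is where the real work lies.
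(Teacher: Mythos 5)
Your overall route is the paper's route: split off the framing by a filtration, reduce the associated graded to Bezrukavnikov's explicit quasi-isomorphism for the quotient Lie algebra, and conclude by spectral sequence comparison (the paper also relies on \cite{Bezru94}, Section 4, Corollary 2, for the explicit map, so your closing worry about ``realizing the canonical duality'' is exactly what that citation settles --- no new Koszulness statement for $\fraktg(r)$ needs to be proved, and no inhomogeneous Koszul duality in the style of \cite{HomBV12} is invoked). However, two of your concrete steps fail as written. First, your filtration is ill-defined: counting ``the number of factors $\nu^{(i)}$'' is not an invariant of an element of $\Mog(r)$, because $a_l^{(i)}b_l^{(i)}=\nu^{(i)}$ is a relation, and this count is \emph{not} equivalent to the dual-numbers degree in $\Mog\cong \Com^c\circ H\circ \Lie^c\{-1\}\circ \K[\Delta]/(\Delta^2)$ --- that factor records the tadpole generators $w^{(ii)}$, not $\nu^{(i)}$. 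The filtration that works counts $w^{(ii)}$-factors on the $\Mog$ side and, dually, factors from $\fraksg(r)^*$ (the duals of the central elements $t_{ii}$) on the Chevalley--Eilenberg side; centrality of the $t_{ii}$ is what makes the associated graded split as $C(\fraktg(r)/\fraksg(r))\otimes S(\fraksg(r)^*[-1])$, with differential on the first factor only, matching $B(r)\otimes T(r)$ on the other side.

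Second, and more seriously, the framing factor is \emph{not} acyclic, and your claim that the comparison reduces to ``Bezrukavnikov tensored with an acyclic Koszul complex of the same flavour as Lemmas \ref{lemma:kappa} and \ref{lemma:ComHLie}'' would prove too much: if the framing contribution were acyclic on both sides, then $\Xi(r)$ being a quasi-isomorphism would force $H(C(\fraktg(r)))\cong H(C(\fraktg(r)/\fraksg(r)))$, i.e.\ the framed and unframed models would have the same cohomology, which is false (already for $g=1$, $r=1$, the class $s^{-1}(t_{11})^*$, respectively $w^{(11)}$, is a nontrivial degree-one class absent from the unframed model). What actually happens on the associated graded is that the framing factors carry \emph{zero} differential and are matched by the tautological isomorphism $S(\fraksg(r)^*[-1])\cong T(r)$ (both exterior algebras on $r$ odd generators), so $\gr\,\Xi(r)$ is (Bezrukavnikov's quasi-isomorphism) $\otimes$ (an isomorphism) --- no acyclicity enters, and the Koszul pairings $\Lie^c\{-1\}$--$\Com\{-2\}$ and $\K[\Delta]/(\Delta^2)$--$\K[\delta^*]$ from Lemma \ref{lemma:ComHLie} belong to the analysis of $\Mogg$, a different context. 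With the filtration corrected and the acyclicity claim replaced by this isomorphism, your argument becomes the paper's proof.
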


\begin{proof}
We only need to check the statement about each $\Xi(r)$ being a quasi-isomorphism. It follows from the work of Bezrukavnikov \cite{Bezru94}. To see this, denote by $\fraksg(r)\subset \fraktg(r)$ the Lie ideal spanned by all $t_{ii}$'s. Additionally, consider the short exact sequence of Lie algebras
\[
0\rightarrow \fraksg(r)\rightarrow \fraktg(r)\rightarrow \fraktg(r)/\fraksg(r)\rightarrow 0.
\]
Recall that the Lie algebra Bezrukavnikov (\cite{Bezru94}, Main Theorem) considers is isomorphic to the quotient $\fraktg(r)/\fraksg(r)$. Denote by $\pi:\fraktg(r)^*\twoheadrightarrow \fraksg (r)^*$ the restriction map which is dual to the inclusion. Define the following filtration on $S(\fraktg^*(r)[-1])$,
\[
\mF^p S(\fraktg^*(r)[-1])=\{v_1 \cdots v_n \ | \ \# \text{ of } v_i  \text{ for which } \pi(v_i)\neq 0 \leq p\}.
\]
This defines a complete, bounded below ascending filtration on $S(\fraktg^*(r)[-1])$. Moreover, notice that for any $c\in \fraktg^*[-1]=S^1(\fraktg^*(r)[-1])$, $d_{\CE}(c)=\sum c'c''$ satisfies $\pi(c')=\pi(c'')=0$, since $\fraksg(r)$ is central. In particular, this implies that on the level of the associated graded
\[
\gr S(\fraktg^*(r)[-1])=\bigoplus\limits_{p\geq 0} \mF^{p+1} S(\fraktg^*(r)[-1])/\mF^{p} S(\fraktg^*(r)[-1])
\]
the induced differential $d_0$ acts on homogeneous elements as 
\[
d_0(v_1\cdots v_n)=\sum\limits_{i :  \pi(v_i)=0} \pm v_1\cdots d_{\CE} (v_i) \cdots v_n
\]
since applying it to a $v_i$ for which $\pi(v_i)\neq 0$ decreases the number defining the filtration above by one. In particular, this implies
\[
(\gr C(\fraktg(r)),d_0)\cong (C(\fraktg(r)/\fraksg(r))\otimes S(\fraksg(r)^*[-1]),d_{\CE}\otimes 1).
\]
On homogeneous elements the map is given by
\[
v_1\cdots v_n\mapsto \pm\prod\limits_{i:\pi(v_i)=0} v_i \otimes \prod\limits_{j:\pi(v_j)\neq 0} v_j.
\]
On the other hand, let $T(r)\subset \Mog(r)$ be the graded subalgebra generated by all $w^{(ii)}$. It is not a subcomplex. Furthermore, denote by $B(r)\subset \Mog(r)$ the dg commutative subalgebra generated by $\{a^{(i)}_l,b^{(i)}_l,w^{(ij)}\}_{l,i\neq j}$ subject to the same relations as in $\Mog(r)$. Recall that $B(r)$ is precisely the dgca considered by Bezrukavnikov \cite{Bezru94}. Filter $\Mog(r)$ by
\[
\mF^p\Mog(r)=\{c_1\dots c_n \ | \ \# \text{ of } c_i\in T(r) \leq p\}.
\]
This defines a complete, bounded below ascending filtration. On the level of the associated graded, the differential acts on homogeneous elements as
\[
d_0(c_1\cdots c_n)=\sum\limits_{i:c_i\notin T(r)} \pm c_1\cdots d (c_i) \cdots c_n.
\]
This implies in particular that
\[
(\gr \Mog,d_0)\cong (B(r) \otimes T(r),d\otimes 1)
\]
where on homogeneous elements the morphism is given by
\[
c_1\cdots c_n\mapsto \pm\prod\limits_{i:c_i\notin T(r)} c_i \otimes \prod\limits_{j:c_j\in T(r)} c_j.
\]
The morphism $\Xi(r)$ is compatible with the respective filtrations, and moreover $\gr \Xi(r)$ induces a map between the respective identifications, i.e.
\[
C(\fraktg(r)/\fraksg(r))\otimes S(\fraksg(r)^*[-1])\rightarrow B(r)\otimes T(r).
\]
Since $S(\fraksg(r)^*[-1])\cong T(r)$, the result now follows by the fact that $\gr \Xi(r)$ describes precisely the quasi-isomorphism 
\[
C(\fraktg(r)/\fraksg(r))\xrightarrow\simeq B(r)
\]
appearing in Bezrukavnikov's work (\cite{Bezru94}, Section 4, Corollary 2).
\end{proof}

\begin{rem}
Proposition \ref{prop:Ctmodel} implies that the pair $(C(\fraktg),C(\frakt_\bv))$ defines a real model for the framed configuration spaces of points on surfaces.
\end{rem}

\subsubsection{The non-framed case for $g=1$}\label{sec:nonfrt}

In the non-framed case for $g=1$, we denote by $\fraktone(n)$ the Lie algebra with generators of degree zero,
\begin{equation*}
x^{(i)},\ y^{(i)}, \ t_{ij} \text{ for } 1\leq i\neq j\leq n
\end{equation*}
subject to the relations,
\begin{align*}
&t_{ij}=t_{ji}, \ [t_{ij},t_{kl}]=0 \text{ for } i,j,k,l \text{ all distinct},\  [t_{ik}+t_{kj},t_{ij}]=0 \text{ for } i,j,k \text{ all distinct.}\\
&[x^{(i)},y^{(j)}]=t_{ij} \ (i\neq j) , \ [x^{(i)},y^{(i)}]=-\sum\limits_{j\neq i}^{n} t_{ij}.
\end{align*}
Notice that for the quotient Lie algebra $\fraktg(n)/\fraksg(n)$ appearing in the proof above is isomorphic to $\fraktone(n)$ for $g=1$. This is the genus one case of the Lie algebra introduced by Bezrukavikov \cite{Bezru94}. The collection of Lie algebras $\fraktone=\{\fraktone(n)\}_{n\geq 1}$ assembles to form an operadic module over the operad of Drinfeld-Kohno Lie algebras $\frakt$. The Chevalley-Eilenberg complex $C(\frakt)$ forms a dg Hopf cooperad and $C(\fraktone)$ a right $C(\frakt)$-cooperadic dg Hopf comodule. By similar arguments as in the section above, there is a quasi-isomorphism of cooperadic dg Hopf comodules (also called $\Xi$, by abuse of notation)
\[
\Xi:C(\fraktone)\rightarrow \Mo
\]
compatible with the quasi-isomorphism of dg Hopf cooperads
\[
C(\frakt)\rightarrow \e_2^c.
\]

\subsection{Deformation complexes revisited}

The zig-zag of quasi-isomorphisms
\[
\pdBVGraphsg\xrightarrow \Phi \Mog\xleftarrow \Xi C(\fraktg)
\]
induces quasi-isomorphisms
\begin{equation}\label{eq:zigzagHarr}
\Harr \pdBVGraphsg \xrightarrow {\Harr \Phi} \Harr \Mog \xleftarrow {\Harr \Xi} \Harr C(\fraktg).
\end{equation}

In turn, they induce a zig-zag of dg vector spaces
\begin{equation}\label{eq:zigzagDef}
\Def_\tk(\pdBVGraphsg \xrightarrow \Phi \Mog)\leftarrow \Def_\tk(\Mog\xrightarrow \id \Mog) \rightarrow  \Def_\tk(C(\fraktg)\xrightarrow \Xi\Mog)
\end{equation}
given by $f\mapsto f\circ \Harr (\Phi)$ and $f\mapsto f\circ \Harr(\Xi)$, respectively.

\begin{lemma}\label{lemma:zigzagDef}
The morphisms in the zig-zag \eqref{eq:zigzagDef} are quasi-isomorphisms.
\end{lemma}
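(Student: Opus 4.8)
The plan is to observe that both arrows of the zig-zag are induced by the Harrison maps $\Harr\Phi$ and $\Harr\Xi$, and to reduce the statement to the fact that these are quasi-isomorphisms, transported through the diagrammatic identification of Section \ref{section:diagdescr}. Concretely, under the identifications $\Def_\tk(\pdBVGraphsg\xrightarrow\Phi\Mog)\cong\Mogg\hotimes_S\Harr\BVGraphsg$, $\Def_\tk(\Mog\xrightarrow\id\Mog)\cong\Mogg\hotimes_S\Harr\Mog^c$ and $\Def_\tk(C(\fraktg)\xrightarrow\Xi\Mog)\cong\Mogg\hotimes_S\Harr C(\fraktg)^c$ (all legitimate by the finiteness hypotheses of Proposition \ref{prop:filteredmod} and the identity $\Harr(\op A^c)=(\Harr\op A)^c$), the two maps become $\id_{\Mogg}\hotimes(\Harr\Phi)^c$ and $\id_{\Mogg}\hotimes(\Harr\Xi)^c$. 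Thus it suffices to prove that tensoring $\id_{\Mogg}$ with these dualized Harrison maps yields a quasi-isomorphism for the full twisted differential $D_\tk=d+\partial'_\mathsf{op}+\partial'_\mathsf{mod}$.

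First I would record that $\Harr\Phi$ and $\Harr\Xi$ are quasi-isomorphisms. Since $\Phi\colon\pdBVGraphsg(r)\to\Mog(r)$ (Remark following Remark \ref{rem:Mog}) and $\Xi\colon C(\fraktg)(r)\to\Mog(r)$ (Proposition \ref{prop:Ctmodel}) are arity-wise quasi-isomorphisms of augmented dg commutative algebras, and the Harrison complex is a homotopy functor on augmented dg commutative algebras over a field of characteristic zero, the induced maps $\Harr\Phi$ and $\Harr\Xi$ are quasi-isomorphisms; this is seen by filtering $\Harr(-)=\mathbb L^c(I(-)[1])$ by the weight (number of cogenerators), whose associated graded is the cofree Lie coalgebra applied arity-wise to the underlying complexes, and invoking that Schur functors preserve quasi-isomorphisms in characteristic zero. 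Dualizing and using the finite-type condition on the subquotients, the maps $(\Harr\Phi)^c$ and $(\Harr\Xi)^c$ are again quasi-isomorphisms.

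Next I would compare the twisted complexes by a spectral sequence argument. I filter all three sides by the cohomological degree of the $\Mogg$-tensor factor (refining, if needed for convergence, the complete arity filtration $\prod_{N\geq p}$ used in the proof of Proposition \ref{prop:koszuldef}); the two zig-zag maps act only on the $\Harr$-factor and hence preserve this filtration. The point of the bookkeeping is that every term of $D_\tk$ other than the source differential strictly raises the $\Mogg$-degree: by the explicit formulas of Section \ref{section:diagdescr}, $\partial'_\mathsf{op}$ inserts either $s^{-2}\mu$, $s^{-2}\lambda$ or $\delta^*$ into the $\Mogg$ part (each of positive degree), $\partial'_\mathsf{mod}$ wedges in an element $n_a=\Phi(m_a)$ of positive degree while bracketing in the Harrison factor, and $d_{\Mogg}$ is the internal differential raising the $\Mogg$-degree by one; only $1\otimes(d_\Harr+d_{\mathrm{source}})$ leaves the $\Mogg$ factor untouched. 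Consequently the associated graded differential is exactly the tensor-product differential $1\otimes(d_\Harr+d_{\mathrm{source}})$, on which the zig-zag maps restrict to $\id_{\Mogg}\hotimes(\Harr\Phi)^c$ and $\id_{\Mogg}\hotimes(\Harr\Xi)^c$. A K\"unneth argument (characteristic zero, completed tensor product, finite type) together with the previous paragraph shows these are quasi-isomorphisms on the $E_0$-page, and the comparison theorem for complete filtrations upgrades this to the claimed quasi-isomorphisms of the $\Def_\tk$-complexes.

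The main obstacle I expect is the degree bookkeeping in the third step: one must verify scrupulously that, after the identification of Section \ref{section:diagdescr}, each of $\partial'_\mathsf{op}$, $\partial'_\mathsf{mod}$ and $d_{\Mogg}$ strictly increases the chosen $\Mogg$-filtration while $d_\Harr+d_{\mathrm{source}}$ preserves it, and that the filtration is complete and exhaustive so that the spectral sequence converges; the homotopy-functoriality of the Harrison complex in the second step is standard, but must be combined correctly with the dualization permitted by the finiteness hypotheses.
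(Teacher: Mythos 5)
Your proposal is correct and follows essentially the same route as the paper: identify the $\Def_\tk$-complexes with $\Mogg\hotimes_S\Harr(-)^c$, note the zig-zag maps act only on the Harrison factor, filter by arity and then by the $\Mogg$-degree (using that $\Mogg$ is non-negatively graded) so that $\partial'_\mathsf{op}$, $\partial'_\mathsf{mod}$ and $d_{\Mogg}$ vanish on the associated graded, and conclude from the Harrison quasi-isomorphisms \eqref{eq:zigzagHarr}. The only difference is cosmetic: you additionally spell out, via the weight filtration on $\mathbb{L}^c$, why $\Harr\Phi$ and $\Harr\Xi$ are quasi-isomorphisms, which the paper simply asserts when stating \eqref{eq:zigzagHarr}.
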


\begin{proof}
Identify
\begin{align*}
\Def_\tk(\pdBVGraphsg \xrightarrow \Phi \Mog)&\cong \Mogg \hotimes_S \Harr \BVGraphsg\\
\Def_\tk(\Mog \xrightarrow \id \Mog)&\cong \Mogg \hotimes_S\Harr \Mog^*\\
\Def_\tk(C(\fraktg) \xrightarrow \Xi \Mog)&\cong \Mogg \hotimes_S\Harr C^c(\fraktg) \\
\end{align*}
and equip first all complexes with the complete descending filtration defined by the arity. Moreover, filter the associated graded by 
\[
\text{ degree of } \Mogg \text{ part } \geq p.
\]
Since $\Mogg$ is non-negatively graded, these define bounded descending filtrations on the respective complexes in each arity. On the level of the associated graded complexes, we only see the part of the differential $1\otimes (d_{\Harr}+ d_{\BVGraphsg})$, $1\otimes (d_{\Harr}+ d_{\Mog})$ and $1\otimes (d_{\Harr}+ d_{\CE})$, respectively. The quasi-isomorphisms \eqref{eq:zigzagHarr} between the Harrison complexes now imply the statement of the lemma.
\end{proof}

\subsubsection{The non-framed case for $g=1$}
The results of the section above carry over word for word to the non-framed case for $g=1$. As a result, we find that the deformation complexes
\[
\Def_\tk(\pdGraphsone \xrightarrow \Phi \Mo)\leftarrow \Def_\tk(\Mo\xrightarrow \id \Mo) \rightarrow  \Def_\tk(C(\fraktone)\xrightarrow \Xi\Mo)
\]
are all related by quasi-isomorphisms.

\section{Higher genus Grothendieck-Teichm\"uller Lie algebras}\label{sec:grtg}

In \cite{Enriquez14}, Enriquez defines the elliptic Grothendieck-Teichm\"uller Lie algebra $\grt_1^{ell}$ by extending Drinfeld's construction of the Grothendieck-Teichm\"uller Lie algebra $\grt_1$ \cite{Drinfeld90}. Recall that $\grt_1$ may be viewed as a linear subspace of the completed free Lie algebra in two variables, equipped with a different Lie bracket. In the elliptic case, $\grt_1^{ell}$ decomposes into a semi-direct product
\[
\grt_1^{ell}=\grt_1\ltimes \frakr^{ell}
\]
where $\frakr^{ell}=\sll_2\ltimes \frakr_{ell}$, and $\frakr_{ell}$ is the kernel of a natural projection $\grt_1^{ell}\rightarrow \grt_1$. Explicitly, it is given by
\begin{equation}\label{eq:defrell}
\arraycolsep=0.75pt\def\arraystretch{1.5}
\frakr_{ell}=\left\lbrace (U_a,U_b) \in \fraktone (2)^{\times 2} \ \Bigg \vert \ \begin{array}{l}
U_a^{1,23}+U_a^{2,13}+U_a^{3,12}=0 \\
U_b^{1,23}+U_b^{2,13}+U_b^{3,12}=0 \\
\noindent [U_a^{1,23},y^{(2)}]-[U_b^{2,13},x^{(1)}]=0 \\
\noindent [U_a^{1,23},x^{(2)}]-[U_a^{2,13},x^{(1)}]=0  \\
\noindent [U_b^{1,23},y^{(2)}]-[U_b^{2,13},y^{(1)}]=0  \\
\noindent [U_a^{1,23},y^{(1)}]-[U_b^{1,23},x^{(1)}]=0 
\end{array} \right\rbrace.
\end{equation}
While Enriquez' work generalizes Drinfeld's approach by constructing a genus one analogue of a theory of braided monoidal categories in order to define the elliptic versions of the Grothendieck-Teichm\"uller groups and Drinfeld associators - and ultimately identifying $\grt_1^{ell}$ as the graded Lie algebra of the appropriate Grothendieck-Teichm\"uller group - our work is mostly inspired by the following analogy. In \cite{Willwacher15}, Willwacher established (among others) two important results relating the Grothendieck-Teichm\"uller Lie algebra $\grt_1$, Kontsevich's graph dg Lie algebra $\GC$ and homotopy derivations of the Gerstenhaber operad $e_2$. They read as follows.
\begin{thm}(\cite{Willwacher15})\label{thm:Willwachermain}
There are isomorphisms of Lie algebras
\[
H^0(\GC)\cong \grt_1 \ \text{ and } \ H^0(\Der^h(e_2))\cong  \K \ltimes \grt_1.
\]
\end{thm}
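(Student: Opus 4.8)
The plan is to deduce both isomorphisms from a single computation of the degree-zero cohomology of the operadic deformation complex of $e_2$, and to reduce the second statement to the first. The first step is to replace the homotopy derivations $\Der^h(e_2)$ by an explicit graph model. Using Kontsevich's formality (the zig-zag of Proposition \ref{prop:formality}, $\e_2^c\leftarrow \pdGraphs\rightarrow \OmPA(\FM_2)$) one has $e_2\simeq \Graphs_2$ as operads, so that $\Der^h(e_2)$ is quasi-isomorphic to the complex of operadic derivations of $\Graphs_2$, equivalently to the deformation complex $\Def(\mathsf{hoe}_2\to e_2)$ of the bar-cobar resolution. A derivation is determined by its values on generators, and the resulting complex is the full graph complex $\fGC_2$ with low-valence vertices allowed. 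Analyzing the low-valence part shows $H(\Der^h(e_2))\cong H(\GC)\oplus(\text{loop classes})$, where in degree zero the only extra class is one-dimensional, spanned by the Euler derivation that uniformly rescales the Gerstenhaber bracket and product. This rescaling acts on $\GC$ by the graph grading, so the splitting is a semidirect product and yields $H^0(\Der^h(e_2))\cong \K\ltimes H^0(\GC)$. Thus the theorem reduces to the single assertion $H^0(\GC)\cong\grt_1$.

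Next I would construct the comparison map $\grt_1\to H^0(\GC)$. By Tamarkin's theorem a choice of Drinfeld associator equips the rational operad $e_2$ with an action of the group $\GRT_1=\exp(\grt_1)$; differentiating, $\grt_1$ acts by homotopy derivations, producing a Lie algebra morphism $\grt_1\to H^0(\Der^h(e_2))$ whose image lands, after the splitting above, in $H^0(\GC)$. Concretely, an element of $\grt_1\subset \widehat{\mathfrak{lie}}_2$ deforms the bracket relations of $e_2$, and transporting this deformation through the formality zig-zag expresses it as a linear combination of graphs, producing the cocycle in $\GC$. That this is a morphism of Lie algebras follows from compatibility of the $\GRT_1$-action with operadic composition.

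For injectivity one uses that the $\grt_1$-action on $e_2$ is faithful: a nonzero element of $\grt_1$ has a nonvanishing leading Lie-word term, and the lowest-order part of the comparison map is essentially the inclusion of $\grt_1$ into the Chevalley--Eilenberg/Harrison cohomology of the free Lie algebra, which detects precisely this bracketing, hence survives to a nonzero graph cocycle. Surjectivity is the crux. I would either filter $\GC$ by loop order (the first Betti number of the graphs) and estimate the associated graded degree-wise against $\grt_1$, or — more robustly — invoke Tamarkin's rigidity statement that the space of formality morphisms $\e_2^c\leftarrow\cdots\rightarrow \OmPA(\FM_2)$ is a torsor under $\GRT_1$ up to the overall rescaling $\K^\times$. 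The latter identifies the homotopy automorphism group of $e_2$ with $\K^\times\ltimes\GRT_1$, whose tangent Lie algebra is $H^0(\Der^h(e_2))$; combined with $H^0(\Der^h(e_2))\cong\K\ltimes H^0(\GC)$ from the first step, this forces $H^0(\GC)\cong\grt_1$ and simultaneously delivers the second isomorphism.

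The main obstacle is surjectivity, that is, showing $H^0(\GC)$ is no larger than $\grt_1$. Either route needs substantial input: the filtration argument demands a precise vanishing computation on the associated graded, controlling the contributions that survive into the ``hairy'' sector (compare Remark \ref{rmk:GCHGCzero}), while the torsor route rests on Tamarkin's deep existence-and-uniqueness theorem for formality morphisms. I expect the cleanest argument to combine the two: pass through the hairy graph complex $\HGC$, which by the Fresse--Willwacher comparison controls the deformation complex up to a single class (cf. Remark \ref{rmk:HGCFW}), and thereby reduce the upper bound on $H^0(\GC)$ to an explicit identification with the solution space of the defining $\frakt$-relations of $\grt_1$, matched term-by-term with the pentagon and hexagon equations.
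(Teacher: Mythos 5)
This theorem is not proved in the paper at all: it is quoted from \cite{Willwacher15}, so your sketch has to be measured against Willwacher's original argument. Your first step does match his architecture: one identifies the complex of homotopy derivations with a deformation complex of the type $\Def(\mathsf{hoe}_2\to\Graphs_2)$, which is the full graph complex $\fGC_2$ with low-valence vertices; splitting off the low-valence part leaves, in degree zero, exactly one extra class, the Euler/rescaling derivation, whence $H^0(\Der^h(e_2))\cong \K\ltimes H^0(\GC)$, and the second isomorphism correctly reduces to the first. Your injectivity sketch is also in the right spirit (in Willwacher's paper it drops out of the same computation that gives surjectivity, since the composite $\grt_1\to H^0(\GC)\to\grt_1$ is the identity).

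The genuine gap is precisely where you locate the crux, and one of your two proposed escapes would fail. The ``torsor route'' is circular: Drinfeld's theorem makes the set of associators a $\GRT_1$-torsor, and Tamarkin's construction turns an associator into a formality morphism, so together they only produce the action $\GRT_1\to$ homotopy automorphisms of $e_2$, i.e.\ the lower bound $\grt_1\hookrightarrow H^0(\Der^h(e_2))$ you already have. The statement that the homotopy automorphism group is \emph{exactly} $\K^\times\ltimes\GRT_1$ — equivalently that homotopy classes of formality morphisms form a $\GRT_1$-torsor — is a \emph{corollary} of $H^0(\GC)\cong\grt_1$ in \cite{Willwacher15}, not an available input. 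What actually closes the argument, and what the present paper imitates in higher genus, is your second route carried out in full: replace $e_2^c$ by the Chevalley–Eilenberg model $C(\frakt)$ of the Drinfeld–Kohno operad, simplify the deformation complex through a chain of filtrations and Koszul-type replacements (the genus-$g$ analogues being Theorem \ref{thm:defHGC}, the complex $\Def_\tk$, and the auxiliary complex $\Qc$), and then identify degree-zero cocycles concretely with elements $\psi$ of the free Lie algebra whose arity-two and arity-three components solve the linearized hexagon and pentagon equations in $\frakt(2)$ and $\frakt(3)$ — the very pattern of equations that defines $Z_{(g)}$ in \eqref{eq:defZg}. This explicit cohomology computation over $\frakt$, which your proposal leaves as an ``expectation,'' is the entire content of surjectivity and cannot be bypassed: the hairy complex $\HGC$ (Remark \ref{rmk:HGCFW}) only transports the problem to the deformation complex, it does not by itself bound $H^0(\GC)$ from above.
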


Additionally, as already mentioned in Remark \ref{rmk:HGCFW}, later work by Fresse and Willwacher \cite{FW20} establishes the link between $\HGC$, a hairy variant of Kontsevich's graph dg Lie algebra $\GC$, and the operadic deformation complex from Section \ref{sec:defforcoops} by defining a quasi-isomorphism
\[
\HGC[-1]\xrightarrow \sim \Def(\pdGraphs\xrightarrow \phi e_2^c)
\]
while also identifying the deformation complex to be quasi-isomorphic to the complex of homotopy biderivations of the Hopf cooperad $e_2^c$. Furthermore, they give a quasi-isomorphism $\GC \rtimes \K \xrightarrow \sim \HGC$. While Proposition \ref{prop:GCHGC} and Theorem \ref{thm:defHGC} generalize these constructions to the higher genus case, our next aim is to extend the first part of Willwacher's Theorem \ref{thm:Willwachermain} to surfaces. It entails computing $H^0(\spp(H^*)\ltimes \GCg)$. For this, let
\begin{equation}\label{eq:defZg}
\arraycolsep=0.75pt\def\arraystretch{1.5}
Z_{(g)}=\left\lbrace (U_c)_{c\in H^1} \in \fraktg(2)^{\times 2g} \ \Bigg \vert \ \begin{array}{l}
\forall \alpha \in H^1 \ \exists u_\alpha \in \fraktg(1) \ : \ u_\alpha^{12}=U_\alpha^{1,2}+U_\alpha^{2,1} \\
\forall \alpha \in H^1 \ : \ U_\alpha^{12,3}-U_\alpha^{1,23}-U_\alpha^{2,13}=0 \\
 \forall i, j \ : \ [U_{a_i}^{1,23},y_j^{(2)}]-[U_{b_j}^{2,13},x_i^{(1)}]=0   \\
i<  j \ : \ [U_{a_i}^{1,23},x_j^{(2)}]-[U_{a_j}^{2,13},x_i^{(1)}]=0  \\
 i< j \ : \ [U_{b_i}^{1,23},y_j^{(2)}]-[U_{b_j}^{2,13},y_i^{(1)}]=0  \\
 \sum_{i=1}^g [U_{a_i}^{1,23},y_i^{(1)}]-[U_{b_i}^{1,23},x_i^{(1)}]=0 
\end{array} \right\rbrace
\end{equation}
and $B_{(g)}$ be the subspace of $Z_{(g)}$ spanned by
\begin{equation}\label{eq:Bg}
 B_{(g)}=\{(V_c)_{c} \in \fraktg(2)^{\times 2g} \ | \ \exists v\in \fraktg(1)  \ :   \ V_{a_i}=[v^{12},x_i^{(1)}], \ V_{b_i}=[v^{12},y_i^{(1)}]  \ \forall i \}
\end{equation}
\begin{rem}
To see that $B_{(g)}$ is indeed a subspace of $Z_{(g)}$, we compute for instance
\[
V_{a_i}^{12,3}-V_{a_i}^{1,23}-V_{a_i}^{2,13}=[v^{123},x_i^{(1)}+x_i^{(2)}-x_i^{(1)}-x_i^{(2)}]=0
\]
and
\[
V_{a_i}^{1,2}+V_{a_i}^{2,1}=[v^{12},x_i^{(1)}+x_i^{(2)}]=[v,x_i^{(1)}]^{12}.
\]
Moreover, the last four relations in the definition of $Z_{(g)}$ follow by the Jacobi identity and the relations in $\fraktg(3)$.
\end{rem}

Set furthermore 
\[
\frakr_{(g)}:= Z_{(g)}/B_{(g)}.
\]
The main result of this section is the following theorem.

\begin{thm}\label{thm:GCgvsp}
There is an isomorphism of vector spaces
\[
H^0(\spp(H^*)\ltimes \GCg) \cong \frakr_{(g)}.
\]
Moreover, 
\[
H^{-1}(\spp(H^*)\ltimes \GCg)=
\begin{cases}
\spp_{-1}(H^*)=\K [1]\oplus \K[1] & \text{ for } g=1\\
0 & \text{ for } g\geq 2
\end{cases}
\]
and for all $i<-1$, 
\[
H^{i}(\spp(H^*)\ltimes \GCg)=H^{i}(\GCg)=0.
\]
In particular, $H^{i}(\GCg)=0$ for all $i<0$ and $g\geq 1$.
\end{thm}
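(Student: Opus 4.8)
The plan is to assemble all the quasi-isomorphisms of the preceding sections into a single chain that reduces every statement to a cohomology computation for the Koszul deformation complex built from Bezrukavnikov's Lie algebras, and then to carry out that computation explicitly in low degrees. By Proposition \ref{prop:GCHGC} the map $F\colon\spp(H^*)\ltimes\GCg\to\HGCg$ is a quasi-isomorphism in every degree except $3$; combining this with the quasi-isomorphism $\widetilde\Psi\colon\HGCg[-1]\to\Def(\pdBVGraphsg\xrightarrow{\Phi}\Mog)$ of Theorem \ref{thm:defHGC}, the identification $\Def\simeq\Def_\tk$ of Proposition \ref{prop:koszuldef}, and the zig-zag of Lemma \ref{lemma:zigzagDef}, the desuspension gives
\[
H^i(\spp(H^*)\ltimes\GCg)\cong H^{i+1}\big(\Def_\tk(C(\fraktg)\xrightarrow{\Xi}\Mog)\big)\qquad(i\neq 3).
\]
Thus degree $1$ of the deformation complex computes $H^0$, degree $0$ computes $H^{-1}$, and vanishing of the deformation complex below degree $0$ yields $H^i=0$ for $i<-1$. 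The statements about $H^i(\GCg)$ itself then follow from Lemma \ref{lemma:extensioncommutes}, which strips off the $\spp(H^*)$-contribution away from degrees $-1,0$ and accounts for the genus-one discrepancy $\spp_{-1}(H^*)$.

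Next I would simplify $\Def_\tk(C(\fraktg)\to\Mog)\cong\Mogg\hotimes_S\Harr C^c(\fraktg)$. Since each $C(\fraktg)(r)=S(\fraktg^*(r)[-1])$ is a free graded-commutative algebra, the reduced Harrison cohomology of the cofree cocommutative coalgebra $C^c(\fraktg)(r)$ is concentrated in weight one and equals the cogenerators $\fraktg(r)$ — this is exactly the Dolgushev--Rogers computation already used in Lemma \ref{lemma:harr}. Filtering by Harrison weight and running the associated spectral sequence, in the same spirit as the proof of Theorem \ref{thm:defHGC}, collapses the Harrison factor onto $\fraktg$ and turns the surviving differential into the convolution differential assembled from the Chevalley--Eilenberg differential of $\fraktg$ (the dual of the bracket) and the right $\frakt_\bv$-module structure transported through $\Xi$ and the Koszul morphism $\tk$ of Section \ref{sec:OmegaMo}. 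On the relevant page the deformation complex thus becomes the complex computing biderivations of the operadic $\frakt_\bv$-module $\fraktg$, with the degree recording the homological degree in $\fraktg$ minus the number of Chevalley--Eilenberg brackets; in particular this reduced complex is supported in non-negative degrees, so $H^j=0$ for $j<0$ is immediate and the only genuinely new computations are in degrees $0$ and $1$.

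Finally I would carry out the explicit low-degree computation. A biderivation of $\fraktg$ over $\frakt_\bv$ is determined by its values on generators, i.e.\ by a tuple $(U_c)_{c\in H^1}\in\fraktg(2)^{\times 2g}$ (one value for each of the generators $x_l,y_l$) together with arity-one data $u_\alpha,v\in\fraktg(1)$. The degree-$1$ cocycle condition unwinds, using the insertions $\circ_u$ and the relations of $\fraktg(3)$ (Arnold and Jacobi), into precisely the six equations defining $Z_{(g)}$ in \eqref{eq:defZg}: the coassociativity relation $U_\alpha^{12,3}-U_\alpha^{1,23}-U_\alpha^{2,13}=0$ from the module cobar differential, the symmetry relation $u_\alpha^{12}=U_\alpha^{1,2}+U_\alpha^{2,1}$ from the arity-one part together with $S_2$-equivariance, and the four bracket relations from compatibility with the defining relations of $\fraktg$. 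The exact elements in degree $1$ are exactly the inner biderivations $B_{(g)}$ of \eqref{eq:Bg}, so $H^1\cong Z_{(g)}/B_{(g)}=\frakr_{(g)}$. The same analysis in degree $0$ produces nothing for $g\geq 2$ and a two-dimensional space matching $\spp_{-1}(H^*)$ for $g=1$, the discrepancy reflecting, as in Lemma \ref{lemma:extensioncommutes}, that the twisting element $z$ is annihilated by $\spp_{-1}(H^*)$ only in genus one.

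I expect the last step to be the main obstacle: translating the abstract coderivation cocycle condition into the six concrete relations of $Z_{(g)}$ demands careful bookkeeping of the $\frakt_\bv$-action, the $S_r$-equivariance, and the signs, and the identification of the coboundaries with exactly $B_{(g)}$ relies on controlling the arity-one ambiguity $u_\alpha,v\in\fraktg(1)$ — this is where Lemma \ref{lemma:center}, the one-dimensionality of the center of $\fraktg(1)$, enters, pinning down the kernel of the coboundary map $v\mapsto\big([v^{12},x_i^{(1)}],[v^{12},y_i^{(1)}]\big)$ so that the quotient is computed correctly.
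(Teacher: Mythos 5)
Your proposal is correct and follows essentially the same route as the paper: the same chain of quasi-isomorphisms reduces everything to $\Def_\tk(C(\fraktg)\xrightarrow{\Xi}\Mog)$, your ``collapse of the Harrison factor onto $\fraktg$'' is precisely the paper's quasi-isomorphism $E=1\otimes T$ onto the auxiliary complex $\Qc=\Mogg\hotimes_S\fraktg$ (whose non-negative grading gives the vanishing below degree $-1$), and your low-degree unwinding into the defining equations of $Z_{(g)}$ and $B_{(g)}$ --- with Lemma \ref{lemma:center} controlling the arity-one ambiguity in the coboundaries --- is exactly the content of the paper's Lemmas \ref{lem:lemma2}--\ref{lem:lemma4}. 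The one caveat is that what you call the ``biderivation complex'' is in the paper still the full complex $\Qc$ retaining the $\Mogg$ factor, so the bookkeeping you defer is a genuine cascade of nested spectral sequences (Lemmas \ref{lemma:E1} and \ref{lemma:grW}) rather than a direct identification; but the strategy and all key inputs coincide.
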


Notice in particular that the last statement uses Lemma \ref{lemma:extensioncommutes}. Before turning to the set-up of the proof, let us address the Lie algebra structure $\frakr_{(g)}$ may be equipped with.

\subsection{Lie algebra structure on $\frakr_{(g)}$}

Given $U=(U_\alpha)_\alpha \in Z_{(g)}$, we may construct an $S_r$-equivariant derivation of $\fraktg(r)$, for any $r\geq 1$. For $r\geq 2$, define
\begin{align*}
Z_{(g)}&\rightarrow \Der_{S_r}(\fraktg(r))\\
U=(U_c)_c&\mapsto U=\left( U(x_i^{(1)})=U^{1,2\dots r}_{a_i}, \ U(y_i^{(1)})=U^{1,2\dots r}_{b_i} \ \text{ for } i=1,\dots g, \ U(t_{kl})=U(t_{kk})=0 \right) .
\end{align*}
The derivation is completely determined by specifying its value on the generators $x_i^{(1)}$, $y_i^{(1)}$. Notice that by equivariance
\[
U(x_i^{(l)})=U^{l,1\dots \hat{l}\dots r}_{a_i}, \ U(y_i^{(l)})=U^{l,1\dots \hat{l}\dots r}_{b_i}.
\]
That this construction yields a well-defined derivation of the Lie algebra $\fraktg(r)$ (i.e. that it preserves the relations among the generators) follows directly from the defining equations of $Z_{(g)}$. For instance,
\[
0=U([x_i^{(k)},y_j^{(l)}])=[U(x_i^{(k)}),y_j^{(l)}]+[x_i^{(k)},U(y_j^{(l)})]=[U_{a_i}^{k,1\dots r},y_j^{(l)}]+[x_i^{(k)},U_{b_j}^{l,1\dots r}]
\]
is the image of the relation $[U_{a_i}^{1,23},y_j^{(2)}]+[x_i^{(1)},U_{b_j}^{2,13}]=0$ under the Lie algebra morphism $\fraktg(3)\rightarrow \fraktg(r)$ induced by the map of sets $\phi:\{1,\dots r\}\rightarrow \{1,2,3\}$ mapping $1\mapsto k$, $2\mapsto l$, $3,\dots, r\mapsto 3$. The other compatibility relations follow from similar calculations. For $r=1$, notice first that from $U_c$ we can recover the arity one component $u_c$ by setting the variables $x_i^{(2)}=y_i^{(2)}=t_{12}=t_{22}=0$ for all $i$ in the expression $U_c^{1,2}+U_c^{2,1}$. This projection onto the arity one component $u=(u_c)_c$ produces a derivation of $\fraktg(1)$, i.e.
\[
Z_{(g)}\rightarrow \Der(\fraktg(1)), \ U=(U_c)_c\mapsto u=\left( u(x_i^{(1)})=u_{a_i}, \ u(y_i^{(1)})=u_{b_i}, u(t_{11})=0 \right).
\]
For us, the most important application of this fact will be in the case $r=2$. We use the natural commutator Lie bracket on $\Der_{S_2}(\fraktg(2))$ to endow $Z_{(g)}$ with a Lie algebra structure.

\begin{lemma}\label{lemma:bracket}
For $U,V\in Z_{(g)}$, setting
\[
\{U,V\}_c:=U(V_c)-V(U_c)
\]
and $\{U,V\}=(\{U,V\}_c)_c$ yields a Lie bracket on $Z_{(g)}$. The subspace $B_{(g)}$ forms a Lie ideal in $Z_{(g)}$.
\end{lemma}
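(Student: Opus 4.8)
The plan is to recognize $\{-,-\}$ as the commutator bracket of derivations, transported along the assignment $U\mapsto U$ that sends a tuple to its associated $S_2$-equivariant derivation of $\fraktg(2)$. Write $\iota\colon Z_{(g)}\to\Der_{S_2}(\fraktg(2))$ for this map; it is linear and injective, since the tuple is recovered from the derivation by $U_{a_i}=U(x_i^{(1)})$, $U_{b_i}=U(y_i^{(1)})$. Unwinding the definition, the tuple $\{U,V\}$ is precisely the tuple of values of the commutator $[\iota U,\iota V]$ on the generators $x_i^{(1)},y_i^{(1)}$, because $U(V_{a_i})-V(U_{a_i})=[\iota U,\iota V](x_i^{(1)})$ and similarly for $b_i$. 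Hence, once we know $\{U,V\}\in Z_{(g)}$, this reads $\iota(\{U,V\})=[\iota U,\iota V]$, so $\iota$ is an injective linear map intertwining $\{-,-\}$ with the genuine Lie bracket on $\Der_{S_2}(\fraktg(2))$, and antisymmetry and the Jacobi identity for $\{-,-\}$ are inherited through $\iota$. The crux is therefore \emph{closedness}: that $\{U,V\}$ again satisfies the six defining relations of $Z_{(g)}$.

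To prove closedness I would work with the associated derivations in arities one, two and three simultaneously. For $U\in Z_{(g)}$ the construction preceding the lemma yields $S_r$-equivariant derivations $U\in\Der_{S_r}(\fraktg(r))$ for $r=1,2,3$, all annihilating the generators $t_{ij},t_{kk}$. The six relations defining $Z_{(g)}$ are exactly equivalent to the conjunction of two facts: that these maps are \emph{well-defined} derivations respecting the relations of $\fraktg(r)$ (relations three through six are precisely the Leibniz images of the defining commutators $[x_i^{(1)},y_j^{(2)}]$, $[x_i^{(1)},x_j^{(2)}]$, $[y_i^{(1)},y_j^{(2)}]$ and $\sum_l[x_l^{(1)},y_l^{(1)}]$ in $\fraktg(3)$), and that they are \emph{compatible} with the operadic insertion maps (relation one encodes $U^{(2)}\circ d=d\circ U^{(1)}$ for the diagonal $d\colon\fraktg(1)\to\fraktg(2)$, and the cocycle relation two encodes $U^{(3)}\circ\psi=\psi\circ U^{(2)}$ for the insertion $\psi\colon\fraktg(2)\to\fraktg(3)$ collapsing $\{2,3\}$). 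Now for $U,V\in Z_{(g)}$ the arity-wise commutators $[U^{(r)},V^{(r)}]$ again form a family of $S_r$-equivariant derivations of $\fraktg(r)$ — automatically, since the commutator of two derivations of a Lie algebra is a derivation — which still annihilate the $t$'s and remain compatible with $d,\psi$ and their $S_3$-relabelings (compatibility with a Lie-algebra morphism survives taking commutators). Evaluating the arity-two commutator on generators returns the tuple $\{U,V\}$, while the insertion-compatibility identifies $[U^{(3)},V^{(3)}]$ on generators with $\{U,V\}_{a_i}^{1,23}$, $\{U,V\}_{b_j}^{2,13}$, and so on; applying the Leibniz rule for this honest arity-three derivation to the defining commutators of $\fraktg(3)$ then reproduces relations three through six for $\{U,V\}$, and relations one and two reexpress the $d$- and $\psi$-compatibility. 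Thus $\{U,V\}\in Z_{(g)}$.

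The main obstacle is exactly this closedness bookkeeping: one must check that the arity-three commutator, a priori only some derivation of $\fraktg(3)$, evaluates on the generators to the arity-three images of the arity-two tuple $\{U,V\}$ — this is where relation two (the cocycle/insertion compatibility) is indispensable — after which relations three through six drop out formally from the Leibniz rule. Beyond this matching I expect no difficulty; the conceptual content is simply that compatible operadic-module derivations annihilating the abelian part form a Lie algebra under commutator.

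For the ideal statement I would first observe that a class $V\in B_{(g)}$ with witness $v\in\fraktg(1)$ has associated arity-two derivation equal to $\ad_{v^{12}}$. Indeed $v^{12}$ is a Lie polynomial in $x_l^{(1)}+x_l^{(2)}$, $y_l^{(1)}+y_l^{(2)}$ and $t_{11}^{12}$, each of which commutes with $t_{12},t_{11},t_{22}$ (using $[x_l^{(1)}+x_l^{(2)},t_{12}]=[y_l^{(1)}+y_l^{(2)},t_{12}]=0$, the centrality of $t_{11},t_{22}$, and $[t_{12},t_{12}]=0$), so $\ad_{v^{12}}$ annihilates all of $t_{12},t_{11},t_{22}$ and agrees with $V$ on the $x,y$ generators; hence $V=\ad_{v^{12}}$ as derivations of $\fraktg(2)$. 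Then for any $U\in Z_{(g)}$,
\[
\{U,V\}_{a_i}=U\bigl([v^{12},x_i^{(1)}]\bigr)-[v^{12},U_{a_i}]=[U(v^{12}),x_i^{(1)}]+[v^{12},U_{a_i}]-[v^{12},U_{a_i}]=[U(v^{12}),x_i^{(1)}],
\]
and the $d$-compatibility (relation one) gives $U(v^{12})=(u(v))^{12}=:w^{12}$ with $w=u(v)\in\fraktg(1)$, where $u$ is the arity-one derivation associated to $U$. The identical computation yields $\{U,V\}_{b_i}=[w^{12},y_i^{(1)}]$, so $\{U,V\}\in B_{(g)}$ with witness $w$. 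Since $\{-,-\}$ is antisymmetric, $B_{(g)}$ is a two-sided (Lie) ideal of $Z_{(g)}$.
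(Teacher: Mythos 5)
Your proposal is correct and follows essentially the same route as the paper: the paper likewise treats $\{-,-\}$ as the commutator of derivations, introduces the associated derivations $u$, $U$, $\tilde U$ in arities one, two and three, uses the compatibility identities $(u(v_c))^{12}=U(v_c^{12})$ and $\{U,V\}_c^{\phi}=\tilde U(V_c^{\phi})-\tilde V(U_c^{\phi})$ to verify relations one and two, and obtains relations three through six by applying $\tilde U$ and $\tilde V$ to the defining relations and subtracting --- which is exactly your Leibniz-rule mechanism, merely written out element-wise. Your ideal argument ($V=\ad_{v^{12}}$, then $\{U,V\}_{a_i}=[U(v^{12}),x_i^{(1)}]$ with $U(v^{12})=(u(v))^{12}$) coincides with the paper's, with your explicit check that $\ad_{v^{12}}$ annihilates the $t$-generators being a welcome extra detail.
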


\begin{proof}
The Lie bracket is the natural commutator Lie bracket for derivations, hence we only need to check that $Z_{(g)}$ is closed under the Lie bracket. For $U,V\in Z_{(g)}$, let $u=(u_c)_c$ and $v=(v_c)_c$ (satisfying $u_c^{12}=U_c^{1,2}+U_c^{2,1}$, and the respective equation for the pair $v,V$) be the derivations of $\fraktg(1)$. Denote their Lie bracket also by $\{u,v\}$. Additionally, notice that
\[
(u(v_c))^{12}=U(v_c^{12}).
\]
This readily implies
\[
(U(V_c))^{1,2}+(U(V_c))^{2,1}=U((V_c)^{1,2})+U((V_c)^{2,1})=U(v_c^{12})
\]
and thus
\[
\{U,V\}_c^{1,2}+\{U,V\}_c^{2,1}=U(V_c^{1,2}+V_c^{2,1})-V(U_c^{1,2}+U_c^{2,1})=U(v_c^{12})-V(u_c^{12})=(u(v_c)-v(u_c))^{12}=\{u,v\}_c^{12}.
\]
Similarly, denote by $\tilde U$ the $S_3$-equivariant derivation of $\fraktg(3)$ defined by the formulas above, i.e.
\[
\tilde U(x_i^{(1)})=U_{a_i}^{1,23}, \ \tilde U(y_i^{(1)})=U_{b_i}^{1,23}.
\]
Moreover, notice that for any map of sets $\phi:\{1,2,3\}\rightarrow \{1,2\}$, and all $i=1\dots g$ and $l=1,2$, 
\[
(U(x_i^{(l)}))^{\phi}=\tilde U((x_i^{(l)})^{\phi}), \ (U(y_i^{(l)}))^{\phi}=\tilde U((y_i^{(l)})^{\phi})
\]
and in particular, for any $c\in H^1$,
\[
\{U,V\}_c^{\phi}=U(V_c^\phi)-V(U_c^{\phi}).
\]
Next, observe that thus
\[
\{U,V\}_c^{12,3}-\{U,V\}_c^{1,23}-\{U,V\}_c^{2,13}=\tilde U(V_c^{12,3}-V_c^{1,23}-V_c^{2,13})-\tilde V(U_c^{12,3}-U_c^{1,23}-U_c^{2,13})=0.
\]
Furthermore, consider
\begin{align*}
&0=\tilde U([V_{a_i}^{1,23},y_j^{(2)}]-[V_{b_j}^{2,13},x_i^{(1)}])=[\tilde U (V_{a_i}^{1,23}),y_j^{(2)}]+[V_{a_i}^{1,23}, U_{b_j}^{2,13}] - [\tilde U (V_{b_j}^{2,13}),x_i^{(1)}]-[V_{b_j}^{2,13},U_{a_j}^{1,23}]\\
&0=\tilde V([U_{a_i}^{1,23},y_j^{(2)}]-[U_{b_j}^{2,13},x_i^{(1)}])=[\tilde V (U_{a_i}^{1,23}),y_j^{(2)}]+[U_{a_i}^{1,23},V_{b_j}^{2,13}] - [\tilde V (U_{b_j}^{2,13}),x_i^{(1)}]-[U_{b_j}^{2,13},V_{a_i}^{1,23}].
\end{align*}
Their difference yields
\begin{align*}
0&=[\tilde U (V_{a_i}^{1,23}),y_j^{(2)}]-[\tilde U (V_{b_j}^{2,13}),x_i^{(1)}]-[\tilde V (U_{a_i}^{1,23}),y_j^{(2)}]+[\tilde V (U_{b_j}^{2,13}),x_i^{(1)}]\\
&=[\tilde U (V_{a_i}^{1,23})-\tilde V (U_{a_i}^{1,23}),y_j^{(2)}]-[\tilde U (V_{b_j}^{2,13})-\tilde V (U_{b_j}^{2,13}),x_i^{(1)}]=[\{U,V\}_{a_i}^{1,23},y_j^{(2)}]-[\{U,V\}_{b_j}^{2,13},x_i^{(1)}].
\end{align*}
A similar computation shows that the Lie bracket $\{U,V\}$ also satisfies the last three defining equation of $Z_{(g)}$. To show that $B_{(g)}$ is a Lie ideal in $Z_{(g)}$, let $V=(V_{a_i}=[v^{12},x_i^{(1)}], V_{b_i}=[v^{12},y_i^{(1)}])_{i=1,\dots,g} \in B_{(g)}$ and $U=(U_c)_c\in Z_{(g)}$. Notice that by Jacobi, $V=\ad_{v^{12}}$. Then,
\begin{align*}
\{U,V\} (x_i^{(1)})= U([v^{12},x_i^{(1)}]) - V(U_{a_i})&=[ U(v^{12}),x_i^{(1)}] + [v^{12}, U(x_i^{(1)})]- V(U_{a_i})\\
&=[ U(v^{12}),x_i^{(1)}] +  [v^{12}, U_{a_i}] -  [v^{12}, U_{a_i}]=[ U(v^{12}),x_i^{(1)}] .
\end{align*}
The analogous calculations yields the respective result for all $y_i^{(1)}$. Finally, take $u=(u_c)_c$, the derivation of $\fraktg(1)$ we considered at the beginning of this proof and note that
\[
U(v^{12})=(u(v))^{12}
\]
from which we conclude that $\{U,V\}\in B_{(g)}$.
\end{proof}

In the light of Willwacher's Theorem \ref{thm:Willwachermain} and Theorem \ref{thm:GCgvsp}, we refer to $\frakr_{(g)}$ as the Grothendieck-Teichm\"uller Lie algebra of a surface of genus $g$.

\begin{defi}
The Grothendieck-Teichm\"uller Lie algebra of a surface of genus $g$ is
\[
\frakr_{(g)}:=(Z_{(g)}/ B_{(g)}, \{ \ , \ \})
\]
\end{defi}

We conjecture moreover that the isomorphism of Theorem \ref{thm:GCgvsp} is compatible with the respective Lie algebra structures.

\begin{conj}\label{thm:GCgrt}
The zeroth cohomology of the complex $\spp(H^*)\ltimes \GCg$, considered as a Lie algebra, is isomorphic to the Grothendieck-Teichm\"uller Lie algebra of a surface of genus $g$, i.e.
\[
H^0(\spp(H^*)\ltimes \GCg)\cong \frakr_{(g)}.
\]
\end{conj}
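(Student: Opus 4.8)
The plan is to reduce the computation of $H^0(\spp(H^*)\ltimes\GCg)$ to a single low-degree cohomology group of the deformation complex, and then to transport that group to the combinatorially transparent Bezrukavnikov model $C(\fraktg)$, where it can be read off as the biderivations of the operadic $\frakt_\bv$-module $\fraktg$. Concretely, Theorem \ref{thm:defGC} tells us that $\Psi\colon(\spp(H^*)\ltimes\GCg)[-1]\to\Def(\pdBVGraphsg\xrightarrow{\Phi}\Mog)$ is a quasi-isomorphism in every degree except $4$. Since degree $1<4$, the desuspension bookkeeping gives an isomorphism $H^0(\spp(H^*)\ltimes\GCg)\cong H^1(\Def(\pdBVGraphsg\xrightarrow{\Phi}\Mog))$. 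Applying Proposition \ref{prop:koszuldef} and the zig-zag of quasi-isomorphisms of Lemma \ref{lemma:zigzagDef} (which rests on Proposition \ref{prop:Ctmodel}) then identifies this with $H^1(\Def_\tk(C(\fraktg)\xrightarrow{\Xi}\Mog))\cong H^1(\Mogg\hotimes_S\Harr C^c(\fraktg))$.

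Next I would analyze the complex $\Mogg\hotimes_S\Harr C^c(\fraktg)$ exactly in the spirit of the proof of Theorem \ref{thm:defHGC}. The essential input is the Lie-algebraic analogue of Lemma \ref{lemma:harr}: by Koszul duality between the commutative and Lie operads, the Harrison complex of the Chevalley--Eilenberg coalgebra $C^c(\fraktg)(r)$ is quasi-isomorphic, up to a degree shift, to $\fraktg(r)^*$. Running the same nested filtration arguments as for $\Def_\tk$ (first collapsing the Harrison part, then using the acyclicity of the Koszul complexes $\Lie^c\{-1\}\circ\Com\{-2\}$ and $\K[\Delta]/(\Delta^2)\circ\K[\delta^*]$, exactly as in Lemma \ref{lemma:ComHLie}) collapses $\Mogg$ onto its $\Com^c\circ H\circ\Lie\{-1\}$ part. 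The upshot is that $H^1$ is computed by the complex of $S$-equivariant, filtration-compatible derivations of the operadic $\frakt_\bv$-module $\fraktg$, i.e. by collections of derivations $D\colon\fraktg(r)\to\fraktg(r)$ compatible with the module cocompositions, modulo those arising as coboundaries.

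The heart of the argument is then the purely Lie-theoretic identification of these biderivations with $\frakr_{(g)}$, using the dictionary set up at the start of Section \ref{sec:grtg}. A biderivation of $\fraktg$ is determined by its arity-two component, and I would show that such data is precisely a tuple $(U_c)_{c\in H^1}\in\fraktg(2)^{\times 2g}$ recording $D(x_i^{(1)})=U_{a_i}^{1,2\dots r}$ and $D(y_i^{(1)})=U_{b_i}^{1,2\dots r}$. Well-definedness on the relations of $\fraktg$ together with compatibility with the comodule cocomposition translate into the six defining equations of $Z_{(g)}$: the symmetry condition $u_\alpha^{12}=U_\alpha^{1,2}+U_\alpha^{2,1}$ comes from the arity-one projection, the condition $U_\alpha^{12,3}-U_\alpha^{1,23}-U_\alpha^{2,13}=0$ from compatibility with the cocomposition, and the remaining four from preserving the mixed commutator relations in $\fraktg(3)$ (these are exactly the computations already carried out in Lemma \ref{lemma:bracket}). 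The coboundaries, i.e. the inner biderivations $D=\ad_{v^{12}}$ associated to an element $v\in\fraktg(1)$, correspond under this dictionary to the subspace $B_{(g)}$. Hence $H^1\cong Z_{(g)}/B_{(g)}=\frakr_{(g)}$. I expect this step — making the correspondence between the abstract deformation-complex cocycles and the explicit equations of $Z_{(g)}$ fully precise, with all signs and with the exact match of coboundaries to $B_{(g)}$ — to be the main obstacle.

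For the negative-degree statements I would combine the same model with Lemma \ref{lemma:extensioncommutes}. Since $\Psi$ is a quasi-isomorphism in every degree $\neq 4$, we get $H^i(\spp(H^*)\ltimes\GCg)\cong H^{i+1}(\Def_\tk(C(\fraktg)\xrightarrow{\Xi}\Mog))$ for all $i<0$. Because $\fraktg$ is concentrated in degree zero and $\Mog$ is non-negatively graded, the collapsed complex above is concentrated in non-negative degrees, and a direct degree count gives $H^{j}=0$ for $j<0$; this yields $H^i(\GCg)=0$ for $i<0$ and, through Lemma \ref{lemma:extensioncommutes}, $H^i(\spp(H^*)\ltimes\GCg)=0$ for all $i<-1$. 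Finally, in degree $-1$ the extension contributes exactly $\spp_{-1}(H^*)=\K[1]\oplus\K[1]$ for $g=1$ and nothing for $g\geq 2$ by Lemma \ref{lemma:extensioncommutes}, while $H^{-1}(\GCg)=0$ by the vanishing just established, which completes the remaining assertions of the theorem.
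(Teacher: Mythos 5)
The statement you are proving is the paper's Conjecture~\ref{thm:GCgrt}, and it is stated as a conjecture precisely because the isomorphism is only established \emph{as vector spaces}: that weaker statement is Theorem~\ref{thm:GCgvsp}, and your proposal is essentially a faithful reconstruction of its proof (reduction via Theorem~\ref{thm:defGC}, Proposition~\ref{prop:koszuldef} and Lemma~\ref{lemma:zigzagDef} to $\Def_\tk(C(\fraktg)\xrightarrow{\Xi}\Mog)$, collapse of the Harrison part via Lemma~\ref{lemma:Harr} --- note the target there is $\fraktg$, not $\fraktg(r)^*$ --- and then the nested filtrations on the auxiliary complex $\Qc=\Mogg\hotimes_S\fraktg$ leading to $Z_{(g)}/B_{(g)}$, plus Lemma~\ref{lemma:extensioncommutes} for the negative degrees). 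For that vector-space statement your outline is correct and matches the paper.

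The genuine gap is the Lie-algebra compatibility, which is the entire content of the conjecture and which your argument does not supply. Every map in your chain is only a morphism of complexes: $\Psi$ comes from Proposition~\ref{prop:Lieaction}, which produces a chain map $\frakg[-1]\to\Def(\op M\xrightarrow{\Phi}\op N)$, not a map of Lie algebras; indeed the paper explicitly states in the introduction that identifying the deformation complex with the dg Lie algebra of homotopy biderivations of $\pdBVGraphsg$ ``remains an open problem at this stage,'' so $\Def(\pdBVGraphsg\xrightarrow{\Phi}\Mog)$ carries no Lie structure against which $\Psi$ could be checked. Your middle paragraph tacitly assumes that $H^1$ of the deformation complex is ``biderivations modulo inner biderivations'' \emph{with its commutator bracket}, and that this bracket is transported along the zig-zag and through the spectral-sequence collapses to the bracket $\{U,V\}_c=U(V_c)-V(U_c)$ of Lemma~\ref{lemma:bracket}; but spectral sequences and filtration arguments transport graded vector spaces, not brackets, unless the differentials and comparison maps are shown to be (at least $L_\infty$-)compatible with a bracket on each intermediate object ($\Mogg\hotimes_S\Harr C^c(\fraktg)$, $\Qc$, the pages $E_r$, $\tilde E_r$), none of which is done here or in the paper. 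To close the gap you would need either (i) a dg Lie (or $L_\infty$) structure on the deformation complex realizing homotopy biderivations, with $\Psi$, $\tilde\kappa_*$, the zig-zag of Lemma~\ref{lemma:zigzagDef}, and $E$ of Lemma~\ref{lemma:defot} upgraded to $L_\infty$-morphisms, or (ii) a direct computation showing that the explicit vector-space isomorphism of Theorem~\ref{thm:GCgvsp} intertwines the graph bracket on $H^0(\spp(H^*)\ltimes\GCg)$ with $\{-,-\}$ on $\frakr_{(g)}$; neither is achieved by the proposal as written.
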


In the next sections we first proof our main result, before exploring further consequences of Theorem \ref{thm:GCgvsp}.

\section{Proof of Theorem \ref{thm:GCgvsp}}

Recall that Theorem\ref{thm:defGC}, together with the zig-zag of quasi-isomorphisms from Lemma \ref{lemma:zigzagDef} imply that
\[
H^{i-1}(\spp(H^*)\ltimes \GCg)\cong H^i (\Def_\tk (C(\fraktg)\xrightarrow \Xi \Mog)) \text{ for all } i\neq 4.
\]
It turns out that we may compute the right hand side for non-positive degrees and in degree one by introducing the auxiliary complex below.

\subsection{An auxiliary complex}

Consider the dg vector space
\[
\Qc:=(\Mogg\hotimes_S \fraktg, d_{\Mogg}\otimes 1).
\]
The collection of dg Lie algebras
\[
\Mog \hotimes_S \fraktg:=\prod\limits_{r\geq 1} \Mog(r)\hotimes_{S_r} \fraktg(r)
\]
acts arity-wise on $\Qc$ via the Lie bracket on $\fraktg(r)$ and the $\Mog(r)$-module structure of $\Mogg(r)$. Recall from Lemma \ref{lemma:MCtMO)} that 
\[
m_r=\sum\limits_{1\leq i<j \leq r}  w^{(ij)}\otimes t_{ij}+ \sum\limits_{i=1}^r   w^{(ii)}\otimes t_{ii}+ \sum\limits_{i=1}^r\sum\limits_{l=1}^g  \ a_l^{(i)} \otimes x_l^{(i)}+\sum\limits_{i=1}^r\sum\limits_{l=1}^g  \ b_l^{(i)} \otimes y_l^{(i)}
\]
defines a Maurer-Cartan element in $\Mog(r) \hotimes_{S_r} \fraktg(r) $. Formally set
\[
m_{\fraktg}:=\sum_{r\geq 1} m_r .
\]
Next, twist $\Qc$ by this Maurer-Cartan element to obtain the complex
\[
\mathsf{Q}_\mathrm{(g)}^{m_{\fraktg}}=(\Mogg\hotimes_S \fraktg, d_{\Mogg}\otimes 1+\partial_{m_{\fraktg}}).
\]

Define the morphism of degree one
\begin{align*}
\partial_2:\Mogg(r)\hotimes_{S_r} \fraktg (r)&\rightarrow \Mogg(r+1)\hotimes_{S_{r+1}} \fraktg(r+1)\\
l\otimes  \theta &\mapsto \sum\limits_{i=1}^r (-1)^{|l|} l\circ_i s^{-2}\lambda \otimes  \theta^{\delta_{i}^r}. 
\end{align*}

By abuse of notation we drop the dependence on the arity and write $\partial_2$ for all such operations.

\begin{lemma}
The morphism above squares to zero, i.e. $\partial_2^2=0$.
\end{lemma}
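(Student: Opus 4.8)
The plan is to expand $\partial_2^2$ as an explicit double sum and to show that the resulting terms cancel in three groups, organized by the relative position of the two inserted brackets. Writing $\mu := s^{-2}\lambda$, which has odd degree (so that $|l\circ_i\mu| = |l|+1$), I would first record
\[
\partial_2^2(l\otimes\theta) = -\sum_{i=1}^{r}\sum_{j=1}^{r+1} (l\circ_i\mu)\circ_j\mu \otimes (\theta^{\delta_i^r})^{\delta_j^{r+1}},
\]
the global sign $-1$ arising from the two factors $(-1)^{|l\circ_i\mu|}(-1)^{|l|}$. The slots of $l\circ_i\mu$ are the original ones except that slot $i$ has been replaced by the two inputs $i,i+1$ of the inserted $\mu$. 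Accordingly I split the sum into the cases $j\le i-1$, $j\ge i+2$, and the degenerate case $j\in\{i,i+1\}$.

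For the two non-degenerate cases the brackets are grafted at disjoint slots, and the operad interchange law in $\bvk$ gives $(l\circ_i\mu)\circ_j\mu = -(l\circ_{j-1}\mu)\circ_i\mu$ for $j\ge i+2$ (the sign being $(-1)^{|\mu||\mu|}=-1$). On the Lie-algebra side, the simplicial identity recorded before the statement yields, for $j\ge i+1$, the equality $(\theta^{\delta_i^r})^{\delta_j^{r+1}} = (\theta^{\delta_{j-1}^r})^{\delta_i^{r+1}}$. Hence the term indexed by $(i,j)$ with $j\ge i+2$ is exactly the negative of the term indexed by $(j-1,i)$, and the assignment $(i,j)\mapsto(j-1,i)$ is a sign-reversing bijection between the families $j\ge i+2$ and $j\le i-1$. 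These terms therefore cancel in pairs, leaving only the degenerate contribution.

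The degenerate terms $j\in\{i,i+1\}$, where the second bracket is inserted into the first, are the main obstacle. Sequential associativity of the composition in $\bvk$ rewrites them as $l\circ_i(\mu\circ_1\mu)$ and $l\circ_i(\mu\circ_2\mu)$, while the special (degenerate) simplicial identity $(\theta^{\delta_i^r})^{\delta_{i+1}^{r+1}} = (\theta^{\delta_i^r})^{\delta_i^{r+1}}$ shows that both carry the same decoration $\theta^{\delta_i^r\circ\delta_i^{r+1}}$. The leftover is thus a multiple of $\sum_i l\circ_i(\mu\circ_1\mu+\mu\circ_2\mu)\otimes\theta^{\delta_i^r\circ\delta_i^{r+1}}$. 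Now $\theta^{\delta_i^r\circ\delta_i^{r+1}}$ is obtained by tripling the $i$-th slot, hence is invariant under the subgroup of $S_{r+2}$ permuting the three merged inputs $\{i,i+1,i+2\}$. Passing to the coinvariants $\Mogg\hotimes_S\fraktg$, this invariance lets me transport the symmetric group action onto the $\bvk$-factor, so that $\mu\circ_1\mu+\mu\circ_2\mu$ may be replaced by its signed $S_3$-symmetrization. Since $\mu\circ_1\mu$ and $\mu\circ_2\mu$ lie in the Lie part $\Lie\{-1\}(3)$, the Jacobi relation $\mu\circ_1\mu + (\mu\circ_1\mu)^{(123)} + (\mu\circ_1\mu)^{(132)} = 0$ forces this symmetrization to vanish (equivalently, $\Lie(3)$ contains no copy of the relevant one-dimensional $S_3$-representation). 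Hence the degenerate family vanishes in the coinvariants, giving $\partial_2^2=0$.

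I expect the real content to sit in this last paragraph: the precise tracking of the Koszul signs (so that the non-degenerate families genuinely cancel and the degenerate one genuinely reduces to a symmetrization) together with the representation-theoretic vanishing of the symmetrized Jacobiator. The non-degenerate cancellation, by contrast, is a bookkeeping exercise once the interchange law and the simplicial identities are in hand.
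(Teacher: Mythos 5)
Your proposal follows the paper's proof essentially verbatim: the same expansion of the double sum with global sign $-1$, the same reindexing of the non-degenerate pairs $(i,j)$ via the interchange law and the simplicial identities $\theta^{\delta_i^r\circ\delta_j^{r+1}}=\theta^{\delta_{j-1}^r\circ\delta_i^{r+1}}$, and the same reduction of the degenerate terms $j\in\{i,i+1\}$ to $\sum_i l\circ_i\bigl(s^{-2}\lambda\circ_1 s^{-2}\lambda+s^{-2}\lambda\circ_2 s^{-2}\lambda\bigr)\otimes\theta^{\delta_i^r\circ\delta_i^{r+1}}$, killed by the Jacobi identity in the coinvariants. Your final step merely spells out (via the $S_3$-invariance of $\theta^{\delta_i^r\circ\delta_i^{r+1}}$ and symmetrization) what the paper asserts in one line, so the argument is correct and coincides with the paper's.
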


\begin{proof}
By direct computation, we find
\begin{align*}
\partial_2^2( l\otimes \theta)=&\sum\limits_{i,j} (-1)^{2|l|+1} (l\circ_i s^{-2}\lambda )\circ_j s^{-2}\lambda \otimes (\theta^{\delta_i^r})^{\delta_j^{r+1}}\\
= &-\sum\limits_{i+1<j} (l\circ_i s^{-2}\lambda )\circ_j s^{-2}\lambda \otimes \theta^{\delta_i^r \circ \delta_j^{r+1}}- \sum\limits_{j<i} (l\circ_i s^{-2}\lambda )\circ_j s^{-2}\lambda \otimes \theta^{\delta_i^r \circ \delta_j^{r+1}}\\
&- \sum\limits_{i=j} (l\circ_i s^{-2}\lambda )\circ_i s^{-2}\lambda \otimes \theta^{\delta_i^r \circ \delta_i^{r+1}} -  \sum\limits_{i+1=j} (l\circ_i s^{-2}\lambda )\circ_{i+1} s^{-2}\lambda \otimes \theta^{\delta_i^r \circ \delta_{i+1}^{r+1}}\\
=&-\sum\limits_{i+1<j} (-1)(l\circ_{j-1} s^{-2}\lambda )\circ_i s^{-2}\lambda \otimes \theta^{\delta_{j-1}^r \circ \delta_i^{r+1}} -  \sum\limits_{j<i} (l\circ_i s^{-2}\lambda )\circ_j s^{-2}\lambda \otimes \theta^{\delta_i^r \circ \delta_j^{r+1}}\\
&-\sum\limits_{i} \big((l\circ_i s^{-2}\lambda )\circ_{i} s^{-2}\lambda +(l\circ_i s^{-2}\lambda )\circ_{i+1} s^{-2}\lambda\big) \otimes \theta^{\delta_i^r \circ \delta_{i}^{r+1}}\\
=&-\sum\limits_{i} l\circ_i \big(s^{-2}\lambda \circ_{1} s^{-2}\lambda + s^{-2}\lambda \circ_{2} s^{-2}\lambda\big) \otimes \theta^{\delta_i^r \circ \delta_{i}^{r+1}}=0
\end{align*}
since $l\circ_i (s^{-2}\lambda \circ_1 s^{-2}\lambda)\otimes \theta^{\delta_i^r  \circ \delta_{i}^{r+1}}=0$ (and similarly for the $\circ_2$-case) in the space of coinvariants by the Jacobi identity.
\end{proof}

\begin{lemma}
The sum $d_{\Mogg}\otimes 1 + \partial_{m_{\fraktg}}+ \partial_2$ defines a differential on $\Mogg\hotimes_S \fraktg$.
\end{lemma}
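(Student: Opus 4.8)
The plan is to show that the degree-one operator $D:=d_{\Mogg}\otimes 1 + \partial_{m_{\fraktg}}+ \partial_2$ squares to zero. Since all three summands are odd, expanding $D^2$ reduces the task to the three pairwise anticommutators, and two of them are already under control: the identity $(d_{\Mogg}\otimes 1 + \partial_{m_{\fraktg}})^2=0$ is precisely the assertion that the twisted complex $(\Mogg\hotimes_S\fraktg,\, d_{\Mogg}\otimes 1+\partial_{m_{\fraktg}})$ is well defined, which holds because $m_{\fraktg}=\sum_r m_r$ is a Maurer--Cartan element by Lemma \ref{lemma:MCtMO)}, while $\partial_2^2=0$ is the preceding lemma. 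Thus everything comes down to the single identity $(d_{\Mogg}\otimes 1 + \partial_{m_{\fraktg}})\,\partial_2 + \partial_2\,(d_{\Mogg}\otimes 1 + \partial_{m_{\fraktg}})=0$, i.e. that $\partial_2$ anticommutes with the twisted differential of $\mathsf{Q}_{\mathrm{(g)}}^{m_{\fraktg}}$.

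First I would record that $\partial_2$ raises the total arity by one, whereas $d_{\Mogg}\otimes 1$ and $\partial_{m_{\fraktg}}$ preserve it, so the anticommutator is an arity-raising operator whose vanishing can be checked on the arity-$(r{+}1)$ component. Splitting $d_{\Mogg}=d_{\Mog}+d_{\bvk}+d_{\mathrm{coop}}$, the interactions of $\partial_2$ with $d_{\Mog}$ and $d_{\bvk}$ are the routine ones: the inserted generator $s^{-2}\lambda$ carries no decorations and no factor of $\delta^*$, the doubling map $\theta\mapsto\theta^{\delta_i^r}$ is a morphism of Lie algebras, and the operadic insertion $-\circ_i s^{-2}\lambda$ commutes with the internal $\Mog$- and $\bvk$-differentials; these contributions therefore cancel in pairs up to the Koszul sign.

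The substance of the argument — and the step I expect to be the main obstacle — lies in the remaining contributions: $\partial_2$ against $d_{\mathrm{coop}}$ and against $\partial_{m_{\fraktg}}$. Here $\partial_{m_{\fraktg}}$ is governed by the Lie bracket of $\fraktg$ together with the $\Mog$-module structure on $\Mogg$, whereas $d_{\mathrm{coop}}$ is governed by the cooperadic comodule structure $\Delta_{\Mog}$ and the Koszul morphism $\kappa$, and what makes these compatible with the arity-raising $\partial_2$ is the naturality of $\Delta_{\Mog}$ with respect to the coface maps $\delta_i^r$ and the defining relations of the $\frakt_\bv$-module $\fraktg$. I would organize the bookkeeping exactly as in the proof of $\partial_2^2=0$: expand both composites, split the double sum over the two insertion/doubling sites according to their relative position, and pair off terms using the cosimplicial relations $\theta^{\delta_l^{n-1}\circ\delta_{k+1}^n}=\theta^{\delta_k^{n-1}\circ\delta_l^n}$ (for $l\le k$) together with the Jacobi identity for $s^{-2}\lambda$. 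The Maurer--Cartan equation $dm_r+\tfrac12[m_r,m_r]=0$ of Lemma \ref{lemma:MCtMO)} is what forces the diagonal terms — those in which the freshly inserted edge lands on the very strand acted on by $m_{\fraktg}$ — to reorganize into the defining relations of $\fraktg(r{+}1)$ and hence to vanish, which completes the verification.
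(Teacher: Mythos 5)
Your overall reduction coincides with the paper's: $(d_{\Mogg}\otimes 1+\partial_{m_{\fraktg}})^2=0$ is the Maurer--Cartan property of $m_{\fraktg}$, $\partial_2^2=0$ is the preceding lemma, and everything rests on showing that $\partial_2$ anticommutes with the other two terms; your plan to expand the composites, split the sums according to the relative position of the insertion site, and use the cosimplicial relations is exactly how the paper's computation is organized. One mislocation of difficulty: the paper disposes of the \emph{entire} $d_{\Mogg}\otimes 1$ part, including $d_{\mathrm{coop}}$, in one line via $d_{\Mogg}(l\circ_i s^{-2}\lambda)=d_{\Mogg}(l)\circ_i s^{-2}\lambda$, which only uses the derivation property of $d_{\Mogg}$ with respect to the right $\bvk$-module structure and the fact that $s^{-2}\lambda$ is a cycle in $\bvk$. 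Your suggestion that the $d_{\mathrm{coop}}$-interaction requires naturality of $\Delta_{\Mog}$ with respect to the coface maps $\delta_i^r$ is off the mark: $d_{\mathrm{coop}}$ acts on the $\Mogg$ factor alone and never sees the maps $\delta_i^r$, which live on the $\fraktg$ factor.

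The genuine flaw is in your final step. The diagonal terms of $\partial_{m_{\fraktg}}\partial_2+\partial_2\partial_{m_{\fraktg}}$ --- those where $m_{r+1}$ hits the strands $i$ and $i+1$ created by the insertion --- are not controlled by the Maurer--Cartan equation. That equation governs the interplay of $d_{\Mog}$ with $\frac12[m_r,m_r]$ (it is what makes the twisted differential square to zero), but neither $\partial_2$ nor $\partial_{m_{\fraktg}}$ produces any $d_{\Mog}$-terms, so there is nothing for it to cancel against; an attempt to run the verification through $dm_r+\frac12[m_r,m_r]=0$ stalls. What actually closes the computation in the paper is: (i) centrality of the $t_{jj}$, which removes the $w^{(ii)}$- and $w^{(i+1\,i+1)}$-terms of $m_{r+1}$; (ii) the cabling relation $[t_{i\,i+1},\theta^{\delta_i^r}]=0$, which kills the $w^{(i\,i+1)}$-term outright; and (iii) the identifications $w^{(ik)}\wedge(l\circ_i s^{-2}\lambda)=w^{(i+1\,k)}\wedge(l\circ_i s^{-2}\lambda)$ and $a_m^{(i)}\wedge(l\circ_i s^{-2}\lambda)=a_m^{(i+1)}\wedge(l\circ_i s^{-2}\lambda)$ in $\Mogg$, combined with the formulas $(t_{ik})^{\delta_i^r}=t_{ik}+t_{i+1\,k}$ and $(x_m^{(i)})^{\delta_i^r}=x_m^{(i)}+x_m^{(i+1)}$ on generators, which merge the two contributions on strands $i,i+1$ into a single term $(w^{(ik)}\wedge l)\circ_i s^{-2}\lambda\otimes[t_{ik},\theta]^{\delta_i^r}$, respectively $(a_m^{(i)}\wedge l)\circ_i s^{-2}\lambda\otimes[x_m^{(i)},\theta]^{\delta_i^r}$, matching $\partial_2\partial_{m_{\fraktg}}$ up to sign. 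Replace your appeal to the Maurer--Cartan equation by these centrality, cabling and $\Mog$-module identities and your blueprint becomes the paper's proof.
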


\begin{proof}
We need to show that $\partial_2$ anti-commutes with the other two terms. For this we compute
\begin{align*}
&\partial_{m_{\fraktg}}\partial_2( l\otimes \theta)= \partial_{m_{\fraktg}} \sum_i  (-1)^{|l|} l\circ_i s^{-2}\lambda \otimes  \theta^{\delta_i^r}\\
=& \sum\limits_{j<k,i}  (-1)^{|l|} w^{(jk)} \wedge (l\circ_i s^{-2} \lambda) \otimes [t_{jk} ,\theta^{\delta_i^r}]\\
+& \sum\limits_{j,m,i}  (-1)^{|l|} a^{(j)}_m \wedge (l\circ_i s^{-2} \lambda) \otimes [x_m^{(j)} ,\theta^{\delta_i^r}]
+\sum\limits_{j,m,i} (-1)^{|l|} b^{(j)}_m \wedge (l\circ_i s^{-2} \lambda) \otimes [y_m^{(j)} ,\theta^{\delta_i^r}]\\
=&\big(\sum\limits_{\substack{j<k,i \\ i,i+1,j,k \text{ all distinct}}}+ \sum_{j=i<k}+ \sum_{j=i+1< k}+ \sum_{j<i=k}+ \sum_{j<i+1= k} \big) \big( (-1)^{|l|} w^{(jk)} \wedge (l\circ_i s^{-2} \lambda) \otimes [t_{jk} ,\theta^{\delta_i^r}]\big)\\
+&\big(\sum\limits_{\substack{j\neq i,i+1 \\ m}}+ \sum\limits_{\substack{j=i \\ m}}+ \sum\limits_{\substack{j=i+1 \\ m}} \big)  \big((-1)^{|l|} a^{(j)}_m \wedge (l\circ_i s^{-2} \lambda) \otimes [x_m^{(j)} ,\theta^{\delta_i^r}] + (-1)^{|l|}  b^{(j)}_m \wedge (l\circ_i s^{-2} \lambda) \otimes [y_m^{(j)} ,\theta^{\delta_i^r}]\big)
\end{align*}
Here we already used the fact that all $t_{jj}$ are central. Next, notice that
\[
\sum\limits_{\substack{j<k,i \\ i,i+1,j,k \text{ all distinct}}} (-1)^{|l|} w^{(jk)} \wedge (l\circ_i s^{-2} \lambda) \otimes [t_{jk} ,\theta^{\delta_i^r}]=
\sum\limits_{\substack{j<k,i \\ i,i+1,j,k \text{ all distinct}}}  (-1)^{|l|} (w^{(jk)} \wedge l) \circ_i s^{-2} \lambda \otimes [t_{jk} ,\theta]^{\delta_i^r} 
\]
and
\begin{align*}
&\sum_{j=i<k} w^{(ik)} \wedge (l\circ_i s^{-2} \lambda) \otimes [t_{ik} ,\theta^{\delta_i^r}]+ \sum_{j=i+1< k} w^{(i+1 \ k)} \wedge (l\circ_i s^{-2} \lambda) \otimes [t_{i+1 \ k} ,\theta^{\delta_i^r}]\\
=&\sum_{j=i<k} w^{(ik)} \wedge (l\circ_i s^{-2} \lambda) \otimes [t_{ik}+t_{i+1 \ k} ,\theta^{\delta_i^r}]= \sum_{j=i<k} (w^{(ik)} \wedge l) \circ_i s^{-2} \lambda \otimes [t_{ik} ,\theta]^{\delta_i^r}
\end{align*}
where in particular we used that $w^{(ik)} \wedge (l\circ_i s^{-2} \lambda)=w^{(i+1 \ k)} \wedge (l\circ_i s^{-2} \lambda) $ and $[t_{i \ i+1}, \theta^{\delta_i^r}]=0$. The sums over $j<k=i$ and $j<k=i+1$ work similarly. Moreover,
\[
\sum\limits_{\substack{j\neq i,i+1 \\ m}} a^{(j)}_m \wedge (l\circ_i s^{-2} \lambda) \otimes [x_m^{(j)} ,\theta^{\delta_i^r}] = \sum\limits_{\substack{j\neq i,i+1 \\ m}} (a^{(j)}_m \wedge l) \circ_i s^{-2} \lambda \otimes [x_m^{(j)} ,\theta]^{\delta_i^r} 
\]
and
\begin{align*}
&\sum\limits_{\substack{j=i \\ m}} a^{(i)}_m \wedge (l\circ_i s^{-2} \lambda) \otimes [x_m^{(i)} ,\theta^{\delta_i^r}] + \sum\limits_{\substack{j=i+1 \\ m}} a^{(i+1)}_m \wedge (l\circ_i s^{-2} \lambda) \otimes [x_m^{(i+1)} ,\theta^{\delta_i^r}]\\
=&\sum\limits_{i,m} (a^{(i)}_m \wedge l)\circ_i s^{-2} \lambda \otimes [x_m^{(i)}+x_m^{(i+1)} ,\theta^{\delta_i^r}] =\sum\limits_{i,m} (a^{(i)}_m \wedge l)\circ_i s^{-2} \lambda \otimes [x_m^{(i)} ,\theta]^{\delta_i^r}.
\end{align*}
The terms involving the $b$'s may be treated analogously. In summary, these manipulations lead to
\[
\partial_{m_{\fraktg}} \partial_2 ( l\otimes \theta)=-\partial_2 \partial_{m_{\fraktg}} ( l \otimes  \theta).
\]
For $d_{\Mogg}$ the diagrammatic interpretation of $d_{\Mogg}$ (see Section \ref{sec:OmegaMo}) readily gives
\begin{align*}
(d_{\Mogg}\otimes 1) \partial_2( l \otimes  \theta)=&\sum_i d_{\Mogg} (-1)^{|l|} (l\circ_i s^{-2}\lambda) \otimes \theta^{\delta_i^r}=\sum_{i}(-1)^{|l|} d_{\Mogg}(l) \circ_i s^{-2}\lambda \otimes \theta^{\delta_i^r}\\
=&(-1)^{2|l|+1}\partial_2 ( d_{\Mogg}(l) \otimes \theta ) =-\partial_2 (d_{\Mogg} \otimes 1)( l\otimes  \theta).
\end{align*}
\end{proof}

Finally, abusing notation again, set
\[
(\Qc, d_\Qc):= (\Mogg \hotimes_S \fraktg, d_{\Mogg}\otimes 1 + \partial_{m_{\fraktg}}+ \partial_2).
\]

\subsection{Relation to the deformation complexes}

Recall the natural morphism of dg Lie algebras
\[
\Harr C^c(\fraktg)\rightarrow \fraktg.
\]
It is defined through the composition which on, say elements of weight $k$ is given by
\[
\Lie_k(IS^c(\fraktg[1])[-1])\xrightarrow {\pi^{\otimes k}} \Lie_k(\fraktg[1][-1])\rightarrow \fraktg
\]
where $\pi:IS^c(\fraktg[1])[-1]\rightarrow \fraktg[1][-1]=\fraktg$ is the natural projection and the second morphism is the unique Lie algebra map extending the identity $\fraktg\rightarrow \fraktg$.

\begin{lemma}\label{lemma:Harr}
The map $T:\Harr C^c(\fraktg)\rightarrow \fraktg$ is a quasi-isomorphism of dg Lie algebras.
\end{lemma}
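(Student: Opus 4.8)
The plan is to prove the statement arity by arity, recognizing it as an instance of Koszul duality between the commutative and Lie operads and reusing the Harrison--homology computation of Lemma \ref{lemma:harr}. Since $T$ preserves the arity decomposition and, over a field, cohomology commutes with products, it suffices to show that for each fixed $r\geq 1$ the component $T(r)\colon \Harr C^c(\fraktg(r))\to \fraktg(r)$ is a quasi-isomorphism. Here $C^c(\fraktg(r))=S^c(\fraktg(r)[1])$ is the Chevalley--Eilenberg chain coalgebra, cofree as a graded cocommutative coalgebra on $\fraktg(r)[1]$, with coderivation differential $d_{\CE}$ induced by the bracket; the total differential on $\Harr C^c(\fraktg(r))=(\hL(IS^c(\fraktg(r)[1])[-1]),\,d_{\CE}+d_{\Harr})$ is the sum of $d_{\CE}$ and the Harrison differential $d_{\Harr}$.

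First I would install the weight grading. The grading on $\fraktg(r)$ with $|x_l^{(i)}|=|y_l^{(i)}|=1$ and $|t_{ij}|=|t_{ii}|=2$ used in the proof of Lemma \ref{lemma:MCtMO)} is preserved by the bracket, hence it induces a weight grading on $C^c(\fraktg(r))$ and on $\Harr C^c(\fraktg(r))$ which $T(r)$ respects. Each weight-$w$ component is finite dimensional, so all the spectral sequences below live inside a fixed finite-dimensional weight piece and converge trivially, which is what makes the argument go through despite the completion in $\hL$.

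Next comes the key filtration. For a Lie monomial in $\hL(IS^c(\fraktg(r)[1])[-1])$ whose letters are symmetric words in $\fraktg(r)[1]$, let $N$ be the total number of $\fraktg(r)$-factors appearing. The differential $d_{\Harr}$, built from the reduced coproduct, only redistributes factors and so preserves $N$, whereas $d_{\CE}$ brackets two factors inside a single symmetric word and lowers $N$ by one. Filtering increasingly by $N$, the associated graded differential is exactly $d_{\Harr}$ acting on the Harrison complex of the \emph{cofree} cocommutative coalgebra $S^c(\fraktg(r)[1])$ with $d_{\CE}$ switched off. By Lemma \ref{lemma:harr} (the Dolgushev--Rogers computation, applied here with $\icgg^0(r)$ replaced by the graded vector space $\fraktg(r)$) its cohomology is the space of cogenerators, concentrated in weight one and equal to $\fraktg(r)$. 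The first differential of the spectral sequence is induced by $d_{\CE}$, but $d_{\CE}$ vanishes on length-one symmetric words, so it is zero; the sequence degenerates at $E_1=\fraktg(r)$. Since $T(r)$ is by construction the projection onto the weight-one generators $\fraktg(r)[1][-1]=\fraktg(r)$ followed by the identity, it induces the identity on $E_\infty$, proving that it is a quasi-isomorphism.

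It remains to record that $T$ is a morphism of dg Lie algebras: it is a Lie algebra map by construction (it extends $\id_{\fraktg}$ freely over the free Lie algebra), and the chain-map property $T\circ(d_{\CE}+d_{\Harr})=0$ reduces, on weight-two generators, to the cancellation of the two contributions $T\circ d_{\CE}$ and $T\circ d_{\Harr}$ into $[\,\cdot\,,\cdot\,]_{\fraktg}$. This cancellation is precisely the compatibility encoded by the Koszul twisting morphism $\coCom\to\Lie$, which also identifies $C^c(-)$ with the bar construction and $\Harr$ with the cobar construction, exhibiting $T$ as the counit of the bar--cobar adjunction. The main obstacle is therefore not convergence (which is free from the weight grading) but the sign-careful verification of this cancellation, together with matching the weight shifts so that Lemma \ref{lemma:harr} applies verbatim.
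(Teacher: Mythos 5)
Your proof is correct and takes essentially the same route as the paper's: both run a one-step filtration spectral sequence that removes $d_{\CE}$ from the associated graded, leaving only $d_{\Harr}$, and then invoke the Dolgushev--Rogers computation $H(\Harr(S^c(\fraktg[1])),d_{\Harr})=\fraktg$ (the same input as Lemma \ref{lemma:harr}), with $\gr T$ inducing the identity on cogenerators. The only deviation is cosmetic --- the paper filters by the number of internal Lie brackets in $\fraktg$, so that $d_{\CE}$ strictly raises filtration, whereas you filter by the symmetric-word length $N$, so that $d_{\CE}$ strictly lowers it --- and your arity-wise reduction and weight-grading bookkeeping merely make explicit the finiteness and convergence that the paper leaves implicit.
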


\begin{proof}
Filter both sides by the total number of brackets in $\fraktg$. Remark that $\Harr C^c(\fraktg)$ has two types of Lie brackets - the internal bracket on $\fraktg$ and the one coming from taking the Harrison complex. Here the filtration is taken with respect to the internal Lie bracket. This defines complete descending filtrations on the respective complexes. On the level of the associated graded, we find that
\[
\gr T:(\gr \Harr C^c(\fraktg),d_{\Harr} ) \rightarrow \gr \fraktg
\]
vanishes on elements of weight greater than one in the free Lie algebra. On elements of weight one, i.e. elements in $C^c(\fraktg)[-1]$, it is by definition non-zero only on elements in $\fraktg[1][-1]$ (i.e. elements which are not a coproduct of elements in $\fraktg[1]$) on which it is the identity. Additionally, the Chevalley-Eilenberg differential vanishes on the associated graded. Moreover, as computed in (\cite{DR12}, Theorem B.1.), we have 
\[
H(\Harr (S^c(\fraktg[1])),d_{\Harr})=\fraktg
\] 
and thus $\gr T$ defines a quasi-isomorphism. The statement follows.
\end{proof}

\begin{lemma}
Denote the differential on $\Def_\tk (C(\fraktg)\xrightarrow \Xi \Mog) \cong \Mogg \hotimes_S \Harr C^c(\fraktg)$ by
\[
D_\tk=d_{\Mogg}\otimes 1 + 1\otimes d_{\Harr}+ 1\otimes d_{\CE} + \partial_{\mathsf{op}}'+\partial_{\mathsf{mod}}'.
\]
The map above induces a morphism of complexes
\[
E:=1\otimes T: ( \Mogg\hotimes_S \Harr C^c(\fraktg),D_\tk) \rightarrow (\Qc,d_\Qc)
\]
\end{lemma}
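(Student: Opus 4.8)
The plan is to verify directly that $E=1\otimes T$ intertwines the two differentials, i.e. that $d_{\Qc}\circ E=E\circ D_{\tk}$, by matching the constituent terms of $D_{\tk}=d_{\Mogg}\otimes 1 + 1\otimes d_{\Harr}+ 1\otimes d_{\CE} + \partial_{\mathsf{op}}'+\partial_{\mathsf{mod}}'$ with those of $d_{\Qc}=d_{\Mogg}\otimes 1 + \partial_{m_{\fraktg}}+ \partial_2$, using the structural properties of $T$ recorded in Lemma \ref{lemma:Harr}. The three properties I will exploit are that $T$ is a chain map into a complex with vanishing differential, that $T$ is a morphism of (free) Lie algebras, and that $T$ is compatible with the right operadic comodule structures (of $\Harr C^c(\fraktg)$ over $C^c(\frakt_\bv)$ and of $\fraktg$ over $\frakt_\bv$). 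Since $E$ acts only on the second tensor factor, the two internal terms $d_{\Mogg}\otimes 1$ agree immediately up to the Koszul sign.

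Next I would dispose of the two Harrison/Chevalley--Eilenberg terms. Because $\fraktg$ is concentrated in degree zero its differential vanishes, so $T$ being a chain map forces $T\circ(d_{\Harr}+d_{\CE})=0$; hence $E\circ(1\otimes d_{\Harr}+1\otimes d_{\CE})=1\otimes\big(T(d_{\Harr}+d_{\CE})\big)=0$, matching the absence of any such term in $d_{\Qc}$. For the module twist I claim $E\circ\partial_{\mathsf{mod}}'=\partial_{m_{\fraktg}}\circ E$. The twisting element $m_{\mathsf{mod}}=\Xi\circ s^{-1}\circ\pi$ lives in the weight-one part of $\Harr C^c(\fraktg)$ and, under the identifications, is represented by exactly the element $\sum_a\pm n_a\otimes s^{-1}m^a$ that becomes $m_{\fraktg}$ after applying $T$ (which is the identity on weight one, cf. Lemma \ref{lemma:MCtMO)}). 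Since $\partial_{\mathsf{mod}}'$ acts through the free-Lie bracket on $\Harr C^c(\fraktg)$ together with the $\Mog$-module action on $\Mogg$, and $T$ carries the Harrison bracket to the bracket of $\fraktg$, applying $1\otimes T$ produces precisely the action through $[-,-]_{\fraktg}$ and the $\Mog$-module structure that defines $\partial_{m_{\fraktg}}$.

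The main obstacle is the operadic term, where I must show $E\circ\partial_{\mathsf{op}}'=\partial_2\circ E$. I would expand $m_{\mathsf{op}}$ as in Section \ref{section:diagdescr} into its three leading pieces $s^{-2}\lambda\otimes\prodbvgraphs$, $s^{-2}\mu\otimes\brabvgraphs$ and $\delta^*\otimes\deltabvgraphs$, so that $\partial_{\mathsf{op}}'(l\otimes x)$ becomes a sum of the corresponding operations: composing $l$ in $\Mogg$ with $s^{-2}\lambda$, $s^{-2}\mu$, $\delta^*$ while acting on $x$ by the $\prodbvgraphs$-, $\brabvgraphs$- and $\deltabvgraphs$-cocompositions. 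The key point is that $T$ retains only the cogenerator (weight-one) component: the $\prodbvgraphs$-cocomposition, being the $\mu^c$-part of the cooperadic comodule structure, projects under $T$ to the coface operation $\theta\mapsto\theta^{\delta_i^r}$ on $\fraktg$ (these are precisely the formulas matching $\Mog$ in Section \ref{sec:Mog}), whereas the $\brabvgraphs$- ($\lambda^c$-) and $\deltabvgraphs$- ($\Delta^c$-) cocompositions produce genuine two-fold coproducts, respectively $w^{(ii)}$/$t_{ii}$-type contributions, which lie outside the cogenerator part and are therefore annihilated by $T$.

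Only the first piece then survives, giving exactly $\sum_i(-1)^{|l|}\,l\circ_i s^{-2}\lambda\otimes (Tx)^{\delta_i^r}=\partial_2(l\otimes Tx)$. I expect the genuine work to be the careful bookkeeping of signs in this identification, together with checking that the $\lambda^c$- and $\Delta^c$-cocompositions of the generators $w^{(ij)},a_l^{(i)},b_l^{(i)}$ indeed vanish after the projection $T$; once these four matched terms are assembled, the identity $d_{\Qc}\circ E=E\circ D_{\tk}$ follows, establishing that $E$ is a morphism of complexes.
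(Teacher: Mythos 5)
Your proposal is correct and follows essentially the same route as the paper's proof: the Harrison and Chevalley--Eilenberg terms die because $T$ is a chain map into a complex with zero differential, the module twist matches because the weight-one part of $m_{\mathsf{mod}}$ is carried by $T$ exactly to $m_{\fraktg}$ while $T$ takes the Harrison bracket to the bracket of $\fraktg$, and for the operadic twist only the $\mu^c$-piece of $m_{\mathsf{op}}$ (which induces the coface maps $\theta\mapsto\theta^{\delta_i^r}$) survives, the $\lambda^c$- and $\Delta^c$-pieces producing decomposable elements annihilated by the weight-one projection in $T$. The one cosmetic slip is that you phrase the three leading pieces of $m_{\mathsf{op}}$ in the graphical $\mathsf{BVGraphs}$-language of Section \ref{section:diagdescr}, whereas in this setting $m_{\mathsf{op}}$ lives in $\Omega\bv^c\hotimes_S C^c(\frakt_\bv)$ with leading terms $s^{-1}\Delta^c\otimes st_{11}+s^{-1}\lambda^c\otimes st_{12}+s^{-1}\mu^c\otimes 1_{12}$, so the surviving term is $v\circ_i 1_{12}=v^{\delta_i^r}$ and the discarded ones are the insertions of $st_{12}$ and $st_{11}$ -- but this translation does not change the argument.
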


\begin{proof}
Since $T:\Harr C^c(\fraktg)\rightarrow \fraktg[1]$ is a map of complexes, we have
\[
E(d_{\Harr} \otimes 1+d_{\CE}\otimes 1)=0.
\]
As for $\partial'_{\mathsf{mod}}$, notice that the Maurer-Cartan element in $\Hom_S((\Harr C(\fraktg))(r),\Mog(r))$ 
\[
\Xi\circ s^{-1} \circ \pi:(\Harr C(\fraktg))(r)\rightarrow C(\fraktg)(r)[1]\rightarrow \Mog(r)
\]
is non-zero only on products of the cogenerators of $\fraktg^*(r)[-1][1]=\fraktg^*(r)$ (i.e any bracket in the Harrison complex is mapped to zero). Additionally, since the differential $\partial'_{\mathsf{mod}}$ is defined through the action of 
\[
m_\mathsf{mod}(r)=\sum\limits_{z} (\Xi\circ s^{-1} \circ \pi) (z^*)\otimes z \in \Mog(r)\hotimes_{S_r} \Harr C^c(\fraktg)(r)
\]
and moreover, $T$ vanishes on non-trivial coproducts of generators of $\fraktg$, we find that, for any $u \otimes  v  \in \Mogg(r)\hotimes_{S_r} (\Harr C^c(\fraktg))(r)$
\begin{align*}
&E(\partial'_\mathsf{mod}(u\otimes v))=\sum \pm E( (\Xi\circ s^{-1} \circ \pi) (z^*)\wedge u \otimes  [z,v])\\
=&\sum \pm  (\Xi\circ s^{-1} \circ \pi) (z^*)\wedge u \otimes  [T(z),T(v)])\\
=&
\begin{cases}
\sum\pm (\Xi\circ s^{-1}\circ \pi) (z^*)\wedge u \otimes   [z,T(v)]  & \text{ if } z \in \{t_{ij},  t_{ii}, x_l^{(i)}, y_l^{(i)}\}_{i,j,l}\\
0 & \text{ otherwise.}
\end{cases}
\end{align*}
Now, on cogenerators $\Xi\circ s^{-1}\circ \pi$ takes the values
\[
(\Xi \circ s^{-1} \circ \pi)( (t_{ij})^*)=w^{(ij)}, \ \ \ (\Xi \circ s^{-1}\circ  \pi)( (x_l^{(i)})^*)=a_l^{(i)}, \ \ \ (\Xi\circ s^{-1}\circ  \pi)( (y_l^{(i)})^*)=b_l^{(i)}
\]
from which we conclude that
\begin{align*}
E(\partial'_\mathsf{mod} (u\otimes v))& =\sum_{1\leq i<j\leq r} w^{(ij)}\wedge u \otimes [ t_{ij},T(v)] +\sum_{i=1}^r w^{(ii)} \wedge u \otimes [ t_{ii},T(v)] \\
&+\sum_{i=1}^r \sum_{l=1}^g a_l^{(i)}\wedge u \otimes [x_l^{(i)},T(v)] + b_l^{(i)} \wedge u \otimes [ y_l^{(i)},T(v)]\\
&=m_r \cdot E(u\otimes  v) = \partial_{m_{\fraktg}} E(u\otimes v).
\end{align*}

On the other hand, notice that in this case by a similar reasoning as in Section \ref{section:diagdescr}, the differential $\partial'_\mathsf{op}$ is induced by the action of the Maurer-Cartan element $m_\mathsf{op}\in \Omega \bv^c \hotimes_S C^c(\frakt_{\bv})$ whose leading order terms are
\[
m_\mathsf{op}=s^{-1}\Delta^c \otimes s t_{11} + s^{-1}\lambda^c \otimes s t_{12}+ s^{-1}\mu^c\otimes 1_{12}+ \dots.
\]
Here $1_{12}\in C^c(\frakt_{\bv})(2)=\K 1_{12}\oplus I S^c(\frakt_{\bv}[1](2))$. Additionally, (again, as in Section \ref{section:diagdescr}) since $\kappa$ vanishes on elements of arity greater than three, we only need to consider these terms when we apply $\partial'_\mathsf{op}$ on $\Mogg\hotimes_S \Harr C^c(\fraktg)$. Thus, we have
\begin{align*}
E(\partial'_\mathsf{op} ( u\otimes   v))&=E(m_\mathsf{op} \cdot (u\otimes  v))\\
&= \sum_{i=1}^r \pm E\big( u \circ_i \delta^* \otimes v\circ_i s t_{11} + \pm u\circ_i s^{-2}\mu \otimes v \circ_i s t_{12}+ \pm u \circ_i s^{-2} \lambda \otimes v\circ_i 1_{12}  \big) \\
&= \sum_{i=1}^r \pm u\circ_i s^{-2} \lambda \otimes  T(v\circ_i 1_{12})= \sum_{i=1}^r \pm u\circ_i s^{-2} \lambda \otimes  T(v)^{\delta_i^r}= \partial_2 E(u\otimes v)
\end{align*}
since $v\circ_i 1_{12}=v^{\delta^r_i}$ is the only term on which the morphism $T$ does not vanish (in the other two cases, the operadic bimodule structure of $\Harr C^c(\fraktg)$ over $C^c(\frakt_\bv)$ produces least one non-trivial coproduct within the brackets, and on such elements $T$ vanishes). Notice also that $T(v^{\delta^r_i})=T(v)^{\delta^r_i}$, since $T$ is a Lie algebra morphism.

\end{proof}

\begin{lemma}\label{lemma:defot}
The map $E: \Mogg\hotimes_S \Harr C^c(\fraktg) \rightarrow \Qc$ is a quasi-isomorphism.
\end{lemma}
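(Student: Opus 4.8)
The strategy is to transport the quasi-isomorphism $T$ of Lemma \ref{lemma:Harr} through the surrounding structure by means of two nested complete descending filtrations and a spectral sequence comparison, exactly in the spirit of the proof of Lemma \ref{lemma:zigzagDef}. Since $E=1\otimes T$ is already known to be a morphism of complexes, it suffices to produce a filtration compatible with $E$ on which the induced map is a quasi-isomorphism. First I would filter both $\Mogg\hotimes_S \Harr C^c(\fraktg)$ and $\Qc=\Mogg\hotimes_S\fraktg$ by arity, setting $\mF^p=\prod_{N\geq p}(-)(N)$. This is a complete descending filtration preserved by $E$. The only parts of the two differentials that raise the arity are $\partial'_\mathsf{op}$ on the source and $\partial_2$ on the target; both therefore vanish on the associated graded, which consequently splits arity-wise.

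On the arity-associated graded I would then filter by the cohomological degree of the $\Mogg$-factor, $\mF^p=\{\deg(\Mogg)\geq p\}$. Because $\Mogg$ is non-negatively graded, this is again a complete descending filtration, bounded in each fixed total degree and arity, so its spectral sequence converges. Each of $d_{\Mogg}\otimes 1$, $\partial_{m_{\fraktg}}$ and $\partial'_\mathsf{mod}$ strictly raises the $\Mogg$-degree: $d_{\Mogg}$ by definition, and the two twisting terms because they wedge a positive-degree generator ($w^{(ij)}$, $a_l^{(i)}$, $b_l^{(i)}$ or $w^{(ii)}$) into the $\Mogg$-factor. Hence all three die on the associated graded, and what survives is $1\otimes(d_{\Harr}+d_{\CE})$ on the source and the zero differential on the target.

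On this doubly associated graded the map becomes $1\otimes T$ arity-wise. Working over a field of characteristic zero, taking $S_r$-coinvariants is exact and commutes with cohomology, and since the $\Mogg(r)$-factor carries the zero differential, the K\"unneth theorem gives $H\big(\Mogg(r)\otimes \Harr C^c(\fraktg(r))\big)\cong \Mogg(r)\otimes H(\Harr C^c(\fraktg(r)))$. By Lemma \ref{lemma:Harr} the map $T$ identifies $H(\Harr C^c(\fraktg(r)))$ with $\fraktg(r)$, so $1\otimes T$ is an isomorphism on the first page. A comparison of the two convergent spectral sequences then upgrades this to the statement that $E$ is a quasi-isomorphism, proving Lemma \ref{lemma:defot}. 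I expect the only real work to lie in the careful bookkeeping---verifying term by term that $\partial_2$ and $\partial'_\mathsf{op}$ are exactly the arity-raising contributions, and that $\partial_{m_{\fraktg}}$, $\partial'_\mathsf{mod}$ and $d_{\Mogg}$ all raise the $\Mogg$-degree---together with confirming convergence of the completed spectral sequences, which is precisely where the finiteness and completeness hypotheses recorded earlier are used.
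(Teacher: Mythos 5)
Your argument is correct and is essentially the paper's own proof: the paper uses a single filtration by the cohomological degree of the $\Mogg$-factor, which is complete and bounded since $\Mogg$ is non-negatively graded, notes that every term of both differentials except $1\otimes(d_{\Harr}+d_{\CE})$ on the source strictly raises this degree (all generators $w^{(ij)}$, $w^{(ii)}$, $a_l^{(i)}$, $b_l^{(i)}$ wedged in by the twists have positive degree, as do $s^{-2}\lambda$, $s^{-2}\mu$, $\delta^*$), so that the associated graded comparison reduces to Lemma \ref{lemma:Harr}; your preliminary filtration by arity, modelled on the proof of Lemma \ref{lemma:zigzagDef}, is redundant here but harmless, and if anything makes the convergence of the second filtration easier to certify (it becomes bounded in each arity). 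One bookkeeping slip worth flagging: it is not true that all of $\partial'_\mathsf{op}$ raises arity --- its component given by composition with $\delta^*$ (arity one, adding a tadpole and pairing the Harrison factor against $s\,t_{11}$) preserves arity and therefore survives to your first associated graded. Since $|\delta^*|=2$, this leftover term still strictly raises the $\Mogg$-degree and is killed by your second filtration along with $d_{\Mogg}\otimes 1$, $\partial_{m_{\fraktg}}$ and $\partial'_\mathsf{mod}$, so the final associated graded and hence the conclusion are unaffected.
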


\begin{proof}
Filter both sides by
\[
\text{ degree on } \Mogg  \geq p.
\]
Since $\Mogg$ is non-negatively graded, this defines descending complete bounded above filtrations on the respective complexes. On the level of the associated graded, we are left with $1\otimes (d_{\Harr}+d_{\CE})$ as a differential on $\gr (\Mogg\otimes_S \Harr C(\fraktg))$, while on $\gr \Qc$ the differential vanishes. The statement now follows by Lemma \ref{lemma:Harr}.
\end{proof}

\begin{rem}\label{rmk:crucialzigzag}
In particular, Lemma \ref{lemma:defot}, Theorem \ref{thm:defGC} and the zig-zag of quasi-isomorphisms from Lemma \ref{lemma:zigzagDef} imply that
\[
H^{i-1}(\spp(H^*)\ltimes \GCg)\cong H^i( \Mogg\hotimes_S \fraktg) \text{ for all } i\neq 4.
\]
Since $\Mogg$ is non-negatively graded and $\fraktg$ is concentrated in degree zero, we obtain the following result as a corollary. Indeed this also proves one part of Theorem \ref{thm:GCgvsp}. Notice that the elements of $\spp(H^*)$ live in degrees $-1$ and $0$.
\end{rem}

\begin{cor}\label{cor:negativedegrees}
For $i<-1$, we have $H^i (\spp(H^*)\ltimes \GCg)=H^i(\GCg)=0$.
\end{cor}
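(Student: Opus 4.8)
The plan is to combine the cohomological comparison recorded in Remark \ref{rmk:crucialzigzag} with an elementary degree-boundedness observation about the auxiliary complex $\Qc$, and then transport the conclusion back through Lemma \ref{lemma:extensioncommutes}. Recall that Theorem \ref{thm:defGC}, the zig-zag of quasi-isomorphisms of Lemma \ref{lemma:zigzagDef}, and the quasi-isomorphism $E$ of Lemma \ref{lemma:defot} together yield the isomorphisms
\[
H^{i-1}(\spp(H^*)\ltimes \GCg)\cong H^{i}(\Mogg\hotimes_S \fraktg)\qquad\text{for all } i\neq 4.
\]

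The key structural point is that the complex $\Qc=\Mogg\hotimes_S \fraktg$ is concentrated in non-negative degrees. Indeed $\Mogg$ is non-negatively graded (it is built from $\Mog$, which sits in non-negative degrees, composed with $\bvk$), while $\fraktg$ is concentrated in degree zero; hence every homogeneous generator $l\otimes\theta$ has total degree $|l|\geq 0$. Since all three pieces of the differential $d_{\Mogg}\otimes 1+\partial_{m_{\fraktg}}+\partial_2$ are of degree $+1$, the cohomology inherits this bound, so $H^{i}(\Qc)=0$ whenever $i<0$.

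First I would apply the displayed isomorphism for $i<0$. Any such $i$ automatically satisfies $i\neq 4$, so the isomorphism is available and gives $H^{i-1}(\spp(H^*)\ltimes \GCg)\cong H^{i}(\Qc)=0$. Re-indexing by $j=i-1$, this says $H^{j}(\spp(H^*)\ltimes \GCg)=0$ for every $j<-1$.

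Finally I would invoke Lemma \ref{lemma:extensioncommutes} to replace $\spp(H^*)\ltimes\GCg$ by $\GCg$ on cohomology in this range. For $g\geq 2$ the lemma identifies the two cohomologies in all degrees $i\neq 0$, in particular for $i<-1$; for $g=1$ it introduces the correction term $\spp_i(H^*)$, which vanishes for $i<-1$ since $\spp(H^*)$ is concentrated in degrees $-1$ and $0$. Either way $H^{i}(\spp(H^*)\ltimes\GCg)=H^{i}(\GCg)$ for $i<-1$, and combining this with the vanishing just established finishes the argument. I do not expect any genuine obstacle here: the entire content is carried by the earlier comparison results, and the corollary is simply the observation that having a non-negatively graded model forces the graph cohomology to vanish below degree $-1$.
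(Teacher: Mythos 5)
Your proposal is correct and follows exactly the paper's own argument: the paper derives the corollary from Remark \ref{rmk:crucialzigzag} (itself assembled from Theorem \ref{thm:defGC}, Lemma \ref{lemma:zigzagDef} and Lemma \ref{lemma:defot}), the observation that $\Mogg$ is non-negatively graded while $\fraktg$ sits in degree zero, and the fact that $\spp(H^*)$ lives in degrees $-1$ and $0$ so that Lemma \ref{lemma:extensioncommutes} transfers the vanishing to $\GCg$. Your added remarks — that $i<0$ automatically avoids the excluded degree $i=4$, and that the $g=1$ correction $\spp_i(H^*)$ vanishes for $i<-1$ — are exactly the implicit checks the paper relies on.
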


\begin{rem}
In the non-framed case for $g=1$ the complex $\mathsf{Q}^{\mathrm{non-fr}}_\mathrm{(g)}:=\Mo^{!} \hotimes_S \fraktone$ is set up analogously, with all occurrences of $\Mo_{(1)}$ and $\frakt_{(1)}$ replaced by $\Mo$ and $\fraktone$, respectively. We recover the results that
\[
H(\Def_\tk(C(\fraktone)\rightarrow \Mo))\cong H(\mathsf{Q}^{\mathrm{non-fr}}_\mathrm{(g)}) 
\]
and that, since $\Mo^{!}$ is non-negatively graded, for $i<-1$
\[
H^i(\spp(H^*)\ltimes \GC_{(1)}^{\minitadp})=H^i(\GCg)=0.
\]
\end{rem}

\subsection{The cohomology of $\Mogg \hotimes_S \fraktg$ in degrees zero and one}

We aim to prove the following result.

\begin{prop}\label{thm:Qcohomology}
The cohomology of $\Qc$ in degrees zero and one is
\begin{align*}
H^{0}(\Qc)&=\begin{cases}
\K [1]\oplus \K[1] & \text{ for } g=1\\
0 & \text{ for } g\geq 2
\end{cases} \\
H^1(\Qc)&\cong Z_{(g)}/B_{(g)}.
\end{align*}
\end{prop}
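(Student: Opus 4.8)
The plan is to compute $H^0(\Qc)$ and $H^1(\Qc)$ by first peeling the Koszul-dual data off $\Mogg$ and then analysing the resulting, much smaller, complex in its three lowest degrees. First I would reuse the spectral sequence of Lemma \ref{lemma:ComHLie} essentially verbatim: its zeroth-page differential $d_{\kappa_1}+d_{\kappa_2}$ lives entirely in the $\Mogg$ tensor-factor — it removes dashed edges and resolves the $\delta^*$-tadpoles through the acyclicity of $(\Lie^c\{-1\}\circ\Com\{-2\},d_{\kappa_1})$ and $(\K[\Delta]/(\Delta^2)\circ\K[\delta^*],d_{\kappa_2})$ — and is therefore insensitive to whether $\Mogg$ is tensored with $\icgg^0$ or with $\fraktg$. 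This identifies $\Qc$, up to quasi-isomorphism, with $(\Com^c\circ H\circ\Lie\{-1\}\hotimes_S\fraktg,\bar d)$, in which every cluster carries one element of $H$ and one (possibly trivial) Lie tree, there are no tadpoles, and $\bar d$ is assembled from the surviving pieces of $\partial_{m_{\fraktg}}$ (decorate a vertex by $a_l$ or $b_l$ and bracket the $\fraktg$-factor with $x_l^{(i)}$ resp.\ $y_l^{(i)}$), of $\partial_2$ (split a singleton cluster into a Lie bracket via $\circ\, s^{-2}\lambda$ while doubling the strand through $\delta_i^r$), and of the fusing differential $d_{\mathrm{fuse}}$.

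Since $\fraktg$ is concentrated in degree zero and $\Com^c\circ H\circ\Lie\{-1\}$ is non-negatively graded, the total degree equals the degree of the graph part, and I can enumerate the lowest pieces: in degree $0$ the only diagrams are the \emph{bare} ones (all clusters singletons decorated by $1$), so the degree-zero space is $\prod_r\fraktg(r)_{S_r}$; in degree $1$ exactly one cluster is special, being either an arity-two Lie-bracket cluster or an arity-one cluster decorated by some $c\in H^1$. As degree $0$ is the bottom of the complex, $H^0(\Qc)=\ker\bar d^0$, and testing $\bar d^0$ against the two target types forces, on the $\fraktg$-factor, the vanishing of all brackets with the $x_l^{(i)}$ and $y_l^{(i)}$ (from the decorating part of $\partial_{m_{\fraktg}}$), together with a parity constraint in the $S_r$-coinvariants coming from $\partial_2$ and $d_{\mathrm{fuse}}$. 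Here Lemma \ref{lemma:center} is decisive: the bracket conditions confine cocycles to the centre of the arity-one Lie algebras, which for $g\geq2$ is one-dimensional and whose class is then annihilated by the coinvariant constraint, giving $H^0=0$; for $g=1$ the arity-one Lie algebra is abelian and the surviving classes are exactly two, matching $\spp_{-1}(H^*)=\K[1]\oplus\K[1]$ (and mirroring the obstruction already seen in Lemma \ref{lemma:extensioncommutes}).

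For $H^1$ I would first run an auxiliary filtration by the number of non-special clusters, in the spirit of the small/big-cluster arguments of Lemmas \ref{lemma:grgr}–\ref{lemma:coh}, to show that every degree-one cocycle is cohomologous to one supported on diagrams with a single $H^1$-decorated vertex and all remaining external vertices univalent; such a class is completely determined by its arity-two datum, a tuple $(U_c)_{c\in H^1}\in\fraktg(2)^{\times2g}$. The cocycle equation $\bar d(\text{degree }1)=0$ then unwinds into the defining relations of $Z_{(g)}$: the equivariance term of $\partial_2$ produces $u_\alpha^{12}=U_\alpha^{1,2}+U_\alpha^{2,1}$; the arity-three instance of $\partial_2$, using $\theta^{\delta_k^{n-1}\circ\delta_{k+1}^n}=\theta^{\delta_k^{n-1}\circ\delta_k^n}$, yields $U_\alpha^{12,3}-U_\alpha^{1,23}-U_\alpha^{2,13}=0$; and the bracket part of $\partial_{m_{\fraktg}}$ in arity three, landing in degree two, produces precisely the four remaining bracket relations of $Z_{(g)}$. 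Finally I would compute the image of $\bar d^0$ on an arity-one element $v\in\fraktg(1)$ to be $(V_{a_i}=[v^{12},x_i^{(1)}],\,V_{b_i}=[v^{12},y_i^{(1)}])$, which is exactly $B_{(g)}$; assembling these identifications gives $H^1(\Qc)\cong Z_{(g)}/B_{(g)}$.

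The main obstacle is this last paragraph. Two points require genuine care: the homotopy argument reducing an arbitrary degree-one cocycle to its arity-two datum (for which I expect the cleanest route is the nested-spectral-sequence machinery already developed for Theorem \ref{thm:defHGC}, specialised to total degree one and to the $\fraktg$-coefficients), and the sign- and $\delta$-map bookkeeping that translates the purely combinatorial cocycle and coboundary conditions into the six explicit relations of $Z_{(g)}$ and the coboundary space $B_{(g)}$. By contrast, the reduction of Step 1 and the degree-zero computation should be routine given Lemmas \ref{lemma:ComHLie} and \ref{lemma:center}.
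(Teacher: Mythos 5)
Your proposal follows essentially the same route as the paper's own proof: your Koszul reduction of $\Qc$ to $\Com^c\circ H\circ\Lie\{-1\}\hotimes_S\fraktg$ is the content of Lemma \ref{lem:lemma2}, your degree-zero analysis (Lemma \ref{lemma:center} confining the arity-one datum to the center, with the forced arity-two component $u^{12}$ then killing $t_{11}$ for $g\geq 2$ and leaving the two classes $x^{(1)},y^{(1)}$ for $g=1$) is exactly Lemma \ref{lem:lemma4}, and your attribution of the six relations of $Z_{(g)}$ --- symmetry and cyclicity from $\partial_2$, the four bracket relations from the decorating part of $\partial_{m_{\fraktg}}$ in arity three --- together with $B_{(g)}$ as the image of $\fraktg(1)$ reproduces the paper's nested spectral sequences (Lemmas \ref{lemma:E1}, \ref{lemma:grW}, \ref{lem:lemma3}, \ref{lem:lemma4}). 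The few points you leave implicit --- disposing of the $w^{(ij)}\otimes[t_{ij},-]$ terms of $\partial_{m_{\fraktg}}$ in the reduction, and the description of $H^2(W)$ needed to certify that the arity-three relations are read off in genuine subspaces rather than quotients --- are precisely the technical devices the paper's two filtrations supply, not different ideas.
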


Filter the complex $(\Qc,d_{\Qc})=(\Mogg\hotimes_S \fraktg, D)$ by
\[
\text{total degree of the decorations in } \Mog \geq p.
\]
On the level of the associated graded $\gr \Qc$ notice that
\[
\# \text{ of edges in } \Mog +(\# \text{ Lie elements } - \# \text{ clusters }) + \# \delta^*  \geq p
\]
defines a filtration. Here, we count the trivial Lie word in $\Lie\{-1\}(1)$ also as a Lie element. Moreover, this number is always non-negative, and the subspace
\[
(\gr \Qc)_{\geq 1} \subset \gr \Qc
\]
spanned by elements for which this number is at least one is a subcomplex of $\gr \Qc$. Consider the quotient space
\[
W:=\gr \Qc / (\gr \Qc)_{\geq 1}.
\]
Notice that elements in $W$ may be represented by elements in $ \Com^c \circ H \circ \Lie \{-1\}\hotimes_S \fraktg$, since these are the diagrams for which the number above is zero (i.e. there are no dashed edges or $\delta^*$'s, and each cluster contains precisely one Lie word). It is subject to the following claim.

\begin{lemma}\label{lem:lemma2}
The quotient map 
\[
\gr \Qc \twoheadrightarrow W
\]
is a quasi-isomorphism.
\end{lemma}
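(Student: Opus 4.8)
The plan is to prove the equivalent statement that the subcomplex $(\gr\Qc)_{\geq 1}$ is acyclic, so that the projection onto the quotient $W$ is a quasi-isomorphism. This should follow the exact pattern of Lemma~\ref{lemma:ComHLie}: the idea is to introduce one more filtration under which the only surviving piece of the differential is the pair of Koszul differentials already analysed there.

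First I would record how each summand of the differential on $\gr\Qc$ moves the integer $N:=\#\text{edges}+(\#\text{Lie elements}-\#\text{clusters})+\#\delta^*$. Recall that passing to $\gr\Qc$ has already killed the internal differential $d_{\Mog}$ (which replaces an edge by the diagonal class and hence strictly raises the decoration degree) together with the two decoration-creating terms $a_l^{(i)}\otimes x_l^{(i)}$, $b_l^{(i)}\otimes y_l^{(i)}$ of $\partial_{m_{\fraktg}}$. The remaining terms are: the two Koszul parts of $d_{\mathrm{coop}}$ (remove a dashed edge and fuse the two clusters it joins, resp.\ remove a dashed tadpole and raise $\delta^*$), the fusing part $d_{\mathrm{fuse}}$, the bracket-plus-$\delta^*$ term $d_{\bvk}$, the insertion $\partial_2$, and the edge-/tadpole-creating terms $w^{(ij)}\otimes t_{ij}$, $w^{(ii)}\otimes t_{ii}$ of $\partial_{m_{\fraktg}}$. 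A direct bookkeeping shows that all of these preserve $N$ except the last two, which raise it by one; this is exactly what makes $(\gr\Qc)_{\geq 1}$ a subcomplex and $W$ the quotient by it.

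Next I would resolve $(\gr\Qc)_{\geq 1}$ by equipping it with the same complete descending filtration used in Lemma~\ref{lemma:ComHLie}, namely by the degree of the $\Mog$-part plus the total arity minus the number of clusters plus the power of $\delta^*$. A second short computation shows that on the associated graded only the two Koszul differentials $d_{\kappa_1}$ (pairing $\Lie^c\{-1\}$ with $\Com\{-2\}$) and $d_{\kappa_2}$ (pairing $\K[\Delta]/(\Delta^2)$ with $\K[\delta^*]$) survive, while $d_{\mathrm{fuse}}$, $d_{\bvk}$, $\partial_2$ and the $\partial_{m_{\fraktg}}$-terms all strictly raise this filtration degree. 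Since $d_{\kappa_1}+d_{\kappa_2}$ acts only on the $\Mogg$-factor and leaves $\fraktg$ inert, the argument of Lemma~\ref{lemma:ComHLie}, resting on the acyclicity of the Koszul complexes $(\Lie^c\{-1\}\circ\Com\{-2\},d_{\kappa_1})$ and $(\K[\Delta]/(\Delta^2)\circ\K[\delta^*],d_{\kappa_2})$, applies verbatim and identifies the cohomology of $(\Mogg\hotimes_S\fraktg,d_{\kappa_1}+d_{\kappa_2})$ with $\Com^c\circ H\circ\Lie\{-1\}\hotimes_S\fraktg$. The crucial point is that this is precisely the $N=0$ subspace $W$; because $d_{\kappa_1}+d_{\kappa_2}$ preserves $N$, the complex splits along $N$ and the part with $N\geq 1$ is acyclic. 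By completeness of the filtrations this lifts back to acyclicity of $(\gr\Qc)_{\geq 1}$.

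The only real difficulty is bookkeeping. I would have to track with care how the six summands of the differential on $\gr\Qc$ shift both filtration quantities, and verify that the nested filtrations (decoration degree, then $N$, then the Koszul degree) are complete and bounded in each fixed arity and total degree, so that the successive spectral-sequence comparisons converge. No homological input beyond Lemma~\ref{lemma:ComHLie} is needed; the lemma is essentially the statement that the ``extra'' differentials $d_{\mathrm{fuse}}$, $d_{\bvk}$, $\partial_2$ and $\partial_{m_{\fraktg}}$ cannot create cohomology below the Koszul level.
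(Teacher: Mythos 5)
Your proposal is correct and follows essentially the same route as the paper: the paper's proof also filters by $\mathrm{arity}-\#\text{clusters}+\#\delta^*+\Mog\text{-degree}$ and reduces to the Koszul differentials $d_{\kappa_1}+d_{\kappa_2}$ of Lemma \ref{lemma:ComHLie}, whose cohomology $\Com^c\circ H\circ \Lie\{-1\}\hotimes_S\fraktg$ is exactly $\gr W$. Your reformulation via acyclicity of the subcomplex $(\gr \Qc)_{\geq 1}$ (using that the Koszul cohomology is concentrated in the $N=0$ part) is only a cosmetic repackaging of the paper's direct comparison of associated graded complexes, and your bookkeeping of how each summand of the differential shifts the two filtration quantities matches the paper's.
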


\begin{proof}
Filter both sides by
\[
\text{ arity } - \# \text{ clusters } + \# \delta^* + \Mog \text{ degree } \geq p.
\]
The induced differential on the associated graded complex $\gr W$ vanishes, whereas on $\gr\gr \Qc[-1]$, it is given by the sum of the Koszul differentials (see the proof of Lemma \ref{lemma:ComHLie}), i.e.
\[
(d_{\kappa_1}+d_{\kappa_2})\otimes 1.
\]
Its cohomology is isomorphic to
\[
H(\gr\gr \Qc, (d_{\kappa_1}+d_{\kappa_2})\otimes 1)\cong \Com^c \circ H\circ \Lie\{-1\}\hotimes_S \fraktg\cong \gr W.
\]
\end{proof}

Thus, in order to compute $H(\gr \Qc)$, it suffices to compute the cohomology of the quotient space $H(W,d_W)$. To set up the computation of the latter, filter $W$ by
\begin{equation}\label{eq:specseqarity}
\text{arity }  \geq p.
\end{equation}
Notice that the differential splits as $d_W=d_0+d_1$ where $d_i$ raises the arity by $i$. More precisely,
\begin{align*}
d_0&= d_\mathrm{fuse}\otimes 1\\
d_1&= \partial_2
\end{align*}

Consider the spectral sequence associated to the complete filtration above. For the associated graded complex, we have the following result. The proof is given further below.

\begin{lemma}\label{lemma:grW}
The cohomology of the associated graded complex in degrees zero, one and two is
\begin{align*}
H^0(\gr W, d_0)&\cong \fraktg(1)\\
H^1(\gr W,d_0)&\cong  \fraktg(1)[-1]^{\times 2g} \oplus \{(U_c)_{c} \in \fraktg(2)[-1]^{\times 2g} \ | \ U_c^{1,2}+ U_c^{2,1}=0 \}\\
H^2(\gr W,d_0)&\cong \fraktg(1)[-2] \oplus  \fraktg(2)[-2]^{\times 2g^2-2g} \\
&\oplus \{(U_i)_i \in \fraktg(2)[-2]^{\times g} \ | \ U_i^{1,2}=-U_i^{2,1} \} \oplus \{U_\nu \in \fraktg(2)[-2] \ | \ U_\nu^{1,2}=-U_\nu^{2,1} \}\\
 &\oplus  \{(V_i)_i \in \fraktg(2)[-2]^{\times g} \ | \ V_i^{1,2}=V_i^{2,1} \}\\
& \oplus \{(U_c)_c \in \fraktg(3)^{\times 2g} \ | \ U_c^{1,2,3}=U_c^{\sigma(1),\sigma(2),\sigma(3)} \ \forall \sigma \in S_3\}\\
&\oplus \{ (U_{p,q})\in \fraktg(3)^{g(2g-2)} \ | \ U_{p,q}^{1,2,3}=(-1)^{|\sigma|} U_{p,q}^{\sigma(1),\sigma(2),\sigma(3)} \ \forall \sigma \in S_3\}.
\end{align*}
\end{lemma}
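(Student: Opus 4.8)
The plan is to use that the differential on the associated graded, $d_0 = d_{\mathrm{fuse}}\otimes 1$, acts only on the first tensor factor of $\gr W \cong \Com^c\circ H\circ \Lie\{-1\}\hotimes_S \fraktg$. Since we work over a field of characteristic zero, the functor $-\hotimes_S\fraktg$ (symmetric-group coinvariants arity by arity, against the fixed coefficient module $\fraktg$) is exact, so
\[
H(\gr W,d_0)\cong \mathcal H\hotimes_S \fraktg,\qquad \mathcal H:=H\big(\Com^c\circ H\circ \Lie\{-1\},\,d_{\mathrm{fuse}}\big).
\]
Thus I would first reduce the problem to computing the $S$-module $\mathcal H$ in low internal degree, and only afterwards tensor with $\fraktg$. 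Since a diagram with $p$ clusters of sizes $k_1,\dots,k_p$ and decorations $d_1,\dots,d_p$ sits in internal degree $(n-p)+\sum_i|d_i|$ with $n=\sum k_i$ the arity, and since $d_{\mathrm{fuse}}$ preserves the arity while lowering $p$ by one, each fixed internal degree $j\le 2$ receives contributions only from arities $n\le 3$; this bounds the computation to a finite, arity-by-arity analysis.

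Next I would identify $d_{\mathrm{fuse}}$ as a Chevalley--Eilenberg/Harrison-type differential: it fuses two clusters, brackets their Lie words in $\Lie\{-1\}$ (using the \emph{symmetric} degree-one bracket), and multiplies their $H$-decorations in the graded commutative algebra $H$. In each fixed arity this is exactly the kind of cofree-coalgebra Harrison complex whose cohomology is controlled by the Dolgushev--Rogers computation already invoked in Lemma \ref{lemma:harr}, and by the explicit filtration-and-homotopy arguments of Lemmas \ref{lemma:coh} and \ref{lemma:unredHarr}. I would run that machinery in internal weights $0,1,2$ to determine $\mathcal H$ as an $S_n$-representation together with its decoration-multiplicities, paying particular attention to the degree-zero decoration $1$: because $1$ is the unit of $H$ and behaves like the special element $s^{-1}1$ of the \emph{unreduced} Harrison complex, the classes involving it are governed precisely by the unreduced computation of Lemma \ref{lemma:unredHarr} rather than by naive cogenerator-concentration. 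Concretely I expect $\mathcal H^0\cong\K$ in arity one (the generator decorated by $1$), $\mathcal H^1$ built from arity-one generators decorated by $c\in H^1$ together with a sign-type $S_2$-class in arity two for each $c$, and $\mathcal H^2$ spread over arities one ($\nu$-decorated), two, and three.

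Then I would tensor $\mathcal H$ with $\fraktg$ over $S$, translating each $S_n$-isotype into the corresponding condition on elements of $\fraktg(n)$: a trivial $S_2$-summand yields the symmetry $U^{1,2}=U^{2,1}$, a sign summand yields the antisymmetry $U^{1,2}+U^{2,1}=0$, and the $S_3$-trivial and $S_3$-sign summands in arity three yield the totally symmetric resp.\ totally antisymmetric conditions $U_c^{1,2,3}=U_c^{\sigma(1),\sigma(2),\sigma(3)}$ and $U_{p,q}^{1,2,3}=(-1)^{|\sigma|}U_{p,q}^{\sigma(1),\sigma(2),\sigma(3)}$. The numerical multiplicities $2g$, $g$, $2g^2-2g$ and $g(2g-2)$ then come out of counting decoration data: single decorations range over the $2g$-dimensional $H^1$ and the one-dimensional span of $\nu$, while the arity-three and doubly-decorated arity-two classes are indexed by the symmetric and antisymmetric parts of $H^1\otimes H^1$ (of dimensions $g(2g+1)$ and $g(2g-1)$, reorganized as in the statement).

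The main obstacle will be the internal-degree-two case, i.e.\ the weight-three homology of the fusion complex in arity three. Here several cluster shapes land in the same cohomological degree simultaneously, and I will need to disentangle them, match each surviving cocycle with its exact $S_3$-isotype (trivial vs.\ sign vs.\ the mixed arity-two contributions), and verify that the $d_{\mathrm{fuse}}$-coboundaries cut the space down to exactly the listed subspaces with no spurious classes. Correctly threading the unit decoration $1$ through this analysis --- so that the unreduced-Harrison corrections of Lemma \ref{lemma:unredHarr} are applied rather than the reduced ones --- is where the bookkeeping is most delicate, and I would handle it by the same filtration-by-number-of-special-elements and explicit homotopy used in that lemma, carried out arity by arity.
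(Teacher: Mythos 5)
Your overall skeleton is exactly the paper's: the proof of Lemma \ref{lemma:grW} begins precisely with your first move, namely that $d_0$ acts only on $V=\Com^c\circ H\circ \Lie\{-1\}$ and that taking cohomology commutes with taking $S$-coinvariants in characteristic zero, so $H(\gr W,d_0)\cong H(V,d_0)\hotimes_S\fraktg$; and your final step, translating trivial/sign $S_2$- and $S_3$-isotypes into the symmetry and antisymmetry constraints on elements of $\fraktg(2)$ and $\fraktg(3)$, is also exactly how the paper extracts the listed subspaces (via identifications such as $(\,\cdot\,)\otimes\theta^{1,2}=(\,\cdot\,)\otimes(\theta^{1,2}\mp\theta^{2,1})$ in the coinvariants).

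The genuine gap is in the middle step, the computation of $H(V,d_0)$ in internal degrees $0,1,2$, which you defer to ``Harrison machinery'' via Lemmas \ref{lemma:harr}, \ref{lemma:coh} and \ref{lemma:unredHarr}. That identification is not available as stated: $d_{\mathrm{fuse}}$ does not only bracket the Lie words of two fused clusters, it also \emph{multiplies their decorations in the algebra $H$} (e.g.\ $a_ib_i=\nu$, while $a_ia_j=a_ib_j=0$ for $i\neq j$ and $pp=0$ by odd symmetry), so $V$ is not the Harrison complex of a cofree coalgebra on which Dolgushev--Rogers or Lemma \ref{lemma:unredHarr} applies --- those lemmas concern $S^c(P_k)$ with inert polynomial variables and no decoration multiplication. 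The paper's Lemma \ref{lemma:E1} instead filters $V$ by (number of decorated clusters) $+$ (arity of undecorated clusters), so that the leading differential $\td_0$ fuses only undecorated clusters and the associated graded becomes $V_1\otimes(\Com^c\circ\Lie\{-1\})$ with the acyclic Koszul complex in the second factor; the decoration-multiplying fusions are relegated to the next-page differential $\td_{-1}$, whose kernel and image are then worked out by hand in degrees $\le 2$. It is precisely this residual differential that produces the delicate summands you would miss: the classes in $W_2$ mixing $a_ib_i$-decorated pairs with $\nu$-decorated and undecorated clusters, and the exclusion of the pairs $(a_i,b_i)$ from the ``free'' arity-two summand. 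Relatedly, your multiplicity heuristic via the symmetric and antisymmetric parts of $H^1\otimes H^1$ (dimensions $g(2g+1)$ and $g(2g-1)$) does not reproduce the stated counts: the correct bookkeeping gives $2g^2-2g$ unconstrained copies of $\fraktg(2)$ indexed by unordered pairs $p\neq q$ with $pq=0$, plus $g+g+1$ constrained summands coming from the pairs $(a_i,b_i)$ and $\nu$, and $2g$ symmetric versus $2g^2-2g$ antisymmetric copies of $\fraktg(3)$ --- numbers that only fall out of the explicit $\td_{-1}$ analysis (Jacobi and Arnold-type relations included), which your proposal flags as the main obstacle but does not carry out.
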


To compute the cohomology of $\gr W$ (for which the induced differential is $d_0$) in low degrees, notice that $d_0$ only acts on $V:=\Com^c\circ H\circ \Lie\{-1\}$. Recall that we may represent elements in $V$ by diagrams as in Figure \ref{fig:comHlie}, and that $d_0$ is the operation described in the second line of Figure \ref{fig:diffcoop}.

\subsubsection{The cohomology of $(\Com^c\circ H\circ \Lie \{-1\},d_0)$ in degrees zero, one and two}

The computation yields the following list of representatives.

\begin{lemma}\label{lemma:E1}
For the cohomology of $V=(\Com^c\circ H^* \circ \Lie \{-1\},d_0)$ we find
\begin{align*}
H^0( \Com^c\circ H^* \circ \Lie \{-1\},d_0)&\cong \K \ \overset{1}{\clu}\\
H^1( \Com^c\circ H^* \circ \Lie \{-1\},d_0)&\cong \vspan\{ \overset{1}{\cclu} \ | \ c \in H^1\}\oplus \vspan\{ \overset{1}{\cclu} \ \overset{2}{\clu} \  - \  \overset{2}{\cclu} \ \overset{1}{\clu} \ | \ c \in H^1\}\\
H^2( \Com^c\circ H^* \circ \Lie \{-1\},d_0)&\cong W_1\oplus W_2\oplus W_3 \oplus \K \ \overset{1}{\oclu} \ 
\end{align*}
with explicit bases for the spaces $W_i$ given in the proof.
\end{lemma}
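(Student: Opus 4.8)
The plan is to compute the cohomology of $V=(\Com^c\circ H\circ\Lie\{-1\},d_0)$ directly, exploiting that $d_0=d_{\mathrm{fuse}}$ preserves the total arity while lowering the number of clusters by one. First I would fix the bookkeeping: a monomial of $V$ is an unordered collection of clusters $C_1,\dots,C_m$, where $C_j$ carries a Lie word on $n_j$ inputs and a decoration $\alpha_j\in H$, and its cohomological degree equals $\sum_j|\alpha_j|+(N-m)$ with $N=\sum_j n_j$ the total arity. Since $d_0$ fixes $N$, the complex splits as $V=\bigoplus_N V(N)$, and I would compute $H^{\le 2}(V(N),d_0)$ for every $N$. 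The degree formula shows that in a fixed low degree the only freedom, apart from finitely many genuinely decorated or bracketed clusters, is an arbitrary number of \emph{trivial} clusters (arity one, decorated by $1\in H$); these are exactly the small clusters of Lemma~\ref{lemma:grgr}, and they are the reason $V(N)$ is non-trivial for unbounded $N$.

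Second, I would recognise $(V,d_0)$ as a Chevalley--Eilenberg/Harrison-type complex: setting $\mathfrak L:=H\circ\Lie\{-1\}$ with the operation $(\alpha,w_1)\otimes(\beta,w_2)\mapsto(\alpha\beta,[w_1,w_2])$ that multiplies decorations by the cup product in $H$ and brackets Lie words at the root, $\mathfrak L$ becomes a twisted Lie algebra and $V=\Com^c(\mathfrak L)$ with $d_0$ its homology differential. This is the exact analogue of the situation treated in Lemmas~\ref{lemma:coh} and \ref{lemma:unredHarr}, and I would reuse that technology: filter by the number of non-trivial (big) clusters and, on the associated graded, contract away the trivial-cluster directions by the explicit homotopy of Lemma~\ref{lemma:unredHarr} (the maps sending $\ad_{s^{-1}1}^{k}\mapsto\ad_{s^{-1}1}^{k-1}$). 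This collapses the unbounded-arity tails and reduces the computation in each degree to a finite-dimensional problem supported on a bounded number of genuinely decorated or bracketed clusters.

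Third, I would carry out the resulting finite computation degree by degree. In degree $0$ the only cocycle is the single trivial cluster $\overset{1}{\clu}$, every product of at least two trivial clusters being non-closed because fusing two of them yields the non-zero arity-two cluster $[1,2]$; hence $H^0\cong\K\,\overset{1}{\clu}$. In degree $1$ the chains are either all of arity one with a single decoration $c\in H^1$, or all-trivial with exactly one arity-two cluster; using that the root bracket is \emph{symmetric} in $\Lie\{-1\}(2)$ one checks that the surviving classes are the arity-one clusters $\overset{1}{\cclu}$ and the antisymmetrised pairs $\overset{1}{\cclu}\,\overset{2}{\clu}-\overset{2}{\cclu}\,\overset{1}{\clu}$, all contributions with more non-trivial inputs being killed by the homotopy, which reproduces the stated $H^1$. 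In degree $2$ the same analysis leaves, after the trivial tails are removed, contributions supported on at most three non-trivial inputs: the arity-one class $\overset{1}{\oclu}$ (decoration $\nu$), together with the two- and three-input cocycles, which I would organise according to their $S_2$- and $S_3$-representation type into the spaces $W_1,W_2,W_3$ and for which I would write down an explicit basis (this is the source of the $S_3$-symmetric and $S_3$-sign pieces that reappear in Lemma~\ref{lemma:grW}).

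The hard part will be the degree-two computation: bounding and then enumerating the three-input cocycles and disentangling their $S_3$-symmetry types, all while tracking the signs coming from the operadic suspension $\Lie\{-1\}$ and from the graded-commutativity of the cup product on $H$. Controlling the fusing differential against an arbitrary number of trivial clusters uniformly in $N$—that is, verifying that the homotopy of Lemma~\ref{lemma:unredHarr} really annihilates the entire unbounded tail in degrees $\le 2$—is the other delicate point, and it is precisely what makes the bases for $W_1,W_2,W_3$ finite and explicit.
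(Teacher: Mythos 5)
Your overall two-step shape --- collapse a trivial sector, then a finite degree-by-degree computation --- matches the paper's proof, but the two concrete devices you rely on fail as stated, and this is a genuine gap. Filtering by the number of non-trivial (big) clusters is not a filtration: the fusion differential both raises and lowers that number. Fusing two trivial clusters (arity one, decoration $1$, trivial Lie word) produces an undecorated arity-two bracket cluster, i.e.\ a big cluster, so the count goes up by one, while fusing two big clusters lowers it by one; hence neither the ascending nor the descending filtration by this quantity is preserved by $d_0$, and the associated graded on which you want to run your homotopy is not a complex. The paper instead filters by the number of \emph{decorated} clusters plus the \emph{total arity} of the undecorated ones; under that count every fusion of two undecorated clusters (including two trivial ones) is degree-preserving, and every fusion involving a decorated cluster strictly decreases the degree, so one gets an honest spectral sequence whose degree-zero differential $\td_0$ is fusion inside the undecorated sector.

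Second, contracting only the ``trivial-cluster directions'' is too coarse even after the filtration is repaired. The acyclic sector is the whole undecorated part $\Com^c\circ\Lie\{-1\}$, which contains undecorated bracket clusters of all arities (e.g.\ the arity-two cluster $\braclu$), and these are not removed by stripping trivial tails: $\braclu$ dies only because it is the image of $\clu \ \clu$, which sits at a lower level of your proposed filtration --- exactly the non-monotonicity above. The paper disposes of this sector in one stroke via the classical Koszul acyclicity of $\Com^c\circ\Lie\{-1\}$ (as in the proof of Lemma \ref{lemma:ComHLie}), obtaining $H(\gr V,\td_0)\cong V_1\oplus (V_1\otimes \K)$ with $V_1$ the all-decorated part, and then computes the next-page differential $\td_{-1}$ (fusing decorated clusters with each other or with the unique surviving undecorated arity-one cluster, multiplying decorations by the cup product) by hand in degrees $\leq 2$; it is $\td_{-1}$, not a homotopy on trivial tails, that kills e.g.\ the symmetric combination $\overset{1}{\cclu} \ \overset{2}{\clu} + \overset{2}{\cclu} \ \overset{1}{\clu}$ and carves out $W_1$ (the Jacobi kernel), $W_2$, $W_3$. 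Note also that the homotopy of Lemma \ref{lemma:unredHarr} is tailored to $\ad_{s^{-1}1}$-insertions in the unreduced Harrison complex of a cofree coalgebra --- a different complex from $V$ --- so importing it here would itself require justification. Your structural observation that $V=\Com^c(\mathfrak{L})$ for the twisted Lie algebra $\mathfrak{L}=H\circ\Lie\{-1\}$ with bracket twisted by the cup product is correct and appealing, but as written the argument does not get off the ground without replacing your filtration and your collapsing lemma by the paper's.
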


\begin{proof}
We say a cluster is decorated if it carries decorations in $H^{\geq 1}$. All other clusters are called undecorated. Filter $V$ by
\[
\# \text{ decorated clusters }+ \text{ arity of undecorated clusters }\leq p.
\]
The associated filtration is ascending, exhaustive and bounded below. On the level of the associated graded, the induced differential, denoted $\td_0$, fuses two nondecorated clusters and pairs the two Lie words by adding a bracket. Furthermore, if we denote by $V_1$ the sub-collection of $V$ for which all clusters are decorated, we may write
\[
\gr V=V_1\otimes \Com^c\circ \Lie \{-1\}.
\]
Moreover, the differential acts only on the second factor, that is, $\Com^c\circ \Lie\{-1\}$. It acts as the Koszul differential. It is well-known (as already mentioned in the proof of Lemma \ref{lemma:ComHLie}) that this complex is acyclic (i.e. $\K$ in arity one). Thus, we find
\[
H(\gr V, \td_0)\cong V_1\oplus (V_1\otimes (\Com^c\circ \Lie\{-1\})(1))=V_1\oplus (V_1\otimes \K).
\]
In particular, we may give a list of representatives in low degrees 
\begin{align*}
H^0(\gr V,\td_0)&\cong \K \ \overset{1}{\clu}  \\
H^1(\gr V,\td_0)&\cong \vspan\{ \ \overset{1}{\cclu} \ | \ c \in H^1\}\oplus \vspan\{ \ \overset{\sigma(1)}{\cclu} \ \overset{\sigma(2)}{ \clu} \ | \ c \in H^1 , \sigma \in S_2 \}\\
H^2(\gr V,\td_0)&\cong \vspan\{ \overset{1}{\pclu} \  \overset{2}{\qclu}  \ | \ p,q \in H^1\}\oplus \vspan\{ \overset{\sigma(1)}{\pclu} \ \overset{\sigma(2)}{\qclu} \ \overset{\sigma(3)}{ \clu} \ | \ p,q \in H^1, \sigma\in S_3 \}\oplus \K \ \overset{1}{ \oclu} \oplus \K \ \overset{1}{\oclu } \ \overset{2}{ \clu} \\
&\oplus \K \ \overset{1}{\clu } \ \overset{2}{ \oclu} \oplus \vspan\{ \overset{1 \ \ 2}{\cbraclu}  \ | \ c \in H^1\}\oplus \vspan\{ \overset{\sigma(1) \ \sigma(2)}{\cbraclu} \ \overset{\sigma(3)}{\clu} \ | \ c \in H^1, \sigma\in S_3\}
\end{align*}

\begin{rem}\label{rmk:convergence}
Notice that on the first page of the spectral sequence, $H(\gr V, \td_0)\cong V_1\oplus (V_1\otimes \K)$, the induced differential is $\td_{-1}$ which fuses either two decorated clusters, or a decorated with the unique undecorated cluster (if there is one), and adds a Lie bracket. Moreover, by writing out the first page explicitly, we find that in degrees zero, one and two, the spectral sequence abuts on the second page. In particular, since the filtration is exhaustive and bounded below, we find for $i=0,1,2$,
\[
H^i(V,d_0)\cong H^i(H(\gr V,\td_0),\td_{-1}). 
\]
\end{rem}

Thus, we compute
\[
H^0(H(\gr V,\td_0),\td_{-1})\cong \K \ \overset{1}{ \clu } \ .
\]
Moreover, in degree one
\[
H^1(H(\gr V,\td_0),\td_{-1})\cong \vspan\{ \overset{1}{\cclu} \ | \ c \in H^1\}\oplus \vspan\{ \overset{1}{\cclu} \ \overset{2}{\clu} \  - \  \overset{2}{\cclu} \ \overset{1}{\clu} \ | \ c \in H^1\}.
\]
In degree two, consider first $\td_{-1}$ on the representatives of the form
\[
\cbraclu \ \clu
\]
for a fixed decoration $c \in H^1$. Notice that there are three such elements, namely
\[
\overset{1 \ \ 2}{\cbraclu} \ \overset{3}{\clu} \ \ ; \ \ \overset{2 \ \ 3}{\cbraclu} \ \overset{1}{\clu} \ \ ; \ \ \overset{3 \ \ 1}{\cbraclu} \ \overset{2}{\clu} \ .
\]
We may identify these with the expressions in $\Lie\{-1\}(2)\otimes \Lie \{-1\}(1)$ given by
\[
[x_1,x_2]\otimes x_3 \ ; \ [x_2,x_3]\otimes x_1 \ ; \ [x_3,x_1]\otimes x_2.
\]
These are mapped by $\td_{-1}$ to
\[
[[x_1,x_2], x_3] \ ; \ [[x_2,x_3], x_1] \ ; \ [[x_3,x_1], x_2]
\]
or alternatively, to diagrams of the form
\[
\cbrabraclu \ .
\]
Notice that $\td_{-1}$ may thus be viewed as a surjective map from the three-dimensional space $\Lie\{-1\}(2)\otimes \Lie \{-1\}(1)$ to the two-dimensional space $\Lie\{-1\}(3)$. The one-dimensional kernel is determined by the graded Jacobi identity. Set
\[
W_1:=\vspan\{\overset{1 \ \ 2}{\cbraclu} \ \overset{3}{\clu} \ \ + \ \ \overset{2 \ \ 3}{\cbraclu} \ \overset{1}{\clu} \ \ + \ \ \overset{3 \ \ 1}{\cbraclu} \ \overset{2}{\clu} \ | \ \alpha\in H^1\}.
\]
Then $W_1\subset \ker \td_{-1}$. Next, consider $\td_{-1}$ restricted to
\[
\vspan\{ \overset{1}{\pclu} \ \overset{2}{\qclu}  \ | \ p,q \in H^1\}\oplus \K \ \overset{1}{\oclu} \ \overset{2}{\clu} \oplus \K \ \overset{1}{\clu} \ \overset{2}{\oclu} \ .
\]
We are thus considering a $(4g^2+2)$-dimensional space. Moreover, the image under $\td_{-1}$ is one-dimensional, with generator
\[
\overset{1 \ \ 2}{\obraclu} \  .
\]
The kernel, denoted $W_2$, is thus $4g^2+1$-dimensional. A choice for a basis is given by the $2g(2g-1)$ diagrams
\[
\overset{1}{\pclu} \ \overset{2}{\qclu} 
\]
for which the product $pq=0$, together with the $2g+1$ combinations
\[
 \underset{1}{\overset{1}{\aiclu}} \ \underset{2}{\overset{2}{\biclu}} \ - \  \underset{1}{\overset{1}{\biclu}} \ \underset{2}{\overset{2}{\aiclu}}  \ -( \ \overset{1}{\oclu} \ \overset{2}{\clu} \ + \ \ \overset{1}{\clu} \ \overset{2}{\oclu} \ ) \ \ \text{ and } \ \ \overset{1}{\aiclu} \ \overset{2}{\biclu} \ + \ \overset{1}{\biclu} \ \overset{2}{\aiclu} \ \ \text{ and } \ \ \overset{1}{\oclu} \ \overset{2}{\clu} \ - \ \overset{1}{\clu} \ \overset{2}{\oclu}
\]
for $i=1,\dots,g$. We further need to consider $\td_{-1}$ restricted to
\[
\vspan\{\overset{\sigma(1)}{\pclu} \ \overset{\sigma(2)}{\qclu} \ \overset{\sigma(3)}{\clu} \ | \ p,q \in H^1,\sigma \in S_3\}.
\]
Notice that this is a $3\cdot 4g^2$-dimensional space. For fixed $p$ and $q$ the image under $\td_{-1}$ is spanned by diagrams of the form
\[
\pqbraclu \ \clu \ + \ \pbraclu \ \qclu \ + \ \pclu \ \qbraclu \ .
\]
These linear combinations are subject to the relation
\[
\sum\limits_{\sigma \in S_3} (-1)^{|\sigma|} \big( \ \overset{\sigma(1) \sigma(2)}{\pqbraclu} \ \overset{\sigma(3)}{\clu} \ + \ \overset{\sigma(1) \sigma(3)}{\pbraclu} \ \overset{\sigma(2)}{\qclu} \ + \ \overset{\sigma(1)}{\pclu} \ \overset{\sigma(2)\sigma(3)}{\qbraclu} \ \big) =0.
\]
The kernel of $\td_{-1}$ restricted to such diagrams, denoted $W_3$, is thus spanned by the completely anti-symmetric linear combinations
\[
\sum\limits_{\sigma\in S_3} (-1)^{| \sigma |} \overset{\sigma(1)}{\pclu} \ \overset{\sigma(2)}{\qclu} \ \overset{\sigma(3)}{\clu}
\]
for $p,q \in H^1$. Notice further that elements of the form
\[
\overset{1 \ \ 2}{\cbraclu}
\]
are exact under $\td_{-1}$. In summary, we thus find
\[
H^2(H(\gr V, \td_0),\td_{-1})\cong W_1\oplus W_2\oplus W_3 \oplus \K \ \overset{1}{\oclu} \ .
\]
The calculations above, together with Remark \ref{rmk:convergence} imply the result.

\end{proof}

\subsubsection{The cohomology of $\gr W\cong (\Com^c\circ H\circ \Lie \{-1\})\hotimes_S \fraktg$ in degrees zero, one and two}

We are now in a position to prove the technical result from Lemma \ref{lemma:grW}.

\begin{proof}[Proof of Lemma \ref{lemma:grW}]
Lemma \ref{lemma:E1} computes certain degrees of the first page of the spectral sequence associated to the arity filtration \eqref{eq:specseqarity}. Recall further that $(\gr W,d_0)\cong (V\hotimes_S \fraktg,d_0)$ with $d_0$ acting only on $V$. Moreover, since taking cohomology commutes with taking coinvariants, we find
\[
H(\gr W ,d_0)=H(V,d_0)\hotimes_S \fraktg.
\]
In degree zero, we clearly find $H^0(\gr W,d_0) \cong \fraktg(1)$. In degree one, we find on the one hand representatives of the form 
\[
\overset{1}{\cclu} \ \otimes \theta_c
\]
yielding the $2g$ copies of $\fraktg(1)[-1]$. Moreover, in $H^1(V,d_0)\hotimes_{S_2} \fraktg(2)$, we have the following identification for $c\in H^1$
\[
( \ \overset{1}{\cclu} \ \overset{2}{\clu} \   - \  \overset{2}{\cclu} \ \overset{1}{\clu} \ )\otimes \theta_c^{1,2} = \ \overset{1}{\cclu} \ \overset{2}{\clu} \ \otimes (\theta_c^{1,2}-\theta_c^{2,1})
\]
which, by setting $U_c^{1,2}:= \theta_c^{1,2}-\theta_c^{2,1}$, readily implies
\begin{align*}
 H^1(V(2),d_0) \hotimes_{S_2} \fraktg(2)&\cong  \bigoplus_{c\in H^1} \K \ ( \ \overset{1}{\cclu} \ \overset{2}{\clu} \  - \  \overset{2}{\cclu} \ \overset{1}{\clu} \ )\hotimes_{S_2} \fraktg(2)\\
&\cong \bigoplus_{c\in H^1}   \K \ \overset{1}{\cclu} \ \overset{2}{\clu} \ \hotimes_{S_2}\{ \theta_{c}^{1,2} -\theta_c^{2,1} \ | \ \theta_c \in \fraktg(2) \} \\
& \cong \{ (U_c)_c \in \fraktg(2)[-1]^{\times 2g} \ | \ U_c^{1,2}=-U_c^{2,1} \}.
\end{align*}
In degree two, notice first that for $c \in H^1$
\[
( \ \overset{1 \ \ 2}{\cbraclu} \ \overset{3}{\clu} \ \ + \ \ \overset{2 \ \ 3}{\cbraclu} \ \overset{1}{\clu} \ \ +\ \ \overset{3 \ \ 1}{\cbraclu} \ \overset{2}{\clu} \ ) \otimes \theta_c^{1,2,3}= \frac12 \sum_{\sigma\in S_3}  \ \overset{1 \ \ 2}{\cbraclu} \ \overset{3}{\clu} \ \otimes \theta_c^{\sigma(1),\sigma(2),\sigma(3)}  .
\]
This enables us to define the isomorphisms
\begin{align}\label{eq:basisW1}
 \nonumber W_1\hotimes_{S_3} \fraktg(3)&\cong \bigoplus_{c\in H^1} \K ( \ \overset{1 \ \ 2}{\cbraclu} \ \overset{3}{\clu} \ \ + \ \ \overset{2 \ \ 3}{\cbraclu} \ \overset{1}{\clu} \ \ +\ \ \overset{3 \ \ 1}{\cbraclu} \ \overset{2}{\clu} \ ) \hotimes_{S_3}  \fraktg(3)\\ 
&\cong  \bigoplus_{c\in H^1}  \K \  \overset{1 \ \ 2}{\cbraclu} \ \overset{3}{\clu} \ \hotimes_{S_3} \{ U_{c}\in \fraktg(3) \ | \ U_{c}^{\sigma(1),\sigma(2),\sigma(3)} = U_c^{1,2,3} \ \forall \sigma \in S_3\}  \\
\nonumber &\cong \{(U_c)_c \in \fraktg(3)[-2]^{\times 2g} \ |  \ U_c^{1,2,3}=U_c^{\sigma(1),\sigma(2),\sigma(3)} \ \forall \sigma \in S_3\}
\end{align}
Furthermore, in the case of $ W_2 \hotimes_{S_2}\fraktg(2)$, remark that for all $p\neq q\in H^1$ with $pq=0$
\[
 \overset{1}{\pclu} \ \overset{2}{\qclu} \ \otimes   \theta_{p,q}^{1,2} +  \ \overset{1}{\qclu} \ \overset{2}{\pclu} \ \otimes \theta_{q,p}^{1,2}= \ \overset{1}{\pclu} \ \overset{2}{\qclu} \ \otimes (\theta_{p,q}^{1,2} -\theta_{q,p}^{2,1} ) 
\]
and that
\[
\overset{1}{\pclu} \ \overset{2}{\pclu} \ \otimes \theta=0
\]
because of the odd symmetry in the diagram. Additionally,
\begin{align*}
( \ \overset{1}{\aiclu} \ \overset{2}{\biclu} \ + \ \overset{1}{\biclu} \ \overset{2}{\aiclu} \ ) \otimes \theta_{i}^{1,2}&= \ \overset{1}{\aiclu} \ \overset{2}{\biclu} \ \otimes  (\theta_i^{1,2} - \theta_i^{2,1} )\\
 ( \ \overset{1}{\oclu} \ \overset{2}{\clu} \ - \ \overset{1}{\clu} \ \overset{2}{\oclu} \ ) \otimes \theta_{\nu}^{1,2} &= \ \overset{1}{\oclu} \ \overset{2}{\clu} \ \otimes  (\theta_\nu^{1,2} - \theta_\nu^{2,1} )  \\
 ( \ \underset{1}{\overset{1}{\aiclu}} \ \underset{2}{\overset{2}{\biclu}} \ - \  \underset{1}{\overset{1}{\biclu}} \ \underset{2}{\overset{2}{\aiclu}}  \ -( \ \overset{1}{\oclu} \ \overset{2}{\clu} \ + \ \ \overset{1}{\clu} \ \overset{2}{\oclu} \ ))\otimes \theta^{1,2} &=  (  \ \underset{1}{\overset{1}{\aiclu}} \ \underset{2}{\overset{2}{\biclu}} \ - \ \overset{1}{\oclu} \ \overset{2}{\clu} \ )\otimes (\theta^{1,2}+\theta^{2,1}) .
\end{align*}
In particular, this implies 
\begin{align*}
 W_2 \hotimes_{S_2} \fraktg(2)& \cong \ \bigoplus_{1\leq i < j \leq g} \ \overset{1}{\aiclu} \ \overset{2}{\ajclu} \  \hotimes_{S_2}  \fraktg(2)\\
&\oplus \bigoplus_{1\leq i <j \leq g}   \ \overset{1}{\biclu} \ \overset{2}{\bjclu} \ \hotimes_{S_2} \fraktg(2) \\
&\oplus \bigoplus_{1\leq i \neq j \leq g}   \ \overset{1}{\aiclu} \ \overset{2}{\bjclu} \ \hotimes_{S_2} \fraktg(2) \\
&\oplus \bigoplus_{i=1,\dots,g}  \K \ \overset{1}{\aiclu} \ \overset{2}{\biclu} \ \hotimes_{S_2} \{U_i \in \fraktg(2) \ | \ U_i^{1,2}=-U_i^{2,1}\}  \\
& \oplus   \K \ \overset{1}{\oclu} \ \overset{2}{\clu} \ \hotimes_{S_2} \{U_\nu \in \fraktg(2) \ | \ U_\nu^{1,2}=-U_\nu^{2,1}\} \\
&\oplus \bigoplus_{i=1,\dots,g}  \K ( \ \overset{1}{\aiclu} \ \overset{2}{\biclu} \ - \ \overset{1}{\oclu} \ \overset{2}{\clu} \ )\hotimes_{S_2} \{ V_i \in \fraktg(2) \ | \ V_i^{1,2}=V_i^{2,1} \} \\
 \cong \fraktg(2)[-2]^{\times 2g^2-2g} &\oplus \{(U_i)_i \in \fraktg(2)[-2]^{\times g} \ | \ U_i^{1,2}=-U_i^{2,1} \} \oplus \{U_\nu \in \fraktg(2)[-2] \ | \ U_\nu^{1,2}=-U_\nu^{2,1} \}\\
 &\oplus  \{(V_i)_i \in \fraktg(2)[-2]^{\times g} \ | \ V_i^{1,2}=V_i^{2,1} \}.
\end{align*}
Finally, in $ W_3 \hotimes_{S_3} \fraktg(3) $ and any $p\neq q \in H^1$
\[
\left( \sum\limits_{\sigma\in S_3} (-1)^{| \sigma |} \overset{\sigma(1)}{\pclu} \ \overset{\sigma(2)}{\qclu} \ \overset{\sigma(3)}{\clu} \ \right) \otimes \theta_{p,q} = \sum_{\sigma \in S_3} (-1)^{|\sigma|} \overset{1}{\pclu} \ \overset{2}{\qclu} \ \overset{3}{\clu} \ \otimes  \theta_{p,q}^{\sigma(1),\sigma(2),\sigma(3)} 
\]
which leads to
\begin{align*}
W_3 \hotimes_{S_3}  \fraktg(3) & \cong \bigoplus_{1\leq i<j \leq g}  \K \ \overset{1}{\aiclu} \ \overset{2}{\ajclu} \ \overset{3}{\clu} \ \hotimes_{S_3} \{ U_{a_i,a_j} \in \fraktg(3) \ | \ (-1)^{|\sigma|} U_{a_i,a_j}^{\sigma(1),\sigma(2),\sigma(3)} = U_{a_i,a_j}^{1,2,3} \  \forall \sigma \in S_3\} \\
&\oplus  \bigoplus_{1\leq i\neq j \leq g} \K \ \overset{1}{\aiclu} \ \overset{2}{\bjclu} \ \overset{3}{\clu} \ \hotimes_{S_3} \{ U_{a_i,b_j} \in \fraktg(3) \ | \ (-1)^{|\sigma|} U_{a_i,b_j}^{\sigma(1),\sigma(2),\sigma(3)} = U_{a_i,b_j}^{1,2,3} \  \forall \sigma \in S_3\}\\
&\oplus  \bigoplus_{1\leq i<j \leq g}  \K \ \overset{1}{\biclu} \ \overset{2}{\bjclu} \ \overset{3}{\clu} \ \hotimes_{S_3} \{ U_{b_i,b_j} \in \fraktg(3) \ | \ (-1)^{|\sigma|} U_{b_i,b_j}^{\sigma(1),\sigma(2),\sigma(3)} = U_{b_i,b_j}^{1,2,3} \  \forall \sigma \in S_3\} \\
&\cong \{ U \in \fraktg(3)[-2] \ | \  (-1)^{|\sigma|} U^{\sigma(1),\sigma(2),\sigma(3)}=U^{1,2,3} \ \forall \sigma \in S_3\}^{\times (2g^2-2g)}
\end{align*}
Since
\[
H^2(V,d_0)\hotimes_S \fraktg  = (W_1 \oplus W_2 \oplus W_3 \oplus \K \ \overset{1}{\oclu} \ ) \hotimes_S \fraktg.
\]
the result of Lemma \ref{lemma:grW} follows.
\end{proof}

\subsubsection{The cohomology of $W$ in degrees zero, one and two}

Unravelling the machinery of the spectral sequence, notice that Lemma \ref{lemma:grW} computes its first page which is of the form
\[
\begin{array}{ccccc}
0 & E_1^{1,2} & \cdots & &  \\
0 & E_1^{1,1} & E_1^{2,1} & \cdots & \\
0 & E_1^{1,0} & E_1^{2,0} & E_1^{3,0} & \cdots \\
0 & E_1^{1,-1} & E_1^{2,-1} & E_1^{3,-1} & E_1^{4,-1} \cdots\\
0 & 0 & 0 & 0 & 0 \cdots
\end{array}
\]
with horizontal differential $d_1$. We deduce that $E_2^{i,-1}=E_\infty^{i,-1}$ for all $i=1,2$, and  $E_3^{1,0}=E_\infty^{1,0}$. In particular,
\begin{align*}
H^0(W,d_W)&\cong E_2^{1,-1}\\
H^1(W,d_W)&\cong E_3^{1,0}\oplus E_2^{2,-1}.
\end{align*}
Moreover, in degree two, we have $E_\infty^{1,1}=E_4^{1,1} \subset E_0^{1,1}$ and $E_\infty^{2,0}=E_3^{2,0} \subset E_2^{2,0}$, whereas $E_\infty^{3,-1}=E_3^{3,-1}=E_2^{2,-1}/\mathrm{im}(d_2: E_2^{1,0}\rightarrow E_2^{3,-1})$. Additionally, rewriting Lemma \ref{lemma:grW} gives
\begin{align*}
E_1^{1,-1}&\cong \fraktg(1)\\
E_1^{1,0}&\cong \fraktg(1)[-1]^{\times 2g}\\
E_1^{2,-1}&\cong \{(U_c)_{c} \in \fraktg(2)[-1]^{\times 2g} \ | \ U_c^{1,2}+ U_c^{2,1}=0 \}\\
E_1^{1,1}&\cong  \ \overset{1}{\oclu} \ \hotimes_{S_1} \fraktg(1)\\
E_1^{2,0}&\cong W_2 \hotimes_{S_2} \fraktg(2)\\
E_1^{3,-1} &\cong  (W_1 \oplus W_3)\hotimes_{S_3} \fraktg(3) .
\end{align*}

More concretely, we may give explicit representatives in degree zero and one, and additionally describe the cohomology in degree two in a way that will become useful in the proof of Theorem \ref{thm:Qcohomology}.

\begin{lemma}\label{lem:lemma3}
There are isomorphisms of vector spaces in degrees zero, one and two,
\begin{align*}
H^0(W,d_W) & \cong \fraktg(1) \\
H^1(W,d_W) &\cong \bigoplus_{c \in H^1} \{  \ \overset{1}{\cclu} \ \overset{2}{\clu} \ \otimes U_c  \ | \ U_c \in \fraktg(2) \ : \ \exists u_c \in \fraktg(1)  : u_c^{12}=U_c^{1,2}+U_c^{2,1} \ ; \ U_c^{12,3}-U_c^{1,23}-U_c^{2,13}=0\}\\
H^2(W,d_W) & \cong E_3^{3,-1} \oplus E_3^{2,0} \oplus E_4^{1,1}
\end{align*}
where $E_4^{1,1}$ is a subspace of $E_1^{1,1}$, $E_3^{2,0}$ is a subspace of $E_1^{2,0}$ and $E_3^{3,-1}\cong A\oplus B$ is a direct sum of a quotient space $A$ of a subspace of $ W_1\hotimes_{S_3}\fraktg(3)$ and a subspace $B=\ker(d_1|_{ W_3\hotimes_{S_3}\fraktg(3) }:E_1^{3,-1}\rightarrow E_1^{4,-1})$ of $W_3\hotimes_{S_3}\fraktg(3)$.
\end{lemma}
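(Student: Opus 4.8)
The plan is to run the spectral sequence of the arity filtration \eqref{eq:specseqarity} a couple of pages beyond its first page, which is exactly what Lemma \ref{lemma:grW} records. Recall that on $W$ the differential splits as $d_W = d_0 + d_1$ with $d_0 = d_\mathrm{fuse}\otimes 1$ and $d_1 = \partial_2$, so the induced page differentials are $d_r\colon E_r^{p,q}\to E_r^{p+r,q-r+1}$. Since the only nonzero rows are $q\geq -1$ and the only nonzero columns are $p\geq 1$, the groups contributing to total degree $n$ are the $E_\infty^{p,q}$ with $p+q=n$, $p\geq 1$, $q\geq -1$, and convergence in the low degrees of interest is immediate from the shape of the page displayed above. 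Throughout I would identify the $E_1$-entries with the explicit Lie-theoretic spaces listed right after Lemma \ref{lemma:grW}.

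In degree zero only $E^{1,-1}$ contributes, so $H^0(W,d_W) = E_2^{1,-1} = \ker\big(d_1\colon E_1^{1,-1}\to E_1^{2,-1}\big)$. Here $d_1$ vanishes: applying $\partial_2$ to a representative $\overset{1}{\clu}\otimes\theta$ produces the single arity-two cluster $\overset{1\ 2}{\smclusterbra}\otimes\theta^{1,2}$, which is $d_0 = d_\mathrm{fuse}$-exact and hence zero on the $E_1$-page, and no differential can enter or leave the corner, giving $H^0(W,d_W)\cong\fraktg(1)$. In degree one the contributions are $E_\infty^{2,-1}=E_2^{2,-1}$ and $E_\infty^{1,0}=E_3^{1,0}$. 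First I would compute $E_2^{2,-1}=\ker\big(d_1\colon E_1^{2,-1}\to E_1^{3,-1}\big)$ (the incoming $d_1$ being zero by the previous step); evaluating $\partial_2$ on the representative $(\,\overset{1}{\cclu}\ \overset{2}{\clu}\ -\ \overset{2}{\cclu}\ \overset{1}{\clu}\,)\otimes\theta_c$ and projecting onto the $W_1\oplus W_3$ description of $E_1^{3,-1}$ turns the cocycle condition into the cooperadic associativity relation $U_c^{12,3}-U_c^{1,23}-U_c^{2,13}=0$, which together with the antisymmetry $U_c^{1,2}+U_c^{2,1}=0$ carried by $E_1^{2,-1}$ is exactly the $u_c=0$ part of the claimed space. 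Then I would compute $E_3^{1,0}=\ker\big(d_2\colon E_2^{1,0}\to E_2^{3,-1}\big)$ with $E_2^{1,0}=\ker\big(d_1\colon E_1^{1,0}\to E_1^{2,0}\big)$; this summand records the freedom of a nonzero symmetric part, replacing antisymmetry by the condition $\exists\, u_c\in\fraktg(1)$ with $u_c^{12}=U_c^{1,2}+U_c^{2,1}$. Lifting the two associated-graded pieces to a single closed form produces the stated description of $H^1(W,d_W)$.

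In degree two the contributions are $E_\infty^{1,1}$, $E_\infty^{2,0}$ and $E_\infty^{3,-1}$, and the shape of the page gives $E_\infty^{1,1}=E_4^{1,1}\subseteq E_1^{1,1}$, $E_\infty^{2,0}=E_3^{2,0}\subseteq E_1^{2,0}$, and $E_\infty^{3,-1}=E_3^{3,-1}=E_2^{3,-1}/\mathrm{im}\big(d_2\colon E_2^{1,0}\to E_2^{3,-1}\big)$. I would obtain these by applying $d_1,d_2,d_3$ to the entries $E_1^{1,1}\cong\overset{1}{\oclu}\hotimes_{S_1}\fraktg(1)$, $E_1^{2,0}\cong W_2\hotimes_{S_2}\fraktg(2)$ and $E_1^{3,-1}\cong (W_1\oplus W_3)\hotimes_{S_3}\fraktg(3)$. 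The key structural observation is that $d_1$ preserves the $W_1\oplus W_3$ splitting and that, after passing to $E_2$, the incoming $d_2$ meets only the $W_1$-summand; hence $E_3^{3,-1}$ splits as $A\oplus B$ with $B=\ker\big(d_1|_{W_3\hotimes_{S_3}\fraktg(3)}\big)$ a subspace of $W_3\hotimes_{S_3}\fraktg(3)$ and $A$ the $d_2$-quotient of $\ker\big(d_1|_{W_1\hotimes_{S_3}\fraktg(3)}\big)$, a subspace of $W_1\hotimes_{S_3}\fraktg(3)$. This yields the degree-two description, where $E_4^{1,1}$ and $E_3^{2,0}$ are left as the indicated subspaces.

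The main obstacle will be the degree-one computation: matching the combinatorial differentials $d_1=\partial_2$ and $d_2$ to the Lie-algebraic relations defining the first two families of constraints in $Z_{(g)}$, and then producing an honest lift of the two associated-graded pieces $E_2^{2,-1}$ and $E_3^{1,0}$ to the single unified description of $H^1(W,d_W)$. The delicate point is the symmetric-part/coproduct bookkeeping relating the element $u_c\in\fraktg(1)$ that labels $E_1^{1,0}$ to the symmetrization $U_c^{1,2}+U_c^{2,1}$ of the $\fraktg(2)$-label, where the signs from the desuspensions and equivariance under the $S_r$-coinvariants must be tracked carefully. By contrast, once the behaviour of $d_1$ along the $W_1\oplus W_3$ decomposition is established, the degree-two part is comparatively routine page-chasing.
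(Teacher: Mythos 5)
Your proposal follows essentially the same route as the paper: run the arity spectral sequence from the $E_1$-page of Lemma \ref{lemma:grW}, observe that $d_1\colon E_1^{1,-1}\to E_1^{2,-1}$ vanishes because its image is $d_0$-exact, extract the antisymmetry and cyclic relations from $d_1$ and $d_2$ in degree one, and split $E_3^{3,-1}=A\oplus B$ using the fact that the incoming differentials carry at least one Lie bracket and so cannot touch the two-decoration ($W_3$) representatives. The one step you defer --- reassembling $E_3^{1,0}\oplus E_2^{2,-1}$ into the single space $\bigoplus_c G_c[-1]$ --- is handled in the paper by the short, explicit argument via the symmetrization map $U_c\mapsto \pi(U_c^{1,2}+U_c^{2,1})$ on $G_c$, whose kernel and image realize the two graded pieces, so the obstacle you flag is genuinely mild and your plan is sound.
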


\begin{proof}
To compute $E_2^{1,-1}$ consider $d_1:E_1^{1,-1}\rightarrow E_1^{2,-1}$. Since for any $u\in \fraktg(1)$
\[
d_1 ( \ \overset{1}{\clu} \ \otimes u)= \ \overset{1 \ \ 2}{\braclu} \ \otimes u^{12} =0 \in E_1^{2,-1}
\]
(i.e. it is exact under $d_0$), we conclude that $H^0(W,d_W)=E_\infty^{1,-1}=E_2^{1,-1}=\ker(d_1:E_1^{1,-1}\rightarrow E_1^{2,-1})\cong \fraktg(1)$. In degree one, we have already established that $H^1(W,d_W)\cong E_3^{1,0}\oplus E_2^{2,-1}$ at the beginning of this section. To  determine the right side, let us start by considering $d_1:E_1^{2,-1}\rightarrow E_1^{3,-1}$. For this, take $V_c \in \fraktg(2)$ with $V_c^{1,2}+V_c^{2,1}=0$ and compute
\[
d_1( \ \overset{1}{\cclu} \ \overset{2}{\clu} \ \otimes V_c)=  \ \overset{1 \ \ 2}{\cbraclu} \ \overset{3}{\clu} \ \otimes V_c^{12,3} +  \  \overset{1}{\cclu} \ \overset{2 \ \ 3}{ \braclu} \ \otimes V_c^{1,23} .
\]
In order to express the right hand side in the preferred basis of $E_1^{3,-1}$ (see identification \eqref{eq:basisW1}) we add the exact (under $d_0$) term
\[
-d_0 \ \overset{1}{\cclu} \ \overset{2}{\clu} \ \overset{3}{\clu} \ \otimes V_c^{1,23}  = - \left( \ \overset{1 \ \ 2}{\cbraclu} \ \overset{3}{\clu} \  + \ \overset{1 \ \ 3}{\cbraclu} \ \overset{2}{\clu} \  + \ \overset{1}{\cclu} \ \overset{2 \ \ 3}{\braclu} \ \right) \otimes V_c^{1,23} = - \left(2  \ \overset{1 \ \ 2}{\cbraclu} \ \overset{3}{\clu} \    + \ \overset{1}{\cclu} \ \overset{2 \ \ 3}{\braclu} \ \right) \otimes V_c^{1,23} 
\]
to obtain
\begin{align*}
\overset{1 \ \ 2}{\cbraclu} \ \overset{3}{\clu} \ \otimes ( V_c^{12,3}  - 2 V_c^{1,23})  = \ \overset{1 \ \ 2}{\cbraclu} \ \overset{3}{\clu} \ \otimes ( V_c^{12,3} - V_c^{2,13} - V_c^{1,23} ).
\end{align*}
Here we used that 
\[
\overset{1 \ \ 2}{\cbraclu} \ \overset{3}{\clu} \ \otimes V_c^{1,23}= \ \overset{1 \ \ 2}{\cbraclu} \ \overset{3}{\clu} \ \otimes V_c^{2,13}
\]
in the space of coinvariants. Notice also that 
\[
V_c^{12,3} - V_c^{2,13} - V_c^{1,23}=V_c^{12,3} + V_c^{13,2} + V_c^{23,1}
\]
(as $V_c^{1,2}+V_c^{2,1}=0$) is clearly $S_3$-invariant as required for the basis elements in $E_1^{3,-1}$. Since additionally, $\mathrm{im}(d_1:E_1^{1,-1}\rightarrow E_1^{2,-1})=0$, we conclude that 
\[
E_2^{2,-1}\cong\bigoplus_{c \in H^1} \{V_c \otimes \ \overset{1}{\aclu} \ \overset{2}{\clu}  \ | \ V_c \in \fraktg(2) \ : \ V_c^{1,2}+V_c^{2,1}=0 \ ; \ V_c^{12,3}-V_c^{1,23} - V_c^{2,13}=0\}.
\]
Next, let $u_c \in \fraktg(1)$. Notice that 
\[
d_1(\ \overset{1}{\cclu} \ \otimes u_c )=  \overset{1 \ \ 2}{\cbraclu} \  \otimes u_c^{12} = 0 \in E_1^{2,0}
\]
if and only if there exists $U_c \in \fraktg(2)$ satisfying $u_c^{12}=U_c^{1,2}+U_c^{2,1}$. Such an element always exists (for instance $U_c^{1,2}=\frac12 u_c^{12}$). This is used to compute 
\[
E_2^{1,0}\cong \bigoplus_{c \in H^1} \{ \ \overset{1}{\cclu} \ \otimes u_c \  | \ u_c \in \fraktg(1) \ : \ \exists U_c \in \fraktg(2) \ : \ u_c^{12}=U_c^{1,2} + U_c^{2,1} \} \cong \fraktg(1)[-1]^{\times 2g}.
\]
To determine $E_3^{2,0}$ we need to apply $d_1$ to the extension $ \overset{1}{\cclu} \ \overset{2}{\clu} \ \otimes U_c^{1,2}  $ of $\overset{1}{\cclu} \ \otimes u_c\in E_2^{1,0}$. This yields a similar calculation as above in the case of $E_2^{2,-1}$, from which we conclude
\[
E_3^{1,0}\cong \bigoplus_{c \in H^1} \{ \ \overset{1}{\cclu} \ \otimes u_c \  | \  u_c \in \fraktg(1) \ : \ \exists U_c \in \fraktg(2) \ : \ u_c^{12}= U_c^{1,2}+U_c^{2,1} \ ; \ U_c^{12,3}-U_c^{1,23}-U_c^{2,13}=0\}.
\]

To conclude the degree one case, consider the morphism of graded Lie algebras
\begin{align*}
\fraktg(2)& \rightarrow \fraktg(1)\\
\theta(x_l^{(1)}, x_l^{(2)},y_l^{(1)},y_l^{(2)},t_{11},t_{22},t_{12})&\mapsto \theta(x_l^{(1)}, x_l^{(2)}=0,y_l^{(1)},y_l^{(2)}=0,t_{11},t_{22}=0,t_{12}=0).
\end{align*}
Next, define
\[
G_c := \{  U_c \in \fraktg(2) \ | \  \exists u_c \in \fraktg(1) \ : \ u_c^{12}=U_c^{1,2}+ U_c^{2,1}; \ U_c^{12,3}-U_c^{1,23}-U_c^{2,13}=0\}.
\]
and
\begin{align*}
\pi\circ \mathrm{sym}: \bigoplus_{c \in H^1} G_c &\rightarrow \fraktg(2)^{\times 2g} \rightarrow  \fraktg(1)^{\times 2g} \\
 (U_c)_c &\mapsto  (U_c^{1,2}+U_c^{2,1} )_c \mapsto (\pi(U_c^{1,2}+U_c^{2,1}) )_c .
\end{align*}
While the image satisfies $\mathrm{im}(\pi\circ \mathrm{sym})\cong E_2^{2,-1}$, the kernel is given by $\ker(\pi \circ \mathrm{sym})=\ker(\mathrm{sym})\cong E_3^{1,0}$, since $\pi$ is injective when restricted to $\mathrm{im}(\mathrm{sym})$. We thus find
\[
H^1(W,d_W)\cong E_3^{1,0} \oplus E_2^{2,-1}\cong \bigoplus_{c\in H^1} G_c[-1].
\]
In degree two, it is an immediate consequence of the shape of the first page that $E_\infty^{1,1}=E_4^{1,1}$ is a subspace of $E_1^{1,1}$. Similarly, $E_\infty^{2,0}=E_2^{2,0}=\ker(d_1:E_1^{2,0}\rightarrow E_1^{3,0})\subset E_1^{2,0}$ since $\mathrm{im}(d_1:E_1^{1,0}\rightarrow E_1^{2,0})=0$. Lastly, for $E_1^{3,-1}$ consider first $W_3 \hotimes_{S_3} \fraktg(3) $. Its representatives are of the form 
\[
\overset{1}{\pclu} \ \overset{2}{\qclu} \ \overset{3}{\clu} \ \otimes  U^{1,2,3} 
\]
with $p\neq q \in H^1$ and $U^{1,2,3} =(-1)^{|\sigma|} U^{\sigma(1),\sigma(2),\sigma(3)}$ for all $\sigma \in S_3$. Notice in particular that all elements in $\mathrm{im}(d_1:E_1^{2,-1}\rightarrow E_1^{3,-1})$ and $\mathrm{im}(d_2:E_2^{1,0}\rightarrow E_2^{3,-1})$ contain at least one Lie bracket. Therefore, they cannot kill any of the representatives carrying two decorations. Thus, representatives of cohomology classes in $E_3^{3,-1}$ with two decorations span the space
\[
\ker(d_1|_{ W_3 \hotimes_{S_3} \fraktg(3)}:E_1^{3,-1}\rightarrow E_1^{4,-1})=:B
\] 
($d_2$ vanishes by degree reasons). Restricting to elements of $ W_1 \hotimes_{S_3} \fraktg(3)$, we have 
\begin{align*}
E_2^{3,-1}&\supset A':=\ker(d_1|_{ W_1 \hotimes_{S_3} \fraktg(3)} : E_1^{3,-1}\rightarrow E_1^{4,-1})/\mathrm{im}(d_1:E_1^{2,-1}\rightarrow E_1^{3,-1})\\
E_3^{3,-1}& \supset A:= A'/\mathrm{im}(d_2:E_2^{1,0}\rightarrow E_2^{3,-1})\\
E_3^{3,-1}&=A\oplus B.
\end{align*}
\end{proof}

By computing $H^i(W,d_W)=H^i(\gr \Qc,\gr d_\Qc)$ for $i=0,1,2$, we find that the first page $\tilde E_1$ of the spectral sequence induced by filtering by the degree of the decorations in $\Mog$ is
\begin{center}
\begin{tikzcd}[row sep=0.1cm]
0 \arrow[r,"d_1"] & \cdots &&\\
  0 \arrow[r,"d_1"] &\tilde E_1^{0,1}=0 \arrow[r,"d_1"] & \tilde E_1^{1,1} \arrow[r,"d_1"] & \cdots &  &  \\
   0 \arrow[r, "d_1"]& \tilde E_1^{0,0} \arrow[r,"d_1"]  &\tilde E_1^{1,0} \arrow[r,"d_1"]& \tilde E_1^{2,0}\arrow[r,"d_1"] & \cdots &\\
   0& 0 & 0 & 0 & 0\\
\end{tikzcd}
\end{center}
with $d_1$ induced by $\partial_{m_{\fraktg}}$. The row $\tilde E_0^{k,-1}=0$ vanishes by degree reasons already on the level of the associated graded. It follows that $\tilde E_2^{0,0}=\tilde E_\infty^{0,0}\cong H^0(\Qc)$ and $\tilde E_2^{1,0}=\tilde E_\infty^{1,0}\cong H^1(\Qc)$. Moreover, notice that, if we retain the notation from Lemma \ref{lem:lemma3}, $\tilde E_1^{2,0}\cong E_4^{1,1}\oplus E_3^{2,0}\oplus B$ and that $\tilde E_1^{1,1}\cong A$. In particular, for $w\in \tilde E_1^{1,0}$, $d_1(w)=0 \in \tilde E_1^{2,0}$ implies that $d_1(w)=0$ on the nose, i.e. as an element in $(\Com^c \circ H)\hotimes_S \fraktg $ since none of $ E_4^{1,1}$, $E_3^{2,0}$, $B$ are quotient spaces.  

\begin{lemma}\label{lem:lemma4}
In degrees zero and one, we find
\begin{align*}
\tilde E_2^{0,0}=& 
\begin{cases}
\K [1]\oplus \K[1] & \text{ for } g=1\\
0 & \text{ for } g\geq 2
\end{cases}\\
\tilde E_2^{1,0} \cong& Z_{(g)}/B_{(g)} .
\end{align*}
\end{lemma}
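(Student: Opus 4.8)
The plan is to compute the $E_2$-page of the spectral sequence in the two positions $(0,0)$ and $(1,0)$ by analyzing the differential $d_1$ (induced by $\partial_{m_{\fraktg}}$) between the already-identified $E_1$-terms from Lemma~\ref{lem:lemma3}. Recall that $\tilde E_2^{0,0} = \ker(d_1 \colon \tilde E_1^{0,0} \to \tilde E_1^{1,0})$ and $\tilde E_2^{1,0} = \ker(d_1 \colon \tilde E_1^{1,0} \to \tilde E_1^{2,0})/\mathrm{im}(d_1 \colon \tilde E_1^{0,0} \to \tilde E_1^{1,0})$. By Lemma~\ref{lem:lemma3} we have $\tilde E_1^{0,0}\cong H^0(W,d_W)\cong \fraktg(1)$ and $\tilde E_1^{1,0}\cong H^1(W,d_W)\cong \bigoplus_{c\in H^1}G_c[-1]$, where $G_c$ consists of those $U_c\in\fraktg(2)$ admitting a $u_c\in\fraktg(1)$ with $u_c^{12}=U_c^{1,2}+U_c^{2,1}$ and satisfying the cocycle relation $U_c^{12,3}-U_c^{1,23}-U_c^{2,13}=0$.

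First I would make the differential $d_1$ explicit on representatives. Since $d_1$ is induced by $\partial_{m_{\fraktg}}$ (the twisting by $m_r$), acting on a diagram $\overset{1}{\clu}\otimes u$ of arity one it produces the arity-two elements obtained by bracketing $u$ with the decorations carried by the Maurer-Cartan element $m_r$; concretely the image records, for each basis element $c\in H^1$, the commutator of $u$ with the corresponding generator of $\fraktg$. The key computation is to identify the image of $\overset{1}{\clu}\otimes u$ inside $\bigoplus_c G_c[-1]$: I expect it to be precisely the family $(V_c)_c$ with $V_{a_i}=[u^{12},x_i^{(1)}]$, $V_{b_i}=[u^{12},y_i^{(1)}]$, i.e. the family defining $B_{(g)}$ in equation~\eqref{eq:Bg} (here one uses that $\ad_{u^{12}}$ is the associated inner derivation). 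This will immediately give $\mathrm{im}(d_1\colon\tilde E_1^{0,0}\to\tilde E_1^{1,0})\cong B_{(g)}$, and the Remark following equation~\eqref{eq:Bg} already verifies that such families lie in the kernel of the next differential, so they indeed land in the appropriate subspace. For the $(0,0)$-entry I must compute $\ker(d_1)$ on $\fraktg(1)$: an element $u$ dies under $d_1$ iff $[u,x_i]=[u,y_i]=0$ for all $i$, i.e. $u$ is central in $\fraktg(1)$; by Lemma~\ref{lemma:center} this center is one-dimensional (spanned by $t_{11}$) for $g\geq 2$, but one must also track the interplay with the arity-one generators and the grading to see that the surviving class vanishes for $g\ge 2$ and is two-dimensional $\K[1]\oplus\K[1]$ for $g=1$ (reflecting $\spp_{-1}(H^*)$, consistent with Theorem~\ref{thm:GCgvsp}).

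Next, for $\tilde E_2^{1,0}$ I would show that the kernel of $d_1\colon\tilde E_1^{1,0}\to\tilde E_1^{2,0}$ is exactly $Z_{(g)}$. An element of $\tilde E_1^{1,0}$ is a family $(U_c)_c$ already satisfying the first two defining relations of $Z_{(g)}$ (the existence of $u_\alpha$ and the cocycle relation), as these are built into $G_c$. The content is that $d_1(U_c)_c=0$ forces precisely the remaining four relations of $Z_{(g)}$ in equation~\eqref{eq:defZg}, namely the cross relations $[U_{a_i}^{1,23},y_j^{(2)}]-[U_{b_j}^{2,13},x_i^{(1)}]=0$ and its three companions. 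I would obtain these by writing $d_1$ on a diagram $\overset{1}{\cclu}\,\overset{2}{\clu}\otimes U_c$ as a sum of bracket terms of the generators of $m_r$ against $U_c$ in the various clusters, then expressing the result in the preferred basis of $\tilde E_1^{2,0}$ (using the identifications $E_4^{1,1}$, $E_3^{2,0}$, $B$ from Lemma~\ref{lem:lemma3}); the vanishing in each basis component reproduces one of the four relations. Combining, $\ker(d_1)\cong Z_{(g)}$ and hence $\tilde E_2^{1,0}\cong Z_{(g)}/B_{(g)}$.

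The main obstacle will be the bookkeeping in the second computation: matching the image of $d_1$ on arity-two diagrams against the three distinct summands of $\tilde E_1^{2,0}$ (coming from $W_1$, $W_2$, $W_3$ via Lemma~\ref{lemma:grW}) and checking that each of the four cross-relations of $Z_{(g)}$ corresponds exactly to one anti/symmetrization type, with the correct signs under the $S_3$-action. One must be careful that, as noted before Lemma~\ref{lem:lemma4}, none of $E_4^{1,1}$, $E_3^{2,0}$, $B$ is a quotient space, so $d_1(w)=0$ in $\tilde E_1^{2,0}$ genuinely means $d_1(w)=0$ on the nose in $(\Com^c\circ H)\hotimes_S\fraktg$; this is what licenses reading off the relations component-by-component. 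I expect the signs and the symmetry-class matching to require the Jacobi identity and the relations of $\fraktg(3)$ (exactly the manipulations used in the Remark after~\eqref{eq:Bg}), but no genuinely new input, so the difficulty is combinatorial rather than conceptual.
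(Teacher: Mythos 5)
Your route is the paper's own — compute $d_1$ explicitly on representatives, identify $\mathrm{im}(d_1\colon\tilde E_1^{0,0}\to\tilde E_1^{1,0})$ with $B_{(g)}$ (your formula $V_{a_i}=[u^{12},x_i^{(1)}]$, $V_{b_i}=[u^{12},y_i^{(1)}]$ is exactly the paper's), read the cross relations of $Z_{(g)}$ off the vanishing of $d_1$, and invoke the remark that $\tilde E_1^{2,0}\cong E_4^{1,1}\oplus E_3^{2,0}\oplus B$ contains no quotient spaces, so vanishing there means vanishing on the nose. But your kernel criterion in position $(0,0)$ is wrong as stated: ``$u$ dies under $d_1$ iff $[u,x_i]=[u,y_i]=0$ for all $i$, i.e.\ $u$ is central'' would give a \emph{three}-dimensional answer for $g=1$, since $\frakt_{(1)}(1)=\K x\oplus\K y\oplus\K t_{11}$ is abelian and everything is central, whereas the lemma asserts dimension two. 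The missing mechanism, which your phrase ``track the interplay'' gestures at but does not supply, is that a class in $\tilde E_1^{0,0}\cong\fraktg(1)$ is represented in $W$ by a $d_W$-closed element with higher-arity tails, $w_0=\ \overset{1}{\clu}\ \otimes u+\frac12\ \overset{1}{\clu}\ \overset{2}{\clu}\ \otimes u^{12}+\dots$, so that $d_1(w_0)$ also has arity-two components $\overset{1}{\aiclu}\ \overset{2}{\clu}\ \otimes [u^{12},x_i^{(1)}]$ and $\overset{1}{\biclu}\ \overset{2}{\clu}\ \otimes [u^{12},y_i^{(1)}]$. It is these that exclude $t_{11}$ in every genus, since $[(t_{11})^{12},x_i^{(1)}]=[t_{12},x_i^{(1)}]\neq0$: for $g\geq2$, centrality (Lemma \ref{lemma:center}) leaves only $\K t_{11}$ and the arity-two condition kills it, giving $0$; for $g=1$ the arity-one condition is vacuous and the arity-two condition selects exactly $u\in\{x^{(1)},y^{(1)}\}$, giving $\K[1]\oplus\K[1]$.

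The same incompleteness, in milder form, affects your computation of $\ker(d_1\colon\tilde E_1^{1,0}\to\tilde E_1^{2,0})$. A closed representative $w_1$ has components in every arity, and $d_1(w_1)$ contains, besides the arity-three terms whose vanishing yields the four cross relations of $Z_{(g)}$, an arity-one component $\overset{1}{\oclu}\ \otimes\sum_i([u_{a_i},y_i^{(1)}]-[u_{b_i},x_i^{(1)}])$ landing in $E_4^{1,1}$ and arity-two components landing in $E_3^{2,0}$; your sentence ``the vanishing in each basis component reproduces one of the four relations'' overlooks them. A priori they impose further equations, so at that stage you only know $\ker(d_1)\subseteq Z_{(g)}$; to get equality one must verify they are redundant, which the paper does by specialization: setting $x_i^{(3)},y_i^{(3)},t_{j3}$ to zero shows the arity-three relations imply the vanishing of the arity-two part, and symmetrizing and then setting $x_i^{(2)},y_i^{(2)},t_{12},t_{22}$ to zero recovers the arity-one condition $\sum_i([u_{a_i},y_i^{(1)}]-[u_{b_i},x_i^{(1)}])=0$. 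This redundancy check is short but genuinely necessary; with it and the corrected $(0,0)$ mechanism, your outline completes along the paper's lines.
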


\begin{proof}
A class in $\tilde E_1^{0,0}\cong \fraktg(1)$ is completely determined its arity one component. Its leading terms are
\[
w_0= \ \overset{1}{\clu} \ \otimes  u+ \frac12 \ \overset{1}{\clu} \ \overset{2}{\clu} \ \otimes  u^{12} + \dots \ .
\]
Thus,
\[
d_1(w_0)=\sum_{i=1}^g  \ \overset{1}{\aiclu} \ \otimes [u,x_i^{(1)}]  +  \ \overset{1}{\biclu} \ \otimes [u,y_i^{(1)}] + \sum_{i=1}^g  \ \overset{1}{\aiclu} \ \overset{2}{\clu} \ \otimes [u^{12},x_i^{(1)}]+   \ \overset{1}{\biclu} \ \overset{2}{\clu} \ \otimes [u^{12},y_i^{(1)}] +\dots =0 \in \tilde E_1^{1,0}
\]
Observe that elements in $\tilde E_1^{1,0}\cong H^1(W,d_W)$ are determined by their components in arity one and two. For $g\geq 2$ the statement follows, since requiring $[u,x_i^{(1)}]=0=[u,y_i^{(1)}]$ for all $i$ in arity one, implies $u=k\cdot t_{11} \in Z(\fraktg(1))=\K t_{11}$ by Lemma \ref{lemma:center}. However, for $u=k\cdot t_{11}$
\[
[u^{12},x_i^{(1)}]=[t_{11}+t_{12}+t_{22},x_i^{(1)}]=[t_{12},x_i^{(1)}]\neq 0
\]
the arity two part does not vanish. Hence $u=0$. For $g=1$, $u=x^{(1)}$ and $u=y^{(1)}$ both solve the required equations in arity one (since $\frakt_{(1)}(1)$ is abelian) and two (for instance $[u^{12},y^{(1)}]=[x^{(1)}+x^{(2)},y^{(1)}]=-t_{12}+t_{12}=0$), whereas by the same reasoning as above, $u=t_{11}$ does not. Thus $d_1(w_0)=0 \in \tilde E_1^{1,0}$ for $u\in \{x^{(1)}, y^{(1)}\}$ and $\tilde E_2^{0,0}$ is two-dimensional in this case. In degree one, consider 
\[
w_1=\sum_{c \in H^1}  \ \overset{1}{ \cclu} \ \otimes u_c  +  \ \overset{1}{\cclu} \ \overset{2}{ \clu} \ \otimes U_c^{1,2} +  \ \overset{1}{\cclu} \ \overset{2}{\clu} \ \overset{3}{\clu} \ \otimes U_c^{1,23}  +  \dots \in W^1
\]
satisfying $d_W(w_1)=0$. In this case, 
\begin{align*}
d_1(w_1)&=\sum_{i=1}^{g}  \ \overset{1}{\oclu} \ \otimes ([u_{a_i},y_i^{(1)}] - [u_{b_i},x_i^{(1)}]) + \sum_{i=1}^g  \ \overset{1}{\oclu} \ \overset{2}{\clu} \ \otimes ([U_{a_i}^{1,2},y_i^{(1)}]-[U_{b_i}^{1,2},x_i^{(1)}]) \\
&+\sum_{i,j} \ \overset{1}{\aiclu} \  \overset{2}{\bjclu} \ \otimes ([U_{a_i}^{1,2},y_j^{(2)}] - [U_{b_j}^{2,1},x_i^{(1)}]) \\
&+\sum_{i\leq j}  \ \overset{1}{\aiclu} \  \overset{2}{\ajclu} \ \otimes ([U_{a_i}^{1,2},x_j^{(2)}] - [U_{a_j}^{2,1},x_i^{(1)}]) \\
&+\sum_{i\leq j}  \ \overset{1}{\biclu} \  \overset{2}{\bjclu} \ \otimes ([U_{b_i}^{1,2},y_j^{(2)}] - [U_{b_j}^{2,1},y_i^{(1)}]) \\
&+2 \sum_{i=1}^g \ \overset{1}{\oclu} \ \overset{2}{\clu} \ \overset{3}{\clu} \ \otimes ( [U_{a_i}^{1,23},y_i^{(1)}]-[U_{b_i}^{1,23} , x_i^{(1)}]) \\
&+ 2 \sum_{i,j} \  \overset{1}{\aiclu} \ \overset{2}{\bjclu} \ \overset{3}{\clu} \ \otimes ( [U_{a_i}^{1,23},y_j^{(2)}]-[U_{b_j}^{2,13} , x_i^{(1)}])\\
&+ 2 \sum_{i\leq j} \ \overset{1}{\aiclu} \ \overset{2}{\ajclu} \ \overset{3}{\clu} \ \otimes ( [U_{a_i}^{1,23},x_j^{(2)}]-[U_{a_j}^{2,13} , x_i^{(1)}]) \\
&+ 2 \sum_{i\leq j} \  \overset{1}{\biclu} \ \overset{2}{\bjclu} \ \overset{3}{\clu} \ \otimes ( [U_{b_i}^{1,23},y_j^{(2)}]-[U_{b_j}^{2,13} , y_i^{(1)}])  + \dots=0. \\
\end{align*}
Observe that elements in $\tilde E_1^{2,0}\cong H^1(W,d_W)\cong E_4^{1,1}\oplus E_3^{2,0}\oplus B $ are determined by their components in arity one, two and three. The arity three component gives precisely the relations from the statement. Moreover, by setting the variables $x_i^{(3)},y_i^{(3)}, t_{j3}$ equal to zero, the conditions for the arity three part to vanish readily imply that the arity two part is zero as well. Lastly, notice that assuming the relations in arity three (and thus that the arity two component is zero as well) implies
\begin{align*}
0&=\sum_{i=1}^g ([U_{a_i}^{1,2},y_i^{(1)}]-[U_{b_i}^{1,2},x_i^{(1)}])^{1,2+2,1}\\
&=\sum_{i=1}^g [U_{a_i}^{1,2}+U_{a_i}^{2,1},y_i^{(1)}+y_i^{(2)}]-[U_{b_i}^{1,2}+U_{b_i}^{2,1},x_i^{(1)}+x_i^{(2)}]\\
&=\sum_{i=1}^g [u_{a_i}^{12},y_i^{(1)}+y_i^{(2)}]-[u_{b_i}^{12},x_i^{(1)}+x_i^{(2)}]\\
&=\sum_{i=1}^g ([u_{a_i},y_i^{(1)}]-[u_{b_i},x_i^{(1)}])^{12}.
\end{align*}
By setting the variables $x_i^{(2)}$, $y_i^{(2)}$, $t_{12}$, $t_{22}$ equal to zero in this last line, we obtain
\[
\sum_{i=1}^{g} [u_{a_i},y_i^{(1)}] - [u_{b_i},x_i^{(1)}]=0
\]
i.e. the arity one terms are zero. For the exact elements in degree one, notice that $\tilde E_1^{0,0}\cong\fraktg(1)$ is completely determined by its arity one component. It is sent to
\[
d_1( \overset{1}{\clu} \ \otimes v)=\sum_{i=1}^g \ \overset{1}{\aiclu} \ \otimes [v,x_i^{(1)}] + \overset{1}{\biclu} \ \otimes [v,y_i^{(1)}] .
\]
Set $v_{a_i}=[v,x_i^{(1)}]$ and $V_{a_i}=[v^{12},x_i^{(1)}]$. They satisfy $v_{a_i}^{12}=V_{a_i}^{1,2}+V_{a_i}^{2,1}$ and
\[
V_{a_i}^{12,3}-V_{a_i}^{1,23}-V_{a_i}^{2,13}=[v_{a_i}^{123},x_i^{(1)}+x_i^{(2)}-x_i^{(1)}-x_i^{(2)}]=0.
\]
Thus, $\mathrm{im}(d_1:\tilde E_1^{0,0}\rightarrow \tilde E_1^{1,0})\cong \{(V_c)_{c} \in \fraktg(2)^{\times 2g} \ | \ \exists v\in \fraktg(1)  \ :   \ V_{a_i}=[v^{12},x_i^{(1)}], \ V_{b_i}=[v^{12},y_i^{(1)}]  \ \forall i \}$.
\end{proof}

The series of lemmas above now imply Proposition \ref{thm:Qcohomology}.

\begin{proof}[Proof of Proposition \ref{thm:Qcohomology}]
Since $H^0(\Qc)\cong \tilde E_2^{0,0}$ and $H^1(\Qc)\cong \tilde E_2^{1,0}$ Proposition \ref{thm:Qcohomology} follows.
\end{proof}

\begin{proof}[Proof of Theorem \ref{thm:GCgvsp}]
The results mentioned in Remark \ref{rmk:crucialzigzag} yield that as vector spaces
\[
H^0(\spp(H^*)\ltimes \GCg)\cong H^1(\Def(\pdGraphsg\xrightarrow \Phi \Mog))\cong H^1(\Qc,d_\Qc)\cong Z_{(g)}/B_{(g)}=\frakr_{(g)}
\]
and
\[
H^{-1}(\spp(H^*)\ltimes \GCg)\cong H^{0}(\Qc,d_\Qc)=\begin{cases}
\K [1]\oplus \K[1] & \text{ for } g=1\\
0 & \text{ for } g\geq 2.
\end{cases}.
\]
By Lemma \ref{lemma:extensioncommutes}, we have
\[
H^{-1}(\GCg)=H^{-1} (\spp(H^*)\ltimes \GCg)=0
\]
for $g\geq 2$, whereas for $g=1$
\[
H^{-1} (\spp(H^*)\ltimes \GC_\mathrm{(1)})=\spp_{-1}(H^*)\ltimes H^{-1}(\GC_{\mathrm{(1)}}).
\]
Since in this case $\spp_{-1}(H^*)$ is two-dimensional, we deduce that $H^{-1}(\GC_\mathrm{(1)})=0$. Corollary \ref{cor:negativedegrees} proves the result in degrees $i<-1$.
\end{proof}

\section{Corollaries}

In this section we unwrap some of the definitions and results given above in order to understand parts of the structure of $\frakr_{(g)}$ and the cohomology of the graph complex $\GCg$.

\subsection{The non-framed case for $g=1$}

The non-framed case for $g=1$ is particularly interesting since we may compare our findings to the literature - most notably the work of Enriquez \cite{Enriquez14}. First, observe that the results from above carry over to the non-framed case for $g=1$, with the collection of Lie algebras $\fraktone$ (see Section \ref{sec:nonfrt}) in place of $\frakt_{(1)}$. Explicitly, we have equivalent versions
\[
Z^{\mathrm{non-fr}}_{(1)}, B^{\mathrm{non-fr}}_{(1)}, \frakr^{\mathrm{non-fr}}_{(1)}=Z^{\mathrm{non-fr}}_{(1)}/ B^{\mathrm{non-fr}}_{(1)}
\]
defined by the analogous formulas (see equations \eqref{eq:defZg} and \eqref{eq:Bg}), but in which all occurrences of $\frakt_{(1)}$ are replaced by $\fraktone$. This may be simplified further.

\begin{lemma}
The subspace $B^{\mathrm{non-fr}}_{(1)}=0$.
\end{lemma}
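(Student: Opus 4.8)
The plan is to prove that $B^{\mathrm{non-fr}}_{(1)}=0$ by unpacking the definition of $B_{(g)}$ from equation \eqref{eq:Bg} in the special case $g=1$ (with $\fraktone$ replacing $\frakt_{(1)}$) and showing that the only derivation of the required inner form is the trivial one. Recall that an element of $B^{\mathrm{non-fr}}_{(1)}$ is a pair $(V_a, V_b)$ for which there exists $v\in \fraktone(1)$ with $V_a=[v^{12},x^{(1)}]$ and $V_b=[v^{12},y^{(1)}]$. The key structural input is that $\fraktone(1)$ is \emph{abelian}: indeed, as noted in Section \ref{sec:nonfrt}, for $g=1$ the Lie algebra $\frakt_{(1)}(1)$ is abelian, and the same holds for $\fraktone(1)$, which is freely generated by $x^{(1)}$ and $y^{(1)}$ with no relations in arity one (the only arity-one relation $[x^{(1)},y^{(1)}]=-\sum_{j\neq 1}t_{1j}$ becomes vacuous in arity one as the right hand side is empty).

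First I would observe that since $\fraktone(1)$ is abelian and is generated by $x^{(1)}$ and $y^{(1)}$, any $v\in \fraktone(1)$ is a linear combination of $x^{(1)}$ and $y^{(1)}$, say $v=\lambda x^{(1)}+\mu y^{(1)}$ for scalars $\lambda,\mu\in\K$. Its image $v^{12}$ under the map induced by $\delta^1_1$ is then $\lambda(x^{(1)}+x^{(2)})+\mu(y^{(1)}+y^{(2)})$. I would then compute the bracket $V_a=[v^{12},x^{(1)}]$ directly using the defining relations of $\fraktone(2)$: the terms $[x^{(1)},x^{(1)}]$ and $[x^{(2)},x^{(1)}]$ vanish (the latter by $[x^{(i)},x^{(j)}]=0$, which is implicit in the genus-one relations since there are no $x_l^{(i)}$ with distinct $l$), while $[y^{(1)},x^{(1)}]$ and $[y^{(2)},x^{(1)}]$ must be evaluated. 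The relation $[x^{(i)},y^{(j)}]=t_{ij}$ for $i\neq j$ gives $[y^{(2)},x^{(1)}]=-t_{12}$, and the arity-one relation $[x^{(1)},y^{(1)}]=-t_{12}$ (the single off-diagonal term) gives $[y^{(1)},x^{(1)}]=t_{12}$.

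The decisive step is then to show that requiring $(V_a,V_b)$ to lie in $B^{\mathrm{non-fr}}_{(1)}$ forces $v$ to be central, hence $V_a=V_b=0$. Carrying out the bracket computation, $V_a=[v^{12},x^{(1)}]=\mu([y^{(1)},x^{(1)}]+[y^{(2)},x^{(1)}])=\mu(t_{12}-t_{12})=0$, and similarly $V_b=[v^{12},y^{(1)}]=\lambda([x^{(1)},y^{(1)}]+[x^{(2)},y^{(1)}])=\lambda(-t_{12}+t_{12})=0$. Thus every element of $B^{\mathrm{non-fr}}_{(1)}$ is automatically zero, regardless of the choice of $v$, which establishes $B^{\mathrm{non-fr}}_{(1)}=0$. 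The main point to get right is the bookkeeping of the symmetrized brackets $[v^{12},x^{(1)}]$: the two contributions $[-,x^{(1)}]$ coming from the index-$1$ and index-$2$ parts of $v^{12}$ cancel precisely because of the sign antisymmetry $t_{12}=-[x^{(2)},y^{(1)}]=[x^{(1)},y^{(1)}]$ built into the relations. I do not expect any serious obstacle here; the only care needed is to confirm that $\fraktone(1)$ has no additional generators (in particular no $t_{11}$, since the non-framed genus-one Lie algebra omits the diagonal generators), so that $v$ really is spanned only by $x^{(1)}$ and $y^{(1)}$ and the cancellation is complete.
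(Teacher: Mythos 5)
Your proposal is correct and follows essentially the same route as the paper's proof: both rest on $\fraktone(1)$ being abelian (so $v$ is a linear combination of $x^{(1)}$ and $y^{(1)}$, with no $t_{11}$ present in the non-framed case) together with the direct bracket computation in $\fraktone(2)$ where the two contributions cancel, e.g.\ $[x^{(1)}+x^{(2)},y^{(1)}]=-t_{12}+t_{12}=0$; the paper just checks the two generators separately rather than a general combination. One small wording caveat: the arity-one relation $[x^{(1)},y^{(1)}]=0$ is not ``vacuous'' --- it is precisely the relation making $\fraktone(1)$ abelian, so $\fraktone(1)$ is the two-dimensional abelian Lie algebra $\K x\oplus \K y$ rather than freely generated, which is in fact what your argument uses.
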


\begin{proof}
This follows from the fact that $\fraktone(1)$ is abelian, and the following computation for the two generators. Indeed, for $v=x\in \fraktone(1)$, we have
\begin{align*}
&[v^{12},x^{(1)}]=[x^{(1)}+x^{(2)},x^{(1)}]=0\\
&[v^{12},y^{(1)}]=[x^{(1)}+x^{(2)},y^{(1)}]=-t_{12}+t_{12}=0
\end{align*}
while a similar computation holds for $v=y\in \fraktone(1)$.
\end{proof}

Hence, in the non-framed case for $g=1$ we have
\[
H^0(\spp(H^*)\ltimes \GC^{\minitadp}_{(1)})\cong Z^{\mathrm{non-fr}}_{(1)}=\frakr^{\mathrm{non-fr}}_{(1)}.
\]

Next, we find an interesting decomposition of $Z^{\mathrm{non-fr}}_{(1)}=\frakr^{\mathrm{non-fr}}_{(1)}$. 

\begin{lemma}\label{lemma:decompositionframed}
As Lie algebras
\[
\frakr^{\mathrm{non-fr}}_{(1)}\cong \sll_2\ltimes \frakr_{ell}.
\]
\end{lemma}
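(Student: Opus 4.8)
Since $\Bone=0$ by the previous lemma, we have $\frakrone=\Zone$, so it suffices to produce a Lie algebra isomorphism $\Zone\cong\sll_2\ltimes\frakr_{ell}$. The plan is to realize $\sll_2$ as an explicit splitting subalgebra and $\frakr_{ell}$ as the complementary ideal, the latter being the kernel of a natural projection. Recall from the discussion of the Lie algebra structure on $Z_{(g)}$ that an element $(U_c)_c\in\Zone$ is the same datum as a compatible family of $S_r$-equivariant derivations of the operadic $\frakt$-module $\fraktone$, determined by $U(x^{(1)})=U_a^{1,2\dots r}$ and $U(y^{(1)})=U_b^{1,2\dots r}$; the defining relations of $\Zone$ are precisely those guaranteeing that this family preserves all relations of $\fraktone(r)$.

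First I would exhibit the copy of $\sll_2$. The symplectic Lie algebra $\mathfrak{sp}(H^1)=\sll_2$ of the two-dimensional space $H^1=\langle a,b\rangle$ acts on $\fraktone$ by the linear derivations fixing every $t_{ij}$ and acting on the generators $x^{(i)}\leftrightarrow a$, $y^{(i)}\leftrightarrow b$ through the standard representation. Concretely, the triple $e=(0,x^{(1)})$, $f=(y^{(1)},0)$, $h=(x^{(1)},-y^{(1)})$ in $\fraktone(2)^{\times 2}$ should be checked to satisfy all defining relations of $\Zone$ (a short direct verification: for instance $(y^{(1)},0)$ has symmetric part $y^{(1)}+y^{(2)}=(y^{(1)})^{12}$, is additive, and annihilates the two commutator relations because $[y^{(1)},y^{(2)}]=0$) and to close, under the commutator bracket $\{-,-\}$ of Lemma~\ref{lemma:bracket}, into a copy of $\sll_2$.

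Next I would define the projection $p\colon\Zone\to\gl(H^1)$ sending $(U_a,U_b)$ to the linear endomorphism $a\mapsto u_a$, $b\mapsto u_b$, where $u_a,u_b\in\fraktone(1)=\langle x,y\rangle$ are the arity-one elements produced by the first defining relation of $\Zone$, namely $u_c^{12}=U_c^{1,2}+U_c^{2,1}$. Since $p$ records only the linear part of a derivation and $\{-,-\}$ is the derivation commutator, $p$ is a homomorphism of Lie algebras, and it restricts to the identity on the explicit $\sll_2$ above, so it is split surjective. The key point is that its image is exactly the traceless (that is, symplectic) part $\sll_2\subset\gl(H^1)$: the last defining relation of $Z_{(g)}$ in \eqref{eq:defZg}, which for $g=1$ reads $[U_a^{1,23},y^{(1)}]-[U_b^{1,23},x^{(1)}]=0$ and is inherited from $[x^{(1)},y^{(1)}]=-t_{12}$ in $\fraktone(2)$, forces $(u_a,u_b)$ to preserve the symplectic pairing; this is extracted exactly by the arity-specialization technique already used in the proof of Lemma~\ref{lem:lemma4} (setting the auxiliary variables to zero), and indeed the trace-$2$ scaling derivation $(x^{(1)},y^{(1)})$ is ruled out because it violates this relation.

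Finally I would identify $\ker p$ with $\frakr_{ell}$. Both sit inside the derivations of $\fraktone$ and are cut out by the vanishing of the linear part, so matching them amounts to checking that the conditions of \eqref{eq:defrell} are equivalent to ``$(U_a,U_b)\in\Zone$ with $u_a=u_b=0$''. The first two lines of \eqref{eq:defrell} follow from antisymmetry $U_c^{1,2}+U_c^{2,1}=0$ together with additivity: additivity gives $U_a^{1,23}+U_a^{2,13}=U_a^{12,3}$, while antisymmetry gives $U_a^{3,12}=-U_a^{12,3}$, so the cyclic sum vanishes (and conversely). The remaining four relations of \eqref{eq:defrell} are precisely the statements that the derivation preserves $[x^{(i)},x^{(j)}]=0$, $[y^{(i)},y^{(j)}]=0$, $[x^{(i)},y^{(j)}]=t_{ij}$ and the balanced relation, all of which hold for every element of $\Zone$ by construction. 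Carefully translating between the cyclic-symmetry and commutator conditions of \eqref{eq:defrell} and the additivity-plus-descent conditions of \eqref{eq:defZg} is the main obstacle, since it is the only place where the two standalone presentations (Enriquez' and ours) must be reconciled. Once this is done, $\ker p=\frakr_{ell}$ is a Lie ideal, the section $\sll_2\hookrightarrow\Zone$ is a Lie algebra map, and $\Zone=\sll_2\ltimes\frakr_{ell}$, which is the claimed isomorphism $\frakrone\cong\sll_2\ltimes\frakr_{ell}$.
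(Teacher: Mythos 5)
Your proof is correct and takes essentially the same route as the paper: the same generators $E=(0,x^{(1)})$, $F=(y^{(1)},0)$, $H=(x^{(1)},-y^{(1)})$ spanning $\sll_2$, the same identification of $\frakr_{ell}$ with the kernel of the arity-one linear-part projection (a Lie ideal by Lemma \ref{lemma:bracket}), and the same key observation that antisymmetry together with additivity is equivalent to Enriquez' cyclic relation. Your only additions are cosmetic — you make the split surjection onto $\sll_2\subset\gl(H^1)$ explicit and rule out the trace element $(x^{(1)},y^{(1)})$ directly (correctly so, though note the arity-one specialization by itself is vacuous here since $\fraktone(1)$ is abelian; the trace is excluded by the arity-two and arity-three bracket relations, as you in fact verify) — where the paper leaves these points implicit.
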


In \cite{Enriquez14}, Enriquez also considers this extension. He denotes it by $\frakr^{ell}:=\sll_2\ltimes \frakr_{ell}$.

\begin{proof}
Observe that the system of equations
\begin{align*}
0&=U_a^{1,2}+U_a^{2,1}\\
0&=U_a^{12,3}-U_a^{1,23}-U_a^{2,13} 
\end{align*}
is equivalent, via the fact that $U^{12,3}_a=-U^{3,12}_a$, to the single equation
\[
U_a^{1,23}+U_a^{2,13}+U_a^{3,12}=0.
\]
It follows that $\frakr_{ell}$ may be described by the subspace of $\frakrone$ spanned by those pairs $U=(U_a,U_b)\in \Zone$ for which $u=(u_a,u_b)=(0,0)\in \fraktone(1)^{\times 2}$. As we have seen in the proof of Lemma \ref{lemma:bracket}, the Lie bracket $\{U,V\} \in \frakrone$ respects the derivations of $\fraktone(1)$, i.e. if $u=(u_a,u_b)$ and $v=(v_a,v_b)$ satisfy the first relation in $\frakrone$ for $U$ and $V$, respectively, then $\{u,v\}$ solves the required equation for the Lie bracket $\{U,V\}$. Thus, $\frakr_{ell}$ clearly defines a Lie ideal in $\frakrone$. On the other hand, recall that $\fraktone(1)=\K x \oplus \K y$ is abelian, and therefore a basis for the elements in $\frakrone$ for which the first condition is satisfied for non-zero $u=(u_a,u_b)\in \fraktone(1)^{\times 2}$ is given by
\[
H=(x^{(1)},-y^{(1)}) \ , \ E=(0,x^{(1)}) \ , \ F=(y^{(1)},0).
\]
These satisfy
\[
\{E,F\}=H \ , \ \{H,E\}=2E \ , \ \{H,F\}=-2F.
\]
and thus form a Lie subalgebra of $\frakrone$ isomorphic to $\sll_2$.
\end{proof}

Furthermore, recall that in the case of genus one we have (see Lemma \ref{lemma:extensioncommutes})
\[
H^0(\spp(H^*)\ltimes \GC^{\minitadp}_{(1)})=\spp_0(H^*)\ltimes H^0(\GC^{\minitadp}_{(1)})
\]
where $\spp_0(H^*)$ denotes the transformations of degree zero. Notice that as Lie algebras $\spp_0(H^*)\cong \sll_2$. Moreover, by tracing through the zig-zag of quasi-isomorphisms, we find that 
\begin{align*}
a \partial_b &\mapsto  F=(y^{(1)},0)\\
b \partial_{a} &\mapsto E=(0,x^{(1)})\\
a\partial_{a}-b \partial_{b}&\mapsto H=(x^{(1)},-y^{(1)}).
\end{align*}

We conclude that the cohomology of the graph complex in degree zero is indeed isomorphic to Enriquez' $\frakr_{ell}$.

\begin{cor}\label{cor:GCrell}
In the non-framed case for $g=1$ the isomorphism of Theorem \ref{thm:GCgvsp}
\[
H^0(\spp(H^*)\ltimes \GC^{\minitadp}_{(1)})\cong \frakr^{ell}
\]
restricts to an isomorphism of vector spaces
\[
H^0(\GC^{\minitadp}_{(1)})\cong \frakr_{ell}.
\]
\end{cor}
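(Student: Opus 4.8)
The plan is to exploit two compatible vector-space splittings of the source and target of the isomorphism furnished by Theorem~\ref{thm:GCgvsp}. In the non-framed genus-one setting, combining Theorem~\ref{thm:GCgvsp} with the vanishing $B^{\mathrm{non-fr}}_{(1)}=0$ and with Lemma~\ref{lemma:decompositionframed} produces a vector-space isomorphism
\[
\Theta\colon H^0(\spp(H^*)\ltimes \GC^{\minitadp}_{(1)}) \xrightarrow{\ \sim\ } \frakr^{\mathrm{non-fr}}_{(1)}\cong \sll_2\ltimes \frakr_{ell}=\frakr^{ell}.
\]
On the source, Lemma~\ref{lemma:extensioncommutes} gives the direct-sum (semidirect) decomposition $\spp_0(H^*)\oplus H^0(\GC^{\minitadp}_{(1)})$; on the target, the proof of Lemma~\ref{lemma:decompositionframed} exhibits $\sll_2\oplus \frakr_{ell}$, with $\frakr_{ell}=\ker(\frakr^{ell}\to\sll_2)$ mapping isomorphically onto the quotient $\frakr^{ell}/\sll_2$.

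First I would record that $\Theta$ carries the summand $\spp_0(H^*)$ onto $\sll_2$. This is exactly the content of the explicit assignments displayed before the statement: the three degree-zero generators $a\partial_b$, $b\partial_a$, $a\partial_a-b\partial_b$ are sent to $F=(y^{(1)},0)$, $E=(0,x^{(1)})$, $H=(x^{(1)},-y^{(1)})$, which form a basis of the subalgebra $\sll_2\subset \frakr^{\mathrm{non-fr}}_{(1)}$ isolated in Lemma~\ref{lemma:decompositionframed}. Hence $\Theta(\spp_0(H^*))=\sll_2$ as subspaces.

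It then remains to run a purely linear-algebraic argument. Writing $\pi\colon \frakr^{ell}\to \frakr^{ell}/\sll_2\cong \frakr_{ell}$ for the quotient projection, I would consider the composite
\[
H^0(\GC^{\minitadp}_{(1)}) \hookrightarrow H^0(\spp(H^*)\ltimes \GC^{\minitadp}_{(1)}) \xrightarrow{\ \Theta\ } \frakr^{ell} \xrightarrow{\ \pi\ } \frakr_{ell}.
\]
Since $\Theta$ is an isomorphism with $\Theta(\spp_0(H^*))=\sll_2=\ker\pi$, this composite is injective (if $\pi\Theta(\gamma)=0$ then $\Theta(\gamma)\in\Theta(\spp_0(H^*))$, forcing $\gamma\in \spp_0(H^*)\cap H^0(\GC^{\minitadp}_{(1)})=0$) and surjective (any class of $\frakr_{ell}$ lifts through $\Theta$, and the $\spp_0(H^*)$-part of the lift is killed by $\pi$). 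This gives the asserted isomorphism of vector spaces $H^0(\GC^{\minitadp}_{(1)})\cong \frakr_{ell}$.

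The main obstacle is the second step, namely establishing that $\Theta$ really does identify $\spp_0(H^*)$ with the $\sll_2$-factor via those explicit formulas. Although asserted in the text, this requires tracing each degree-zero generator of $\spp(H^*)$ through the entire chain of quasi-isomorphisms underlying Theorem~\ref{thm:GCgvsp}---the hairy comparison of Proposition~\ref{prop:GCHGC}, the deformation-complex identifications of Theorem~\ref{thm:defGC} and Lemma~\ref{lemma:zigzagDef}, and the auxiliary complex $\Qc$ with its Bezrukavnikov-type model---and checking that under all of them the image of each generator stabilizes to the indicated element of $\fraktone(2)^{\times 2}$. Everything after this matching is formal.
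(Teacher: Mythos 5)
Your proposal follows essentially the same route as the paper: the paper likewise combines the vanishing of $B^{\mathrm{non-fr}}_{(1)}$, the splitting $H^0(\spp(H^*)\ltimes \GC^{\minitadp}_{(1)})=\spp_0(H^*)\ltimes H^0(\GC^{\minitadp}_{(1)})$ from Lemma~\ref{lemma:extensioncommutes}, the decomposition $\frakr^{\mathrm{non-fr}}_{(1)}\cong \sll_2\ltimes \frakr_{ell}$ of Lemma~\ref{lemma:decompositionframed}, and the explicit tracing of the degree-zero generators onto $E,F,H$ to conclude. The kernel/quotient linear-algebra step you spell out is left implicit in the paper, and the generator-matching step you flag as the main obstacle is asserted there at the same level of detail (``by tracing through the zig-zag of quasi-isomorphisms''), so your account is faithful to the paper's argument.
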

%

\begin{rem}
In negative degrees the result corresponding to Theorem \ref{thm:GCgvsp} for the non-framed case in $g=1$ translates to
\[
H^{-1}(\spp(H^*)\ltimes \GC^{\minitadp}_{(1)})=\spp_{-1}(H^*)=\K[1]\oplus \K[1]
\]
and
\[
H^{i}(\GC^{\minitadp}_{(1)})=0 
\]
for all $i<0$.
\end{rem}

\subsubsection{Conjectural generators in degree zero}

In \cite{Enriquez14}, Enriquez defines the Lie subalgebra $\frakb_3\subset \frakr^{ell}$ generated by $\sll_2$ and the derivations $\delta_{2n}\in \frakr_{ell}$ for $n\geq 0$,
\[
\delta_{2n}(x^{(1)})=(\ad_{ x^{(1)}})^{2n+2}(y^{(1)}), \ \delta_{2n}(y^{(1)})=\frac12\sum_{\substack{0\leq p < 2n+1 \\ p+q=2n+1}} [(\ad_{x^{(1)})}^p (y^{(1)}),(\ad_{ x^{(1)})}^q(y^{(1)})].
\]

\begin{conj}(\cite{Enriquez14})\label{conj:deltas}
The elliptic Grothendieck-Teichm\"uller Lie algebra $\frakr^{ell}$ is generated by $\sll_2$ and the family of derivations $\{\delta_{2n}\}_{n\geq 0}$, i.e. 
\[
\frakr^{ell}=\frakb_3.
\]
\end{conj}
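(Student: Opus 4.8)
The plan is to transport the conjecture into the graph complex, where the present paper supplies new leverage, and to recast it as a statement about indecomposable cohomology classes. By Corollary \ref{cor:GCrell} the isomorphism of Theorem \ref{thm:GCgvsp} restricts to $H^0(\GC^{\minitadp}_{(1)})\cong \frakr_{ell}$, while by Lemma \ref{lemma:decompositionframed} the full $\rell$ is $\sll_2\ltimes\frakr_{ell}$ with the $\sll_2$-factor realized inside $\spp_0(H^*)$ through the explicit dictionary recorded just after Lemma \ref{lemma:decompositionframed}. Thus the conjecture $\rell=\frakb_3$ is equivalent to the assertion that $H^0(\GC^{\minitadp}_{(1)})$ is generated, as a Lie algebra under the $\spp_0(H^*)\cong\sll_2$-action, by the cohomology classes corresponding to $\{\delta_{2n}\}_{n\geq 0}$. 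The first concrete step is to pin these classes down: tracing $\delta_{2n}$ backwards through the zig-zag $\GC^{\minitadp}_{(1)}\rightsquigarrow\Def(\pdGraphsone\xrightarrow{\Phi}\Mo)\rightsquigarrow \mathsf{Q}^{\mathrm{non-fr}}_{(1)}$ used to prove Theorem \ref{thm:GCgvsp}, I would identify each $\delta_{2n}$ with an explicit cocycle represented by a decorated graph whose combinatorics encode the iterated bracket $\ad_{x^{(1)}}^{2n+2}(y^{(1)})$, then verify that its image in the $H^1(\mathsf{Q})\cong\Zone$ description (recall $\Bone=0$) reproduces Enriquez' formulas.

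Second, I would reduce generation to a computation of indecomposables. The Lie algebra $\rell$ carries a weight grading (induced by the degree grading on $\fraktone(2)$), with finite-dimensional graded pieces concentrated in weights $\geq 1$; the generators $E,F,H$ sit in weight one and each $\delta_{2n}$ in weight $2n+2$. For such a positively graded, finite-type Lie algebra, a homogeneous subset generates if and only if it spans the weight-graded space of indecomposables $Q(\rell):=\rell/[\rell,\rell]$. Under the graph-complex dictionary the Lie bracket is the edge-insertion (pairing) operation, so $[\rell,\rell]$ is spanned by classes of graphs that split as a pairing of two strictly smaller decorated graphs. Generation by $\sll_2$ and $\{\delta_{2n}\}$ is then precisely the statement that every indecomposable class is, modulo the $\sll_2$-action, a scalar multiple of a single $\delta_{2n}$; equivalently, that $Q(\rell)$ is, as an $\sll_2$-module, the sum of the summands containing the $\delta_{2n}$ and nothing more.

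The hard part, and the reason this remains conjectural, is exactly this last vanishing. Computing $Q\bigl(H^0(\GC^{\minitadp}_{(1)})\bigr)$ in full is of the same order of difficulty as determining the generators of Drinfeld's $\grt_1$ in the local case, where the analogous generation and freeness statements (Deligne--Ihara) are themselves open. One concrete route would be to install a depth filtration on $\frakr_{ell}$ (via a filtration on $\fraktone(2)$) and induct on depth: the $\delta_{2n}$ lie in depth one, and one must rule out indecomposable classes in depth $\geq 2$. Here I expect the genuine obstruction, since the known depth-two relations among the $\delta_{2n}$ are the Pollack relations governed by cusp forms and period polynomials for $\mathrm{SL}_2(\Z)$; controlling precisely which classes these relations kill, and showing that nothing indecomposable survives beyond depth one, seems to require arithmetic input that current graph-complex techniques do not yet provide. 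A complete proof would most plausibly come from an independent identification of $\frakr_{ell}$ with a free (or explicitly presented) Lie algebra on the $\delta_{2n}$, with the graph complex serving only to match generators; absent such an identification, the spectral-sequence computations used to prove Theorem \ref{thm:GCgvsp} reach only degrees $\leq 2$ and so do not by themselves terminate the induction.
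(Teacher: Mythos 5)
Your task statement here is labeled a \emph{conjecture} in the paper and attributed to Enriquez; the paper contains no proof of it, so there is no author argument to match yours against. Your proposal, to its credit, recognizes this and is offered as a reduction program rather than a proof. The reductions you do carry out are broadly sound and closely parallel the paper's own ``conjectural generators'' discussion: Corollary \ref{cor:GCrell} and Lemma \ref{lemma:decompositionframed} legitimately translate $\rell=\frakb_3$ into a generation statement for $H^0(\GC^{\minitadp}_{(1)})$ under the $\spp_0(H^*)\cong\sll_2$-action, and the graded-Nakayama reduction of generation to indecomposables is valid for a Lie algebra graded in positive weights with finite-dimensional pieces. Two caveats, though. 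First, your grading bookkeeping is off: with the grading induced by word-length in $\fraktone(2)$, the derivation bracket is additive only after a shift by one, which places $E,F,H$ in degree zero rather than weight one; consequently the indecomposables must be taken as $\sll_2$-module indecomposables of the strictly positive part, not naive Lie-algebra indecomposables of all of $\rell$. Second, even your ``first concrete step'' --- identifying each $\delta_{2n}$ with an explicit graph cocycle by tracing the zig-zag through $\Def(\pdGraphsone\xrightarrow{\Phi}\Mo)$ and $\mathsf{Q}^{\mathrm{non-fr}}_{(1)}$ --- is itself only conjectural in the paper: the closed, non-exact classes $\gamma_{2n}=t_{2n}+\gamma^1_{2n}+\gamma^2_{2n}$ are constructed, but their matching with the $\delta_{2n}$ under the isomorphism of Theorem \ref{thm:GCgvsp} is stated as a separate conjecture, not proved, so you cannot treat that identification as available input.

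The genuine gap is the one you name yourself: proving that no indecomposables survive beyond the $\delta_{2n}$, equivalently ruling out indecomposable classes in depth $\geq 2$, where the Pollack relations governed by period polynomials of cusp forms live. Nothing in the paper's spectral-sequence machinery --- which reaches only degrees $\leq 2$ of the auxiliary complex $\mathsf{Q}^{\mathrm{non-fr}}_{(1)}$ and hence only $H^{\leq 1}$ of the relevant deformation complex --- nor in your outline closes this, and your own assessment that arithmetic input beyond current graph-complex techniques is needed is accurate. So the verdict: your proposal is an honest and essentially correct framing, consistent with (and slightly more explicit than) the paper's own conjectural discussion, but it is not a proof, and the paper does not claim one either.
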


The existence of the elements $\delta_{2n}$ enables us to construct conjectural generators for the degree zero cohomology of the graph complex. To fix notation set
\[
t_{2n}:=\treegraph\in \GC^{\minitadp}_{(1)}
\]
be the graph with no loops, $2n+2$ vertices, each carrying a decoration by $\alpha$, and with the two outermost vertices additionally decorated by $\beta$. By adding correction terms which involve one and two loop graphs $\gamma^1_{2n}$, $\gamma^2_{2n}$ which can be explicitly calculated, we obtain diagrams
\[
\gamma_{2n}=t_{2n}+\gamma_{2n}^1+\gamma_{2n}^2 \in \GC^{\minitadp}_{(1)}
\]
which satisfy $d\gamma_{2n}=0$. In particular, $\gamma_{2n}$ is not exact, since the tree part $t_{2n}$ cannot be obtained through the vertex splitting operation (this would involve a vertex of valency four, decorated by two $\alpha$, but such diagrams are zero by symmetry). Moreover, it can be shown that $\GC^{\minitadp}_{(1)}$ is quasi-isomorphic to its subcomplex spanned by diagrams without $\omega$-decorations (see \cite{FN21}, note that this is only true in genus one), and thus this element cannot arise from applying the differential to a tree with one $\omega$-decoration. We have the following conjectures.

\begin{conj}
Under the isomorphism of Theorem \ref{thm:GCgvsp}, the element $\delta_{2n}\in \frakrone$ corresponds to a graph cohomology class, all of whose representatives have a nonvanishing coefficient in front of the graph $t_{2n}$.
\end{conj}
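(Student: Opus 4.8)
The plan is to realize the conjectured correspondence by an explicit computation that propagates the candidate cocycle $\gamma_{2n}=t_{2n}+\gamma_{2n}^1+\gamma_{2n}^2$ through the same chain of quasi-isomorphisms underlying Theorem \ref{thm:GCgvsp}, and to combine this with the fact that the coefficient of $t_{2n}$ is a well-defined nonzero invariant on cohomology. Concretely, Remark \ref{rmk:crucialzigzag}, together with Lemmas \ref{lemma:zigzagDef} and \ref{lemma:defot}, realizes the isomorphism $H^0(\spp(H^*)\ltimes\GC^{\minitadp}_{(1)})\cong \frakrone$ (restricting to $H^0(\GC^{\minitadp}_{(1)})\cong \frakr_{ell}$ by Corollary \ref{cor:GCrell}) as the composite of the map $\Psi$, the zig-zag of deformation complexes, the map $E=1\otimes T$, and the identification $H^1(\Qc)\cong \Zone/\Bone$ extracted from the spectral sequence of Lemma \ref{lem:lemma3}. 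The goal is then to show that the image of $\gamma_{2n}$ under this composite is a nonzero scalar multiple of $\delta_{2n}\in\frakr_{ell}$, while ruling out that any representative of $[\gamma_{2n}]$ could lose its $t_{2n}$-term.

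First I would establish that the coefficient of $t_{2n}$ defines a linear functional on $H^0(\GC^{\minitadp}_{(1)})$. This is exactly the content sketched in the paragraph preceding the conjecture: the tree $t_{2n}$ cannot appear in the image of $d_s$, since producing it would require splitting a four-valent vertex carrying two $\alpha$-decorations, and such a graph vanishes by symmetry; moreover, by the reduction to the $\omega$-free subcomplex (\cite{FN21}, valid only in genus one) no differential of a graph carrying an $\omega$-decoration can produce $t_{2n}$ either. Hence the $t_{2n}$-coefficient is unchanged by adding coboundaries and descends to $H^0$. Since $\gamma_{2n}$ has $t_{2n}$-coefficient $1$ by construction, it suffices to identify the cohomology class $[\gamma_{2n}]$ with $\delta_{2n}$.

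The core step is the combinatorial identification of the image of the tree part under the composite into $\Qc=\Mo^{!}\hotimes_S\fraktone$. Following the diagrammatic description of Section \ref{section:diagdescr} and the definition of $E=1\otimes T$, the action of $\gamma_{2n}$ through $\widetilde\Psi$ converts each hair-decoration into a generator of $\fraktone$ and, via $\partial_2$ and the Koszul map $\tilde\kappa$, turns the internal edges of the caterpillar successively into Lie brackets $s^{-2}\lambda$; under $T$ these assemble into an iterated adjoint action inside $\fraktone(2)$. I expect the leading term of the image to be $\sum_{c\in H^1}\overset{1}{\cclu}\,\overset{2}{\clu}\otimes U_c$ with $U_a$ equal, up to a nonzero combinatorial constant, to $(\ad_{x^{(1)}})^{2n+2}(y^{(1)})$ and $U_b$ to the corresponding symmetrized bracket, which are precisely Enriquez' formulas for $\delta_{2n}(x^{(1)})$ and $\delta_{2n}(y^{(1)})$. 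Matching the caterpillar shape of $t_{2n}$ (internal vertices decorated by $\alpha\leftrightarrow x$, the two extremal ones additionally by $\beta\leftrightarrow y$) with the nested-bracket shape of the iterated adjoint is the combinatorial heart of the argument, and the representative description of $H^1(W,d_W)$ in Lemma \ref{lem:lemma3} in terms of the spaces $G_c$ provides the bookkeeping.

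The main obstacle is twofold. First, one must produce the loop corrections $\gamma_{2n}^1,\gamma_{2n}^2$ explicitly and verify $d\gamma_{2n}=0$; the text itself defers this to a computation, and one must then check that these corrections do not alter the image in $\Zone/\Bone$, i.e. that they lie in higher filtration or map into $\Bone$. Second, and more seriously, tracking the caterpillar through the hairy graph complex, the Koszul module $\Mo^{!}$, and the Harrison complexes without losing control of signs and multiplicities is delicate, and one must rule out cancellation so that the scalar in front of $(\ad_{x^{(1)}})^{2n+2}(y^{(1)})$ is genuinely nonzero. A clean way to organize this is to filter by the number of internal vertices and work on the associated graded, where, as in the proof of Proposition \ref{prop:GCHGC}, the map $F:\GC^{\minitadp}_{(1)}\to\HGC^{\minitadp}_{(1)}$ replaces hairs by edges without higher corrections, so that the tree part propagates untouched and the correction graphs drop out; the nonvanishing of the scalar should then follow from the explicit degree-$(1,\dots,1)$ Harrison computation already invoked in Lemma \ref{lemma:coh}.
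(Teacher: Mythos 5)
The statement you are addressing is labelled a \emph{conjecture} in the paper: the authors give no proof, only the heuristic paragraph preceding it (non-exactness of $\gamma_{2n}$ via the symmetry argument for four-valent vertices with two $\alpha$-decorations, plus the reduction to the $\omega$-free subcomplex from \cite{FN21}). Your proposal is a reasonable research plan, and its first step is essentially sound and close to what the paper itself sketches: if the coefficient of $t_{2n}$ vanishes on all coboundaries of the full twisted differential $d_s+d_p+[z,-]$, it descends to a functional on $H^0(\GC^{\minitadp}_{(1)})$, and since $\gamma_{2n}$ has coefficient $1$, every representative of $[\gamma_{2n}]$ would have nonvanishing $t_{2n}$-coefficient. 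Note, however, that even here you only treat $d_s$ and $\omega$-decorated trees; you should also rule out the pairing part $d_p$ (impossible by an edge count: a connected preimage on $2n+2$ vertices would need $2n$ edges) and the twist $[z,-]$ (impossible because the new univalent vertex retains at most one decoration, while the end vertices of $t_{2n}$ carry both $\alpha$ and $\beta$).

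The genuine gap is in the core step, which is precisely what keeps the statement conjectural. You assert (``I expect the leading term of the image to be\dots'') that pushing $\gamma_{2n}$ through the chain of maps yields Enriquez' formulas for $\delta_{2n}$ up to a nonzero constant, but no computation is given, and the chain cannot simply be ``pushed through'': the zig-zag of Lemma \ref{lemma:zigzagDef} points \emph{out of} $\Def_\tk(\Mo\xrightarrow{\id}\Mo)$, so comparing $\widetilde\Psi_{\tilde\kappa}([\gamma_{2n}])\in \Def_\tk(\pdGraphsone\xrightarrow{\Phi}\Mo)$ with a class on the $\fraktone$ side requires \emph{inverting} a quasi-isomorphism, i.e.\ constructing an explicit cocycle lift together with a homotopy, and then further extracting the element of $\Zone/\Bone$ by lifting through the nested spectral sequences of Lemmas \ref{lem:lemma2}--\ref{lem:lemma4} (the identification $H^1(\Qc)\cong \Zone/\Bone$ is not realized by a chain-level projection). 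Your proposed shortcut --- filtering by internal vertices so that ``the tree part propagates untouched and the correction graphs drop out'' --- does not resolve this: the isomorphism of Theorem \ref{thm:GCgvsp} is not a filtered map with respect to that filtration, the loop corrections $\gamma_{2n}^1,\gamma_{2n}^2$ need not lie in higher filtration for the relevant filtrations on $\Qc$, and in any case the corrections themselves are only asserted to exist in the paper (their explicit form and the verification $d\gamma_{2n}=0$ are deferred). Until the lift through the zig-zag is constructed and the resulting element of $\fraktone(2)^{\times 2}$ is computed with controlled signs and shown to equal a nonzero multiple of $(\delta_{2n}(x^{(1)}),\delta_{2n}(y^{(1)}))$ modulo $\Bone$, the correspondence with $\delta_{2n}$ remains unproven; your text identifies the right intermediate objects but does not close this step.
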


The equivalent of Conjecture \ref{conj:deltas} in terms of diagrams then reads as follows.

\begin{conj}
Consider the Lie subalgebra $\mathfrak b$ of $H^0(\spp(H^*)\ltimes \GC^{\minitadp}_{(1)})$ generated by $\spp_0(H^*)$ and the graph cohomology classes $[\gamma_{2n}]$. Then
\[
H^0(\spp(H^*)\ltimes \GC^{\minitadp}_{(1)})= \mathfrak b.
\]
\end{conj}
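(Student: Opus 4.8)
The plan is to transport the statement across the genus-one isomorphism already established and then reduce it to Enriquez' generation conjecture. By Corollary \ref{cor:GCrell} and Lemma \ref{lemma:decompositionframed} there is an isomorphism of Lie algebras $H^0(\spp(H^*)\ltimes \GC^{\minitadp}_{(1)})\cong \frakr^{ell}=\sll_2\ltimes \frakr_{ell}$, under which $\spp_0(H^*)\cong\sll_2$ is carried onto the standard triple $E,F,H$ by the explicit dictionary recorded just before the corollary ($a\partial_b\mapsto F$, $b\partial_a\mapsto E$, $a\partial_a-b\partial_b\mapsto H$). A Lie subalgebra is the span of the iterated brackets of its generators, so it suffices to compute the images of the classes $[\gamma_{2n}]$ under this isomorphism and to check that, together with $\sll_2$, they generate all of $\frakr^{ell}$.

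The first and most substantial task is to identify the image of $[\gamma_{2n}]$ inside $\frakr_{ell}\subset\fraktone(2)^{\times 2}$. This means tracing a single, explicitly given cocycle through the chain of quasi-isomorphisms assembled in the non-framed genus-one case,
\[
\spp(H^*)\ltimes\GC^{\minitadp}_{(1)}\xrightarrow{F}\HGC^{\minitadp}_{(1)}\xrightarrow{\widetilde\Psi}\Def(\pdGraphsone\xrightarrow{\Phi}\Mo)\xrightarrow{\tilde\kappa_*}\Def_\tk\;\rightsquigarrow\;\mathsf{Q}^{\mathrm{non-fr}}_{(1)}=\Mo^{!}\hotimes_S\fraktone,
\]
and reading off the arity-two component of the resulting cocycle, exactly as was done for general classes in the proof of Lemma \ref{lem:lemma4}. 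Since every map in the chain is explicit on representatives, the leading tree term $t_{2n}=\treegraph$ controls the output: a long chain of $\alpha$-decorated vertices acting on the Maurer-Cartan element reproduces iterated brackets $(\ad_{x^{(1)}})^{k}(y^{(1)})$, which is precisely the shape of Enriquez' formula for $\delta_{2n}$. I would therefore aim to show $[\gamma_{2n}]\mapsto\delta_{2n}$, with the correction graphs $\gamma^1_{2n},\gamma^2_{2n}$ accounting exactly for the lower-order bracket monomials in $\delta_{2n}(y^{(1)})$ and for closedness; the vanishing $\Bone=0$ proved in the non-framed section guarantees there is no ambiguity modulo coboundaries, so the leading coefficient at $t_{2n}$ determines the class. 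This also confirms the intermediate conjecture that the class corresponding to $\delta_{2n}$ has nonvanishing $t_{2n}$-coefficient.

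Granting this identification, the subalgebra $\mathfrak b$ generated by $\spp_0(H^*)$ and $\{[\gamma_{2n}]\}_{n\ge0}$ maps isomorphically onto the subalgebra $\frakb_3\subset\frakr^{ell}$ generated by $\sll_2$ and $\{\delta_{2n}\}_{n\ge0}$. The final step is then to invoke Enriquez' Conjecture \ref{conj:deltas}, which asserts $\frakr^{ell}=\frakb_3$; combined with the isomorphism this yields $H^0(\spp(H^*)\ltimes\GC^{\minitadp}_{(1)})=\mathfrak b$.

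The hard part will be the transport computation: one must verify that the coefficient of $t_{2n}$ survives all of the filtrations, spectral sequences and $S_r$-coinvariants used to build the zig-zag (in particular the passage through $d_{\mathrm{fuse}}$ and $\partial_2$, and the symmetrisations implicit in $\Zone$), landing on the specific Lie word of $\delta_{2n}$ rather than on an $\sll_2$-translate of it or on zero, with all signs correct. Independently of this, the statement remains conditional on Enriquez' Conjecture \ref{conj:deltas}: without it, the most one can prove unconditionally is the inclusion $\mathfrak b\subseteq H^0(\spp(H^*)\ltimes\GC^{\minitadp}_{(1)})$ together with the identification $\mathfrak b\cong\frakb_3$, so that the full equality is precisely equivalent to Enriquez' conjecture.
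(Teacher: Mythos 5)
The statement you are asked about is a \emph{conjecture} in the paper: the text offers no proof, and explicitly presents it as the diagrammatic translation of Enriquez' Conjecture \ref{conj:deltas}. Your proposal does not close this gap --- it is a (partly unexecuted) reduction to open statements, and you concede as much in your final paragraph. Concretely, your plan rests on three unproven inputs. First, Enriquez' Conjecture \ref{conj:deltas} itself. Second, the identification $[\gamma_{2n}]\mapsto \delta_{2n}$: this is precisely the content of the conjecture stated immediately before the one under review, and the paper does not carry out the transport computation through the zig-zag; your sketch of it (chains of $\alpha$-decorated vertices producing $(\ad_{x^{(1)}})^{k}(y^{(1)})$) is plausible but is exactly the ``hard part'' you defer. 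Third, and this is a gap you do not flag: Theorem \ref{thm:GCgvsp} and Corollary \ref{cor:GCrell} are isomorphisms of \emph{vector spaces} only; compatibility with the Lie brackets is itself Conjecture \ref{thm:GCgrt} in the paper. Your argument needs the Lie structure, since you want the subalgebra $\frakb$ \emph{generated} by $\spp_0(H^*)$ and the $[\gamma_{2n}]$ to map onto the subalgebra $\frakb_3$ generated by $\sll_2$ and the $\delta_{2n}$; a linear isomorphism carrying generators to generators says nothing about the images of iterated brackets. So even granting your transport computation and Conjecture \ref{conj:deltas}, the equality $H^0(\spp(H^*)\ltimes \GC^{\minitadp}_{(1)})=\frakb$ would still be conditional on Conjecture \ref{thm:GCgrt}.

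One specific step in your write-up is also incorrect as stated: you claim that $\Bone=0$ ``guarantees there is no ambiguity modulo coboundaries, so the leading coefficient at $t_{2n}$ determines the class.'' The vanishing of $\Bone$ concerns the quotient defining the target Lie algebra $\frakr^{\mathrm{non-fr}}_{(1)}=\Zone/\Bone$; it has no bearing on exactness in the graph complex itself, which is governed by the twisted differential $d_s+d_p+[z,-]$ on $\GC^{\minitadp}_{(1)}$. The paper's argument for non-exactness of $\gamma_{2n}$ is different (the tree $t_{2n}$ cannot arise from vertex splitting, and one may pass to the $\omega$-decoration-free subcomplex), and in any case non-exactness does not imply that the $t_{2n}$-coefficient \emph{determines} the cohomology class --- a priori two distinct classes could share the same leading tree term. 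Summing up: what you have is an honest statement of equivalences between open conjectures, essentially matching how the paper frames the situation, but not a proof, and the missing Lie-compatibility input (Conjecture \ref{thm:GCgrt}) should be named explicitly in your reduction.
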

%

\subsection{The framed case for $g=1$}

In the framed case for $g=1$, we have the following result. Recall the definition of $B_{(1)}$ from equation \eqref{eq:Bg}.
\begin{lemma}
The subspace $B_{(1)}$ is one-dimensional with generator $V=([t_{12},x^{(1)}],[t_{12},y^{(1)}])$.
\end{lemma}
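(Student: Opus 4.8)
The plan is to recall that $B_{(1)}$ is by definition the image of the map $\fraktone(1)\to \fraktg(2)^{\times 2g}$ sending $v\mapsto ([v^{12},x_i^{(1)}],[v^{12},y_i^{(1)}])_i$, which for $g=1$ becomes $v\mapsto ([v^{12},x^{(1)}],[v^{12},y^{(1)}])$ (here using the \emph{framed} Lie algebra $\frakt_{(1)}$, so $t_{11}$ is present). Since $\frakt_{(1)}(1)$ is spanned by $x$, $y$ and the central element $t_{11}$, it suffices to evaluate this map on each of these three generators and determine the dimension of the resulting span.

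First I would compute the images of $x$ and $y$. For $v=x$ we get $x^{12}=x^{(1)}+x^{(2)}$, so the first component is $[x^{(1)}+x^{(2)},x^{(1)}]=0$ (since $[x^{(1)},x^{(2)}]=0$) and the second is $[x^{(1)}+x^{(2)},y^{(1)}]=-t_{12}+t_{12}=0$, where I use the defining relations $[x^{(1)},y^{(1)}]=-t_{12}$ (from $\sum[x^{(i)},y^{(i)}]=-\sum_{j\neq i}t_{ij}-(2-2g)t_{ii}$ specialized to $g=1$, together with centrality of $t_{11}$) and $[x^{(2)},y^{(1)}]=t_{21}=t_{12}$. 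Hence $x\mapsto (0,0)$. The analogous computation gives $y\mapsto (0,0)$. This is exactly the calculation already carried out in the non-framed case, and it shows the $x$- and $y$-contributions vanish.

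The key difference from the non-framed case is the central generator $t_{11}$, which is not present there. I would compute $v=t_{11}$: then $v^{12}=t_{11}+t_{12}+t_{22}$ (using the image of $t_{ii}$ under the doubling map $\delta^1_1$), and since $t_{11},t_{22}$ are central, $[v^{12},x^{(1)}]=[t_{12},x^{(1)}]$ and $[v^{12},y^{(1)}]=[t_{12},y^{(1)}]$. Thus $t_{11}\mapsto V:=([t_{12},x^{(1)}],[t_{12},y^{(1)}])$, which is the claimed generator. So $B_{(1)}$ is spanned by $V$, and it remains only to check $V\neq 0$, i.e.\ that $[t_{12},x^{(1)}]$ and $[t_{12},y^{(1)}]$ are not both zero in $\frakt_{(1)}(2)$.

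The main obstacle is precisely this nonvanishing: showing $V\neq 0$ requires knowing that $t_{12}$ does not commute with $x^{(1)}$ in $\frakt_{(1)}(2)$. I would argue this by exhibiting a representation or quotient of $\frakt_{(1)}(2)$ in which $[t_{12},x^{(1)}]$ is manifestly nonzero; a convenient choice is to map to a nilpotent quotient (or to use the associative model $A$ appearing in Lemma~\ref{lemma:center}), where the Drinfeld--Kohno type relations do not force $[t_{12},x^{(1)}]=0$. Since there is no relation in $\frakt_{(1)}(2)$ making $t_{12}$ central (indeed only $t_{11},t_{22}$ are central), $V$ is nonzero, and therefore $B_{(1)}=\K\,V$ is one-dimensional as claimed.
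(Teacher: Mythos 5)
Your proposal takes essentially the same route as the paper: evaluate the defining map $v\mapsto([v^{12},x^{(1)}],[v^{12},y^{(1)}])$ on the basis $x,y,t_{11}$ of the abelian $\frakt_{(1)}(1)$, find that $x,y\mapsto(0,0)$ exactly as in the non-framed computation, and that $t_{11}^{12}=t_{11}+t_{12}+t_{22}$ together with centrality of $t_{11},t_{22}$ yields the generator $V$. Your extra paragraph on verifying $V\neq 0$ goes beyond the paper (which treats this as evident) and is only sketched --- the remark that ``no relation makes $t_{12}$ central'' is not by itself a proof, though it could be completed, e.g.\ by passing to the quotient $\fraktone(2)$ where $[t_{12},x^{(1)}]$ is visibly nonzero --- but this does not change the fact that the argument is the paper's.
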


\begin{proof}
The Lie algebra $\frakt_{(1)}(1)$ is abelian, and a similar computation as in the framed case shows that for the generators $v=x,y\in \frakt_{(1)}(1)$, we have $[v^{12},x^{(1)}]=[v^{12},y^{(1)}]=0$. For $v=t_{11}$ on the other hand,
\begin{align*}
&[v^{12},x^{(1)}]=[t_{11}+t_{12}+t_{22},x^{(1)}]=[t_{12},x^{(1)}]\\
&[v^{12},y^{(1)}]=[t_{12},y^{(1)}]
\end{align*}
defines the generator of $B_{(1)}$.
\end{proof}

The result of Lemma \ref{lemma:decompositionframed} carries over to the framed case. For this denote by
\[
Z_{(1)}^0\subset Z_{(1)}
\]
the subspace of anti-symmetric derivations, i.e. we require that the pair $u=(u_a,u_b)=(0,0)$ or equivalently
\[
U_a^{1,2}+U_a^{2,1}=0 \text{ and } U_b^{1,2}+U_b^{2,1}=0.
\]
Notice that $Z_{(1)}^0$ takes the role of $\frakr_{ell}$ in the framed case. The analogous argument as in the proof of Lemma \ref{lemma:decompositionframed} implies that $Z_{(1)}^0$ defines a Lie ideal in $Z_{(1)}$. Moreover, since $\frakt_{(1)}(1)=\K x\oplus \K y \oplus \K t_{11}$ is abelian, we obtain the same basis $\{H,E,F\}$ for the subspace spanned by derivations for which the corresponding pair $u=(u_a,u_b)\in \frakt_{(1)}^{\times 2}$ is non-zero. Notice in particular, that for instance $u_a=t_{11}$ (or equivalently $u_b=t_{11}$) yields $U_a^{1,2}=\frac12 (t_{11}+t_{12}+t_{22})$ which does not satisfy the cyclic relation $U_a^{12,3}-U_a^{1,23}-U_a^{2,13}=0$. Thus, the Lie algebra $\frakr_{(1)}$ decomposes as
\[
\frakr_{(1)}\cong \sll_2\ltimes Z_{(1)}^0/B_{(1)}.
\]
Indeed, tracing through the zig-zag of quasi-isomorphisms, we find that the derivations $H,E,F$ correspond to the basis of $\spp_0(H^*)$ in the extension of the graph complex. Recall that also in this case
\[
H^0(\spp(H^*)\ltimes \GC_{(1)})\cong \spp_0(H^*)\ltimes H^0(\GC_{(1)}).
\]
This yields the equivalent statement to Corollary \ref{cor:GCrell} in the framed case.

\begin{cor}
In the framed case for $g=1$, the isomorphism from Theorem \ref{thm:GCgvsp} restricts to an isomorphism of vector spaces
\[
H^0(\GC_{(1)})\cong Z_{(1)}^0/B_{(1)}.
\]
\end{cor}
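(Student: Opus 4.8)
The plan is to obtain the framed $g=1$ statement as a restriction of the full isomorphism $\Theta\colon H^0(\spp(H^*)\ltimes \GC_{(1)})\xrightarrow{\sim}\frakr_{(1)}=Z_{(1)}/B_{(1)}$ furnished by Theorem~\ref{thm:GCgvsp}, by matching the two semidirect-product splittings available on either side. On the source, Lemma~\ref{lemma:extensioncommutes} (with $i=0$) gives the vector space decomposition $H^0(\spp(H^*)\ltimes \GC_{(1)})=\spp_0(H^*)\oplus H^0(\GC_{(1)})$, in which $\spp_0(H^*)\cong\sll_2$ is a subalgebra and $H^0(\GC_{(1)})$ the complementary graph ideal. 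On the target, the decomposition $\frakr_{(1)}\cong\sll_2\ltimes Z_{(1)}^0/B_{(1)}$ recorded just above provides the matching splitting, with $\sll_2=\langle H,E,F\rangle$ the subalgebra of classes whose arity-one part $u=(u_a,u_b)$ is nonzero and $Z_{(1)}^0/B_{(1)}$ the ideal of antisymmetric derivation classes, i.e. those with $u=0$. The whole problem thus reduces to the single assertion that $\Theta$ carries the first summand onto the first, namely $\Theta(\spp_0(H^*))=\sll_2$.

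To prove this I would trace the three generators $a\partial_b$, $b\partial_a$, $a\partial_a-b\partial_b$ of $\spp_0(H^*)$ through the chain of quasi-isomorphisms underlying $\Theta$ (Theorem~\ref{thm:defGC}, Lemma~\ref{lemma:zigzagDef}, Lemma~\ref{lemma:defot}, and Remark~\ref{rmk:crucialzigzag}), exactly as in the non-framed computation preceding Corollary~\ref{cor:GCrell}, so as to recover the correspondences $a\partial_b\mapsto F=(y^{(1)},0)$, $b\partial_a\mapsto E=(0,x^{(1)})$ and $a\partial_a-b\partial_b\mapsto H=(x^{(1)},-y^{(1)})$. Since the only features distinguishing the framed setting from the non-framed one are the central generator $t_{11}$ and the admission of internal tadpoles, the real content is to check that $t_{11}$ never enters: the relevant representatives are decorated only by $\alpha,\beta$ (dual to $a,b$), so the $\omega$- and $t_{11}$-directions play no role and the calculation is formally identical to the non-framed case. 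Along the way I would also record, for well-definedness of the target, that $B_{(1)}\subset Z_{(1)}^0$, which holds because $[t_{12},x^{(1)}+x^{(2)}]=0$ forces the generator $([t_{12},x^{(1)}],[t_{12},y^{(1)}])$ of $B_{(1)}$ to be antisymmetric.

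Granting $\Theta(\spp_0(H^*))=\sll_2$, the conclusion is formal: both $\spp_0(H^*)$ and $\sll_2$ are the subalgebra summands of their respective splittings, so $\Theta$ descends to an isomorphism of the quotient spaces, and these quotients are canonically the complementary summands $H^0(\GC_{(1)})$ and $Z_{(1)}^0/B_{(1)}$; this yields precisely the desired vector space isomorphism $H^0(\GC_{(1)})\cong Z_{(1)}^0/B_{(1)}$. I expect the main obstacle to be the generator-tracing of the second paragraph, but since it reduces verbatim to the already-completed non-framed computation for Corollary~\ref{cor:GCrell}, the remaining effort is the bookkeeping needed to confirm that the central $t_{11}$ and the tadpoles leave that computation unchanged.
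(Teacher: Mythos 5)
Your proposal is correct and follows essentially the same route as the paper: you split both sides via the semidirect-product decompositions $H^0(\spp(H^*)\ltimes \GC_{(1)})\cong \spp_0(H^*)\ltimes H^0(\GC_{(1)})$ (Lemma~\ref{lemma:extensioncommutes}) and $\frakr_{(1)}\cong \sll_2\ltimes Z_{(1)}^0/B_{(1)}$, and match the summands by tracing $a\partial_b\mapsto F$, $b\partial_a\mapsto E$, $a\partial_a-b\partial_b\mapsto H$ through the zig-zag exactly as the paper does before the corollary. Your auxiliary checks --- that $B_{(1)}\subset Z_{(1)}^0$ via $[t_{12},x^{(1)}+x^{(2)}]=0$, and that $t_{11}$ does not interfere (the paper notes $u=t_{11}$ fails the cyclic relation $U^{12,3}-U^{1,23}-U^{2,13}=0$, so the subalgebra with $u\neq 0$ is still spanned by $H,E,F$) --- are the same points the paper records.
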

%
%
%

\subsection{The subspace $B_{(g)}$ for $g\geq 2$ - tripods revisited}

Let $g\geq 2$. The image of the $\spp_0(H^*)$-part under the zig-zag of quasi-isomorphisms leads to the following Lie subalgebra of $\frakr_{(g)}$. It maps
\begin{align*}
\alpha_i\partial_{\beta_i}&\mapsto V_{\alpha_i\partial_{\beta_i}}:=(0,\dots,0,(V_{\alpha_i\partial_{\beta_i}})_{b_i}=x_i^{(1)},0,\dots,0)\\
\beta_i\partial_{\alpha_i}&\mapsto V_{\beta_i\partial_{\alpha_i}}:=(0,\dots,0,(V_{\beta_i\partial_{\alpha_i}})_{a_i}=y_i^{(1)},0,\dots,0)\\
\alpha_i\partial_{\alpha_j}-\beta_j\partial_{\beta_i}&\mapsto V_{\alpha_i\partial_{\alpha_j}}:=(0,\dots,0,(V_{\alpha_i\partial_{\alpha_j}})_{a_i}=x_i^{(1)},0,\dots,0,(V_{\alpha_i\partial_{\alpha_j}})_{b_i}=-y_j^{(1)},0,\dots,0) \text{ for all } i,j\\
\alpha_i\partial_{\beta_j}+\alpha_j\partial_{\beta_i}&\mapsto V_{\alpha_i\partial_{\beta_j}}:=(0,\dots,0,(V_{\alpha_i\partial_{\beta_j}})_{b_i}=x_j^{(1)},0,\dots,0,(V_{\alpha_i\partial_{\beta_j}})_{b_j}=x_j^{(1)},0,\dots,0) \text{ for } i\neq j\\
\beta_i\partial_{\alpha_j}+\beta_j\partial_{\alpha_i}&\mapsto V_{\beta_i\partial_{\alpha_j}}:=(0,\dots,0,(V_{\beta_i\partial_{\alpha_j}})_{a_i}=y_j^{(1)},0,\dots,0,(V_{\beta_i\partial_{\alpha_j}})_{a_j}=y_i^{(1)},0,\dots,0) \text{ for } i\neq j\\
\end{align*}

\begin{lemma}\label{lemma:spinrg}
The elements 
\[
\{V_{\alpha_i\partial_{\beta_i}},V_{\beta_i\partial_{\alpha_i}},V_{\alpha_i\partial_{\alpha_j}},V_{\alpha_i\partial_{\beta_j}},V_{\beta_i\partial_{\alpha_j}}\}
\]
generate a Lie subalgebra of $\frakr_{(g)}$ which is isomorphic to $\spp_0(H^*)$.
\end{lemma}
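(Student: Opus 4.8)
The plan is to exhibit an explicit Lie algebra homomorphism $\spp_0(H^*) \to \frakr_{(g)}$ sending each basis element to the corresponding $V$ listed above, and to show it is injective. Since $\spp_0(H^*)$ is generated by the listed basis elements, and the map is defined on these, the main content is twofold: first, that each $V_{\sigma}$ genuinely lies in $Z_{(g)}$ (so that the formulas make sense in $\frakr_{(g)} = Z_{(g)}/B_{(g)}$), and second, that the assignment $\sigma \mapsto V_\sigma$ respects brackets, i.e. $\{V_\sigma, V_\tau\} = V_{[\sigma,\tau]}$ modulo $B_{(g)}$.

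First I would verify that each $V_\sigma \in Z_{(g)}$. For the derivation-type verification, recall from the construction in Section~\ref{sec:grtg} that an element $U = (U_c)_c \in Z_{(g)}$ corresponds to an $S_r$-equivariant derivation of $\fraktg(r)$; each $V_\sigma$ is built to be (the $\spp_0$-image of) exactly such a derivation, so checking membership amounts to verifying the six defining relations of $Z_{(g)}$ in \eqref{eq:defZg}. The symmetry condition $\exists u_\alpha : u_\alpha^{12} = U_\alpha^{1,2} + U_\alpha^{2,1}$ and the cyclic condition $U_\alpha^{12,3} = U_\alpha^{1,23} + U_\alpha^{2,13}$ hold trivially here because each component of $V_\sigma$ is a single generator $x_i^{(1)}$ or $y_i^{(1)}$ (living in arity one), for which the coproduct relations reduce to $x_i^{(1)} + x_i^{(2)} = x_i^{(1)} + x_i^{(2)}$ and similar tautologies; I would record these as a short direct computation. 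The last four bracket relations then need to be checked using only the commutation relations in $\fraktg(2)$ and $\fraktg(3)$ — these are the routine but essential calculations analogous to those in the \textbf{Remark} following \eqref{eq:Bg}.

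Next I would establish that $\sigma \mapsto V_\sigma$ is a Lie algebra morphism. The cleanest route is to use the interpretation of $Z_{(g)}$-elements as derivations together with Lemma~\ref{lemma:bracket}, which gives the bracket $\{U,V\}_c = U(V_c) - V(U_c)$ as the commutator of the associated $S_2$-equivariant derivations of $\fraktg(2)$. Under this identification, the map $\spp_0(H^*) \to \Der_{S_2}(\fraktg(2))$ factors through the natural action of $\spp_0(H^*) \cong \lsp_0$ on the generating space $E(2) = \vspan(x_i^{(l)}, y_i^{(l)}, t_{ij})$, and the commutator of derivations matches the Lie bracket in $\spp_0(H^*)$ because $\spp_0(H^*)$ acts on the generators through its defining linear representation. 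Thus $\{V_\sigma, V_\tau\}$ and $V_{[\sigma,\tau]}$ agree already as derivations, hence on the nose in $Z_{(g)}$ (not merely modulo $B_{(g)}$). Injectivity follows since distinct basis elements of $\spp_0(H^*)$ act differently on $E(2)$, and one checks their images are not in $B_{(g)}$: by the previous subsection's computation $B_{(g)}$ is generated by elements of the form $([v^{12},x_i^{(1)}], [v^{12},y_i^{(1)}])$, which all have symmetric leading part $u \neq 0$ of a specific shape, whereas the $V_\sigma$ are linearly independent modulo this ideal.

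The main obstacle I expect is the bookkeeping in verifying that the commutator of derivations reproduces exactly the structure constants of $\spp_0(H^*)$, in particular handling the signs and the antisymmetrization built into the off-diagonal generators $V_{\alpha_i\partial_{\alpha_j}}$, $V_{\alpha_i\partial_{\beta_j}}$, $V_{\beta_i\partial_{\alpha_j}}$. One must be careful that the identification $\spp_0(H^*) \cong \lsp_0$ sends each graph-complex basis element to the derivation of $\fraktg(2)$ that truly corresponds to it under the zig-zag of quasi-isomorphisms, rather than to a coincidentally similar-looking derivation; I would pin this down by comparing the action of $\sigma$ on decorations (as defined in the $\spp(H^*)$-action on $\GCg$) with the action of $V_\sigma$ on the Lie generators, using the duality $\partial_{f_q} \leftrightarrow e_q$. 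Once this dictionary is fixed, the bracket identities reduce to the standard $\lsp$ commutation relations, which I would verify on a representative pair (say $[\alpha_i\partial_{\beta_i}, \beta_i\partial_{\alpha_i}] = \alpha_i\partial_{\alpha_i} - \beta_i\partial_{\beta_i}$) and note that the remaining cases follow identically.
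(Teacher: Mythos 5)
Your overall strategy coincides with the paper's: the paper's proof is a one-line appeal to ``a straightforward direct computation'' verifying membership in $Z_{(g)}$, closure under the bracket, and compatibility with the bracket of $\spp_0(H^*)$, and your proposal is precisely that computation, organized well. Your structural observation that each $V_\sigma$ is the derivation of $\fraktg(2)$ extending a linear endomorphism of the span of the weight-one generators $x_i^{(1)},y_i^{(1)}$ (under the dictionary $\alpha_i\leftrightarrow x_i$, $\beta_i\leftrightarrow y_i$), so that $\{V_\sigma,V_\tau\}=V_{[\sigma,\tau]}$ holds on the nose in $Z_{(g)}$ because commutators of such derivations extend commutators of linear maps, is correct and is a cleaner packaging than checking structure constants case by case.

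However, your justification of injectivity contains a genuine error. You claim that the generators $([v^{12},x_i^{(1)}],[v^{12},y_i^{(1)}])_i$ of $B_{(g)}$ ``all have symmetric leading part $u\neq 0$.'' This is false: taking $v=t_{11}$ (which is central in $\fraktg(1)$, and for $g\geq 2$ spans the center by Lemma \ref{lemma:center}) gives the nonzero element $V_{a_i}=[t_{12},x_i^{(1)}]$, $V_{b_i}=[t_{12},y_i^{(1)}]$ of $B_{(g)}$ whose symmetrization vanishes, since $V_{a_i}^{1,2}+V_{a_i}^{2,1}=[t_{12},x_i^{(1)}+x_i^{(2)}]=0$; indeed for $g=1$ the paper shows $B_{(1)}$ is spanned by exactly this element. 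So an argument based on the shape of the symmetric part of $B_{(g)}$ cannot separate your $V_\sigma$ from $B_{(g)}$. The repair is immediate via the weight grading on $\fraktg(2)$ ($|x_l^{(i)}|=|y_l^{(i)}|=1$, $|t_{ij}|=2$, as used in the proof that $\Xi$ respects comodule structures): every component of every $V_\sigma$ is of weight one (a single generator), whereas every nonzero element of $B_{(g)}$ has components $[v^{12},x_i^{(1)}]$, $[v^{12},y_i^{(1)}]$ of weight at least two, since $v\in\fraktg(1)$ has weight at least one. Hence $\mathrm{span}\{V_\sigma\}\cap B_{(g)}=0$, and since the $V_\sigma$ are visibly linearly independent in $Z_{(g)}$ (they induce linearly independent endomorphisms of the generating space), injectivity into $\frakr_{(g)}=Z_{(g)}/B_{(g)}$ follows. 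With this substitution your proof is complete and matches the paper's intended argument.
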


\begin{proof}
A straightforward direct computation shows that all elements satisfy the relations of $\frakr_{(g)}$, that the subspace is closed under the Lie bracket and that the assignment above is compatible with the respective Lie algebra structure on $\spp_0(H^*)$ and $\frakr_{(g)}$.
\end{proof}

Next, we use the image of the tripods to define the generators of a second Lie subalgebra of $\frakr_{(g)}$. We first introduce a small piece of notation. For $\alpha \in H^1$, set $z^{(1)}_\alpha \in\{x_1^{(1)},\dots,x_g^{(1)},y_1^{(1)},\dots,y_1^{(g)}\}$ to be the corresponding analogous generator in $\fraktg(2)$. For instance, if $\alpha=a_i$, $z^{(1)}_\alpha=x_i^{(1)}$, and if $\alpha=b_j$, $z^{(1)}_\alpha=y_j^{(1)}$. Next let $\alpha, \beta, \gamma \in H^1$ be all distinct, and denote by $\alpha^*,\beta^*,\gamma^*$ their Poincar\'e duals. We define the $S_2$-equivariant derivation $V_{\alpha,\beta,\gamma}$ of $\fraktg(2)$ by
\[
(V_{\alpha,\beta,\gamma})_c=
\begin{cases}
-\langle \alpha,\alpha^*\rangle([z_\beta^{(1)},z_\gamma^{(1)}]+\langle \beta,\gamma\rangle \ 2 t_{11}) & \text{ if } c=\alpha^*\\
\langle \beta,\beta^*\rangle ([z_\alpha^{(1)},z_\gamma^{(1)}]+\langle \alpha,\gamma\rangle \ 2 t_{11})& \text{ if } c=\beta^*\\
-\langle \gamma,\gamma^*\rangle ([z_\alpha^{(1)},z_\beta^{(1)}]+\langle \alpha,\beta\rangle \ 2 t_{11})& \text{ if } c=\gamma^*\\
0 & \text{ otherwise.}
\end{cases}
\]

\begin{lemma}
For $\alpha,\beta,\gamma \in H^{1}$ all distinct, $V_{\alpha,\beta,\gamma}\in Z_{(g)}$.
\end{lemma}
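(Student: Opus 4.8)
The plan is to verify directly the six defining conditions of $Z_{(g)}$ listed in \eqref{eq:defZg} for the tuple $(V_{\alpha,\beta,\gamma})_c$. The starting point is a structural observation: each nonzero component (for $c\in\{\alpha^*,\beta^*,\gamma^*\}$) is, up to the scalar $\langle\,\cdot\,,\,\cdot\,\rangle$ prefactor, of the shape $[z^{(1)}_\mu,z^{(1)}_\nu]+\langle\mu,\nu\rangle\,2t_{11}$, namely a single bracket of two slot-one generators corrected by a multiple of the central element $t_{11}$. In particular it lies in the image of the slot-one inclusion $\fraktg(1)\hookrightarrow\fraktg(2)$. I would first record the elementary relabelling computations needed throughout: for a slot-one bracket one has $[z^{(1)}_\mu,z^{(1)}_\nu]^{1,2}+[z^{(1)}_\mu,z^{(1)}_\nu]^{2,1}=[z^{(1)}_\mu,z^{(1)}_\nu]+[z^{(2)}_\mu,z^{(2)}_\nu]$, whereas $([z_\mu,z_\nu])^{12}$ produces in addition the cross terms $[z^{(1)}_\mu,z^{(2)}_\nu]+[z^{(2)}_\mu,z^{(1)}_\nu]$; and for the central generator $(t_{11})^{12}=t_{11}+t_{22}+t_{12}$ and $(t_{11})^{12,3}-(t_{11})^{1,23}-(t_{11})^{2,13}=t_{12}$.

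The heart of the matter is that the $2t_{11}$ corrections are tuned precisely to absorb these cross terms, and this is what makes the first two conditions hold. I would split into the pairing case $\langle\mu,\nu\rangle\neq0$ and the generic case. In the generic case the cross terms $[z^{(1)}_\mu,z^{(2)}_\nu]+[z^{(2)}_\mu,z^{(1)}_\nu]$ vanish by the relations $[x^{(i)}_k,x^{(j)}_l]=[y^{(i)}_k,y^{(j)}_l]=0$ and $[x^{(i)}_k,y^{(j)}_l]=0$ for $i\neq j,\ k\neq l$, and the correction term is absent, so conditions~1 and~2 are immediate. In the pairing case, say $\{\mu,\nu\}=\{a_k,b_k\}$, the cross terms evaluate to $2t_{12}$ via $[x^{(1)}_k,y^{(2)}_k]=[x^{(2)}_k,y^{(1)}_k]=t_{12}$, while the $2t_{11}$ correction contributes the compensating multiple of $t_{12}$ under exactly the same relabellings. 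This yields $(V_{\alpha,\beta,\gamma})_c^{1,2}+(V_{\alpha,\beta,\gamma})_c^{2,1}=u_c^{12}$ with $u_c=[z_\mu,z_\nu]+\langle\mu,\nu\rangle\,2t_{11}\in\fraktg(1)$ (condition~1), and that $U_c^{12,3}-U_c^{1,23}-U_c^{2,13}=0$ (condition~2).

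For the remaining conditions~3–6 I would compute the indicated commutators directly, using again that the slot-one support of the components is the main simplifying feature: $U^{1,23}_c$ and $U^{2,13}_c$ are supported on the generators of a single slot, so nearly all brackets against $x^{(i)}_\bullet,y^{(j)}_\bullet$ in a different slot vanish on the nose, and the few surviving contributions cancel in pairs by antisymmetry of the bracket together with the pairing signs $\langle\,\cdot\,,\,\cdot\,\rangle$. The summed condition~6, $\sum_i([U^{1,23}_{a_i},y^{(1)}_i]-[U^{1,23}_{b_i},x^{(1)}_i])=0$, will in addition call on the relation $\sum_k[x^{(i)}_k,y^{(i)}_k]=-\sum_{j\neq i}t_{ij}-(2-2g)t_{ii}$ to collapse the diagonal terms. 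I expect the main obstacle to be organisational rather than conceptual: conditions~3–5 have to be checked case by case according to how the three fixed duals $\alpha^*,\beta^*,\gamma^*$ overlap with the running indices $a_i,b_j$, and each case carries its own constellation of pairing signs. Tabulating these cases and tracking the $\langle\,\cdot\,,\,\cdot\,\rangle$ factors carefully is the only delicate point; once the bracket relations above are in hand, every case reduces to a short cancellation.
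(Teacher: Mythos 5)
Your proposal is correct and takes essentially the same route as the paper, whose proof consists of the single assertion that the claim ``follows from a direct computation'': your relabelling formulas $(t_{11})^{12}=t_{11}+t_{22}+t_{12}$ and $(t_{11})^{12,3}-(t_{11})^{1,23}-(t_{11})^{2,13}=t_{12}$, the observation that the $2t_{11}$ corrections are tuned to absorb the cross terms $[z^{(1)}_\mu,z^{(2)}_\nu]+[z^{(2)}_\mu,z^{(1)}_\nu]$ in conditions~1–2, and the slot-support reduction for conditions~3–6 all check out on direct verification. Two minor refinements: in conditions~3–5 the surviving terms of the form $[t_{12},z^{(1)}_\bullet]$ and $[t_{12},z^{(2)}_\bullet]$ cancel via the derived relation $[z^{(1)}_l+z^{(2)}_l,t_{12}]=0$ (a consequence of the Jacobi identity noted in the paper) rather than bare antisymmetry, and condition~6 closes by the Jacobi identity alone once the central $2t_{11}$ terms drop, so the surface relation $\sum_k[x_k^{(i)},y_k^{(i)}]=-\sum_{j\neq i}t_{ij}-(2-2g)t_{ii}$ is not actually needed for the tripods.
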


\begin{proof}
It follows from a direct computation that $V_{\alpha,\beta,\gamma}$ verifies the defining equations.
\end{proof}

Next consider the derivations
\[
A_j=\sum_{i\neq j} V_{a_i,b_i,a_j} \ \text{ and } B_j=\sum_{i\neq j} V_{a_i,b_i,b_j}.
\]

\begin{lemma}
For all $j=1,\dots g$, we have $A_j\in B_{(g)}$ and $B_j\in B_{(g)}$.
\end{lemma}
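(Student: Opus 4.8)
The plan is to prove both memberships by exhibiting, for each $j$, an explicit element $v \in \fraktg(1)$ realizing $A_j$ (resp.\ $B_j$) in the form $([v^{12}, x_i^{(1)}], [v^{12}, y_i^{(1)}])_i$, which is exactly the defining shape of an element of $B_{(g)}$. My candidates are $v = -x_j^{(1)}$ for $A_j$ and $v = -y_j^{(1)}$ for $B_j$; the two cases are interchanged by the evident symmetry $x \leftrightarrow y$, so I would carry out $A_j$ in full and obtain $B_j$ by the analogous computation.

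First I would unwind the components of $A_j = \sum_{i \neq j} V_{a_i, b_i, a_j}$. Using the Poincar\'e duals $a_i^* = -b_i$, $b_i^* = a_i$ (with $\langle a_i, a_i^* \rangle = \langle b_i, b_i^* \rangle = 1$, read off from the diagonal $\Delta$), together with the fact that the mixed degree-one pairings $\langle b_i, a_j \rangle$ and $\langle a_i, a_j \rangle$ vanish for $i \neq j$ while $\langle a_i, b_i \rangle = -1$, each single tripod $V_{a_i,b_i,a_j}$ has only three nonzero components: $(V)_{a_i} = [x_i^{(1)}, x_j^{(1)}]$, $(V)_{b_i} = [y_i^{(1)}, x_j^{(1)}]$, and $(V)_{b_j} = [x_i^{(1)}, y_i^{(1)}] - 2 t_{11}$ (the last carrying the correction term precisely because $\langle a_i, b_i \rangle = -1$). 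Here the assignment $c = \alpha^*$ is read off-linearly in the index, so e.g.\ $(V)_{-b_i} = -(V)_{b_i}$. Summing over $i \neq j$ then gives $(A_j)_{a_l} = [x_l^{(1)}, x_j^{(1)}]$ for $l \neq j$, $(A_j)_{a_j} = 0$, $(A_j)_{b_l} = [y_l^{(1)}, x_j^{(1)}]$ for $l \neq j$, and $(A_j)_{b_j} = \sum_{i \neq j}([x_i^{(1)}, y_i^{(1)}] - 2 t_{11})$.

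Next I would verify the identities $[v^{12}, x_l^{(1)}] = (A_j)_{a_l}$ and $[v^{12}, y_l^{(1)}] = (A_j)_{b_l}$ with $v = -x_j^{(1)}$, so that $v^{12} = -(x_j^{(1)} + x_j^{(2)})$. The computations split according to whether $l = j$ or $l \neq j$, and all reduce to the defining relations of $\fraktg(2)$: the position-crossing brackets $[x_j^{(2)}, x_l^{(1)}] = 0$ and $[x_j^{(2)}, y_l^{(1)}] = 0$ for $l \neq j$, and the crucial one $[x_j^{(2)}, y_j^{(1)}] = t_{12}$. For $l \neq j$ these give the bracket components immediately. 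The delicate point --- and the main obstacle --- is the component at $b_j$: there I would use that the genus relation reads differently in the two arities, namely $\sum_l [x_l^{(1)}, y_l^{(1)}] = (2g-2) t_{11} - t_{12}$ in $\fraktg(2)$ (as opposed to $(2g-2) t_{11}$ in $\fraktg(1)$), so that $(A_j)_{b_j}$ simplifies to $-[x_j^{(1)}, y_j^{(1)}] - t_{12}$, while on the other side $[v^{12}, y_j^{(1)}] = -[x_j^{(1)}, y_j^{(1)}] - t_{12}$, the extra $t_{12}$ arising exactly from $[x_j^{(2)}, y_j^{(1)}]$. The two sides match, proving $A_j \in B_{(g)}$, and the identical bookkeeping with $v = -y_j^{(1)}$ yields $B_j \in B_{(g)}$. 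In short, the entire argument is a careful matching of the $t_{11}$- and $t_{12}$-corrections built into the tripod definition against those forced by the arity-two relations of $\fraktg$.
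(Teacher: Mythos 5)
Your proposal is correct and matches the paper's own proof essentially verbatim: the paper also verifies the lemma by comparing the components of $A_j$ (resp.\ $B_j$) against the derivation $([v^{12},x_i^{(1)}],[v^{12},y_i^{(1)}])_i$ for $v=x_j$ (resp.\ $v=y_j$), obtaining $V=-A_j$ and using the arity-two relation $\sum_l [x_l^{(1)},y_l^{(1)}]=(2g-2)t_{11}-t_{12}$ together with $[x_j^{(2)},y_j^{(1)}]=t_{12}$ for the delicate $b_j$-component, exactly as you do. Your choice $v=-x_j$ instead of $v=x_j$ is an immaterial sign convention, since $B_{(g)}$ is a linear subspace.
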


\begin{proof}
For $v=x_j\in \frakt_{(g)}(1)$, we obtain the derivation $V$ defined by
\begin{align*}
V_{a_i}=&[x_j^{(1)}+x_j^{(2)},x_i^{(1)}]=[x_j^{(1)},x_i^{(1)}] \text{ for } i\neq j\\
V_{a_j}=&[x_j^{(1)}+x_j^{(2)},x_j^{(1)}]=[x_j^{(1)},x_j^{(1)}]=0 \\
V_{b_i}=&[x_j^{(1)}+x_j^{(2)},y_i^{(1)}]=[x_j^{(1)},y_i^{(1)}] \text{ for } i\neq j\\
V_{b_j}=&[x_j^{(1)}+x_j^{(2)},y_j^{(1)}]=[x_j^{(1)},y_j^{(1)}]+t_{12}
\end{align*}
On the other hand, we compute
\begin{align*}
A_j(a_i)=&\sum_{l\neq j} V_{a_l,b_l,a_j}(a_i)=[x_i^{(1)},x_j^{(1)}] \text{ for } i \neq j\\
A_j(a_j)=&\sum_{l\neq j} V_{a_l,b_l,a_j}(a_j)=0\\
A_j(b_i)=&\sum_{l\neq j} V_{a_l,b_l,a_j}(b_i)=[y_i^{(1)},x_j^{(1)}] \text{ for } i \neq j\\
A_j(b_j)=&\sum_{l\neq j} V_{a_l,b_l,a_j}(b_j)=\sum_{l\neq j} ([x_l^{(1)},y_l^{(1)}]-2t_{11})=-2(g-1)t_{11}+\sum_{l\neq j} [x_l^{(1)},y_l^{(1)}]=-[x_j^{(1)},y_j^{(1)}]-t_{12}
\end{align*}
This shows $V=-A_j$. A similar computation holds for $v=y_j$, and the respective derivation $V=-B_j$ in that case.
\end{proof}

Let $\fraka'$ be the Lie subalgebra of $Z_{(g)}$ generated by all $V_{\alpha,\beta,\gamma}$. Quotient $\fraka'$ by the Lie ideal generated by the $\{A_j,B_j\}_{j=1,\dots g}$ above and denote by $\fraka$ the quotient Lie algebra. Under the identification of Lemma \ref{lemma:spinrg}, the Lie algebra $\spp_0(H^*)$ acts by derivation on the Lie subalgebra $\fraka'$ via the Lie bracket on $Z_{(g)}$. Moreover, this action preserves the Lie ideal above. The following conjecture is then the equivalent of Conjecture \ref{conj:Tripods}.

\begin{conj}
The morphism induced by the inclusion of $\fraka'\rightarrow  Z_{(g)}$ induces an isomorphism
\[
\spp_0(H^*)\ltimes \fraka\cong \frakr_{(g)}.
\]
\end{conj}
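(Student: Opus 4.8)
The plan is to realise the stated map as a well-defined injective morphism of Lie algebras, and then to identify the remaining surjectivity with a generation statement that is the surface analogue of Enriquez' Conjecture~\ref{conj:deltas}. First I would construct the morphism
\[
\Theta:\spp_0(H^*)\ltimes \fraka\rightarrow \frakr_{(g)}=Z_{(g)}/B_{(g)}.
\]
On the factor $\fraka$ it is induced by the inclusion $\fraka'\hookrightarrow Z_{(g)}$ followed by the projection to $\frakr_{(g)}$; this descends to the quotient $\fraka=\fraka'/(A_j,B_j)$ precisely because $A_j,B_j\in B_{(g)}$, as already shown. On the factor $\spp_0(H^*)$ it is the embedding of Lemma~\ref{lemma:spinrg}, and the semidirect-product compatibility is exactly the statement recorded before the conjecture, namely that $\spp_0(H^*)$ acts by derivations on $\fraka'$ through the bracket $\{\ ,\ \}$ on $Z_{(g)}$ while preserving the defining Lie ideal.

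For injectivity I would use that $\frakr_{(g)}$ carries a nonnegative grading for which $\{\ ,\ \}$ is homogeneous: assign to a derivation $U=(U_c)_c\in Z_{(g)}$ the number $(\text{weight of }U_c)-1$, where the weight grading on $\fraktg(2)$ is $|x_l^{(i)}|=|y_l^{(i)}|=1$, $|t_{ij}|=|t_{ii}|=2$. Since the derivation $U$ replaces a weight-one generator by $U_c$, it shifts weight by exactly this amount, so the bracket is additive for this grading; and as $B_{(g)}$ is spanned by homogeneous inner derivations $\ad_{v^{12}}$, the grading descends to $\frakr_{(g)}$. In this grading the image of $\spp_0(H^*)$ sits in degree zero, whereas the tripods $V_{\alpha,\beta,\gamma}$, and hence all of $\fraka$, sit in degrees $\geq 1$. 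Injectivity of $\Theta$ then reduces, by degree separation, to the injectivity of the embedding of Lemma~\ref{lemma:spinrg} together with the identity $\fraka'\cap B_{(g)}=(A_j,B_j)$; the latter is a finite comparison of tripod brackets and inner derivations in each fixed degree, hence in principle verifiable degree by degree.

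The substantive content is the surjectivity, i.e. that $\frakr_{(g)}$ is generated by its degree-zero part $\spp_0(H^*)$ and its degree-one part spanned by the tripods. Via Theorem~\ref{thm:GCgvsp} and Proposition~\ref{prop:GCHGC} this is exactly the translation of Conjecture~\ref{conj:Tripods} to the $\fraktg$-side. I would attempt to prove it on the associated graded by showing that the iterated bracket maps
\[
\{\frakr_{(g)}^{1},\frakr_{(g)}^{k}\}\rightarrow \frakr_{(g)}^{k+1}
\]
are jointly surjective for every $k\geq 1$. Each graded piece $\frakr_{(g)}^{k}$ is a finite-dimensional representation of $\lsp(H^1)\cong\lsp_{2g}$, acting through its action on $H^1$ and on $\fraktg(r)$, so one could hope to check this surjectivity isotypic component by component using highest-weight theory.

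The main obstacle is precisely this generation step, which is why the statement is recorded as a conjecture. Two features make it genuinely hard. First, $\fraktg$ is \emph{not} a free Lie algebra — it carries the surface relations — so the graded pieces and their bracket structure are not freely computable, and the known generation-in-low-degree theorems for symplectic derivation Lie algebras of \emph{free} Lie algebras (Kontsevich, Morita, Hain) do not apply directly. Second, pinning down the full kernel of $\Theta$ beyond the degree argument would require a presentation of $Z_{(g)}$ in terms of the tripods that is not currently available. A more promising, if still open, route would be to transport a generation-in-low-weights result across an identification of $\frakr_{(g)}$ with Hain's explicitly presented Lie algebra attached to the relative completion of the surface mapping class group; making that identification precise is itself a deep problem (compare the discussion following Theorem~\ref{thm:Theoremtwo} on Gonzalez' definition), and for $g\geq 2$ it subsumes the genus-one case, where even Enriquez' Conjecture~\ref{conj:deltas} remains unproven.
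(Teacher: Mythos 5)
The statement you were asked to prove is recorded in the paper as a \emph{conjecture}; the paper contains no proof of it, and your proposal, quite correctly, does not claim to supply one either. What you have written is a sound reduction rather than a proof, and the parts you do verify are essentially correct. The construction of $\Theta$ is fine: descent to $\fraka=\fraka'/(A_j,B_j)$ uses exactly that $A_j,B_j\in B_{(g)}$ together with $B_{(g)}$ being a Lie ideal (Lemma \ref{lemma:bracket}), and the semidirect compatibility is the paper's remark preceding the conjecture. Your weight grading is a genuine observation not made explicit in the paper, and it checks out: the coface maps $\theta\mapsto\theta^\phi$ preserve the weight grading on $\fraktg$, each defining relation of $Z_{(g)}$ is weight-homogeneous, so $Z_{(g)}$ and $B_{(g)}$ are graded subspaces, the bracket $\{U,V\}_c=U(V_c)-V(U_c)$ is additive for the shifted degree, the $\spp_0(H^*)$-image sits in degree $0$ (with $B_{(g)}$ vanishing there, since $\fraktg(1)$ has no weight-$0$ part), and the tripods sit in degree $1$. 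This legitimately separates the two factors and reduces injectivity of $\Theta$ to the single statement $\fraka'\cap B_{(g)}=(A_j,B_j)$.

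Two caveats deserve flagging. First, your phrase that the kernel identification is ``in principle verifiable degree by degree'' is over-optimistic: each graded piece is finite-dimensional, but there are infinitely many degrees, so no finite computation settles it; as you concede later, one would need a presentation of $Z_{(g)}$ in terms of tripods, which does not exist. Injectivity is therefore just as open as surjectivity --- both are part of the conjecture, not preliminaries to it. Second, your claim that surjectivity ``is exactly the translation of Conjecture \ref{conj:Tripods}'' via Theorem \ref{thm:GCgvsp} silently uses that the isomorphism $H^0(\spp(H^*)\ltimes\GCg)\cong\frakr_{(g)}$ is one of Lie algebras; the paper only establishes it as an isomorphism of vector spaces, the Lie compatibility being itself Conjecture \ref{thm:GCgrt}. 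So strictly speaking the two tripod-generation conjectures are equivalent only modulo that further conjecture (the paper elides this in the same way). Your proposed attack on generation --- surjectivity of $\{\frakr_{(g)}^{1},\frakr_{(g)}^{k}\}\rightarrow\frakr_{(g)}^{k+1}$ checked isotypically as $\lsp_{2g}$-representations --- is a reasonable program, and your diagnosis of why it is hard (the relations in $\fraktg$ obstruct the free-Lie-algebra generation theorems in the style of Hain, cf.\ Conjecture \ref{conj:deltas} in genus one) matches the state of the art. In short: the verifiable portions of your proposal are correct, the grading argument is a useful addition, and the statement remains open exactly where you say it does.
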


\subsection{Outlook}

We end this section with a brief and slightly vague overview of open questions and potential improvements of our results. 
\subsubsection{Extension by Kontsevich's graph complex}
First, remark that we may define a combinatorial action of Kontsevich's dg Lie algebra $\GC$ on $\GCg'$ by insertion at the vertices. In order for the action to descend to the quotient $\GCg$ (where we quotient out tadpoles), we need to restrict to the Lie subalgebra $\GC^{div}\subset \GC$ of ``divergence''-free diagrams \cite{Willwacher17} spanned by diagrams whose Lie bracket with the diagram consisting of a single vertex and a tadpole vanishes. This allows us to define the dg Lie algebra
\[
\GC^{div} \ltimes (\spp(H^*)\ltimes \GCg).
\]
Remark that the ``correct'' differential is rather involved since one has to take the twists of several Maurer-Cartan elements into account. Pushing the analogy from above a bit further, we state the following conjecture.

\begin{conj}
There is an isomorphism of Lie algebras
\[
H^0(\GC^{div}\ltimes (\spp(H^*)\ltimes \GCg))\cong\grt_1 \ltimes \frakr_{(g)}.
\]
\end{conj}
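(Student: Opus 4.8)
The plan is to study the dg Lie algebra $\GC^{div}\ltimes(\spp(H^*)\ltimes\GCg)$ by means of the short exact sequence of complexes
\[
0\rightarrow \spp(H^*)\ltimes\GCg\rightarrow \GC^{div}\ltimes(\spp(H^*)\ltimes\GCg)\rightarrow \GC^{div}\rightarrow 0,
\]
in which $\spp(H^*)\ltimes\GCg$ is a dg Lie ideal. The induced long exact sequence gives, around degree zero,
\[
H^{-1}(\GC^{div})\xrightarrow{\ \partial'\ }H^0(\spp(H^*)\ltimes\GCg)\rightarrow H^0(\GC^{div}\ltimes(\spp(H^*)\ltimes\GCg))\rightarrow H^0(\GC^{div})\xrightarrow{\ \partial\ } H^1(\spp(H^*)\ltimes\GCg).
\]
The two outer groups are already controlled: $H^0(\GC^{div})\cong\grt_1$ by Willwacher's computation (Theorem \ref{thm:Willwachermain}, \cite{Willwacher15}, \cite{Willwacher17}), and $H^0(\spp(H^*)\ltimes\GCg)\cong\frakr_{(g)}$ by Theorem \ref{thm:GCgvsp}. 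Hence the statement reduces to showing that the connecting map $\partial$ vanishes and that the arrow out of $\frakr_{(g)}$ is injective, together with an identification of the resulting bracket.

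First I would write out the twisted differential on the semidirect product explicitly, keeping track of the several Maurer--Cartan twists. Starting from the honest semidirect product $\GC^{div}\ltimes(\spp(H^*)\ltimes\GCg'')$, in which $\GC^{div}$ acts on $\GCg''$ by insertion at vertices (the divergence-free condition being precisely what makes this action descend past internal tadpoles), one twists by the Maurer--Cartan element $(0,0,z)$. The twisted differential then acquires a cross term $\gamma\mapsto \gamma\cdot z$ carrying a decoration-free graph $\gamma\in\GC^{div}$ into the decorated complex $\GCg$, so that $d(\gamma,0,0)=(d\gamma,\,0,\,\gamma\cdot z)$. Consequently a class $[\gamma]\in\grt_1$ lifts to a cocycle of the full complex exactly when $\gamma\cdot z$ is a coboundary in $\spp(H^*)\ltimes\GCg$; this is precisely the assertion $\partial=0$.

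The injectivity of the map out of $\frakr_{(g)}$ is the easy half: it follows from the vanishing of the preceding connecting map $\partial'$, which holds because $H^{-1}(\GC^{div})=0$ (the graph cohomology is concentrated in non-negative degrees). The vanishing of $\partial$ is the delicate half, and since the positive-degree cohomology $H^1(\spp(H^*)\ltimes\GCg)$ is at present unknown, it cannot be obtained for free from the target and must be proved by an explicit lifting. Concretely I would filter by the number of internal edges plus the total degree of decorations and show, on the associated graded, that $\gamma\cdot z$ lies in the image of the induced differential; the divergence-free hypothesis is used here to control the tadpole-producing insertions into the single-vertex generators of $z$. One then lifts the resulting primitive through the filtration by the standard obstruction-theoretic iteration, producing an honest cocycle $(\gamma,0,-\eta)$ representing the image of $[\gamma]$.

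Finally I would identify the Lie bracket. Once both $\grt_1$ and $\frakr_{(g)}$ lift to $H^0$, the self-brackets reproduce those of $\grt_1$ and of $\frakr_{(g)}$, while the mixed bracket is the insertion action of $\GC^{div}$ on $\GCg$; it remains to check that this insertion action descends, under the isomorphism of Theorem \ref{thm:GCgvsp}, to the expected action of $\grt_1$ on $\frakr_{(g)}$. This compatibility is the heaviest bookkeeping, since the action must be transported along the entire zig-zag of quasi-isomorphisms used to establish Theorem \ref{thm:GCgvsp} (through $\HGCg$, the Koszul deformation complex, and $\Mogg\hotimes_S\fraktg$). I expect the principal obstacle to be the vanishing of $\partial$: estimating the twisted cross term $\gamma\cdot z$ for a general divergence-free representative and exhibiting it as a coboundary is exactly where the interaction between the $z$-twist and the insertion action resists routine manipulation, and it is the step for which the divergence-free structure must be exploited most carefully.
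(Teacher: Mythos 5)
The statement you are trying to prove is left as an open conjecture in the paper --- no proof is given there --- so your proposal must stand on its own, and as it stands it is a strategy outline rather than a proof. Your long-exact-sequence skeleton is sensible, but three of its inputs are unsupported. First, you quote $H^0(\GC^{div})\cong\grt_1$ and $H^{-1}(\GC^{div})=0$ as ``Willwacher's computation''; Theorem \ref{thm:Willwachermain} concerns $\GC$, not the divergence-free subcomplex, and the inclusion $\GC^{div}\subset\GC$ is not a quasi-isomorphism --- the cohomology of $\GC^{div}$ is governed by its own long exact sequence involving the divergence operator, so both assertions require a separate argument or a precise citation, and neither appears in the paper. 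Second, as you concede yourself, the vanishing of the connecting map $\partial$ --- exactness of $\gamma\cdot z$ in $\spp(H^*)\ltimes\GCg$ for every divergence-free cocycle $\gamma$ --- is not proved: your filtration argument is only sketched, and since $H^1(\spp(H^*)\ltimes\GCg)$ is unknown (the paper states this explicitly), nothing forces the obstruction to vanish. This is the mathematical heart of the conjecture, not a technical step that can be deferred. Third, your cross-term formula $d(\gamma,0,0)=(d\gamma,0,\gamma\cdot z)$ presumes that the only twist is by $(0,0,z)$, whereas the paper warns that the correct differential ``is rather involved since one has to take the twists of several Maurer-Cartan elements into account''; you would need to verify that your short exact sequence of complexes is even the right one before running the long exact sequence.

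Beyond the connecting maps, the statement asserts an isomorphism of \emph{Lie algebras}, and here two further gaps appear. The identification $H^0(\spp(H^*)\ltimes\GCg)\cong\frakr_{(g)}$ is, as a Lie algebra isomorphism, itself only Conjecture \ref{thm:GCgrt} in the paper --- Theorem \ref{thm:GCgvsp} produces an isomorphism of vector spaces --- so your step ``the self-brackets reproduce those of $\frakr_{(g)}$'' silently assumes an open statement. Moreover, even granting $\partial=0$ and the injectivity out of $\frakr_{(g)}$, the long exact sequence only yields an extension of Lie algebras $0\to\frakr_{(g)}\to H^0\to\grt_1\to 0$; to obtain the asserted \emph{semidirect} product you must construct a Lie-algebra section, i.e.\ lifts of $\grt_1$-classes whose brackets close modulo nothing, and then identify the induced action of $\grt_1$ on $\frakr_{(g)}$ with an action that the paper never pins down --- neither of which your outline addresses beyond ``heaviest bookkeeping.'' In short: the scaffold is plausible and is probably how one would attack the conjecture, but every load-bearing step (the cohomology of $\GC^{div}$, the vanishing of $\partial$, the Lie structure on $H^0(\spp(H^*)\ltimes\GCg)$, and the splitting of the extension) is either asserted or deferred, and at least one of them ($\partial=0$) is precisely the open problem the conjecture encodes.
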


\subsubsection{Higher genus Kashiwara-Vergne Lie algebras}

The Kashiwara-Vergne Lie algebra $\mathfrak{krv}$ is a Lie algebra closely related to the Grothendieck-Teichm\"uller Lie algebra $\grt_1$. It was introduced by Alekseev and Torossian \cite{AT12} and describes the symmetries of the Kashiwara-Vergne problem \cite{KV78} in Lie theory. In a series of works (\cite{AKKN17}, \cite{AKKN18}) Alekseev, Kuno, Kawazumi and Naef define the higher genus analogue of the Kashiwara-Vergne Lie algebra, which moreover easily translates to a diagrammatic language. Additionally, Raphael and Schneps established a similar definition for $g=1$ based on the theory of moulds \cite{SE17}. In genus zero, $\grt_1$ injects into $\mathfrak{krv}$ \cite{AT12}, and conjecturally the two Lie algebras coincide up to a one-dimensional Lie algebra. Moreover, the Kashiwara-Vergne Lie algebra also allows a combinatorial description due to \v Severa and Willwacher \cite{SevWill11} which is compatible with the identification of $\grt_1$ with $H^0(\GC)$. We refer to the upcoming work \cite{FN21} for the details on how to relate the higher genus Kashiwara-Vergne Lie algebra with $\GCg$ and $\frakr_{(g)}$. For the combinatorial description, the strategy reads as follows. The dg Lie algebra $\GCg$ has a descending filtration given by the number of loops. Both the vertex splitting differential $d_s$ and the twist by $z$ preserve this number, while the pairing differential raises the number of loops by one. Moreover, the action of $\spp(H^*)$ preserves this filtration. Additionally, the filtration restricts to the subcomplex spanned by diagrams with no $\omega$ decorations. On this subcomplex, consider the spectral sequence with respect to the loop filtration. The $E_1^{0,0}$ term consists of trivalent trees modulo the ``IHX''-relation. On the first page the differential is given by the pairing differential. In analogy to the local case \cite{SevWill11}, we expect the higher genus Kashiwara-Vergne Lie algebras from \cite{AKKN18} to be isomorphic to $E_2^{0,0}$.

\printbibliography

\end{document}